\newcommand{\acdef}[1]{\define{\acl{#1}} \textup{(\acs{#1})}\acused{#1}}	
\newcommand{\acdefp}[1]{\define{\aclp{#1}} \textup{(\acsp{#1})}\acused{#1}}	
\colorlet{MyRed}{Crimson!60!DarkRed}
\colorlet{MyBlue}{DodgerBlue!75!black}
\colorlet{MyGreen}{DarkGreen}
\colorlet{MyViolet}{DarkMagenta}
\colorlet{MyLightBlue}{DodgerBlue!20}
\colorlet{MyLightGreen}{MyGreen!20}
\colorlet{PrimalColor}{MyBlue}
\colorlet{PrimalFill}{MyLightBlue}
\colorlet{DualColor}{MyRed}
\colorlet{AlertColor}{MyRed}	
\colorlet{BadColor}{MyRed}	
\colorlet{GoodColor}{MyGreen}	
\colorlet{LinkColor}{MediumBlue}	
\colorlet{MacroColor}{MyViolet}
\colorlet{RevColor}{MediumBlue}	
\colorlet{RevColor}{Black}  
\setlist[1]{topsep=\smallskipamount,itemsep=\smallskipamount,left=\parindent}
\setlist[2]{left=0pt}
\crefname{algo}{Algorithm}{Algorithms}
\crefname{assumption}{Assumption}{Assumptions}
\crefname{case}{Case}{Cases}
\crefname{cond}{Condition}{Conditions}
\crefname{noref}{}{}
\crefname{problem}{Problem}{Problems}
\DeclareRobustCommand{\crefnosort}[1]{%
  \begingroup\@cref@sortfalse\cref{#1}\endgroup
}
\theoremstyle{plain}
\newtheorem{theorem}{Theorem}	
\newtheorem{corollary}{Corollary}	
\newtheorem{lemma}{Lemma}	
\newtheorem{proposition}{Proposition}	
\newtheorem*{theorem*}{Theorem}	
\newtheorem*{corollary*}{Corollary}	
\theoremstyle{definition}
\newtheorem{assumption}{Assumption}	
\newtheorem{definition}{Definition}	
\newtheorem{example}{\raisebox{\depth}{$\blacktriangleright$}~Example}	
\newtheorem*{assumption*}{Assumptions}	
\newtheorem*{definition*}{Definition}	
\newtheorem*{example*}{\raisebox{\depth}{$\blacktriangleright$}~Example}	
\theoremstyle{remark}
\newtheorem{remark}{Remark}	
\newtheorem{notation}{Notation}	
\newtheorem*{remark*}{Remark}	
\newtheorem*{notation*}{Notation}	
\def\endenv{\hfill\raisebox{\depth}{$\blacktriangleleft$}\smallskip}	
\newcounter{proofstep}
\numberwithin{remark}{section}	
\numberwithin{example}{section}	
\newcommand{\draft}[1]{#1}	
\newcommand{\define}[1]{\emph{\draft{#1}}}	
\newcommand{\revise}[1]{{\color{RevColor}#1}}	
\def\beginrev{\color{RevColor}}	
\def\endedit{}	
\def\endedit{\color{black}}	
\newcommand{\newmacro}[2]{\newcommand{#1}{\draft{#2}}}	
\newcommand{\newop}[2]{\DeclareMathOperator{#1}{\draft{#2}}}	
\newcommand{\newoplims}[2]{\DeclareMathOperator*{#1}{\draft{#2}}}	
\newcommand{\newsmartmacro}[2]{
	\NewDocumentCommand{#1}{
		E{_}{{}}
	}{
		\draft{#2_{##1}}
	}
}
\newcommand{\eps}{\varepsilon}	
\newcommand{\wilde}{\widetilde}	
\DeclarePairedDelimiter{\braces}{\{}{\}}	
\DeclarePairedDelimiter{\bracks}{[}{]}	
\DeclarePairedDelimiter{\parens}{(}{)}	
\DeclarePairedDelimiter{\of}{(}{)}	
\DeclarePairedDelimiter{\abs}{\lvert}{\rvert}	
\DeclarePairedDelimiter{\ceil}{\lceil}{\rceil}	
\DeclarePairedDelimiter{\floor}{\lfloor}{\rfloor}	
\DeclarePairedDelimiter{\setof}{\{}{\}}	
\DeclarePairedDelimiterX{\setdef}[2]{\{}{\}}{#1:#2}	
\DeclarePairedDelimiterXPP{\exclude}[1]{\mathopen{}\setminus}{\{}{\}}{}{#1}
\DeclarePairedDelimiterX{\braket}[2]{\langle}{\rangle}{#1,#2}	
\DeclarePairedDelimiterX{\internalInner}[1]{\langle}{\rangle}{#1}
\NewDocumentCommand \inner {s m g}{
	\IfBooleanTF{#1}
	{
	\internalInner*{#2\IfValueT{#3}{,#3}}
	}
	{
	\internalInner{#2\IfValueT{#3}{,#3}}
	}
}
\DeclarePairedDelimiter{\norm}{\lVert}{\rVert}	
\DeclarePairedDelimiterXPP{\dnorm}[1]{}{\lVert}{\rVert}{_{\ast}}{#1}	
\DeclarePairedDelimiterXPP{\onenorm}[1]{}{\lVert}{\rVert}{_{1}}{#1}	
\DeclarePairedDelimiterXPP{\twonorm}[1]{}{\lVert}{\rVert}{_{2}}{#1}	
\DeclarePairedDelimiterXPP{\supnorm}[1]{}{\lVert}{\rVert}{_{\infty}}{#1}	
\newcommand{\defeq}{\coloneqq}	
\newcommand{\from}{\colon}	
\newmacro{\nat}{i}	
\newmacro{\natA}{i}	
\newmacro{\natB}{j}	
\newmacro{\natC}{k}	
\newmacro{\nats}{\mathbb{N}}	
\newmacro{\N}{\nats}	
\newmacro{\integer}{a}	
\newmacro{\intA}{a}	
\newmacro{\intB}{b}	
\newmacro{\intC}{c}	
\newmacro{\integers}{\mathbb{Z}}	
\newmacro{\Z}{\integers}	
\newmacro{\rational}{r}	
\newmacro{\ratA}{r}	
\newmacro{\ratB}{s}	
\newmacro{\ratC}{t}	
\newmacro{\rationals}{\mathbb{Q}}	
\newmacro{\Q}{\rationals}	
\newmacro{\real}{x}	
\newmacro{\realA}{x}	
\newmacro{\realB}{y}	
\newmacro{\realC}{z}	
\newmacro{\reals}{\mathbb{R}}	
\newmacro{\R}{\reals}	
\newmacro{\complex}{z}	
\newmacro{\complexA}{z}	
\newmacro{\complexB}{w}	
\newmacro{\complexC}{z}	
\newmacro{\complexes}{\mathbb{C}}	
\newmacro{\C}{\mathbb{C}}	
\newoplims{\argmax}{arg\,max}	
\newoplims{\argmin}{arg\,min}	
\newoplims{\intersect}{\bigcap}	
\newoplims{\union}{\bigcup}	
\newop{\aff}{aff}	
\newop{\bd}{bd}	
\newop{\bigoh}{\mathcal{O}}	
\newop{\card}{card}	
\newop{\cl}{cl}	
\newop{\conv}{conv}	
\newop{\clconv}{\overline{conv}}	
\newop{\crit}{crit}	
\newop{\diag}{diag}	
\newop{\diam}{diam}	
\newop{\dist}{dist}	
\newop{\dom}{dom}	
\newop{\eig}{eig}	
\newop{\ess}{ess}	
\newop{\Hess}{Hess}	
\newop{\ind}{ind}	
\newop{\im}{im}	
\newop{\intr}{int}	
\newop{\Jac}{Jac}	
\newop{\one}{\mathds{1}}	
\newop{\proj}{pr}	
\newop{\prox}{prox}	
\newop{\rank}{rank}	
\newop{\relint}{ri}	
\newop{\sign}{sgn}	
\newop{\supp}{supp}	
\newop{\Sym}{Sym}	
\newop{\tr}{tr}	
\newop{\unif}{unif}	
\newop{\vol}{vol}	
\newcommand{\cf}{cf.\xspace}	
\newcommand{\eg}{e.g.,\xspace}	
\newcommand{\ie}{i.e.,\xspace}	
\newcommand{\viz}{viz.\xspace}	
\newcommand{\textpar}[1]{\textup(#1\textup)}	
\newcommand{\txs}{\textstyle}	
\newcommand{\eqdot}{\,.}	
\newcommand{\alt}[1]{#1'}	
\newcommand{\altalt}[1]{#1''}	
\newmacro{\ball}{\mathbb{B}}	
\newmacro{\clball}{\overline{\mathbb{B}}}	
\newmacro{\sphere}{\mathbb{S}}	
\newmacro{\argdot}{\kern.5pt\boldsymbol{\cdot}\kern.5pt}	
\newmacro{\dd}{\:d}	
\newmacro{\ddt}{\frac{d}{dt}}	
\newmacro{\del}{\partial}	
\newcommand{\contdiff}[1]{\mathcal{C}^{#1}}	
\newcommand{\insum}{\sum\nolimits}	
\newmacro{\const}{c}	
\newmacro{\constA}{a}	
\newmacro{\constB}{b}	
\newmacro{\Const}{C}	
\newmacro{\param}{\theta}	
\newmacro{\params}{\Theta}	
\newmacro{\coef}{\alpha}	
\newmacro{\coefA}{\lambda}	
\newmacro{\coefB}{\mu}	
\newmacro{\coefC}{\nu}	
\newmacro{\expA}{p}	
\newmacro{\expB}{q}	
\newmacro{\expC}{r}	
\newmacro{\precs}{\eps}		
\newmacro{\precsalt}{{\eps'}}	
\newmacro{\asympteq}{\asymp}
\newmacro{\func}{g} 
\newmacro{\realspace}{\R^{\vdim}}	
\newmacro{\vecspace}{\mathcal{X}}	
\newmacro{\dspace}{\R^{\vdim}}	
\newmacro{\subspace}{\mathcal{W}}	
\newmacro{\coord}{i}	
\newmacro{\coordA}{i}	
\newmacro{\coordB}{j}	
\newmacro{\coordC}{k}	
\newmacro{\nCoords}{d}	
\newmacro{\dims}{\nCoords}	
\newmacro{\vdim}{\nCoords}	
\newmacro{\bvec}{e}	
\newmacro{\uvec}{u}	
\newmacro{\bvecs}{\mathcal{E}}	
\newmacro{\point}{x}	
\newmacro{\pointA}{\point}	
\newmacro{\pointB}{\point'}	
\newmacro{\pointalt}{\pointB}
\newmacro{\pointC}{\point''}	
\newmacro{\pointaltalt}{\pointC}
\newmacro{\points}{\mathcal{X}}	
\newmacro{\intpoints}{\relint\points}	
\newmacro{\base}{p}	
\newmacro{\baseA}{q}	
\newmacro{\baseB}{q}	
\newmacro{\baseC}{u}	
\newmacro{\set}{\mathcal{K}}	
\newmacro{\setA}{\set}	
\newmacro{\setB}{\alt\set}	
\newmacro{\setC}{\altalt\set}	
\newmacro{\idx}{i}
\newmacro{\idxalt}{j}
\newmacro{\idxaltalt}{l}
\newmacro{\idxaltaltalt}{k}
\newmacro{\indices}{I}
\newmacro{\indicesalt}{J}
\newmacro{\closed}{\mathcal{C}}	
\newmacro{\cpt}{\mathcal{K}}	
\newmacro{\cptalt}{\alt\cpt}	
\newmacro{\nhd}{\mathcal{U}}	
\newmacro{\nhdalt}{\W}	
\newmacro{\nhdaltalt}{\V}	
\newmacro{\nbd}{\nhd}
\newmacro{\nbdalt}{\nhdalt}
\newmacro{\nbdaltalt}{\nhdaltalt}
\newmacro{\U}{\mathcal{U}}	
\newmacro{\V}{\mathcal{V}}	
\newmacro{\W}{\mathcal{W}}	
\newmacro{\open}{\mathcal{U}}	
\newmacro{\openA}{\mathcal{U}}	
\newmacro{\openB}{\mathcal{V}}	
\newmacro{\mfld}{\mathcal{M}}	
\newmacro{\gmat}{g}	
\newmacro{\gdist}{\dist_{\gmat}}	
\newmacro{\tvec}{z}	
\newmacro{\form}{\omega}	
\newmacro{\radius}{r}
\newmacro{\Radius}{R}
\newmacro{\Radiusalt}{\widetilde{\Radius}}
\newmacro{\radiusalt}{\alt\radius}
\newmacro{\margin}{\delta}
\newmacro{\marginalt}{\alt\margin}
\newmacro{\Margin}{\Delta}
\newmacro{\connectedcomp}{\mathcal{K}}
\newmacro{\plainset}{S} 
\newmacro{\interv}{A} 
\newmacro{\domain}{D}
\newmacro{\bigcpt}{\mathcal{D}}
\newsmartmacro{\bigcptalt}{\alt\bigcpt}
\newsmartmacro{\bigcptaltalt}{\alt\alt\bigcpt}
\newmacro{\cvx}{\mathcal{C}}	
\newmacro{\subd}{\partial}	
\newop{\tspace}{T}	
\newop{\tcone}{TC}	
\newop{\dcone}{\tcone^{\ast}}	
\newop{\ncone}{NC}	
\newop{\pcone}{PC}	
\newop{\hull}{\Delta}	
\newop{\minimize}{minimize}	
\newop{\Opt}{Opt}	
\newop{\Sol}{Sol}	
\newop{\gap}{Gap}	
\newop{\orcl}{\mathsf{G}}	
\newop{\err}{\mathsf{Z}}	
\newmacro{\obj}{f}	
\newmacro{\objalt}{g}	
\newmacro{\objA}{f}	
\newmacro{\objB}{g}	
\newmacro{\sobj}{F}	
\newcommand{\sol}[1][\point]{#1^{\ast}}	
\newmacro{\gvec}{g}	
\newmacro{\gbound}{G}	
\newmacro{\oper}{A}	
\newmacro{\vecfield}{v}	
\newmacro{\vbound}{V}	
\newmacro{\lips}{L}	
\newmacro{\strong}{\alpha}	
\newmacro{\smooth}{\beta}	
\newmacro{\tmplips}{L}	
\newmacro{\tmpbound}{B}	
\newmacro{\growth}{M}	
\newmacro{\regparam}{\lambda}	
\newop{\ex}{\mathbb{E}}	
\newop{\prob}{\mathbb{P}}	
\newop{\probalt}{\mathbb{Q}}	
\newop{\var}{\mathbb{V}}	
\newop{\cov}{cov}	
\newop{\simplex}{\hull}	
\providecommand\given{}	
\DeclarePairedDelimiterXPP{\exof}[1]{\ex}{[}{]}{}{
\renewcommand\given{\nonscript\,\delimsize\vert\nonscript\,\mathopen{}} #1}
\DeclarePairedDelimiterXPP{\exwrt}[2]{\ex_{#1}}{[}{]}{}{
\renewcommand\given{\nonscript\:\delimsize\vert\nonscript\:\mathopen{}} #2}
\DeclarePairedDelimiterXPP{\probof}[1]{\prob}{(}{)}{}{
\renewcommand\given{\nonscript\:\delimsize\vert\nonscript\:\mathopen{}} #1}
\DeclarePairedDelimiterXPP{\probwrt}[2]{\prob_{\!#1}}{(}{)}{}{
\renewcommand\given{\nonscript\:\delimsize\vert\nonscript\:\mathopen{}} #2}
\DeclarePairedDelimiterXPP{\oneof}[1]{\one}{\{}{\}}{}{#1}	
\DeclarePairedDelimiterXPP{\varof}[1]{\var}{[}{]}{}{
\renewcommand\given{\nonscript\,\delimsize\vert\nonscript\,\mathopen{}} #1}
\DeclarePairedDelimiterXPP{\covof}[1]{\cov}{(}{)}{}{
\renewcommand\given{\nonscript\,\delimsize\vert\nonscript\,\mathopen{}} #1}
\newmacro{\event}{E}	
\newmacro{\eventA}{E}	
\newmacro{\eventB}{H}	
\newmacro{\seed}{\theta}	
\newmacro{\seeds}{\Theta}	
\newmacro{\pdist}{P}	
\newmacro{\history}{\mathcal{H}}	
\newmacro{\sample}{\omega}	
\newmacro{\samples}{\Omega}	
\newmacro{\sspace}{\R^{m}}	
\newmacro{\filter}{\mathcal{F}}	
\newmacro{\probspace}{(\samples,\filter,\prob)}	
\newmacro{\mean}{\mu}	
\newmacro{\sdev}{\sigma}	
\newmacro{\variance}{\sdev^{2}}	
\newmacro{\variancealt}{s^2}
\newmacro{\covmat}{\Sigma}
\newmacro{\hessmat}{H}
\newmacro{\rv}{X}
\newmacro{\trv}{X_\Radius}
\newmacro{\gaussian}{\mathcal{N}}
\newmacro{\partition}{Z}
\newmacro{\nSamples}{n}	
\newmacro{\datapoint}{\xi}
\newmacro{\beforestart}{-1}	
\newmacro{\start}{0}	
\newmacro{\afterstart}{1}	
\newmacro{\running}{\start,\afterstart,\dotsc}	
\newmacro{\run}{n}	
\newmacro{\runA}{n}	
\newmacro{\runB}{k}	
\newmacro{\runC}{\ell}	
\newmacro{\nRuns}{N}	
\newmacro{\nRunsalt}{\alt\nRuns}	
\newmacro{\runs}{\mathcal{\nRuns}}	
\newmacro{\runalt}{\runB}	
\newmacro{\tstart}{0}	
\renewcommand{\time}{\draft{t}}	
\newmacro{\timeA}{t}	
\newmacro{\timeB}{s}	
\newmacro{\timealt}{\timeB}	
\newmacro{\timealtalt}{u}
\newmacro{\timeC}{\tau}	
\newmacro{\timeD}{\lambda}	
\newmacro{\horizon}{T}	
\newmacro{\horizonalt}{S}	
\newcommand{\new}[1][\state]{#1^{+}}	
\newmacro{\seq}{a}	
\newmacro{\seqA}{a}	
\newmacro{\seqB}{b}	
\newmacro{\seqC}{c}	
\newmacro{\state}{x}	
\newsmartmacro{\accstate}{x^{\step}}	
\newmacro{\stateA}{x}	
\newmacro{\stateB}{z}	
\newmacro{\statealt}{\stateB}
\newmacro{\stateC}{y}	
\newmacro{\stateD}{p}	
\newmacro{\statealtalt}{\stateC}
\newmacro{\statealtaltalt}{\stateD}
\newmacro{\cstate}{X}
\newmacro{\cstateA}{X}
\newmacro{\cstateB}{Z}
\newmacro{\cstatealt}{\cstateB}
\newmacro{\startingpoint}{\point_\start}	
\newcommand{\init}[1][\state]{\draft{#1}_{\start}}	
\newcommand{\afterinit}[1][\state]{\draft{#1}_{\afterstart}}	
\newcommand{\iter}[1][\state]{\draft{#1}_{\runB}}	
\newcommand{\curr}[1][\state]{\draft{#1_{\run}}}	
\renewcommand{\next}[1][\state]{\draft{#1}_{\run+1}}	
\newmacro{\mat}{M}	
\newmacro{\hmat}{H}	
\newmacro{\ones}{\mathbf{1}}	
\newmacro{\eye}{I}	
\newmacro{\identity}{\eye}	
\newmacro{\zer}{\mathbf{0}}	
\newcommand{\mg}{\succ}	
\newcommand{\mleq}{\preccurlyeq}	
\newmacro{\eigval}{\lambda}	
\newcommand{\orthproj}{P}	
\newop{\Nash}{NE}	
\newop{\CE}{CE}	
\newop{\CCE}{CCE}	
\newop{\NI}{NI}	
\newop{\brep}{br}	
\newop{\reg}{Reg}	
\newop{\preg}{\overline{Reg}}	
\newop{\val}{val}	
\newmacro{\play}{i}	
\newmacro{\playA}{i}	
\newmacro{\playB}{j}	
\newmacro{\playC}{k}	
\newmacro{\nPlayers}{N}	
\newmacro{\players}{\mathcal{\nPlayers}}	
\newmacro{\pure}{\alpha}	
\newmacro{\pureA}{\alpha}	
\newmacro{\pureB}{\beta}	
\newmacro{\pureC}{\gamma}	
\newmacro{\nPures}{A}	
\newmacro{\pures}{\mathcal{\nPures}}	
\newmacro{\strat}{x}	
\newmacro{\stratA}{x}	
\newmacro{\stratB}{\stratA'}	
\newmacro{\stratC}{\stratA''}	
\newmacro{\strats}{\mathcal{X}}	
\newmacro{\intstrats}{\strats^{\circle}}	
\newmacro{\pay}{u}	
\newmacro{\payv}{v}	
\newmacro{\payfield}{v}	
\newmacro{\loss}{\ell}	
\newmacro{\game}{\mathcal{G}}	
\newmacro{\gameFull}{\game(\players,\points,\pay)}	
\newmacro{\fingame}{\Gamma}	
\newmacro{\fingameFull}{\Gamma(\players,\pures,\pay)}	
\newmacro{\minmax}{L}	
\newmacro{\minvar}{\point_{1}}	
\newmacro{\minvarA}{\point_{1}}	
\newmacro{\minvarB}{\minvarA'}	
\newmacro{\minvars}{\points_{1}}	
\newmacro{\maxvar}{\point_{2}}	
\newmacro{\maxvarA}{\point_{2}}	
\newmacro{\maxvarB}{\maxvarA'}	
\newmacro{\maxvars}{\points_{2}}	
\newmacro{\pot}{U}	
\newmacro{\gradientbound}{16 \dims \log 6}	
\newmacro{\hreg}{h}	
\newmacro{\breg}{D}	
\newmacro{\mprox}{P}	
\newmacro{\mirror}{Q}	
\newmacro{\fench}{F}	
\newmacro{\hstr}{K}	
\newmacro{\hrange}{H}	
\newmacro{\proxdom}{\points^{\hreg}}	
\DeclarePairedDelimiterXPP{\bregof}[2]{\breg}{(}{)}{}{#1,#2}	
\DeclarePairedDelimiterXPP{\proxof}[2]{\mprox_{#1}}{(}{)}{}{#2}	
\newmacro{\dpoint}{y}	
\newmacro{\dpointA}{y}	
\newmacro{\dpointB}{\dpointA'}	
\newmacro{\dpointC}{\dpointA''}	
\newmacro{\dpoints}{\mathcal{Y}}	
\newmacro{\dstate}{Y}	
\newmacro{\dvec}{w}	
\newmacro{\zone}{\mathbb{D}}	
\newop{\Eucl}{\Pi}	
\newop{\logit}{softmax}	
\newop{\dkl}{KL}	
\newmacro{\flowmap}{\Theta}	
\DeclarePairedDelimiterXPP{\flowof}[2]{\flowmap_{#1}}{(}{)}{}{#2}	
\DeclarePairedDelimiterXPP{\dotflowof}[2]{\dot\flowmap_{#1}}{(}{)}{}{#2}	
\newmacro{\traj}{x}	
\DeclarePairedDelimiterXPP{\trajof}[1]{\traj}{(}{)}{}{#1}	
\DeclarePairedDelimiterXPP{\difftrajof}[1]{\dot\traj}{(}{)}{}{#1}	
\newcommand{\est}[1]{\hat #1}	
\newmacro{\signal}{\est\gvec}	
\newmacro{\step}{\eta}	
\newmacro{\learn}{\eta}	
\newmacro{\tempinv}{\beta}	
\newmacro{\batch}{B}
\newmacro{\batchidx}{b}
\newmacro{\efftime}{\tau}	
\newmacro{\error}{Z}	
\newmacro{\noise}{\mathsf{U}}	
\newmacro{\snoise}{\xi}	
\newmacro{\noisepar}{\sdev}	
\newmacro{\noisevar}{\variance}	
\newmacro{\aggnoise}{\mathrm{\uppercase\expandafter{\romannumeral1}}}	
\newmacro{\supnoise}{\aggnoise_{\infty}}	
\newmacro{\maxnoise}{\aggnoise^{\ast}}	
\newmacro{\bias}{b}	
\newmacro{\drift}{b}	
\newmacro{\bbound}{B}	
\newmacro{\sbias}{\chi}	
\newmacro{\aggbias}{\mathrm{\uppercase\expandafter{\romannumeral2}}}	
\newmacro{\supbias}{\aggbias_{\infty}}	
\newmacro{\maxbias}{\aggbias^{\ast}}	
\newmacro{\sbound}{M}	
\newmacro{\aggsecond}{\mathrm{\uppercase\expandafter{\romannumeral3}}}	
\newmacro{\supsecond}{\aggsecond_{\infty}}	
\newmacro{\maxsecond}{\aggsecond^{\ast}}	
\newmacro{\mix}{\delta}	
\newmacro{\unitvec}{w}	
\newmacro{\unitvar}{W}	
\newmacro{\perturb}{z}	
\newmacro{\purequery}{\est\pure}	
\newmacro{\query}{\est\state}	
\newmacro{\pivot}{\point}	
\newmacro{\querypoint}{\est\point}	
\newmacro{\vertex}{v}	
\newmacro{\vertexA}{v}	
\newmacro{\vertexB}{w}	
\newmacro{\vertexC}{u}	
\newmacro{\nVertices}{V}	
\newmacro{\vertices}{\mathcal{V}}	
\newmacro{\edge}{e}	
\newmacro{\edgeA}{e}	
\newmacro{\edgeB}{\edgeA'}	
\newmacro{\edgeC}{\edgeA''}	
\newmacro{\nEdges}{E}	
\newmacro{\edges}{\mathcal{\nEdges}}	
\newmacro{\graph}{\mathcal{G}}	
\newmacro{\graphFull}{\graph(\vertices,\edges)}	
\newmacro{\adjmat}{A}	
\newmacro{\wmat}{W}	
\newmacro{\tree}{T}
\newmacro{\treealt}{\alt\tree}
\newmacro{\trees}{\mathcal{T}}
\newmacro{\treesalt}{\wilde\trees}
\newmacro{\mgf}{M}	
\DeclarePairedDelimiterXPP{\mgfof}[2]{\mgf_{#1}}{(}{)}{}{#2}	
\newmacro{\cgf}{K}	
\DeclarePairedDelimiterXPP{\cgfof}[2]{\cgf_{#1}}{(}{)}{}{#2}	
\newmacro{\ham}{\mathcal{H}}	
\DeclarePairedDelimiterXPP{\hamof}[2]{\ham_{#1}}{(}{)}{}{#2}	
\newmacro{\lag}{\mathcal{L}}	
\DeclarePairedDelimiterXPP{\lagof}[2]{\lag_{#1}}{(}{)}{}{#2}	
\newmacro{\mom}{p}
\newmacro{\pos}{q}
\newmacro{\vel}{v}
\newmacro{\velalt}{w}
\newmacro{\hamilt}{\mathcal{H}}
\newmacro{\hamiltalt}{\bar\hamilt}
\newmacro{\lagrangian}{\mathcal{L}}
\newmacro{\lagrangianalt}{\bar\lagrangian}
\newmacro{\curve}{\gamma}	
\DeclarePairedDelimiterXPP{\curveat}[1]{\curve}{(}{)}{}{#1}	
\DeclarePairedDelimiterXPP{\diffcurveat}[1]{\dot\curve}{(}{)}{}{#1}	
\newmacro{\curves}{\Gamma}
\DeclarePairedDelimiterXPP{\curvesat}[3]{\curves_{#1}}{(}{)}{}{#2;#3}	
\newmacro{\contcurves}{\contfuncs}
\DeclarePairedDelimiterXPP{\contcurvesat}[2]{\contcurves_{#1}}{(}{)}{}{#2}	
\newmacro{\lint}{\cstate}	
\DeclarePairedDelimiterXPP{\lintat}[1]{\lint}{(}{)}{}{#1}	
\newmacro{\qpot}{B}	
\newmacro{\qmat}{B}	
\newmacro{\energy}{E}	
\newmacro{\action}{\mathcal{S}}
\newmacro{\act}{\mathcal{S}}
\DeclarePairedDelimiterXPP{\actof}[2]{\act_{#1}}{[}{]}{}{#2}	
\newmacro{\pth}{\curve}
\newmacro{\pthalt}{\varphi}
\newmacro{\pths}{\curves}
\newmacro{\dpth}{\xi}
\newmacro{\dpthalt}{\zeta}
\newmacro{\daction}{\mathcal{A}}
\newmacro{\quasipot}{V}
\newmacro{\dquasipot}{B}
\newmacro{\symdquasipot}{C}
\newmacro{\dquasipotalt}{\widetilde{\dquasipot}}
\newmacro{\invpot}{W}
\newmacro{\dinvpot}{\energy}
\newmacro{\logmgf}{H}
\newmacro{\contfuncs}{\mathcal{C}}
\newmacro{\map}{F}
\newmacro{\rate}{\rho}
\newmacro{\level}{s}
\newmacro{\eqcl}{\mathcal{K}}
\newmacro{\neqcl}{\nComps}
\newmacro{\primvar}{\alpha}
\newmacro{\bdprimvar}{\primvar_\infty}
\newmacro{\bdvar}{\noisepar_{\infty}^{2}}
\newmacro{\exponent}{s}
\newmacro{\bdpot}{\pot_\infty}
\newmacro{\potgbound}{C}
\newmacro{\ratenhd}{\mathcal{N}}
\newmacro{\diffcurr}{\delta \curr}
\newcommand{\exittime}{\tau}
\newcommand{\hittime}{\sigma}
\newmacro{\comp}{\mathcal{K}}
\newmacro{\iComp}{i}
\newmacro{\jComp}{j}
\newmacro{\kComp}{k}
\newmacro{\nComps}{K}
\newmacro{\iGround}{0}
\newmacro{\ground}{\comp_{\iGround}}
\newmacro{\groundstates}{\sol[\indices]}
\newmacro{\asymptstables}{AS}
\newmacro{\nontrivialattract}{NTA}
\newmacro{\meas}{\mu}	
\newmacro{\altmeas}{\nu}	
\newmacro{\occmeas}{\meas}	
\newmacro{\borel}{\mathcal{B}}	
\newmacro{\size}{\delta}	
\newmacro{\toler}{\eps}	
\newcommand{\half}[1][1]{{\frac{#1}{2}}}
\renewcommand{\WAdelete}[1]{}
\newmacro{\fn}{f}	
\newmacro{\basin}{\mathcal{B}}	
\newcommand{\invmeas}[1][\infty]{\draft{\meas_{#1}}}	
\newmacro{\measalt}{\widetilde{\mu}}	
\newmacro{\grad}{\nabla}
\newmacro{\errorterm}{E}
\newmacro{\errortermalt}{\widetilde{E}}
\begin{document}

 
\title
[The Long-Run Distribution of Stochastic Gradient Descent]	
{What is the Long-Run Distribution of Stochastic\\
Gradient Descent? A Large Deviations Analysis}	

\author
[W.~Azizian]
{Waïss Azizian$^{c,\ast}$}
\address{$^{c}$\,%
Corresponding author.}
\address{$^{\ast}$\,%
Univ. Grenoble Alpes, CNRS, Inria, Grenoble INP, LJK, 38000 Grenoble, France.}
\email[Corresponding author]{waiss.azizian@univ-grenoble-alpes.fr}
\author
[F.~Iutzeler]
{Franck Iutzeler$^{\sharp}$}
\address{$^{\sharp}$\,%
Institut de Mathématiques de Toulouse,  Université de Toulouse,  CNRS, UPS, 31062, Toulouse, France.}
\email{franck.iutzeler@math.univ-toulouse.fr}
\author
[J.~Malick]
{\\Jérôme Malick$^{\ast}$}
\email{jerome.malick@cnrs.fr}
\author
[P.~Mertikopoulos]
{Panayotis Mertikopoulos$^{\diamond}$}
\address{$^{\diamond}$\,%
Univ. Grenoble Alpes, CNRS, Inria, Grenoble INP, LIG, 38000 Grenoble, France.}
\email{panayotis.mertikopoulos@imag.fr}

\thanks{The authors are grateful to Pierre-Louis Cauvin for valuable feedback on an earlier version of this work.}

\subjclass[2020]{
Primary 90C15, 90C26, 60F10;
secondary 90C30, 68Q32.}
\keywords{%
Stochastic gradient descent;
Freidlin\textendash Wentzell theory;
large deviations;
Boltzmann\textendash Gibbs distribution.}


\newacro{LHS}{left-hand side}
\newacro{RHS}{right-hand side}
\newacro{iid}[i.i.d.]{independent and identically distributed}
\newacro{lsc}[l.s.c.]{lower semi-continuous}
\newacro{usc}[u.s.c.]{upper semi-continuous}
\newacro{rv}[r.v.]{random variable}
\newacro{wp1}[w.p.$1$]{with probability $1$}

\newacro{NE}{Nash equilibrium}
\newacroplural{NE}[NE]{Nash equilibria}

\newacro{GD}{gradient descent}
\newacro{KL}{Kullback\textendash Leibler}
\newacro{KurdL}[K\L]{Kurdyka\textendash\L ojasiewicz}
\newacro{LD}{large deviation}
\newacro{LDP}{large deviation principle}
\newacro{CGF}{cumulant-generating function}
\newacro{LSI}{logarithmic Sobolev inequality}
\newacro{MGF}{moment-generating function}
\newacro{ODE}{ordinary differential equation}
\newacro{SG}{stochastic gradient}
\newacro{SGD}{stochastic gradient descent}
\newacro{SDE}{stochastic differential equation}
\newacro{SME}{stochastic modified equation}
\newacro{SFO}{stochastic first-order oracle}
\newacro{SGO}{stochastic gradient oracle}
\newacro{SGLD}{stochastic gradient Langevin dynamics}
\newacro{SNR}{signal-to-noise ratio}
\newacro{ML}{machine learning}

\newacro{DMFT}{dynamic mean-field theory}

\begin{abstract}
%
%
In this paper, we examine the long-run distribution of \acf{SGD} in general, non-convex problems.
Specifically, we seek to understand which regions of the problem's state space are more likely to be visited by \ac{SGD}, and by how much.
Using an approach based on the theory of large deviations and randomly perturbed dynamical systems, we show that the long-run distribution of \ac{SGD} resembles the Boltzmann\textendash Gibbs distribution of equilibrium thermodynamics with temperature equal to the method's step-size and energy levels determined by the problem's objective and the statistics of the noise.
In particular, we show that, in the long run,
\begin{enumerate*}
[\upshape(\itshape a\hspace*{.5pt}\upshape)]
\item
the problem's critical region is visited exponentially more often than any non-critical region;
\item
the iterates of \ac{SGD} are exponentially concentrated around the problem's minimum energy state (which does not always coincide with the global minimum of the objective);
\item
all other connected components of critical points are visited with frequency that is exponentially proportional to their energy level;
and, finally
\item
any component of local maximizers or saddle points is ``dominated'' by a component of local minimizers which is visited exponentially more often.
\end{enumerate*}
\end{abstract}

\allowdisplaybreaks	
\acresetall	
\acused{iid}
\acused{LHS}
\acused{RHS}

\maketitle

\section{Introduction}
\label{sec:introduction}

Even though \ac{SGD} has been around for more than 70 years \cite{RM51}, it is still the method of choice for training a wide array of modern machine learning architectures \textendash\ from large language models to reinforcement learning and recommender systems.
This phenomenal success is largely owed to the method's simplicity:
given a smooth function $\obj\from\realspace\to\R$ and the associated optimization problem
\begin{equation}
\label{eq:opt}
\tag{Opt}
\min\nolimits_{\point\in\realspace}\;
	\obj(\point)
\end{equation}
the \ac{SGD} algorithm is given by the simple update rule
\begin{equation}
\label{eq:SGD}
\tag{SGD}
\next
	= \curr - \step \curr[\signal]
\end{equation}
where $\step>0$ is the method's step-size and $\curr[\signal]$, $\run=\running$ is a stochastic gradient of $\obj$ at $\curr$.

By virtue of its wide applicability, \eqref{eq:SGD} and its variants have been studied extensively in the literature, for both convex and non-convex objectives.
In the non-convex case (which is the most relevant setting for machine learning), the basic, no-frills guarantees of \eqref{eq:SGD} boil down to bounds of the form $\exof*{\insum_{\runalt=\start}^{\run} \norm{\nabla\obj(\iter)}^{2}} = \bigoh(\sqrt{\run})$ provided that $\step$ has been chosen accordingly \cite{Lan20}.
This guarantee suggests that the sequence $\curr$ eventually spends all but a vanishing fraction of time near regions where $\nabla\obj$ is small,
but it does not answer \emph{where} \eqref{eq:SGD} ultimately settles down.
In particular, the following crucial question remains open:
\vspace{1ex}
\begin{quote}
\centering
\itshape
Which critical points of $\obj$ \textpar{or components thereof} are more likely\\
to be observed in the long run \textendash\ and by how much?
\end{quote}
\vspace{1ex}


\begin{figure}[tbp]
\centering
\footnotesize
\includegraphics[height=38ex]{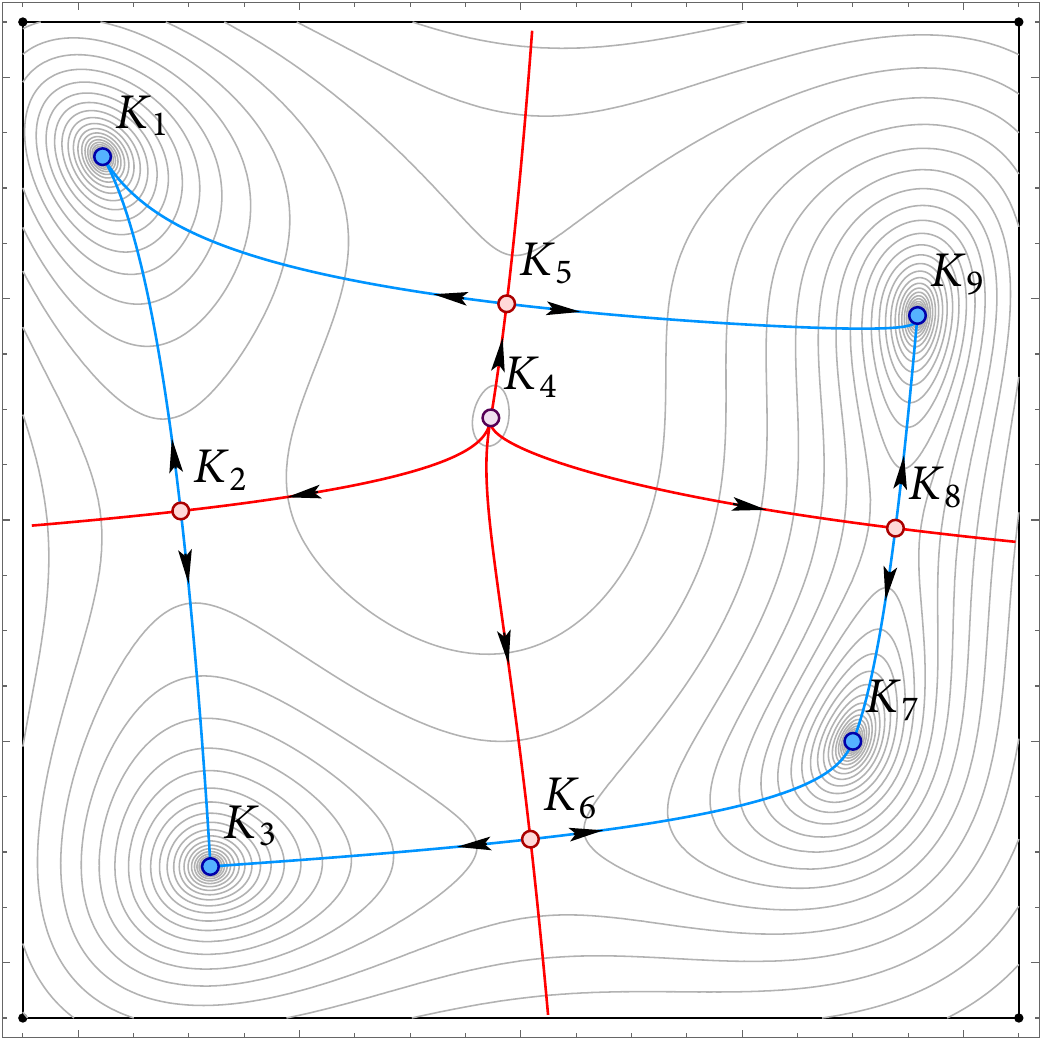}\\[1ex]
\includegraphics[height=42ex]{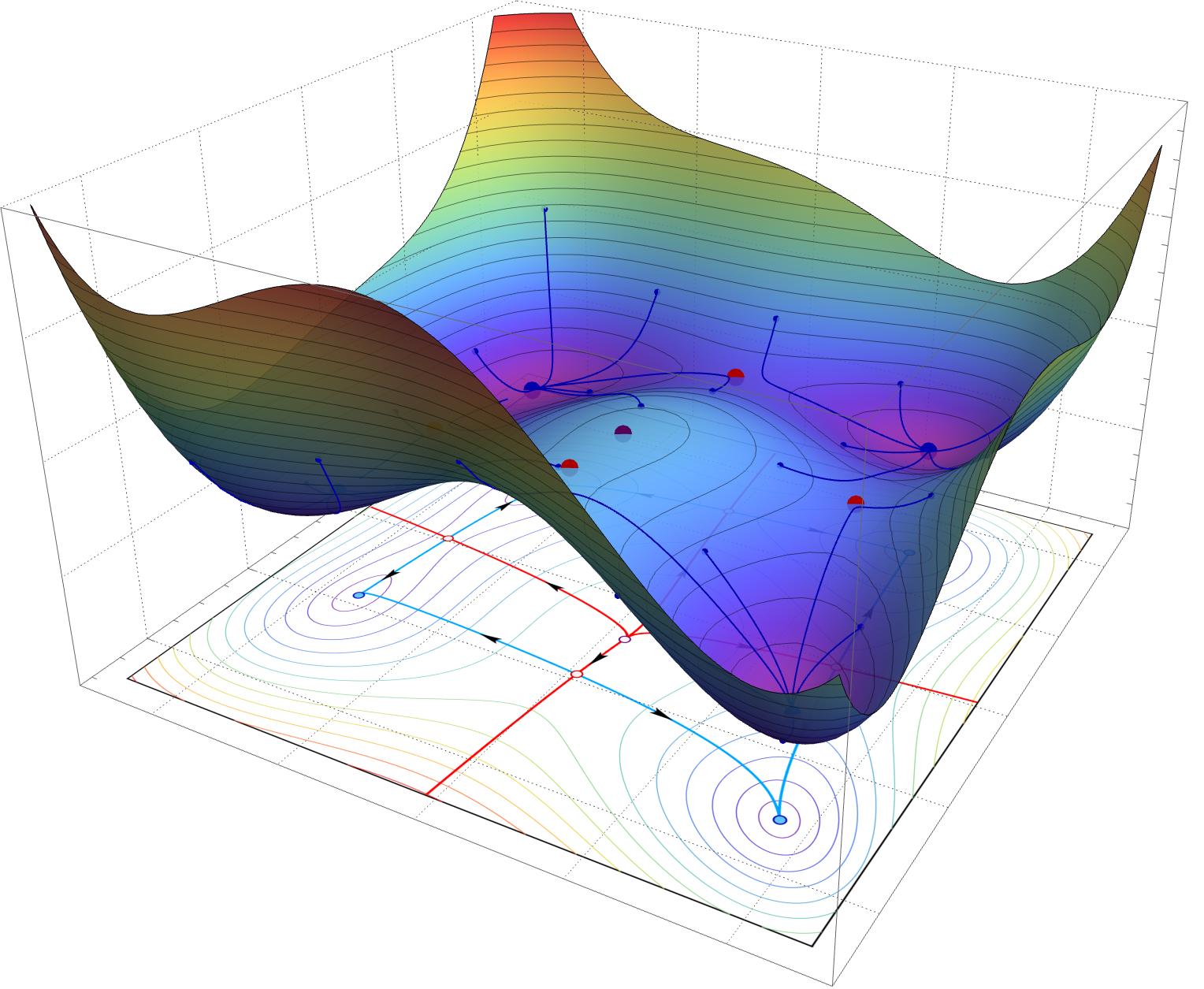}
\hfill
\includegraphics[height=42ex]{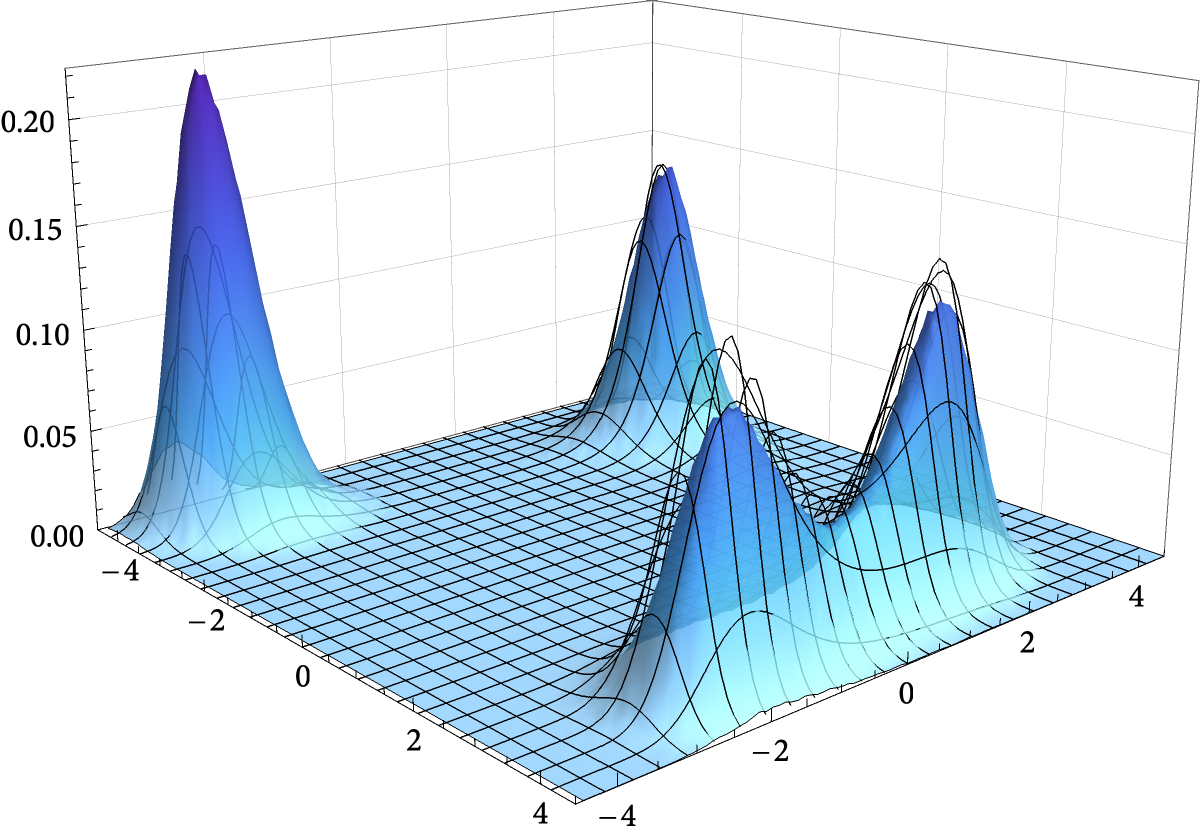}
\caption{Graphical illustration of \crefrange{thm:Gibbs}{thm:ground} for the Himmelblau test function $\obj(\point_{1},\point_{2}) = (\point_{1}^{2} + \point_{2} - 11)^{2} + (\point_{1}+\point_{2}^{2} - 7)^{2}$. 
The figure to the left depicts the loss landscape of $\obj$ with several (deterministic) orbits of the gradient flow of $\obj$ superimposed for visual convenience.
The figure in the middle highlights the $9$ critical components of $\obj$ as well as the ``most likely'' transitions between them:
$\comp_{1}$, $\comp_{3}$, $\comp_{7}$ and $\comp_{9}$ are minimizers (light blue),
$\comp_{5}$ is a global maximum (light purple),
and the rest are saddle points (light red).
The figure to the right illustrates the long-run distribution of $1000$ samples of \eqref{eq:SGD} run with $\step = 0.01$ over a horizon of $2\times 10^{4}$ iterations:
the density landscape represents the observed distribution, while the superimposed wireframe indicates our theoretical prediction.}
\label{fig:benchmark}
\end{figure}


This question is notoriously difficult because the loss landscape of $\obj$ can be exceedingly complicated \textendash\ especially in deep learning problems with hundreds of millions (or even billions) of parameters.
Starting with the negative, $\obj$ may contain a number of spurious saddle points that is exponentially larger than the number of local minima, and the function values associated with worst-case saddle points may be considerably worse than those associated with worst-case local minima \cite{DPGC+14}.
On the flip side, a more positive answer is provided by the literature on the avoidance of saddle-points where, under different assumptions for the method's step-size and the structure of $\obj$, it has been shown that the time spent by $\curr$ in the vicinity of strict saddle points is (vanishingly) small;
for a representative \textendash\ but, by necessity, incomplete \textendash\ list of results, \cf \cite{BH95,JGNK+17,GHJY15,Pem90,MHKC20,HMC21,AMPW22,HKKM23,yangFastConvergenceRandom2020} and references therein.

Now, even though the above justifies the informal mantra that ``\ac{SGD} avoids saddle points'', it does not answer \emph{which} critical regions of $\obj$ are most likely to be observed in the long run, and by how much.
This question has attracted significant interest in deep learning, but the matter remains poorly understood:
on the one hand, some works have shown that, in certain stylized deep net models, most local minimizers are concentrated in an exponentially narrow band of the problem's global minimum \cite{CHMB+15,Kaw16};
on the other hand, empirical studies suggest that, even in this case, the long-run distribution of \eqref{eq:SGD} may not be adequately captured by the shape of the problem's loss function \cite{LPA20}.

\subsection*{Our contributions}

Our goal in this paper is to quantify the long-run distribution of \eqref{eq:SGD} in the most general manner possible.
To do so, we take an approach based on the theory of large deviations \cite{DZ98} and randomly perturbed dynamical systems \cite{FW98,Kif88}, which enables us to estimate the probability of ``rare events'' (such as $\curr$ moving against the gradient flow of $\obj$ for a protracted period of time).
This allows us to characterize the events that occur with high probability and establish the following hierarchy of results (stated formally as \crefrange{thm:Gibbs}{thm:ground} in \cref{sec:analysis}):
\begin{enumerate}
\item
In the long run, the critical region of $\obj$ is visited exponentially more often than any non-critical region.
\item
The iterates of \eqref{eq:SGD} are concentrated with exponentially high probability in the vicinity of a region that minimizes a certain ``energy functional'' which depends on $\obj$ and the statistics of the noise in \eqref{eq:SGD}.
Importantly, the ground state of this functional \emph{does not} necessarily coincide with the global minimum of $\obj$.
\item
Among the remaining connected components of critical points, each component is visited with frequency which is exponentially proportional to its energy, according to the Boltzmann\textendash Gibbs distribution of statistical physics with temperature equal to the method's step-size.
\item
Every connected component of non-minimizing critical points of $\obj$ \textendash\ \ie local maximizers or saddle points \textendash\ is ``dominated'' by a component of local minimizers that is visited exponentially more often.
\end{enumerate}
Finally, we derive an explicit characterization of the invariant measure of \eqref{eq:SGD} under Gaussian noise and other noise models motivated by deep learning considerations.

Taken together, these properties resemble those of a canonical ensemble in statistical physics:
in a sense, each connected component of critical points can be seen as a ``state'' of a statistical ensemble, and the step-size of \eqref{eq:SGD} plays the role of the system's (fixed) temperature, which determines how easy it is to transit from one component to another.
We find this analogy particularly appealing as it provides a way of connecting ideas from equilibrium thermodynamics to the long-run behavior of \eqref{eq:SGD}.

\subsection*{Related work}

The main approaches used in the literature to examine the long-run distribution of \eqref{eq:SGD} hinge on the study of a limiting \ac{SDE}, typically associated with (a version of) the discrete Langevin dynamics or a diffusion approximation of \eqref{eq:SGD}.

Starting with the former, \citet{raginskyNonconvexLearningStochastic2017} examined the law of the \ac{SGLD}, a variant of \eqref{eq:SGD} with injected Gaussian noise of variance $2\step/\tempinv$ for some inverse temperature parameter $\tempinv>0$ (see also \cite{erdogduGlobalNonconvexOptimization2019,liSharpUniformintimeError2022} for some recent follow-ups in this direction).
\citet{raginskyNonconvexLearningStochastic2017} first showed that \ac{SGLD} closely tracks an associated diffusion process over finite time intervals;
the law of this diffusion was then shown to converge to the Gibbs measure $\exp(-\tempinv\obj) / \int\exp(-\tempinv\obj)$ at a geometric rate, fast enough to ensure the convergence of the discrete-time dynamics to the same measure.

\eqref{eq:SGD} can be recovered in the context of \ac{SGLD} by setting the inverse temperature parameter to $\tempinv \propto 1/\step$.
Unfortunately however, the convergence rate of the \ac{SDE} to its invariant distribution is exponential in $\tempinv$, so it is too slow to compensate for the discretization error in this case.
As a result, the bounds between the discrete dynamics and the invariant measure of the limiting diffusion become vacuous in the case of \eqref{eq:SGD} \textendash\ and similar considerations apply to the related work of \citet{majkaNonasymptoticBoundsSampling2019}.%
\footnote{In more detail, the error term in \citet[Eq.~(3.1)]{raginskyNonconvexLearningStochastic2017} can no longer be controlled if $\step$ is small.
Similarly, the constants in the geometric convergence rate guarantees of \citet[Theorems~2.1 and 2.5]{majkaNonasymptoticBoundsSampling2019} would degrade as $\exp(-\Omega(1/\step))$;
as a result, the associated discretization errors would be of the order of $\exp(\Omega(1/\step)))$, which cannot be controlled for small $\step$.}

Another potential approach to studying the long-run distribution of \eqref{eq:SGD} consists of approximating its trajectories via the solutions of a limiting \ac{SDE}.
A key contribution here was provided by the work of \citet{liStochasticModifiedEquations2017,liStochasticModifiedEquations2019} who showed that the tracking error between the iterates of \eqref{eq:SGD} and the solution of a certain \ac{SDE} becomes vanishingly small in the limit $\step\to0$ over any \emph{finite} time interval.
However, in contrast to the Langevin case, the convergence speed of the induced \ac{SME} to its invariant measure degrades exponentially as $\step\to0$ \citep{eberleReflectionCouplingsContraction2016a}, rendering this approach moot for a global description of the invariant measure of \eqref{eq:SGD}.
Though this strategy has been refined, either in the vicinity of global minimizers \citep{blancImplicitRegularizationDeep2020,liWhatHappensSGD2021} or in regions where $\obj$ is locally strongly convex \citep{fengUniforminTimeWeakError2019,liUniformintimeDiffusionApproximation2022}, this diffusion approach still fails to capture the long run behavior of \eqref{eq:SGD} in general non-convex settings.

Nevertheless, the limiting \ac{SDE} still provides valuable insights into certain aspects of the dynamics of \eqref{eq:SGD}.
In particular, a fruitful thread in the literature \citep{huDiffusionApproximationNonconvex2018,xieDiffusionTheoryDeep2021,moriPowerLawEscapeRate,jastrzebskiThreeFactorsInfluencing2018} has sought to estimate the escape rates of the approximating diffusion from local minimizers through this approach.
Interestingly, these works use some elements of the continuous-time Freidlin\textendash Wentzell theory \citep{FW98},
which is also the point of departure of our paper.
That being said, even though these results demonstrate how the structure of the objective function and of the noise locally affect the dynamics of the \ac{SDE} in a basin, they provide no information on the long-run behavior of the (discrete-time) dynamics of \eqref{eq:SGD};
for a more technical discussion, see \cref{app:related}.

One last approach which has gained increased attention in the literature is that of  \citet{dieuleveutBridgingGapConstant2018} and \citet{yuAnalysisConstantStep2020} who study \eqref{eq:SGD} as a discrete-time Markov chain.
This allowed \cite{dieuleveutBridgingGapConstant2018,yuAnalysisConstantStep2020} to derive conditions under which \eqref{eq:SGD} is (geometrically) ergodic and, in this way, to quantify the bias of the invariant measure under global growth conditions, \ie the distance to the global minimum of $\obj$.
Building further on this perspective, \citet{gurbuzbalaban2021heavy,hodgkinson2021multiplicative} and \citet{pavasovic2023approximate} showed that, under general conditions, the asymptotic distribution of the iterates of \eqref{eq:SGD} is heavy-tailed;
however, these results only describe the distribution of \eqref{eq:SGD} near infinity, and they provide no information on which critical regions of $\obj$ are more likely to be observed.
Again, we provide some more details on this in \cref{app:related}.

\subsection*{Our approach and techniques}

The linchpin of our approach is the theory of large deviations of \citet{FW98} for Markov processes, originally developed for diffusion processes in continuous time, and subsequently extended to subsampling in discrete time by \citet{Kif88};
see also \cite{gulinskyLargeDeviationsDiscreteTime1993,deacostaLargeDeviationsMarkov2022}
and
\cite{dupuisStochasticApproximationsLarge1985,dupuisLargeDeviationsAnalysis1988} for applications to stochastic approximation.
However, the starting point of all these works is the study of continuous-time diffusions on closed manifolds;
as far as we are aware, our paper provides the first extension of the theory of \citeauthor{FW98} to discrete-time systems that evolve over unbounded domains, and with a general \textendash\ possibly discrete \textendash\ noise profile.

One of the key challenges that we need to overcome is that most of the potentials introduced in \citep{FW98,Kif88} become drastically less regular in our context;
we remedy this issue by refining the analysis and carefully studying the structure of the attractors of \eqref{eq:SGD}.
This allows us to salvage enough regularity and show that \eqref{eq:SGD} spends most of its time near its attractors (this is achieved by developing suitable tail-bounds for the time spent away from critical points) and, ultimately, to estimate the transition probabilities of \eqref{eq:SGD} between different connected components of critical points.
This involves a series of novel mathematical tools and techniques, which we detail in \cref{app:invmeas}.

\section{Preliminaries and blanket assumptions}
\label{sec:prelims}

\subsection{Standing assumptions}

In this section, we describe our assumptions for the objective function $\obj$ of \eqref{eq:opt} and the black-box oracle providing gradient information for \eqref{eq:SGD}.
We begin with the former, writing throughout $\vecspace \defeq \realspace$ for the domain of $\obj$.

\begin{assumption}
\label{asm:obj}
The objective function $\obj\from\vecspace\to\R$ satisfies the following conditions:
\begin{enumerate}
[left=\parindent,label=\upshape(\itshape\alph*\hspace*{.5pt}\upshape)]
\item
\define{Coercivity:}
\label[noref]{asm:obj-coer}
$\obj(\point)\to\infty$ as $\norm{\point}\to\infty$.
\item
\label[noref]{asm:obj-smooth}
\define{Smoothness:}
$\obj$ is $C^{2}$-differentiable and its gradient is $\smooth$-Lipschitz continuous, that is,
\begin{equation}
\label{eq:LG}
\tag{LG}
\norm{\nabla\obj(\pointB) - \nabla\obj(\pointA)}
	\leq \smooth \norm{\pointB - \pointA}
	\quad
	\text{for all $\pointA,\pointB\in\vecspace$}
	\eqdot
\end{equation}
\item
\label[noref]{asm:obj-crit}
\define{Critical set regularity:}
The critical set
\begin{equation}
\label{eq:crit}
\crit\obj
	\defeq \setdef{\point\in\vecspace}{\nabla\obj(\point) = 0}
\end{equation}
of $\obj$ consists of a finite number of smoothly connected components $\comp_{\iComp}$, $\iComp=1,\dotsc,\nComps$.
\end{enumerate}
\end{assumption}

These requirements are fairly standard in the literature:
\cref{asm:obj}\cref{asm:obj-coer} guarantees that \eqref{eq:opt} admits a solution and rules out infima at infinity (such as $\obj(\point) = \cramped{e^{-\point^{2}}}$);
\cref{asm:obj}\cref{asm:obj-smooth} is a bare-bones regularity requirement for the analysis of gradient methods;
and, finally,
\cref{asm:obj}\cref{asm:obj-crit}
serves to exclude objectives with
anomalous critical sets (\eg exhibiting kinks or other non-smooth features), so it is also quite mild from an operational standpoint.%
\footnote{This last requirement can be replaced by positing for example that $\obj$ is definable in terms of some semi-algebraic / $o$-minimal structure, see \eg \citep{costeINTRODUCTIONOMINIMALGEOMETRY,vandendriesGeometricCategoriesOminimal1996} and \cref{rem:asm_def} in \cref{app:setup}.}

Regarding the gradient input to \eqref{eq:SGD}, we will assume throughout that the optimizer has access to a \acdef{SFO}, that is, a black-box mechanism returning a stochastic estimate of the gradient of $\obj$ at the point of interest.
Formally, when queried at $\point\in\vecspace$, an \ac{SFO} returns a random vector of the form
\begin{equation}
\label{eq:SFO}
\tag{SFO}
\orcl(\point;\sample)
	= \nabla\obj(\point)
		+ \err(\point;\sample)
\end{equation}
where
\begin{enumerate}
[left=\parindent,label=\upshape(\itshape\alph*\hspace*{.5pt}\upshape)]
\item
$\sample$ is a random seed drawn from
a compact subset $\samples$ of $\sspace$ based on some (complete) probability measure $\prob$.%
\footnote{The specific form of $\samples$ is not important;
in practice, random seeds from a target distribution are often generated by inverse transform sampling from $[0,1]^{m}$.}

\item
$\err(\point;\sample)$ is an umbrella error term capturing all sources of noise and randomness in the oracle.
\end{enumerate}
This oracle model is sufficiently flexible to account for all established versions of \eqref{eq:SGD} in the literature, including
minibatch \ac{SGD} (where $\sample$ represents the sampled minibatch),
noisy \acl{GD} (where the optimizer may artificially inject 
noise in the process to enhance 
convergence), 
and Langevin Monte Carlo methods.

With all this in mind, we make the following blanket assumptions for \eqref{eq:SFO}:
\begin{assumption}
\label{asm:noise}
The error term $\err\from\vecspace\times\samples\to\realspace$ of \eqref{eq:SFO} satisfies the following properties:
\begin{enumerate}
[left=\parindent,label=\upshape(\itshape\alph*\hspace*{.5pt}\upshape)]
\item
\label[noref]{asm:noise-zero}
\define{Properness:}
	$\exof{\err(\point;\sample)} = 0$ and $\covof{\err(\point;\sample)} \mg 0$ for all $\point\in\vecspace$.
\item
\label[noref]{asm:noise-growth}
\define{Smooth growth:}
	$\err(\point;\sample)$ is $C^{2}$-differentiable and satisfies the growth condition
\begin{equation}
\label{eq:smooth-growth}
\sup_{\point,\sample} \frac{\norm{\err(\point;\sample)}}{1 + \norm{\point}}
	< \infty
	\eqdot
\end{equation}
\item
\label[noref]{asm:noise-subG}
\define{Sub-Gaussian tails:}
The tails of $\err$ are bounded as
\begin{equation}
\label{eq:subG}
\log\exof*{e^{\inner{\mom, \err(\point;\sample)}}}
	\leq \frac{1}{2} \bdvar \norm{\mom}^{2}
\end{equation}
for some $\sdev_{\infty} > 0$ and all $\mom \in \realspace$.
\end{enumerate}
\end{assumption}

\cref{asm:noise}\cref{asm:noise-zero} is standard in the literature and ensures that the gradient noise in \eqref{eq:SGD} has zero mean and does not vanish identically at any $\point\in\vecspace$;
this requirement in particular plays a crucial role in several incarnations of noisy \acl{GD} that have been proposed to effectively escape saddle points of $\obj$ \cite{Pem90,BH95,JGNK+17,GHJY15}.
\cref{asm:noise}\cref{asm:noise-growth} is a bit more technical but otherwise simply serves to impose a limit on how large the noise may grow as $\norm{\point}\to\infty$.
Finally,
\cref{asm:noise}\cref{asm:noise-subG} is also widely used in the literature:
while not as general as the (possibly fat-tailed) finite variance assumption $\exof{\norm{\err(\point;\sample)}^{2}} \leq \bdvar$, it allows much finer control of the stochastic processes involved, leading in turn to more explicit and readily interpretable results.
We only note here that \cref{asm:noise}\cref{asm:noise-subG} can be relaxed further by allowing the variance proxy $\bdvar$ of $\err(\point;\sample)$ to depend on $\point$, possibly diverging to infinity as $\norm{\point}\to\infty$.
To streamline our presentation,
we defer the general case to the appendix.

Our last blanket requirement is a stability condition ensuring that the \acl{SNR} of \eqref{eq:SFO} does not become too small at infinity.
We formalize this as follows:

\begin{assumption}
\label{asm:SNR}
The \acl{SNR} of $\orcl$ is bounded as
\begin{equation}
\label{eq:SNR}
\liminf_{\norm{\point}\to\infty} \frac{\norm{\nabla\obj(\point)}^{2}}{\bdvar}
	> 16 \log6 \cdot \vdim
	\eqdot
\end{equation}
\end{assumption}

\cref{asm:SNR} is a technical requirement needed to establish a series of concentration bounds later on, and the specific value of the lower bound serves to facilitate some computations later on.
In practice, $\nabla\obj$ is often norm-coercive \textendash\ \ie $\norm{\nabla\obj(\point)} \to \infty$ as $\norm{\point} \to \infty$ \textendash\ so this assumption is quite mild.
Note also that \cref{asm:SNR}, as \cref{asm:noise}\cref{asm:noise-subG}, can be extended to the case where the variance proxy $\bdvar$ of $\orcl$ depends on $\point$, possibly blowing up at infinity;
we postpone the relevant details to \cref{app:subsec:setup}.

Putting together all of the above, the \ac{SGD} algorithm can be written in abstract recursive form as
\begin{equation}
\label{eq:SGD-abstract}
\new
	\gets \state - \step \orcl(\state;\sample)
	\eqdot
\end{equation}
Thus, given a (possibly random) initialization $\init\in\vecspace$ and an \ac{iid} sequence of random seeds $\curr[\sample]\in\samples$, $\run=\running$, the iterates $\curr$ of \eqref{eq:SGD} are obtained by taking $\curr[\signal] \gets \orcl(\curr;\curr[\sample])$ and iterating $\run\gets\run+1$ ad infinitum.
To streamline notation, we will write $\prob_{\!\init}$ for the law of $\curr$ starting at $\init$, and we will refer to it as the law of \eqref{eq:SGD}.

\subsection{Discussion of the assumptions}
\label{subsec:disc-assumptions}

To illustrate the generality of our assumptions, we briefly consider here the example of the regularized empirical risk minimization problem
\begin{equation}
\obj(\point)
    = \frac{1}{\nSamples} \sum_{\idx = 1}^{\nSamples} \loss(\state;\datapoint_{\idx})
    	+ \half[\regparam] \norm{\state}^{2}
\end{equation}
where
$\datapoint_{\idx}$, $\idx = 1,\dotsc,\nSamples$, are the training data of the model,
$\loss(\state;\datapoint)$ represents the loss of the model $\state$ on the data point $\datapoint$,
and
$\regparam>0$ is a regularization parameter.
In this case, we have
\begin{equation}
    \err(\state; \sample) = \grad \loss(\state;\datapoint_{\sample}) - \frac{1}{\nSamples} \sum_{\idx = 1}^{\nSamples} \grad \loss(\state;\datapoint_{\idx})
\end{equation}
where $\sample$ is sampled uniformly at random from $\{1,\dotsc,\nSamples\}$.

If $\loss$ is $C^2$-differentiable, Lipschitz continuous and smooth
\textendash\ see \eg \cite{MHKC20} and references therein \textendash\
\cref{asm:obj,asm:obj-coer,asm:obj-smooth} are satisfied automatically.
The error term $\err(\state; \sample)$ is also uniformly bounded so  \cref{asm:noise,asm:noise-growth,asm:noise-subG} are likewise verified (see \eg \citet[Ex.~2.4]{wainwright2019high}).
Finally, we have $\norm{\nabla\obj(\state)} = \bigoh(\regparam \norm{\state})$, so \cref{asm:SNR} also holds.
Thus, this setting covers two wide classes of examples:
linear models with non-convex losses \citep{erdogduGlobalNonconvexOptimization2019,yuAnalysisConstantStep2020} and smooth neural networks with normalization layers \citep{liReconcilingModernDeep2020}.%
\footnote{Our assumptions can also be linked to the notion of \define{dissipativity}, which is standard in Markov chain and sampling literature, see \eg \citep{erdogduGlobalNonconvexOptimization2019,liSharpUniformintimeError2022,raginskyNonconvexLearningStochastic2017,yuAnalysisConstantStep2020,LM24}.
Considering the gradient oracle obtained by sampling minibatches of size $\batch$, this setting fits into our framework with the relaxed version of \cref{asm:noise} in \cref{app:setup} provided $\batch$ is chosen large enough, see \cref{app:subsec:setup}.}

\section{Analysis and results}
\label{sec:analysis}

\subsection{Invariant measures}

The overarching objective of our paper is to understand the statistics of the limiting behavior of \eqref{eq:SGD}.
To that end, our point of departure will be the \define{mean occupation measure}
of $\curr$, defined here as
\begin{equation}
\label{eq:occmeas}
\curr[\occmeas](\borel)
	= \exof*{\frac{1}{\run} \sum_{\runalt=\start}^{\run-1} \oneof{\iter \in \borel}}
\end{equation}
for every Borel $\borel\subseteq\vecspace$.
In words, $\curr[\occmeas](\borel)$ simply measures the mean fraction of time that $\curr$ has spent in $\borel$ up to time $\run$, so the long-run statistics of \eqref{eq:SGD} can be quantified by the limiting distribution $\lim_{\run\to\infty} \curr[\occmeas]$.

If $\curr$ is ergodic, $\curr[\occmeas]$ converges weakly to some measure $\invmeas$, known as the \define{invariant measure} of $\curr$.%
\footnote{Weak convergence means here that $\lim_{\run\to\infty}\int \varphi \dd \curr[\occmeas]  = \int \varphi \dd \invmeas $ for every bounded continuous function $\varphi \from \vecspace \to \R$.}
Referring to the abstract representation \eqref{eq:SGD-abstract} of \eqref{eq:SGD}, ``invariance'' simply means here that $\invmeas$ satisfies the defining property
\begin{equation}
\label{eq:invariant}
\state\sim\invmeas
	\implies \new\sim\invmeas
\end{equation}
\ie $\invmeas$ is stationary under \eqref{eq:SGD}. Note however that ergodicity generally requires that the noise has a density part \citep{yuAnalysisConstantStep2020}.
More generally, even if $\curr$ is not ergodic, any (weak) limit point
of $\curr[\occmeas]$ must still satisfy the invariance property \eqref{eq:invariant} \citep{doucMarkovChains2018,hernandez-lermaMarkovChainsInvariant2003}, so this will be our principal figure of merit.%
\footnote{Existence always holds in our setting, \cf \cref{lem:inv_meas_exist} in \cref{app:lyapunov}.}

More precisely, our goal will be to quantify the long-run concentration of probability mass near the components $\comp_{\iComp}$ of $\crit\obj$:
in particular, since each $\comp_{\iComp}$ generically has Lebesgue measure zero, we will seek to estimate the probability mass $\invmeas(\nhd_{\iComp})$ where $\invmeas$ is invariant under \eqref{eq:SGD} and $\nhd_{\iComp}$ is a sufficiently small neighborhood of $\comp_{\iComp}$ \textendash\ typically a $\size$-neighborhood of the form $\nhd_{\iComp} \equiv \nhd_{\iComp}(\size) \defeq \setdef{\point\in\vecspace}{\dist(\comp_{\iComp},\point) < \size}$ in the limit $\size\to0$.
Doing this will allow us to determine the probability that \eqref{eq:SGD} is concentrated in the long run near one critical component or another,
as well as the degree of this concentration.

\subsection{A \acl{LDP} for \ac{SGD}}
\label{subsec:ldp}

Our strategy to achieve this will be to estimate the long-run rates of transition between different regions of $\vecspace$ under \eqref{eq:SGD}.
Our first step in this regard will be to establish a \acdef{LDP} for the process $\curr$, $\run=\running$, in the spirit of the general theory of \citet{FW98}.
This will in turn allow us to quantify the probability of ``rare events'' in \eqref{eq:SGD} \textendash\ \eg moving against the gradient flow of $\obj$ for a protracted period of time \textendash\ and it will play a crucial role in the sequel.

Now, since the statistics of \eqref{eq:SGD} are determined by those of \eqref{eq:SFO}, we begin by considering the \acdefp{CGF} of $\err$ and $\orcl$, \viz
\begin{subequations}
\label{eq:CGF}
\begin{align}
\cgfof{\err}{\point,\mom}
	&\defeq \log \exof{\exp(\braket{\mom}{\err(\point;\sample)})}
	\\[\smallskipamount]
\begin{split}
\cgfof{\orcl}{\point,\mom}
	&\defeq \log \exof{\exp(\braket{\mom}{\orcl(\point;\sample)})}
	\\
	&\hspace{2pt}= \cgfof{\err}{\point,\mom} + \braket{\nabla\obj(\point)}{\mom}
\end{split}
\end{align}
\end{subequations}
where $\point\in\vecspace$, $\mom\in\dspace$, and $\braket{\mom}{\vel}$ denotes the standard bilinear pairing between $\mom\in\dspace$ and $\vel\in\vecspace$.
To state our results, we will also require the associated \define{Lagrangians}
\begin{subequations}
\label{eq:Lagrange}
\begin{align}
\lagof{\err}{\point,\vel}
	&\defeq \cgf_{\err}^{\ast}(\point,-\vel)
	\\[\smallskipamount]
\lagof{\orcl}{\point,\vel}
	&\defeq \cgf_{\orcl}^{\ast}(\point,-\vel)
	= \lag_{\err}(\point,\vel+\nabla\obj(\point))
\end{align}
\end{subequations}
where ``$\ast$'' denotes convex conjugation with respect to $\mom$ in $\cgfof{\err}{\point,\mom}$ and $\cgfof{\orcl}{\point,\mom}$.

The importance of the Lagrangian functions \eqref{eq:Lagrange} is that they provide a \acl{LDP} for $\err$ and $\orcl$.
Namely, to leading order in $\run$ (and ignoring boundary effects), we have
\begin{equation}
\label{eq:LDP-point}
\probof*{\frac{1}{\run}\sum_{\runalt=\start}^{\run} \orcl(\point;\curr[\sample]) \in \borel}
	\sim  \exp\of*{-\run\inf_{\vel\in\borel}\lagof{\orcl}{\point,\vel}}
\end{equation}
for every Borel $\borel\in\dspace$ (and likewise for $\err$ and $\lag_{\err}$),
so the long-run statistics of the process $\curr[S] = \sum_{\runalt=\start}^{\run-1} \orcl(\point;\iter[\sample])$ are fully determined by $\lag_{\orcl}$ \cite{DZ98}.
In this regard, $\lag_{\orcl}$ plays the role of a ``rate function'' for $\curr[S]$
and quantifies the rate of occurrence of ``rare events'' in this context \cite{DZ98}.

Going back to \eqref{eq:SGD}, we have $\curr = \init - \step \sum_{\runalt=\start}^{\run-1} \orcl(\iter;\iter[\sample])$, so a promising way to understand the occupation measure of $\curr$ would be to try to derive a \acl{LDP} for $\curr$ starting from \eqref{eq:LDP-point}.
Unfortunately however, in contrast to $\curr[S] = \sum_{\runalt=\start}^{\run-1} \orcl(\point;\iter[\sample])$, this is not possible because \eqref{eq:LDP-point} concerns \ac{iid} samples drawn at a fixed point $\point\in\vecspace$, while the iterates of $\curr$ are highly auto-correlated.
Instead, inspired by the theory of \citet{FW98} for randomly perturbed dynamical systems, we will encode the \emph{entire} trajectory $\curr$ of \eqref{eq:SGD} as a point in some infinite-dimensional space of curves, and we will derive a \acl{LDP} for \eqref{eq:SGD} directly in this space.

To make this idea precise, we first require a continuous-time surrogate for the sequence of iterates of \eqref{eq:SGD}.
Concretely, writing $\curr[\efftime] = \run \step$ for the ``effective time'' that has elapsed up to the $\run$-th iteration of \eqref{eq:SGD}, we define the continuous-time interpolation of $\curr$ as the piecewise affine curve
\begin{equation}
\label{eq:interpolation}
\lintat{\time}
	= \curr + \frac{\time - \curr[\efftime]}{\step} (\next - \curr)
\end{equation}
for all $\run = \running$, and all $\time \in [\curr[\efftime],\next[\efftime]]$.
The resulting curve is continuous by construction, so, for embedding purposes, we will consider the ambient spaces of continuous curves truncated at some finite $\horizon\geq\tstart$:
\begin{subequations}
\begin{align}
\label{eq:contspaces-trunc}
\contcurves_{\horizon}
	&\defeq \contfuncs([\tstart,\horizon],\vecspace)
	\\
\contcurvesat{\horizon}{\point}
	&\defeq \setdef{\curve\in\contcurves_{\horizon}}{\curveat{\tstart} = \point}
	\\
\contcurvesat{\horizon}{\point,\pointalt}
	&\defeq \setdef{\curve\in\contcurves_{\horizon}}{\curveat{\tstart} = \point, \curveat{\horizon} = \pointalt}.
\end{align}
\end{subequations}

With these preliminaries in hand, and in analogy with the Lagrangian formulation of classical mechanics, we define the (normalized) ``\define{action functional}'' of $\lag_{\orcl}$ as
\begin{equation}
\label{eq:action}
\actof{\horizon}{\curve}
	= \int_{\tstart}^{\horizon} \lagof{\orcl}{\curveat{\time},\diffcurveat{\time}} \dd\time
\end{equation}
for all $\curve\in\contcurves_{\horizon}$ and with the convention $\actof{\horizon}{\curve} = \infty$ if $\curve$ is not absolutely continuous.
In a certain sense (to be made precise below), the functional $\actof{\horizon}{\curve}$ is a ``measure of likelihood'' for the curve $\curve$, with lower values indicating higher probabilities.
Accordingly, by leveraging the so-called ``least action principle'' \citep{DZ98,FW98,Kif90}, it is possible to establish the following \acl{LDP} for \eqref{eq:SGD}:

\begin{proposition}
\label{prop:LDP-interpolated}
Fix a time horizon $\horizon>\tstart$,
tolerance margins $\toler,\margin > 0$,
and
an action level $\level>0$.
In addition, write
\begin{equation}
\label{eq:curves-actbound}
\curvesat{\horizon}{\init}{\level}
	\defeq \setdef{\curve\in\contcurvesat{\horizon}{\init}}{\actof{\horizon}{\curve} \leq \level}
\end{equation}
for the space of continuous curves starting at $\init$ and with action at most $\level$.
Then, for all sufficiently small $\step$, we have
\begin{subequations}
\label{eq:LDP-interpolated}
\begin{align}
\begin{split}
\label{eq:LDP-interpolated-close}
\MoveEqLeft
\probwrt*{\init}{\sup_{\tstart\leq\time\leq\horizon} \!\norm{\lintat{\time} - \curveat{\time}} < \margin}
	\\
	&\geq \exp\of*{-\frac{\actof{\horizon}{\curve} + \toler}{\step}}
	\quad
	\text{for all $\curve\in\curvesat{\horizon}{\init}{\level}$}
\end{split}
\shortintertext{and, in addition,}
\begin{split}
\label{eq:LDP-interpolated-far}
\MoveEqLeft
\probwrt*{\init}{\sup_{\tstart\leq\time\leq\horizon} \!\norm{\lintat{\time} - \curveat{\time}} > \margin
	\text{ for all $\curve\in\curvesat{\horizon}{\init}{\level}$}}\!
	\\
	&\leq \exp\of*{-\frac{\level-\toler}{\step}}.
\end{split}
\end{align}
\end{subequations}
\end{proposition}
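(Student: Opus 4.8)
The plan is to follow the Freidlin--Wentzell template for randomly perturbed dynamical systems \cite{FW98,Kif88,DZ98}, adapted to the discrete, unbounded, and lower-regularity setting of \eqref{eq:SGD}. The engine of the argument is the \emph{local} large deviations estimate \eqref{eq:LDP-point}: at a \emph{fixed} base point $\point$, the empirical mean of $\orcl(\point;\cdot)$ over $M$ \ac{iid} draws obeys a Cram\'er-type bound with rate function $\lag_{\orcl}$. Both inequalities in \cref{prop:LDP-interpolated} are then produced by the same three-step recipe: (i)~partition $[\tstart,\horizon]$ into $\bigoh(1/h)$ subintervals of effective length $h$, on each of which the relevant curve is compared with a constant-velocity segment; (ii)~apply the local estimate on each subinterval, conditionally on the past, with the iterate frozen at the left endpoint of the segment; and (iii)~chain the per-segment bounds using the Markov property of \eqref{eq:SGD-abstract} (which holds since the seeds $\curr[\sample]$ are \ac{iid}). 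The per-segment rates $M_{k}\,\lagof{\orcl}{\point_{k},\vel_{k}}$ with $M_{k}\approx h/\step$ assemble into a Riemann sum for $\actof{\horizon}{\curve}/\step$, which is exactly the exponent in \eqref{eq:LDP-interpolated}; all discretization errors are absorbed into the slack $\toler$ once $\step$ is taken small with $h$ fixed beforehand. Throughout, I would use the elementary bound $\lagof{\orcl}{\point,\vel}\geq\norm{\vel+\nabla\obj(\point)}^{2}/(2\bdvar)$ (a direct consequence of \eqref{eq:subG}) together with \cref{asm:obj} to confine, via Gr\"onwall, every curve of action at most $\level$ issuing from $\init$ to a fixed compact set $\bigcpt=\bigcpt(\init,\level,\horizon)$.

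For the lower bound \eqref{eq:LDP-interpolated-close}, I would first reduce to the case where the target $\curve$ is a polygonal curve with finitely many breakpoints and constant velocity on each piece, at the price of adding $\toler/2$ to the action; this uses a standard mollify-then-linearize approximation together with the lower semicontinuity and local boundedness of $\lag_{\orcl}$ on the compact tube around $\curve$. On a segment $[t_{k},t_{k+1}]$ with velocity $\vel_{k}$, the event $\sup_{\tstart\leq\time\leq\horizon}\norm{\lintat{\time}-\curveat{\time}}<\margin$ is implied by the intersection, over segments, of the events that (a)~the empirical mean of the $M_{k}$ oracle draws in the segment lands in a small ball around $-\vel_{k}$, and (b)~no partial sum within the segment strays by more than $\margin/2$. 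For (a) I would invoke the lower half of Cram\'er's theorem \emph{uniformly} over base points in $\bigcpt$, transferring the rate from the moving iterate $\curr$ to the frozen base point $\point_{k}$ at a cost controlled by the Lipschitz bounds in \cref{asm:obj} and \cref{asm:noise} times the segment diameter; for (b) I would use the sub-Gaussian maximal inequality afforded by \cref{asm:noise}. Multiplying the conditional probabilities over the $\bigoh(1/h)$ segments then yields a lower bound $\exp\!\big(-\sum_{k}M_{k}\,\lagof{\orcl}{\point_{k},\vel_{k}}-o(1/\step)\big)=\exp(-(\actof{\horizon}{\curve}+\toler)/\step)$ once $h$ and the inner radius are tuned.

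For the upper bound \eqref{eq:LDP-interpolated-far}, let $\pthalt$ be the polygonal interpolation of $t\mapsto\lintat{t}$ through the grid $t_{k}=kh$. If $t\mapsto\lintat{t}$ is $\margin$-far from \emph{every} curve of action at most $\level$ in $\contcurvesat{\horizon}{\init}$, then either $\actof{\horizon}{\pthalt}>\level$, or $\sup_{\tstart\leq\time\leq\horizon}\norm{\lintat{\time}-\pthalt(\time)}\geq\margin/2$. The second alternative is a short-time large excursion of the piecewise-affine interpolant and is exponentially negligible by the sub-Gaussian maximal inequality (over effective time $h$ the curve moves by a sum of $\approx h/\step$ increments with variance proxy $\bigoh(\bdvar)$), after a union bound over the $\bigoh(1/h)$ intervals. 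On the first alternative, I would condition on the skeleton $(\lintat{t_{k}})_{k}$ and apply the upper half of Cram\'er's theorem \textendash\ which needs only \eqref{eq:subG} and no regularity \textendash\ to obtain, segment by segment, $\probof{\pthalt(t_{k+1})-\pthalt(t_{k})=\Delta_{k}\given\text{past}}\leq\exp(-M_{k}\,\lagof{\orcl}{\point_{k},\Delta_{k}/h}+\text{Lipschitz slack})$; multiplying over $k$ and summing over a $\margin$-net of admissible skeletons \textendash\ whose cardinality is subexponential in $1/\step$, hence absorbed in $\toler$ \textendash\ gives the bound $\exp(-(\level-\toler)/\step)$. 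The one genuinely new ingredient relative to the compact-state-space theory is ruling out escapes of $t\mapsto\lintat{t}$ to infinity before time $\horizon$: here I would run a Chernoff/Lyapunov argument exploiting the restoring drift guaranteed by \cref{asm:SNR} to show that leaving a large enough ball already costs action at least $\level$, so that the skeleton net may be restricted to $\bigcpt$.

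I expect the main obstacle to be precisely the low regularity of the rate function. In \cite{FW98,Kif88} the analogous steps lean on smoothness of $\lag_{\orcl}$ on a closed manifold, whereas here $\lag_{\orcl}$ is only as regular as \cref{asm:obj} and \cref{asm:noise} permit; consequently the uniform local large deviations estimate, the transfer of the exponent from the moving iterate $\curr$ to the frozen base point $\point_{k}$, and the approximation properties of $\actof{\horizon}{\curve}$ used in the polygonal reduction must all be re-established from first principles. This, together with the confinement estimate needed to tame the unbounded domain, is where essentially all of the work lies; the time-discretization bookkeeping and the maximal inequalities are routine given \cref{asm:noise}.
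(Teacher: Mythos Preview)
Your proposal is a valid ``from first principles'' route, but it is \emph{not} what the paper does, and the paper's approach is considerably shorter. Rather than rebuilding the Freidlin--Wentzell machinery by partitioning $[\tstart,\horizon]$, applying Cram\'er segment-by-segment, and chaining via Markov, the paper introduces an auxiliary continuous-time process $\cstatealt$ defined by the ODE $\dot\cstatealt_\time = -\nabla\obj(\cstatealt_\time) + \noise(\cstatealt_\time,\sample_{\lfloor\time/\step\rfloor})$ with $\cstatealt_\tstart=\init$. This process falls \emph{exactly} into the scope of \cite[Chap.~7, Thm.~4.1$'$]{FW98}, which can be quoted directly once one handles the unboundedness: since both $\cstatealt$ and all curves in $\curvesat{\horizon}{\init}{\level}$ are confined to a compact set $\cptalt$ (by Gr\"onwall, as you note), one simply replaces the drift $-\nabla\obj+\noise$ by a smooth modification that agrees on $\cptalt$ but is globally bounded, thereby verifying ``Condition~F'' of \cite{FW98}. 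The LDP for $\cstatealt$ is then a black-box citation. Finally, a short Gr\"onwall argument (\cref{lem:traj_approx}) shows $\dist_{[\tstart,\horizon]}(\lint,\cstatealt)\leq\const\step$, which transfers the LDP from $\cstatealt$ to $\lint$ at the cost of doubling $\margin$.

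What your approach buys is self-containment and, potentially, sharper control of the constants; what the paper's approach buys is brevity---the entire proof fits in half a page, with all the segment-level analysis, polygonal approximation, and regularity issues you anticipate absorbed into the cited theorem. The auxiliary process $\cstatealt$ is the idea you are missing: it converts the discrete recursion into a genuine ODE with piecewise-constant random forcing, which is precisely the object \cite[Chap.~7]{FW98} was built for.
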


In words, \cref{prop:LDP-interpolated} states that
\begin{enumerate*}
[\upshape(\itshape\alph*\hspace*{.5pt}\upshape)]
\item
the linear interpolation $\lintat{\time}$ of $\curr$ stays close to low-action trajectories with probability that is exponentially large in their action value;
and
\item
the probability that $\lintat{\time}$ strays far from said trajectories is exponentially small in their action value.
\end{enumerate*}
This is, in fact, the first rung in a hierarchy of \aclp{LDP} that ultimately quantify the probability of rare events in \eqref{eq:SGD};
because these results are fairly technical to set up and prove (and not required for stating our main result), we defer the relevant discussion and proofs to \cref{app:LDP}.

\subsection{Transition costs and the quasi-potential}
\label{subsec:trans-cost}

Now, in view of the characterization \eqref{eq:LDP-interpolated-close} and \eqref{eq:LDP-interpolated-far} of ``rare trajectories'' of \eqref{eq:SGD}, we will seek to derive below an analogous characterization for the ``typical trajectories'' of \eqref{eq:SGD} in terms of $\action$.
To do this, we will associate a certain \define{transition cost} to each pair of components $\comp_{\iComp}$, $\comp_{\jComp}$ of $\crit\obj$, and we will use these costs to quantify how likely it is to observe $\curr$ near a component of critical points of $\obj$.

These costs are defined as follows:
First, following \citet{FW98}, define the \define{quasi-potential} between two points $\pointA,\pointB\in\vecspace$ as
\begin{equation}
\label{eq:qpot-point}
\qpot(\pointA,\pointB)
	\defeq \inf\setdef{\actof{\horizon}{\curve}}{\curve\in\contcurvesat{\horizon}{\pointA,\pointB}, \horizon\in\N}
\end{equation}
and the corresponding quasi-potential between two sets $\setA,\setB\subseteq\vecspace$ as
\begin{equation}
\label{eq:qpot-set}
\qpot(\setA,\setB)
	\defeq \inf\setdef{\qpot(\pointA,\pointB)}{\pointA\in\setA,\pointB\in\setB}.
\end{equation}
By construction, $\qpot(\pointA,\pointB)$ is the action value of the ``most probable'' path from $\pointA$ to $\pointB$, so it can be interpreted as the ``action cost'' of moving from $\pointA$ to $\pointB$.
Accordingly, to capture the difficulty of $\curr$ leaving the vicinity of a given component of $\crit\obj$ and wandering off to another, we will consider the \define{cost matrix}
\begin{equation}
\label{eq:cost-mat}
\qmat_{\iComp\jComp}
	\defeq \qpot(\comp_{\iComp},\comp_{\jComp})
\end{equation}
where $\comp_{\iComp},\comp_{\jComp}$, $\iComp,\jComp=1,\dotsc,\nComps$, are any two components of critical points of $\obj$.

As we mentioned before, the transitions between components of $\crit\obj$ play a crucial role in our analysis because this is where $\curr$ spends most of its time.
To characterize the structure of these transitions more precisely, it will be convenient to encode them in a complete weighted directed graph $\graph = (\vertices,\edges)$, which we call the \define{transition graph} of \eqref{eq:SGD}, and which is defined as follows:
\begin{enumerate}
[left=\parindent,label=\upshape(\itshape\alph*\hspace*{.5pt}\upshape)]
\item
The vertex set of $\graph$ is
$\vertices = \setof{1,\dotsc,\nComps}$,
\ie $\graph$ has one vertex per component of critical points of $\obj$.
\item
The edge set of $\graph$ is $\edges\!=\!\setdef{(\iComp,\jComp)}{\iComp,\jComp=1,\dotsc,\nComps, \iComp\!\neq\!\jComp}$, \ie $\graph$ has an edge per pair of components of $\crit\obj$.
\item
The weight of the directed edge $(\iComp,\jComp) \in \edges$ is $\qmat_{\iComp\jComp}$.
\end{enumerate}
To avoid degenerate cases, we will make the following assumption for the problem's cost matrix:
\begin{assumption}
\label{asm:costs}
$\qmat_{\iComp\jComp} < \infty$ for all $\iComp,\jComp=1,\dotsc,\nComps$.
\end{assumption}

This assumption is purely technical and mainly serves to streamline our presentation and avoid complicated statements involving non-communicating classes of the cost matrix $\qmat_{\iComp\jComp}$;
see \cref{app:estimates_trans_inv} for a more detailed discussion.

The last element we need for the statement of our results is the minimum total cost of reaching a component $\comp_{\iComp}$ of $\crit\obj$ from any starting point.
Since the most likely trajectories of \eqref{eq:SGD} are action minimizers, the ``path of least resistance'' to reach vertex $\iComp$ from vertex $\jComp$ may not follow the edge $(\iComp,\jComp)$ if the cost $\qmat_{\iComp\jComp}$ is too high;
instead, the relevant notion turns out to be
the \define{minimum weight spanning tree} pointing to $\iComp$.%
\footnote{We distinguish here between the notion of an \define{out-tree}
and that of an \define{in-tree}.
In an out-tree, edges point away from the root;
in an in-tree, edges point toward it.}
Formally, writing $\trees_{\iComp}$ for the set of spanning trees of $\graph$ that point to $\iComp$, we define the \emph{energy} of $\comp_{\iComp}$ as%
\begin{equation}
\label{eq:energy}
\energy_{\iComp}
	= \adjustlimits
		\min_{\tree_{\iComp}\in\trees_{\iComp}}
		\sum_{\jComp,\kComp\in\tree_{\iComp}} \qmat_{\jComp\kComp}.
\end{equation}
The terminology ``energy'' is explained below, where we show that, to leading order, the long-run distribution of \eqref{eq:SGD} around $\crit\obj$ follows the Boltzmann\textendash Gibbs distribution for a canonical ensemble with energy levels $\energy_{\iComp}$ at temperature $\step$.

\subsection{The long-run distribution of \ac{SGD}}

With all this in hand, we are finally in a position to state our main results for the statistics of the asymptotic behavior of \eqref{eq:SGD}.
We start by showing that, in the long run, the probability of observing the iterates of \eqref{eq:SGD} near a component of $\crit\obj$ is exponentially proportional to its energy.

\begin{theorem}
\label{thm:Gibbs}
Suppose that $\invmeas$ is invariant under \eqref{eq:SGD},
fix a tolerance level $\toler > 0$,
and
let $\nhd_{\iComp} \equiv \nhd_{\iComp}(\size)$, $\iComp = 1,\dotsc,\nComps$, be $\size$-neighborhoods of the components of $\crit\obj$.
Then, for all sufficiently small $\size,\step > 0$, we have
\begin{equation}
\label{eq:invmeas-Gibbs-log}
\abs*{\step\log\invmeas(\nhd_{\iComp})
	+ \energy_{\iComp} - \min\nolimits_{\jComp} \energy_{\jComp}}
	\leq \toler
\end{equation}
and
\begin{equation}
\label{eq:invmeas-Gibbs-diffs}
\abs*{\step\log\frac{\invmeas(\nhd_{\iComp})}{\invmeas(\nhd_{\jComp})}
	+ \energy_{\iComp} - \energy_{\jComp}}
	\leq \toler.
\end{equation}
More compactly, with notation as above, we have:
\begin{equation}
\label{eq:invmeas-Gibbs}
\invmeas(\nhd_{\iComp})
	\propto \exp\of*{-\frac{\energy_{\iComp} + \bigoh(\toler)}{\step}}.
\end{equation}
\end{theorem}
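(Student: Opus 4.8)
The plan is to adapt the classical Freidlin--Wentzell characterization of the invariant measure of a randomly perturbed dynamical system to the discrete-time, unbounded-domain setting of \eqref{eq:SGD}. The argument has three ingredients: (i) a reduction of the long-run behavior of \eqref{eq:SGD} to a finite-state Markov chain on the critical components $\comp_{\iComp}$, $\iComp = 1,\dotsc,\nComps$; (ii) sharp large-deviations estimates for the transition probabilities of this chain in terms of the cost matrix $\qmat_{\iComp\jComp}$; and (iii) the Markov chain tree theorem, which converts these estimates into the announced Gibbs-type formula for $\invmeas$.

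For (i), I would fix a small $\size > 0$, work with the neighborhoods $\nhd_{\iComp} = \nhd_{\iComp}(\size)$ together with the slightly inflated buffers $\nhd_{\iComp}(2\size)$, and track the successive visits of $\curr$ to the inner sets $\nhd_{\iComp}(\size)$, each separated by an intervening exit from the corresponding outer set; this defines an embedded Markov chain on $\setof{1,\dotsc,\nComps}$ with transition matrix $P = (P_{\iComp\jComp})$ and, under \cref{asm:costs}, a unique stationary law $\pi = (\pi_{\iComp})$. The crux here is that \eqref{eq:SGD} spends only a vanishing (on the exponential scale) fraction of time outside $\bigcup_{\iComp} \nhd_{\iComp}$: combining the coercivity of \cref{asm:obj}\cref{asm:obj-coer}, the \acl{SNR} bound of \cref{asm:SNR}, and the sub-Gaussian tails of \cref{asm:noise}\cref{asm:noise-subG}, one sets up a Lyapunov/drift argument for the large-$\norm{\point}$ regime and couples it with the trajectory-level estimate \eqref{eq:LDP-interpolated-far} of \cref{prop:LDP-interpolated} to obtain exponentially light tail bounds on the length of excursions away from the critical components, uniformly in the starting point. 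Since any weak limit of the occupation measures $\curr[\occmeas]$ is invariant and such limits exist (\cref{lem:inv_meas_exist}), these bounds give $\step\log\invmeas(\nhd_{\iComp}) = \step\log\pi_{\iComp} + \bigoh(\toler)$ in the stated regime, reducing the theorem to estimating $\pi_{\iComp}$.

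For (ii), I would show that the one-step transition probabilities satisfy $\step\log P_{\iComp\jComp} \to -\qmat^{\circ}_{\iComp\jComp}$ in the regime $\step \ll \size \ll 1$, where $\qmat^{\circ}_{\iComp\jComp} \geq \qmat_{\iComp\jComp}$ is the minimal action of a path from $\comp_{\iComp}$ to $\comp_{\jComp}$ that does not first reach another critical component. The lower bound is immediate from \eqref{eq:LDP-interpolated-close} applied to a near-minimizing curve in $\contcurvesat{\horizon}{\pointA,\pointB}$ realizing the corresponding quasi-potential up to $\toler$; the matching upper bound follows from \eqref{eq:LDP-interpolated-far}, the Markov property (to discard excursions that leave and re-enter the buffers), and the lower semicontinuity and compactness of the sublevel sets of the action functional $\actof{\horizon}{\argdot}$ (as in \eqref{eq:curves-actbound}) to rule out cheaper routes appearing in the $\size \to 0$ limit. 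This is exactly where the reduced regularity of the quasi-potential in our setting must be faced --- the $\comp_{\iComp}$ are not singletons, and the noise may be degenerate or discrete --- so making these estimates uniform near each $\comp_{\iComp}$ rests on the careful analysis of the attractor structure of \eqref{eq:SGD} developed in \cref{app:estimates_trans_inv}; I expect this to be the main obstacle of the whole argument.

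For (iii), since $P$ is irreducible, the Markov chain tree theorem gives $\pi_{\iComp} \propto \sum_{\tree_{\iComp} \in \trees_{\iComp}} \prod_{(\jComp,\kComp) \in \tree_{\iComp}} P_{\jComp\kComp}$. Plugging in $P_{\jComp\kComp} = \exp\big({-}(\qmat^{\circ}_{\jComp\kComp} + o(1))/\step\big)$ and invoking Laplace's principle, each tree sum is dominated by its minimum-weight element, so $\step\log\sum_{\tree_{\iComp}\in\trees_{\iComp}} \prod_{(\jComp,\kComp)\in\tree_{\iComp}} P_{\jComp\kComp} \to -\min_{\tree_{\iComp}\in\trees_{\iComp}} \sum_{(\jComp,\kComp)\in\tree_{\iComp}} \qmat^{\circ}_{\jComp\kComp}$. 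The last combinatorial ingredient is the Freidlin--Wentzell identity showing that this minimum is unchanged if each $\qmat^{\circ}_{\jComp\kComp}$ is replaced by the full quasi-potential $\qmat_{\jComp\kComp}$ --- essentially because any ``detour'' through an intermediate component is already realized by a competing spanning tree --- so it equals $\energy_{\iComp}$ as in \eqref{eq:energy}. Normalizing and taking logarithms, $\step\log\pi_{\iComp} = -\energy_{\iComp} + \min_{\jComp}\energy_{\jComp} + \bigoh(\toler)$; together with (i) this yields \eqref{eq:invmeas-Gibbs-log}, and \eqref{eq:invmeas-Gibbs-diffs} as well as the compact form \eqref{eq:invmeas-Gibbs} follow immediately.
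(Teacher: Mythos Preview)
Your outline is correct and follows the same Freidlin--Wentzell strategy as the paper: reduce to an induced chain on the critical components, obtain sharp LDP estimates on its transitions, and apply the tree formula (the paper invokes \citet[Chap.~6, Lem.~3.1--3.2]{FW98}, which is exactly your Markov chain tree theorem plus Laplace). Two technical points where the paper's execution differs from your sketch: first, it works throughout with the \emph{accelerated} process $\accstate_{\run} = \state_{\run\floor{1/\step}}$ rather than $\curr$ itself, so that each step already aggregates $\bigoh(1/\step)$ iterations of \eqref{eq:SGD} and the discrete LDP of \cref{cor:ldp_iterates_full} applies cleanly between consecutive iterates; second, in your step~(i) the link between an \emph{arbitrary} invariant $\invmeas$ and the embedded chain's stationary law is made in the paper via the Khasminskii-type representation of \cref{lem:invmeas_from_induced_invmeas} (the restriction of $\invmeas$ to $\nhdaltalt$ is invariant for the induced chain, and $\invmeas$ is recovered by integrating excursions against it), not via occupation measures --- your appeal to weak limits of $\curr[\occmeas]$ is a non sequitur here, since the theorem is stated for every invariant $\invmeas$, not only for those arising as such limits.
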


\Cref{thm:Gibbs} is the formal version of the statement that the long-run distribution of \eqref{eq:SGD} around the components of $\crit\obj$ follows an $\toler$-approximate Boltzmann\textendash Gibbs distribution with energy levels $\energy_{\iComp}$ at temperature $\step$ \citep{LL76}.
However,
since the critical set of $\obj$ includes both minimizing and \emph{non-minimizing} components,%
\footnote{To remove any ambiguity, a component $\comp$ of $\crit\obj$ is called (locally) minimizing if $\comp = \argmin_{\point\in\nhd} \obj(\point)$ for some neighborhood $\nhd$ of $\comp$;
otherwise, we say that $\comp$ is non-minimizing.}
a natural question that arises is whether the non-minimizing components of $\obj$ are selected against under $\invmeas$.
Our next result is a consequence of \cref{thm:Gibbs} and shows that this indeed the case:

\begin{theorem}
\label{thm:unstable}
Suppose that $\invmeas$ is invariant under \eqref{eq:SGD},
and let $\comp$ be a non-minimizing component of $\obj$.
Then, with notation as in \cref{thm:Gibbs}, there exists a minimizing component $\alt\comp$ of $\obj$ and a positive constant $\const \equiv \const(\comp,\alt\comp) > 0$ such that
\begin{equation}
\label{eq:invmeas-unstable}
\frac{\invmeas(\nhd)}{\invmeas(\alt\nhd)}
	\leq \exp\of*{-\frac{\const(\comp,\alt\comp) + \eps}{\step}}
\end{equation}
for all all sufficiently small $\step>0$ and all sufficiently small neighborhoods $\nhd$ and $\alt\nhd$ of $\comp$ and $\alt\comp$ respectively.
In particular, in the limit $\step\to0$, we have $\invmeas(\nhd)\to0$.
\end{theorem}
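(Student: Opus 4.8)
The plan is to deduce this directly from the Boltzmann--Gibbs description of \cref{thm:Gibbs}. Since that result gives $\invmeas(\nhd_{\iComp}) \propto \exp(-(\energy_{\iComp} + \bigoh(\toler))/\step)$, the statement reduces to the purely combinatorial claim that \emph{every non-minimizing component $\comp = \comp_{\iComp}$ of $\crit\obj$ is strictly dominated in energy by some minimizing component} $\alt\comp = \comp_{\jComp}$, i.e. $\energy_{\jComp} < \energy_{\iComp}$. Granting this, set $\const(\comp,\alt\comp) \defeq (\energy_{\iComp} - \energy_{\jComp})/2 > 0$; then for any $\eps \in (0,\const)$, applying \cref{thm:Gibbs} with a tolerance $\toler \le \const - \eps$ (and $\size,\step$ correspondingly small), the bound \eqref{eq:invmeas-Gibbs-diffs} yields $\step\log(\invmeas(\nhd)/\invmeas(\alt\nhd)) \le -(\energy_{\iComp} - \energy_{\jComp}) + \toler \le -(\const + \eps)$, which is precisely \eqref{eq:invmeas-unstable}. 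Likewise, since $\energy_{\iComp} > \min_{\kComp}\energy_{\kComp}$, choosing $\toler$ smaller than $c \defeq \energy_{\iComp} - \min_{\kComp}\energy_{\kComp}$ in \eqref{eq:invmeas-Gibbs-log} gives $\invmeas(\nhd) \le \exp(-(c-\toler)/\step) \to 0$ as $\step\to0$.

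To prove the combinatorial claim I would first record two structural properties of the cost matrix $\qmat$. \textbf{(a)} If $\comp_{\kComp}$ is a (locally) minimizing component, then $\obj$ itself is a strict Lyapunov function for the gradient flow $\dot\point = -\nabla\obj(\point)$ in a neighborhood of $\comp_{\kComp}$, so $\comp_{\kComp}$ is asymptotically stable; hence leaving a small neighborhood of $\comp_{\kComp}$ requires moving against the flow and $\qmat_{\kComp\kComp'} = \qpot(\comp_{\kComp},\comp_{\kComp'}) > 0$ for every $\kComp' \neq \kComp$. \textbf{(b)} If $\comp_{\iComp}$ is non-minimizing, there is a minimizing component $\comp_{\jComp}$ with $\qmat_{\iComp\jComp} = \qpot(\comp_{\iComp},\comp_{\jComp}) = 0$: because $\comp_{\iComp}$ is not a local minimum, arbitrarily close to it there are points $\point$ with $\obj(\point) < \min_{\comp_{\iComp}}\obj$; a straight nudge onto such a $\point$ has action $\bigoh(\dist(\comp_{\iComp},\point)^{2})$ (using that $\lagof{\orcl}{\point,\argdot}$ vanishes quadratically near the zero of $\nabla\obj$, as $\covof{\err(\point;\sample)} \mg 0$ by \cref{asm:noise}), while the forward gradient flow from $\point$ has zero action — since $\lagof{\orcl}{\point,-\nabla\obj(\point)} = \lagof{\err}{\point,0} = 0$ — and, by \cref{asm:obj}\cref{asm:obj-crit}, converges to a critical component whose $\obj$-value is strictly below that of $\comp_{\iComp}$. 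Iterating through the finitely many components and letting the successive nudges shrink, the accumulated cost tends to $0$, and the chain terminates at a minimizing component $\comp_{\jComp}$; hence $\qpot(\comp_{\iComp},\comp_{\jComp}) = 0$.

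The remaining step is a minimum-spanning-in-tree surgery. Fix an optimal in-tree $\tree_{\iComp}$ rooted at $\iComp$ realizing $\energy_{\iComp}$, and let $\comp_{\jComp}$ be as in \textbf{(b)}. In $\tree_{\iComp}$ the vertex $\jComp$ has a unique out-edge $(\jComp,\kComp)$; deleting it splits $\tree_{\iComp}$ into the in-subtree rooted at $\jComp$ and an in-tree rooted at $\iComp$ on the complementary vertex set (which contains $\iComp$), and adding the zero-weight edge $(\iComp,\jComp)$ reconnects the two pieces into a spanning in-tree $\tree_{\jComp}$ rooted at $\jComp$ of total weight $\energy_{\iComp} - \qmat_{\jComp\kComp} + \qmat_{\iComp\jComp} = \energy_{\iComp} - \qmat_{\jComp\kComp}$. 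Therefore $\energy_{\jComp} \le \energy_{\iComp} - \qmat_{\jComp\kComp}$, and since $\comp_{\jComp}$ is minimizing, property \textbf{(a)} gives $\qmat_{\jComp\kComp} > 0$, so $\energy_{\jComp} < \energy_{\iComp}$, which completes the claim and hence the theorem.

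I expect the main obstacle to be property \textbf{(b)} — making rigorous the statement that one can descend from a non-minimizing component to a minimizing one at vanishing action cost. This relies on a precise description of the attractor structure of the gradient flow of $\obj$ under \cref{asm:obj}, together with uniform quadratic estimates on $\lagof{\orcl}{\point,\argdot}$ near $\crit\obj$ and the subadditivity of $\qpot$; these are exactly the regularity facts developed in the paper's appendix, which I would invoke rather than re-prove here.
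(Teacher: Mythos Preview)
Your proposal is correct and follows the same approach as the paper: the paper likewise deduces the result from \cref{thm:Gibbs} after proving (via the identical in-tree surgery you describe—delete the out-edge from $\jComp$ in an optimal in-tree rooted at $\iComp$ and insert the zero-weight edge $(\iComp,\jComp)$) that every non-minimizing $\comp_{\iComp}$ admits a minimizing $\comp_{\jComp}$ with $\qmat_{\iComp\jComp}=0$ and $\energy_{\jComp}<\energy_{\iComp}$. The only cosmetic difference is in how the chain in step~(b) is terminated: you descend through strictly decreasing $\obj$-values, whereas the paper iterates ``not asymptotically stable $\Rightarrow\exists\,\jComp$ with $\qmat_{\iComp\jComp}=0$'' until it reaches a component whose outgoing costs are all strictly positive and then notes that such a component must be asymptotically stable; both terminate by finiteness of the component set.
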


This avoidance principle is particularly important because it shows that \eqref{eq:SGD} is far less likely to be observed near a non-minimizing components of $\crit\obj$ relative to a minimizing one.
In this regard, \cref{thm:unstable} complements a broad range of avoidance results in the literature \cite{Pem90,GHJY15,JGNK+17,MHKC20,HMC21,HKKM23} without requiring any of the ``strict saddle'' assumptions that are standard in this context.

That being said, \cref{thm:Gibbs,thm:unstable} leave open the possibility that the energy landscape of \eqref{eq:SGD} contains \emph{non-critical} low-energy regions that nonetheless get a significant amount of probability under \eqref{eq:SGD};
put differently, \cref{thm:Gibbs,thm:unstable} do not rule out the eventuality that, in the long run, $\curr$ may still be observed with non-vanishing probability far from the critical region of $\obj$.
Our next result addresses precisely this issue and shows that this probability
is exponentially small.

\begin{theorem}
\label{thm:crit}
Suppose that $\invmeas$ is invariant under \eqref{eq:SGD},
and
let $\nhd \equiv \nhd(\size)$ be a $\size$-neighborhood of $\crit\obj$.
Then there exists a constant $\const \equiv \const_{\size} > 0$ such that, for all sufficiently small $\step > 0$, we have:
\begin{equation}
\label{eq:invmeas-crit}
\invmeas(\nhd)
	\geq 1 - e^{-\const/\step}.
\end{equation}
\end{theorem}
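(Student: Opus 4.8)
The plan is to bound the complementary mass $\invmeas(\realspace\setminus\nhd)$ by passing through the mean occupation measures \eqref{eq:occmeas}: since any invariant $\invmeas$ is a weak limit point of the $\curr[\occmeas]$, it suffices to show that the long-run expected fraction of effective time that \eqref{eq:SGD} spends outside $\nhd$ is at most $e^{-\const/\step}$. \textit{Step 1 (confinement).} First I would dispose of the unboundedness of the domain. Using \cref{asm:SNR} together with the sub-Gaussian tails of \cref{asm:noise}\cref{asm:noise-subG}, the one-step drift of $\obj$ obeys $\exof{\obj(\new)\given\state=\point}\leq\obj(\point)-\const_0\step$ for $\point$ outside a large compact set and some $\const_0>0$; fed into a Lyapunov/drift argument (the technical heart of \cref{app:lyapunov}), this produces a compact set $\bigcpt$ with $\invmeas(\realspace\setminus\bigcpt)\leq e^{-\const_1/\step}$, and shows that, started anywhere in $\bigcpt$, \eqref{eq:SGD} stays in a slightly larger compact $\bigcpt'$ for any number of steps polynomial in $1/\step$ except on an event of probability $\leq e^{-\const_1/\step}$. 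It thus remains to control the fraction of effective time spent in $\bigcpt'\setminus\nhd$.

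\textit{Step 2 (the flow absorbs $\bigcpt'$, and the escape cost is positive).} By \cref{asm:obj}\cref{asm:obj-coer}, $\crit\obj$ is the global attractor of the gradient flow $\dot\curve=-\nabla\obj(\curve)$; since $\norm{\nabla\obj}$ is bounded below on the compact set $\bigcpt'\setminus\nhd(\size/2)$, every flow orbit issuing from $\bigcpt'$ (once perturbed off the measure-zero stable manifolds of the non-minimizing components) enters the union of the $\size/2$-neighborhoods of the \emph{minimizing} components within a fixed effective-time horizon $T<\infty$. On the other hand, because the minimizing components of $\crit\obj$ are precisely the Lyapunov-stable ones, the \emph{escape cost}
\[
\const_\size
	\defeq \inf\setdef{\qpot(\point,\pointalt)}{\point\in\nhd(\size/2)\cap(\text{a minimizing component}),\ \pointalt\notin\nhd(\size)}
\]
is strictly positive, where $\qpot$ is the quasi-potential \eqref{eq:qpot-point}: any admissible path must climb out of the basin of attraction $\basin_\iComp$ of a minimizing component $\comp_\iComp$ — which costs a positive amount, bounded below uniformly because $\basin_\iComp$ is the basin of an attractor — and this cost is not cancelled by subsequently coasting on the flow (which is free) toward another component, since by \cref{asm:obj}\cref{asm:obj-crit} there are only finitely many components to coast through.

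\textit{Step 3 (renewal estimate).} Let $\hittime$ be the hitting time of the union of the $\size/2$-neighborhoods of the minimizing components and $\exittime$ the first subsequent exit time of $\nhd(\size)$, both measured in effective time. Applying \cref{prop:LDP-interpolated} to the zero-action flow orbit of Step 2 gives $\exof{\hittime}\leq\Const$ uniformly over starting points in $\bigcpt'$ and uniformly in $\step$. Applying the large-deviations bound \eqref{eq:LDP-interpolated-far} with action level $\const_\size$ — no path from the core of a minimizing component to $\nhd(\size)^{\mathtt{c}}$ has action below $\const_\size$ — shows that, over each window of effective length $T$, the process fails to remain inside $\nhd(\size)$ with probability at most $e^{-(\const_\size-\toler)/\step}$, so $\exof{\exittime}\geq e^{(\const_\size-\toler)/\step}$, while each excursion into $\nhd(\size)^{\mathtt{c}}$ has bounded effective duration (it is a finite cascade of flow transits between components, by \cref{asm:obj}\cref{asm:obj-crit}). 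By a renewal-reward argument (the continuous-time/effective-time version of Kac's formula, carried out in \cref{app:estimates_trans_inv}), the long-run expected fraction of effective time in $\bigcpt'\setminus\nhd(\size)$ is at most $\Const'\big/\big(\Const'+e^{(\const_\size-\toler)/\step}\big)$; together with Step 1 and the reduction above, and choosing $\toler$ small, this yields \eqref{eq:invmeas-crit} with $\const=\const_\size>0$ for all sufficiently small $\step$.

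The main obstacle is twofold. First, Step 1: classical Freidlin\textendash Wentzell theory is set on a compact state space, whereas ours is not, so confinement must be extracted from the dissipativity supplied by \cref{asm:SNR} and a careful accounting of the sub-Gaussian increments — this is the only place the growth/SNR hypotheses are genuinely needed. Second, the positivity and $\step$-uniformity of the escape cost $\const_\size$ in Step 2, which is exactly the point at which the identification of the minimizing components of $\crit\obj$ with the attractors of the gradient flow is used: without it, transits out of $\nhd(\size)$ would be free and the fraction of time outside would merely be $\bigoh(1)$ rather than exponentially small. Once these two ingredients are in place, Step 3 is a fairly routine packaging of \cref{prop:LDP-interpolated} with the transition-graph formalism of \cref{subsec:trans-cost}.
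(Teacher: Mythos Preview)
Your overall architecture---Lyapunov confinement to a compact set, strictly positive escape cost from the minimizing components, and a Kac/renewal representation of the invariant mass---matches the paper's route (it applies \cref{prop:inv_meas_ground_states}, built on \cref{lem:ub_invmeas_domain} and \cref{lemma:ground_states_loc_min}). The framing via occupation measures is superfluous: if $\invmeas$ is invariant then taking $\init[\state]\sim\invmeas$ gives $\curr[\occmeas]=\invmeas$ identically, so you are really bounding $\invmeas(\nhd^{\mathtt c})$ directly; the paper does this through the Kac formula of \cref{lem:invmeas_from_induced_invmeas}, which is the same renewal identity you invoke.

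The real gap is in Steps~2--3, in how you deal with the \emph{non-minimizing} components. Your claim that ``every flow orbit (once perturbed off the measure-zero stable manifolds) enters the $\size/2$-neighborhoods of the \emph{minimizing} components within a fixed horizon $T$'' is not uniform: near a saddle the transit time of the flow blows up, so applying \cref{prop:LDP-interpolated} to the zero-action flow orbit does \emph{not} yield $\ex[\hittime]\leq\Const$ uniformly over $\bigcpt'$. A correct version would construct, for each starting point, a low-action path of \emph{bounded} length that bypasses saddles---essentially the content of \cref{lemma:not_asympt_stable_implies_unstable,lemma:not_asympt_stable_implies_unstable2}---but you do not supply this. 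The paper sidesteps the issue entirely: it runs the renewal cycle with the hitting time of \emph{all} of $\crit\obj$ (which \emph{is} uniformly bounded, \cref{lem:est_going_back_to_crit}), and then kills the contribution of non-minimizing components not by a hitting-time bound but by the energy inequality $\energy_\iComp>\min_\jComp\energy_\jComp$ for non-minimizing $\comp_\iComp$ (\cref{lemma:not_asympt_stable_implies_unstable3}), which feeds into the tree-weight estimate of \cref{prop:est_induced_invmeas} inside \cref{lem:ub_invmeas_domain}. In effect the paper leverages \cref{thm:Gibbs} to prove \cref{thm:crit}, whereas you try to prove \cref{thm:crit} independently; your route is viable but requires the missing uniform-length low-action paths near saddles.
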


Taken together, \crefrange{thm:Gibbs}{thm:crit} show that, in the long run, the iterates of \eqref{eq:SGD} are exponentially more likely to be observed in the vicinity of $\crit{\obj}$ rather than far from it, and exponentially more likely to be observed near a minimum of $\obj$ rather than a saddle-point (or a local maximizer).

Our next result can be seen as joint consequence of \cref{thm:Gibbs,thm:crit} as it shows that the long-run distribution of \eqref{eq:SGD} is exponentially concentrated around the system's \define{ground state}
\begin{equation}
\label{eq:ground}
\txs
\ground
	= \union_{\iComp\in\argmin_{\jComp} \energy_{\jComp}} \comp_{\iComp}
\end{equation}
that is, the components of $\crit\obj$ with minimal energy.
The precise statement is as follows:

\begin{theorem}
\label{thm:ground}
Suppose that $\invmeas$ is invariant under \eqref{eq:SGD},
and
let $\nhd_{\iGround} \equiv \nhd_{\iGround}(\size)$ be a $\size$-neighborhood of the system's ground state $\ground$.
Then there exists a constant $\const \equiv \const_{\size} > 0$ such that, for all sufficiently small $\step>0$, we have:
\begin{equation}
\label{eq:invmeas-ground}
\invmeas(\nhd_{\iGround})
	\geq 1 - e^{-\const/\step}.
\end{equation}
\end{theorem}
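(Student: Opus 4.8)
The plan is to deduce \cref{thm:ground} as a direct consequence of the two preceding results, \cref{thm:Gibbs} and \cref{thm:crit}, so the proof is essentially a union-bound/bookkeeping argument once the energies are taken into account. First I would set $\energy_{\min} \defeq \min_{\jComp}\energy_{\jComp}$ and observe that, by the definition \eqref{eq:ground} of the ground state $\ground$, the complement (within a $\size$-neighborhood of $\crit\obj$) of $\nhd_{\iGround}$ is covered by finitely many neighborhoods $\nhd_{\iComp}$ of critical components $\comp_{\iComp}$ with $\energy_{\iComp} > \energy_{\min}$. Since there are only $\nComps < \infty$ components (\cref{asm:obj}\cref{asm:obj-crit}), one may pick a gap $\gamma \defeq \min\setdef{\energy_{\iComp} - \energy_{\min}}{\energy_{\iComp} > \energy_{\min}} > 0$; for $\size$ small enough the neighborhoods $\nhd_{\iComp}$ are disjoint and the gap persists.

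Next I would apply \cref{thm:Gibbs}: for a fixed tolerance $\toler < \gamma/4$ and for all sufficiently small $\size,\step$, \eqref{eq:invmeas-Gibbs-log} gives $\invmeas(\nhd_{\iComp}) \leq \exp(-(\energy_{\iComp} - \energy_{\min} - \toler)/\step)$ for each component, and in particular $\invmeas(\nhd_{\iComp}) \leq \exp(-(\gamma - \toler)/\step) \leq \exp(-\tfrac{3\gamma}{4\step})$ for every \emph{non-ground-state} component. Summing over the (at most $\nComps$) such components yields $\sum_{\iComp:\,\energy_{\iComp}>\energy_{\min}} \invmeas(\nhd_{\iComp}) \leq \nComps \exp(-\tfrac{3\gamma}{4\step}) \leq \exp(-\tfrac{\gamma}{2\step})$ once $\step$ is small enough to absorb the factor $\nComps$. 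Then, invoking \cref{thm:crit} with a constant $\const' \equiv \const'_{\size} > 0$, we have $\invmeas(\vecspace \setminus \nhd) \leq e^{-\const'/\step}$ where $\nhd$ is the $\size$-neighborhood of $\crit\obj$. Decomposing
\begin{equation*}
\invmeas(\vecspace \setminus \nhd_{\iGround})
	\leq \invmeas(\vecspace \setminus \nhd)
		+ \sum\nolimits_{\iComp:\,\energy_{\iComp}>\energy_{\min}} \invmeas(\nhd_{\iComp})
	\leq e^{-\const'/\step} + e^{-\gamma/(2\step)}
\end{equation*}
(using that $\nhd \setminus \nhd_{\iGround}$ is contained in the union of the non-ground-state neighborhoods for $\size$ small), and setting $\const \defeq \tfrac{1}{2}\min\{\const', \gamma/2\}$ absorbs both terms into a single $e^{-\const/\step}$ for $\step$ small, giving \eqref{eq:invmeas-ground}.

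The only mild subtlety — and the step I would be most careful about — is the geometric claim that, for $\size$ sufficiently small, $\nhd \setminus \nhd_{\iGround}(\size)$ is genuinely covered by the disjoint union $\bigcup_{\iComp:\,\energy_{\iComp}>\energy_{\min}} \nhd_{\iComp}(\size)$, i.e.\ that shrinking $\size$ separates the critical components cleanly; this follows from \cref{asm:obj}\cref{asm:obj-crit} (finitely many components, each a smoothly connected compact-ish piece) together with coercivity (\cref{asm:obj}\cref{asm:obj-coer}), which confines $\crit\obj$ to a bounded region so the components are at positive pairwise distance. Everything else is a finite sum of exponentials, so there is no analytic obstacle; the work has already been done in \cref{thm:Gibbs,thm:crit}.
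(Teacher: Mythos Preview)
Your argument is correct: the union-bound decomposition $\invmeas(\vecspace\setminus\nhd_{\iGround})\leq\invmeas(\vecspace\setminus\nhd)+\sum_{\iComp:\,\energy_{\iComp}>\energy_{\min}}\invmeas(\nhd_{\iComp})$ is valid (the set inclusion you worry about at the end holds without any disjointness hypothesis, since a point in $\nhd\setminus\nhd_{\iGround}$ is within $\size$ of some $\comp_{\iComp}$ but not of any ground-state component, hence $\iComp$ is non-ground), and the two terms are controlled by \cref{thm:crit} and \cref{thm:Gibbs} exactly as you say. The only tacit step is that you may need to shrink $\size$ to some $\size'<\size$ to invoke \cref{thm:Gibbs}, and then use $\nhd_{\iGround}(\size')\subset\nhd_{\iGround}(\size)$ to recover the claim for the original $\size$; this is harmless.

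The paper takes a different route. Rather than combining \cref{thm:Gibbs} and \cref{thm:crit} after the fact, it proves a single proposition (\cref{prop:inv_meas_ground_states}) that directly bounds $\invmeas\bigl(\vecspace\setminus\bigcup_{\iComp\in\indicesalt}\nhd_{\iComp}\bigr)$ for any collection $\indicesalt$ of minimizing components containing the ground states; both \cref{thm:crit} (take $\indicesalt=$ all minimizing components) and \cref{thm:ground} (take $\indicesalt=\argmin_{\jComp}\energy_{\jComp}$, noting via \cref{lemma:not_asympt_stable_implies_unstable3} that ground states are minimizing) are then one-line specializations. That proposition is proved from the finer estimate \cref{lem:ub_invmeas_domain}, which bounds $\invmeas$ on a general domain in terms of $\min_{\iComp}\{\energy_{\iComp}+\qpot(\comp_{\iComp},\domain)\}$ and exploits \cref{lemma:ground_states_loc_min} to get a positive quasipotential cost for escaping a minimizing component. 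Your approach is more modular and stays at the level of the stated theorems; the paper's is more unified and avoids re-deriving the ``away from $\crit\obj$'' and ``at non-ground components'' pieces separately.
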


In words, \crefrange{thm:Gibbs}{thm:ground} provide the following quantification of the limiting distribution of $\curr$:
in the long run
\begin{enumerate*}
[\upshape(\itshape a\hspace*{.5pt}\upshape)]
\item
the critical region of $\obj$ is visited exponentially more often than any non-critical region of $\obj$ (\cref{thm:crit});
\item
in particular, the iterates of \eqref{eq:SGD} are exponentially concentrated around the problem's ground state (\cref{thm:ground});
\item
among the mass that remains, every component of $\obj$ gets a fraction that is exponentially proportional to its energy (\cref{thm:Gibbs});
and, finally
\item
every non-minimizing component is ``dominated'' by a minimizing component that is visited exponentially more often (\cref{thm:unstable}).
\end{enumerate*}
Importantly, the problem's energy landscape is shaped by $\obj$, but not $\obj$ alone:
the statistics of \eqref{eq:SFO} play an equally important role, so we may have $\ground \neq \argmin\obj$;
we discuss this issue in detail in \cref{sec:applications}.

The proofs of the above results are quite lengthy and elaborate,
so we defer them to the appendix and only provide below a roadmap describing the overall proof strategy, the main technical challenges encountered, and the way they can be resolved.

\subsection{Outline of the proof}

As discussed in \Cref{subsec:ldp}, the first step of the proof consists in establishing a \acl{LDP} for \eqref{eq:SGD}.
The \ac{LDP} of \cref{prop:LDP-interpolated} for the interpolated process $\lintat{\time}$ is obtained as a consequence of \citep[Chap.\;7]{FW98} in \cref{app:ldp_interpolated}.
With this result in hand, our next step (which we carry out in \cref{app:ldp_discrete})
is to deduce an \ac{LDP} for the ``accelerated'' \ac{SGD} process $\curr[\accstate]$, $\run=\running$, defined here as
\begin{equation}
\label{eq:SGD-acc}
\curr[\accstate] = \state_{\run \floor{1/\step}}
\end{equation}
where $\floor{1/\step}$ denotes the integer part of $1/\step$.
As $\curr[\accstate]$ is a subsampled version of \eqref{eq:SGD}, it essentially shares the same long-run behavior and invariant measures and, in addition, it has a very important feature:
there are sufficiently many time steps between two of its iterates for an \ac{LDP} to hold in the specified subinterval.
[Intuitively, this is because it takes $\bigoh(1/\step)$ steps of \eqref{eq:SGD} to average out random fluctuations due to the noise.]
In view of the above, the rest of our proof focuses on the accelerated sequence $\curr[\accstate]$.

The main thrust of the proof is contained in \Cref{app:invmeas} and consists in adapting the powerful machinery of \cite{FW98,Kif88} to study the limiting behavior of $\curr[\accstate]$.
However, both \cite{FW98,Kif88} study continuous-time diffusion processes on closed manifolds, so there are some key challenges to overcome:
\begin{itemize}
\item
The unconstrained setting renders many elements of \cite{FW98,Kif88} inapplicable.
We remedy this by showing that the time that \eqref{eq:SGD} only spends a negligible amount of time away from $\crit\obj$.
\item
The generality of our assumptions on the noise makes the Lagrangians 
$\lag_{\orcl}$ and $\lag_{\err}$ non-smooth:
more precisely, they must have bounded domains, on which they may fail to be continuous.
As a consequence, most of the objects introduced in \citep{FW98,Kif88} become drastically less regular \textendash\ \eg $\dquasipot$ defined in \eqref{eq:qpot-point} \textendash\ which again renders their results inapplicable.
We remedy this issue by refining the analysis, carefully studying the structure of the attractors, and salvaging enough regularity to proceed.
\end{itemize}
The crux of the proof (\Cref{app:invmeas}) is structured as follows:
\begin{enumerate}
\item
In \cref{app:attractors}, we study the structure of the attractors of \eqref{eq:SGD}, as well as the regularity of the Lagrangians and the quasi-potential near these attractors.
\item
In \Cref{app:lyapunov}, we show that \eqref{eq:SGD} spends most of its time near its attractors by deriving a series of tail-bounds on the time spent away from $\crit\obj$.
These bounds are obtained even for unbounded variance proxy through the construction of a Lyapunov function from $\obj$ and $\bdvar$.
\item
In \cref{app:estimates_trans_inv}, we estimate the transition probabilities of the process between attractors:
if the iterates of \eqref{eq:SGD} are near $\comp_\iComp$, the next component of critical points that they visit is $\comp_\jComp$ with probability
\(
\exp({-\qmat_{\iComp\jComp}/{\step}}).
\)
As such, low-weight paths in the transition graph $\graph$ of \eqref{eq:SGD}
represent the high-probability transitions of \eqref{eq:SGD} between components.
We can then leverage the general theory of \citet{FW98} to obtain \cref{thm:Gibbs}.
\item
Finally, in \cref{app:conv_stab}, we analyze the properties of 
mini\-mi\-zing components to establish \cref{thm:unstable,thm:crit,thm:ground} .
\end{enumerate}

\begin{remark*}
We also note that, since weak limit points of the sequence of occupation measures $\curr[\occmeas]$ of \eqref{eq:SGD} are invariant in the sense of \eqref{eq:invariant}, the results of \cref{thm:Gibbs,thm:unstable,thm:crit,thm:ground} also apply to $\curr[\occmeas]$ if $\run$ is large enough.
We make this observation precise in \cref{app:main_occmeas}.
\hfill
\endenv
\end{remark*}

\section{Examples and applications}
\label{sec:applications}

In this last section, we explore the dependency of the transition costs and the energy levels on the parameters of the problem in certain special cases.

\subsection{Gaussian noise}
\label{sec:gaussian-noise}

We begin with the case of (truncated) Gaussian noise, where the problem's energy levels admit a particularly simple closed form.
To ease notation, we present here the computations in the case of Gaussian noise, and we defer the more intricate case of \emph{truncated} Gaussian noise to \cref{app:subsec:truncated_gaussian_noise}.

To that end, assume that the gradient error $\error(\point, \sample)$ in \eqref{eq:SGD} follows a centered Gaussian distribution with variance $\variance > 0$ for all $\point \in \vecspace$.
The Lagrangian and the action functional of the problem then become:
\begin{align}
\lagof{\orcl}{\point, \vel}
	&= \frac{\norm{\vel + \grad \obj(\point)}^2}{2 \variance}
\shortintertext{and}
\actof{\horizon}{\pth}
	&= \int_0^\horizon \frac{\norm{\diffcurveat{\time} + \grad \obj(\curveat{\time})}^2}{2 \variance} \dd \time
\end{align}
for all $\point,\vel\in\vecspace$ and all $\pth\in\contcurves_{\horizon}$.
This expression shows that $\actof{\horizon}{\pth}$ penalizes the deviation of $\pth$ from the gradient flow of $\obj$:
the closer $\diffcurveat{\time}$ is to $-\grad \obj(\curveat{\time})$, the smaller the action.
Then, for the reverse path $\pthalt(\time) = \pth(\horizon-\time)$, we get
\begin{align}
\actof{\horizon}{\pthalt}
	&= \actof{\horizon}{\pth}
		- 2 \bracks{\obj(\pth_\horizon) -  \obj(\pth_\tstart)}/\variance.
\end{align}
Note that if $\pth$ joins $\comp_\iComp$ to $\comp_\jComp$, then $\pthalt$ joins $\comp_\jComp$ to $\comp_\iComp$, so
\begin{equation}
\label{eq:energy-diff}
\qmat_{\jComp\iComp}
	\leq \qmat_{\iComp\jComp}
		- 2 \parens{\obj_\jComp - \obj_\iComp}/\variance
\end{equation}
where $\obj_\iComp$ denotes the value of $\obj$ on $\comp_{\iComp}$, $\iComp=1,\dotsc,\nComps$.
Exchanging $i$ and $j$, we get equality in \eqref{eq:energy-diff}, so the minimum in the definition \eqref{eq:energy} of $\energy_\iComp$ is reached for the same undirected tree, namely the minimum-weight spanning tree with symmetric weights $\qmat_{\iComp\jComp} + 2\obj_\iComp/\variance$ for all $\iComp,\jComp$.
As such, up to a constant, we get
\begin{equation}
\label{eq:energy-Gauss}
\energy_\idx
	= 2 \obj_\idx/\variance.
\end{equation}
Thus, invoking \cref{thm:Gibbs}, we conclude that the probabiity distribution of \eqref{eq:SGD} over $\crit(\obj)$ is governed by the Boltzmann\textendash Gibbs measure with energy levels given by \eqref{eq:energy-Gauss}

Similarly, in the {\em truncated} Gaussian case, we have:
\begin{proposition}
    \label{prop:truncated_gaussian_main}
    For any $\precs > 0$, if $\error(\point, \sample)$ follows a centered Gaussian distribution with variance $\variance > 0$ conditioned on being in a ball with large enough radius 
    depending on
    $\grad \obj(\point)$ and $\precs$, then, up to a constant,
    \begin{equation}
        \energy_\idx = 2 \obj_\idx/\variance + \bigoh \parens*{\precs}\quad
        \text{ for all }
        \idx  = 1,\dots,\nComps\,.
    \end{equation}
\end{proposition}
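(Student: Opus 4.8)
The statement only concerns the energy levels $\energy_\idx$ of \eqref{eq:energy}, which are determined by the cost matrix $\qmat_{\iComp\jComp} = \qpot(\comp_{\iComp},\comp_{\jComp})$; so the plan is to compare the truncated model with the untruncated Gaussian one, for which \eqref{eq:energy-Gauss} already gives $\energy_\idx = 2\obj_\idx/\variance$ up to an $\idx$-independent constant. Writing $\qpot^{\circ}$, $\qmat^{\circ}$ for the quasi-potential and cost matrix of the untruncated model, it suffices to prove that $\qmat_{\iComp\jComp} = \qmat^{\circ}_{\iComp\jComp} + \bigoh(\precs)$ for every pair $\iComp,\jComp$: once this is in hand, the \emph{same} spanning-tree computation as in the Gaussian case applies, and since a minimum-weight in-tree has at most $\nComps-1$ edges, each perturbed by $\bigoh(\precs)$, taking the minimum over the (finite) family of trees changes $\energy_\idx$ by at most $\bigoh((\nComps-1)\precs) = \bigoh(\precs)$, which yields $\energy_\idx = 2\obj_\idx/\variance + \bigoh(\precs)$ up to an additive constant.

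The first step is to pin down the Lagrangian. Since the conditioning ball is centered, the error remains centered, so \cref{asm:noise}\cref{asm:noise-zero} still holds; in particular $\lagof{\orcl}{\point,-\grad\obj(\point)} = \lagof{\err}{\point,0} = 0$, i.e.\ the gradient flow of $\obj$ still carries zero action. Completing the square in the Gaussian integral gives $\cgfof{\err}{\point,\mom} = \tfrac12\variance\norm{\mom}^2 + \Psi_{\Radius(\point)}(\mom)$, where $\Psi_{\Radius}(\mom) = \log\prob(\norm{Z+\variance\mom}\leq\Radius) - \log\prob(\norm{Z}\leq\Radius)$, $Z\sim\gaussian(0,\variance\eye_{\vdim})$, and $\Radius(\point)$ is the truncation radius at $\point$. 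By Anderson's inequality the shifted ball carries no more Gaussian mass than the centered one, so $\Psi_{\Radius}\leq 0$; dualizing, this already yields $\lagof{\err}{\point,\vel}\geq\norm{\vel}^2/(2\variance)$ everywhere, hence $\actof{\horizon}{\pth}\geq\actof{\horizon}{\pth}^{\circ}$ for all curves, and therefore the lower bound $\qmat_{\iComp\jComp}\geq\qmat^{\circ}_{\iComp\jComp}$. For the matching upper bound one needs $\Psi_{\Radius}$ to be small on the relevant range of $\mom$: a symmetry argument gives $\nabla\Psi_{\Radius}(0)=0$ and $\Psi_{\Radius}(\mom) = -\tfrac12 c_{\Radius}\norm{\mom}^2 + \bigoh(\norm{\mom}^3)$ with $0\leq c_{\Radius} = \bigoh(\variance\,\prob(\norm{Z}\geq\Radius/2))$, while the same Gaussian tail bounds give $\abs{\Psi_{\Radius}(\mom)}\leq\precs$ as soon as $\norm{\variance\mom}\leq\Radius - C_{\variance,\vdim}\sqrt{\log(1/\precs)}$. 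Taking Legendre transforms, these facts translate into the key estimate: on a fixed compact set $\bigcpt$ and for $\vel$ with $\norm{\vel+\grad\obj(\point)}\leq 2\norm{\grad\obj(\point)}+1$, the excess $\lagof{\orcl}{\point,\vel} - \norm{\vel+\grad\obj(\point)}^2/(2\variance)$ is nonnegative and at most $\bigoh(\precs)\,\norm{\vel+\grad\obj(\point)}^2/\variance$, with an additional additive $\bigoh(\precs)$ only when $\norm{\vel+\grad\obj(\point)}$ is bounded away from $0$ — provided $\Radius(\point)\geq 2\norm{\grad\obj(\point)} + C_{\variance,\vdim}\sqrt{\log(1/\precs)}$, which is exactly a radius ``large enough depending on $\grad\obj(\point)$ and $\precs$''. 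The \emph{quadratic} form of the first term is what makes it integrable near $\crit\obj$.

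It remains to prove $\qmat_{\iComp\jComp}\leq\qmat^{\circ}_{\iComp\jComp}+\bigoh(\precs)$. Fix $\iComp,\jComp$ and, exactly as in the gradient case of \citep{FW98}, choose a finite-horizon curve $\pth$ joining a point of $\comp_{\iComp}$ to a point of $\comp_{\jComp}$ that is a concatenation of gradient-descent legs ($\diffcurveat{\time} = -\grad\obj(\curveat{\time})$) and gradient-ascent legs ($\diffcurveat{\time} = +\grad\obj(\curveat{\time})$) with $\actof{\horizon}{\pth}^{\circ}\leq\qmat^{\circ}_{\iComp\jComp}+\precs$; such a curve stays in a fixed compact set $\bigcpt$ and satisfies $\norm{\diffcurveat{\time}+\grad\obj(\curveat{\time})}\in\{0,2\norm{\grad\obj(\curveat{\time})}\}$ throughout, so the key estimate applies along it. On the descent legs $\lagof{\orcl}{\curveat{\time},\diffcurveat{\time}}=0$ identically, so they cost nothing; on the ascent legs the excess over the Gaussian Lagrangian is at most $\bigoh(\precs)\,\norm{2\grad\obj(\curveat{\time})}^2/\variance$ near $\crit\obj$ and at most $\bigoh(\precs)$ away from it. Integrating, the first contribution is $\bigoh(\precs)$ because $\int_{\text{ascent}}\norm{\grad\obj(\curveat{\time})}^2\dd\time$ equals the total height climbed along $\pth$, which is $\tfrac12\variance$ times the Gaussian action of the ascent legs, hence $\leq\tfrac12\variance(\qmat^{\circ}_{\iComp\jComp}+\precs)$; and the second contribution is $\bigoh(\precs)$ times the time the ascent legs spend outside a fixed neighborhood of $\crit\obj$, where $\norm{\grad\obj}$ is bounded below, so this time is again controlled by the total height climbed. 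Therefore $\qmat_{\iComp\jComp}\leq\actof{\horizon}{\pth}\leq\actof{\horizon}{\pth}^{\circ}+\bigoh(\precs)\leq\qmat^{\circ}_{\iComp\jComp}+\bigoh(\precs)$, which closes the comparison and, with the first paragraph, the proposition.

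The main obstacle is this last step. The quasi-potentials are infima of action integrals over curves whose horizon must grow without bound (they creep in and out of shrinking neighborhoods of $\crit\obj$), so a crude pointwise bound $\lagof{\orcl}{}-\lagof{\orcl}{}^{\circ}=\bigoh(\precs)$ would only give $\bigoh(\precs\cdot\horizon)$ after integration and be useless. This is resolved by the quantitative concentration analysis of $\Psi_{\Radius}$ near the origin, which shows that the Lagrangian discrepancy decays \emph{quadratically} in $\vel+\grad\obj(\point)$ and is therefore integrable against the arclength of the reversed gradient flow; a secondary but essential point is that the truncation must be with respect to a centered ball, so that properness of the noise — and hence the exact vanishing of the action along the gradient flow — is preserved for the truncated model just as in the Gaussian case. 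Everything else (the square-completion, Anderson's inequality, and the spanning-tree bookkeeping) is routine.
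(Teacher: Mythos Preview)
Your approach is correct and takes a genuinely different route from the paper. The paper's proof (\cref{app:subsec:truncated_gaussian_noise}) does not compare $\qmat$ with the untruncated Gaussian costs $\qmat^{\circ}$; instead it establishes the approximate time-reversal relation $\qmat_{\idx\idxalt} + \pot_\idx \approx \qmat_{\idxalt\idx} + \pot_\idxalt$ \emph{directly} for the truncated model, via a reparametrization lemma (\citet[Chap.~4, Lem.~3.1]{FW98}): any path $\pth$ is reparametrized to $\tilde\pth$ with $\norm{\dot{\tilde\pth}_\timealt} = \norm{\grad\obj(\tilde\pth_\timealt)}$, which forces $\dot{\tilde\pth}+\grad\obj(\tilde\pth)$ into the range $\norm{\cdot}\leq\Radius/4$ where the truncated Lagrangian is sandwiched by $(1\pm\precsalt)\norm{\cdot}^2/(2\variance)$, and then the time-reversal and spanning-tree computations of the Gaussian case go through with a multiplicative $(1\pm\precsalt)$ error. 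Your route---Anderson's inequality for the lower bound, and evaluation on explicit ascent/descent concatenations for the upper bound---is more transparent in that it exhibits the Gaussian model as a reference and bounds the perturbation, but it leans on the structure theorem for Gaussian quasi-potentials in gradient systems (optimal paths are reversed flows within each basin), whereas the paper's reparametrization handles \emph{arbitrary} near-optimal curves uniformly and carries over verbatim to state-dependent variance $\variance(\obj(\point))$. Two minor remarks: your multiplicative estimate $\bigoh(\precs)\norm{\vel+\grad\obj}^2/\variance$ already integrates to $\bigoh(\precs)$ times the Gaussian action, so the separate additive-$\bigoh(\precs)$ bookkeeping away from $\crit\obj$ is unnecessary; and since ascent/descent legs to and from critical points take infinite time while $\qpot$ in \eqref{eq:qpot-point} is defined over finite horizons, your ``finite-horizon curve'' needs the standard truncation-and-reconnection near $\crit\obj$ (via the continuity of $\rate$ there, \cref{lem:continuity_rate}), which you use implicitly but do not spell out.
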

\Cref{prop:truncated_gaussian_main} is proven in \cref{app:subsec:truncated_gaussian_noise}, where we also allow $\noisepar$ to depend on $\point$ via $\obj(\point)$.

\subsection{Local dependencies}

 The modeling of the noise is crucial to the understanding of the dynamics of \eqref{eq:SGD}.
 This has been often underlined, especially in the exit-time literature 
\citep{huDiffusionApproximationNonconvex2018,xieDiffusionTheoryDeep2021,moriPowerLawEscapeRate,jastrzebskiThreeFactorsInfluencing2018}.
In particular, \citep{moriPowerLawEscapeRate} showed experimentally that, in deep learning models, the variance of the noise scales linearly with the objective function and also examined the effect of this observation on the exit time from a local minima.

We explain here how this model of the noise influences the invariant measure of \eqref{eq:SGD}.
To that end, following \citet{moriPowerLawEscapeRate}, assume there is a positive-definite matrix
$\sol[\hessmat]$ such that, locally near a  minimizing  $\comp_\iComp$, $\log\obj$ is separable in the eigenbasis of $\sol[\hessmat]$:%
\footnote{Following \citep{moriPowerLawEscapeRate}, $\sol[\hessmat]$ corresponds to the Hessian of $\obj$ at $\comp_\iComp$ for deep learning models.}
\begin{equation}
\log\obj(\point)
	= \sum_{\eigval \in \eig \sol[\hessmat]} \objalt^\eigval(\point_\eigval)
\end{equation}
where $\point_\eigval$ denotes the projection of $\point$ on the eigenspace of $\eigval$.
\begin{lemma}
Suppose that $\error(\point,\sample)$ satisfies
\begin{equation}
\label{eq:local_noise_main}
\cgf_{\err}(\point, \vel)
	\leq \frac{\variance\obj(\point)}{2} \inner{\vel, \sol[\hessmat] \vel}
	\quad
	\text{for all $\point$ near $\comp_{\iComp}$}
	\,.
\end{equation}
Then, for small enough $\margin > 0$ and all $\jComp\neq\iComp$, we have
\begin{equation}
\energy_{\jComp}
	\geq 2\min \setdef*
		{\sum_{\eigval \in \eig \sol[\hessmat]} \!\!\!\!\frac{\objalt^\eigval(\point_\eigval) - \objalt^\eigval_\iComp}{\eigval \variance }}
	{\point,\dist(\point, \comp_\iComp) = \margin}
\label{eq:local_energy_levels_main}
\end{equation}  
where $\objalt^\eigval_\iComp$ is the value of $\objalt^\eigval$ on $\eqcl_\iComp$.
\end{lemma}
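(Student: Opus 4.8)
The plan is to bound $\energy_\jComp$ from below by the cost of leaving a small ball around $\comp_\iComp$, and then to extract that exit cost from the pointwise Lagrangian lower bound implicit in \eqref{eq:local_noise_main}. Throughout I would use that $\cgf_\orcl(\point,0)=0$ forces $\lag_\orcl\geq 0$, hence $\actof{\horizon}{\curve}\geq 0$ and all quasi-potentials are nonnegative. For the reduction: by \eqref{eq:energy}, $\energy_\jComp$ is the minimum over spanning in-trees rooted at $\jComp$ of the associated sum of transition costs $\qmat=\qpot(\cdot,\cdot)$. Since $\jComp\neq\iComp$, the vertex $\iComp$ is never the root, so in any such tree it has a unique outgoing edge $(\iComp,\kComp)$ with $\kComp\neq\iComp$; discarding the remaining (nonnegative) weights yields $\energy_\jComp\geq\min_{\kComp\neq\iComp}\qpot(\comp_\iComp,\comp_\kComp)$. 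It therefore suffices to show that $\qpot(\comp_\iComp,\comp')\geq 2\min_{\dist(\point,\comp_\iComp)=\margin}\sum_\eigval(\objalt^\eigval(\point_\eigval)-\objalt^\eigval_\iComp)/(\eigval\variance)$ for an arbitrary component $\comp'\neq\comp_\iComp$. Here I would fix $\margin>0$ small enough that the closed $\margin$-neighborhood of $\comp_\iComp$ lies inside the region where \eqref{eq:local_noise_main} and the separable form of $\log\obj$ are valid and is disjoint from every other component of $\crit\obj$; both are possible since the components of $\crit\obj$ are disjoint compact sets (each is closed, and bounded because $\obj$ is coercive and constant on it).

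To carry out the remaining estimate I would first unfold the Legendre transform of \eqref{eq:Lagrange}: for every dual vector $\mom$ and every $\point$ near $\comp_\iComp$, combining \eqref{eq:CGF} with \eqref{eq:local_noise_main} gives $\lag_\orcl(\point,\vel)\geq-\inner{\mom,\vel+\nabla\obj(\point)}-\tfrac{\variance\obj(\point)}{2}\inner{\mom,\sol[\hessmat]\mom}$. Maximizing over $\mom$ in the eigenbasis of $\sol[\hessmat]$ — where the expression decouples block-by-block and where $\nabla\obj(\point)=\obj(\point)\sum_\eigval(\objalt^\eigval)'(\point_\eigval)$ by separability — a one-dimensional maximization in each block produces
\[
\lag_\orcl(\point,\vel)\;\geq\;\sum_{\eigval\in\eig\sol[\hessmat]}\frac{\norm{\vel_\eigval+\obj(\point)\,(\objalt^\eigval)'(\point_\eigval)}^{2}}{2\,\variance\,\obj(\point)\,\eigval},
\]
where $\vel_\eigval$ is the projection of $\vel$ onto the eigenspace of $\eigval$. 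Then, given any absolutely continuous curve $\curve$ with $\curveat{\tstart}\in\comp_\iComp$ and $\curveat{\horizon}\in\comp'$, let $\tau$ be the first time $\dist(\curveat{\time},\comp_\iComp)=\margin$; by continuity $\tau$ exists, $\tau<\horizon$, and $\curve$ stays in the $\margin$-ball on $[\tstart,\tau]$, so the bound above applies along it. Discarding the tail $[\tau,\horizon]$ (legitimate since $\lag_\orcl\geq 0$) and applying the elementary inequality $\norm{a+b}^{2}\geq 4\inner{a,b}$ with $a$ the velocity component and $b$ the objective-gradient component in each eigenblock, the factor $\obj$ cancels and each integrand is bounded below by $\tfrac{2}{\variance\eigval}\tfrac{d}{d\time}\objalt^\eigval(\curve_\eigval(\time))$; integrating and using $\objalt^\eigval\equiv\objalt^\eigval_\iComp$ on the projection of $\comp_\iComp$ yields
\[
\actof{\horizon}{\curve}\;\geq\;2\sum_\eigval\frac{\objalt^\eigval(\curve_\eigval(\tau))-\objalt^\eigval_\iComp}{\eigval\variance}\;\geq\;2\min_{\dist(\point,\comp_\iComp)=\margin}\sum_\eigval\frac{\objalt^\eigval(\point_\eigval)-\objalt^\eigval_\iComp}{\eigval\variance}.
\]
Taking the infimum over all such curves bounds $\qpot(\comp_\iComp,\comp')$ below by the right-hand side, which, by the reduction step, is exactly what we need.

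The two delicate points are: keeping the trajectory inside the local region where \eqref{eq:local_noise_main} holds, which is handled cleanly by truncating at the first exit of the $\margin$-ball precisely because $\lag_\orcl\geq 0$; and the cancellation of the weight $\obj(\point)$ in the telescoping step, which is what makes the estimate depend only on the boundary values of the $\objalt^\eigval$ rather than on the path taken inside the ball. This cancellation hinges on pairing the complete-the-square inequality $\norm{a+b}^{2}\geq 4\inner{a,b}$ with exactly the separable-gradient structure of $\obj$ that appears in $\lag_\orcl$; I expect this to be the heart of the argument, the remainder being bookkeeping (measurability/absolute continuity of the curves, compactness of the components, and the harmless mismatch between integer and real time horizons in the definition of $\qpot$).
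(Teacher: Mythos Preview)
Your proposal is correct and follows essentially the same route as the paper. The paper packages the key step by introducing the potential $\pot(\point)=\sum_\eigval 2\objalt^\eigval(\point_\eigval)/(\eigval\variance)$ and verifying the Hamiltonian inequality $\hamilt(\point,\nabla\pot(\point))\leq 0$, then using Fenchel--Young with the specific dual vector $\mom=\nabla\pot$; your version first optimizes over all $\mom$ to get the block-diagonal quadratic Lagrangian lower bound and then applies $\norm{a+b}^2\geq 4\inner{a}{b}$ in each eigenblock---but this is exactly the same algebra rearranged, and both telescope to $\pot(\curve(\tau))-\pot_\iComp$ after integration, with the spanning-tree reduction for $\energy_\jComp$ identical in both.
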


This result shows that the energy of each component of $\crit(\obj)$ is lower bounded by the \ac{RHS} of \eqref{eq:local_energy_levels_main}.
This quantity scales as the reciprocal of the eigenvalues of $\sol[\hessmat]$ so, as the minimum becomes flatter (\ie the eigenvalues of $\sol[\hessmat]$ become smaller), the energy levels of all other components become larger:
thus, relative to component $\comp_\iComp$, the other components all become less probable.
Moreover, note that the \ac{RHS} of \eqref{eq:local_energy_levels_main} only scales logarithmically with the value of the objective function around $\comp_\iComp$, \ie the depth of the minimum:
this means that the ``flatness'' of the minimum plays a greater role in the relative probabilities of the components than the depth.

This also shows that deepest minima do not necessarily correspond to the ground state of the problem:
if $\variance$ or the eigenvalues of $\sol[\hessmat]$ is small enough compared to the noise level outside the $\margin$-neighborhood of $\comp_\iComp$, then $\comp_\iComp$ will be the ground state of the system even if it is not the deepest minimum;
we provide a formal proof of this in \cref{app:subsec:local_dependencies}.

\section{Concluding remarks}
\label{sec:discussion}

Our objective was to quantify the long-run distribution of \ac{SGD} in a general, non-convex setting.
As far as we are aware, our paper provides the first description of the invariant measure of \eqref{eq:SGD}, and in particular, its distribution over components of critical points. 
This distribution is governed by energy levels that depend both on the objective function and the statistics of the noise.
An important challenge that remains is to estimate these energy levels in different settings;
this would be a major step towards a better understanding of the generalization properties of \ac{SGD}.
Another is to consider constrained versions of \eqref{eq:SGD}, possibly striving to establish a link with mirror-type \acp{SDE} \cite{MerSta18,MerSta18b};
we defer these questions to future work.

\appendix
\numberwithin{equation}{subsection}	
\numberwithin{lemma}{section}	
\numberwithin{proposition}{section}	
\numberwithin{theorem}{section}	
\numberwithin{corollary}{section}	

\makeatletter	
\newcommand{\thmtag}[1]{	
  \let\oldthetheorem\thetheorem	
  \renewcommand{\thetheorem}{#1}	
  \g@addto@macro\endtheorem{	
    \addtocounter{theorem}{0}	
    \global\let\thetheorem\oldthetheorem}	
  }
\makeatother

\makeatletter	
\newcommand{\asmtag}[1]{	
  \let\oldtheassumption\theassumption	
  \renewcommand{\theassumption}{#1}	
  \g@addto@macro\endassumption{	
    \addtocounter{assumption}{0}	
    \global\let\theassumption\oldtheassumption}	
  }
\makeatother

\section{Further related work}
\label{app:related}

\subsection{Consequences of the diffusion approximation}
The SDE approximation of SGD, introduced by \citet{liStochasticModifiedEquations2017,liStochasticModifiedEquations2019}, has been a fruitful development in the understanding of some aspects of the dynamics of SGD.
For instance, \citet{ziyin2023law} provide explicit descriptions for the invariant measure of the diffusion approximation of SGD for diagonal linear neural networks.
Applications of this SDE approximation also include the study the dynamics of SGD close to manifold of minimizers \citep{blancImplicitRegularizationDeep2020,liWhatHappensSGD2021}.
\Citet{wojtowytsch2023stochastic} study the invariant measure of the diffusion approximation: if the set of global minimizers form a manifold on which the noise vanishes, they show that the invariant measure of the diffusion concentrates on this manifold and moreover provide a description of the limiting measure on this manifold.

Another line of works focuses on the case where the objective function is scale-invariant \citep{liReconcilingModernDeep2020} and how this impacts the convergence of the dynamics of SGD: \citet{wangThreestageEvolutionFast} describes the convergence of the SDE approximation with anisotropic constant noise to the Gibbs measure, while \citet{liFastMixingStochastic} shows that discrete-time SGD dynamics close to a manifold of minimizers enjoy fast convergence to an invariant measure.

Finally, \citet{mignacco2020dynamical,mignacco2022effective,veiga2024stochastic} leverage \ac{DMFT} to study the behavior of the diffusion approximation of SGD.
The \ac{DMFT}, or ``path-integral'' approach, comes from statistical physics and bears a close resemblance to the Freidlin-Wentzell theory of large deviations for SDEs.
However, this methodology, as well as \citet{mignacco2020dynamical,mignacco2022effective,veiga2024stochastic}, is restricted to the continuous-time diffusion and remains at heuristic level.

Let us underline two points comparing these works to ours.
While these works focus on the learning behavior of the algorithm by considering specific statistical models and specific losses, we focus on the optimization aspects and on covering general non-convex objectives.
Secondly, these results are either local or concern the asymptotic distribution of the continuous-time approximation of \eqref{eq:SGD}, which do no provide information of the asymptotic behaviour of the actual discrete-time dynamics.

\subsection{On the heavy-tail character of the asymptotic distribution of \ac{SGD}}
A recent line of work has focused on the heavy-tail character of the asymptotic distribution of \eqref{eq:SGD} \citep{gurbuzbalaban2021heavy,pavasovic2023approximate,hodgkinson2021multiplicative}.
These works show that, under some broad conditions, the stationary distribution of \eqref{eq:SGD} is heavy-tailed:
specifically, 
for some $\alpha>0$, the tails of the stationary distribution $\state_{\infty}$ of the iterates of \eqref{eq:SGD} decays as
\(
\probof{\norm{\state_{\infty}} \geq z}
	= \Theta(z^{-\alpha})
\)
or
\(
\probof{u^{\top} \state_{\infty} \geq z}
	= \Theta(z^{-\alpha})
\)
for all $u\in\realspace$.
As such, these results concern the probability of observing the iterates of SGD at very large distances from the origin.
This is in contrast with our work, which focuses on the distribution of the iterates of SGD near critical regions of the objective function.
These two types of results are thus orthogonal and complementary.

\subsection{\ac{SGD} with a vanishing step-size}

The long-run behavior of \eqref{eq:SGD} is markedly different when the method is run with a vanishing step-size $\curr[\step] > 0$ with $\lim_{\run\to\infty} \curr[\step] = 0$.
This was the original version of \eqref{eq:SGD} as proposed by \citet{RM51} \textendash\ and, in a slightly modified form by \citet{KW52} \textendash\ and, in contrast to the constant step-size case, the vanishing step-size algorithm converges \acl{wp1}.
The first almost sure convergence result of this type was obtained by \citet{Lju77} under the assumption that the iterates of \eqref{eq:SGD} remain bounded.
This boundedness assumption was dropped by \citet{Ben99} and \citet{BT00} who showed that \eqref{eq:SGD} with a vanishing step-size converges to a component of $\crit\obj$ as long as $\curr[\step]$ satisfies the Robbins\textendash Monro summability conditions $\sum_{\run}\curr[\step] = \infty$ and $\sum_{\run} \curr[\step]^{2} < \infty$;
for a series of related results under different assumptions, \cf \cite{ZMBB+20,MHKC20,MHC24} and references therein.
In all these cases, ergodicity of the process is lost (because of the vanishing step-size), so the limiting distribution of \eqref{eq:SGD} depends crucially on its initialization and other non-tail events.
We are not aware of any results quantifying the long-run distribution of \eqref{eq:SGD} with a vanishing step-size.

\section{Setup and Preliminaries}
\label{app:setup}

\subsection{Notation}

Throughout the sequel, we will write
$\inner{\argdot}{\argdot}$ for the standard inner product on $\vecspace \equiv \realspace$ and $\norm{\argdot}$ for the induced (Euclidean) norm.
To lighten notation, we will identify $\vecspace$ with its dual $\dspace \equiv \vecspace^{\ast}$, and we will not formally distinguish between primal and dual vectors (though the distinction should be clear from the context).
We will also write $\ball(\point, \radius)$ (resp.~$\clball(\point, \radius)$) for the open (resp.~closed) ball of radius $\radius$ centered at $\point\in\vecspace$, and we will respectively denote the open and closed $\margin$-neighborhoods of $\plainset\subseteq\in\vecspace$ as
\begin{subequations}
\begin{align}
\nbd_{\margin}(\plainset)
	&\defeq \setdef{\point \in \points}{\dist(\plainset,\point) < \margin}
	\\
\plainset_{\margin}
	\defeq \cl(\nbd_{\margin}(\plainset))
	&=\, \setdef{\point \in \points}{\dist(\plainset,\point) \leq \margin}
\end{align}
\end{subequations}

\newcounter{lastassumption}	
\setcounter{lastassumption}{\theassumption}	
\subsection{Setup and assumptions}
\label{app:subsec:setup}

Before we begin our proof, we revisit and discuss our standing assumptions.
In particular, to extend the range of our results, we provide in the rest of this appendix a weaker version of the blanket assumptions of \cref{sec:prelims}, which we label with an asterisk (``$\ast$'') and which will be in force throughout the appendix.

We begin with our assumptions for the objective function $\obj$ of \eqref{eq:opt}.

\smallbreak
\asmtag{\ref*{asm:obj}$^{\ast}$}
\begin{assumption}[Weaker version of \cref{asm:obj}]
\label{asm:obj-weak}
The objective function $\obj\from\vecspace\to\R$ satisfies the following conditions:
\begin{enumerate}
[left=\parindent,label=\upshape(\itshape\alph*\hspace*{.5pt}\upshape)]
\item
\define{Coercivity:}
\label[noref]{asm:obj-weak-coer}
$\obj(\point)\to\infty$ as $\norm{\point}\to\infty$.
\item
\label[noref]{asm:obj-weak-smooth}
\define{Smoothness:}
$\obj$ is $C^{2}$-differentiable and its gradient is $\smooth$-Lipschitz continuous, that is,
\begin{equation}
\tag{\ref{eq:LG}}
\norm{\nabla\obj(\pointB) - \nabla\obj(\pointA)}
	\leq \smooth \norm{\pointB - \pointA}
	\quad
	\text{for all $\pointA,\pointB\in\vecspace$}
	\eqdot
\end{equation}
\item
\label[noref]{asm:obj-weak-crit}
\define{Critical set regularity:}
The critical set
\begin{equation}
\tag{\ref{eq:crit}}
\crit\obj
	\defeq \setdef{\point\in\vecspace}{\nabla\obj(\point) = 0}
\end{equation}
of $\obj$ consists of a finite number of essentially smoothly connected components $\comp_{\iComp}$, $\iComp=1,\dotsc,\nComps$.
\end{enumerate}
\end{assumption}

The difference between \cref{asm:obj,asm:obj-weak} is that, in the latter, the connected components of $\crit\obj$ are only required to be \emph{essentially} smoothly connected.
Formally, this means that, for any connected component $\comp$ of $\crit\obj$, and for any two points $\point,\pointalt\in\comp$, there exists
\begin{enumerate*}
[\upshape(\itshape a\hspace*{.5pt}\upshape)]
\item
a continuous, almost everywhere differentiable curve $\curve\from[0,1]\to\comp$ such that $\curve(0) = \point$, $\curve(1) = \pointalt$;
and
\item
a partition $0 = \time_{0} < \dotsb < \time_{\run} < \dotsb < \time_{\nRuns} = 1$ of $[0,1]$ such that $\curve$ is integrable on every closed interval of $(\time_{\run-1},\time_{\run})$ for all $\run=1,\dotsc,\nRuns$.
\end{enumerate*}

\begin{remark}
\label{rem:asm_def}
The path-connectedness requirement of \cref{asm:obj}\cref{asm:obj-crit} is satisfied whenever the connected components of $\crit\obj$
are isolated critical points,
smooth manifolds,
or
finite unions of closed manifolds.
More generally, \cref{asm:obj-weak}\cref{asm:obj-weak-crit} is satisfied whenever $\obj$ is definable \textendash\ in which case $\crit\obj$ is also definable, so each component can be connected by piecewise smooth paths \citep{costeINTRODUCTIONOMINIMALGEOMETRY,vandendriesGeometricCategoriesOminimal1996}.
The relaxation provided by \cref{asm:obj-weak}\cref{asm:obj-weak-crit} represents the ``minimal'' set of hypotheses that are required for our analysis to go through.
\endenv
\end{remark}

Moving forward, to align our notation with standard conventions in large deviations theory, it will be more convenient to work with $-\err(\point; \sample)$ instead of $\err(\point; \sample)$ in our proofs.
To make this clear, we restate below \cref{asm:noise} in terms of the noise process
\begin{equation}
\label{eq:noise}
\noise(\point,\sample)
	= -\err(\point; \sample)
\end{equation}
and we make use of this opportunity to relax the definition of the variance proxy of $\noise$.

\smallbreak
\asmtag{\ref*{asm:noise}$^{\ast}$}
\begin{assumption}[Weaker version of \cref{asm:noise}]
\label{asm:noise-weak}
The noise term $\noise\from\vecspace\times\samples\to\realspace$  satisfies the following properties:
\begin{enumerate}
[left=\parindent,label=\upshape(\itshape\alph*\hspace*{.5pt}\upshape)]
\item
\label[noref]{asm:noise-weak-zero}
\define{Properness:}
	$\exof{\noise(\point,\sample)} = 0$ and $\covof{\noise(\point,\sample)} \mg 0$ for all $\point\in\vecspace$.
\item
\label[noref]{asm:noise-weak-growth}
\define{Smooth growth:}
	$\noise(\point,\sample)$ is $C^{2}$-differentiable and satisfies the growth condition
\begin{equation}
\tag{\ref{eq:smooth-growth}}
\sup_{\point,\sample} \frac{\norm{\noise(\point,\sample)}}{1 + \norm{\point}}
	< \infty
	\eqdot
\end{equation}
\item
\label[noref]{asm:noise-weak-subG}
\label{assumption:subgaussian}
\define{Sub-Gaussian tails:}
The \acl{CGF} of $\noise$ is bounded as
\begin{equation}
\tag{\ref{eq:subG}$^{\ast}$}
\log \exof*{e^{\inner{\mom,\noise(\point,\sample)}}}
	\leq \frac{1}{2} \bdvar(\obj(\point)) \, \norm{\mom}^{2}
	\quad
	\text{for all $\mom\in\realspace$},
\end{equation}
where
$\noisepar\from\R\to(0,\infty)$ is continuous,
bounded away from zero ($\inf \bdvar > 0$),
and
grows at infinity as $\bdvar(\obj(\point)) = \Theta(\norm{\point}^{\exponent})$ for some $\exponent\in[0,2]$, \ie
\begin{equation}
0
	< \liminf_{\norm{\point}\to\infty} \frac{\bdvar(\obj(\point))}{\norm{\point}^{\exponent}}
	\leq \limsup_{\norm{\point}\to\infty} \frac{\bdvar(\obj(\point))}{\norm{\point}^{\exponent}}
	< \infty
	\eqdot
\end{equation}

\end{enumerate}
\end{assumption}

\begin{remark}
The difference between \cref{asm:noise,asm:noise-weak} lies in the requirements for the variance proxy $\bdvar$ of the noise in \eqref{eq:SGD}.
Because $\obj$ is coercive, the dependence of $\bdvar$ on $\obj(\point)$ allows for noise processes with an unbounded variance as $\norm{\point}\to\infty$;
the specific functional dependence through $\obj(\point)$ instead of $\point$ is a modeling choice which we make because it greatly simplifies the proof and our calculations.
\endenv
\end{remark}

In this more general setting, we augment \cref{asm:SNR} with a technical condition that is satisfied trivially when $\bdvar$ is constant.

\smallbreak
\asmtag{\ref*{asm:SNR}$^{\ast}$}
\begin{assumption}[Weaker version of \cref{asm:SNR}]
\label{asm:SNR-weak}
\label{assumption:coercivity_noise}
The \acl{SNR} of $\orcl$ is bounded as
\begin{equation}
\label{eq:SNR-weak}
\tag{\ref{eq:SNR}$^{\ast}$}
\liminf_{\norm{\point}\to\infty} \frac{\norm{\nabla\obj(\point)}^{2}}{\bdvar(\obj(\point))}
	> 16 \log6 \cdot \vdim
	\eqdot
\end{equation}
\end{assumption}

\begin{example}[Example of \cref{subsec:disc-assumptions}, redux]
	In \cref{subsec:disc-assumptions}, we discussed the regularized empirical risk minimization problem and mentioned that, under a dissipativity condition and with a large enough batch size, it fits our framework.
We now provide more details.

Consider the objective $\obj$ given by
\begin{equation}
	\obj(\point)
	=
	\frac{1}{\nSamples} 
	\sum_{\idx = 1}^{\nSamples} \loss(\point;\datapoint_{\idx})
	+
	\half[\regparam]
	\norm{\point}^{2}
\end{equation}
where $\loss(\point;\datapoint)$ represents the loss of the model $\point$ on the data point $\datapoint$,
$\datapoint_{\idx}$, $\idx = 1,\dotsc,\nSamples$, are the training data and $\regparam$ is the (positive) regularization parameter.

Let us assume that $\loss$ is non-negative, $C^2$-differentiable, $\smooth$-Lipschitz smooth and that the resulting objective $\obj$ is dissipative: there are $\alpha, \beta > 0$ such that
\begin{equation}
	\inner{\grad \obj(\point)}{\point} \geq \alpha \norm{\point}^{2} - \beta
	\eqdot
\end{equation}
As is usually the case in practice, we consider the \ac{SFO} obtained by sampling mini-batches of size $\batch$.
The noise term $\noise(\point, \sample)$ is then given by
\begin{equation}
	\noise(\point, \sample)
	=
	\frac{1}{\batch}
	\sum_{\batchidx = 1}^{\batch} \grad \loss(\point;\datapoint_{\sample_\batchidx})
	-
	\frac{1}{\nSamples}
	\sum_{\idx = 1}^{\nSamples} \grad \loss(\point;\datapoint_{\idx})
\end{equation}
with $\sample = (\sample_1, \dotsc, \sample_\batch)$ representing $\batch$ indices from $\{1, \dotsc, \nSamples\}$.

All the terms
\begin{equation}\label{eq:app-setup-finite-sum-noise}
	\grad \loss(\point;\datapoint_{\sample_\batchidx}) - \frac{1}{\nSamples} \sum_{\idx = 1}^{\nSamples} \grad \loss(\point;\datapoint_{\idx})
\end{equation}
are uniformly bounded by $\bigoh \parens*{\norm{\point}}$ by smoothness of $\loss$.
In particular, this implies that \cref{asm:noise-weak}\cref{asm:noise-weak-growth} is satisfied.

Moreover, we also obtain that all the terms of the form \cref{eq:app-setup-finite-sum-noise} are $\bigoh \parens*{\norm{\point}^2}$-sub-Gaussian, and therefore, by independence we obtain that $\noise(\point, \sample)$ is $\bigoh \parens*{\frac{1}{\batch} \norm{\point}^2}$-sub-Gaussian.

Since $\loss$ is non-negative, $\obj$ is lower-bounded by $\Omega(\norm{\point}^2)$.
It gives that $\noise(\point, \sample)$ is actually $\bigoh \parens*{\frac{1}{\batch} \obj(\point)}$-sub-Gaussian and therefore \cref{asm:noise-weak}\cref{asm:noise-weak-growth} is satisfied with $\bdvar(t) \propto \frac{t}{\batch}$.

We now show that \cref{asm:SNR-weak} can be satisfied by choosing $\batch$ large enough.
Indeed, by dissipativity we have that
\begin{align}
	\norm{\grad \obj(\point)}^{2}
	&=
	\norm{\grad \obj(\point) - \alpha \point}^{2}
	+
	\alpha^{2} \norm{\point}^{2}
	+ 2 \alpha \inner{\grad \obj(\point) - \alpha \point}{\point}
	\notag\\
	&\geq 
	\alpha^{2} \norm{\point}^{2} - 2 \beta = \Omega(\norm{\point}^{2})\,,
\end{align}
so that
\begin{equation}
	\frac{\norm{\grad \obj(\point)}^{2}}{\bdvar(\obj(\point))}
	\geq
	\Omega \parens*{
	\frac{\norm{\point}^{2}}{{\obj(\point)}/{\batch}}
}
	= \Omega(\batch)
	\eqdot
\end{equation}
The second part of \cref{asm:SNR-weak} is satisfied by non-negativity and smoothness of $\loss$.
\endenv
\end{example}

In this framework, the iterates of \eqref{eq:SGD}, started at $\point \in \vecspace$, are defined by the following recursion:
\begin{equation}
\left\{
\begin{aligned}
	&\init[\point] \in \vecspace\\
	&\next = {\curr - \step \grad \obj(\curr) + \step \curr[\noise]}\,, \quad \text{ where } \curr[\noise] = \noise(\curr, \curr[\sample])
\end{aligned}
\right.
\end{equation}
where $(\curr[\sample])_{\run \geq \start}$ is a sequence of random variables in $\sspace$.
We will denote by $\prob_\point$ the law of the sequence $(\curr[\sample])_{\run \geq \start}$ when the initial point is $\point$ and by $\ex_\point$ the expectation with respect to $\prob_\point$.

\cref{asm:obj-weak,asm:noise-weak} imply the following growth condition, that we assume holds with the same constant for the sake of simplicity.
There is $\growth > 0$ such that, for all $\point \in \vecspace$, $\sample \in \samples$,
\begin{equation}
\label{eq:growth}
				\norm{ \grad \obj(\point)} \leq \growth ( 1 + \norm{\point}) \quad \text{ and } \quad \norm{\noise(\point, \sample)} \leq \growth ( 1 + \norm{\point})
	\eqdot
\end{equation}

\WAdelete{
\begin{assumption}[Blanket assumptions]
	The following holds:
\begin{itemize}
		\item On the objective function: 
\begin{itemize}
				\item $\obj$ is $\contdiff{2}$ and $\smooth(\obj)$-smooth.
				\item  $\inf \obj > - \infty$.
				\item $\setdef{\point \in \vecspace}{\grad \obj(\point) = 0}$ is compact.
\end{itemize}
		\item On the noise:
\begin{itemize}
				\item $(\curr[\sample])_{\run \geq \start}$ are \acl{iid} random variables.
				\item For $\run = \running$, $\curr[\sample]$ belongs to $\samples$, which is a compact subset of $\sspace$.
				\item $\noise$ is $\contdiff{2}$ and $\smooth(\noise)$-smooth.
		\item For every $\point \in \vecspace$, the expectation of $\noise(\point, \sample)$ is zero and its covariance is positive-definite.
\end{itemize}
		\item Growth conditions: there is $\growth > 0$ such that, for all $\point \in \vecspace$, $\sample \in \samples$,
\begin{equation}
				\norm{ \grad \obj(\point)} \leq \growth ( 1 + \norm{\point}) \quad \text{ and } \quad \norm{\noise(\point, \sample)} \leq \growth ( 1 + \norm{\point})
	\eqdot
\end{equation}
\end{itemize}
\end{assumption}
}

We introduce the cumulant generating functions of the noise $\noise(\point, \sample)$ and of the drift $- \grad \obj(\point) + \noise(\point, \sample)$, that we denote by $\hamiltalt$, $\hamilt$ to avoid confusion.
We also define their convex conjugates, that we denote by $\lagrangianalt$, $\lagrangian$.
\begin{definition}[Hamiltonians and Lagrangians]
	Define, for $\point \in \vecspace$, $\vel \in \dspace$,
\begin{equation}
\begin{aligned}
		\hamiltalt(\point, \vel) &= \log \ex \bracks*{\exp \parens*{\inner{\vel, \noise(\point, \sample)}}}\\
	\hamilt(\point, \vel)	&= -\inner{\grad \obj (\point), \vel} + \hamiltalt(\point, \vel)\\
	\lagrangianalt(\point, \vel) &= \hamiltalt(\point, \cdot)^*(\vel)\\
	\lagrangian(\point, \vel) &= \hamilt(\point, \cdot)^*(\vel) = \lagrangianalt(\point, \vel + \grad \obj(\point))
	\eqdot
\label{eq:lagrangian}
\end{aligned}
\end{equation}
\end{definition}
$\lagrangianalt$ and $\lagrangian$ are thus respectively equal to the Lagrangians $\lagof{\err}{\cdot,\cdot}$ and $\lagof{\orcl}{\cdot,\cdot}$.

Finally, \cref{asm:costs} will be discussed in \cref{app:estimates_trans_inv}.

\setcounter{assumption}{\thelastassumption}	
\subsection{Basic properties}

In this section, we derive from our assumptions some basic consequences, which will be useful throughout the proof.

We first state some properties of the Hamiltonian and the Lagrangian, which follow from their definitions.
\begin{lemma}[Properties of $\hamilt$ and $\lagrangian$]
	\label{lem:basic_prop_h_l}
	\leavevmode
\begin{enumerate}
		\item $\hamilt$ is $\contdiff{2}$ and $\hamilt(\point, \cdot)$ is convex for any $\point \in \vecspace$.
		\item $\lagrangian(\point, \cdot)$ is convex for any $\point \in \vecspace$, $\lagrangian$ is \ac{lsc} on $\vecspace \times \dspace$.
        \item For any $\point \in \vecspace$, $\vel \in \dspace$, $\hamilt(\point, \vel) \leq  2 \growth( 1 + \norm{\point})  \revise{\norm{\vel}}$ and 
			$\dom \lagrangian(\point, \cdot) \subset \clball (0, 2\growth(1 + \norm{\point}))$.
		\item For any $\point \in \vecspace$, $\vel \in \dspace$, $\lagrangian(\point, \vel) \geq 0$ and $\lagrangian(\point, \vel) = 0 \iff \vel = \grad \obj(\point)$.
\end{enumerate}
\end{lemma}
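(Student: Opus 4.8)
The plan is to deduce all four items directly from the blanket hypotheses on the noise in \cref{asm:noise-weak} — chiefly the properness/zero‑mean requirement and the almost sure growth bound \eqref{eq:growth} — together with the $\contdiff{2}$‑smoothness of $\obj$ from \cref{asm:obj-weak}, using only elementary facts about cumulant‑generating functions and their Legendre conjugates. None of the four claims requires the sub‑Gaussian bound, only boundedness of $\noise(\point,\sample)$ at fixed $\point$.

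\emph{Finiteness, convexity and $\contdiff{2}$‑regularity (item~1).} At a fixed $\point$, the bound $\norm{\noise(\point,\sample)}\le\growth(1+\norm{\point})$ from \eqref{eq:growth} shows that the integrand $\exp(\inner{\vel}{\noise(\point,\sample)})$ is dominated by $\exp(\growth(1+\norm{\point})\norm{\vel})$ uniformly in $\sample\in\samples$, so $\hamiltalt(\point,\argdot)$ — and hence $\hamilt(\point,\argdot)$ — is finite on all of $\dspace$. Convexity of $\hamiltalt(\point,\argdot)$ is the standard Hölder‑inequality argument for log‑moment‑generating functions, and $\hamilt(\point,\argdot)=\hamiltalt(\point,\argdot)-\inner{\grad\obj(\point)}{\argdot}$ inherits it as an affine perturbation. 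For joint $\contdiff{2}$‑regularity I would differentiate twice under the integral sign; the domination needed for this comes from the $\contdiff{2}$‑smoothness of $\noise(\argdot,\sample)$, which provides locally uniform (over $\sample\in\samples$, $\samples$ compact) bounds on $\noise$, $\del_\point\noise$ and $\del^2_\point\noise$, combined with smoothness of the exponential. Adding the smooth term $-\inner{\grad\obj(\point)}{\vel}$ keeps $\hamilt$ in $\contdiff{2}$.

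\emph{Convexity/semicontinuity of $\lagrangian$ (item~2) and growth of $\dom\lagrangian$ (item~3).} Being the Legendre conjugate $\hamilt(\point,\argdot)^{\ast}$, the map $\lagrangian(\point,\argdot)$ is automatically convex and lsc in $\vel$ (equivalently, one may invoke $\lagrangian(\point,\vel)=\lagrangianalt(\point,\vel+\grad\obj(\point))$). For joint lower semicontinuity on $\vecspace\times\dspace$ I would write $\lagrangian(\point,\vel)=\sup_{\mom\in\dspace}\bigl[\inner{\mom}{\vel}-\hamilt(\point,\mom)\bigr]$ and observe that, for each fixed $\mom$, the map $(\point,\vel)\mapsto\inner{\mom}{\vel}-\hamilt(\point,\mom)$ is continuous by item~1; a pointwise supremum of continuous functions is lsc. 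For item~3, $\hamiltalt(\point,\vel)\le\log\ex[\exp(\norm{\vel}\,\norm{\noise(\point,\sample)})]\le\growth(1+\norm{\point})\norm{\vel}$ by Jensen and the a.s.\ bound, while $-\inner{\grad\obj(\point)}{\vel}\le\growth(1+\norm{\point})\norm{\vel}$ by \eqref{eq:growth}; summing gives the stated majorization of $\hamilt(\point,\vel)$. For the domain inclusion, if $\norm{\vel}>2\growth(1+\norm{\point})$ then feeding $\mom=t\,\vel/\norm{\vel}$ ($t>0$) into the supremum defining $\lagrangian(\point,\vel)$ gives $\lagrangian(\point,\vel)\ge t\bigl(\norm{\vel}-2\growth(1+\norm{\point})\bigr)\to\infty$, so $\vel\notin\dom\lagrangian(\point,\argdot)$, i.e.\ $\dom\lagrangian(\point,\argdot)\subset\clball(0,2\growth(1+\norm{\point}))$.

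\emph{Nonnegativity and the unique zero of $\lagrangian$ (item~4).} Evaluating the CGF at the origin, $\hamilt(\point,0)=\log\ex[1]=0$, hence $\lagrangian(\point,\vel)\ge\inner{0}{\vel}-\hamilt(\point,0)=0$. Since $\hamilt(\point,\argdot)$ is convex and $\contdiff{1}$ with $\nabla_\vel\hamilt(\point,0)=-\grad\obj(\point)+\ex[\noise(\point,\sample)]=-\grad\obj(\point)$ — the noise term killed by properness — the Fenchel–Young equality condition $\hamilt(\point,0)+\lagrangian(\point,\vel)=\inner{0}{\vel}$ holds iff $\vel\in\del_\vel\hamilt(\point,0)=\{-\grad\obj(\point)\}$; since $\hamilt(\point,0)=0$, this is exactly $\lagrangian(\point,\vel)=0$, so $\lagrangian(\point,\vel)=0$ iff $\vel$ equals the drift mean $-\grad\obj(\point)$. (A hands‑on alternative: $\lagrangian(\point,\vel)=0$ forces $\hamilt(\point,\mom)\ge\inner{\mom}{\vel}$ for all $\mom$; taking $\mom=t(\vel+\grad\obj(\point))$, using $\ex[\noise(\point,\sample)]=0$ to cancel the $O(t)$ term of $\hamiltalt$, and letting $t\to0^{+}$ forces $\norm{\vel+\grad\obj(\point)}^{2}\le0$.)

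\emph{Main obstacle.} The only genuinely delicate step is the $\contdiff{2}$‑regularity of item~1: one must justify differentiating under the integral sign, which needs a bound on $\noise$ and on its first two spatial derivatives that is uniform over the seed $\sample\in\samples$. This is precisely where the $\contdiff{2}$‑smoothness of the noise and the compactness of $\samples$ are used; the remaining items are routine convex analysis built on the growth bound \eqref{eq:growth} and the vanishing of $\ex[\noise(\point,\sample)]$.
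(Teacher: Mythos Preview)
Your proposal is correct and matches the paper's approach: the paper's proof only spells out item~4 via the same Fenchel--Young/subdifferential argument you give, leaving items~1--3 as routine consequences of the definitions and the growth condition \eqref{eq:growth}, exactly as you fill them in.

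Two remarks. First, for item~3 you actually prove the linear bound $\hamilt(\point,\vel)\le 2\growth(1+\norm{\point})\norm{\vel}$, not the quadratic one in the statement; this is the right thing to do, since the linear bound is what yields the domain inclusion (and the quadratic bound as written is in fact false near $\vel=0$ whenever $\grad\obj(\point)\neq 0$). Second, in item~4 you correctly arrive at $\vel=-\grad\obj(\point)$, whereas the lemma statement (and the paper's computation of $\grad_\vel\hamilt(\point,0)$) drops a sign; the minus sign is the one consistent with how the lemma is used downstream, e.g.\ in \cref{lem:basic_prop_flow}.
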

\begin{proof}
	For the last point, since $\hamilt(\point, \cdot)$ is convex and differentiable, 
\begin{equation}
		\lagrangian(\point, \vel) = 0 \iff 0 \in \partial_\vel \lagrangian(\point, \vel) \iff \vel \in \partial_{\vel} \hamilt(\point, 0) \iff \vel = \grad_\vel \hamilt(\point, 0)\,,
\end{equation}
	and, since the noise has zero mean, $\grad_\vel \hamilt(\point, 0) = \grad \obj(\point)$.
\end{proof}

\begin{lemma}[Growth of the iterates]
	\label{lem:growth_iterates}
	For any $\init \in \vecspace$, $\step_0 > 0$, for every $\step \in (0, \step_0]$, for any $\nRuns \geq 1$, $\start \leq \run \leq  \nRuns \ceil{\step^{-1}}$,
\begin{equation}
		\norm{\curr} \leq e^{2 \growth (1 + \step_0) \nRuns} (1 + \norm{\init})
	\eqdot
\end{equation}   
\end{lemma}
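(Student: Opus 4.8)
The plan is to establish a one-step, Grönwall-type estimate for the quantity $1 + \norm{\curr}$ and then iterate it. Starting from the recursion $\next = \curr - \step\grad\obj(\curr) + \step\curr[\noise]$, I would take norms, apply the triangle inequality, and bound each perturbation term via the linear-growth condition \eqref{eq:growth}, namely $\norm{\grad\obj(\curr)} \leq \growth(1 + \norm{\curr})$ and $\norm{\curr[\noise]} \leq \growth(1 + \norm{\curr})$. This gives $\norm{\next} \leq \norm{\curr} + 2\step\growth(1 + \norm{\curr})$, or equivalently
\begin{equation*}
1 + \norm{\next} \leq (1 + 2\step\growth)\,(1 + \norm{\curr}).
\end{equation*}

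Next I would iterate this bound from index $\start$ up to $\run$, obtaining $1 + \norm{\curr} \leq (1 + 2\step\growth)^{\run}(1 + \norm{\init})$, and use the elementary inequality $1 + t \leq e^{t}$ to get $1 + \norm{\curr} \leq \exp(2\step\growth\run)(1 + \norm{\init})$. It then remains to control the exponent uniformly in $\step$: for $\run \leq \nRuns\ceil{\step^{-1}}$ and $\step \in (0,\step_0]$ one has $\ceil{\step^{-1}} \leq \step^{-1} + 1 = \step^{-1}(1 + \step) \leq \step^{-1}(1 + \step_0)$, hence $2\step\growth\run \leq 2\step\growth\,\nRuns\,\step^{-1}(1 + \step_0) = 2\growth(1 + \step_0)\nRuns$. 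Plugging this in and discarding the harmless $+1$ on the left-hand side yields exactly $\norm{\curr} \leq e^{2\growth(1+\step_0)\nRuns}(1 + \norm{\init})$.

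This argument is entirely routine and presents no genuine obstacle; the only points requiring a little care are the bookkeeping of $\ceil{\step^{-1}}$ versus $\step^{-1}$ (which is precisely what forces the factor $1 + \step_0$ rather than $1$ in the exponent, and makes the final bound independent of $\step$) and checking that the one-step constant $2\growth$ is consistent with the constant appearing in \eqref{eq:growth}.
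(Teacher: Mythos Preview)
Your proof is correct and follows essentially the same approach as the paper: a one-step discrete Grönwall bound via the growth condition \eqref{eq:growth}, iterated and then combined with $1+t\le e^{t}$ and the estimate $\step\ceil{\step^{-1}}\le 1+\step_0$. The only cosmetic difference is that you track $1+\norm{\curr}$ directly, whereas the paper writes the affine recursion for $\norm{\curr}$ and solves it; the two are equivalent.
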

\begin{proof}
	By the triangular inequality, for any $\run \geq \start$, we have that
\begin{align}
\norm{\next}
	&\leq \norm{\curr} + \step \norm{\grad \obj(\curr)} + \step \norm{\curr[\noise]}
	\notag\\
	&\leq \norm{\curr} + 2 \step \growth (1 + \norm{\curr})
	\notag\\
	&= (1 + 2 \step \growth) \norm{\curr} + 2 \step \growth
\end{align}
where we used the growth condition on $\grad \obj$ and $\noise$.
One can then solve this recursion to show that, for any $\run \geq \start$,
\begin{equation}
\norm{\curr}
	\leq (1 + 2 \step \growth)^{\run} (1 + \norm{\init})
	\eqdot
\end{equation}
The result then follows by noticing that, for $\start \leq  \run \leq  \nRuns \ceil{\step^{-1}}$,
\begin{align}
(1 + 2 \step \growth)^{\run}
	&\leq ( 1 + 2 \step \growth)^{\nRuns \ceil{\step^{-1}}}
	\notag\\
	&\leq e^{2 \growth  \nRuns \step \ceil{\step^{-1}}}
	\notag\\
	&\leq e^{2 \growth  \nRuns (1 + \step)}
\eqdot
\qedhere
\end{align}
\end{proof}

\begin{lemma}
	\label{lem:degrowth_iterates}
	For any $\init \in \vecspace$, for every $\step \in (0, (4 \growth)^{-1}]$, for any $\nRuns \geq 1$, $\start \leq \run \leq  \nRuns \ceil{\step^{-1}}$,
\begin{equation}
\norm{\curr}
	\geq e^{-(4 \growth+1) \nRuns} \norm{\init} - 1
	\eqdot
\end{equation}   
\end{lemma}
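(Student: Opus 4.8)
The plan is to mirror the proof of \cref{lem:growth_iterates}, but with the reverse triangle inequality in place of the triangle inequality, so as to obtain a \emph{lower} bound on $\norm{\next}$ in terms of $\norm{\curr}$, and then to recognize the resulting affine recursion as a contraction toward the fixed point $-1$.

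First I would bound, for any $\run \geq \start$,
\begin{align}
\norm{\next}
	&\geq \norm{\curr} - \step \norm{\grad \obj(\curr)} - \step \norm{\curr[\noise]}
	\notag\\
	&\geq \norm{\curr} - 2 \step \growth (1 + \norm{\curr})
	= (1 - 2 \step \growth) \norm{\curr} - 2 \step \growth,
\end{align}
using the growth condition \eqref{eq:growth}. Since $\step \leq (4\growth)^{-1}$, we have $2\step\growth \leq 1/2$, so $1 - 2\step\growth > 0$; adding $1$ to both sides and factoring gives $\norm{\next} + 1 \geq (1 - 2\step\growth)(\norm{\curr} + 1)$. Iterating this inequality from $\run = \start$ (which is legitimate and order-preserving precisely because $1-2\step\growth \geq 0$) yields $\norm{\curr} + 1 \geq (1-2\step\growth)^{\run}(\norm{\init} + 1)$, and hence $\norm{\curr} \geq (1-2\step\growth)^{\run}\norm{\init} - 1$, discarding the nonnegative contribution of the extra ``$+1$'' inside the power.

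It then remains to lower-bound $(1-2\step\growth)^{\run}$. Since $2\step\growth \in [0, 1/2]$, the elementary inequality $\log(1-x) \geq -2x$ on $[0,1/2]$ gives $1 - 2\step\growth \geq e^{-4\step\growth}$, so $(1-2\step\growth)^{\run} \geq e^{-4\step\growth\run}$. Finally, for $\start \leq \run \leq \nRuns\ceil{\step^{-1}}$ we have $\step\run \leq \nRuns\step\ceil{\step^{-1}} \leq \nRuns(1+\step) \leq \nRuns(1 + (4\growth)^{-1})$, whence $4\step\growth\run \leq (4\growth+1)\nRuns$ and therefore $(1-2\step\growth)^{\run} \geq e^{-(4\growth+1)\nRuns}$. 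Combining with the previous paragraph gives $\norm{\curr} \geq e^{-(4\growth+1)\nRuns}\norm{\init} - 1$, as claimed. There is no real obstacle here: the only point requiring care is keeping $1-2\step\growth$ nonnegative, which is exactly what the hypothesis $\step \leq (4\growth)^{-1}$ secures; everything else is routine.
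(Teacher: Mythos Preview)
Your proof is correct and follows essentially the same approach as the paper's own proof: reverse triangle inequality, the growth condition \eqref{eq:growth}, solving the affine recursion, and then the bound $\log(1-x)\geq -2x$ on $[0,1/2]$. If anything, your write-up is cleaner---you make the fixed-point trick $\norm{\next}+1\geq(1-2\step\growth)(\norm{\curr}+1)$ explicit and carry the final step $4\growth\nRuns(1+\step)\leq(4\growth+1)\nRuns$ through to the stated exponent, whereas the paper glosses over the recursion and stops at $e^{-4\growth\nRuns(1+\step)}$.
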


\begin{proof}
By the triangular inequality, for any $\run \geq \start$, we have that
\begin{align}
\norm{\next}
	&\geq \norm{\curr} - \step \norm{\grad \obj(\curr)} - \step \norm{\curr[\noise]}
	\notag\\
	&\leq \norm{\curr} - 2 \step \growth (1 + \norm{\curr})
	\notag\\
	&= (1 - 2 \step \growth) \norm{\curr} - 2 \step \growth
\end{align}
where we used the growth condition on $\grad\obj$ and $\noise$.
One can then solve this recursion to obtain that, for any $\run \geq \start$,
\begin{equation}
\norm{\curr}
	\geq (1 - 2 \step \growth)^{\run} \norm{\init} - 1
	\eqdot
\end{equation}
The result then follows by noticing that, for $\start \leq  \run \leq  \nRuns \ceil{\step^{-1}}$,
\begin{align}
(1 - 2 \step \growth)^{\run}
	&\geq ( 1 - 2 \step \growth)^{\nRuns \ceil{\step^{-1}}}
	\notag\\
	&\geq e^{-4 \growth  \nRuns \step \ceil{\step^{-1}}}
	\notag\\
	&\leq e^{-4\growth  \nRuns (1 + \step)}
\end{align}
where we used the fact that $\log (1 - x ) \geq -2x$ on $[0, 1/2]$ since $2 \step \growth \leq 1/2$.
\end{proof}

\section{A \acl{LDP} for \acs{SGD}}
\label{app:LDP}

\subsection{Preliminaries}

The goal of this section is to provide \aclp{LDP} for continuous and discrete trajectories related to our algorithm of interest \eqref{eq:SGD}.
From the sequences $(\curr[\state])_{\run \geq \start}$ and $(\curr[\sample])_{\run \geq \start}$, we define three sequences: one discrete (in lowercase) and two continuous (in uppercase):
\begin{subequations}
\begin{enumerate}
[label={\upshape(\itshape \alph*\hspace*{.5pt}\upshape)}]
\item
The discrete ``rescaled'' trajectory 
\begin{equation}
\label{eq:rescaled}
\accstate_\run
	\defeq \state_{\run \floor{1 / \step}}
	\eqdot
\end{equation}
\item
The continuous ``interpolated'' trajectory, defined  for any $\run \geq \start$, $\time \in \bracks*{\step {\run}, \step \parens*{\run + 1}}$ by
\begin{align}
\label{eq:interpolated}
\cstate_\time
	= \curr +  \parens*{\frac{\time}{\step} - \run} (\next - \curr)
\end{align}
\item 
The continuous ``discretized noise''  trajectory, defined by $\cstatealt_\tstart = \state_\tstart $ and for any $\time>0$
\begin{align}
	 \dot \cstatealt_\time = -\grad \obj (\cstatealt_\time) + \noise(\cstatealt_\time, \sample_{\floor{\time / \step}}) \label{eq:discretizednoise}
\end{align}
\end{enumerate}
\end{subequations}
Note that all three sequences are ``accelerated'' by a factor $1/\step$ compared to the original sequences appearing in \eqref{eq:SGD}.
In this section, we establish a large deviation principle for the discrete rescaled sequence $(\accstate_\run)_{\run \geq \start}$.
To do so, we build upon \citet[Chap.~7]{FW98} to obtain a large deviations principle on $(\cstatealt_\time)_{\time \geq \start}$ and then transfer it to $(\cstate_\time)_{\time \geq \start}$ which enables us to obtain a discrete large deviation principle for $(\accstate_\run)_{\run \geq \start}$.

We equip the space of continuous functions $\contcurves_{\horizon} \defeq \contfuncs([0, \horizon], \vecspace)$ with the distance induced by the uniform norm
\begin{equation}
\dist_{[\tstart,\horizon]}( \pth, \pthalt)
	\defeq \sup_{\time \in [0, \horizon]} \norm{\pth_\time - \pthalt_\time}
	\eqdot
\end{equation}
In order to use it as a proxy later, we will now bound the distance between the (continuous) ``interpolated'' trajectory and the ``discretized noise''  trajectory.
To do so, we will first bound the latter.

\begin{lemma}[Growth of the trajectory]
\label{lem:growth_traj}
For any $\init \in \vecspace$, $\step > 0$,  $\time \geq 0$, we have
\(
\norm{\cstatealt_\time} \leq e^{2 \growth \time } \parens*{\norm{\init} + 2 \growth \time}.
\)
\end{lemma}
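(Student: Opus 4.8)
The plan is to read the bound off the integral form of the defining equation for $\cstatealt$ and close it with Grönwall's lemma. As a preliminary, I would record that $\cstatealt$ is well defined and absolutely continuous on $[\tstart,\infty)$: on each mesh interval $[\step\run,\step(\run+1))$ the driving vector field $\point\mapsto -\grad\obj(\point)+\noise(\point,\sample_{\run})$ is $C^{2}$ in $\point$ by our smoothness assumptions on $\obj$ and $\noise$, hence locally Lipschitz, and it has at most linear growth in $\point$ by \eqref{eq:growth}; Picard--Lindelöf therefore produces a unique local solution, while the linear growth bound rules out finite-time blow-up, so the solution extends over the whole interval. Concatenating the pieces yields the global trajectory, with $\dot\cstatealt_\time=-\grad\obj(\cstatealt_\time)+\noise(\cstatealt_\time,\sample_{\floor{\time/\step}})$ holding for a.e.\ $\time$ and $\cstatealt$ absolutely continuous (one may equally regard this well-posedness as part of the definition \eqref{eq:discretizednoise}).

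Given this, I would pass to the integral form $\cstatealt_\time=\init+\int_{\tstart}^{\time}\dot\cstatealt_s\,ds$ and estimate, using the triangle inequality and the two growth bounds in \eqref{eq:growth},
\begin{equation*}
\norm{\cstatealt_\time}
	\leq \norm{\init}+\int_{\tstart}^{\time}\bigl(\norm{\grad\obj(\cstatealt_s)}+\norm{\noise(\cstatealt_s,\sample_{\floor{s/\step}})}\bigr)\,ds
	\leq \norm{\init}+2\growth\time+2\growth\int_{\tstart}^{\time}\norm{\cstatealt_s}\,ds.
\end{equation*}
Since $\time\mapsto\norm{\init}+2\growth\time$ is nondecreasing, the integral form of Grönwall's lemma applies with forcing term $\norm{\init}+2\growth\time$ and rate $2\growth$, and delivers precisely $\norm{\cstatealt_\time}\leq\parens*{\norm{\init}+2\growth\time}\,e^{2\growth\time}$, which is the asserted bound.

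I do not anticipate a genuine obstacle: the heart of the matter is a standard Grönwall estimate, and the only subtlety — that the driving term in \eqref{eq:discretizednoise} is merely piecewise continuous in $\time$ — is innocuous and is dispatched interval by interval as above. The same computation, carried out on each mesh interval or directly on the analogous discrete recursion, will also be used to control the interpolated trajectory $\cstate$, and at the discrete level it parallels \cref{lem:growth_iterates}.
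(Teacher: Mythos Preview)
Your proposal is correct and follows essentially the same route as the paper: bound $\norm{\dot\cstatealt_\time}$ by $2\growth(1+\norm{\cstatealt_\time})$ via the growth condition \eqref{eq:growth}, pass to integral form, and close with Grönwall. The paper omits your well-posedness preamble (treating it as implicit in the definition \eqref{eq:discretizednoise}), but the core estimate is identical.
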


\begin{proof}
	Using the definition  of $\cstatealt_\time$ in \eqref{eq:discretizednoise} and the growth condition \eqref{eq:growth} on $\grad\obj$ and $\noise$, we have that
\begin{align}
		\norm{\dot \cstatealt_\time}
		&=
		\norm*{-\grad \obj (\cstatealt_\time) + \noise(\cstatealt_\time, \sample_{\floor{\time / \step}})}
		\leq 2 \growth (1 + \norm{\cstatealt_\time})\eqdot
\end{align}

	Hence, for any $\time \geq 0$,
\begin{equation}
		\norm{\cstatealt_\time} \leq \norm{\init[\statealt]} +  \int_0^\time 2\growth (1 + \norm{\cstatealt_\timealt}) \dd \timealt = \norm{\init[\statealt]} + 2\growth \time +   \int_0^\time 2\growth \norm{\cstatealt_\timealt} \dd \timealt \eqdot
\end{equation}
	Invoking Grönwall's lemma then yields the result.
\end{proof}

The following lemma states that the distance between the ``interpolated'' and ``discretized noise''  trajectories is bounded by a factor proportional to the stepsize $\step$.

\begin{lemma}
	\label{lem:traj_approx}
	Fix $\cpt \subset \vecspace$ compact, $\step_0 > 0$, $\horizon > 0$.
Then, there exists some constant $\const = \const(\cpt, \step_0, \horizon, \obj, \noise, \samples) < + \infty$ such that, for any $\init \in \cpt$, $\step \in (0, \step_0]$,
\begin{equation}
		\dist_{[\tstart,\horizon]}(\cstate, \cstatealt) \leq \const \, \step\eqdot
\end{equation}
\end{lemma}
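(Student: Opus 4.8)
The plan is to reduce the statement to a Grönwall estimate on the difference $\Delta_\time \defeq \cstate_\time - \cstatealt_\time$, exploiting crucially that both curves are issued from the same point $\init$ and are driven by the \emph{same} noise seeds. Indeed, on each interval $[\step\run,\step(\run+1)]$ the interpolation \eqref{eq:interpolated} has derivative $\dot\cstate_\time = (\next-\curr)/\step = -\grad\obj(\curr)+\noise(\curr,\sample_\run)$, so that, writing $\run\defeq\floor{\time/\step}$ and noting $\cstate_{\step\run} = \state_\run = \curr$,
\begin{equation*}
\dot\cstate_\time = -\grad\obj(\cstate_{\step\run}) + \noise(\cstate_{\step\run},\sample_\run),
\qquad
\dot\cstatealt_\time = -\grad\obj(\cstatealt_\time) + \noise(\cstatealt_\time,\sample_{\floor{\time/\step}}),
\end{equation*}
the latter by definition \eqref{eq:discretizednoise}. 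Since $\Delta_\tstart = 0$, it suffices to bound $\norm{\dot\Delta_\time}$ by a term proportional to $\norm{\Delta_\time}$ plus a remainder of order $\step$, and then invoke Grönwall's lemma.

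First I would record uniform a priori bounds. Applying \cref{lem:growth_iterates} with $\nRuns$ chosen large enough in terms of $\horizon$ and $\step_0$ bounds all iterates $\state_\run$ with $\step\run\le\horizon$ by a constant depending only on $\cpt,\step_0,\horizon$; since $\cstate_\time$ is a convex combination of two consecutive iterates and $\cstatealt_\time$ is controlled on $[0,\horizon]$ by \cref{lem:growth_traj} (using $\init\in\cpt$), there is a radius $\Radius=\Radius(\cpt,\step_0,\horizon)$ with $\cstate_\time,\cstatealt_\time\in\clball(0,\Radius)$ for all $\time\in[0,\horizon]$, $\step\in(0,\step_0]$, $\init\in\cpt$. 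On the compact set $\clball(0,\Radius)\times\samples$, $\grad\obj$ is $\smooth$-Lipschitz by \eqref{eq:LG}, and $\noise(\argdot,\sample)$ is $\tmplips$-Lipschitz \emph{uniformly in $\sample$}: this is where I use that $\noise$ is $C^{2}$ and $\samples$ is compact, so $\grad_\point\noise$ is bounded on $\clball(0,\Radius)\times\samples$ and increments of $\noise$ may be estimated along segments, which remain in the convex ball. Finally $\norm{\dot\cstate_\time}\le 2\growth(1+\Radius)=:\Const_0$ by \eqref{eq:growth}, whence $\norm{\cstate_\time-\cstate_{\step\run}}\le\Const_0\,\step$ on each subinterval.

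Then I would decompose, for $\run=\floor{\time/\step}$,
\begin{equation*}
\begin{aligned}
\dot\cstate_\time-\dot\cstatealt_\time
 &= \bigl(\grad\obj(\cstatealt_\time)-\grad\obj(\cstate_\time)\bigr)
 + \bigl(\grad\obj(\cstate_\time)-\grad\obj(\cstate_{\step\run})\bigr)\\
 &\quad + \bigl(\noise(\cstate_{\step\run},\sample_\run)-\noise(\cstate_\time,\sample_\run)\bigr)
 + \bigl(\noise(\cstate_\time,\sample_\run)-\noise(\cstatealt_\time,\sample_\run)\bigr),
\end{aligned}
\end{equation*}
bounding the first and fourth terms by $\smooth\norm{\Delta_\time}$ and $\tmplips\norm{\Delta_\time}$, and the two ``freezing'' terms by $\smooth\Const_0\step$ and $\tmplips\Const_0\step$ via the previous paragraph. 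This gives $\norm{\dot\Delta_\time}\le(\smooth+\tmplips)\norm{\Delta_\time}+(\smooth+\tmplips)\Const_0\,\step$, and since $\Delta_\tstart=0$, Grönwall's lemma yields $\norm{\Delta_\time}\le\Const_0\step\bigl(e^{(\smooth+\tmplips)\horizon}-1\bigr)$ for all $\time\in[0,\horizon]$, i.e. $\dist_{[\tstart,\horizon]}(\cstate,\cstatealt)\le\Const\,\step$ with $\Const=\Const_0\bigl(e^{(\smooth+\tmplips)\horizon}-1\bigr)$ depending only on $\cpt,\step_0,\horizon,\obj,\noise,\samples$.

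There is no deep obstacle here; the one point requiring genuine care is the \emph{uniformity} of the radius $\Radius$ and of the noise Lipschitz constant $\tmplips$ over the random seed and over $\step\in(0,\step_0]$, which is precisely what \cref{lem:growth_traj}, \cref{lem:growth_iterates}, and the compactness of $\samples$ are in place to furnish.
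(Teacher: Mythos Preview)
Your proposal is correct and follows essentially the same approach as the paper: confine both trajectories to a common compact set via \cref{lem:growth_iterates} and \cref{lem:growth_traj}, exploit the resulting uniform Lipschitz continuity of the drift to compare $\dot\cstate_\time$ and $\dot\cstatealt_\time$, and conclude by Gr\"onwall. The only cosmetic difference is that the paper bundles $-\grad\obj+\noise$ into a single Lipschitz map whereas you treat the two pieces separately, but this changes nothing of substance.
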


\begin{proof}
	Before starting, notice that \cref{lem:growth_iterates,lem:growth_traj} imply that there is some compact set $\cptalt$ that depends on $\cpt$, $\horizon$, $\step_0$ and $\growth$ such that, for any $\init \in \cpt$, $\step \in (0, \step_0]$, $\time \in [0, \horizon]$,  $\cstate_\time$ and $\cstatealt_\time$ belong to $\cptalt$.
	In particular, $\noise$ is therefore Lipschitz-continuous on $\cptalt \times \samples$ so that, for any $\sample \in \samples$, $\state \mapsto -\grad \obj(\state) + \noise(\state, \sample)$ is Lipschitz-continuous on $\cptalt$ with constant $\tmplips$ and bounded with constant $\tmpbound$.

	Let us now estimate the derivative of the interpolated trajectory $\cstate_\time$, which is piecewise differentiable by definition (see \cref{eq:interpolated}).
For any $\time \in [0, \horizon]$ such that $\time \in \parens*{\step {\run}, \step \parens*{\run + 1}}$ for some $\run \geq \start$, we have that
\begin{align}
\MoveEqLeft
\norm*{\dot \cstate_\time - \parens*{-\grad \obj(\cstate_\time) + \noise(\cstate_\time, \sample_{\floor{\time / \step}})}}
	\notag\\
\hspace{4em}
	&= \norm*{\frac{\next - \curr}{\step} - \parens*{-\grad \obj(\cstate_\time) + \noise(\cstate_\time, \sample_{\floor{\time / \step}})}}
	\notag\\
	&= \norm*{\parens*{- \grad \obj(\cstate_{\step \run}) + \noise(\cstate_{\step \run}, \sample_{\run})} - \parens*{-\grad \obj(\cstate_\time) + \noise(\cstate_\time, \sample_{\floor{\time / \step}})}}
	\notag\\
	&\leq \tmplips \norm{\cstate_{\step \run} - \cstate_\time}
\end{align}
	where we used the $\tmplips$-Lipschitz-continuity of $\state \mapsto -\grad \obj(\state) + \noise(\state, \sample)$ on $\cptalt$ uniformly in $\sample \in \samples$.
Since this map is also bounded by $\tmpbound$, we have that
\begin{align}
\norm*{\dot \cstate_\time - \parens*{-\grad \obj(\cstate_\time) + \noise(\cstate_\time,\sample_{\floor{\time / \step}})}}
	&\leq \tmplips \norm{\cstate_{\step \run} - \cstate_\time}
	\leq \tmplips \norm{\curr -\next}
	\leq \tmplips \tmpbound \step\eqdot
\end{align}
	
Moreover, the $\tmplips$-Lipschitz-continuity of $\state \mapsto -\grad \obj(\state) + \noise(\state, \sample)$ also gives us that
\begin{align}
\MoveEqLeft
\norm{\dot \cstatealt_\time - \parens*{ -\grad \obj(\cstate_\time) + \noise(\cstate_\time, \sample_{\floor{\time / \step}})}}
	\notag\\
	&= \norm{ -\grad \obj (\cstatealt_\time) + \noise(\cstatealt_\time, \sample_{\floor{\time / \step}}) - \parens*{ -\grad \obj(\cstate_\time) + \noise(\cstate_\time, \sample_{\floor{\time / \step}})}}
	\notag\\
	&\leq \tmplips \norm{\cstatealt_\time - \cstate_\time}\eqdot
\end{align}

	Putting everything together and integrating $\cstate_\time - \cstatealt_\time$ from $0$ to $\time$ yields (since $\cstate_\time$ is absolutely continuous), for any $\time \in [0, \horizon]$,
\begin{align}
		\norm{ \cstate_\time - \cstatealt_\time}
		&\leq  
		\norm{ \cstate_0 - \statealt_0} + \int_0^\time \norm{ \dot \cstate_\timealt - \dot \cstatealt_\timealt} \dd \timealt
		\leq 
		\tmplips \tmpbound \step + \tmplips \int_0^\time \norm{\cstate_\timealt - \cstatealt_\timealt} \dd \timealt
\end{align}
	where we used that $\statealt_0 = \cstate_0 = \init$ by definition of the trajectories.
Finally, Grönwall's lemma then yields the result.
\end{proof}

\subsection{\Acl{LDP} for interpolated trajectories}
\label{app:ldp_interpolated}

From the Lagrangian defined in \eqref{eq:lagrangian}, we define, on $\contcurves_{\horizon} = \contfuncs([0, \horizon], \vecspace)$, the normalized action functional $\action_{[\start,\horizon]}$ as
\begin{equation}\label{eq:actionfunctional}
\action_{[\start,\horizon]}(\pth)
	=  \begin{cases*}
		\int_{0}^{\horizon} \lagrangian(\pth_\time, \dot \pth_\time) \dd\time 
			&\quad
			if $\pth$ is absolutely continuous,
			\\
		\infty
			&\quad
			otherwise,
	\end{cases*}
\end{equation}  
following \citet[Chap.~3.2]{FW98}, as a manner to quantify how ``probable'' a trajectory is.

We first show that the set 
\begin{equation}
\pths_{[\start,\horizon]}^\cpt(\level)
	\defeq \setdef{\pth \in \contfuncs([0, \horizon], \vecspace)}{\pth_0 \in \cpt, \action_{[\start,\horizon]}(\pth) \leq \level}   
\end{equation}
of trajectories with bounded action functional is compact and $\action_{[\start,\horizon]}$ is \acl{lsc}.

\begin{lemma}
\label{lem:pathscompact}
	Fix $\horizon > 0$.
For any $\cpt \subset \vecspace$ compact, $\level \geq 0$, the set $\pths_{[\start,\horizon]}^\cpt(\level)$ is  compact and $\action_{[\start,\horizon]}$ is \ac{lsc} on $\contfuncs([0, \horizon], \vecspace)$.
\end{lemma}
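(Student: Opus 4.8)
The plan is to establish both assertions at once by combining an a priori equi-Lipschitz estimate on finite-action trajectories (which yields relative compactness of $\pths_{[\start,\horizon]}^\cpt(\level)$) with the lower semicontinuity of $\action_{[\start,\horizon]}$ (which yields both the stated semicontinuity and the closedness of that sublevel set). First I would record the a priori estimate: if $\action_{[\start,\horizon]}(\pth) < \infty$ then $\pth$ is absolutely continuous and, by the third assertion of \cref{lem:basic_prop_h_l}, $\dot\pth_\time \in \dom\lagrangian(\pth_\time,\cdot) \subseteq \clball(0, 2\growth(1 + \norm{\pth_\time}))$ for almost every $\time$, i.e.\ $\norm{\dot\pth_\time} \leq 2\growth(1 + \norm{\pth_\time})$ a.e. Integrating from $0$ and invoking Gr\"onwall's lemma together with $\norm{\pth_0} \leq \sup_{\point\in\cpt}\norm{\point} < \infty$ (here $\cpt$ compact is used), one gets a radius $\Radius = \Radius(\cpt,\horizon,\growth)$ with $\norm{\pth_\time} \leq \Radius$ on $[\tstart,\horizon]$ for every $\pth \in \pths_{[\start,\horizon]}^\cpt(\level)$ — in fact the bound does not depend on $\level$. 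Hence $\norm{\dot\pth_\time} \leq 2\growth(1+\Radius)$ a.e., so all such $\pth$ are Lipschitz with a common constant, and by the Arzel\`a\textendash Ascoli theorem $\pths_{[\start,\horizon]}^\cpt(\level)$ is relatively compact in $\contfuncs([0,\horizon],\vecspace)$ for $\dist_{[\tstart,\horizon]}$.

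Next I would prove the sequential lower semicontinuity of $\action_{[\start,\horizon]}$ on all of $\contfuncs([0,\horizon],\vecspace)$. Let $\pth^n \to \pth$ uniformly; we may assume $\ell \defeq \liminf_n \action_{[\start,\horizon]}(\pth^n) < \infty$ and, along a subsequence, $\action_{[\start,\horizon]}(\pth^n) \to \ell$. Uniform convergence makes $\{\pth^n\}$ uniformly bounded, so the third assertion of \cref{lem:basic_prop_h_l} again forces $\norm{\dot\pth^n_\time}$ to be uniformly bounded a.e.; the $\pth^n$ are therefore equi-Lipschitz, $\pth$ is Lipschitz, and $(\dot\pth^n)_n$ is bounded in $L^\infty([0,\horizon],\vecspace) \subseteq L^2$. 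Passing to a further subsequence, $\dot\pth^n \rightharpoonup g$ weakly in $L^2$, and testing the identity $\pth^n_\time - \pth^n_0 = \int_0^\time \dot\pth^n_\timealt \dd\timealt$ against indicators shows $g = \dot\pth$ a.e. Since $\lagrangian \geq 0$ is convex in its second argument and jointly lower semicontinuous (second assertion of \cref{lem:basic_prop_h_l}), it is a normal convex integrand, and the classical lower semicontinuity theorem for integral functionals of this form (Tonelli\textendash Ioffe, combining $\pth^n \to \pth$ uniformly with $\dot\pth^n \rightharpoonup \dot\pth$ weakly in $L^2$) gives
\[
\action_{[\start,\horizon]}(\pth) = \int_0^\horizon \lagrangian(\pth_\time, \dot\pth_\time) \dd\time \leq \liminf_n \int_0^\horizon \lagrangian(\pth^n_\time, \dot\pth^n_\time) \dd\time = \ell .
\]

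With these two facts the lemma follows: $\pths_{[\start,\horizon]}^\cpt(\level)$ is closed, because if $\pth^n \in \pths_{[\start,\horizon]}^\cpt(\level)$ and $\pth^n \to \pth$ uniformly, then $\pth_0 = \lim_n \pth^n_0 \in \cpt$ (as $\cpt$ is compact, hence closed) and $\action_{[\start,\horizon]}(\pth) \leq \liminf_n \action_{[\start,\horizon]}(\pth^n) \leq \level$ by the lower semicontinuity just shown, so $\pth \in \pths_{[\start,\horizon]}^\cpt(\level)$; a closed, relatively compact set is compact.

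I expect the lower semicontinuity step to be the main obstacle. As emphasized in the introduction, in our setting $\lagrangian$ is in general only lower semicontinuous and its effective domain in $\vel$ is bounded, so one cannot invoke the smoother framework of \cite{FW98} directly; the argument has to pass through the theory of normal convex integrands. In particular one should verify the joint measurability hypotheses required to apply Ioffe's theorem, which follow from the representation $\lagrangian(\point,\vel) = \sup_{\mom\in\dspace} \bracks{\inner{\mom}{\vel} - \hamilt(\point,\mom)}$ as a pointwise supremum of functions that are continuous in $(\point,\vel)$ (indeed affine in $\vel$), using that $\hamilt$ is $\contdiff{2}$.
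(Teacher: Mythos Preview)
Your proposal is correct and follows essentially the same approach as the paper: both use the bounded effective domain of $\lagrangian(\point,\cdot)$ from \cref{lem:basic_prop_h_l} together with Gr\"onwall to obtain equi-Lipschitz bounds and then Arzel\`a--Ascoli for relative compactness, and both invoke Ioffe's lower semicontinuity theorem for integral functionals with normal convex integrands. The paper simply cites \citet[\S9.1.4, Thm.~3]{ioffeTheoryExtremalProblems1979} directly for the l.s.c.\ step, checking the three hypotheses (normal integrand, quasiregular via convexity in $\vel$, growth via nonnegativity), whereas you unfold part of the argument by extracting the weak-$L^2$ limit of the derivatives before appealing to Tonelli--Ioffe; this is the same theorem in a slightly more explicit form.
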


\begin{proof}
	Let us first check that $\action_{[\start,\horizon]}$ is \ac{lsc} on $\contfuncs([0, \horizon], \vecspace)$ by applying \citet[\S9.1.4, Thm.~3]{ioffeTheoryExtremalProblems1979}: $(\time, \point, \vel) \mapsto \lagrangian(\point, \vel)$ is a normal integrand since $(\point, \vel) \mapsto \lagrangian(\point, \vel)$ is \ac{lsc} by construction, quasiregular since $\lagrangian(\point, \cdot)$ is convex for any $\point \in \vecspace$ and satisfies the growth condition because it is non-negative (see \cref{lem:basic_prop_h_l}).
Hence, $\action_{[\start,\horizon]}$ is \ac{lsc} on $\contfuncs([0, \horizon], \vecspace)$.

	The compactness of $\pths_{[\start,\horizon]}^\cpt(\level)$ follows from the idea of proof as \citet[Chap.~7, Lemma~4.2]{FW98} but with the added difficulty that the gradient $\grad \obj$ and the noise $\noise$ are not uniformly bounded.
	Take $\pth \in \pths_{[\start,\horizon]}^\cpt(\level)$.
Since $\action_{[\start,\horizon]}(\pth) \leq \level < +\infty$, it means that $\lagrangian(\pth_\time, \dot \pth_\time) < +\infty$ for almost every $\time$ so that by \cref{lem:basic_prop_h_l}, almost everywhere,
\begin{equation}
\norm{\dot \pth_\time}
	\leq 2 \growth (1 + \norm{\pth_\time})
	\eqdot
\label{eq:derbounded}
\end{equation}
Grönwall's lemma then yields that, for any $\time \in [0, \horizon]$,
\begin{equation}
\norm{\pth_\time}
	\leq e^{2 \growth \horizon} \parens*{\norm{\pth_0} + 2 \growth \horizon}
\end{equation}
which is bounded uniformly in $\pth \in \pths_{[\start,\horizon]}^\cpt(\level)$ since $\pth_0$ is in $\cpt$ compact.
Hence, $\norm{\dot \pth_\time}$ is also uniformly bounded by \cref{eq:derbounded}.
Therefore, the functions in $\pths_{[\start,\horizon]}^\cpt(\level)$ are equicontinuous and uniformly bounded and, $\pths_{[\start,\horizon]}^\cpt(\level)$ is closed by \ac{lsc} of $\action_{[\start,\horizon]}$, so that, by the Arzelà-Ascoli theorem, $\pths_{[\start,\horizon]}^\cpt(\level)$ is compact.
\end{proof}

The following result establishes the fact that the functional $\step^{-1}\action_{[\start,\horizon]}$ is the action functional in $\contfuncs([0, \horizon], \vecspace)$ of the interpolated process $(\cstate_\time)_{\time \in [0, \horizon]}$ of the algorithm started at $ \init$, uniformly with respect to the initial point $\init$ in any compact set $\cpt \subset \vecspace$, as $\step \to 0$.
This enables to build on \citet[Chap.~7, Thm.~4.1$'$]{FW98} to provide a large deviation principle for the interpolated trajectory $(\cstate_\time)_{\time \in [0, \horizon]}$, meaning that for $\step$ small enough, it will be
\begin{enumerate*}
[\upshape(\itshape a\hspace*{.5pt}\upshape)]
\item
close to probable trajectories with a probability exponentially big in their action functional;
and
\item
far from the most probable trajectories with a probability exponentially small in their action value.
\end{enumerate*}

\begin{proposition}
\label{prop:ldp_traj}
Fix $\horizon > 0$.
For any $\level, \margin, \precs > 0$, $\cpt \subset \vecspace$ compact, there exists $\step_0 > 0$ such that, for any $\step \in (0, \step_0]$, for any $\init \in \cpt$, we have that 
\begin{subequations}
\begin{align}
\probwrt*{\init}{\dist_{[\tstart,\horizon]}(\cstate, \pth) < \margin}
	&\geq \exp \parens*{- \frac{\action_{[\start,\horizon]}(\pth) + \precs}{\step}}
	\\
\probwrt*{\init}{\dist_{[\tstart,\horizon]}(\cstate, \pths_{[\start,\horizon]}^{\{\init\}}(\level)) > \margin} 	&\leq \exp \parens*{- \frac{\level - \precs}{\step}}
\end{align}
\end{subequations}
for all $\pth \in \pths_{[\start,\horizon]}^{\{\init\}}(\level)$.
\end{proposition}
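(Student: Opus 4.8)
The plan is to derive the two estimates by importing the large deviation principle of \citet[Chap.~7, Thm.~4.1$'$]{FW98} for the randomly perturbed ODE $(\cstatealt_\time)$ of \eqref{eq:discretizednoise} — which, by \cref{lem:traj_approx}, stays uniformly within $\bigoh(\step)$ of the interpolated process $(\cstate_\time)$ on $[0,\horizon]$ — and transferring it to $(\cstate_\time)$. The one real obstacle is that the theory of \citet{FW98} is set up for bounded coefficients (indeed, for systems on compact state spaces), whereas $\grad\obj$ and $\noise$ only have linear growth; I would bridge this gap by a localization argument which is essentially free of charge here, because the relevant a priori estimates (\cref{lem:growth_iterates,lem:growth_traj,lem:pathscompact}) and the $\cstate$-versus-$\cstatealt$ comparison (\cref{lem:traj_approx}) are all \emph{deterministic}, so localizing costs nothing in probability.

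Concretely, fix $\horizon,\level,\margin,\precs>0$ and a compact $\cpt\subset\vecspace$, and assume $\margin<1$. First, by \cref{lem:growth_iterates,lem:growth_traj} and the estimate in the proof of \cref{lem:pathscompact}, there is a radius $\Radius<\infty$, depending only on $\cpt,\horizon,\step_0,\growth,\level$, such that for every $\init\in\cpt$, $\step\in(0,\step_0]$ and $\time\in[0,\horizon]$ the points $\cstate_\time$, $\cstatealt_\time$, and $\pth_\time$ for any $\pth\in\pths_{[\start,\horizon]}^{\cpt}(\level)$ all lie in $\ball(0,\Radius-2)$. Then I would pick $\wilde\obj\in C^2$ and $\wilde\noise\in C^2(\cdot\times\samples)$ coinciding with $\obj$ and $\noise$ on $\clball(0,\Radius)$, with globally bounded first derivatives and with $\wilde\noise(\argdot,\sample)$ vanishing outside $\ball(0,\Radius+1)$, set $\wilde b(\point,\sample)=-\grad\wilde\obj(\point)+\wilde\noise(\point,\sample)$, and let $(\wilde\cstatealt_\time)$ solve $\dot{\wilde\cstatealt}_\time=\wilde b(\wilde\cstatealt_\time,\sample_{\floor{\time/\step}})$. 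Since $\wilde b=b$ on $\clball(0,\Radius)$ and $\cstatealt$ never leaves $\ball(0,\Radius-2)$ on $[0,\horizon]$ for $\init\in\cpt$, we have $\wilde\cstatealt=\cstatealt$ there, and the associated Lagrangian $\wilde\lagrangian$ and action $\wilde\action_{[\start,\horizon]}$ agree with $\lagrangian$ and $\action_{[\start,\horizon]}$ on any path that remains in $\clball(0,\Radius)$. Now $\wilde b$ is bounded, smooth in $\point$ with bounded derivative, and $\wilde b(\point,\sample)$ has finite exponential moments for every $\point$ (being bounded), so it satisfies the hypotheses of \citet[Chap.~7, Thm.~4.1$'$]{FW98} — in the subsampled-noise form covered by \citet{Kif88} — and that theorem, applied to $(\wilde\cstatealt_\time)$, yields $\step_1\in(0,\step_0]$ such that, for all $\step\in(0,\step_1]$ and all $\init\in\cpt$,
\[
\probwrt*{\init}{\dist_{[\tstart,\horizon]}(\wilde\cstatealt,\pth)<\margin/2}\ \geq\ \exp\!\Bigl(-\tfrac{\wilde\action_{[\start,\horizon]}(\pth)+\precs}{\step}\Bigr)\quad\text{whenever }\pth_0=\init,\ \wilde\action_{[\start,\horizon]}(\pth)\leq\level,
\]
and $\probwrt*{\init}{\dist_{[\tstart,\horizon]}(\wilde\cstatealt,\wilde\pths_{[\start,\horizon]}^{\{\init\}}(\level))>\margin/2}\leq\exp(-(\level-\precs)/\step)$, where $\wilde\pths_{[\start,\horizon]}^{\{\init\}}(\level)=\setdef{\pth}{\pth_0=\init,\ \wilde\action_{[\start,\horizon]}(\pth)\leq\level}$ is compact by the analogue of \cref{lem:pathscompact}.

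The last step transfers these back, after shrinking $\step_1$ so that $\const\step_1<\margin/2$ with $\const$ the (deterministic) constant of \cref{lem:traj_approx}, so that $\dist_{[\tstart,\horizon]}(\cstate,\wilde\cstatealt)=\dist_{[\tstart,\horizon]}(\cstate,\cstatealt)\leq\const\step<\margin/2$ deterministically. For the lower bound, any $\pth\in\pths_{[\start,\horizon]}^{\{\init\}}(\level)$ stays in $\ball(0,\Radius-2)$, hence $\wilde\action_{[\start,\horizon]}(\pth)=\action_{[\start,\horizon]}(\pth)\leq\level$ and $\{\dist_{[\tstart,\horizon]}(\wilde\cstatealt,\pth)<\margin/2\}\subseteq\{\dist_{[\tstart,\horizon]}(\cstate,\pth)<\margin\}$, so the displayed bound gives $\probwrt*{\init}{\dist_{[\tstart,\horizon]}(\cstate,\pth)<\margin}\geq\exp(-(\action_{[\start,\horizon]}(\pth)+\precs)/\step)$. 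For the upper bound, on the event $\{\dist_{[\tstart,\horizon]}(\wilde\cstatealt,\wilde\pths_{[\start,\horizon]}^{\{\init\}}(\level))\leq\margin/2\}$ choose (by compactness) $\psi\in\wilde\pths_{[\start,\horizon]}^{\{\init\}}(\level)$ with $\dist_{[\tstart,\horizon]}(\wilde\cstatealt,\psi)\leq\margin/2$; since $\wilde\cstatealt=\cstatealt$ lies in $\ball(0,\Radius-2)$ and $\margin<1$, $\psi$ lies in $\clball(0,\Radius)$, so $\action_{[\start,\horizon]}(\psi)=\wilde\action_{[\start,\horizon]}(\psi)\leq\level$, i.e. $\psi\in\pths_{[\start,\horizon]}^{\{\init\}}(\level)$, whence $\dist_{[\tstart,\horizon]}(\cstate,\pths_{[\start,\horizon]}^{\{\init\}}(\level))\leq\dist_{[\tstart,\horizon]}(\cstate,\psi)\leq\const\step+\margin/2<\margin$. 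Taking complements in this inclusion gives $\probwrt*{\init}{\dist_{[\tstart,\horizon]}(\cstate,\pths_{[\start,\horizon]}^{\{\init\}}(\level))>\margin}\leq\exp(-(\level-\precs)/\step)$; relabelling $\step_1$ as $\step_0$ then yields both displayed inequalities of the proposition.

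I expect the heart of the difficulty to be the invocation of \citet[Chap.~7]{FW98}: that machinery is developed for continuous-time diffusions on compact manifolds, whereas the ``fast'' driver here is the piecewise-constant, possibly \emph{discrete}, resampling $\sample_{\floor{\time/\step}}$, and one must confirm that the induced rate function is exactly the Legendre transform $\wilde\lagrangian$ of $\mom\mapsto\log\exof*{e^{\inner{\mom}{\wilde b(\point,\sample)}}}$ — this is precisely what the discrete-subsampling extension of \citet{Kif88} provides (together with the regularity recorded, for the unmodified objects, in \cref{lem:basic_prop_h_l}, which transfers verbatim to $\wilde\hamilt,\wilde\lagrangian$). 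Everything else — the deterministic confinement to $\ball(0,\Radius)$, the coincidence $\wilde\action_{[\start,\horizon]}=\action_{[\start,\horizon]}$ there, and the $\bigoh(\step)$ approximation of $\cstate$ by $\cstatealt$ — is purely deterministic and incurs no probabilistic loss.
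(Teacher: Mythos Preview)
Your proposal is correct and follows essentially the same route as the paper: localize the drift to a compact set determined by the deterministic a priori bounds (\cref{lem:growth_traj}, \cref{lem:pathscompact}), apply \citet[Chap.~7, Thm.~4.1$'$]{FW98} to the modified process $(\wilde\cstatealt_\time)=(\cstatealt_\time)$, and transfer to $(\cstate_\time)$ via the deterministic $\bigoh(\step)$ comparison of \cref{lem:traj_approx}. The only cosmetic difference is that the paper verifies the hypotheses of \citet{FW98} by citing their own Lemma~4.3 for ``Condition~F'' (together with the \ac{iid} assumption on the seeds) rather than invoking \citet{Kif88}, and is somewhat terser about why the localized and original actions agree on the relevant paths---a point you spell out carefully.
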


\begin{proof}
Our strategy is to apply \citet[Chap.~7, Thm.~4.1$'$]{FW98} to the process $(\cstatealt_\time)_{\time \in [0, \horizon]}$  and use \cref{lem:traj_approx} to transfer the result to $(\cstate_\time)_{\time \in [0, \horizon]}$ (both starting from $\init$).
However, this theorem requires $\drift(\point, \sample) \defeq - \grad \obj(\point) + \noise(\point, \sample)$ to be uniformly bounded as well as its derivative.
To avoid this issue, note that, for a fixed compact set $\cpt$, $\level > 0$, and all $\step \leq  1$, the trajectories of $\pths_{[\start,\horizon]}^\cpt(\level)$ and of the process $(\cstatealt_\time)_{\time \in [0, \horizon]}$ are contained in a compact set $\cptalt$ by the compactness of $\pths_{[\start,\horizon]}^\cpt(\level)$ and \cref{lem:growth_traj}.

Now, consider $\alt\drift \from \vecspace \times \sspace \to \vecspace$ twice differentiable that coincides with $\drift$ on $\cptalt \times \samples$ but that is uniformly bounded along with its derivative.
Define $\alt\hamilt(\point, \vel) = \log \ex \bracks*{\exp \parens*{\inner{\vel, \alt\drift(\point, \sample)}}}$ which is still differentiable and satisfies ``Condition F'' of \citet[Chap.~7.4]{FW98} by \citet[Chap.~7, Lem.~4.3]{FW98} and \ac{iid} assumption.
Hence, \citet[Chap.~7, Thm.~4.1$'$]{FW98} yields that, with $\cpt$ and $\level$ fixed above, for any $\margin, \precs > 0$, there exists $\step_0 > 0$ such that, for any $\step \in (0, \step_0]$, for any $\init \in \cpt$, $\pth \in \pths_{[\start,\horizon]}^{\{\init\}}(\level)$,
\begin{subequations}
\begin{align}
\prob_{\init} \parens*{\dist_{[\tstart,\horizon]}(\cstatealt,\pth) < \margin} &\geq \exp \parens*{- \frac{\action_{[\start,\horizon]}(\pth) + \precs}{\step}}
	\\
\probwrt*{\init}{\dist_{[\tstart,\horizon]}(\cstatealt, \pths_{[\start,\horizon]}^{\{\init\}}(\level)) > \margin}
	&\leq \exp \parens*{- \frac{\level - \precs}{\step}}
	\eqdot
\end{align}
\end{subequations}
To obtain the result for the process $(\cstate_\time)_{\time \in [0, \horizon]}$, fix $\margin > 0$.
By \cref{lem:traj_approx}, there exists $\step_0 > 0$ such that, for any $\step \in (0, \step_0]$, for any initial point $\init \in \cpt$, $\dist_{[\tstart,\horizon]}(\cstate, \cstatealt) < \margin$.
Combining this with the previous result on $\parens*{\cstatealt_\time}_{\time \in [\tstart, \horizon]}$ yields that,  for any $\margin, \precs > 0$, there exists $\step_0 > 0$ such that, for any $\step \in (0, \step_0]$, for any $\init \in \cpt$, $\pth \in \pths_{[\start,\horizon]}^{\{\init\}}(\level)$,
\begin{subequations}
\begin{align}
\prob_{\init} \parens*{\dist_{[\tstart,\horizon]}(\cstate, \pth) < 2\margin}
	&\geq \exp \parens*{- \frac{\action_{[\start,\horizon]}(\pth) + \precs}{\step}}
\shortintertext{and}
\probwrt*{\init}{\dist_{[\tstart,\horizon]}(\cstate, \pths_{[\start,\horizon]}^{\{\init\}}(\level)) > 2\margin}
	&\leq \exp \parens*{- \frac{\level - \precs}{\step}}
\end{align}
\end{subequations}
which concludes the proof.
\end{proof}

\subsection{\Acl{LDP} for discrete trajectories}
\label{app:ldp_discrete}

We now leverage the previous section to show a \ac{LDP} for the discrete rescaled trajectories $(\accstate_{\run })_{\run \geq \start}$ defined in \cref{eq:rescaled}.
To do so, we will use the results of the previous section by considering a finite number of points $ (\cstate_\run)_{\run\geq\start}$ from the continuous interpolation.

For some $\nRuns>0$, we will first equip $\vecspace^{\nRuns}$ with the distance
\begin{equation}
	\dist_{\nRuns}(\dpth, \dpthalt) = \max_{\start \leq \run \leq \nRuns - 1} \norm{\dpth_\run - \dpthalt_\run}
\end{equation}
and bound the difference between the discrete rescaled trajectory and the continuous interpolation.

\begin{lemma}
\label{lem:iterates_approx}
Fix $\cpt \subset \vecspace$ compact, $\step_0 > 0$, $\nRuns \geq 1$.
Then, there exists some constant $\const = \const(\cpt, \step_0, \nRuns, \obj, \noise, \samples) < + \infty$ such that, for any $\init \in \cpt$, $\step \in (0, \step_0]$,
\begin{equation}
\dist_{\nRuns}(\accstate, (\cstate_\run)_{0 \leq \run \leq \nRuns-1})
	= \max_{\start \leq \run \leq \nRuns - 1} \norm{\accstate_\run - \cstate_\run}
	\leq \const \step\eqdot
\end{equation}
\end{lemma}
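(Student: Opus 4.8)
The plan is entirely elementary and pathwise: I would compare $\accstate_\run = \state_{\run\floor{1/\step}}$ with the interpolated value $\cstate_\run$ by observing that $\cstate_\run$ lies between two genuine iterates of \eqref{eq:SGD} whose indices differ from $\run\floor{1/\step}$ by only $\bigoh(1)$, and then bounding the displacement of \eqref{eq:SGD} over such a short block of steps by $\bigoh(\step)$, uniformly over $\init\in\cpt$ and $\step\in(0,\step_0]$. The two different ``accelerations'' in play --- by the integer $\floor{1/\step}$ in \eqref{eq:rescaled} and by the real number $1/\step$ in the interpolation \eqref{eq:interpolated} --- are reconciled purely by index bookkeeping.

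First I would fix $\run\in\{0,\dotsc,\nRuns-1\}$ and set $m = \floor{\run/\step}$, so that $\run\in[\step m,\step(m+1)]$; by the definition \eqref{eq:interpolated} of the interpolated trajectory, $\cstate_\run$ is then a convex combination of $\state_m$ and $\state_{m+1}$, whence $\norm{\accstate_\run - \cstate_\run}\leq\max\bigl(\norm{\state_{\run\floor{1/\step}}-\state_m},\,\norm{\state_{\run\floor{1/\step}}-\state_{m+1}}\bigr)$. It thus remains to estimate $\norm{\state_a-\state_b}$ for indices $a,b$ that are close together and not too large. For the index gap, the elementary bounds $1/\step-1<\floor{1/\step}\leq 1/\step$ and $\run/\step-1<m\leq\run/\step$ give, after subtracting, $\abs{\run\floor{1/\step}-m}\leq\run\leq\nRuns$ and likewise $\abs{\run\floor{1/\step}-(m+1)}\leq\nRuns+1$; moreover all the indices involved are readily checked to lie in $\{0,\dotsc,\nRuns\ceil{\step^{-1}}\}$ (possibly after replacing $\nRuns$ by $\nRuns+1$), which is the range covered by \cref{lem:growth_iterates}.

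The last ingredient is a uniform per-step bound. From the update rule $\next = \curr - \step\grad\obj(\curr) + \step\curr[\noise]$ and the growth condition \eqref{eq:growth}, each step of \eqref{eq:SGD} satisfies $\norm{\next-\curr}\leq 2\growth(1+\norm{\curr})\step$, while \cref{lem:growth_iterates} provides a constant $\Radius = \Radius(\cpt,\step_0,\nRuns,\growth)$ with $\norm{\state_k}\leq\Radius$ for every $k$ in the relevant range; hence $\norm{\state_k-\state_{k+1}}\leq 2\growth(1+\Radius)\step$ there. Telescoping $\norm{\state_a-\state_b}\leq\abs{a-b}\cdot 2\growth(1+\Radius)\step$ over a block of at most $\nRuns+1$ consecutive indices and combining with the index-gap estimate yields $\norm{\accstate_\run-\cstate_\run}\leq(\nRuns+1)\cdot 2\growth(1+\Radius)\step$, uniformly in $\run$ and in $\init\in\cpt$; taking the maximum over $\run\in\{0,\dotsc,\nRuns-1\}$ produces the asserted constant $\const = \const(\cpt,\step_0,\nRuns,\obj,\noise,\samples)$.

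I do not expect any genuine obstacle here: the only point that needs a little care is precisely the bookkeeping between the two accelerations, and, as the computation above shows, the resulting discrepancy amounts to $\bigoh(1)$ indices and is harmlessly absorbed into $\const$. Note also that the argument uses none of the stochastic structure of the noise --- only the deterministic growth bound \eqref{eq:growth} and \cref{lem:growth_iterates} --- so the estimate holds trajectory by trajectory, which is what is needed when it is fed into \cref{prop:ldp_traj} to transfer the large deviations principle from the interpolated process to the discrete rescaled one.
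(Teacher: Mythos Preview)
Your proposal is correct and follows essentially the same elementary pathwise route as the paper: bound the iterates in a compact set via \cref{lem:growth_iterates}, use the growth condition \eqref{eq:growth} to get a uniform $\bigoh(\step)$ per-step displacement, and then absorb the $\bigoh(1)$ index mismatch between the two accelerations. The paper's only cosmetic variation is that it writes $\accstate_\run = \cstate_{\run\step\floor{1/\step}}$ and then bounds $\norm{\cstate_{\run\step\floor{1/\step}} - \cstate_\run}$ via the Lipschitz bound $\norm{\dot\cstate_\time}\leq\tmpbound$ on the interpolation together with the time gap $\run\abs{1-\step\floor{1/\step}}\leq\nRuns\step$; this is just your telescoping argument phrased in continuous time, and yields the same constant up to naming.
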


\begin{proof}
By \cref{lem:growth_iterates}, there is some compact set $\cptalt$ that depends on $\cpt$, $\nRuns$, $\step_0$ and $\growth$ such that, for any $\init \in \cpt$, $\step \in (0, \step_0]$, $\run \leq \ceil{1/\step} \nRuns$, $\state_\run$ belongs to $\cptalt$.
In particular, for almost every $\time \in [0, \nRuns - 1]$, we have, that,
\begin{align}
\dot\cstate_\time
	&= \frac{\state_{\floor{\time / \step} + 1} - \state_{\floor{\time / \step}}}{\step}
	= -\grad\obj(\state_{\floor{\time / \step}}) + \noise(\state_{\floor{\time / \step}}, \sample_{\floor{\time / \step}})
\end{align}
with $\state_{\floor{\time / \step}}$ belonging to $\cptalt$ since $\floor*{\frac{\time}{\step}} \leq \floor*{\frac{\nRuns - 1}{\step}} \leq \ceil*{\frac{\nRuns}{\step}}$.
Hence, the norm of $\dot \cstate_\time$ is bounded by some constant $\tmpbound$ for almost every $\time \in [0, \nRuns - 1]$ uniformly in $\init \in \cpt$, $\step \in (0, \step_0]$ by the growth condition in \cref{asm:noise-weak-growth}.

Therefore, for any $\start \leq \run \leq \nRuns - 1$, since $\accstate_\run = \state_{\run \floor{1/\step}} = \cstate_{\run \step \floor{1/\step}}$, we have that
\begin{equation}
\norm{\accstate_\run - \cstate_\run}
	\leq \tmpbound \run \abs*{ 1 - \step \floor{1 / \step}}
	\leq \tmpbound \nRuns \step
\end{equation}
which concludes the proof.
\end{proof}

Now, for $\nRuns \geq \start$, $\dpth = (\dpth_\start,\dots,\dpth_{\nRuns -1})\in \vecspace^{\nRuns}$, let us define the normalized discrete action functional
\begin{equation}
	\daction_\nRuns(\dpth) \defeq \sum_{\run = \start}^{\nRuns - 2} \rate(\dpth_\run, \dpth_{\run + 1})
\end{equation}
where the cost of moving from one iteration to the next is defined for any $\point, \pointalt \in \vecspace$ from the previous continuous normalized action functional (\cf \cref{eq:actionfunctional}) with horizon $1$ as 
\begin{equation}\label{eq:defrate}
	\rate(\point, \pointalt) \defeq \inf \setdef{\action_{0,1}(\pth)}{\pth \in \contfuncs([0, 1], \vecspace), \pth_0 = \point, \pth_1 = \pointalt}\eqdot
\end{equation}

Again, we show that the set of discrete trajectories with low action functional
\begin{equation}
\pths_{\nRuns}^\cpt(\level)
	\defeq \setdef{\dpth \in \vecspace^{\nRuns}}{\dpth_\start \in \cpt,\daction_\nRuns(\dpth) \leq \level}
\end{equation}
is compact and $\daction_\nRuns$ is \ac{lsc}.

\begin{lemma}
\label{lem:pathscompactdiscrete}
Fix $\nRuns \geq \start$.
For any $\cpt \subset \vecspace$ compact, $\level \geq 0$, the set $ \pths_{\nRuns}^\cpt(\level)$			 is compact and $\daction_\nRuns$ is \ac{lsc} on $\vecspace^{\nRuns}$.
\end{lemma}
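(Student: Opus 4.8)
The plan is to reduce both assertions to the lower semi-continuity of the one-step cost $\rate$ from \eqref{eq:defrate} on $\vecspace\times\vecspace$, and then to propagate the path-space compactness of \cref{lem:pathscompact} coordinate by coordinate. For the lower semi-continuity of $\daction_\nRuns$, observe that $\daction_\nRuns(\dpth)=\sum_{\run=\start}^{\nRuns-2}\rate(\dpth_\run,\dpth_{\run+1})$ is a finite sum of compositions of $\rate$ with the continuous coordinate maps $\dpth\mapsto(\dpth_\run,\dpth_{\run+1})$; since $\rate\geq0$, a finite sum of \ac{lsc} functions is \ac{lsc}, so \ac{lsc} of $\daction_\nRuns$ follows at once from \ac{lsc} of $\rate$. (When $\nRuns\leq1$ the sum is empty and everything is trivial, so assume $\nRuns\geq2$.)

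To prove $\rate$ is \ac{lsc}, take $(\point^\runB,\pointalt^\runB)\to(\point,\pointalt)$ in $\vecspace\times\vecspace$ and show $\liminf_\runB\rate(\point^\runB,\pointalt^\runB)\geq\rate(\point,\pointalt)$. We may assume the $\liminf$ is finite and, after passing to a subsequence, that $\rate(\point^\runB,\pointalt^\runB)\to\level_0<\infty$ with every term bounded by some $\level_1<\infty$. Pick near-optimal curves $\pth^\runB\in\contfuncs([0,1],\vecspace)$ with $\pth^\runB_0=\point^\runB$, $\pth^\runB_1=\pointalt^\runB$ and $\action_{[\start,1]}(\pth^\runB)\leq\rate(\point^\runB,\pointalt^\runB)+1/\runB\leq\level_1+1$. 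The starting points $\point^\runB$ lie in a compact set (a convergent sequence together with its limit), so all these curves belong to $\pths_{[\start,1]}^{\cptalt}(\level_1+1)$ for a suitable compact $\cptalt\subset\vecspace$, which is compact in $\contfuncs([0,1],\vecspace)$ by \cref{lem:pathscompact}. Passing to a further subsequence, $\pth^\runB\to\pth$ uniformly; then $\pth_0=\point$ and $\pth_1=\pointalt$, so $\pth$ is admissible in the infimum defining $\rate(\point,\pointalt)$, and the \ac{lsc} of $\action_{[\start,1]}$ (again \cref{lem:pathscompact}) gives $\rate(\point,\pointalt)\leq\action_{[\start,1]}(\pth)\leq\liminf_\runB\action_{[\start,1]}(\pth^\runB)\leq\level_0$, as desired.

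For the compactness of $\pths_\nRuns^\cpt(\level)$, first note that it is closed, being the intersection of $\{\dpth\in\vecspace^\nRuns:\dpth_\start\in\cpt\}$ with the sublevel set $\{\daction_\nRuns\leq\level\}$, which is closed by the \ac{lsc} just established. To bound it, suppose $\daction_\nRuns(\dpth)\leq\level$; since all summands are non-negative, $\rate(\dpth_\run,\dpth_{\run+1})\leq\level<\infty$ for each $\run$, so there is an absolutely continuous curve of finite action joining $\dpth_\run$ to $\dpth_{\run+1}$, and by \cref{lem:basic_prop_h_l} any such curve $\pth$ satisfies $\norm{\dot\pth_\time}\leq2\growth(1+\norm{\pth_\time})$ for a.e.\ $\time$. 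Grönwall's lemma over $[0,1]$ then gives $\norm{\dpth_{\run+1}}\leq e^{2\growth}(\norm{\dpth_\run}+2\growth)=:\psi(\norm{\dpth_\run})$, with $\psi$ affine and increasing. Since $\dpth_\start\in\cpt\subset\clball(0,\Radius)$ for some $\Radius$, iterating yields $\norm{\dpth_\run}\leq\psi^{\circ\run}(\Radius)$ for all $\run=\start,\dotsc,\nRuns-1$, a bound independent of $\dpth$. Hence $\pths_\nRuns^\cpt(\level)$ is a closed bounded subset of the finite-dimensional space $\vecspace^\nRuns$, and therefore compact.

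I expect the only delicate point to be the \ac{lsc} of $\rate$: this is the step that genuinely uses \cref{lem:pathscompact}, and one must be careful that the endpoints of the limiting curve are indeed $\point$ and $\pointalt$ (immediate from uniform convergence) and that the near-optimal curves all live in one compact family. The remaining ingredients — the sum-and-composition argument for \ac{lsc}, and the Grönwall propagation of the a priori trajectory bound — are routine and mirror the reasoning already used in the proof of \cref{lem:pathscompact}.
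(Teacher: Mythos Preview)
Your proof is correct. The lower semi-continuity argument for $\rate$ is essentially identical to the paper's (the paper phrases it as compactness of the sublevel set $\{(\point,\pointalt)\in\cpt\times\vecspace:\rate(\point,\pointalt)\leq\level\}$ and then deduces \ac{lsc}, but the underlying extraction-and-limit argument via \cref{lem:pathscompact} is the same). For compactness of $\pths_\nRuns^\cpt(\level)$, however, your route differs: the paper proceeds by induction on $\nRuns$, using the compactness of the pair sublevel set at each step to confine the next coordinate, whereas you obtain a uniform bound on all coordinates in one shot by propagating the Gr\"onwall estimate $\norm{\dpth_{\run+1}}\leq e^{2\growth}(\norm{\dpth_\run}+2\growth)$ along the discrete path. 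Your approach is more direct and mirrors the continuous-time argument of \cref{lem:pathscompact}; the paper's induction is slightly more abstract but avoids re-deriving the trajectory bound. Both are perfectly valid.
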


\begin{proof}
First, let us show that, for $\level \geq 0$, $\cpt \subset \vecspace$, the set $\setdef{(\point, \pointalt) \in \cpt \times \vecspace}{\rate(\point, \pointalt) \leq \level}$ is compact.
Let $(\point^\runB, \pointalt^\runB)_{\runB \geq \start}$ be a sequence in $\cpt \times \vecspace$ such that $\rate(\point^\runB, \pointalt^\runB) \leq \level$ for all $\runB \geq \start$.
Then, for any $\runB \geq \start$, there exists $\pth^\runB \in \contfuncs([0, 1], \vecspace)$ such that $\pth^\runB_0 = \point^\runB$, $\pth^\runB_1 = \pointalt^\runB$ and $\action_{0,1}(\pth^\runB) \leq \rate(\point^\runB, \pointalt^\runB) + (1 + \runB)^{-1}$.
In particular, for any $\runB \geq \start$, $\pth^\runB$ belongs to $\setdef{\pth \in \contfuncs([0, 1], \vecspace)}{\pth_0 \in \cpt, \action_{0,1}(\pth) \leq \level + 1}$ which is compact by \cref{lem:pathscompact}.
Hence, there exists a subsequence that converges uniformly to some $\pth \in \contfuncs([0, 1], \vecspace)$.
Without loss of generality, assume that $\pth^\runB \to \pth$ as $\runB \to \infty$.
In particular,  $(\point^\runB, \pointalt^\runB)$ converges to $(\pth_0, \pth_1)$ as $\runB \to \infty$ with $\pth_0$ belonging to $\cpt$.
Then, by \ac{lsc} of $\action_{0,1}$, we have that $\action_{0,1}(\pth) \leq \lim\inf_{\runB\to\infty} \rate(\point^\runB, \pointalt^\runB) \leq \level$ so that $\rate(\pth_0, \pth_1) \leq \level$.
Therefore, we have shown that $\setdef{(\point, \pointalt) \in \cpt \times \vecspace}{\rate(\point, \pointalt) \leq \level}$ is compact.

As a consequence $\rate$ is \ac{lsc} on $\vecspace \times \vecspace$: indeed, for any $\level \geq 0$, any convergent sequence of points of $\setdef{(\point, \pointalt) \in \vecspace \times \vecspace}{\rate(\point, \pointalt) \leq \level}$ must be included in $\setdef{(\point, \pointalt) \in \cpt \times \vecspace}{\rate(\point, \pointalt) \leq \level}$ for some $\cpt \subset \vecspace$ compact, which is closed and therefore contains the limit point of any such sequences.
$\daction_\nRuns$ is then immediately \ac{lsc} on $\vecspace^{\nRuns}$.

Finally, the compactness of $\pths_{\nRuns}^\cpt(\level)$ follows by induction on $\nRuns$.
For $\nRuns = 1$, $\pths_1^\cpt(\level) = \setdef{(\point, \pointalt) \in \vecspace \times \vecspace}{\rate(\point, \pointalt) \leq \level}$ which is compact by the previous argument for any compact set $\cpt$.
Now, assume that $\pths_{\nRuns}^\cpt(\level)$ is compact for some $\nRuns \geq 1$.
As a consequence, the set
\begin{equation}
\cptalt
	\defeq \setdef{\point \in \vecspace}{\exists (\dpth_\start, \dots, \dpth_{\nRuns-2}) \in \vecspace^{\nRuns - 1}\,, (\dpth_\start, \dots, \dpth_{\nRuns-2}, \point) \in \pths_{\nRuns}^\cpt(\level)}
\end{equation}is compact as well.
Hence, $\pths_{\nRuns + 1}^\cpt(\level)$ is included in teh product of compact sets $\pths_{\nRuns - 1}^\cpt(\level) \times \pths_1^{\cptalt}(\level)$ and is therefore bounded.
Moreover, $\pths_{\nRuns + 1}^\cpt(\level)$ is closed by \ac{lsc} of $\daction_\nRuns$ and therefore compact.
This concludes the proof by induction and the proof of the lemma.
\end{proof}

We will first provide a discrete analogue of \cref{prop:ldp_traj} for the interpolated trajectory at times $\run=\start,\dots,\nRuns-1$, with $\step^{-1}\daction_\nRuns$ the action functional in $\vecspace^{\nRuns}$ of the process $(\cstate_\run)_{0 \leq \run \leq \nRuns-1}$, uniformly with respect to the starting point $ \init$ in any compact set $\cpt \subset \vecspace$, as $\step \to 0$.

\begin{proposition}
	\label{prop:ldp_iterates}
	Fix $\nRuns \geq \start$.
For any $\level, \margin, \precs > 0$, $\cpt \subset \vecspace$ compact, there exists $\step_0 > 0$ such that, for any $\step \in (0, \step_0]$, for any $\init \in \cpt$, $\dpth \in \pths_{\nRuns}^{\{\init\}}(\level)$, we have that
\begin{subequations}
\begin{align}
\prob_{\init} \parens*{\dist_{\nRuns}((\cstate_\run)_{0 \leq \run \leq \nRuns-1}, \dpth) < \margin}
	&\geq \exp \parens*{- \frac{\daction_\nRuns(\dpth) + \precs}{\step}}
\shortintertext{and}
\probwrt*{\init}{\dist_{\nRuns}((\cstate_\run)_{0 \leq \run \leq \nRuns-1}, \pths_{\nRuns}^{\{\init\}}(\level)) > \margin}
	&\leq \exp\parens*{- \frac{\level - \precs}{\step}}
	\eqdot
\end{align}
\end{subequations}
for all $\dpth \in \pths_{\nRuns}^{\{\init\}}(\level)$
\end{proposition}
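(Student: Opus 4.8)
The plan is to derive \cref{prop:ldp_iterates} directly from the continuous large deviation principle of \cref{prop:ldp_traj}, evaluated on the interval $[\start,\nRuns-1]$. The key structural fact is that the action functional is additive over the unit intervals $[\run,\run+1]$ and that the Lagrangian $\lagrangian$ carries no explicit time dependence; together these say that minimizing $\action_{[\start,\nRuns-1]}$ over continuous curves pinned at the integer times $\run=0,\dots,\nRuns-1$ reproduces exactly the discrete action $\daction_\nRuns$. We may assume $\nRuns\geq2$, the case $\nRuns=1$ being trivial since then $\daction_1\equiv0$ and $\pths_1^{\{\init\}}(\level)=\{\init\}$, so both inequalities reduce to $\probwrt*{\init}{\norm{\init-\init}<\margin}=1$ and $\probwrt*{\init}{\norm{\init-\init}>\margin}=0$. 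Throughout we invoke \cref{prop:ldp_traj} with horizon $\nRuns-1>0$ and compact set $\cpt$.

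For the lower bound, fix $\dpth\in\pths_\nRuns^{\{\init\}}(\level)$; then $\dpth_0=\init$ and, since their sum is at most $\level<\infty$, each $\rate(\dpth_\run,\dpth_{\run+1})$, $\run=0,\dots,\nRuns-2$, is finite. Using the definition \eqref{eq:defrate} of $\rate$ as an infimum, I would pick for every such $\run$ a curve $\pth^{(\run)}\in\contfuncs([0,1],\vecspace)$ with $\pth^{(\run)}_0=\dpth_\run$, $\pth^{(\run)}_1=\dpth_{\run+1}$ and $\action_{[\start,1]}(\pth^{(\run)})\leq\rate(\dpth_\run,\dpth_{\run+1})+\precs/(2(\nRuns-1))$. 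Time-shifting these segments onto $[\run,\run+1]$ and concatenating yields an absolutely continuous curve $\pth\in\contcurves_{\nRuns-1}$ with $\pth_\run=\dpth_\run$ for all $\run=0,\dots,\nRuns-1$ and, by additivity of the integral and time-translation invariance of $\lagrangian$, $\action_{[\start,\nRuns-1]}(\pth)=\sum_{\run=0}^{\nRuns-2}\action_{[\start,1]}(\pth^{(\run)})\leq\daction_\nRuns(\dpth)+\precs/2\leq\level+\precs/2$; in particular $\pth\in\pths_{[\start,\nRuns-1]}^{\{\init\}}(\level+\precs)$. Then \cref{prop:ldp_traj}, applied with action level $\level+\precs$, tolerance $\precs/2$ and margin $\margin$, provides $\step_0'>0$ (uniform in $\init\in\cpt$) such that, for all $\step\in(0,\step_0']$,
\[
\probwrt*{\init}{\dist_{[\tstart,\nRuns-1]}(\cstate,\pth)<\margin}
\;\geq\;\exp\parens*{-\frac{\action_{[\start,\nRuns-1]}(\pth)+\precs/2}{\step}}
\;\geq\;\exp\parens*{-\frac{\daction_\nRuns(\dpth)+\precs}{\step}}.
\]
Since $\pth_\run=\dpth_\run$, the event on the left is contained in $\{\dist_\nRuns((\cstate_\run)_{0\leq\run\leq\nRuns-1},\dpth)<\margin\}$, which yields the first inequality.

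For the upper bound, I would establish the set inclusion
\[
\braces*{\dist_\nRuns\bigl((\cstate_\run)_{0\leq\run\leq\nRuns-1},\,\pths_\nRuns^{\{\init\}}(\level)\bigr)>\margin}
\;\subseteq\;
\braces*{\dist_{[\tstart,\nRuns-1]}\bigl(\cstate,\,\pths_{[\start,\nRuns-1]}^{\{\init\}}(\level)\bigr)>\margin}.
\]
Indeed, if the event on the right fails there is $\pth\in\pths_{[\start,\nRuns-1]}^{\{\init\}}(\level)$ with $\dist_{[\tstart,\nRuns-1]}(\cstate,\pth)\leq\margin$; setting $\dpth\defeq(\pth_\run)_{0\leq\run\leq\nRuns-1}$, we have $\dpth_0=\init$, and since each restriction $\pth|_{[\run,\run+1]}$ time-shifted to $[0,1]$ connects $\pth_\run$ to $\pth_{\run+1}$ we get $\rate(\pth_\run,\pth_{\run+1})\leq\action_{[\start,1]}(\pth|_{[\run,\run+1]})$, hence $\daction_\nRuns(\dpth)\leq\action_{[\start,\nRuns-1]}(\pth)\leq\level$, so $\dpth\in\pths_\nRuns^{\{\init\}}(\level)$ and $\dist_\nRuns((\cstate_\run)_\run,\dpth)\leq\margin$, i.e.\ the event on the left fails too. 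Applying \cref{prop:ldp_traj} with action level $\level$, tolerance $\precs$ and margin $\margin$ bounds the probability of the event on the right by $\exp(-(\level-\precs)/\step)$ for $\step\in(0,\step_0'']$, and the inclusion transfers this bound to the discrete event. Taking $\step_0=\min(\step_0',\step_0'')$ then gives both inequalities simultaneously. The only point requiring care is the lower bound's realization of the discrete cost $\daction_\nRuns(\dpth)$, up to an error $\precs$, as the action of a single continuous curve passing \emph{exactly} through the prescribed points $\dpth_0,\dots,\dpth_{\nRuns-1}$; this is precisely what the concatenation of near-minimizing segments accomplishes, and it is available because $\daction_\nRuns(\dpth)\leq\level<\infty$ forces each $\rate(\dpth_\run,\dpth_{\run+1})$ to be finite. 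Everything else — uniformity over $\init\in\cpt$, measurability of the events, absolute continuity of the concatenated curve, and the lower semicontinuity/compactness properties of $\daction_\nRuns$ from \cref{lem:pathscompactdiscrete} — is inherited from \cref{prop:ldp_traj} and \cref{lem:basic_prop_h_l}.
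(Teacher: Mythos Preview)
Your proposal is correct and follows essentially the same route as the paper: construct a continuous near-minimizer through the prescribed integer points for the lower bound, and sample a close continuous curve at integer times for the upper bound, both reducing to \cref{prop:ldp_traj} on $[\start,\nRuns-1]$. The only cosmetic difference is that the paper sidesteps the need for compactness in the upper-bound inclusion by applying \cref{prop:ldp_traj} with margin $\margin/2$ and using $\{\dist_{[\tstart,\nRuns-1]}\geq\margin\}\subseteq\{\dist_{[\tstart,\nRuns-1]}>\margin/2\}$, whereas you implicitly use that $\pths_{[\start,\nRuns-1]}^{\{\init\}}(\level)$ is compact (\cref{lem:pathscompact}) so the infimum is attained; both are fine.
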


\begin{proof}
Invoke the first part of \cref{prop:ldp_traj} with $\horizon \gets \nRuns - 1$ and $\level \gets \level + \precs$.
There exists $\step_0 > 0$ such that, for any $\step \in (0, \step_0]$, for any $\init \in \cpt$, $\pth \in \pths_{0,\nRuns}^{\{\init\}}(\level + \precs)$,
\begin{equation}
\prob_{\init} \parens*{\dist_{0, \nRuns - 1 }(\cstate, \pth) < \margin}
	\geq \exp\parens*{- \frac{\action_{0,\nRuns-1}(\pth) + \precs}{\step}}
	\eqdot
\end{equation}
Take $\step \in (0, \step_0]$, $\init \in \cpt$, $\dpth \in \pths_{\nRuns}^{\{\init\}}(\level)$.
Then, there exists $\pth \in \pths_{0, \nRuns}^{\init}(\level + \precs)$, for any $0 \leq \run \leq \nRuns-1$, $\pth_\run = \dpth_\run$, \cf \eqref{eq:defrate}.
Hence,
\begin{align}
\prob_{\init} \parens*{\dist_{\nRuns}((\cstate_\run)_{0 \leq \run \leq \nRuns-1}, \dpth) < \margin}
	&\geq \prob_{\init} \parens*{\dist_{0, \nRuns - 1 }(\cstate, \pth) < \margin}
	\notag\\
	&\geq \exp \parens*{- \frac{\action_{0,\nRuns-1}(\pth) + \precs}{\step}}
	\notag\\
	&\geq \exp \parens*{- \frac{\daction_\nRuns(\dpth) + 2\precs}{\step}}
\end{align}
which prove the first part of the result.

For the second part, we have similarly from \cref{prop:ldp_traj}  with $\horizon \gets \nRuns - 1$ and $\margin \gets \margin/2$ that for any $\step \in (0, \step_0]$, for any $\init \in \cpt$, 
\begin{equation}
\prob_{\init} \parens*{\dist_{\tstart,\nRuns-1}(\cstate, \pths_{\tstart,\nRuns-1}^{\{\init\}}(\level)) > \margin/2}
	\leq \exp \parens*{- \frac{\level - \precs}{\step}}
	\eqdot
\end{equation}
Now, note that if $\dist_{0, \nRuns - 1}(\cstate, \pths_{0, \nRuns-1}^{\{\init\}}(\level)) < \margin$, then there must exist $\pth \in \pths_{0, \nRuns-1}^{\{\init\}}(\level)$ such that $\dist_{0, \nRuns - 1}(\cstate, \pth) \leq  \margin$.
Consider the discrete path $\dpth \in \vecspace^{\nRuns}$ defined by $\dpth_\run = \pth_\run$ for $\start \leq \run \leq  \nRuns - 1$.
Then, by construction, $\daction_\nRuns(\dpth) \leq \action_{0, \nRuns - 1}(\pth) \leq \level$ and $\dist_{\nRuns}((\cstate_\run)_{\start \leq \run \leq \nRuns-1}, \dpth) \leq  \margin$.
Thus, 
\begin{equation}
\dist_{0, \nRuns - 1}(\cstate, \pths_{0, \nRuns-1}^{\{\init\}}(\level)) < \margin \implies \dist_{\nRuns}((\cstate_\run)_{\start \leq \run \leq \nRuns-1}, \pths_{\nRuns}^{\{\init\}}(\level))
	\leq  \margin
	\eqdot
\end{equation}

Putting all together, we have that 
\begin{align}
\probwrt*{\init}{\dist_{\nRuns}((\cstate_\run)_{0 \leq \run \leq \nRuns-1}, \pths_{\nRuns}^{\{\init\}}(\level)) > \margin}
	&\leq \prob_{\init} \parens*{\dist_{0, \nRuns - 1}(\cstate, \pths_{0, \nRuns-1}^{\{\init\}}(\level) \geq  \margin)}
	\notag\\
	&\leq \prob_{\init} \parens*{\dist_{0, \nRuns - 1}(\cstate, \pths_{0, \nRuns-1}^{\{\init\}}(\level) >  \margin / 2)}
	\notag\\
	&\leq \exp \parens*{- \frac{\level - \precs}{\step}}
\end{align}
which concludes our proof.
\end{proof}

Finally, we end up with a large deviation principle on the discrete rescaled iterates $(\curr[\accstate])_{\start \leq \run \leq \nRuns-1} = (\state_{\run \floor{\step^{-1}}})_{\start \leq \run }$ by leveraging  \cref{lem:iterates_approx}.
In the following result, the functional $\step^{-1}\daction_\nRuns$ is thus the action functional in $\vecspace^{\nRuns}$ of the process $(\curr[\accstate])_{\start \leq \run \leq \nRuns-1} $ uniformly with respect to the starting point $ \init$ in any compact set $\cpt \subset \vecspace$, as $\step \to 0$.

\begin{corollary}
	\label{cor:ldp_iterates}
	Fix $\nRuns \geq \start$.
For any $\level, \margin, \precs > 0$, $\cpt \subset \vecspace$ compact, there exists $\step_0 > 0$ such that, for any $\step \in (0, \step_0]$, for any $\init \in \cpt$, $\dpth \in \pths_{\nRuns}^{\{\init\}}(\level)$, we have that
\begin{subequations}
\begin{align}
\prob_{\init} \parens*{\dist_{\nRuns}(\accstate, \dpth) < \margin}
	&\geq \exp \parens*{- \frac{\daction_\nRuns(\dpth) + \precs}{\step}}
\shortintertext{and}
\prob_{\init} \parens*{\dist_{\nRuns}(\accstate, \pths_{\nRuns}^{\{\init\}}(\level)) > \margin}
	&\leq \exp \parens*{- \frac{\level - \precs}{\step}}
	\eqdot
\end{align}
\end{subequations}
for all $\dpth \in \pths_{\nRuns}^{\{\init\}}(\level)$.
\end{corollary}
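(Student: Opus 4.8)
The plan is to obtain \cref{cor:ldp_iterates} by transferring the large deviation estimates of \cref{prop:ldp_iterates} --- which are stated for the interpolated process sampled at integer times, $(\cstate_\run)_{0 \le \run \le \nRuns-1}$ --- to the discrete rescaled iterates $\accstate$, using the uniform approximation bound of \cref{lem:iterates_approx}. That lemma supplies a constant $\Const < \infty$, depending only on $\cpt$, an auxiliary step-size ceiling, $\nRuns$, and the problem data, such that $\dist_{\nRuns}(\accstate, (\cstate_\run)_{0 \le \run \le \nRuns-1}) \le \Const\,\step$ uniformly over $\init \in \cpt$. Consequently, for $\step$ small enough this discrepancy is dominated by any prescribed fraction of the spatial tolerance $\margin$, and the two processes become interchangeable up to an adjustment of $\margin$ --- exactly the device already used at the end of the proof of \cref{prop:ldp_traj}.

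Concretely, I would first fix $\level, \margin, \precs > 0$ and a compact set $\cpt$, apply \cref{lem:iterates_approx} with some auxiliary ceiling $\step_0'$ to extract $\Const$, and then apply \cref{prop:ldp_iterates} with the spatial tolerance halved, $\margin \gets \margin/2$, obtaining a threshold $\step_0''$. Setting $\step_0 \defeq \min\{\step_0', \step_0'', \margin/(2\Const)\}$, the triangle inequality gives, for every $\step \le \step_0$ and every $\dpth \in \pths_{\nRuns}^{\{\init\}}(\level)$, the inclusion $\{\dist_{\nRuns}((\cstate_\run)_{0 \le \run \le \nRuns-1}, \dpth) < \margin/2\} \subseteq \{\dist_{\nRuns}(\accstate, \dpth) < \margin\}$, whence the lower bound $\prob_{\init}(\dist_{\nRuns}(\accstate, \dpth) < \margin) \ge \exp(-(\daction_{\nRuns}(\dpth) + \precs)/\step)$. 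Symmetrically, on the event $\{\dist_{\nRuns}(\accstate, \pths_{\nRuns}^{\{\init\}}(\level)) > \margin\}$ one has $\dist_{\nRuns}((\cstate_\run)_{0 \le \run \le \nRuns-1}, \pths_{\nRuns}^{\{\init\}}(\level)) > \margin - \Const\,\step \ge \margin/2$, so the second half of \cref{prop:ldp_iterates} (again with $\margin \gets \margin/2$) yields $\prob_{\init}(\dist_{\nRuns}(\accstate, \pths_{\nRuns}^{\{\init\}}(\level)) > \margin) \le \exp(-(\level - \precs)/\step)$. This is precisely the asserted pair of bounds.

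The argument is essentially bookkeeping and presents no genuine obstacle; the one point that requires care --- and the only place one could slip --- is the order of quantifiers. The constant $\Const$ produced by \cref{lem:iterates_approx} depends on the step-size ceiling one feeds it, so one must first commit to an auxiliary ceiling $\step_0'$, read off $\Const$, and only then shrink the final threshold to $\min\{\step_0', \step_0'', \margin/(2\Const)\}$; since $\Const$ does not depend on this final choice there is no circularity. Beyond that, note that $\Const$, $\step_0''$, and hence $\step_0$ are uniform in $\init \in \cpt$ because both \cref{lem:iterates_approx} and \cref{prop:ldp_iterates} are already uniform over $\cpt$, so the corollary's uniformity claim comes for free.
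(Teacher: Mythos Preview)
Your proof is correct and follows essentially the same approach as the paper's: both combine \cref{prop:ldp_iterates} with the approximation bound of \cref{lem:iterates_approx} via the triangle inequality. The only cosmetic difference is that you apply \cref{prop:ldp_iterates} with tolerance $\margin/2$ to land directly at $\margin$, whereas the paper applies it with $\margin$ and obtains the conclusion with $2\margin$ (implicitly relabeling afterward); your version is arguably cleaner, and your explicit handling of the quantifier order for $\Const$ is a nice touch that the paper leaves implicit.
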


\begin{proof}
	Fix $\margin>0$.
Choose $\step_0$ such that both \cref{prop:ldp_iterates} and \cref{lem:iterates_approx} hold with $ \const \step \leq \margin $.
Then for any  $\step \in (0, \step_0]$,  for any $\init \in \cpt$, $\dpth \in \pths_{\nRuns}^{\{\init\}}(\level)$, $\dpth \in \pths_{\nRuns}^{\{\init\}}(\level)$, we have that 
\begin{align}
 \prob_{\init} \parens*{\dist_{\nRuns}((\cstate_\run)_{0 \leq \run \leq \nRuns-1}, \dpth) < \margin} &\geq \exp \parens*{- \frac{\daction_\nRuns(\dpth) + \precs}{\step}}\eqdot
\end{align}
Now, if $\dist_{\nRuns}((\cstate_\run)_{0 \leq \run \leq \nRuns-1}, \dpth) < \margin $ and $\dist_{\nRuns}(\accstate, (\cstate_\run)_{0 \leq \run \leq \nRuns-1}) \leq \margin$, then $\dist_{\nRuns}(\accstate, \dpth) < 2\margin$.
Thus,
\begin{align}
\probwrt*{\init}{\dist_{\nRuns}(\accstate, \dpth) < 2\margin}
	\geq \probwrt*{\init}{\dist_{\nRuns}((\cstate_\run)_{0 \leq \run \leq \nRuns-1}, \dpth)<\margin}
	&\geq \exp \parens*{- \frac{\daction_\nRuns(\dpth) + \precs}{\step}}
	\eqdot
\end{align}
The second part can be obtained similarly.
\end{proof}

To summarize this part on large deviations principles, we state a corollary containing all the results that will be needed in the following sections.

\begin{corollary}
\label{cor:ldp_iterates_full}
Fix $\nRuns \geq \start$.
Then: 
\begin{itemize}
\item
For all $\level > 0$, the set 
\begin{equation}
\pths_{\nRuns}^\cpt(\level)
	\defeq \setdef{\dpth \in \vecspace^{\nRuns}}{\dpth_\start \in \cpt,  \daction_\nRuns(\dpth) \leq \level}
\end{equation}
is compact and $\daction_\nRuns$ is \ac{lsc} on $\vecspace^{\nRuns}$.
\item 
For all $\level,\margin,\precs > 0$, $\cpt \subset \vecspace$ compact, there exists $\step_0 > 0$ such that, for any $\step \in (0, \step_0]$, for all $\init \in \cpt$, $\run \leq \nRuns$, $\dpth \in \pths_{\run}^{\{\init\}}(\level)$, we have that
\begin{subequations}
\begin{align}
\probwrt*{\init}{\dist_{\run}(\accstate, \dpth) < \margin}
	&\geq \exp \parens*{- \frac{\daction_\run(\dpth) + \precs}{\step}}
\shortintertext{and}
\probwrt*{\init}{\dist_{\run}(\accstate, \pths_{\run}^{\{\init\}}(\level)) > \margin}
	&\leq \exp\parens*{- \frac{\level - \precs}{\step}}
	\eqdot
\end{align}
\end{subequations}
\end{itemize}
\end{corollary}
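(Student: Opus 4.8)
The plan is to derive this corollary by assembling the results already in hand, the only new ingredient being an elementary uniformization over the horizon. The first bullet — compactness of $\pths_{\nRuns}^{\cpt}(\level)$ and lower semicontinuity of $\daction_{\nRuns}$ on $\vecspace^{\nRuns}$ — is verbatim the statement of \cref{lem:pathscompactdiscrete}, so I would simply cite it.

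For the second bullet, the difference with \cref{cor:ldp_iterates} is that the latter fixes a single horizon $\nRuns$, whereas here the two large-deviation estimates are required to hold, with one common threshold $\step_0$, for every $\run \leq \nRuns$ simultaneously. I would close this by invoking \cref{cor:ldp_iterates} once for each integer horizon $\run \in \{1,\dots,\nRuns\}$: for each such $\run$ it furnishes a threshold $\step_0^{(\run)} > 0$ — depending only on $\level,\margin,\precs,\cpt$ and on $\run$, but not on the initial point $\init \in \cpt$ — such that for all $\step \in (0,\step_0^{(\run)}]$, all $\init \in \cpt$, and all $\dpth \in \pths_{\run}^{\{\init\}}(\level)$ the two inequalities hold (with $\nRuns$ there replaced by $\run$). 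Setting $\step_0 \defeq \min_{1 \leq \run \leq \nRuns} \step_0^{(\run)}$, which is positive as a minimum of finitely many positive numbers, then yields a single threshold valid for all horizons at once, which is exactly the claim.

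I do not anticipate any genuine obstacle, since all the substantive work has already been carried out upstream: the Arzel\`a--Ascoli / Gr\"onwall argument behind compactness and the normal-integrand criterion behind lower semicontinuity (\cref{lem:pathscompactdiscrete}), and the transfer of the Freidlin--Wentzell estimate from the discretized-noise process to the interpolated process and then to the rescaled iterates via \cref{lem:traj_approx,lem:iterates_approx} (\cref{prop:ldp_iterates,cor:ldp_iterates}). The single point that warrants a moment's care is that uniformity in $\init$ must be preserved when passing to the finite minimum over horizons; but this is automatic because each earlier statement is already uniform over the compact set $\cpt$ of initial conditions, so the thresholds $\step_0^{(\run)}$ do not depend on $\init$ and the minimum is harmless.
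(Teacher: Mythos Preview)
Your proposal is correct and matches the paper's intent: the paper presents this corollary without proof, explicitly framing it as a summary of the preceding results, so the argument is exactly what you describe --- cite \cref{lem:pathscompactdiscrete} for the first bullet and take a finite minimum of the thresholds from \cref{cor:ldp_iterates} over $\run \leq \nRuns$ for the second.
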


\section{Attractors and limiting measures via large deviations}
\label{app:invmeas}

We now take inspiration from the framework of \citet{Kif88} in order to relate the sets of critical points to the sets where points can move at no cost.
Then, we relate the probability of \ac{SGD} moving to neighborhoods of critical sets to the probability of being close to well-chosen paths, which enables us to use the results of the previous section.
Finally, we build upon these results to provide bounds on the limiting measure of \ac{SGD}.

\subsection{Setup}

We first need to define the gradient flow of $\obj$.

\begin{definition}
Define, for $\point \in \vecspace$, the flow $\flowmap$  of $-\grad\obj$ starting at $\point$, \ie
\begin{equation}
\dotflowof{\time}{\point}
	= - \grad \obj(\flowof{\time}{\point})
	\quad
	\text{with}
	\quad
\flowof{\tstart}{\point}
	= \state
\end{equation}
and let $\map(\state)$  be the value of this flow at time $1$, \ie
\begin{equation}
\label{eq:mapdef}
\map(\point)
	= \flowof{1}{\point}
	\eqdot
\end{equation} 
\end{definition}

\begin{lemma}[Properties of the flow]
	\label{lem:basic_prop_flow}
	$\flowmap$ is well-defined and continous in both time and space, and, for any $\horizon \geq 0$, $\pth \in \contfuncs([0, \horizon], \vecspace)$ such that $\pth_\tstart = \point$,
\begin{equation}
		\action_{0, \horizon}(\pth) = 0 \iff \pth_{\time} = \flowof{\time}{\point} ~~ \text{ for all } \time \in [0, \horizon]\eqdot
\end{equation}
\end{lemma}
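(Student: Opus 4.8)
The plan is to treat the two halves of the statement separately: the well-posedness and continuity of $\flowmap$, and the variational identification of its orbits with the zero-cost curves of $\action$.

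\textbf{Well-posedness and continuity.} By \cref{asm:obj-weak}\cref{asm:obj-weak-smooth} the vector field $-\grad\obj$ is globally $\smooth$-Lipschitz on $\vecspace$, so for each $\point\in\vecspace$ the Cauchy--Lipschitz (Picard--Lindel\"of) theorem furnishes a unique maximal solution of $\dotflowof{\time}{\point}=-\grad\obj(\flowof{\time}{\point})$ with $\flowof{\tstart}{\point}=\point$. The linear growth bound $\norm{\grad\obj(\point)}\le\growth(1+\norm{\point})$ from \eqref{eq:growth}, fed into Grönwall's lemma, bounds $\norm{\flowof{\time}{\point}}$ on every finite interval, ruling out finite-time blow-up; hence the solution is defined for all $\time\ge0$ and, in particular, $\map(\point)=\flowof{1}{\point}$ is well-defined. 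Joint continuity of $(\time,\point)\mapsto\flowof{\time}{\point}$ is then the standard continuous-dependence statement for Lipschitz ODEs: Grönwall's lemma gives $\norm{\flowof{\time}{\point}-\flowof{\time}{\pointalt}}\le e^{\smooth\time}\norm{\point-\pointalt}$, and $\time\mapsto\flowof{\time}{\point}$ is continuous (indeed $C^{1}$, since $\obj\in C^{2}$) because its derivative $-\grad\obj(\flowof{\time}{\point})$ is locally bounded.

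\textbf{The equivalence.} The key input is \cref{lem:basic_prop_h_l}: $\lagrangian(\point,\cdot)\ge0$ everywhere and vanishes exactly at the gradient-flow velocity, i.e.\ $\lagrangian(\point,\vel)=0$ if and only if $\vel=-\grad\obj(\point)$. For the ``$\Leftarrow$'' implication, if $\pth_{\time}=\flowof{\time}{\point}$ on $[0,\horizon]$, then $\pth$ is $C^{1}$ (hence absolutely continuous) with $\dot\pth_{\time}=-\grad\obj(\pth_{\time})$ for every $\time$, so the integrand $\lagrangian(\pth_{\time},\dot\pth_{\time})$ in \eqref{eq:actionfunctional} vanishes identically and $\action_{0,\horizon}(\pth)=0$. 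For ``$\Rightarrow$'', $\action_{0,\horizon}(\pth)=0$ already forces $\pth$ to be absolutely continuous (otherwise the action is $+\infty$ by definition), and since the non-negative integrand has zero integral it must vanish for a.e.\ $\time\in[0,\horizon]$; thus $\dot\pth_{\time}=-\grad\obj(\pth_{\time})$ for a.e.\ $\time$. Because the right-hand side is continuous along $\pth$ and $\pth$ is absolutely continuous, the fundamental theorem of calculus upgrades this to $\pth_{\time}=\point-\int_{0}^{\time}\grad\obj(\pth_{\timealt})\dd\timealt$, so $\pth$ is a genuine $C^{1}$ solution of the gradient-flow ODE with $\pth_{0}=\point$; a short Grönwall estimate, using the $\smooth$-Lipschitz continuity of $\grad\obj$, shows such a solution is unique, whence $\pth_{\time}=\flowof{\time}{\point}$ on all of $[0,\horizon]$.

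I do not anticipate a genuine obstacle: the lemma is classical ODE theory layered onto \cref{lem:basic_prop_h_l}. The only point meriting a little care is the ``$\Rightarrow$'' direction, where one must pass from ``$\dot\pth=-\grad\obj(\pth)$ almost everywhere'' for a \emph{merely} absolutely continuous curve to honest equality with the classical flow; this is handled through the Carathéodory (integral) formulation together with the Grönwall uniqueness argument above, rather than by invoking Picard--Lindel\"of verbatim.
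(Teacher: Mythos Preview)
Your proposal is correct and follows essentially the same approach as the paper: Picard--Lindel\"of for well-posedness of the flow, and \cref{lem:basic_prop_h_l} together with uniqueness of ODE solutions for the equivalence. You supply more detail than the paper does---in particular the Gr\"onwall-based global existence, the continuous-dependence estimate, and the careful passage from ``$\dot\pth=-\grad\obj(\pth)$ a.e.'' to honest coincidence with the classical flow via the integral formulation---whereas the paper handles the last point tersely by ``extending $\dot\pth$ by continuity.''
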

\begin{proof}
The well-definition and continuity of $\flowmap$ are a consequence of $\obj$ being twice continuously differentiable and of the global Cauchy-Lipschitz (Picard–Lindelöf) theorem for \ac{ODE}. 
The equivalence follows from the uniqueness of the flow and \cref{lem:basic_prop_h_l} since
\begin{align}
\action_{0,\horizon}(\pth)
	= 0
	&\iff
\lagrangian(\pth_{\time}, \dot \pth_{\time}) = 0 \; \text{almost everywhere}
	\notag\\
	&\iff \dot \pth_{\time} = - \grad \obj(\pth_{\time}) \; \text{almost everywhere}
\end{align}
and thus, by extending $\dot \pth$ by continuity, both $\pth$ and $\flowmap$ satisfy the same \ac{ODE} with the same initial condition and are thus equal for all $\time$ by uniqueness of the solutions.
\end{proof}

The following lemma translates this for $\map$.

\begin{lemma}[Properties of $\map$]
	\label{lem:basic_prop_map}
	   $\map$ is well-defined and continuous and, for any $\point, \pointalt \in \vecspace$,
\begin{equation}
		\rate(\point, \pointalt) = 0 \iff \pointalt = \map(\point)\eqdot
\end{equation}
\end{lemma}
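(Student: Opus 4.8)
The plan is to deduce both parts by assembling the three preceding results: well-definedness and joint continuity of the gradient flow $\flowmap$ (\cref{lem:basic_prop_flow}), nonnegativity of the Lagrangian $\lagrangian$ (\cref{lem:basic_prop_h_l}), and the compactness of the action sublevel sets together with lower semicontinuity of $\action_{0,1}$ (\cref{lem:pathscompact}). For the first assertion, \cref{lem:basic_prop_flow} already gives that $\flowmap$ is well-defined on $[0,\infty)\times\vecspace$ and jointly continuous, so specializing to time $\time=1$ shows that $\map=\flowof{1}{\argdot}$ is well-defined and continuous on $\vecspace$.

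For the equivalence, the ``if'' direction is immediate: if $\pointalt=\map(\point)$, take the admissible curve $\pth_{\time}\defeq\flowof{\time}{\point}$ on $[0,1]$, which has $\pth_{\tstart}=\point$, $\pth_{1}=\map(\point)=\pointalt$, and $\action_{0,1}(\pth)=0$ by \cref{lem:basic_prop_flow}; since $\lagrangian\geq 0$ forces $\rate\geq 0$, we get $0\leq\rate(\point,\pointalt)\leq\action_{0,1}(\pth)=0$, hence $\rate(\point,\pointalt)=0$.

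The ``only if'' direction is the crux, and the one delicate point is that the infimum defining $\rate$ in \eqref{eq:defrate} must be attained. Assuming $\rate(\point,\pointalt)=0$, I would take a minimizing sequence $(\pth^{\runB})_{\runB}$ of curves with $\pth^{\runB}_{\tstart}=\point$, $\pth^{\runB}_{1}=\pointalt$, and $\action_{0,1}(\pth^{\runB})\to 0$; after discarding finitely many terms we may assume $\action_{0,1}(\pth^{\runB})\leq 1$, so all the $\pth^{\runB}$ lie in the compact set $\setdef{\pth\in\contfuncs([0,1],\vecspace)}{\pth_{\tstart}=\point,\ \action_{0,1}(\pth)\leq 1}$ provided by \cref{lem:pathscompact}. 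Passing to a uniformly convergent subsequence $\pth^{\runB}\to\pth^{\ast}$ gives $\pth^{\ast}_{\tstart}=\point$, and since the endpoint map $\pth\mapsto\pth_{1}$ is $1$-Lipschitz for the uniform distance, $\pth^{\ast}_{1}=\pointalt$. Lower semicontinuity of $\action_{0,1}$ together with $\lagrangian\geq 0$ yields $0\leq\action_{0,1}(\pth^{\ast})\leq\liminf_{\runB}\action_{0,1}(\pth^{\runB})=0$, hence $\action_{0,1}(\pth^{\ast})=0$, and then \cref{lem:basic_prop_flow} forces $\pth^{\ast}_{\time}=\flowof{\time}{\point}$ for all $\time\in[0,1]$; evaluating at $\time=1$ gives $\pointalt=\pth^{\ast}_{1}=\flowof{1}{\point}=\map(\point)$. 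I do not expect any real obstacle beyond this compactness-and-lsc argument, which is already in hand from the earlier lemmas.
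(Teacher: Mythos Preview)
Your proof is correct and follows essentially the same route as the paper: both directions hinge on \cref{lem:basic_prop_flow}, and the ``only if'' part requires producing a path with $\action_{0,1}(\pth)=0$ from the assumption $\rate(\point,\pointalt)=0$. Your argument is actually more careful than the paper's, which simply asserts the existence of such a minimizing path; you supply the justification via the compactness of $\pths_{0,1}^{\{\point\}}(1)$ and lower semicontinuity of $\action_{0,1}$ from \cref{lem:pathscompact}, which is exactly what is needed.
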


\begin{proof}
The implication $(\Leftarrow)$ is immediate by definition of $\map$ and \cref{lem:basic_prop_flow}.
Now for the reverse, assume that $\rate(\point, \pointalt) = 0$.
\revise{Following the proof of \cref{lem:pathscompactdiscrete}}, there exists $\pth \in \contfuncs([0, 1], \vecspace)$ such that $\pth_{0} = \point$, $\pth_{1} = \pointalt$ and $\action_{0, 1}(\pth) = 0$.
By \cref{lem:basic_prop_flow}, $\pth_{\time} = \flowof{\time}{\point}$ for all $\time \in [0, 1]$ and thus $\pointalt = \flowmap_{1}(\point) = \map(\point)$.
\end{proof}

\subsection{Attractors}
\label{app:attractors}

Let us first formalize the minimum-energy displacement between two points.
\begin{definition}[{\citet[\S1.5]{Kif88}}]
\label{def:dquasipot}
Define, for $\point, \pointalt \in \vecspace$,
\begin{align}
\dquasipot(\point, \pointalt)
	&= \inf\setdef*{\action_{[\tstart,\horizon]}(\pth)}{\pth \in \contfuncs([0, \horizon], \vecspace), \pth_{0} = \point, \pth_\horizon = \pointalt, \horizon \in \N, \horizon \geq 1}
	\notag\\
	&= \inf\setdef*
		{\daction_\nRuns(\dpth)}
		{\dpth \in \points ^ \nRuns\,,\, \dpth_\start = \point\,,\, \dpth_{\nRuns - 1} = \pointalt,\, \nRuns \geq 1}
	\eqdot
\end{align}
\end{definition}

The fact that these two expressions coincide directly come from the definition of $\rate$.

This enables us to define an equivalence relation for the critical points of $\obj$ by grouping points connected by a null-energy path.

\begin{proposition}
   The relation $\sim$ defined for any $\point, \pointalt \in \crit\obj$ as
\begin{equation}
		\point \sim \pointalt \iff \dquasipot(\point, \pointalt) = \dquasipot(\pointalt, \point) = 0\,
\end{equation}
  is an equivalence relation on $\crit\obj$.
\end{proposition}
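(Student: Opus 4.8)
The plan is to check the three defining properties of an equivalence relation directly from \cref{def:dquasipot}, using only that the Lagrangian $\lagrangian$ is nonnegative and vanishes precisely on the graph of $-\grad\obj$ (\cref{lem:basic_prop_h_l}); in particular this forces $\dquasipot \geq 0$, which I will invoke repeatedly. Symmetry is immediate, since the condition ``$\dquasipot(\point,\pointalt) = \dquasipot(\pointalt,\point) = 0$'' is visibly symmetric in its two arguments. For reflexivity, fix $\point \in \crit\obj$: because $\grad\obj(\point) = 0$, the constant path $\pth \equiv \point$ on $[0,1]$ satisfies $\dot\pth \equiv 0 = -\grad\obj(\pth)$, hence has zero action by \cref{lem:basic_prop_h_l}; therefore $\rate(\point,\point) = 0$, so the two-point sequence $(\point,\point) \in \points^{2}$ has $\daction_{2} = 0$, giving $\dquasipot(\point,\point) = 0$, \ie $\point \sim \point$. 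This is the only step where the hypothesis $\point \in \crit\obj$ enters.

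The substance is transitivity. Suppose $\point, \pointalt, \pointaltalt \in \crit\obj$ with $\point \sim \pointalt$ and $\pointalt \sim \pointaltalt$; I will show $\dquasipot(\point,\pointaltalt) = 0$, the companion identity $\dquasipot(\pointaltalt,\point) = 0$ being obtained identically after reversing the two chains. Fix $\eps > 0$. Since $\dquasipot(\point,\pointalt) = 0$ and $\dquasipot(\pointalt,\pointaltalt) = 0$, the discrete form of \cref{def:dquasipot} supplies discrete paths $\dpth^{(1)} \in \points^{\nRuns_{1}}$ from $\point$ to $\pointalt$ and $\dpth^{(2)} \in \points^{\nRuns_{2}}$ from $\pointalt$ to $\pointaltalt$, with $\nRuns_{1},\nRuns_{2} \geq 1$, such that $\daction_{\nRuns_{1}}(\dpth^{(1)}) < \eps/2$ and $\daction_{\nRuns_{2}}(\dpth^{(2)}) < \eps/2$. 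Concatenating them at their common endpoint $\pointalt$ produces a discrete path $\dpth$ of length $\nRuns_{1} + \nRuns_{2} - 1 \geq 1$ running from $\point$ to $\pointaltalt$; since $\daction$ is a sum of the edge costs $\rate$, it is additive under such concatenation, so $\daction(\dpth) = \daction_{\nRuns_{1}}(\dpth^{(1)}) + \daction_{\nRuns_{2}}(\dpth^{(2)}) < \eps$. Hence $\dquasipot(\point,\pointaltalt) \leq \daction(\dpth) < \eps$, and letting $\eps \to 0$ together with $\dquasipot \geq 0$ yields $\dquasipot(\point,\pointaltalt) = 0$. Applying the same argument to the reversed chains gives $\dquasipot(\pointaltalt,\point) = 0$, so $\point \sim \pointaltalt$, which completes the verification.

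I do not expect a genuine obstacle: the argument is a routine gluing. The two points that merit a line of care are (i) staying inside the admissible class of \cref{def:dquasipot} under concatenation — but the glued horizon $\nRuns_{1} + \nRuns_{2} - 1$ is again a positive integer, so this is automatic — and (ii) recording the additivity of $\daction$ (equivalently, of the continuous action functional $\action$) under concatenation of two paths meeting at an endpoint, which is exactly what makes the ``$\eps/2 + \eps/2$'' split work. Carrying out everything in the discrete formulation of $\dquasipot$ keeps the concatenation step completely transparent, since it reduces to juxtaposing finite point-sequences.
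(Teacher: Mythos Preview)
Your proof is correct and follows essentially the same approach as the paper's: symmetry from the definition, reflexivity from the constant path at a critical point having zero action, and transitivity from the triangle inequality $\dquasipot(\point,\pointaltalt) \leq \dquasipot(\point,\pointalt) + \dquasipot(\pointalt,\pointaltalt)$. The only difference is that the paper simply asserts this triangle inequality as ``by construction of $\dquasipot$'', whereas you spell it out explicitly via the $\eps/2 + \eps/2$ concatenation argument.
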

\begin{proof}
\begin{itemize}
		\item Reflexivity: $\dquasipot(\point, \point) = 0$ by \cref{lem:basic_prop_flow} since the flow started at $\point \in \crit\obj$ is constant.
		\item Symmetry: this follows from the definition of $\sim$.
		\item Transitivity:  for any $\point, \pointalt, \pointaltalt \in \crit\obj$, we have by construction of $\dquasipot$ that
\begin{equation}
				\dquasipot(\point, \pointaltalt) \leq \dquasipot(\point, \pointalt) + \dquasipot(\pointalt, \pointaltalt) \eqdot
\end{equation}
\end{itemize}
Therefore, if $\point \sim \pointalt$ and $\pointalt \sim \pointaltalt$, then $\dquasipot(\point, \pointaltalt) = 0$.
$\dquasipot(\pointaltalt, \point) = 0$ follows with a symmetric argument and thus $\point \sim \pointaltalt$.
\end{proof}

Near critical points of $\obj$, the Lagrangian $\lagrangianalt$ is actually very regular.

\begin{lemma}
	\label{lem:regularity_lagrangian}
	For any $\point \in \vecspace$, there exists $\margin > 0$ such that $\lagrangianalt$ is finite and jointly Lipschitz continuous on $\ball(\point, \margin) \times \ball(0, \margin)$.

	Moreover, the following supremum is finite:
\begin{equation}
\sup \setdef*{\frac{\lagrangian(\pointalt, \vel)}{\norm{\vel}^{2}}}{ \pointalt \in \ball(\point, \margin) \cap \crit\obj\,, \vel \in \ball(0, \margin)} < \infty\eqdot
\end{equation}
\end{lemma}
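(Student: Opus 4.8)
The plan is to reduce the statement to a local analysis, near the origin, of the Legendre transform of the log-moment-generating function $\hamiltalt(\pointalt,\cdot)$. First I would fix $\point\in\vecspace$ and a radius $r>0$, and observe that on the compact set $\clball(\point,r)\times\samples$ the map $\noise$, together with its first and second partial derivatives in the first argument, is bounded (it is continuous by \cref{asm:noise-weak-growth}). Hence differentiation under the integral sign is legitimate and $\hamiltalt$ is jointly $\contdiff{2}$ on $\clball(\point,r)\times\dspace$, convex in its second argument (being a log-MGF), with $\hamiltalt(\pointalt,0)=0$, $\grad_{\mom}\hamiltalt(\pointalt,0)=\exof{\noise(\pointalt,\sample)}=0$ and $\grad^{2}_{\mom}\hamiltalt(\pointalt,0)=\covof{\noise(\pointalt,\sample)}$.

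Next, since $\covof{\noise(\point,\sample)}\mg 0$ by \cref{asm:noise-weak-zero} and $(\pointalt,\mom)\mapsto\grad^{2}_{\mom}\hamiltalt(\pointalt,\mom)$ is continuous, I would choose $\margin_{0}\in(0,r]$, $\rho>0$ and $c>0$ such that $\grad^{2}_{\mom}\hamiltalt(\pointalt,\mom)\mgeq c\,\eye$ for all $\pointalt\in\clball(\point,\margin_{0})$ and all $\mom\in\clball(0,\rho)$ (on this compact set $\grad^{2}_{\mom}\hamiltalt$ is automatically bounded above as well). Thus $\hamiltalt(\pointalt,\cdot)$ is $c$-strongly convex on $\clball(0,\rho)$, and combining this with the monotonicity of $\grad_{\mom}\hamiltalt(\pointalt,\cdot)$ along rays through the origin (and $\hamiltalt(\pointalt,0)=0$, $\grad_{\mom}\hamiltalt(\pointalt,0)=0$) yields, for all $\pointalt\in\clball(\point,\margin_{0})$ and all $\mom\in\dspace$,
\begin{equation*}
\norm{\grad_{\mom}\hamiltalt(\pointalt,\mom)}\ \geq\ c\min\{\norm{\mom},\rho\}
\qquad\text{and}\qquad
\hamiltalt(\pointalt,\mom)\ \geq\ c\rho\,\norm{\mom}-\tfrac{c}{2}\rho^{2}.
\end{equation*}

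With these two inequalities I would then analyze $\lagrangianalt(\pointalt,\vel)=\sup_{\mom\in\dspace}\{\inner{\mom}{\vel}-\hamiltalt(\pointalt,\mom)\}$ for $\pointalt\in\clball(\point,\margin_{0})$ and $\norm{\vel}<c\rho$. The second inequality makes $\mom\mapsto\inner{\mom}{\vel}-\hamiltalt(\pointalt,\mom)$ coercive, so the supremum is attained at some $\mom^{\ast}$ solving $\grad_{\mom}\hamiltalt(\pointalt,\mom^{\ast})=\vel$; the first inequality together with $\norm{\vel}<c\rho$ forces $\norm{\mom^{\ast}}<\rho$, hence $\norm{\mom^{\ast}}\leq\norm{\vel}/c$, and $\mom^{\ast}$ is unique by strict convexity on $\clball(0,\rho)$. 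This immediately gives
\begin{equation*}
0\ \leq\ \lagrangianalt(\pointalt,\vel)\ =\ \inner{\mom^{\ast}}{\vel}-\hamiltalt(\pointalt,\mom^{\ast})\ \leq\ \norm{\mom^{\ast}}\norm{\vel}\ \leq\ \tfrac{1}{c}\norm{\vel}^{2},
\end{equation*}
so $\lagrangianalt$ is finite near $(\point,0)$; and since $\lagrangian(\pointalt,\vel)=\lagrangianalt(\pointalt,\vel+\grad\obj(\pointalt))=\lagrangianalt(\pointalt,\vel)$ whenever $\pointalt\in\crit\obj$, the same chain gives $\lagrangian(\pointalt,\vel)/\norm{\vel}^{2}\leq 1/c$ on $(\ball(\point,\margin_{0})\cap\crit\obj)\times\ball(0,c\rho)$, which is the second assertion. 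For joint Lipschitz continuity I would invoke the implicit function theorem on $\grad_{\mom}\hamiltalt(\pointalt,\mom)=\vel$ (the Jacobian $\grad^{2}_{\mom}\hamiltalt(\pointalt,\mom^{\ast})\mgeq c\,\eye$ is invertible) to get that $(\pointalt,\vel)\mapsto\mom^{\ast}(\pointalt,\vel)$ is $\contdiff{1}$; by the envelope identities $\lagrangianalt$ is then $\contdiff{1}$ on a neighborhood of $(\point,0)$ with $\grad_{\vel}\lagrangianalt=\mom^{\ast}$ (bounded by $\rho$) and $\grad_{\pointalt}\lagrangianalt=-\grad_{\pointalt}\hamiltalt(\pointalt,\mom^{\ast})$ (bounded, being continuous on a compact set), and a $\contdiff{1}$ function with bounded gradient on a convex set is Lipschitz there; taking $\margin\defeq\tfrac12\min\{\margin_{0},c\rho\}$ then works.

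The step I expect to be the main obstacle is the handling of the Legendre transform: $\hamiltalt(\pointalt,\cdot)$ is only \emph{locally} strongly convex — around $\mom=0$ — so one cannot read off the regularity of its conjugate from the usual Fenchel dictionary for globally strongly convex/smooth functions, and one must argue that for small $\vel$ the maximizer $\mom^{\ast}$ stays inside the strongly convex region. The key enabling estimate is $\norm{\grad_{\mom}\hamiltalt(\pointalt,\mom)}\geq c\rho$ for $\norm{\mom}\geq\rho$, obtained from the monotonicity of the gradient of the convex function $\hamiltalt(\pointalt,\cdot)$ along rays. It is also worth flagging that the positive-definiteness of $\covof{\noise(\point,\sample)}$ in \cref{asm:noise-weak-zero} is genuinely used here: if the noise were supported in a proper subspace, $\lagrangianalt(\pointalt,\cdot)$ would take the value $+\infty$ on a dense set arbitrarily close to $0$, and neither conclusion could hold.
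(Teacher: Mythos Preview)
Your proposal is correct and follows essentially the same line as the paper: both apply the implicit function theorem to $\grad_{\mom}\hamiltalt(\pointalt,\mom)=\vel$ at $(\point,0,0)$, using that $\grad^{2}_{\mom}\hamiltalt(\point,0)=\covof{\noise(\point,\sample)}\mg 0$, to obtain a $\contdiff{1}$ (in the paper, $\contdiff{2}$) maximizer map $\mom^{\ast}(\pointalt,\vel)$ and hence local smoothness and Lipschitz continuity of $\lagrangianalt$. The only noteworthy difference is in how the quadratic bound $\lagrangian(\pointalt,\vel)\lesssim\norm{\vel}^{2}$ is obtained: the paper argues that $\lagrangianalt$ is $\contdiff{2}$, observes $\grad_{\vel}\lagrangianalt(\pointalt,0)=\mom^{\ast}(\pointalt,0)=0$, and invokes a second-order Taylor bound with $\sup\norm{\Hess_{\vel}\lagrangianalt}$; you instead confine $\mom^{\ast}$ to the strongly convex region via the coercivity/monotonicity estimate $\norm{\grad_{\mom}\hamiltalt(\pointalt,\mom)}\geq c\min\{\norm{\mom},\rho\}$ and read off $\lagrangianalt(\pointalt,\vel)\leq\norm{\mom^{\ast}}\norm{\vel}\leq\norm{\vel}^{2}/c$ directly. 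Your route is a touch more elementary (it avoids computing or bounding $\Hess_{\vel}\lagrangianalt$) and makes the constant explicit, but the underlying mechanism is the same.
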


\begin{proof}
	Take $\point \in \vecspace$.
	We apply the implicit function theorem to the equation 
\begin{equation}
		\grad_\vel \hamiltalt(\pointalt, \velalt) = \vel\,,
		\label{eq:lemma_regularity_lagrangian_implicit_eq}
\end{equation}
	in the variables $(\pointalt, \vel, \velalt) \in \vecspace \times \dspace \times \dspace$.

	We derive that 
\begin{equation}
	\grad_\vel \hamiltalt(\pointalt, \vel) =   \frac{\ex \bracks*{ \noise(\pointalt, \sample) \exp \parens*{\inner{\vel, \noise(\pointalt, \sample)}}}}{\ex \bracks*{\exp \parens*{\inner{\vel, \noise(\pointalt, \sample)}}}} 
\end{equation}
	and thus $(\point, 0, 0)$ is solution of \eqref{eq:lemma_regularity_lagrangian_implicit_eq} since
\begin{equation}
		\grad_\vel \hamiltalt(\point, 0) = \ex \bracks*{ \noise(\point, \sample) } = 0\eqdot
\end{equation}
	Moreover, $\Hess_\vel \hamiltalt(\point, 0) = \ex[\noise(\point, \sample) \noise(\point, \sample)^\top]$ which is positive definite and thus invertible by the blanket assumptions.

	Hence, we can apply the implicit function theorem to get that there exists $\margin > 0$, $\velalt \from \ball(\point, \margin) \times \ball(0, \margin) \to \dspace$ $\contdiff{2}$ such that, for any $\pointalt \in \ball(\point, \margin)$, $\vel \in \ball(0, \margin)$,
\begin{equation}
		\grad_\vel \hamiltalt(\pointalt, \velalt(\pointalt, \vel)) = \vel\eqdot
\end{equation}
	Therefore, for any $\pointalt \in \ball(\point, \margin)$, $\vel \in \ball(0, \margin)$, since $  \lagrangianalt(\pointalt, \vel) = \hamiltalt(\pointalt, \cdot)^*(\vel) $,  we have
\begin{equation}
		\lagrangianalt(\pointalt, \vel) = \inner{\vel, \velalt(\pointalt, \vel)} - \hamiltalt(\pointalt, \velalt(\pointalt, \vel))\,,
		\label{eq:lemma_regularity_lagrangian_explicit_expr}
\end{equation}
	which is finite and $\contdiff{2}$ on $\ball(\point, \margin) \times \ball(0, \margin)$.
		Therefore, $\lagrangianalt$ is actually $\tmplips$-jointly Lipschitz continuous on $\clball(\point, \margin/2) \times \clball(0, \margin/2)$.

  For the second part of the lemma, note that the implicit function theorem also ensures that there is $\V \subset \dspace$ a neighborhood of $0$ such that, 
		for any $\pointalt \in \ball(\point, \margin)$, $\vel \in \ball(0, \margin)$, $\velalt(\pointalt, \vel)$ is the unique solution of \cref{eq:lemma_regularity_lagrangian_implicit_eq} in $\V$.
		But, for any $\pointalt \in \ball(\point, \margin)$ and $\vel = 0$, $\velalt  = 0$ is a solution of \cref{eq:lemma_regularity_lagrangian_implicit_eq} in $\V$ so that necessarily $\velalt(\pointalt, 0) = 0$, and, as a consequence, $\grad_\vel \lagrangianalt(\pointalt, 0) = 0$.

		Hence, for any $\pointalt \in \clball(\point, \margin/2)$, $\vel \in \clball(0, \margin/2)$,
\begin{align}
\lagrangianalt(\pointalt, \vel) 
	&= \lagrangianalt(\pointalt, \vel) - \lagrangianalt(\pointalt, 0) - \inner{\vel, \grad_\vel \lagrangianalt(\pointalt, 0)}
	\notag\\
	&\leq \half \sup_{\clball(\point, \margin/2) \times \clball(0, \margin/2)}\norm*{\Hess_\vel \lagrangianalt} \norm{\vel}^{2}\eqdot
\end{align}
To conclude, it suffices to note that, for any $\pointalt \in \ball(\point, \margin) \cap \crit\obj$, $\grad \obj(\point) = 0$ and therefore $\lagrangian(\pointalt, \cdot)$ and $\lagrangianalt(\pointalt, \cdot)$ coincide.
\end{proof}

\beginrev
\begin{lemma}
\label{lem:continuity_rate}
For any $\point \in \crit(\obj)$, for any $\epsilon > 0$, there exists $\margin > 0$ such that, for any $\pointalt \in \ball(\point, \margin)$, $\rate(\point, \pointalt) \leq \epsilon$ and $\rate(\pointalt, \point) \leq \epsilon$.
\end{lemma}

\begin{proof}
Take $\point \in \vecspace$ such that $\grad \obj(\point) = 0$.
By \cref{lem:regularity_lagrangian}, there exists $\margin > 0$ such that $\lagrangianalt$ is finite and $\tmplips$-jointly Lipschitz continuous on $\ball(\point, \margin) \times \ball(0, \margin)$.
In particular,  for any $\pointalt \in \ball(\point, \margin)$, $\vel \in \ball(0, \margin)$,
\begin{equation}
\lagrangianalt(\pointalt, \vel)
	\leq \tmplips \norm{\vel}
	\eqdot
\end{equation}
For any integer $k \geq 2$, by continuity of $\grad \obj$ (\cref{asm:obj-weak}), there is $0 <\marginalt_k < \margin/k$ such that, for every $\pointalt \in \ball(\point, \marginalt_k)$, $\norm{\grad \obj(\pointalt)} \leq  \margin / k$.
Then, for any $\pointalt \in \ball(\point, \marginalt_k)$, $\vel \in \ball(0, \marginalt_k)$,
\begin{align}
\lagrangian(\pointalt, \vel) &= \lagrangianalt(\pointalt, \vel + \grad \obj(\pointalt))
	\notag\\
	&\leq \tmplips \parens*{\norm{\vel + \grad \obj(\pointalt)}}
	\notag\\
    &\leq \tmplips \parens*{\norm{\vel} + \norm{\grad \obj(\pointalt)}}
    \notag\\
    &\leq \frac{2 \tmplips \margin}{k} \eqdot
\end{align}
Take $\pointalt \in \ball(\point, \marginalt_k)$ and define the path $\pth \in \contfuncs([0, 1], \vecspace)$ by $\pth_{\time} = \point + \time (\pointalt - \point)$.
Since for any $\time \in [0, 1]$, $\pth_{\time} \in \ball(\point, \marginalt_k)$ and $\dot \pth_{\time} = \pointalt - \point \in \ball(0, \marginalt_k)$, we can bound the cost as 
\begin{align}
\rate(\point, \pointalt)
    \leq
\action_{0,1}(\pth)
    = \int_{0}^{1} \lagrangian(\pth_{\time}, \dot \pth_{\time}) \dd \time
    \leq  
    \frac{2 \tmplips \margin}{k} \eqdot
\end{align}
As a result, taking $k$ large enough yields the existence of $\marginalt_k > 0$ such that, for any $\pointalt \in \ball(\point, \marginalt_k)$, $\rate(\point, \pointalt) \leq \epsilon$.
The reverse inequality is obtained by the same argument.
\end{proof}
\endedit

The following lemma relates the Lagrangian to the gradient and our noise structure.

\begin{lemma}
	\label{lem:csq_subgaussian}
		For any $\point \in \vecspace$, $\vel \in \dspace$,
\begin{equation}
		\lagrangian(\point, \vel) \geq \frac{\norm{\vel + \grad \obj(\point)}^{2}}{2\bdvar(\obj(\point))}\eqdot
\end{equation}
\end{lemma}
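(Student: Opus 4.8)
The plan is to obtain this bound by a one-line convex-duality argument starting from the sub-Gaussian tail hypothesis \cref{asm:noise-weak}\cref{asm:noise-weak-subG}. Recall that, with the appendix notation, $\hamiltalt(\point,\mom) = \log\ex[\exp(\inner{\mom,\noise(\point,\sample)})]$ is precisely the cumulant-generating function appearing in \eqref{eq:subG}$^{\ast}$; hence that assumption says nothing more than
\begin{equation}
\hamiltalt(\point,\mom)
	\leq \tfrac{1}{2}\,\bdvar(\obj(\point))\,\norm{\mom}^{2}
	\quad\text{for all }\mom\in\dspace,
\end{equation}
where $\bdvar(\obj(\point)) > 0$ since $\inf\bdvar > 0$.

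The key step is to pass to convex conjugates in the $\mom$-variable. Convex conjugation is order-reversing (for arbitrary functions, no convexity needed), so, since $\lagrangianalt(\point,\cdot) = \hamiltalt(\point,\cdot)^{\ast}$ by definition,
\begin{equation}
\lagrangianalt(\point,\vel)
	\geq \Bigl(\tfrac{1}{2}\,\bdvar(\obj(\point))\,\norm{\argdot}^{2}\Bigr)^{\!\ast}(\vel).
\end{equation}
A direct computation of the conjugate of the quadratic $\mom\mapsto\tfrac{c}{2}\norm{\mom}^{2}$ with $c=\bdvar(\obj(\point))>0$ — the supremum of $\inner{\vel,\mom} - \tfrac{c}{2}\norm{\mom}^{2}$ is attained at $\mom=\vel/c$ and equals $\tfrac{1}{2c}\norm{\vel}^{2}$ — then gives $\lagrangianalt(\point,\vel)\geq\norm{\vel}^{2}/(2\bdvar(\obj(\point)))$ for every $\vel$.

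Finally, I would invoke the shift identity $\lagrangian(\point,\vel) = \lagrangianalt(\point,\vel+\grad\obj(\point))$ recorded in \eqref{eq:lagrangian}, which immediately upgrades the previous inequality to
\begin{equation}
\lagrangian(\point,\vel)
	= \lagrangianalt(\point,\vel+\grad\obj(\point))
	\geq \frac{\norm{\vel+\grad\obj(\point)}^{2}}{2\bdvar(\obj(\point))},
\end{equation}
which is the claim. There is essentially no obstacle here: the only point worth flagging is that $\bdvar$ is bounded away from $0$, so the quadratic majorant of $\hamiltalt$ is coercive and its conjugate is finite and given by the explicit formula above; everything else is the elementary fact that conjugation reverses inequalities together with the definition of the Lagrangians.
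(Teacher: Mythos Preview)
Your proof is correct and is essentially the same as the paper's: both use the sub-Gaussian bound on $\hamiltalt$ and the order-reversing property of convex conjugation. The only cosmetic difference is that the paper first shifts to $\hamilt(\point,\mom)=-\inner{\mom,\grad\obj(\point)}+\hamiltalt(\point,\mom)$ and then conjugates directly to $\lagrangian$, whereas you conjugate $\hamiltalt$ to $\lagrangianalt$ first and then apply the shift identity $\lagrangian(\point,\vel)=\lagrangianalt(\point,\vel+\grad\obj(\point))$; the two routes are equivalent.
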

\begin{proof}
		For any $\point \in \vecspace$, $\vel \in \dspace$, we have that
\begin{align}
			\hamilt(\point, \vel) &= - \inner{\vel, \grad \obj(\point)} + \hamiltalt(\point, \vel)
						\leq - \inner{\vel, \grad \obj(\point)} + \half \bdvar(\obj(\point)) \norm{\vel}^{2}\eqdot
\end{align}
	Taking the conjugate then yields that
\begin{equation}
\lagrangian(\point, \vel) \geq \frac{\norm{\vel + \grad \obj(\point)}^{2}}{2\bdvar(\obj(\point))}
	\eqdot
	\qedhere
\end{equation}
\end{proof}

Now, let us define a potential function $\bdpot$  on $\vecspace$ that uses the minimal displacement energy between two points that will be heavily used in the proofs.

\begin{definition}[Potential]\label{def:bdpot}
	Define, for $\state\in\vecspace$
\begin{align}
	\label{eq:defbdpot}
\bdpot(\state) = 2 \bdprimvar(\obj(\state))
\end{align}
where $\bdprimvar : \revise{\R} \to \R$ is a twice continuously differentiable primitive of $1/\bdvar$.
\end{definition}

\begin{lemma}
	\label{lem:B_is_descent}
	For any $\point, \pointalt \in \vecspace$,
\begin{equation}
    \bdpot(\pointalt) - \bdpot(\point) \leq \dquasipot(\point, \pointalt)
    \eqdot
\end{equation}
\end{lemma}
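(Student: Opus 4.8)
The plan is to reduce the inequality to a pointwise estimate along curves and then invoke \cref{lem:csq_subgaussian}. By \cref{def:dquasipot}, $\dquasipot(\point,\pointalt)$ is the infimum of $\action_{[\start,\horizon]}(\pth)$ over horizons $\horizon \in \N$ with $\horizon \ge 1$ and continuous curves $\pth$ with $\pth_\start = \point$ and $\pth_\horizon = \pointalt$; since the action functional equals $+\infty$ on curves that are not absolutely continuous, it suffices to prove
\[
\bdpot(\pointalt) - \bdpot(\point) \le \action_{[\start,\horizon]}(\pth)
\]
for every absolutely continuous $\pth$ joining $\point$ to $\pointalt$ on some interval $[\start,\horizon]$, and then to pass to the infimum over $\pth$ and $\horizon$.

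First I would express the left-hand side as an integral along $\pth$. Since $\bdprimvar$ is continuously differentiable with $\bdprimvar' = 1/\bdvar$ and $\obj$ is $\contdiff{2}$, the function $\bdpot = 2\,\bdprimvar\circ\obj$ is locally Lipschitz on $\vecspace$; composed with the absolutely continuous curve $\pth$, whose image over $[\start,\horizon]$ is compact, it becomes an absolutely continuous function of $\time$, so the fundamental theorem of calculus together with the chain rule gives
\[
\bdpot(\pointalt) - \bdpot(\point) = \int_\start^\horizon \inner{\grad\bdpot(\pth_\time)}{\dot\pth_\time}\dd\time = \int_\start^\horizon \frac{2}{\bdvar(\obj(\pth_\time))}\,\inner{\grad\obj(\pth_\time)}{\dot\pth_\time}\dd\time .
\]

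Next I would bound the integrand pointwise. For almost every $\time$, writing $a = \dot\pth_\time$ and $b = \grad\obj(\pth_\time)$, the elementary bound $4\inner{a}{b} \le \norm{a+b}^2$ — which is just $\norm{a-b}^2 \ge 0$ — combined with \cref{lem:csq_subgaussian} yields
\[
\frac{2}{\bdvar(\obj(\pth_\time))}\,\inner{\grad\obj(\pth_\time)}{\dot\pth_\time} \le \frac{\norm{\dot\pth_\time + \grad\obj(\pth_\time)}^2}{2\,\bdvar(\obj(\pth_\time))} \le \lagrangian(\pth_\time,\dot\pth_\time).
\]
Integrating this over $[\start,\horizon]$ and substituting the identity of the previous step gives $\bdpot(\pointalt) - \bdpot(\point) \le \action_{[\start,\horizon]}(\pth)$, and taking the infimum over all admissible curves and horizons yields the assertion.

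The only step requiring any care is the absolute-continuity and chain-rule bookkeeping in the first step — namely that a locally Lipschitz function composed with an absolutely continuous curve is absolutely continuous and that the chain rule then holds almost everywhere. This is entirely standard, so I do not anticipate a genuine obstacle: once it is in place, the lemma is essentially a one-line consequence of \cref{lem:csq_subgaussian} and the inequality $\norm{a-b}^2 \ge 0$.
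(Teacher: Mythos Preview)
Your proposal is correct and follows essentially the same route as the paper: express $\bdpot(\pointalt)-\bdpot(\point)$ as the integral of $2\inner{\grad\obj(\pth_\time)}{\dot\pth_\time}/\bdvar(\obj(\pth_\time))$, bound this pointwise by $\norm{\dot\pth_\time+\grad\obj(\pth_\time)}^2/(2\bdvar(\obj(\pth_\time)))$ via the elementary inequality $\norm{a-b}^2\ge 0$ (the paper calls this ``Young's inequality''), invoke \cref{lem:csq_subgaussian}, and pass to the infimum. The only cosmetic difference is that the paper picks a near-optimal path and carries an $\precs$ through, whereas you prove the bound for every admissible path and take the infimum directly; you are also more explicit about the absolute-continuity bookkeeping, which the paper leaves implicit.
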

\begin{proof}
	By \cref{def:dquasipot}, there exists $\horizon \geq 1$, $\pth \in \contfuncs([0, \horizon], \vecspace)$ such that $\pth_{0} = \point$, $\pth_\horizon = \pointalt$ and $\action_{[\tstart,\horizon]}(\pth) \leq \dquasipot(\point, \pointalt) + \precs$.
	Then, by Young's inequality, we have that
\begin{align}
\bdpot(\pointalt) - \bdpot(\point)
	&= 2 \int_{0}^{\horizon} \frac{\inner{\dot \pth_{\time}, \grad \obj(\pth_{\time})}}{\bdvar(\obj(\pth_{\time}))} \dd \time
	\notag\\
    &\revise{\leq \int_{0}^{\horizon} \parens*{
 \frac{\inner{\dot \pth_{\time}, \grad \obj(\pth_{\time})}}{\bdvar(\obj(\pth_{\time}))}
+ \frac{\norm{\grad \obj(\pth_{\time})}^{2} + \norm{\dot \pth_{\time}}^{2}}{2\bdvar(\obj(\pth_{\time}))}} \dd \time
}
    \notag\\
    &= \int_{0}^{\horizon} \frac{\norm{\dot \pth_{\time} + \grad \obj(\pth_{\time})}^{2}}{2\bdvar(\obj(\pth_{\time}))} \dd \time
	\notag\\
	&\leq  \int_{0}^{\horizon}  \lagrangian(\pth_{\time}, \dot \pth_{\time})  \dd \time
	= \action_{[\tstart,\horizon]}(\pth)
\end{align}
where we used \cref{lem:csq_subgaussian} in the last inequality.
Finally, out choice of $\pth$ implies that
\begin{equation}
\bdpot(\pointalt) - \bdpot(\point)
	\leq \dquasipot(\point, \pointalt) +  \precs
\end{equation}
which concludes the proof.
\end{proof}

\begin{lemma}[Equivalence classes are closed]
	\label{lem:equivalence_classes_closed}
	Equivalence classes of $\sim$ are closed in $\vecspace$. As a consequence, equivalence classes are compact.
\end{lemma}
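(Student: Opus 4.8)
The plan is to show that each equivalence class $\eqcl$ of $\sim$ is sequentially closed in $\vecspace$, which is enough since $\vecspace \equiv \realspace$ is a metric space. Accordingly, I would fix a representative $\point^{\ast}\in\eqcl$ (the empty class being trivially closed), take a sequence $(\point_\run)_{\run}$ in $\eqcl$ with $\point_\run\to\point_\infty$ for some $\point_\infty\in\vecspace$, and prove $\point_\infty\in\eqcl$. The first step is to observe that $\point_\infty\in\crit\obj$: since $\obj$ is $C^{2}$, the gradient $\grad\obj$ is continuous, so $\crit\obj=\setdef{\point}{\grad\obj(\point)=0}$ is closed, and $\point_\run\in\eqcl\subseteq\crit\obj$ for all $\run$. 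This ordering matters, because the regularity of $\rate$ we are about to use is only available near $(\crit\obj)^{2}$.

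The core of the argument is to transfer the \emph{local} regularity of $\rate$ near the critical set into a statement about $\dquasipot$. Since $\point_\infty\in\crit\obj$, the gradient flow started at $\point_\infty$ is stationary, so $\map(\point_\infty)=\point_\infty$ and hence $\rate(\point_\infty,\point_\infty)=0$ by \cref{lem:basic_prop_map}. By \cref{lem:continuity_rate}, $\rate$ is finite and continuous on an open neighborhood of $(\crit\obj)^{2}$, in particular at $(\point_\infty,\point_\infty)$; combined with $(\point_\infty,\point_\run)\to(\point_\infty,\point_\infty)$ and $(\point_\run,\point_\infty)\to(\point_\infty,\point_\infty)$, this forces $\rate(\point_\infty,\point_\run)\to0$ and $\rate(\point_\run,\point_\infty)\to0$. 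Since $\rate\geq0$ (because $\lagrangian\geq0$) and a two‑point discrete path is admissible in \cref{def:dquasipot}, we get $0\leq\dquasipot(\point_\infty,\point_\run)\leq\rate(\point_\infty,\point_\run)$, so $\dquasipot(\point_\infty,\point_\run)\to0$, and likewise $\dquasipot(\point_\run,\point_\infty)\to0$.

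To conclude, I would invoke the subadditivity $\dquasipot(\point,\pointaltalt)\leq\dquasipot(\point,\pointalt)+\dquasipot(\pointalt,\pointaltalt)$ (immediate from \cref{def:dquasipot} by concatenating discrete paths, exactly as in the proof that $\sim$ is an equivalence relation). Because $\point_\run\sim\point^{\ast}$ we have $\dquasipot(\point_\run,\point^{\ast})=\dquasipot(\point^{\ast},\point_\run)=0$, whence, for every $\run$,
\[
0\leq\dquasipot(\point_\infty,\point^{\ast})\leq\dquasipot(\point_\infty,\point_\run)+\dquasipot(\point_\run,\point^{\ast})=\dquasipot(\point_\infty,\point_\run),
\qquad
0\leq\dquasipot(\point^{\ast},\point_\infty)\leq\dquasipot(\point_\run,\point_\infty).
\]
Letting $\run\to\infty$ gives $\dquasipot(\point_\infty,\point^{\ast})=\dquasipot(\point^{\ast},\point_\infty)=0$, i.e. $\point_\infty\sim\point^{\ast}$, so $\point_\infty\in\eqcl$ and $\eqcl$ is closed in $\vecspace$. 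For the final assertion, I would note that $\crit\obj$ is compact — it is closed and, by \cref{asm:SNR} (the norm of $\grad\obj$ is bounded away from $0$ at infinity), bounded — so each equivalence class, being a closed subset of $\crit\obj$, is compact.

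I expect the only genuinely load‑bearing ingredient to be \cref{lem:continuity_rate}: without the continuity of $\rate$ and the fact that $\rate$ vanishes on the diagonal of $(\crit\obj)^{2}$, two nearby critical points could carry transition cost bounded away from zero and the equivalence classes would fail to be closed. Given that lemma, the rest is pure bookkeeping with the triangle inequality and nonnegativity of $\dquasipot$; the one pitfall to avoid is applying the continuity of $\rate$ at a point not known to lie in $(\crit\obj)^{2}$, which is why establishing $\point_\infty\in\crit\obj$ must come first.
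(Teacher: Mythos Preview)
Your proof is correct and follows essentially the same approach as the paper: establish $\point_\infty\in\crit\obj$ first, then use \cref{lem:continuity_rate} together with $\rate(\point_\infty,\point_\infty)=0$ to get arbitrarily small transition cost between $\point_\infty$ and nearby $\point_\run$, and finally concatenate (equivalently, use subadditivity of $\dquasipot$) to conclude $\point_\infty\sim\point^\ast$. The paper's version writes the concatenation out explicitly as a discrete path $(\dpth_0,\dots,\dpth_{\nRuns-1},\pointalt)$ rather than invoking the triangle inequality for $\dquasipot$, and it appeals to compactness of $\crit\obj$ as a standing fact rather than deriving it from \cref{asm:SNR}, but these are cosmetic differences.
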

\begin{proof}
Let $\point \in \crit\obj$, and take any sequence $(\pointalt_\runB)_{\runB \geq \start}$ in $\crit\obj$ such that $\point \sim \pointalt_\runB$ for every $\runB \geq \start$ and which converges to some $\pointalt \in \vecspace$.
To show that the equivalence classes are closed, we need to show that $\point \sim \pointalt$; then compacity follow directly since the equivalence classes are subsets of $\crit\obj$ which is compact by assumption.


   Since  $\crit\obj$ is closed, it holds that $\pointalt$ belongs to $\crit\obj$.
	We now show that both $\dquasipot(\point, \pointalt)$ and $ \dquasipot(\pointalt, \point)$ are null.
	We only show that $\dquasipot(\point, \pointalt) = 0$ since the proof for the other equality is symmetric.

    \revise{By \cref{lem:continuity_rate}, for any $\precs>0$ there is a neighborhood of $\pointalt$ on which $\rate(\cdot, \pointalt) \leq \precs$.}
	Take $\runB$ large enough so that $\pointalt_\runB$ belongs to this neighborhood. Since $\point \sim \pointalt_\runB$, there exists $\nRuns \geq 1$, $\dpth \in (\vecspace)^\nRuns$ such that $\dpth_\start = \point$, $\dpth_{\nRuns - 1} = \pointalt_\runB$ and $\daction_\nRuns(\dpth) \leq \precs$.
	Then, the path $\dpthalt \defeq (\dpth_{0}, \dots, \dpth_{\nRuns - 1}, \pointalt) \in (\vecspace)^{\nRuns + 1}$ and satisfies $\dpthalt_\start = \point$, $\dpthalt_{\nRuns} = \pointalt$ and
\begin{equation}
    \daction_{\nRuns + 1}(\dpthalt) \revise{=} \daction_\nRuns(\dpth) + \rate(\pointalt_\runB, \pointalt) \leq 2\precs\eqdot
\end{equation}
	Hence, we have shown that, for any $\precs > 0$, $\dquasipot(\point, \pointalt) \leq 2\precs$ so that $\dquasipot(\point, \pointalt) = 0$.
\end{proof}

\begin{lemma}
	\label{lem:W_nbd}
    \revise{
	For any set $\closed \subset \crit\obj$ and $\state \in \points$, define
    \begin{align}
        \rate(\state, \closed) &\defeq \inf_{\pointalt \in \closed} \rate(\state, \pointalt)\notag\\
        \rate(\closed, \state) &\defeq \inf_{\pointalt \in \closed} \rate(\pointalt, \state)\eqdot
    \end{align}
    Then, for any  $\closed \subset \crit\obj$ and $\radius > 0$, the set 
}
    \begin{equation}
		\nbdalt_\radius(\closed) \defeq \setdef{\state \in \points}{\rate(\state, \closed) < \radius,\, \rate(\closed, \state) < \radius}
\end{equation}
\revise{is a neighborhood of  $\closed$}.
\end{lemma}
\begin{proof}
    \revise{\Cref{lem:continuity_rate} ensures that, for any $\point \in \closed$, there exists $\margin_\point > 0$ such that, for any $\pointalt \in \ball(\point, \margin_\point)$, $\rate(\point, \pointalt) < \radius$ and $\rate(\pointalt, \point) < \radius$. $\V \defeq \bigcup_{\point \in \closed} \ball(\point, \margin_\point)$ is then an open set that contains $\closed$ and is included in $\nbdalt_\radius(\closed)$.}
\end{proof}

This lemma is adapted and significantly expanded from \citet[\S1.5, Lem.~5.2]{Kif88} to handle both the unboundedness of the space and the fact that $\dquasipot$ is neither \ac{lsc} nor \ac{usc}.

\begin{lemma}
	\label{lem:equivalence_classes_dpth_stay_close}
 Let $\cpt$ be an equivalence class of $\sim$. Then, for any $\precs > 0$, there is some $\nRuns \geq 1$ such that, for any $\state, \statealt \in \cpt$,
	there is $\dpth \in (\vecspace)^\nRuns$ such that $\dpth_\start = \state$, $\dpth_{\nRuns - 1} = \statealt$, $\daction_\nRuns(\dpth) < \precs$ and $\max_{\start \leq  \run < \nRuns} d(\dpth_\run, \cpt) < \precs$.
\end{lemma}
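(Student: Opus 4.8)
The plan is to proceed in four stages. \emph{First}, two structural reductions. Since $\state\sim\statealt$ for all $\state,\statealt\in\cpt$, \cref{lem:B_is_descent} gives $\bdpot(\statealt)-\bdpot(\state)\le\dquasipot(\state,\statealt)=0$ and the reverse, so $\bdpot$ — and hence $\obj$, as $\bdpot=2\bdprimvar\circ\obj$ with $\bdprimvar$ strictly increasing — is constant on $\cpt$. Moreover $\cpt$ is a finite union of components of $\crit\obj$: if $\state\in\cpt$ lies in a component $\comp_\iComp$ of $\crit\obj$ and $\statealt\in\comp_\iComp$, then essential smooth connectedness provides a rectifiable curve joining $\state$ to $\statealt$ \emph{inside} $\comp_\iComp$ (so $\grad\obj\equiv0$ along it), and traversing it at constant speed over a large horizon makes the action $O(1/\text{horizon})\to0$ by the quadratic bound $\lagrangian(\pointalt,\vel)\le C\norm{\vel}^2$ near critical points (\cref{lem:regularity_lagrangian}), whence $\dquasipot(\state,\statealt)=\dquasipot(\statealt,\state)=0$ and $\statealt\in\cpt$. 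Write $\cpt=\comp^{(1)}\sqcup\dots\sqcup\comp^{(r)}$ for the components involved; by compactness and essential smooth connectedness there is $\Delta<\infty$ bounding the length of curves needed to connect any two points inside a single $\comp^{(l)}$. The key further ingredient, call it the \emph{trapping estimate}, is: for every $\margin>0$ there is $\eta>0$ with $\setdef{\state\in\vecspace}{\dquasipot(\cpt,\state)<\eta,\ \dquasipot(\state,\cpt)<\eta}\subseteq\nbd_\margin(\cpt)$, where $\dquasipot(\cpt,\state)=\inf_{\base\in\cpt}\dquasipot(\base,\state)$ and symmetrically $\dquasipot(\state,\cpt)=\inf_{\base\in\cpt}\dquasipot(\state,\base)$. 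It is useful because any discrete path $\dpth$ with $\dpth_\start,\dpth_{\nRuns-1}\in\cpt$ and $\daction_\nRuns(\dpth)<\eta$ has \emph{every} vertex $\dpth_\run$ in $\nbd_\margin(\cpt)$: indeed $\dquasipot(\cpt,\dpth_\run)\le\daction_{\run+1}((\dpth_\start,\dots,\dpth_\run))\le\daction_\nRuns(\dpth)$ and $\dquasipot(\dpth_\run,\cpt)\le\daction_\nRuns(\dpth)$.

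\emph{Second}, I would prove the trapping estimate by contradiction. Suppose $\state_n$ with $\dquasipot(\cpt,\state_n),\dquasipot(\state_n,\cpt)\to0$ but $\dist(\state_n,\cpt)\ge\margin$, and pick $\base_n,\baseB_n\in\cpt$ with $\dquasipot(\base_n,\state_n)\to0$ and $\dquasipot(\state_n,\baseB_n)\to0$. By \cref{lem:B_is_descent} and stage one, $\bdpot(\state_n)\to\bdpot|_\cpt$, hence $\obj(\state_n)\to\obj|_\cpt$; since $\obj$ is coercive (and $\bdpot$ is bounded on the compact $\cpt$) the $\state_n$ stay in a fixed compact set, so pass to $\state_n\to\state_\infty$ (with $\dist(\state_\infty,\cpt)\ge\margin$), $\base_n\to\base_\infty$, $\baseB_n\to\baseB_\infty\in\cpt$. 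Take near-optimal continuous paths $\curve^n\from[\tstart,\horizon_n]\to\vecspace$ from $\state_n$ to $\baseB_n$ with $\action_{[\tstart,\horizon_n]}(\curve^n)\to0$ and $\horizon_n\in\N$. If $(\horizon_n)$ is bounded, pad to a common horizon by appending the (stationary) gradient flow started at $\baseB_n$, extract a uniform limit, and use lower semicontinuity of the action (\cref{lem:pathscompact}): the limit has zero action, hence is the gradient flow of $\obj$ from $\state_\infty$ (\cref{lem:basic_prop_flow}) and reaches the critical point $\baseB_\infty$; uniqueness for the flow \ac{ODE}, run backwards, forces $\state_\infty=\baseB_\infty\in\cpt$, a contradiction. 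If $\horizon_n\to\infty$, a diagonal extraction gives $\curve^n|_{[\tstart,m]}\to\flowof{\argdot}{\state_\infty}|_{[\tstart,m]}$ for every $m\in\N$; if $\state_\infty\notin\crit\obj$ then $\obj(\flowof{1}{\state_\infty})<\obj|_\cpt$, so $\obj(\curve^n_1)<\obj|_\cpt-c$ for large $n$ with $c>0$ fixed, and \cref{lem:B_is_descent} applied to $\curve^n|_{[1,\horizon_n]}$ gives $\action_{[\tstart,\horizon_n]}(\curve^n)\ge\bdpot(\baseB_n)-\bdpot(\curve^n_1)\ge c'>0$, contradicting $\action\to0$; if $\state_\infty\in\crit\obj$, then continuity of $\rate$ near $(\crit\obj)^2$ (\cref{lem:continuity_rate}) together with $\rate(\state_\infty,\state_\infty)=0$ (as $\state_\infty=\map(\state_\infty)$, \cref{lem:basic_prop_map}) gives $\dquasipot(\state_\infty,\state_n),\dquasipot(\state_n,\state_\infty)\to0$, so the triangle inequality for $\dquasipot$ yields $\dquasipot(\state_\infty,\baseB_\infty)=\dquasipot(\baseB_\infty,\state_\infty)=0$, i.e.\ $\state_\infty\sim\baseB_\infty$ and $\state_\infty\in\cpt$ — again a contradiction.

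\emph{Third}, the construction. Fix $\precs>0$ and set $\eta\defeq\min\{\precs,\eta(\precs)\}$ with $\eta(\precs)$ given by the trapping estimate for $\margin=\precs$. For each ordered pair $a\neq b$ in $\{1,\dots,r\}$ fix $\base^*\in\comp^{(a)}$, $\baseB^*\in\comp^{(b)}$ and, using $\dquasipot(\base^*,\baseB^*)=0$, a path $\curve^{ab}\from[\tstart,\horizon_{ab}]\to\vecspace$ ($\horizon_{ab}\in\N$) from $\base^*$ to $\baseB^*$ with action $<\eta/3$; there are finitely many such pairs. By stage one and \cref{lem:regularity_lagrangian}, for an integer $H$ large enough (depending only on $\precs$ and on $C,\Delta$) any two points of a common $\comp^{(l)}$ are joined by a curve lying in $\comp^{(l)}$ of horizon $H$ and action $<\eta/3$. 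Now, given $\state\in\comp^{(a)}$, $\statealt\in\comp^{(b)}$, take the concatenation [traversal $\state\to\base^*$ in $\comp^{(a)}$]$\,\cdot\,$[$\curve^{ab}$]$\,\cdot\,$[traversal $\baseB^*\to\statealt$ in $\comp^{(b)}$] when $a\neq b$, and the traversal $\state\to\statealt$ in $\comp^{(a)}$ when $a=b$; in either case the total horizon is an integer $\le 2H+\max_{a\neq b}\horizon_{ab}=:T^*$. Sample this curve at integer times and pad with copies of $\statealt$ (which cost $0$, since $\rate(\statealt,\statealt)=0$) to get $\dpth\in(\vecspace)^{\nRuns}$ with $\nRuns\defeq T^*+1$, $\dpth_\start=\state$, $\dpth_{\nRuns-1}=\statealt$. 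Its discrete action is at most the continuous action, hence $<\eta\le\precs$; and each vertex $\dpth_\run$ either lies in $\cpt$ (the intra-component pieces and the padding) or on some $\curve^{ab}$ joining $\base^*,\baseB^*\in\cpt$ with action $<\eta\le\eta(\precs)$, so $\dquasipot(\cpt,\dpth_\run),\dquasipot(\dpth_\run,\cpt)<\eta(\precs)$ and the trapping estimate gives $\dist(\dpth_\run,\cpt)<\precs$. Since $\nRuns=T^*+1$ depends only on $\precs$, this is exactly the required path.

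The hard part is the trapping estimate of stage two. Because $\dquasipot$ is neither lower- nor upper-semicontinuous (as already noted just before the lemma), one cannot simply pass to the limit in the bounds $\dquasipot(\base_n,\state_n),\dquasipot(\state_n,\baseB_n)\to0$; the argument must be split according to whether the horizons of the near-optimal paths stay bounded, and in the unbounded case one has to combine the Lyapunov inequality of \cref{lem:B_is_descent} with the rigidity of the gradient flow — no critical point is reached in finite time except trivially — to rule out excursions far from $\cpt$. Stage one's claim that each component of $\crit\obj$ meeting $\cpt$ lies entirely in $\cpt$, and the attendant uniform bound $\Delta$ on intra-component connecting lengths, are the only other points requiring care, both being routine consequences of \cref{lem:regularity_lagrangian}, essential smooth connectedness, and compactness.
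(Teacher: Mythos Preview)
Your trapping estimate in Stage~2 is correct and is a clean way to package the key obstruction; the case split on the horizons, combined with \cref{lem:B_is_descent} and uniqueness for the gradient-flow ODE, goes through as you sketch. This is a genuinely different route from the paper's: the paper never isolates such an a~priori estimate on points with small two-sided $\dquasipot$, and instead argues by contradiction directly on sequences of discrete paths between two fixed cover centers $\point_\idx,\point_\idxalt\in\cpt$, extracting the \emph{first} vertex that leaves $\nbd_\precs(\cpt)$ and showing its limit must be a critical point equivalent to $\point_\idx$. Your modular estimate would also be reusable elsewhere (e.g.\ it immediately gives that $\nbdalt_\radius(\cpt)$ shrinks to $\cpt$ as $\radius\to0$).

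There is, however, a real gap in Stage~3: the uniform bound $\Delta$ on intra-component connecting lengths is not a routine consequence of compactness and essential smooth connectedness. The definition of essential smooth connectedness only requires $\dot\pth$ to be integrable on closed subintervals of each open piece $(\time_{\run-1},\time_{\run})$, so $\int_0^1\norm{\dot\pth_\time}\dd\time$ may well be infinite, and even when finite it need not be bounded uniformly over pairs of endpoints; compactness of $\comp^{(l)}$ in the ambient metric says nothing about its intrinsic (path-length) diameter. Without $\Delta$, your horizon $H$ for the intra-component traversals --- and hence your uniform $\nRuns$ --- is not defined.

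The fix is to bypass the component structure entirely and use the local continuity of $\rate$ near $(\crit\obj)^2$ from \cref{lem:continuity_rate}: by compactness of $\cpt$ choose $\delta>0$ so that $\rate(\point,\pointalt)<\eta/3$ whenever $\point,\pointalt\in\cpt$ are $\delta$-close, take a finite $\delta/2$-net $\{\point_1,\dots,\point_m\}\subset\cpt$, fix for each pair $(\point_i,\point_j)$ a path of action $<\eta/3$ and horizon $\horizon_{ij}$ (possible since $\dquasipot(\point_i,\point_j)=0$), and connect arbitrary $\state,\statealt\in\cpt$ via $(\state,\point_i,\dots,\point_j,\statealt)$ with a single $\rate$-step at each end. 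This yields a uniform $\nRuns=2+\max_{i,j}\horizon_{ij}$, and your trapping estimate then controls all vertices. This is essentially the covering argument the paper uses (though the paper handles the endpoints by uniform continuity of $\rate$ rather than by a single step), so with this repair the two proofs converge to the same skeleton --- your contribution being the cleaner, standalone Stage~2.
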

\begin{proof}
    \revise{Fix $\precs > 0$.}
	$\cpt$ is made of critical points of $\obj$ so that by \cref{lem:continuity_rate}, 
    \revise{for any $\point \in \cpt$, there exists $\margin_\point > 0$ such that, for any $\pointalt \in \ball(\point, \margin_\point)$, $\rate(\point, \pointalt) < \precs$ and $\rate(\pointalt, \point) < \precs$. 
   }


	\revise{Since the open sets $\ball(\point, \margin_\point)$ for $\point \in \cpt$ form an open cover of $\cpt$ and $\cpt$ is compact (\cref{lem:equivalence_classes_closed}), there exists a finite number of points $\point_\idx \in \cpt$, $\idx \in \indices$ such that
    }
\begin{equation}
    \cpt \subset \bigcup_{\idx \in \indices} \ball(\point_\idx, \revise{\margin_{\point_\idx}})\eqdot
		\label{eq:lemma_nhd_eqcl_covering_cpt}
\end{equation}

	We first show that the result holds for the points $\point_\idx$ before explaining why it actually suffices for the general case.

	Fix $\idx, \idxalt \in \indices$.
	Since $\point_\idx \sim \point_\idxalt$, $\dquasipot(\point_\idx, \point_\idxalt) = 0$ and therefore there exists sequences $\dpth^\runB \in (\vecspace)^{\nRuns_\runB}$ with $\dpth^\runB_\start = \point_\idx$, $\dpth^\runB_{\nRuns_\runB - 1} = \point_\idxalt$ such that $\daction_{\nRuns_\runB}(\dpth^\runB) \to 0$ as $\runB \to \infty$.
	For the sake of contradiction, assume that from some $\runB$, there always exists $0 \leq \run < \nRuns_\runB$ such that $d(\dpth^\runB_\run, \cpt) \geq \precs$. Define $\run_\runB$ as the smallest $\run$ such that it happens, which is necessarily greater or equal to $1$. Note that, by definition, $\dpth^\runB_{\run_\runB - 1}$ satisfies $d(\dpth^\runB_{\run_\runB - 1}, \cpt) \leq  \precs$.

	Define $\cptalt \defeq \setdef{\point \in \vecspace}{d(\point, \cpt) \leq \precs}$ which is compact.
	However, for $\runB$ large enough, $\rate(\dpth^\runB_{\run_\runB - 1}, \dpth^\runB_{\run_\runB}) 
	\leq \daction_{\nRuns_\runB}(\dpth^\runB) \leq 1$ so that $(\dpth^\runB_{\run_\runB-1}, \dpth^\runB_{\run_\runB}) 
	$ belongs to $\pths_{2}^{\cptalt}(1)$, which is compact by \cref{cor:ldp_iterates_full}.
	Therefore, one can extract a subsequence from $(\dpth^\runB_{\run_\runB-1},\dpth^\runB_{\run_\runB},
	)_{\runB \geq 1}$ that converges to some $(\stateA, \stateB
    )\in \vecspace^2$ that satisfies $d(\stateB, \cpt) \geq \precs$. \revise{Without loss of generality, at the cost of replacing the original sequence by the extracted subsequence, we assume that the sequence $(\dpth^\runB_{\run_\runB-1},\dpth^\runB_{\run_\runB})_{\runB \geq 1}$ itself converges to $(\stateA, \stateB)$.}
        By \ac{lsc} of $\rate$, one has that
\begin{align}
\rate(\stateA, \stateB)
	&\leq \liminf_{\runB \to \infty} \rate(\dpth^\runB_{\run_\runB-1}, \dpth^\runB_{\run_\runB})
	\notag\\
	&\leq \liminf_{\runB \to \infty} \daction_{\nRuns_\runB}(\dpth^\runB) = 0\,,
\end{align}
	so that $\rate(\stateA, \stateB) = 0$. 
	We now show that $\stateA = \stateB$ and that it is a critical point.
	By \cref{lem:B_is_descent}, we have that
\begin{subequations}
\begin{align}
\bdpot(\dpth^\runB_{\run_\runB-1}) - \bdpot(\state_\idx)
	&\leq \dquasipot(\state_\idx, \dpth^\runB_{\run_\runB-1})
	\leq \daction_{\nRuns_\runB}(\dpth^\runB)
	\\
\bdpot(\state_\idxalt) - \bdpot(\dpth^\runB_{\run_\runB})
	&\leq \dquasipot(\state_\idxalt, \dpth^\runB_{\run_\runB})
	\leq \daction_{\nRuns_\runB}(\dpth^\runB)
	\,,
\end{align}
\end{subequations}
so, taking the limit $\runB \to \infty$ yields
\begin{subequations}
\begin{align}
\bdpot(\stateA) - \bdpot(\state_\idx)
	&\leq 0
	\\
\bdpot(\state_\idxalt) - \bdpot(\stateB)
	&\leq 0
	\eqdot
\end{align}
\end{subequations}

	However, the fact that $\state_\idx$ and $\state_\idxalt$ are equivalent and \cref{lem:B_is_descent} imply that $\bdpot(\state_\idx) = \bdpot(\state_\idxalt)$ so that $\bdpot(\stateA) \leq  \bdpot(\stateB)$. Since $\bdprimvar$ is increasing, we have that $\obj(\stateA) \leq \obj(\stateB)$.

	But we have that $\rate(\stateA, \stateB)= 0$ so that $\stateB = \flowmap_{1}(\stateA)$. Therefore, if $\grad \obj(\stateA) \neq 0$, we would have
\begin{equation}
		\obj(\stateB) - \obj(\stateA) = - \int_{0}^{1} \norm{\grad \obj(\flowmap_{\time}(\stateA)}^{2} \dd \time < 0\,,
\end{equation}
	which would be a contradiction. Therefore, $\grad \obj(\stateA) = 0$ and $\stateA = \stateB$.
	Since $\stateA$ is a critical point, to show that it belongs to $\cpt$, it suffices to show that $\state_\idx \sim \stateA$.

    Take ${\revise{\precsalt}} > 0$. \revise{By \cref{lem:W_nbd},} $\W_{\revise{\precsalt}}(\setof{\stateA})$ is \revise{a neighborhood} of $\stateA$.
	Since $\dpth^\runB_{\run_\runB}$ converges to $\stateA$, for $\runB$ large enough, $\dpth^\runB_{\run_\runB}$ belongs to $\W_{\revise{\precsalt}}(\setof{\stateA})$.
	$(\dpth^\runB_{0}, \dots, \dpth^\runB_{\run_\runB}, \stateA)$ and $(\stateA, \dpth^\runB_{\run_\runB}, \dots, \dpth^\runB_{\nRuns_\runB - 1})$ are, respectively, paths from $\state_\idx$ to $\stateA$ and from $\stateA$ to $\state_\idxalt$ with action cost of at most $\daction_{\nRuns_\runB}(\dpth^\runB) + {\revise{\precsalt}}$ so that, for $\runB$ large enough, $\dquasipot(\state_\idx, \stateA) \leq 2 {\revise{\precsalt}}$ and $\dquasipot(\stateA, \state_\idxalt) \leq 2 {\revise{\precsalt}}$.
	
	Hence, we have shown that $\dquasipot(\state_\idx, \stateA) = \dquasipot(\stateA, \state_\idxalt) = 0$. 
	 Then, we also have
\begin{equation}
		0 \leq \dquasipot(\stateA, \point_\idx) \leq \dquasipot(\stateA, \point_\idxalt) + \dquasipot(\point_\idxalt, \point_\idx) = 0\,,
\end{equation}
	and therefore $\state \sim \point_\idx$ and thus $\state$ belongs to $\cpt$. This is a contradiction.

	Therefore, there must exist some sequence $\dpth^{\idx,\idxalt} \in (\vecspace)^{\nRuns_{\idx, \idxalt}}$ with $\dpth^{\idx,\idxalt}_\start = \point_\idx$, $\dpth^{\idx,\idxalt}_{\nRuns_{\idx, \idxalt} - 1} = \point_\idxalt$ such that $\daction_{\nRuns_{\idx, \idxalt}}(\dpth^{\idx,\idxalt}) < \precs$ and $\max_{0 \leq \run < \nRuns_{\idx, \idxalt}} d(\dpth^{\idx,\idxalt}_\run, \cpt) < \precs$.

    Finally, for the general class, consider $\state, \statealt \in \cpt$. By \cref{eq:lemma_nhd_eqcl_covering_cpt}, there exists $\idx, \idxalt \in \indices$ such that $\state \in \ball(\point_\idx, \revise{\margin_{\point_\idx}})$, $\statealt \in \ball(\point_\idxalt, \revise{\margin_{\point_\idxalt}})$.
    \revise{
	Consider $\dpth \in (\vecspace)^{\nRuns_{\idx, \idxalt}+2}$ a
	extension of $\dpth^{\idx,\idxalt}$ defined by
\begin{equation}
\dpth_\run
	= \begin{cases*}
		\state
			&\quad
			if $\run = 0$,
		\\
        \dpth^{\idx,\idxalt}_{\run-1}
			&\quad
			if $0 <  \run \leq  \nRuns_{\idx, \idxalt}$,
		\\
		\statealt
			&\quad
			if $\run = \nRuns_{\idx, \idxalt} + 1$,
            \eqdot
	\end{cases*}
\end{equation}
that still satisfies $\max_{0 \leq \run < \nRuns_{\idx, \idxalt}+2} d(\dpth_\run, \cpt) < \precs$.
Then, one has that
\begin{equation}
		\daction_{\nRuns_{\idx, \idxalt}+2}(\dpth) < \daction_{\nRuns_{\idx, \idxalt}}(\dpth^{\idx,\idxalt}) + 2\precs \leq 3 \precs\,,
\end{equation}
	which concludes the proof.
    }
\end{proof}

The following lemma is inspired by \citet[Prop.~3.3.11]{alongiRecurrenceTopology2007}.
\begin{lemma}
	\label{lem:equivalence_classes_connected}
	Equivalence classes are connected.
\end{lemma}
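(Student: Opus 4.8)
Fix an equivalence class $\cpt$ of $\sim$; by \cref{lem:equivalence_classes_closed} it is compact. The plan is to argue by contradiction: suppose $\cpt = \cpt_{1} \sqcup \cpt_{2}$ with $\cpt_{1},\cpt_{2}$ nonempty and relatively clopen in $\cpt$. Since $\cpt$ is compact, $\cpt_{1}$ and $\cpt_{2}$ are both compact and $d_{0} \defeq \dist(\cpt_{1},\cpt_{2}) > 0$. Both pieces consist of critical points of $\obj$ (they lie in $\cpt \subset \crit\obj$). Picking $\state \in \cpt_{1}$ and $\statealt \in \cpt_{2}$, I will invoke \cref{lem:equivalence_classes_dpth_stay_close} with a vanishing tolerance: for each $n$, with $\precs_{n} \defeq \min(d_{0}/3, 1)/n$, there is $\nRuns_{n} \geq 1$ and $\dpth^{(n)} \in (\vecspace)^{\nRuns_{n}}$ with $\dpth^{(n)}_{\start} = \state$, $\dpth^{(n)}_{\nRuns_{n}-1} = \statealt$, $\daction_{\nRuns_{n}}(\dpth^{(n)}) < \precs_{n}$, and $\max_{\start \leq \run < \nRuns_{n}} d(\dpth^{(n)}_{\run}, \cpt) < \precs_{n}$.

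The key observation is that a \emph{single} crossing edge of such a path carries almost no cost, which forces its endpoints to limit onto a common critical point. Concretely: for $n$ large enough that $\precs_{n} < d_{0}/2$, the neighborhoods $\nbd_{\precs_{n}}(\cpt_{1})$ and $\nbd_{\precs_{n}}(\cpt_{2})$ are disjoint, so $\nbd_{\precs_{n}}(\cpt) = \nbd_{\precs_{n}}(\cpt_{1}) \sqcup \nbd_{\precs_{n}}(\cpt_{2})$; since $\dpth^{(n)}_{\start}$ lies in the first lobe and $\dpth^{(n)}_{\nRuns_{n}-1}$ in the second, there is a first index $j_{n}$ with $\dpth^{(n)}_{j_{n}} \in \nbd_{\precs_{n}}(\cpt_{1})$ and $\dpth^{(n)}_{j_{n}+1} \in \nbd_{\precs_{n}}(\cpt_{2})$. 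As all summands of $\daction_{\nRuns_{n}}$ are nonnegative (\cref{lem:basic_prop_h_l}), we get $\rate(\dpth^{(n)}_{j_{n}}, \dpth^{(n)}_{j_{n}+1}) \leq \daction_{\nRuns_{n}}(\dpth^{(n)}) < \precs_{n}$. The pairs $(\dpth^{(n)}_{j_{n}}, \dpth^{(n)}_{j_{n}+1})$ all lie, for large $n$, in the compact set $\clball$-type product $\overline{\nbd_{1}(\cpt_{1})} \times \overline{\nbd_{1}(\cpt_{2})}$, so along a subsequence they converge to some $(\stateA, \stateB)$. Since $d(\dpth^{(n)}_{j_{n}}, \cpt_{1}) < \precs_{n} \to 0$ and $\cpt_{1}$ is closed, $\stateA \in \cpt_{1}$; likewise $\stateB \in \cpt_{2}$, hence $\dist(\stateA, \stateB) \geq d_{0} > 0$. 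On the other hand, $\rate$ is \ac{lsc} on $\vecspace\times\vecspace$ (established in the proof of \cref{lem:pathscompactdiscrete}), so $\rate(\stateA,\stateB) \leq \liminf_{n} \rate(\dpth^{(n)}_{j_{n}},\dpth^{(n)}_{j_{n}+1}) \leq \liminf_{n} \precs_{n} = 0$, i.e.\ $\rate(\stateA, \stateB) = 0$. By \cref{lem:basic_prop_map} this means $\stateB = \map(\stateA)$; but $\stateA \in \cpt_{1} \subset \crit\obj$, so $\grad\obj(\stateA) = 0$, the gradient flow through $\stateA$ is constant, and hence $\map(\stateA) = \stateA$. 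Therefore $\stateA = \stateB \in \cpt_{1} \cap \cpt_{2} = \emptyset$, a contradiction. Thus $\cpt$ is connected.

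The substantive work is entirely upstream: essentially everything hinges on \cref{lem:equivalence_classes_dpth_stay_close}, which already provides the nontrivial fact that points within an equivalence class can be joined by arbitrarily cheap paths that \emph{stay near the class} — and on the \ac{lsc} of $\rate$ together with the local compactness packaged in \cref{cor:ldp_iterates_full}/\cref{lem:pathscompactdiscrete}. Given those, the only remaining point — and the one I would be most careful to state cleanly — is that the limit of the crossing edges must be a critical point (because it is a limit of points whose distance to $\cpt_{1}\subset\crit\obj$ tends to $0$, and $\crit\obj$ is closed), at which $\map$ acts trivially; this is what collapses the two endpoints and yields the contradiction. No quantitative estimate on the flow displacement is needed for this route.
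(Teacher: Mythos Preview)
Your proof is correct and follows essentially the same route as the paper: assume a disconnection, invoke \cref{lem:equivalence_classes_dpth_stay_close} to produce cheap paths staying near $\cpt$, locate a crossing edge, pass to a subsequential limit, and use the \ac{lsc} of $\rate$ together with \cref{lem:basic_prop_map} and criticality of the limit to collapse the edge. The only cosmetic difference is that the paper phrases the disconnection via ambient open sets $\openA,\openB$ and takes the \emph{last} index in $\openA$, whereas you work with the compact pieces $\cpt_{1},\cpt_{2}$, their disjoint $\precs_{n}$-neighborhoods, and the \emph{first} crossing index; both lead to the same contradiction.
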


\begin{proof}
	Fix $\cpt$ an equivalence class of $\sim$.

	For the sake of contradiction, assume that there are $\openA$, $\openB$ disjoint open sets of $\vecspace$ such that both $\openA \cap \cpt$ and $\openB \cap \cpt$ are non-empty and $\cpt = (\openA \cup \openB) \cap \cpt$.

	Take $\pointA \in \openA \cap \cpt$ and $\pointB \in \openB \cap \cpt$.
	By \cref{lem:equivalence_classes_dpth_stay_close}, there exists a sequence of paths $\dpth^\runB \in (\vecspace)^{\nRuns_{\runB}}$ with $\dpth^\runB_\start = \pointA$, $\dpth^\runB_{\nRuns_{\runB} - 1} = \pointB$ for $\runB \geq \start$ such that $\daction_{\nRuns_{\runB}}(\dpth^\runB) \to 0$ as $\runB \to \infty$ and $\max_{0 \leq \run < \nRuns_{\runB}} d(\dpth^\runB_\run, \cpt) \to 0$ as $\runB \to \infty$.
	Define $\seqA_\runB$ as the last point of $\dpth^\runB$ that belongs to $\openA$ and $\seqB_\runB$ as the successor of $\seqA_\runB$ in $\dpth^\runB$. Formally, $\seqA_\runB$ and $\seqB_\runB$ are defined by $\seqA_\runB = \dpth^\runB_{\idx_\runB}$ and $\seqB_\runB = \dpth^\runB_{\idx_\runB + 1}$ where $\idx_\runB = \max \setdef{\idx < \nRuns_{\runB} }{\dpth^\runB_\idx \in \openA}$.

	By construction, both $d(\seqA_\runB, \cpt)$ and $d(\seqB_\runB, \cpt)$ go to zero as $\runB \to \infty$. In particular, both sequences lie in $\U_{1}(\cpt)$ from some point onward, which is relatively compact, so they admit convergent subsequences. Without loss of generality, thus assume that $\seqA_\runB \to \seqA$ and $\seqB_\runB \to \seqB$ as $\runB \to \infty$ and $\seqA$, $\seqB$ belong to $\cpt$.

	Since $\seqA_\runB$ belongs to $\openA$ for all $\runB \geq \start$, $\seqA_\runB$ is never in $\openB$ so that $\seqA$ does not belong to $\openB$ either. Since $\seqA$ belongs to $\cpt$, it must thus belong to $\openA$. Similarly, $\seqB$ must belong to $\openB$.

However, by construction, we have that $\rate(\seqA_\runB, \seqB_\runB) \leq \daction_{\nRuns_{\runB}}(\dpth^\runB) \to 0$ as $\runB \to \infty$ so that $\rate(\seqA_\runB, \seqB_\runB)$ converges to $0$ as $\runB \to \infty$ as well. By \ac{lsc} of $\rate$ (\cref{cor:ldp_iterates_full} with $\nRuns=1$), $\rate(\seqA, \seqB) = 0$ so that $\seqB= \map(\seqA)$ by \cref{lem:basic_prop_map}. But since $\seqA$ belongs to $\cpt$, it is a critical point of $\obj$ and therefore $\seqA = \map(\seqA)$. Hence $\seqA = \seqB$ with $\seqA \in \openA$ and $\seqB \in \openB$, which is a contradiction.
\end{proof}

\begin{lemma}
	\label{lem:connected_comp_included_in_single_eqcl}
  Any connected component of $\crit\obj$ is included in a single equivalence class.
\end{lemma}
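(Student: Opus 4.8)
The plan is to show directly that any two points $\pointA,\pointB$ of a connected component $\comp$ of $\crit\obj$ satisfy $\dquasipot(\pointA,\pointB)=\dquasipot(\pointB,\pointA)=0$, i.e.\ $\pointA\sim\pointB$; since this places all of $\comp$ in the equivalence class of any one of its points, the lemma follows. By the (essential) smooth connectedness of the components of $\crit\obj$ posited in \cref{asm:obj-weak}, there is a continuous, a.e.\ differentiable path $\curve\from[0,1]\to\comp$ with $\curve(0)=\pointA$, $\curve(1)=\pointB$, and a partition $0=\time_{0}<\dots<\time_{\nRuns}=1$ such that $\curve$ is absolutely continuous — in particular of finite length — on every compact subinterval of each $(\time_{\run-1},\time_{\run})$. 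The guiding idea is that, since $\curve$ lies in $\crit\obj$ — where $\grad\obj$ vanishes and, by the second part of \cref{lem:regularity_lagrangian}, $\lagrangian$ is quadratically small — traversing $\curve$ (or its pieces) arbitrarily slowly costs arbitrarily little action, so the quasi-potential between the endpoints vanishes.

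To make this quantitative, first combine the second part of \cref{lem:regularity_lagrangian} with a finite subcover of the compact set $\comp$ to obtain constants $\const_{0},\margin_{0}>0$ such that $\lagrangian(\pointaltalt,\vel)\le\const_{0}\norm{\vel}^{2}$ for all $\pointaltalt\in\comp$ and $\vel\in\ball(0,\margin_{0})$. Let $\psi\from[a,b]\to\comp$ be any absolutely continuous curve, hence rectifiable of length $\ell\defeq\int_{a}^{b}\norm{\dot\psi(\time)}\dd\time<\infty$. Reparametrizing $\psi$ to unit speed by arc length and then slowing it down, we obtain, for each integer $\nRuns'>\ell/\margin_{0}$, a Lipschitz curve $\pthalt\from[0,\nRuns']\to\comp$ with $\pthalt(0)=\psi(a)$, $\pthalt(\nRuns')=\psi(b)$, and $\norm{\dot\pthalt}=\ell/\nRuns'<\margin_{0}$ a.e.; hence
\[
\dquasipot(\psi(a),\psi(b))
	\;\le\; \action_{[\start,\nRuns']}(\pthalt)
	\;=\; \int_{0}^{\nRuns'}\lagrangian\bigl(\pthalt(\time),\dot\pthalt(\time)\bigr)\dd\time
	\;\le\; \frac{\const_{0}\,\ell^{2}}{\nRuns'} ,
\]
using \cref{def:dquasipot} and the fact that $\pthalt(\time)\in\comp$ throughout. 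Letting $\nRuns'\to\infty$ gives $\dquasipot(\psi(a),\psi(b))=0$, and the same argument applied to the time-reversed curve gives $\dquasipot(\psi(b),\psi(a))=0$.

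It remains to stitch this estimate across the junctions $\time_{\run}$, which is the only place where the weak regularity of $\curve$ intervenes. Fix $\run$; for every small $\precsalt>0$, $\curve$ is absolutely continuous on $[\time_{\run-1}+\precsalt,\time_{\run}-\precsalt]$, so the previous step gives $\dquasipot\bigl(\curve(\time_{\run-1}+\precsalt),\curve(\time_{\run}-\precsalt)\bigr)=0$. Since $\curve(\time_{\run-1}),\curve(\time_{\run})\in\crit\obj$ and $\rate$ is finite and continuous on a neighborhood of $(\crit\obj)^{2}$ with $\rate(\pointaltalt,\pointaltalt)=0$ for $\pointaltalt\in\crit\obj$ (\cref{lem:continuity_rate}, together with $\map(\pointaltalt)=\pointaltalt$ and \cref{lem:basic_prop_map}), continuity of $\curve$ forces $\rate\bigl(\curve(\time_{\run-1}),\curve(\time_{\run-1}+\precsalt)\bigr)\to0$ and $\rate\bigl(\curve(\time_{\run}-\precsalt),\curve(\time_{\run})\bigr)\to0$ as $\precsalt\to0$. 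Using $\dquasipot\le\rate$ (take $\nRuns=2$ in \cref{def:dquasipot}) and the subadditivity of $\dquasipot$, we bound $\dquasipot(\curve(\time_{\run-1}),\curve(\time_{\run}))$ by the sum of those three quantities, the middle of which vanishes; letting $\precsalt\to0$ yields $\dquasipot(\curve(\time_{\run-1}),\curve(\time_{\run}))=0$ for every $\run$. Summing over $\run$ by subadditivity gives $\dquasipot(\pointA,\pointB)\le\sum_{\run=1}^{\nRuns}\dquasipot(\curve(\time_{\run-1}),\curve(\time_{\run}))=0$; the reversed path gives $\dquasipot(\pointB,\pointA)=0$, so $\pointA\sim\pointB$. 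I expect this junction step to be the main obstacle; in the smooth or definable settings (\cref{rem:asm_def}) the connecting path can be taken piecewise smooth, hence of finite length overall, and the single estimate of the second paragraph already closes the proof with no limiting argument.
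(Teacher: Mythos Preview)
Your proof is correct and follows the same core strategy as the paper: use the quadratic bound on $\lagrangian$ near $\crit\obj$ from \cref{lem:regularity_lagrangian} (extended by a finite cover), traverse the connecting curve arbitrarily slowly to drive the action to zero, and then patch across the junctions of the piecewise-absolutely-continuous path. The reparametrization you use (arc length then uniform slowdown) is a cosmetic variant of the paper's explicit time change $\timeD_{\time}=\int_0^\time \max(\norm{\dot\pth_{\timealt}},1)/\precs\,\dd\timealt$; both yield the same $\bigoh(\precs)$ bound on the action.

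The one genuine difference is the junction step. The paper argues that all interior points of each piece lie in a single equivalence class and then invokes \cref{lem:equivalence_classes_closed} (equivalence classes are closed) to absorb the endpoints $\curve(\time_{\run})$. You instead bound the ``bridge'' costs $\rate(\curve(\time_{\run-1}),\curve(\time_{\run-1}+\precsalt))$ and $\rate(\curve(\time_{\run}-\precsalt),\curve(\time_{\run}))$ directly via the continuity of $\rate$ near $(\crit\obj)^2$ from \cref{lem:continuity_rate}, and pass to the limit using subadditivity of $\dquasipot$. Both routes are valid; yours is slightly more self-contained in that it avoids the (nontrivial) closedness lemma, while the paper's phrasing makes the role of the equivalence-class structure more explicit.
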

\begin{proof}
	Let $\connectedcomp$ be a connected component of $\crit\obj$ and fix $\point, \pointalt \in \connectedcomp$. 
	We begin by considering a stronger version of \cref{asm:obj}\cref{asm:obj-weak-crit}, namely that there exists $\pth \in \contfuncs([0, 1], \connectedcomp)$ absolutely continuous such that $\pth_{0} = \point$, $\pth_{1} = \pointalt$, \ie such that $\pth$ is differentiable almost everywhere with $\int_{0}^{1} \norm{\dot \pth_{\time}} \dd \time < \infty$. We show that $\point \sim \pointalt$, \ie that $\dquasipot(\point, \pointalt) = \dquasipot(\pointalt, \point) = 0$.
	
	Let us begin by showing that $\dquasipot(\point, \pointalt) = 0$.

	Since $\crit\obj$ is compact and $\connectedcomp$ is closed as connected component of a closed set, $\connectedcomp$ is compact. By invoking \cref{lem:regularity_lagrangian} at every point of $\connectedcomp$ and extracting a finite covering from the family of balls obtained, we have that, there exists $\margin > 0$, $\tmplips > 0$ such that, for every $\state \in \connectedcomp$, $\vel \in \ball(0, \margin)$,
\begin{equation}
	\lagrangian(\state, \vel) \leq \tmplips {\norm{\vel}^{2}} \eqdot
\end{equation}

	Fix $\precs \in (0, \margin)$ and define, for any $\time \in [0, 1]$,
\begin{equation}
		\timeD_{\time} \defeq \int_{0}^\time \frac{\max(\norm{\dot \pth_{\timealt}}, 1)}{\precs} \dd \timealt \eqdot
\end{equation}
	We have that $\timeD$ is an increasing bijection from $[0, 1]$ to $[0, \timeD_{1}]$ and is absolutely continuous with $\dot \timeD_{\time} = {\max(\norm{\dot \pth_{\time}}, 1)} / {\precs}$ almost everywhere.
	Consider $\timeC \from [0, \timeD_{1}] \to [0, 1]$ the inverse of $\timeD$, which is absolutely continuous with $\dot \timeC_{\time} = {\precs} / {\max(\norm{\dot \pth_{\timeC_{\time}}}, 1)}$ almost everywhere.
	Define $\pthalt \in \contfuncs([0, \timeD_{1}], \vecspace)$ by $\pthalt_{\time} = \pth_{\timeC_{\time}}$ for any $\time \in [0, \timeD_{1}]$.
	Then, $\pthalt$ is absolutely continuous and, for any $\time \in [0, \timeD_{1}]$,
\begin{equation}
		\dot \pthalt_{\time} = \dot \pth_{\timeC_{\time}} \dot \timeC_{\time} = \frac{\precs \dot \pth_{\timeC_{\time}}}{\max(\norm{\dot \pth_{\timeC_{\time}}}, 1)}\,
\end{equation}
   which has norm less than $\precs<\margin$.

	Therefore, we have that,
\begin{align}
\action_{0,\timeD_{1}}(\pthalt)
	&= \int_{0}^{\timeD_{1}} \lagrangian(\pthalt_{\time}, \dot \pthalt_{\time}) \dd \time
	\notag\\ 
	&\leq \tmplips \int_{0}^{\timeD_{1}} \norm{\dot \pthalt_{\time}}^{2} \dd \time
	\notag\\
	&\leq \precs \tmplips \int_{0}^{\timeD_{1}}\norm{\dot \pthalt_{\time}} \dd \time
	\notag\\
	&= \precs \tmplips \int_{0}^{\timeD_{1}} \dot \timeC_{\time}  \norm{\dot \pth_{\timeC_{\time}}} \dd \time
	\notag\\
	&= \precs \tmplips \int_{0}^{1} \norm{\dot \pth_{\time}} \dd \time
\end{align}
where the last equality is obtained by the change of variable $\time \gets \timeC_{\time}$.
Thus, we have shown that, for any $\precs \in (0, \margin)$,
\begin{equation}
\dquasipot(\point, \pointalt)
	\leq \precs \tmplips \int_{0}^{1} \norm{\dot \pth_{\time}} \dd \time
\end{equation}
with $\int_{0}^{1} \norm{\dot \pth_{\time}} \dd \time < \infty$ by construction so that $\dquasipot(\point, \pointalt) = 0$.

Reversing the roles of $\point$ and $\pointalt$ and considering the path $(\pth_{1 - \time})_{\time \in [0, 1]}$ then yields that $\dquasipot(\pointalt, \point) = 0$.
Therefore, we have shown that if there exists $\pth \in \contfuncs([0, 1], \connectedcomp)$ absolutely continuous such that $\pth_{0} = \point$, $\pth_{1} = \pointalt$, then $\point \sim \pointalt$.

We now relax our assumption on the paths from absolute continuity to piecewise absolute continuity (\cref{asm:obj-weak}).
For $\point, \pointalt \in \connectedcomp$, by assumption, there exists $\pth \in \contfuncs([0, 1], \connectedcomp)$ such that $\pth_{0} = \point$, $\pth_{1} = \pointalt$ and such that it is piecewise absolutely continuous: $\pth$ is differentiable almost everywhere and there exists $0 = \time_{0} < \time_{1} < \dots < \time_\nRuns = 1$ such that $\dot \pth$ is integrable on every closed interval of $(\time_\run, \time_{\run + 1})$ for $\run = 0, \dots, \nRuns - 1$.
	Take $0 \leq \run < \nRuns - 1$ and $\time_\run < \timealt < \time < \time_{\run + 1}$.
$\pth$ restricted to $[\timealt, \time]$ is absolutely continuous so that, by the previous case, all the points of $\setdef{\pth_{\timealtalt}}{\timealtalt \in [\timealt, \time]}$ are included in a single equivalence class $\cpt$.
Taking $\timealt \to \time_\run$ and $\time \to \time_{\run + 1}$ yields that $\setdef{\pth_{\timealtalt}}{\timealtalt \in (\time_\run, \time_{\run + 1})}$ is included in $\cpt$.
Moreover, by continuity of $\pth$, $\pth_{\time_\run}$ and $\pth_{\time_{\run + 1}}$ belong to the closure of $\cpt$, which is closed by \cref{lem:equivalence_classes_closed}, so that $\pth_{\time_\run}$ and $\pth_{\time_{\run + 1}}$ belong to $\cpt$ as well.
Therefore, $\pth_{\time_\run} \sim \pth_{\time_{\run + 1}}$.
	By transitivity, we obtain that $\point = \pth_{0} \sim \pth_{\time_{1}} \sim \dots \sim \pth_{\time_{\nRuns - 1}} \sim \pth_{1} = \pointalt$ so that $\point \sim \pointalt$.
\end{proof}

Combining \cref{lem:equivalence_classes_connected,lem:connected_comp_included_in_single_eqcl}, we have shown that  any connected component of $\crit\obj$ is included in a single equivalence class and since they are connected, two distinct connected components of $\crit\obj$ cannot belong to the same equivalent class; hence, we have that they coincide.


\begin{corollary}
	\label{corollary:equivalence_classes_are_connected_components_of_crit}
    \revise{The equivalence classes of $\sim$ are exactly connected components of $\crit\obj$.}
\end{corollary}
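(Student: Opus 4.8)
The plan is to derive the corollary by elementary point-set topology from the two preceding lemmas, without any further large-deviations input. First I would record the two facts that do the work: by the proposition establishing that $\sim$ is an equivalence relation on $\crit\obj$, the equivalence classes partition $\crit\obj$ (in particular any equivalence class $\cpt$ satisfies $\cpt\subseteq\crit\obj$ and is nonempty); by \cref{lem:equivalence_classes_connected}, each $\cpt$ is connected; and by \cref{lem:connected_comp_included_in_single_eqcl}, each connected component $\connectedcomp$ of $\crit\obj$ is contained in a single equivalence class.

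Then I would argue both inclusions. Given an equivalence class $\cpt$, it is a nonempty connected subset of $\crit\obj$, hence contained in exactly one connected component $\connectedcomp$ of $\crit\obj$ (this is the standard fact that connected components are the maximal connected subsets and that distinct components are disjoint), so $\cpt\subseteq\connectedcomp$. By \cref{lem:connected_comp_included_in_single_eqcl}, $\connectedcomp$ is contained in a single equivalence class $\cpt'$; then $\cpt\subseteq\connectedcomp\subseteq\cpt'$, and since two equivalence classes of the same relation are either equal or disjoint and $\cpt$ is a common nonempty subset, $\cpt=\cpt'$. Hence $\connectedcomp\subseteq\cpt$, so $\cpt=\connectedcomp$; every equivalence class is a connected component. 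Conversely, given a connected component $\connectedcomp$, \cref{lem:connected_comp_included_in_single_eqcl} gives an equivalence class $\cpt$ with $\connectedcomp\subseteq\cpt$, and by what was just shown $\cpt$ equals some connected component $\connectedcomp'$; since $\connectedcomp\subseteq\connectedcomp'$ and distinct components are disjoint, $\connectedcomp=\connectedcomp'=\cpt$, so every connected component is an equivalence class. Combining the two gives the claimed identification.

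I do not expect a genuine obstacle in this corollary itself: all the real difficulty — handling the unboundedness of $\vecspace$ and the lack of semicontinuity of $\dquasipot$, forcing cheap $\daction$-paths to stay in a neighborhood of a class (\cref{lem:equivalence_classes_dpth_stay_close}), and the last-exit/compactness argument for connectedness (\cref{lem:equivalence_classes_connected}) — has already been absorbed into the lemmas being cited. The only point requiring a little care is that both lemmas are needed: \cref{lem:equivalence_classes_connected} only places each class inside some component, while \cref{lem:connected_comp_included_in_single_eqcl} is what rules out a component being split across several classes; neither direction alone yields equality, so the argument must invoke both, together with the partition property of equivalence classes to collapse the chain of inclusions.
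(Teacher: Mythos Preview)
Your proposal is correct and follows essentially the same approach as the paper: combine \cref{lem:equivalence_classes_connected} (each equivalence class is connected, hence lies in a single connected component) with \cref{lem:connected_comp_included_in_single_eqcl} (each connected component lies in a single equivalence class), and conclude by the partition property. The paper compresses this into one sentence, while you spell out the chain of inclusions $\cpt\subseteq\connectedcomp\subseteq\cpt'$ and the disjointness argument explicitly, but the content is the same.
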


We end this section by providing a sufficient condition for $\dquasipot(\point, \pointalt)$ to be finite.
\begin{lemma}
	\label{lem:finite_B}
    Consider $\point, \pointalt \in \vecspace$ and assume that there exists $\horizon > 0$, \revise{$\pth \in \contfuncs([0, \horizon], \vecspace)$ absolutely continuous} such that $\pth_{0} = \point$, $\pth_\horizon = \pointalt$ and such that, for every $\time \in [0, \horizon]$, $\grad \obj(\pth_{\time})$ is in the interior of the closed convex hull of the support of $\noise(\pth_{\time}, \sample)$, \ie
\begin{equation}
		\label{eq:lem:finite_B_condition_grad}
		\grad \obj(\pth_{\time}) \in \intr \clconv \supp \noise(\pth_{\time}, \sample)\eqdot
\end{equation}
	Then, $\dquasipot(\point, \pointalt) < \infty$.
\end{lemma}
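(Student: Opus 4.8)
The plan is to reduce the statement to producing a \emph{single} curve of finite action. Indeed, by \cref{def:dquasipot} it suffices to exhibit an integer $\horizonalt\geq 1$ and an absolutely continuous $\pthalt\in\contfuncs([\tstart,\horizonalt],\vecspace)$ with $\pthalt_\tstart=\point$, $\pthalt_{\horizonalt}=\pointalt$ and $\action_{[\tstart,\horizonalt]}(\pthalt)<\infty$, since then $\dquasipot(\point,\pointalt)\leq\action_{[\tstart,\horizonalt]}(\pthalt)<\infty$. The natural candidate is a slow, constant-speed reparametrization of the given path, namely $\pthalt_\timeC\defeq\pth_{\horizon\timeC/\horizonalt}$ for $\timeC\in[\tstart,\horizonalt]$: taking $\horizonalt$ large makes $\norm{\dot\pthalt_\timeC}\leq(\horizon/\horizonalt)\sup_{\time\in[\tstart,\horizon]}\norm{\dot\pth_\time}$ arbitrarily small, and since $\lagrangian(\pthalt_\timeC,\dot\pthalt_\timeC)=\lagrangianalt(\pthalt_\timeC,\dot\pthalt_\timeC+\grad\obj(\pthalt_\timeC))$, everything reduces to showing that $\lagrangianalt(\point,\velalt)$ stays \emph{uniformly bounded} when $\point$ ranges over the compact set $\cpt\defeq\pth([\tstart,\horizon])$ and $\velalt$ over a small ball centered at $\grad\obj(\point)$.

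The first ingredient is a uniform ``interior radius'': there is $\radius_\ast>0$ with $\clball(\grad\obj(\pth_\time),\radius_\ast)\subseteq\set(\pth_\time)$ for all $\time\in[\tstart,\horizon]$, where $\set(\point)\defeq\clconv\supp\noise(\point,\sample)$. Since $\samples$, hence $\supp\prob$, is compact and $\noise$ is continuous, $\supp\noise(\point,\sample)=\overline{\noise(\point,\supp\prob)}$, so $\set(\point)=\clconv\noise(\point,\supp\prob)$ depends continuously on $\point$ for the Hausdorff metric (uniform continuity of $\noise$ in $\point$, uniformly over $\sample\in\samples$, together with the fact that $\clconv$ is $1$-Lipschitz for the Hausdorff metric). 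I would combine this with the elementary convex-geometry fact that if a ball $\clball(\point,\radius)$ lies in a compact convex set and a second compact convex set is within Hausdorff distance $\margin<\radius$ of it, then the concentric ball $\clball(\point,\radius-\margin)$ lies in the second set (a short separating-hyperplane argument), and with the continuity of $\time\mapsto\pth_\time$ and $\time\mapsto\grad\obj(\pth_\time)$: the hypothesis $\grad\obj(\pth_\time)\in\intr\set(\pth_\time)$ gives, at each $\time_0$, some $\radius_0>0$ with $\clball(\grad\obj(\pth_{\time_0}),\radius_0)\subseteq\set(\pth_{\time_0})$, hence $\clball(\grad\obj(\pth_\time),\radius_0/2)\subseteq\set(\pth_\time)$ for $\time$ in a neighborhood of $\time_0$; a finite subcover of $[\tstart,\horizon]$ then furnishes $\radius_\ast$.

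The hard part is the second ingredient: the bound $\sup\setdef*{\lagrangianalt(\point,\velalt)}{\point\in\cpt,\ \velalt\in\clball(\grad\obj(\point),\radius_\ast/2)}<\infty$. A priori $\lagrangianalt$ is only convex and \ac{lsc} (\cref{lem:basic_prop_h_l}), so it need not be locally bounded above, and the supremum in $\lagrangianalt(\point,\velalt)=\sup_{\vel\in\dspace}\bracks{\inner{\vel,\velalt}-\hamiltalt(\point,\vel)}$ runs over all $\vel$; the point is that, since $\velalt$ lies \emph{strictly} inside $\set(\point)$, only bounded $\vel$ contribute. For $\vel\neq\zer$, put $\uvec=\vel/\norm{\vel}$, let $\func(\point,\uvec)=\max_{\sample\in\supp\prob}\inner{\uvec,\noise(\point,\sample)}$ be the support function of $\set(\point)$, and set $\expA(\point,\uvec)\defeq\probof{\inner{\uvec,\noise(\point,\sample)}\geq\func(\point,\uvec)-\radius_\ast/4}$. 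Restricting the expectation defining $\hamiltalt(\point,\vel)$ to the event $\{\inner{\uvec,\noise(\point,\sample)}\geq\func(\point,\uvec)-\radius_\ast/4\}$ yields $\hamiltalt(\point,\vel)\geq\norm{\vel}(\func(\point,\uvec)-\radius_\ast/4)+\log\expA(\point,\uvec)$, while $\velalt+\tfrac{\radius_\ast}{2}\uvec\in\clball(\grad\obj(\point),\radius_\ast)\subseteq\set(\point)$ forces $\inner{\uvec,\velalt}\leq\func(\point,\uvec)-\radius_\ast/2$; subtracting, $\inner{\vel,\velalt}-\hamiltalt(\point,\vel)\leq-\tfrac{\radius_\ast}{4}\norm{\vel}-\log\expA(\point,\uvec)\leq-\log\expA(\point,\uvec)$, and at $\vel=\zer$ the quantity is $0$. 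It remains to bound $\expA(\point,\uvec)$ away from $0$ uniformly over $(\point,\uvec)\in\cpt\times\setof{\norm{\uvec}=1}$: each such probability is positive because $\func(\point,\uvec)$ is the essential supremum of $\inner{\uvec,\noise(\point,\sample)}$ under $\prob$, and $(\point,\uvec)\mapsto\probof{\inner{\uvec,\noise(\point,\sample)}>\func(\point,\uvec)-\radius_\ast/8}$ is \ac{lsc} on the compact index set (uniform continuity of $(\point,\uvec,\sample)\mapsto\inner{\uvec,\noise(\point,\sample)}$, continuity of $\func$, and the portmanteau lemma applied to this open event) and is dominated by $\expA$, so it attains a positive minimum; hence $\inf\expA=:\expA_\ast>0$ and $\lagrangianalt(\point,\velalt)\leq-\log\expA_\ast=:\Const<\infty$ on the whole tube. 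Finally, choosing $\horizonalt\in\N$ with $\horizonalt\geq\max\bigl(1,(2\horizon/\radius_\ast)\sup_{\time\in[\tstart,\horizon]}\norm{\dot\pth_\time}\bigr)$ gives $\norm{\dot\pthalt_\timeC}\leq\radius_\ast/2$, so $\lagrangian(\pthalt_\timeC,\dot\pthalt_\timeC)=\lagrangianalt(\pthalt_\timeC,\dot\pthalt_\timeC+\grad\obj(\pthalt_\timeC))\leq\Const$, and therefore $\dquasipot(\point,\pointalt)\leq\action_{[\tstart,\horizonalt]}(\pthalt)\leq\Const\,\horizonalt<\infty$. The principal obstacle is exactly this uniform bound on $\lagrangianalt$ over the ``$\radius_\ast/2$-tube'' around $\grad\obj$ along $\pth$, the delicate points being that $\lagrangianalt$ is a priori only \ac{lsc}, and that the requisite lower bound on the tail mass $\expA(\point,\uvec)$ must be made uniform through a compactness argument.
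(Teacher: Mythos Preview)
Your proof is correct and takes a genuinely different route from the paper's. Both arguments end the same way---reparametrize $\pth$ slowly so that $\norm{\dot\pthalt}$ is small, then bound the action by $\Const\horizonalt$---but they diverge on the key step of bounding $\lagrangian(\pthalt_\timeC,\dot\pthalt_\timeC)=\lagrangianalt(\pthalt_\timeC,\dot\pthalt_\timeC+\grad\obj(\pthalt_\timeC))$ uniformly along the path.

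The paper proceeds via \emph{regularity}: for each $\time$, it invokes a result from the theory of exponential families to place $\grad\obj(\pth_\time)$ in the range of $\grad_\mom\hamiltalt(\pth_\time,\cdot)$, checks that $\Hess_\mom\hamiltalt$ is positive definite at the corresponding dual point, and applies the implicit function theorem to make $\lagrangian$ locally $C^2$ (hence bounded) on a ball around $(\pth_\time,0)$; compactness then gives uniform $\margin$ and $M$. Your argument is more \emph{direct and elementary}: you never establish smoothness of $\lagrangianalt$, only the upper bound you actually need. The support-function inequality $\inner{\uvec,\velalt}\leq\func(\point,\uvec)-\radius_\ast/2$ combined with the CGF lower bound $\hamiltalt(\point,\vel)\geq\norm{\vel}(\func(\point,\uvec)-\radius_\ast/4)+\log\expA(\point,\uvec)$ cleanly kills the supremum defining $\lagrangianalt$, and the uniform positivity of $\expA$ via lower semicontinuity on $\cpt\times\sphere^{\vdim-1}$ is a nice compactness argument that avoids the implicit function theorem and the external reference altogether. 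The paper's approach buys the stronger conclusion that $\lagrangian$ is locally $C^2$ (in the spirit of \cref{lem:regularity_lagrangian}), which could be reused elsewhere; yours is self-contained and arguably closer in spirit to large-deviations folklore.
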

\begin{proof}
        First, fix $\time \in [0, \horizon]$ and consider the exponential family of distributions
        \begin{equation}
            \propto e^{\inner{\mom}{\noise(\pth_{\time}, \sample)}} \prob(\revise{d} \sample)
        \end{equation}
        with parameter $\mom \in \dspace$.
        We begin by applying \citet[Thm.~3.6]{brownFundamentalsStatisticalExponential1986}. The family above is indeed minimal in the sense of \citet[\S1.1]{brownFundamentalsStatisticalExponential1986} since the interior of $\clconv \supp \noise(\pth_{\time}, \sample)$ is non-empty by assumption.
    Combined with \cref{eq:lem:finite_B_condition_grad}, \citet[Thm.~3.6]{brownFundamentalsStatisticalExponential1986}
        then implies that, for every $\time \in [0, \horizon]$, $\grad \obj(\pth_{\time})$ belongs to $\grad_\mom \hamiltalt(\pth_{\time}, \vecspace)$ so there exists $\mom_\time \in \dspace$ such that $\grad \obj(\pth_{\time}) = \grad_\mom \hamiltalt(\pth_{\time}, \mom_\time)$.
    As in the proof of \cref{lem:regularity_lagrangian}, we will now invoke the implicit function theorem on the equation	 
\begin{equation}
        \label{eq:lem:finite_B_eq_implicit}
		\grad_\mom\hamiltalt(\point, \mom) = \grad \obj(\point) + \vel\,.
\end{equation}
    For this, need to check that $\Hess_{\mom} \hamiltalt(\pth_\time, \mom_\time)$ is positive definite.
    For notational convenience, denote by $\ex_{\mom_{\time}}$, $\cov_{\mom_{\time}}$ and $\var_{\mom_{\time}}$ the expectation, covariance and variance operators with respect to the reweighted distribution
    \begin{equation}
        \propto e^{\inner{\mom_\time}{\noise(\pth_{\time}, \sample)}} \prob(d \sample)\,.
    \end{equation}
    For any non-zero vector $\uvec \in \dspace$, computing the Hessian of $\hamiltalt$ at $(\pth_{\time}, \mom_{\time})$ yields that
    \begin{align}
        \inner{\uvec}{\Hess_{\mom} \hamiltalt(\pth_{\time}, \mom_{\time}) \uvec}
        &=
        \inner{\uvec}{\cov_{\mom_{\time}}\bracks*{\noise(\pth_{\time}, \sample)} \uvec} \\
        &=
        \var_{\mom_{\time}}\bracks*{\inner{\uvec}{\noise(\pth_{\time}, \sample)}}\\
        &=
        \min_{\coef \in \R}
        \ex_{\mom_{\time}}\bracks*{\parens*{\inner{\uvec}{\noise(\pth_{\time}, \sample)} - \coef}^{2}}\\
        &=
        \min_{\coef \in \R}
        \frac{
            \ex \bracks*{\parens*{\inner{\uvec}{\noise(\pth_{\time}, \sample)} - \coef}^{2} e^{\inner{\mom_{\time}}{\noise(\pth_{\time}, \sample)}} 
        }}{
                \ex \bracks*{e^{\inner{\mom_{\time}}{\noise(\pth_{\time}, \sample)}}}
            }\,.
    \end{align}
    The blanket assumptions  on the noise (\cref{app:subsec:setup}) and Cauchy-Schwarz inequality
    then yield
    \begin{align}
        \inner{\uvec}{\Hess_{\mom} \hamiltalt(\pth_{\time}, \mom_{\time}) \uvec} 
        &\geq
        e^{-2 \norm{\mom_{\time}} \growth \parens*{1 + \norm{\pth_{\time}}}}
        \min_{\coef \in \R} \ex \bracks*{\parens*{\inner{\uvec}{\noise(\pth_{\time}, \sample)} - \coef}^{2}}\\
        &= 
        e^{-2 \norm{\mom_{\time}} \growth \parens*{1 + \norm{\pth_{\time}}}}
        \inner{\uvec}{\revise{\cov}\bracks*{\noise(\pth_{\time}, \sample)} \uvec}\,,
    \end{align}
    which is positive by the positive definiteness of $\revise{\cov}\bracks*{\noise(\pth_{\time}, \sample)}$ (\cref{app:subsec:setup}).
    Hence, $\Hess_{\mom} \hamiltalt(\pth_{\time}, \mom_{\time})$ is positive definite and we can apply the implicit function theorem to \cref{eq:lem:finite_B_eq_implicit} to obtain that there exists $\margin(\pth_{\time}) > 0$, $\mom \from \ball(\pth_{\time}, \margin(\pth_{\time})) \times \ball(0, \margin(\pth_{\time})) \to \dspace$ such that, \revise{for any $\point \in \ball(\pth_{\time}, \margin(\pth_{\time}))$, $\vel \in \ball(0, \margin(\pth_{\time}))$,}
\begin{equation}
		\grad_\mom\hamiltalt(\point, \mom(\point, \vel)) = \grad \obj(\point) + \vel\,,
\end{equation}
	or, equivalently
\begin{equation}
		\grad_\mom\hamilt(\point, \mom(\point, \vel)) = \vel\eqdot
\end{equation}
	Therefore, as in the proof of \cref{lem:regularity_lagrangian}, we obtain that $\lagrangian$ is continuous on $\clball(\pth_{\time}, \margin(\pth_{\time})/2) \times \clball(0, \margin(\pth_{\time})/2)$ and therefore bounded by $M(\pth_{\time}) > 0$.
	Since $\pth$ is continuous, $\setdef*{\pth_{\time}}{\time \in [0, \horizon]}$ is compact and therefore, by extracting a finite covering from
\begin{equation}
		\bigcup_{\time \in [0, \horizon]} \ball(\pth_{\time}, \margin(\pth_{\time})/2)\,,
\end{equation}
we obtain that there exists $\margin > 0$ and $M > 0$ such that, for every $\time \in [0, \horizon]$, $\lagrangian(\pth_{\time}, \cdot)$ is finite and bounded by $M$ on $\ball(\revise{0}, \margin)$.
	Choosing a $\pthalt$ reparametrization of $\pth$, which is $\contdiff{1}$, such that $ \norm{\dot \pthalt_{\time}} < \margin$ for every $\time \in [0, \horizonalt]$, we thus obtain a path 
	such that 
\begin{equation}
		\action_{0, \horizonalt}(\pthalt) = \int_{0}^{\horizonalt} \lagrangian(\pthalt_{\time}, \dot \pthalt_{\time}) \dd \time \leq M \horizonalt < \infty\,,
\end{equation}
	which implies that $\dquasipot(\point, \pointalt) < \infty$.
\end{proof}

\subsection{Lyapunov condition}
\label{app:lyapunov}

\begin{definition}[Stopping times for the accelerated process]
\label{def:stoptimes}
For any set $\plainset \subset \vecspace$, we define the hitting and exit times of $\plainset$:
\begin{subequations}
\begin{align}
\hittime_{\plainset}
	&\defeq \inf \setdef{\run \geq \afterstart}{\curr[\accstate] \in \plainset}
	\\
\exittime_{\plainset}
	&\defeq \inf \setdef{\run \geq \start}{\curr[\accstate] \notin \plainset}
	\eqdot
\end{align}
\end{subequations}
\end{definition}

\WAdelete{
\begin{assumption}[Coercivity relative to noise]
	\leavevmode 
\begin{itemize}
		\item $\obj$ goes to infinity
		\item $\frac{\bdvar(\obj(\point))}{\norm{\state}^{\exponent}}$ is bounded above and below at infinity for $\exponent \in [0, 2]$, \ie
\begin{equation}
				0 < \liminf_{\norm{\state}\to +\infty} \frac{\bdvar(\obj(\point))}{\norm{\state}^{\exponent}}\quad\text{ and }\quad \limsup_{\norm{\state}\to +\infty} \frac{\bdvar(\obj(\point))}{\norm{\state}^{\exponent}} < +\infty\eqdot
\end{equation}
		\item $\frac{\norm{\grad \obj}^2}{\bdvar \circ \obj}$ is large enough at infinity, \ie
\begin{equation}
				\liminf_{\norm{\state} \to +\infty} \frac{\norm{\grad \obj(\point)}^2}{\bdvar(\obj(\point))} > \gradientbound\eqdot
\end{equation}
\end{itemize}
\end{assumption}
}

We will need the following concentration lemma and its corollary.

\begin{lemma}[{Part of the proof of \citep[Th. 1.19]{rigollet2023high}}]
	\label{lem:concentration}
	Let $X$ be a random variable in $\R^\dims$ such that, for all $\vel \in \R^\dims$,
\begin{equation}
		\log \ex \bracks*{\exp \parens*{\inner{\vel, X}}} \leq \frac{\norm{\vel}^2}{2}\eqdot
\end{equation}
	Then, for all $t > 0$, we have
\begin{equation}
		\prob \parens*{\norm{X}^2 \geq t} \leq 6^\dims \exp \parens*{- \frac{t}{8}}\eqdot
\end{equation}
 \end{lemma}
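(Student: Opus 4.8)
The plan is to reduce the tail bound on $\norm{X}$ to the scalar sub-Gaussian Chernoff bound by discretizing the unit sphere, exactly as in the proof of \citep[Th.~1.19]{rigollet2023high}. First I would fix a $1/2$-net $\mathcal{N}$ of the Euclidean unit sphere $\sphere^{\dims-1} \subset \R^{\dims}$, \ie a finite subset such that every unit vector is within distance $1/2$ of some element of $\mathcal{N}$; a standard volumetric (greedy packing) argument gives $\card(\mathcal{N}) \leq 5^{\dims} \leq 6^{\dims}$.

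Next I would use the net to bound $\norm{X}$ by a maximum over finitely many directions. Writing $\norm{X} = \sup_{\vel \in \sphere^{\dims-1}} \inner{\vel}{X}$ and choosing, for a near-maximizing $\vel$, some $\uvec \in \mathcal{N}$ with $\norm{\vel - \uvec} \leq 1/2$, Cauchy--Schwarz yields $\inner{\vel}{X} \leq \inner{\uvec}{X} + \norm{\vel - \uvec}\,\norm{X} \leq \max_{\uvec \in \mathcal{N}} \inner{\uvec}{X} + \tfrac12 \norm{X}$, whence $\norm{X} \leq 2 \max_{\uvec \in \mathcal{N}} \inner{\uvec}{X}$. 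For each fixed $\uvec \in \mathcal{N}$, applying the hypothesis with $\vel \gets \lambda\uvec$ gives $\log \ex\bracks*{\exp(\lambda \inner{\uvec}{X})} \leq \lambda^{2}/2$ for all $\lambda > 0$, so optimizing the Markov/Chernoff bound over $\lambda$ produces $\prob\parens*{\inner{\uvec}{X} \geq s} \leq e^{-s^{2}/2}$ for every $s > 0$. Combining this with the previous display and a union bound over $\mathcal{N}$,
\begin{equation*}
\prob\parens*{\norm{X} \geq 2s}
	\leq \prob\parens*{\max_{\uvec \in \mathcal{N}} \inner{\uvec}{X} \geq s}
	\leq \card(\mathcal{N})\, e^{-s^{2}/2}
	\leq 6^{\dims}\, e^{-s^{2}/2}.
\end{equation*}

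Finally I would substitute $s = \sqrt{t}/2$, so that $2s = \sqrt{t}$ and $s^{2}/2 = t/8$, and use $\{\norm{X}^{2} \geq t\} = \{\norm{X} \geq \sqrt{t}\}$ to conclude $\prob\parens*{\norm{X}^{2} \geq t} \leq 6^{\dims} e^{-t/8}$, which is the claim. The argument is essentially routine; the only point requiring a little care is the net-cardinality bound together with the covering step producing exactly the factor $2$ (a coarser net would weaken the exponent and disturb the constant $16\log 6$ that appears in \cref{asm:SNR-weak}). Since the statement is quoted verbatim from \citep{rigollet2023high}, one may alternatively just cite it, but the self-contained argument above is short enough to reproduce.
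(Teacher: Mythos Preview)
Your proposal is correct and is precisely the standard $\eps$-net / Chernoff argument from \citep[Th.~1.19]{rigollet2023high} that the paper invokes. The paper itself does not spell out a proof of this lemma---it simply attributes the statement to that reference---so your self-contained argument matches the intended one exactly.
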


 \begin{corollary}
	 \label{corollary:concentration}
	 In the context of \cref{lem:concentration}, it holds that
	 \begin{equation}
		 \ex \bracks*{\norm{X}^2} \leq \gradientbound\eqdot
	 \end{equation}
 \end{corollary}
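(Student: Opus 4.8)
The plan is to obtain the bound by integrating the sub-exponential tail estimate of \cref{lem:concentration} via the layer-cake (Fubini) representation of the second moment. Concretely, I would start from
\begin{equation}
\ex \bracks*{\norm{X}^2}
	= \int_0^\infty \prob \parens*{\norm{X}^2 \geq t} \dd t
	\leq t_0 + \int_{t_0}^\infty 6^{\dims} e^{-t/8} \dd t
	= t_0 + 8 \cdot 6^{\dims} e^{-t_0/8}
\end{equation}
for any threshold $t_0 > 0$, where on $[0,t_0]$ the integrand is bounded trivially by $1$ and on $[t_0,\infty)$ we insert the tail bound $\prob(\norm{X}^2 \geq t) \leq 6^{\dims} e^{-t/8}$ from \cref{lem:concentration}.

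The natural choice is $t_0 = 8 \dims \log 6$, for which $6^{\dims} e^{-t_0/8} = 6^{\dims} \cdot 6^{-\dims} = 1$, so the display above collapses to $\ex[\norm{X}^2] \leq 8 \dims \log 6 + 8$. It then remains only to verify the elementary numerical inequality $8 \dims \log 6 + 8 \leq 16 \dims \log 6$, i.e.\ $8 \leq 8 \dims \log 6$, which holds since $\dims \geq 1$ and $\log 6 > 1$. This yields $\ex[\norm{X}^2] \leq \gradientbound$, which is exactly the claim.

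I do not expect any genuine obstacle here: the argument is the standard tail-integration trick, and the only mild care needed is the choice of the split point $t_0$ (essentially optimizing $t_0 + 8\cdot 6^{\dims} e^{-t_0/8}$ over $t_0$, with the stated value chosen so that the $6^{\dims}$ factor cancels cleanly) together with the final constant bookkeeping. Everything else is immediate from \cref{lem:concentration}.
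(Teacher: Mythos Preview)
Your proposal is correct and matches the paper's proof essentially line for line: the paper also uses the layer-cake formula, splits the integral at $t_0 = 8\dims\log 6$, applies the tail bound from \cref{lem:concentration} on $[t_0,\infty)$ to get $8\dims\log 6 + 8$, and concludes via $1 \leq \dims\log 6$.
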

 \begin{proof}
	 Since $\norm{X}^2$ is non-negative, its expectation can be written as 
	 \begin{align}
		 \ex \bracks*{\norm{X}^2} &= \int_0^{+\infty} \prob \parens*{\norm{X}^2 > t} \dd t
		 \notag\\
								  &\leq 8 \dims \log 6 + \int_{8 \dims \log 6}^{+\infty} \prob \parens*{\norm{X}^2 > t} \dd t\eqdot
	 \end{align}
	 Invoking \cref{lem:concentration} then yields that
	 \begin{equation}
		 \ex \bracks*{\norm{X}^2} \leq 8 \dims \log 6 + 6^\dims \int_{8 \dims \log 6}^{+\infty} \exp \parens*{- \frac{t}{8}} \dd t \leq  8 \dims \log 6 + 8\,,
	 \end{equation}
	 which concludes the proof since $ 1 \leq \dims \log 6$.
 \end{proof}

 \begin{lemma}[Lyapunov condition]
	 \label{lem:lyapunov_condition}
	 Define  $\bdpot$ as in \cref{lem:B_is_descent}.
	 Then, there exists $\cpt \subset \vecspace$ compact, $\step_0 > 0$, $\const > 0$ such that, for any $\step \leq \step_0$, $\run \geq \start$, if $\curr \notin \cpt$, then, almost surely,
\begin{align}
\bdpot(\next) - \bdpot(\curr)
	&\leq \step \parens*{\frac{\norm{\noise(\curr, \curr[\sample])}^2}{\bdvar(\obj(\curr))} - \frac{\norm{\grad \obj(\curr)}^2}{\bdvar(\obj(\curr))}}
	\notag\\
	&\leq  \step \parens*{\frac{\norm{\noise(\curr, \curr[\sample])}^2}{\bdvar(\obj(\curr))} - (\gradientbound + \const)}\eqdot
\end{align}
\end{lemma}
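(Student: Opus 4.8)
The plan is to read the one-step increment $\bdpot(\next)-\bdpot(\curr)$ off the chain-rule structure $\bdpot = 2\,\bdprimvar\circ\obj$ with $\bdprimvar' = 1/\bdvar$ (\cref{def:bdpot}), using only the $\smooth$-smoothness of $\obj$ from \cref{asm:obj-weak} and the identity $\next - \curr = \step\,(\curr[\noise] - \grad\obj(\curr))$, where $\curr[\noise] = \noise(\curr,\curr[\sample])$. The second inequality of the lemma will then be immediate: by \cref{asm:SNR-weak} there exist $\Radius>0$ and $\const>0$ with $\norm{\grad\obj(\point)}^2/\bdvar(\obj(\point)) \ge \gradientbound + \const$ whenever $\norm{\point} > \Radius$, so, taking $\cpt \defeq \clball(0,\Radius)$, it follows directly from the first inequality — which, as it will turn out, holds for every $\curr$ and not only off $\cpt$.

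For the first inequality I would first trade $\bdpot$ for $\obj$. The mean value theorem applied to $\bdprimvar$ gives $\bdpot(\next)-\bdpot(\curr) = \tfrac{2}{\bdvar(\theta)}\,(\obj(\next)-\obj(\curr))$ for some $\theta$ between $\obj(\curr)$ and $\obj(\next)$, and since $\bdvar$ is nondecreasing (see the remark at the end) a one-line case split on the sign of $\obj(\next)-\obj(\curr)$ gives
\begin{equation*}
\bdpot(\next)-\bdpot(\curr) \;\le\; \tfrac{2}{\bdvar(\obj(\curr))}\,\bigl(\obj(\next)-\obj(\curr)\bigr).
\end{equation*}
Then $\smooth$-smoothness yields $\obj(\next)-\obj(\curr) \le \inner{\grad\obj(\curr)}{\next-\curr} + \tfrac{\smooth}{2}\norm{\next-\curr}^2$, and substituting $\next-\curr = \step\,(\curr[\noise]-\grad\obj(\curr))$,
\begin{equation*}
\obj(\next)-\obj(\curr) \;\le\; -\step\norm{\grad\obj(\curr)}^2 + \step\inner{\grad\obj(\curr)}{\curr[\noise]} + \tfrac{\smooth\step^2}{2}\norm{\curr[\noise]-\grad\obj(\curr)}^2 .
\end{equation*}
The crux is that for $\step \le \step_0 \defeq 1/\smooth$ one has $\tfrac{\smooth\step^2}{2}\norm{\curr[\noise]-\grad\obj(\curr)}^2 \le \tfrac{\step}{2}\norm{\curr[\noise]-\grad\obj(\curr)}^2 = \tfrac{\step}{2}\bigl(\norm{\curr[\noise]}^2 - 2\inner{\curr[\noise]}{\grad\obj(\curr)} + \norm{\grad\obj(\curr)}^2\bigr)$, whose cross term cancels exactly the term $+\step\inner{\grad\obj(\curr)}{\curr[\noise]}$ above; what survives is $\obj(\next)-\obj(\curr) \le \tfrac{\step}{2}\bigl(\norm{\curr[\noise]}^2 - \norm{\grad\obj(\curr)}^2\bigr)$, and multiplying by $2/\bdvar(\obj(\curr)) > 0$ produces the first inequality. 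This exact cancellation is the reason the first bound carries no Young-type slack, and also why it holds for all $\curr$.

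The step I expect to be the genuine obstacle is the passage $\bdvar(\theta)\rightsquigarrow\bdvar(\obj(\curr))$: if $\bdvar$ is not monotone, its behaviour between $\obj(\curr)$ and $\obj(\next)$ can spoil the estimate, and the same issue resurfaces in the alternative route through the second-order Taylor remainder of $\bdpot$ (there one would instead bound $\tfrac12\inner{\next-\curr}{\Hess\bdpot(\xi)(\next-\curr)} \le \tfrac{1}{\step\,\bdvar(\obj(\curr))}\norm{\next-\curr}^2$ with $\xi\in[\curr,\next]$, using $\norm{\Hess\obj}\le\smooth$, $\norm{\grad\obj(\curr)}\le\growth(1+\norm{\curr})$, the $\Theta(\norm{\point}^{\exponent})$ growth of $\bdvar\circ\obj$, and the fact that for $\step_0$ small $\xi$ stays within a fixed fraction of $\norm{\curr}$ so that $\bdvar(\obj(\xi))\asymp\bdvar(\obj(\curr))$) — and there the rank-one correction produced by differentiating $1/\bdvar$ must be negative semidefinite, i.e. once more $\bdvar$ nondecreasing. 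So monotonicity of $\bdvar$, which one may assume without loss of generality by replacing it with a smooth nondecreasing majorant (this only enlarges $\bdpot$ and is therefore harmless in \cref{def:bdpot} and \cref{lem:B_is_descent}), is the one non-routine input; the rest is elementary algebra together with the choice of $\cpt$ and $\step_0$ above.
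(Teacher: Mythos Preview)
Your parenthetical ``alternative route'' --- second-order Taylor on $\bdpot$, bound $\Hess\bdpot(\xi)$ via $\norm{\Hess\obj}\le\smooth$, then use the $\Theta(\norm{\point}^{\exponent})$ growth of $\bdvar\circ\obj$ to compare $\bdvar(\obj(\xi))$ with $\bdvar(\obj(\curr))$ along the segment --- is exactly the paper's argument. There the polarization $2\inner{\grad\obj}{\diffcurr} = \norm{\diffcurr+\grad\obj}^{2}-\norm{\grad\obj}^{2}-\norm{\diffcurr}^{2}$ (with $\diffcurr = (\next-\curr)/\step$) lets the second-order remainder be absorbed by the $-\norm{\diffcurr}^{2}$ term for $\step$ small; the growth exponent $\exponent$ genuinely enters $\step_0$, and $\curr\notin\cpt$ is needed already for the \emph{first} inequality, to keep the segment in the region where the growth bounds apply.

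Your primary MVT-on-$\bdprimvar$ route is genuinely different and more elementary: it avoids the growth exponent altogether, and the exact cancellation at $\step\le 1/\smooth$ is correct and yields the first bound for every $\curr$. The price is that it rests entirely on $\bdvar$ being nondecreasing --- which you rightly isolate as the crux, and which the paper's Hessian bound $\Hess\bdpot\mleq 2\Hess\obj/\bdvar(\obj)$ silently assumes as well (the rank-one correction $-2\bdvar'(\obj)\bdvar(\obj)^{-2}\grad\obj\,\grad\obj^{\top}$ is $\mleq 0$ iff $\bdvar'\ge 0$). One correction to your closing remark: a nondecreasing majorant $\tilde\bdvar\ge\bdvar$ makes $1/\tilde\bdvar$ smaller and hence \emph{shrinks} the increments of the Lyapunov function rather than enlarging it; this is still harmless for \cref{lem:B_is_descent} (the input \cref{lem:csq_subgaussian} only weakens under a majorant), but you would then be proving the lemma for the modified pair $(\tilde\bdpot,\tilde\bdvar)$, not for the objects literally named in the statement.
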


\begin{proof}
	 By \cref{assumption:coercivity_noise}, there is $\Radius \geq \half$, $ \const > 0$ such that, for any $\point \in \vecspace$ such that $\norm{\point} \geq \Radius$,
	 \begin{equation}
		 \begin{cases}
			 \obj(\point) \geq \const\\
			 \const \leq  \frac{\bdvar(\obj(\point))}{\norm{\point}^{\exponent}}  \leq  \const^{-1}\\
			 \frac{\norm{\grad \obj(\point)}^2}{\bdvar(\obj(\point))} \geq \gradientbound + \const\eqdot
		 \end{cases}
	 \end{equation}
	 Then, define
	 $\cpt \defeq \clball(0, 2\Radius + 1)$.

 By definition, $\bdpot$ is twice continuously differentiable and, its Hessian satisfies, for any $\state \in \vecspace$,
\begin{equation}
		\Hess \bdpot(\state) \mleq \frac{2 \Hess \obj(\point)}{\bdvar(\obj(\point))}
		\mleq \frac{2 \smooth(\obj)} {\bdvar(\obj(\point))} \identity\eqdot
		\label{eq:lemma_est_going_back_to_bigcpt_descent_bdpot}
\end{equation}

	For the sake of clarity, for any $\run \geq \start$, denote by $\diffcurr$ the quantity
\begin{equation}
		\diffcurr = \frac{\next - \curr}{\step}\eqdot
\end{equation}

	For any $\run \geq \start$, we now have
\begin{align}
\bdpot(\next) - \bdpot(\curr)
	&\leq \step \inner{\grad \bdpot(\curr), \diffcurr}
	\notag\\
	&+ \frac{\step^2}{2} \norm{\diffcurr}^2 \sup_{t \in [0, 1]}\frac{2 \smooth(\obj)} {\bdvar(\obj(\curr + t (\next - \curr)))}
	\eqdot
\label{eq:lemma_est_going_back_to_bigcpt_descent_bdpot2}
\end{align}
	
	We first focus on bounding the last term. First note that, by the blanket assumptions,
	$\norm{\next - \curr} \leq 2\step \growth (1 + \norm{\curr})$ so that,
	For any $t \in [0, 1]$, $\step \leq (4 \growth)^{-1}$,
\begin{align}
\norm{\curr + t (\next - \curr)}
	&\geq \norm{\curr} - \norm{\next - \curr}
	\notag\\
	&\geq \norm{\curr} - 2\step \growth (1 + \norm{\curr})
	\notag\\
	&\geq \half (\norm{\curr} - 1)
	\eqdot
\end{align}
If $\curr$ is outside of $\cpt$, we have that $\norm{\curr} \geq 2 \Radius + 1$ and thus $\norm{\curr + t (\next - \curr)} \geq \Radius$.
Moreover, since $\Radius \geq \half$, $\curr$ being outside of $\cpt$ also implies that $\half \norm{\curr} \geq 1$ and so $\norm{\curr + t (\next - \curr)} \geq \tfrac{1}{4} \norm{\curr}$. 
By the definition of $\Radius$, we thus have
\begin{align}
\bdvar(\obj(\curr + t (\next - \curr)))
	&\geq \const \norm{\curr + t (\next - \curr)}^{\exponent}
	\notag\\
	&\geq \const \parens*{\frac{1}{4}}^{\exponent} \norm{\curr}^{\exponent}
	\notag\\
	&\geq \frac{\const^2}{4^{\exponent}} \bdvar(\obj(\curr))
	\eqdot
\end{align}

	Thus, if $\curr \notin \cpt$, \cref{eq:lemma_est_going_back_to_bigcpt_descent_bdpot} yields that
\begin{equation}
	\bdpot(\next) - \bdpot(\curr) \leq \step \inner{\grad \bdpot(\curr), \diffcurr} + \step^2 \frac{4^{\exponent}\smooth(\obj)}{\const^2 \bdvar(\obj(\curr))} \norm{\diffcurr}^2\eqdot
		\label{eq:lemma_est_going_back_to_bigcpt_descent_bdpot_2}
\end{equation}
Now, we can rewrite the inner product as 
\begin{align}
\inner{\grad \bdpot(\curr), \diffcurr}
	&= \frac{2\inner{\grad \obj(\curr), \diffcurr}}{\bdvar(\obj(\curr))}
	\notag\\
	&= \frac{\norm{\diffcurr + \grad \obj(\curr)}^2}{\bdvar(\obj(\curr))} - \frac{\norm{\grad \obj(\curr)}^2}{\bdvar(\obj(\curr))} - \frac{\norm{\diffcurr}^2}{\bdvar(\obj(\curr))}
	\eqdot
\end{align}
Plugging this into \cref{eq:lemma_est_going_back_to_bigcpt_descent_bdpot_2} and assuming that $\step \leq \frac{\const^2}{4^{\exponent} \smooth(\obj)}$, we obtain
\begin{align}
\bdpot(\next) - \bdpot(\curr)
	&\leq \step \parens*{\frac{\norm{\diffcurr + \grad \obj(\curr)}^2}{\bdvar(\obj(\curr))} - \frac{\norm{\grad \obj(\curr)}^2}{\bdvar(\obj(\curr))}}
	\notag\\
	&\leq \step \parens*{\frac{\norm{\noise(\curr, \curr[\sample])}^2}{\bdvar(\obj(\curr))} - \frac{\norm{\grad \obj(\curr)}^2}{\bdvar(\obj(\curr))}}
\end{align}
where we unrolled $\diffcurr$ in the last inequality. 
Using again that $\curr \notin \cpt$, we obtain
\begin{equation}
\bdpot(\next) - \bdpot(\curr)
	\leq  \step \parens*{\frac{\norm{\noise(\curr, \curr[\sample])}^2}{\bdvar(\obj(\curr))} - (\gradientbound + \const)}
\end{equation}
which concludes the proof.
\end{proof}

We will reuse, in later sections, the following fact that we thus state as a lemma: the sequence of iterates of \ac{SGD} is \emph{(weak) Feller} (see \eg \citep[Def.~4.4.2]{hernandez-lermaMarkovChainsInvariant2003}).

\begin{lemma}
	\label{lem:feller}
	The Markov chain $(\curr)_{\run \geq \start}$ is \textpar{weak} Feller.
\end{lemma}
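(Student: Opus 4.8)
The plan is to verify directly the defining property of the (weak) Feller property \citep[Def.~4.4.2]{hernandez-lermaMarkovChainsInvariant2003}: for every bounded continuous $\varphi\from\vecspace\to\R$, the map $\point\mapsto(P\varphi)(\point)\defeq\exof{\varphi(\new)}$ — where, conditionally on $\state=\point$, $\new=\point-\step\grad\obj(\point)+\step\noise(\point,\sample)$ with $\sample\sim\prob$ — is again bounded and continuous. Since the chain $(\curr)_{\run\geq\start}$ is time-homogeneous with one-step kernel $P$, this is exactly the assertion of the lemma.

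First I would write the transition explicitly as an integral against the fixed seed distribution. Setting $\Psi(\point,\sample)\defeq\point-\step\grad\obj(\point)+\step\noise(\point,\sample)$, we have $(P\varphi)(\point)=\int_{\samples}\varphi(\Psi(\point,\sample))\dd\prob(\sample)$; this is well defined because $\Psi$ is jointly measurable in $(\point,\sample)$ (it is continuous in $\point$ by the regularity below, and measurable in $\sample$), and $\varphi$ is bounded. Boundedness of $P\varphi$ is then immediate: $\abs{(P\varphi)(\point)}\leq\supnorm{\varphi}$ for all $\point\in\vecspace$, since $\prob$ is a probability measure.

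For continuity, fix $\point\in\vecspace$ and a sequence $\point_{\runB}\to\point$. By \cref{asm:obj-weak}\cref{asm:obj-weak-smooth} the gradient $\grad\obj$ is continuous, and by \cref{asm:noise-weak}\cref{asm:noise-weak-growth} the map $\point\mapsto\noise(\point,\sample)$ is $C^2$, hence continuous, for \emph{every} $\sample\in\samples$; therefore $\Psi(\point_{\runB},\sample)\to\Psi(\point,\sample)$ and, by continuity of $\varphi$, $\varphi(\Psi(\point_{\runB},\sample))\to\varphi(\Psi(\point,\sample))$ for every $\sample\in\samples$. Since $\abs{\varphi(\Psi(\point_{\runB},\sample))}\leq\supnorm{\varphi}$ uniformly in $\runB$ and $\sample$, and the constant $\supnorm{\varphi}$ is $\prob$-integrable, the dominated convergence theorem yields $(P\varphi)(\point_{\runB})\to(P\varphi)(\point)$. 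Hence $P$ maps bounded continuous functions to bounded continuous functions, which is the claimed Feller property.

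There is no substantive obstacle here; the only point worth a word of care is that the dominated convergence step uses the continuity of $\noise(\argdot,\sample)$ for each individual seed $\sample$ (not merely $\prob$-almost every $\sample$), which is exactly what the pointwise $C^2$-regularity of $\noise$ in \cref{asm:noise-weak}\cref{asm:noise-weak-growth} provides, together with the corresponding continuity of $\grad\obj$. The same regularity, combined with measurability of $\sample\mapsto\noise(\point,\sample)$, supplies the joint measurability of $\Psi$ needed for the integral representation of $P\varphi$ to make sense.
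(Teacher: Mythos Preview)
Your proof is correct and follows essentially the same approach as the paper's own proof: both verify directly that $\point\mapsto\exwrt{\point}{\varphi(\afterinit)}$ is bounded and continuous for bounded continuous $\varphi$, using the continuity of $\grad\obj$ and $\noise$. Your version is simply more explicit about the dominated convergence step, which the paper leaves implicit.
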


\begin{proof}
since both $\grad \obj$ and $\noise$ are continuous, for any $\func \from \vecspace \to \R$ continuous and bounded, the function.
\begin{equation}
	\point \in \vecspace \longmapsto \ex_{\point} \bracks*{
		\func \parens*{
			\afterinit 
		}
	}
	= \ex_{\point} \bracks*{
		\func \parens*{
			\point -\step \grad \obj(\point) + \step \noise(\point, \sample)
		}
	}
\end{equation}
is still continuous and bounded. Therefore, the Markov chain $(\curr)_{\run \geq \start}$ is weak-Feller.
\end{proof}

 \begin{lemma}
	 \label{lem:inv_meas_exist}
	 There is $\step_0 > 0$ such that, for any $\step \leq \step_0$, there exists an invariant probability measure for $(\curr)_{\run \geq \start}$.
 \end{lemma}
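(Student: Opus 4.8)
The plan is to establish a Foster--Lyapunov drift condition for the Markov chain $(\curr)_{\run\geq\start}$ towards the compact set $\cpt$ produced by \cref{lem:lyapunov_condition}, and then to combine it with the weak Feller property (\cref{lem:feller}) to invoke the classical Krylov--Bogolyubov / Foster existence criterion for invariant probability measures of weak Feller chains \citep{hernandez-lermaMarkovChainsInvariant2003,doucMarkovChains2018}. Throughout, let $P$ denote the transition kernel of \eqref{eq:SGD}, so that $(P\bdpot)(\point) = \ex_{\point}\bracks*{\bdpot(\afterinit)}$ for $\point\in\vecspace$, where $\bdpot$ is the potential of \cref{def:bdpot}.

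First I would turn \cref{lem:lyapunov_condition} into a pointwise drift inequality. By the sub-Gaussian tail bound of \cref{asm:noise-weak}, for every $\point$ the rescaled noise $\noise(\point,\sample)/\sqrt{\bdvar(\obj(\point))}$ is $1$-sub-Gaussian, so \cref{corollary:concentration} gives $\ex\bracks*{\norm{\noise(\point,\sample)}^{2}}\leq\gradientbound\cdot\bdvar(\obj(\point))$. Taking the conditional expectation in \cref{lem:lyapunov_condition} then yields, for every $\step\leq\step_{0}$ and every $\point\notin\cpt$,
\begin{equation}
(P\bdpot)(\point)
	\leq \bdpot(\point) + \step\parens*{\gradientbound - (\gradientbound + \const)}
	= \bdpot(\point) - \step\const
	\eqdot
\end{equation}
On $\cpt$, the growth condition \eqref{eq:growth} gives $\norm{\afterinit-\point}\leq 2\step\growth(1+\norm{\point})$ almost surely, so $\afterinit$ stays in a fixed compact set whenever $\point\in\cpt$; since $\bdpot$ is continuous, $\tmpbound\defeq\sup_{\point\in\cpt}(P\bdpot)(\point)<\infty$. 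Since $\obj$ attains its infimum (coercivity, \cref{asm:obj-weak}) and $\bdprimvar$ is increasing, $\bdpot$ is bounded below, and after adding a constant we may assume $\bdpot\geq0$. Combining the two cases gives the drift inequality $P\bdpot\leq\bdpot - \step\const + (\tmpbound+\step\const)\,\one_{\cpt}$ on all of $\vecspace$.

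Next I would verify that $\bdpot$ is norm-like. Along the negative gradient flow $\flowmap$ of $\obj$ one has $\ddt\bdpot(\flowof{\time}{\point}) = -2\,\norm{\grad\obj(\flowof{\time}{\point})}^{2}/\bdvar(\obj(\flowof{\time}{\point}))$, which is at most $-2\const$ (for a suitable $\const>0$) whenever $\flowof{\time}{\point}$ lies outside a large enough ball, by \cref{asm:SNR-weak}; since $\bdpot$ is bounded below, the flow started at $\point$ must enter that ball in a finite time $\stoptime(\point)$ with $\bdpot(\point)\geq\inf\bdpot + 2\const\,\stoptime(\point)$, while a Gr\"onwall estimate based on \eqref{eq:growth} forces $\stoptime(\point)$ to grow at least logarithmically in $\norm{\point}$. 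Hence $\bdpot(\point)\to\infty$ as $\norm{\point}\to\infty$, i.e.\ the sublevel sets of $\bdpot$ are compact. With this and the drift inequality of the previous paragraph, iterating from a fixed $\point_{0}$ shows that the Ces\`aro averages $\meas_{\run}\defeq\frac1\run\sum_{\runalt=\start}^{\run-1}\probwrt{\point_{0}}{\iter\in\argdot}$ of the laws of the chain are tight; being weak Feller, the chain then admits an invariant probability measure by the Krylov--Bogolyubov argument, as any weak limit point of $(\meas_{\run})_{\run}$ satisfies \eqref{eq:invariant}. This holds for every $\step\leq\step_{0}$, which is the claim.

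The main obstacle is exactly the norm-like property of $\bdpot$: unlike in the bounded-domain setting of \citep{FW98,Kif88}, $\bdpot$ is not \emph{a priori} coercive — it may grow arbitrarily slowly when the variance proxy $\bdvar$ grows quickly — so one cannot simply quote textbook Krylov--Bogolyubov. It is precisely the signal-to-noise assumption \cref{asm:SNR-weak} that has to be exploited, through the gradient-flow comparison above, to preclude the escape of probability mass to infinity; once this is secured, the remainder is routine Markov-chain theory.
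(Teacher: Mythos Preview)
Your proposal is correct and follows essentially the same route as the paper: take expectations in \cref{lem:lyapunov_condition}, use \cref{corollary:concentration} on the rescaled noise to get the drift $P\bdpot \leq \bdpot - \step\const + C\,\one_\cpt$, combine with the weak Feller property (\cref{lem:feller}), and invoke a Foster-type existence criterion.

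The one substantive difference is that you spend a paragraph proving $\bdpot$ is norm-like via a gradient-flow argument using \cref{asm:SNR-weak}, and call this ``the main obstacle''. The paper skips this entirely: the theorems it cites (\citet[Thm.~7.2.4]{hernandez-lermaMarkovChainsInvariant2003} or \citet[Thm.~12.3.6]{doucMarkovChains2018}) deliver an invariant probability measure for a weak Feller chain directly from a drift towards a \emph{compact} set, without requiring the Lyapunov function itself to have compact sublevels. The drift already forces the chain to visit $\cpt$ with uniformly positive frequency, and the weak Feller property upgrades this to existence of an invariant probability. So your flow argument is valid but superfluous here; coercivity of $\bdpot$ (or rather of a related quantity) only becomes relevant later, in \cref{lemma:tight_occmeas}, where \emph{tightness} of the occupation measures is needed and the stronger \cref{asm:SNR-weak-occmeas} is invoked.
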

 \begin{proof}
	 We invoke a general result on weak-Feller Markov chains that satisfy a Lyapunov condition, \eg \citet[Thm.~7.2.4]{hernandez-lermaMarkovChainsInvariant2003} or \citet[Thm.~12.3.6]{doucMarkovChains2018}.
	
	 First, \cref{lem:feller} ensures that $(\curr)_{\run \geq \start}$ is weak-Feller.

Moreover, by \cref{lem:lyapunov_condition}, there exists $\cpt \subset \vecspace$ compact, $\step_0 > 0$, $\const > 0$ such that, for any $\step \leq \step_0$, $\init = \point \notin \cpt$,
\begin{equation}
	\bdpot(\afterinit) - \bdpot(\point) \leq  \step \parens*{\frac{\norm{\noise(\point, \init[\sample])}^2}{\bdvar(\obj(\point))} - (\gradientbound + \const)}\eqdot
\end{equation}

Passing to the expectation yields that, for any $\point \notin \cpt$,
\begin{equation}
	\ex_{\point} \bracks*{
	\bdpot(\afterinit)} - {\bdpot(\point)
	} \leq  \step \parens*{\frac{\ex_{\point} \bracks*{\norm{\noise(\point, \init[\sample])}^2}}{\bdvar(\obj(\point))} - (\gradientbound + \const)}\eqdot
\end{equation}
Appplying \cref{corollary:concentration} with $\rv \gets \frac{\noise(\point, \init[\sample])}{\sqrt{\bdvar(\obj(\point))}}$ (the conditions of application are verified from \cref{asm:noise-weak}\cref{assumption:subgaussian}) yields that
\begin{equation}
	\ex_{\point} \bracks*{
	\bdpot(\afterinit)} - {\bdpot(\point)
	} \leq - \step \const\eqdot
\end{equation}

Hence, for any $\point \in \vecspace$, it holds
\begin{equation}
	\ex_{\point} \bracks*{
	\bdpot(\afterinit)} - {\bdpot(\point)
	} \leq - \step \const + \oneof{\point \in \cpt} \parens*{\sup_{\pointalt \in \cpt}\ex_{\pointalt} [ \bdpot(\pointalt)] - \inf_{\vecspace} \bdpot + \step \const}\,,
\end{equation}
with $\bdpot$ which is not identically equal to its minimum since $\obj$ is coercive.

Therefore, we can apply \citet[Thm.~7.2.4]{hernandez-lermaMarkovChainsInvariant2003} with $\bdpot - \inf_{\vecspace} \bdpot$, that guarantees that there exists an invariant probability measure for $(\curr)_{\run \geq \start}$.
 \end{proof}

\begin{lemma}
	\label{lem:est_going_back_to_bigcpt}
	There exists a compact set $\bigcpt \subset \vecspace$, $\step_0 > 0$, such that for any compact set $\bigcptalt \subset \vecspace$ such that $\bigcpt \subset \bigcptalt$, there exists $a, b > 0$,  such that
\begin{equation}
		\forall \step \leq \step_0\,,\, \state \in \bigcptalt \setminus \bigcpt \,,\, \run \geq \start\,,\, \prob_\state \parens*{\hittime_{\bigcpt} > \run} \leq \exp \parens*{- \frac{a \run}{\step} + \frac{b}{\step}}\eqdot
\end{equation}
\end{lemma}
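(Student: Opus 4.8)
The plan is to combine the Lyapunov drift of \cref{lem:lyapunov_condition} with the subsampled structure of the accelerated chain $\accstate_{\idx}=\state_{\idx\floor{1/\step}}$ and a Chernoff estimate for the potential $\bdpot$ of \cref{def:bdpot}. First I would fix $\cpt=\clball(0,\radius_{0})$ to be the compact ball produced by \cref{lem:lyapunov_condition}, and set $\bigcpt\defeq\clball\parens*{0,(\radius_{0}+1)e^{4\growth+1}}$, with $\step_{0}>0$ chosen small enough that $\step_{0}\leq(4\growth)^{-1}$, $\step_{0}\leq1/2$, and the drift bound of \cref{lem:lyapunov_condition} holds for every $\step\leq\step_{0}$. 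The key geometric observation is that $\bigcpt$ is large enough that leaving it at a sampling time keeps the entire intervening \eqref{eq:SGD} block outside $\cpt$: if $\accstate_{\idx}\notin\bigcpt$, then \cref{lem:degrowth_iterates} applied with initial point $\accstate_{\idx}$ and $\nRuns=1$ gives $\norm{\state_{\idx\floor{1/\step}+\ell}}\geq e^{-(4\growth+1)}\norm{\accstate_{\idx}}-1>\radius_{0}$ for all $0\leq\ell\leq\floor{1/\step}$, so none of $\state_{\idx\floor{1/\step}},\dots,\state_{(\idx+1)\floor{1/\step}}$ lies in $\cpt$, and hence the drift estimate of \cref{lem:lyapunov_condition} is available at each of these steps.

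Next I would convert this into a bound on the potential along a run of blocks. Fix a compact set $\bigcptalt\supset\bigcpt$, a point $\state\in\bigcptalt\setminus\bigcpt$, a step-size $\step\leq\step_{0}$, and $\run\geq\afterstart$ (the case $\run=\start$ being trivial); write $\nRuns\defeq\run\floor{1/\step}$ and $\xi_{\natC}\defeq\norm{\noise(\state_{\natC},\sample_{\natC})}^{2}/\bdvar(\obj(\state_{\natC}))$. On the event $\braces{\hittime_{\bigcpt}>\run}$ we have $\accstate_{0},\dots,\accstate_{\run-1}\notin\bigcpt$ (for $\idx=0$ because $\state\notin\bigcpt$), so by the previous step $\state_{0},\dots,\state_{\nRuns-1}\notin\cpt$, and telescoping the drift bound of \cref{lem:lyapunov_condition} gives
\begin{equation}
\bdpot(\accstate_{\run})-\bdpot(\state)\leq\step\sum_{\natC=0}^{\nRuns-1}\parens*{\xi_{\natC}-\gradientbound-\const}.
\end{equation}
Because $\obj$ is coercive and $\bdprimvar$ is increasing, $\bdpot$ is bounded below on $\vecspace$, while it is bounded above on the compact set $\bigcptalt$; writing $\domain\defeq\sup_{\bigcptalt}\bdpot-\inf_{\vecspace}\bdpot<\infty$, the displayed inequality forces $\sum_{\natC=0}^{\nRuns-1}(\xi_{\natC}-\gradientbound-\const)\geq-\domain/\step$ on $\braces{\hittime_{\bigcpt}>\run}$, and therefore $\prob_{\state}\parens*{\hittime_{\bigcpt}>\run}\leq\prob_{\state}\parens*{\sum_{\natC=0}^{\nRuns-1}(\xi_{\natC}-\gradientbound-\const)\geq-\domain/\step}$.

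The last step is a Chernoff bound on this deviation. By the sub-Gaussian bound \cref{asm:noise-weak}\cref{assumption:subgaussian}, the vector $\noise(\state_{\natC},\sample_{\natC})/\sqrt{\bdvar(\obj(\state_{\natC}))}$ is conditionally $1$-sub-Gaussian given $\filter_{\natC}$, so \cref{lem:concentration} yields a uniform sub-exponential tail $\probof{\xi_{\natC}\geq t\given\filter_{\natC}}\leq 6^{\dims}e^{-t/8}$ and \cref{corollary:concentration} gives $\exof{\xi_{\natC}\given\filter_{\natC}}\leq\gradientbound$; a routine expansion of the exponential then produces constants $\Const,\coefB_{1}>0$ depending only on $\dims$ such that $\exof{e^{\coefB\xi_{\natC}}\given\filter_{\natC}}\leq e^{\coefB\gradientbound+\Const\coefB^{2}}$ for all $\coefB\leq\coefB_{1}$. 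Taking $\coefB_{\ast}\defeq\min\braces{\coefB_{1},\const/(2\Const)}$ makes $\exof{e^{\coefB_{\ast}(\xi_{\natC}-\gradientbound-\const)}\given\filter_{\natC}}\leq e^{-\coefB_{\ast}\const/2}$, and iterating conditional expectations over $\natC=\nRuns-1,\dots,0$ gives $\exof*{e^{\coefB_{\ast}\sum_{\natC=0}^{\nRuns-1}(\xi_{\natC}-\gradientbound-\const)}}\leq e^{-\coefB_{\ast}\const\nRuns/2}$. Markov's inequality then yields $\prob_{\state}\parens*{\hittime_{\bigcpt}>\run}\leq e^{\coefB_{\ast}\domain/\step}\,e^{-\coefB_{\ast}\const\nRuns/2}$, and since $\nRuns=\run\floor{1/\step}\geq\run/(2\step)$ for $\step\leq1/2$, this is at most $\exp\parens*{-\tfrac{\coefB_{\ast}\const}{4}\cdot\tfrac{\run}{\step}+\tfrac{\coefB_{\ast}\domain}{\step}}$, i.e.\ the claim with $a=\coefB_{\ast}\const/4$ (absolute) and $b=\coefB_{\ast}\domain$ (depending on $\bigcptalt$).

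I expect the main obstacle to be the first step: reconciling the subsampling in $\accstate_{\run}$ with the fact that \cref{lem:lyapunov_condition} only supplies a drift \emph{outside} the compact set $\cpt$. The accelerated chain only ``sees'' the iterates at times $\idx\floor{1/\step}$, so without the slow-motion growth estimate \cref{lem:degrowth_iterates} — together with the right choice of the radius of $\bigcpt$ in terms of $\radius_{0}$ and $\growth$ — one could not exclude that the intervening trajectory dips into $\cpt$ and breaks the telescoped drift inequality. Once this geometric input is secured, the remainder is a standard exponential-supermartingale/Chernoff computation, with the $1/\step$ in the exponent arising simply from the $\approx\run/\step$ \eqref{eq:SGD} steps contained in $\run$ consecutive blocks.
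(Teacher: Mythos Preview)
Your proof is correct and follows the paper's strategy almost exactly through the telescoping step: the same choice of $\bigcpt$ in terms of the compact $\cpt$ from \cref{lem:lyapunov_condition}, the same use of \cref{lem:degrowth_iterates} to ensure all intermediate iterates stay outside $\cpt$, and the same deterministic lower bound on $\sum_{\natC}\xi_{\natC}$ coming from the range of $\bdpot$.

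The one genuine difference is in how the tail bound is extracted. You build a conditional sub-exponential MGF for each $\xi_{\natC}$ and iterate via the tower property to get an exponential supermartingale, then apply Markov. The paper instead stacks all the normalized noise increments $\noise(\state_{\natC},\sample_{\natC})/\sqrt{\bdvar(\obj(\state_{\natC}))}$ into a single vector $\rv\in\R^{\nRuns\dims}$ with $\nRuns=\run\floor{1/\step}$, checks (again via the tower property) that $\rv$ satisfies the sub-Gaussian hypothesis of \cref{lem:concentration} in ambient dimension $\nRuns\dims$, and applies that lemma once to get $\probof{\norm{\rv}^{2}\geq t}\leq 6^{\nRuns\dims}e^{-t/8}$ directly. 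The paper's packaging is shorter and reuses the already-computed constant $\gradientbound$ exactly, so the arithmetic closes without introducing the auxiliary $\coefB_{\ast},\Const$; your route is more modular and makes the martingale structure explicit. Both rely on the same conditional sub-Gaussianity and yield the same $\exp(-a\run/\step+b/\step)$ form.
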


\begin{proof}
	This result is a consequence of \cref{lem:lyapunov_condition}: there exists $\cpt \subset \vecspace$ compact, $\step_0 > 0$, $\const > 0$ such that, for any $\step \leq \step_0$, $\run \geq \start$, if $\curr \notin \cpt$, then, almost surely,
	 \begin{equation}
			   \bdpot(\next) - \bdpot(\curr) \leq  \step \parens*{\frac{\norm{\noise(\curr, \curr[\sample])}^2}{\bdvar(\obj(\curr))} - (\gradientbound + \const)}\eqdot
			   \label{eq:lemma_est_going_back_to_bigcpt_descent_bdpot_prime}
	 \end{equation}
	 First, let us choose $\bigcpt$. There is some $\Radius > 0$ such that $\cpt \subset \ball(0, \Radius)$. Define $\Radiusalt \defeq e^{4 \growth + 1}(\Radius + 1)$ and $\bigcpt \defeq \clball(0, \Radiusalt)$.   
	 With $\step \leq (4 \growth)^{-1}$, if, for some $\run \geq \start$, $\curr$ is not in $\bigcpt$, by \cref{lem:degrowth_iterates}, for any $\runB \leq \ceil{\step^{-1}}$, $\state_{\run + \runB}$ has norm greater or equal to $\Radius$ and thus $\state_{\run + \runB}$ is not in $\cpt$ either.

	Now, fix $\nRuns \geq \start$ and consider the event $\setof*{\hittime_{\bigcpt} > \nRuns}$ with $\init = \point \in \bigcptalt \setminus \bigcpt$. This means that, for any $\start \leq \run \leq \nRuns$, $\accstate_\run$ is outside of $\bigcpt$ and so, for any $\start \leq  \run \leq \nRuns \floor{\step^{-1}}$, $\curr$ is not  in $\cpt$.

	Summing \cref{eq:lemma_est_going_back_to_bigcpt_descent_bdpot_prime} over $\run = \start,\dots,\nRuns \floor{\step^{-1}} - 1$ yields 
\begin{align}
\bdpot(\accstate_\nRuns) - \bdpot(\init)
	&= \bdpot(\state_{\nRuns\floor{\step^{-1}}}) - \bdpot(\init)
	\notag\\
	&\leq \step \sum_{\run = \start}^{\nRuns \floor{\step^{-1}} - 1}
		\parens*{\frac{\norm{\noise(\curr, \sample_\run)}^2}{\bdvar(\obj(\curr))} - (\gradientbound + \const)}
	\eqdot
\end{align}
Define $\Margin \defeq \sup_{\bigcptalt \setminus \bigcpt} \bdpot - \inf_{\vecspace \setminus \bigcpt} \bdpot$  which is finite since $\obj$ is coercive. By definition, we have that $\bdpot(\accstate_\nRuns) -\bdpot(\init) \geq -\Margin$.

	Therefore, on the event $\setof*{\hittime_{\bigcpt} > \nRuns}$, we have
\begin{equation}
		\sum_{\run = \start}^{\nRuns \floor{\step^{-1}} - 1}
		{\frac{\norm{\noise(\curr, \sample_\run)}^2}{\bdvar \circ \obj(\curr)}} \geq \nRuns \floor{\step^{-1}} (\gradientbound + \const) -  \frac{\Margin}{\step}\eqdot
\end{equation}

	Now, on the whole event, consider the random variable $\rv \in \R^{\nRuns \floor{\step^{-1}} \dims}$ defined by
\begin{equation}
		(\rv_{\run \dims}+1, \dots, \rv_{(\run + 1) \dims}) = \frac{\noise(\curr, \sample_\run)}{\sqrt{\bdvar \circ \obj(\curr)}}
	\quad \text{ for } \run = \start,\dots,\nRuns \floor{\step^{-1}} - 1\,,
\end{equation}
	we have that $\probwrt*{\point}{\hittime_{\bigcpt} > \nRuns} \leq \probwrt*{\point}{\norm{\rv}^2 \geq \nRuns \floor{\step^{-1}} (\gradientbound + \const) -  \frac{\Margin}{\step}}$.
	
	Since, for any $\vel \in \dspace$,
\begin{equation}
		\log \exof*{\exp \parens*{\inner*{\vel, \frac{\noise(\curr, \sample_\run)}{\sqrt{\bdvar \circ \obj(\curr)}}}} \given \init, \afterinit, \dots, \curr
		} \leq \frac{\norm{\vel}^2}{2}\,,
\end{equation} 
	the random variable $\rv$ satisfies the assumptions of \cref{lem:concentration} with $\dims \gets \nRuns \floor{\step^{-1}} \dims$.

	First, suppose that $t \defeq \nRuns \floor{\step^{-1}} (\gradientbound + \const) -  \frac{\Margin}{\step}$ is non-negative. Applying  \Cref{lem:concentration} with this $t$ yields that
\begin{align}
\probwrt*{\point}{\hittime_{\bigcpt} > \nRuns}
	&\leq \probwrt*{\point}{\norm{\rv}^2 \geq \nRuns \floor{\step^{-1}} (\gradientbound + \const) -  \frac{\Margin}{\step}}
	\notag\\
	&\leq 6^{\nRuns \floor{\step^{-1}} \dims} \exp \parens*{- \frac{\nRuns \floor{\step^{-1}} (\gradientbound + \const) -  \frac{\Margin}{\step}}{8}}
	\notag\\
	&= \exp \parens*{- \frac{\nRuns \floor{\step^{-1}} (8 \dims \log 6 + \const) -  \frac{\Margin}{\step}}{8}}\eqdot
\end{align}
	If $t$, defined above, is negative, then in particular $\frac{\Margin}{\step} \geq \nRuns \floor{\step^{-1}} \const$ so that this bounds still (trivially) holds.

	Finally, in particular, for $\step \leq 1/2$, $\floor{\step^{-1}} \geq (2 \step)^{-1} $ so we obtain
\begin{equation}
		\probwrt*{\point}{\hittime_{\bigcpt} > \nRuns} \leq \exp \parens*{- \frac{\nRuns (8 \dims \log 6 + \const) }{16 \step} + \frac{ \Margin}{8 \step}}
\end{equation}
and our proof is complete.
\end{proof}

\subsection{Preliminary estimates and lemmas}

\WAdelete{
The next two lemmas corresponds to a stronger version \citet[Lem.~5.3]{Kif88} where we exploit the structure of the problem in our context. 
\begin{lemma}
	Let $\closed \subset \vecspace$ be a closed set that does not contain any critical point of $\obj$ and $\cpt \subset \closed$ be a compact subset.

	Then, there exists $\constA, \constB > 0$ such that, for any 
	$\horizon > 0$ and $\pth \in \contfuncs([0, \horizon], \closed)$ such that $\pth_0 \in \cpt$, it holds that
	\begin{equation}
		\action_{0, \horizon}(\pth) \geq \constA \horizon - \constB\eqdot   
	\end{equation}
\end{lemma}
\begin{proof}
	Since $\closed$ is closed and does not touch $\crit\obj$ and, since, \cref{assumption:coercivity_noise}, we have that,
	\begin{equation}
		\liminf_{\norm{\state} \to +\infty} \frac{\norm{\grad \obj(\state)}^2}{\bdvar \circ \obj(\state)} > 0\,,
	\end{equation}
	it holds that
	\begin{equation}
	   \margin \defeq \inf_{\state \in \closed} \frac{\norm{\grad \obj(\state)}^2}{\bdvar \circ \obj(\state)} > 0\eqdot
	\end{equation}
	Take $\bdprimvar \from \R \to \R$ a primitive of $1 / \bdvar$ and define $\bdpot : \vecspace \to \R$ $\contdiff{2}$ by $\bdpot(\state) \defeq 2 \bdprimvar \circ \obj(\state)$.

	Consider $\horizon > 0$ and $\pth \in \contfuncs([0, \horizon], \closed)$ such that $\pth_0 \in \cpt$.
	If $\action_{0, \horizon}(\pth) = +\infty$, then the result holds trivially so assume that $\action_{0, \horizon}(\pth) < +\infty$ and, in particular, that $\pth$ is absolutely continous.

	Then, we have that
	\begin{align}
		\ddt \bdpot(\pth_\time) &= 
		\frac{
			2 \inner{\grad \obj(\pth_\time)}{\dot \pth_\time}
		}
		{
			\bdvar \circ \obj(\pth_\time)
		}
		\\
		&= \frac{\norm{\grad \obj(\pth_\time) + \dot \pth_\time}^2}{\bdvar \circ \obj(\pth_\time)} - \frac{\norm{\grad \obj(\pth_\time)}^2}{\bdvar \circ \obj(\pth_\time)} - \frac{\norm{\dot \pth_\time}^2}{\bdvar \circ \obj(\pth_\time)}\\
		&\leq 
		\frac{\norm{\grad \obj(\pth_\time) + \dot \pth_\time}^2}{\bdvar \circ \obj(\pth_\time)} - {\margin}\eqdot
	\end{align}  
	Integrating and using \cref{lem:csq_subgaussian}, we obtain that
	\begin{equation}
		\bdpot(\pth_\horizon) - \bdpot(\pth_0) \leq \int_{0}^\horizon 2\lagrangian(\pth_\time, \dot \pth_\time) \dd \time - \horizon \margin = 2\action_{0, \horizon}(\pth) - \horizon \margin\eqdot
	\end{equation}
	Hence, we obtain that
	\begin{equation}
		\action_{0, \horizon}(\pth) \geq \half[\margin \horizon] + \half[
		\inf_{\vecspace} \bdpot - \sup_{\cpt} \bdpot
		]\eqdot
	\end{equation}
\end{proof}
}
We will use the following lemma, which corresponds to \citet[Lem.~5.3]{Kif88}.

\begin{lemma}
	\label{lem:est_stay_in_between}
	Let $\cpt \subset \vecspace$ be a compact set such that $\cpt \cap \crit\obj = \emptyset$.
	Then there exists $\const > 0$, $\nRuns \geq 1$, $\step_0 > 0$,
	such that, for any $\run > \nRuns$, $\state \in \cpt$, $\step \leq \step_0$,
			\begin{equation}
				\prob_\state \parens*{\hittime_{\vecspace\setminus \cpt} > \run} 
				=
				\prob_\state \parens*{\exittime_{\cpt} > \run} 
				\leq  \exp \parens*{- \const (\run - \nRuns) / \step}\eqdot
			\end{equation}
\end{lemma}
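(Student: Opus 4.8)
The plan is to reduce the assertion to a single‐block estimate, obtained by combining the discrete large‐deviations upper bound of \cref{cor:ldp_iterates_full} with a purely deterministic fact, and then to iterate the block estimate via the Markov property of the rescaled chain $(\accstate_\run)_\run$. First I would fix the geometry: since $\cpt$ and $\crit\obj$ are disjoint compacts, there is $\margin>0$ such that $\cpt_{2\margin}$ is compact and disjoint from $\crit\obj$, so $\mu\defeq\inf_{\point\in\cpt_{2\margin}}\norm{\grad\obj(\point)}^2/\bdvar(\obj(\point))>0$ by continuity and compactness.

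The heart of the argument is the following deterministic claim: \emph{there exist $\nRuns_0\in\N$ and $\level>0$ such that every $\dpth\in\vecspace^{\nRuns_0+1}$ with $\dpth_\run\in\cpt_\margin$ for all $\run\in\{0,\dots,\nRuns_0\}$ satisfies $\daction_{\nRuns_0+1}(\dpth)>\level$.} To prove it I would argue by contradiction: if $\daction_{\nRuns_0+1}(\dpth)\le\level$, concatenating near‐optimal one‐step paths produces $\pth\in\contfuncs([\start,\nRuns_0],\vecspace)$ with $\pth_\run=\dpth_\run$ at integer times and $\action_{[\start,\nRuns_0]}(\pth)\le\daction_{\nRuns_0+1}(\dpth)+1\le\level+1$. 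Since $\pth$ returns to the bounded set $\cpt_\margin$ at every integer and has velocity bounded a.e.\ by $2\growth(1+\norm{\cdot})$ (\cref{lem:basic_prop_h_l}), Grönwall's lemma confines $\pth$ to a fixed compact ball $\cptalt\supseteq\cpt_{2\margin}$ on which it is $L$‐Lipschitz for some $L$; hence $\pth_\time\in\cpt_{2\margin}$ for every $\time$ within $\min(1,\margin/L)$ of an integer, so $\int_\start^{\nRuns_0}\norm{\grad\obj(\pth_\time)}^2/\bdvar(\obj(\pth_\time))\dd\time\ge\nRuns_0\min(1,\margin/L)\,\mu$. On the other hand, $\lagrangian(\point,\vel)\ge\norm{\vel+\grad\obj(\point)}^2/(2\bdvar(\obj(\point)))$ (\cref{lem:csq_subgaussian}) together with the identity $\ddt\bdpot(\pth_\time)=2\inner{\grad\obj(\pth_\time)}{\dot\pth_\time}/\bdvar(\obj(\pth_\time))$ for the potential $\bdpot$ of \cref{def:bdpot} (cf.\ the proof of \cref{lem:B_is_descent}) gives the pointwise bound $\lagrangian(\pth_\time,\dot\pth_\time)\ge\tfrac12\ddt\bdpot(\pth_\time)+\norm{\grad\obj(\pth_\time)}^2/(2\bdvar(\obj(\pth_\time)))$; integrating over $[\start,\nRuns_0]$ yields $\level+1\ge\action_{[\start,\nRuns_0]}(\pth)\ge\tfrac12(\inf_{\cptalt}\bdpot-\sup_{\cptalt}\bdpot)+\tfrac12\nRuns_0\min(1,\margin/L)\,\mu$, which is impossible once $\nRuns_0$ is large. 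Fixing $\level=1$ and such an $\nRuns_0$ proves the claim. I expect this deterministic step to be the main obstacle, because one must control the excursions of $\pth$ away from $\cpt_\margin$ between consecutive integer times — precisely where the confinement‐plus‐Lipschitz argument and the $\bdpot$‐monotonicity estimate are needed.

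With the claim in hand, I would apply \cref{cor:ldp_iterates_full} with horizon $\nRuns_0+1$, compact set $\cpt$, action level $\level$, margin $\margin$ and tolerance $\precs=\level/2$, obtaining $\step_0>0$ such that $\prob_\state\bigl(\dist_{\nRuns_0+1}(\accstate,\pths_{\nRuns_0+1}^{\{\state\}}(\level))>\margin\bigr)\le e^{-\level/(2\step)}$ for all $\step\le\step_0$ and $\state\in\cpt$. On the event $\{\exittime_\cpt>\nRuns_0\}=\{\accstate_\run\in\cpt,\ \run=0,\dots,\nRuns_0\}$, any $\dpth$ within $\margin$ of $(\accstate_\run)_{\run\le\nRuns_0}$ has all coordinates in $\cpt_\margin$, hence $\daction_{\nRuns_0+1}(\dpth)>\level$ by the claim, hence $\dpth\notin\pths_{\nRuns_0+1}^{\{\state\}}(\level)$; therefore this event is contained in $\{\dist_{\nRuns_0+1}(\accstate,\pths_{\nRuns_0+1}^{\{\state\}}(\level))>\margin\}$, so $\prob_\state(\exittime_\cpt>\nRuns_0)\le e^{-c_0/\step}$ with $c_0=\level/2>0$, uniformly in $\state\in\cpt$.

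Finally I would iterate. Since $(\accstate_\run)_\run$ is a Markov chain (a subsampling of \eqref{eq:SGD}), conditioning at the deterministic times $m\nRuns_0$ and using that $\accstate_{m\nRuns_0}\in\cpt$ on $\{\exittime_\cpt>m\nRuns_0\}$ gives $\prob_\state(\exittime_\cpt>(m+1)\nRuns_0)\le e^{-c_0/\step}\,\prob_\state(\exittime_\cpt>m\nRuns_0)$, whence $\prob_\state(\exittime_\cpt>m\nRuns_0)\le e^{-mc_0/\step}$ by induction. For $\run>\nRuns_0$, writing $m=\floor{\run/\nRuns_0}\ge(\run-\nRuns_0)/\nRuns_0$ and using monotonicity of $\run\mapsto\prob_\state(\exittime_\cpt>\run)$ yields $\prob_\state(\exittime_\cpt>\run)\le\exp\!\bigl(-c_0(\run-\nRuns_0)/(\nRuns_0\step)\bigr)$, which is the statement with $\nRuns=\nRuns_0$ and $\const=c_0/\nRuns_0$; and since $\accstate_0=\state\in\cpt$ for $\state\in\cpt$, one has $\hittime_{\vecspace\setminus\cpt}=\exittime_\cpt$, so both forms in the statement follow.
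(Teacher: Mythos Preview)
Your proof is correct and complete. The paper itself does not spell out a proof; it simply defers to \citet[Lem.~5.3]{Kif88}, noting that the only ingredient required there is the lower semicontinuity of $\daction_\nRuns$ established in \cref{cor:ldp_iterates_full}. The Kifer argument for the key deterministic step proceeds by compactness and \ac{lsc}: if for every horizon there were a discrete path in $\cpt$ with arbitrarily small action, one would extract (by compactness of $\cpt$ and \ac{lsc} of $\daction$) a zero-action path, which by \cref{lem:basic_prop_map} follows the time-$1$ flow map $\map$; the flow trajectory would then remain in $\cpt$ at all integer times, contradicting convergence of the flow to $\crit\obj$ (\cref{lemma:cv_flow}) together with $\cpt\cap\crit\obj=\emptyset$.

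Your route to the same deterministic claim is genuinely different. You give a direct, quantitative energy estimate: using the sub-Gaussian lower bound on the Lagrangian (\cref{lem:csq_subgaussian}) and the potential $\bdpot$, you show that the action of any continuous path that revisits $\cpt_\margin$ at each integer time grows at least linearly with the horizon, with explicit constants in terms of $\mu=\inf_{\cpt_{2\margin}}\norm{\grad\obj}^2/\bdvar(\obj)$ and the oscillation of $\bdpot$. This avoids both the \ac{lsc} extraction argument and the flow-convergence lemma, at the price of invoking the sub-Gaussian structure of the noise; Kifer's approach is more abstract and model-agnostic (only \ac{lsc} is needed) but non-constructive. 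The downstream steps---passing from the deterministic claim to a one-block probability bound via the \ac{LDP} upper bound of \cref{cor:ldp_iterates_full}, and then iterating through the Markov property of $(\accstate_\run)_\run$---are the same in both approaches.
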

\begin{proof}
	The proof is exactly the same as the proof \citet[Lem.~5.3]{Kif88}, which only uses the \ac{lsc} of $\daction_\nRuns$ (\cref{cor:ldp_iterates_full}).
\end{proof}

The following lemma provides a convenient reformulation of the results of \cref{lem:est_going_back_to_bigcpt} and \cref{lem:est_stay_in_between}.
\begin{lemma}
	\label{lem:exponential_tail_bounds}
	\leavevmode
	\begin{itemize}
		\item 
	There exists $\bigcpt \subset \vecspace$ a compact set, $\step_0 > 0$, such that for any $\bigcptalt \subset \vecspace$ compact set such that $\bigcpt \subset \bigcptalt$, there exists $\coef_0, \constA, \constB > 0$ such that,
	\begin{equation}
		\forall \step \leq \step_0,\, \coef \leq  \coef_0,\, \state \in \bigcptalt \setminus \bigcpt\,,\quad
		\ex_{\state} \bracks*{e^{\frac{\coef \hittime_\bigcpt}{\step}}} \leq e^{ \frac{\constA \coef}{\step} +\constB}\eqdot
	\end{equation}
	\item For any $\cpt \subset \vecspace$ compact such that $\cpt \cap \crit\obj = \emptyset$, there exists $\step_0, \coef_0, \constA, \constB  > 0$ such that, 
		\begin{equation}
			\forall \step \leq \step_0,\, \coef \leq  \coef_0,\, \state \in \cpt\,,\quad
			\ex_{\state} \bracks*{e^{\frac{\coef \exittime_\cpt}{\step}}} \leq e^{ \frac{\constA \coef}{\step} + \constB}\eqdot
		\end{equation}
\end{itemize}
\end{lemma}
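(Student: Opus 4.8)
The plan is to derive both bounds from the exponential tail estimates already established — \cref{lem:est_going_back_to_bigcpt} for the hitting time $\hittime_{\bigcpt}$, and \cref{lem:est_stay_in_between} for the exit time $\exittime_{\cpt}$ — via the elementary device that turns a tail bound into an exponential-moment bound. The two parts are structurally identical, so I would carry out the first in detail and then indicate the trivial modifications for the second.

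For the first part, I would first invoke \cref{lem:est_going_back_to_bigcpt} to fix the compact set $\bigcpt$ and the threshold $\step_0$, and then, for a given compact $\bigcptalt \supseteq \bigcpt$, the constants $\constA,\constB > 0$ with $\prob_{\state}(\hittime_{\bigcpt} > \run) \leq \exp(-\constA\run/\step + \constB/\step)$ for all $\step \leq \step_0$, $\state \in \bigcptalt \setminus \bigcpt$ and $\run \geq \start$. Using $\prob_{\state}(\hittime_{\bigcpt} = \run) \leq \prob_{\state}(\hittime_{\bigcpt} \geq \run) = \prob_{\state}(\hittime_{\bigcpt} > \run-1)$ for $\run \geq 1$, I would write
\[
\ex_{\state}\bracks*{e^{\coef\hittime_{\bigcpt}/\step}}
	= \sum_{\run\geq0} e^{\coef\run/\step}\,\prob_{\state}(\hittime_{\bigcpt}=\run)
	\leq 1 + \sum_{\run\geq1} e^{\coef\run/\step}\,\exp\!\parens*{-\constA(\run-1)/\step + \constB/\step}\,,
\]
and then split the sum at an integer threshold $\nRunsalt$ depending only on $\constA,\constB$ (any $\nRunsalt \geq 2 + 2\constB/\constA$), chosen so that $-\constA(\run-1) + \constB \leq -\constA\run/2$ for all $\run \geq \nRunsalt$. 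The head $\sum_{1\leq\run<\nRunsalt}$ I would bound by $\nRunsalt\,e^{\coef\nRunsalt/\step}$, discarding the tail estimate and using only $\prob \leq 1$; the tail $\sum_{\run\geq\nRunsalt} e^{(\coef-\constA/2)\run/\step}$ is, for $\coef \leq \coef_0 := \constA/4$, a geometric series of ratio at most $e^{-\constA/(4\step)}$, hence bounded by $(1-e^{-\constA/(4\step_0)})^{-1}$ after possibly shrinking $\step_0$. Collecting the two contributions gives $\ex_{\state}[e^{\coef\hittime_{\bigcpt}/\step}] \leq e^{\nRunsalt\coef/\step + \constB'}$ for a suitable constant $\constB'$, which is the asserted inequality (after relabeling the constants as $\constA,\constB$).

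For the second part I would argue in the same way, starting from $\prob_{\state}(\exittime_{\cpt} > \run) \leq e^{-\const(\run-\nRuns)/\step}$ of \cref{lem:est_stay_in_between} (valid for $\run > \nRuns$; for $\run \leq \nRuns$ I use $\prob \leq 1$), splitting the exponential-moment sum at a threshold of order $\nRuns$ so that the decay $-\const(\run-1-\nRuns)/\step$ dominates $\coef\run/\step$, bounding the finitely many head terms trivially, and summing the remaining geometric tail for $\coef$ and $\step$ small enough.

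The step I expect to matter — the only non-mechanical point — is that the target bound keeps the dependence on $\coef$ \emph{inside} the exponent (so the bound tends to the constant $1$ as $\coef\to0$), rather than a cruder bound of the form $e^{\Const/\step}$. The naive estimate $\ex[e^{\coef\tau/\step}] \leq 1 + e^{(\constA+\constB)/\step}\sum_{\run\geq1}e^{(\coef-\constA)\run/\step}$ carries a spurious prefactor $e^{(\constA+\constB)/\step}$ that blows up as $\step\to0$ for fixed small $\coef$; the fix is precisely to split the sum at a threshold that is \emph{constant in $\step$}, which confines the region where the tail bound is weak — namely $\run \lesssim \constB/\constA$, a $\step$-independent number of terms — to the head term $\nRunsalt\,e^{\coef\nRunsalt/\step}$, whose exponent is linear in $\coef$.
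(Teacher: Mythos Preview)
Your proposal is correct and follows essentially the same approach as the paper: both convert the exponential tail bounds of \cref{lem:est_going_back_to_bigcpt} and \cref{lem:est_stay_in_between} into exponential-moment bounds, taking care that the resulting exponent scales with $\coef$. The only cosmetic difference is that the paper uses the layer-cake integral $\ex[Y] = \int_0^\infty \prob(Y>t)\,dt$ with $Y = e^{\coef(\tau-\nRuns)/\step}$ and a change of variable yielding a power-law integral $\int_1^\infty s^{-\const/\coef}\,ds$, whereas you work with the discrete sum and a geometric tail; your closing paragraph correctly identifies the only substantive point.
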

\begin{proof}
	The proofs of both statements are very similar, so we prove only the second one for notational convenience.
	Fix $\cpt \subset \vecspace$ compact such that $\cpt \cap \crit\obj = \emptyset$.
	By \cref{lem:est_stay_in_between}, there exists $\const > 0$, $\nRuns \geq 1$ such that, for any $\run > \nRuns$, $\state \in \cpt$, $\step \leq \step_0$,
	\begin{equation}
		\prob_\state \parens*{\exittime_{\cpt} > \run} 
		\leq  \exp \parens*{- \const (\run - \nRuns) / \step}\eqdot
		\label{eq:lemma:exponential_tail_bounds:est_stay_in_between}
	\end{equation}

	Let us first bound $\ex_\state \bracks*{\exp \parens*{\frac{\coef (\exittime_\cpt - \nRuns)}{\step}}}$. We have that
	\begin{align}
		\ex_\state \bracks*{\exp \parens*{\frac{\coef (\exittime_\cpt - \nRuns)}{\step}}} 
		&=
		\int_{0}^\infty \prob_\state \parens*{\exp \parens*{\frac{\coef (\exittime_\cpt - \nRuns)}{\step}} > \time} \dd \time
		\notag\\
		&\leq 
	e^{\frac{\coef}{\step}}  + {\int_{e^{\frac{\coef}{\step}}}^\infty \prob_\state \parens*{\exittime_\cpt > \nRuns + \frac{\step \log \time}{\coef}} \dd \time}
	\notag\\
		&\leq 
	e^{\frac{\coef}{\step}}  + {\int_{e^{\frac{\coef}{\step}}}^\infty \prob_\state \parens*{\exittime_\cpt > \nRuns + \floor*{\frac{\step \log \time}{\coef}}} \dd \time}
	\notag\\
		&\leq 
	e^{\frac{\coef}{\step}}  + {\int_{e^{\frac{\coef}{\step}}}^\infty \exp \parens*{- \frac{\const}{\step} \floor*{\frac{\step \log \time}{\coef}}} \dd \time}\,,
	\end{align} 
	where we used \cref{eq:lemma:exponential_tail_bounds:est_stay_in_between} in the last inequality.
	
	Lower bounding $\floor{s}$ by $s - 1$, we obtain that 
	\begin{align}
		\ex_\state \bracks*{\exp \parens*{\frac{\coef (\exittime_\cpt - \nRuns)}{\step}}} 
		&\leq
	e^{\frac{\coef}{\step}}  + {\int_{e^{\frac{\coef}{\step}}}^\infty \exp \parens*{- \frac{\const}{\step} \frac{\step \log \time}{\coef} + \frac{\const}{\step}} \dd \time}
	\notag\\
		&=
		e^{\frac{\coef}{\step}}  + {\int_{e^{\frac{\coef}{\step}}}^\infty e^{\frac{\const}{\step}} \time^{- \frac{\const}{\coef}} \dd \time}\eqdot
	\end{align}
	Performing the change of variable $\timealt \gets e^{-\frac{\coef}{\step}}\time$, we obtain that
	\begin{align}
		\ex_\state \bracks*{\exp \parens*{\frac{\coef (\exittime_\cpt - \nRuns)}{\step}}} 
		&\leq
	e^{\frac{\coef}{\step}} \parens*{1+ {\int_{1}^\infty \timealt^{-\frac{\const}{\coef}}\dd \timealt}}\eqdot
	\end{align}
	When $\coef \leq \const/2$, we obtain that
	\begin{equation}
		\ex_\state \bracks*{\exp \parens*{\frac{\coef (\exittime_\cpt - \nRuns)}{\step}}} 
		\leq
		e^{\frac{\coef}{\step}}\parens*{1 + \int_{1}^\infty \timealt^{-2}\dd \timealt}\,,
	\end{equation}
	and therefore, 
	\begin{equation}
		\ex_\state \bracks*{\exp \parens*{\frac{\coef \exittime_\cpt}{\step}}} 
		\leq
		e^{\frac{(\nRuns + 1)\coef}{\step}}\parens*{1 + \int_{1}^\infty \timealt^{-2}\dd \timealt}\,,
	\end{equation}
	which concludes the proof.
\end{proof}

The following lemma upper-bounds the probability  
of exiting a large neighborhood of the critical points before visiting a smaller one critical points.

\begin{lemma}
	\label{lem:est_not_going_to_bigcpt}

	Consider $\crit\obj \subset \open \subset \bigcpt \subset \vecspace$ with $\open$ an open set and $\bigcpt$ a compact set.
	There exists $\bigcptalt \subset \vecspace$ compact set such that $\bigcpt \subset \bigcptalt$, $\Margin > 0$, $\step_0 > 0$ such that, for any $\step \leq \step_0$, $\state \in \bigcpt$,
	\begin{equation}
		\prob_\state \parens*{\exittime_{\bigcptalt} < \hittime_\open} \leq \exp \parens*{- \frac{\Margin}{\step}}\eqdot
	\end{equation}
\end{lemma}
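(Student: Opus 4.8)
The plan is to choose $\bigcptalt$ as a large ball, fix a horizon $\nRuns$ on the accelerated time scale, and split the event $\{\exittime_{\bigcptalt}<\hittime_{\open}\}$ into a \emph{fast} part (where $\bigcptalt$ is left within $\nRuns$ accelerated steps) and a \emph{slow} part (where the chain stays in $\bigcptalt\setminus\open$ for at least $\nRuns$ accelerated steps). Because $\crit\obj\subseteq\open$, the set $\bigcptalt\setminus\open$ is compact and disjoint from $\crit\obj$, so the slow part is controlled by \cref{lem:est_stay_in_between}; the fast part is controlled by the discrete large deviations bound of \cref{cor:ldp_iterates_full}, once a uniform positive lower bound on the action of the escaping paths is established.

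To pick $\bigcptalt$, set $\cpt_{0}\defeq\setdef{\state\in\vecspace}{\obj(\state)\leq\sup_{\bigcpt}\obj}$; this is compact by coercivity of $\obj$ and contains $\bigcpt$. Since $\obj$ is nonincreasing along the gradient flow $\flowmap$, every trajectory $\time\mapsto\flowof{\time}{\point}$ with $\point\in\bigcpt$ stays in $\cpt_{0}$ for all time; equivalently (\cref{lem:basic_prop_flow}), every zero-action trajectory issued from $\bigcpt$ stays in $\cpt_{0}$. Choose $\Radius>0$ with $\cpt_{0}\subseteq\ball(0,\Radius-1)$, put $\bigcptalt\defeq\clball(0,\Radius)$ and $\margin\defeq\tfrac12$; then $\bigcpt\subseteq\cpt_{0}\subseteq\bigcptalt$, the set $\cpt_{0}$ lies in the interior of the shrunk ball $\clball(0,\Radius-\margin)$, and $\cpt'\defeq\bigcptalt\setminus\open$ is compact with $\cpt'\cap\crit\obj=\emptyset$.

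The crux is the deterministic estimate: for every fixed $\nRuns\geq1$,
\[
\level_{0}\defeq\inf\setdef*{\daction_{\nRuns}(\dpth)}{\dpth_{0}\in\bigcpt,\ \dpth_{\run}\notin\clball(0,\Radius-\margin)\text{ for some }0\leq\run\leq\nRuns-1}\;>\;0.
\]
I would prove this by compactness: given $\dpth^{n}$ of this form with $\daction_{\nRuns}(\dpth^{n})\to0$, realize each $\dpth^{n}$ by a continuous path $\pth^{n}\in\contfuncs([0,\nRuns-1],\vecspace)$ with $\pth^{n}(\run)=\dpth^{n}_{\run}$ and $\action_{[\start,\nRuns-1]}(\pth^{n})\leq\daction_{\nRuns}(\dpth^{n})+1/n$ (by the definition of $\rate$ and concatenation of unit-horizon paths); by \cref{lem:pathscompact} a subsequence converges uniformly to some $\pth$ with $\pth_{0}\in\bigcpt$ and, by lower semicontinuity, $\action_{[\start,\nRuns-1]}(\pth)=0$, so $\pth_{\time}=\flowof{\time}{\pth_{0}}$ for all $\time$ by \cref{lem:basic_prop_flow} and $\pth$ stays in $\cpt_{0}\subseteq\intr\clball(0,\Radius-\margin)$. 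Passing to a further subsequence along which the offending index equals a fixed $\run_{*}$ gives $\dpth^{n}_{\run_{*}}=\pth^{n}(\run_{*})\to\pth(\run_{*})$, forcing $\pth(\run_{*})\notin\intr\clball(0,\Radius-\margin)$, a contradiction. This confinement step — choosing $\bigcptalt$ large enough, via coercivity and the compact sublevel set $\cpt_{0}$, that gradient-flow limits of near--zero-action paths from $\bigcpt$ cannot reach the boundary of the shrunk ball — is the main obstacle; the rest is bookkeeping.

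Finally, fix $\cpt'$ and let $\const',\nRuns_{0},\step_{0}'$ be the constants from \cref{lem:est_stay_in_between}; choose $\nRuns>\nRuns_{0}+2$, so $\level_{0}=\level_{0}(\nRuns)>0$. For $\state\in\bigcpt$,
\[
\probwrt*{\state}{\exittime_{\bigcptalt}<\hittime_{\open}}
\leq\probwrt*{\state}{\exittime_{\bigcptalt}\leq\nRuns-1}
+\probwrt*{\state}{\exittime_{\bigcptalt}<\hittime_{\open},\ \exittime_{\bigcptalt}>\nRuns-1}.
\]
On the second event, $\accstate_{1},\dots,\accstate_{\nRuns-1}\in\cpt'$, so by the Markov property and \cref{lem:est_stay_in_between} it is at most $\sup_{\state'\in\cpt'}\probwrt*{\state'}{\exittime_{\cpt'}>\nRuns-2}\leq e^{-\const'(\nRuns-2-\nRuns_{0})/\step}$. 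On the first event, $\accstate_{\run}\notin\bigcptalt$ for some $\run\leq\nRuns-1$; if moreover $\dist_{\nRuns}(\accstate,\pths_{\nRuns}^{\{\state\}}(\level_{0}/2))\leq\margin$, then (using compactness of $\pths_{\nRuns}^{\{\state\}}(\level_{0}/2)$) there is $\dpth$ in that set with $\dpth_{0}=\state\in\bigcpt$ and $\dpth_{\run}\notin\clball(0,\Radius-\margin)$, whence $\daction_{\nRuns}(\dpth)\geq\level_{0}>\level_{0}/2$, a contradiction; hence $\{\exittime_{\bigcptalt}\leq\nRuns-1\}\subseteq\{\dist_{\nRuns}(\accstate,\pths_{\nRuns}^{\{\state\}}(\level_{0}/2))>\margin\}$, which by \cref{cor:ldp_iterates_full} (horizon $\nRuns$, action level $\level_{0}/2$, tolerance $\level_{0}/4$) has probability $\leq e^{-\level_{0}/(4\step)}$ for all small $\step$. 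Taking $\Margin\defeq\tfrac12\min\{\level_{0}/4,\ \const'(\nRuns-2-\nRuns_{0})\}$ and then shrinking $\step_{0}$ so that $2e^{-2\Margin/\step}\leq e^{-\Margin/\step}$ gives the claim.
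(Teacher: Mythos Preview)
Your proof is correct and follows the same overall architecture as the paper's: split $\{\exittime_{\bigcptalt}<\hittime_{\open}\}$ into a fast-escape part handled by the discrete LDP (\cref{cor:ldp_iterates_full}) and a slow part handled by \cref{lem:est_stay_in_between} applied to the compact, critical-point-free set $\bigcptalt\setminus\open$.

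The genuine difference is in how you obtain the positive lower bound on the action of escaping paths. The paper takes $\bigcptalt$ to be the sublevel set $\{\obj\leq\sup_{\bigcpt}\obj+1\}$ and invokes \cref{lem:B_is_descent} directly: any discrete path from $\bigcpt$ that exits $\bigcptalt$ must satisfy $\daction_{\nRuns}(\dpth)\geq\dquasipot(\dpth_0,\dpth_\run)\geq\bdpot(\dpth_\run)-\bdpot(\dpth_0)\geq\Margin$, with the explicit constant $\Margin=\bdprimvar(\sup_{\bigcpt}\obj+1)-\bdprimvar(\sup_{\bigcpt}\obj)>0$. You instead run a soft compactness/contradiction argument through \cref{lem:pathscompact} and the characterisation of zero-action curves as gradient flows (\cref{lem:basic_prop_flow}). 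Your route is more elementary in that it does not touch the sub-Gaussian structure or the Lyapunov potential $\bdpot$ at all --- only coercivity of $\obj$ and lower semicontinuity of the action --- whereas the paper's route is shorter and yields an explicit $\Margin$ tied to the potential. Since the downstream use of this lemma (in \cref{lem:est_going_back_to_crit}) only needs $\Margin>0$, either approach is adequate.
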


\begin{proof}

	Define $\bigcptalt \defeq \setdef{\state \in \vecspace}{\obj(\state) \leq \sup_{\bigcpt} \obj + 1}$ and let $\bdpot$ be as in \cref{lem:B_is_descent}.

	
	Since $\bdprimvar$ is (stricly) increasing as its derivative is (stricly) positive by definition, we have 
	\begin{equation}
		\Margin \defeq \bdprimvar \parens*{\sup_\bigcpt \obj + 1} - \bdprimvar \parens*{\sup_\bigcpt \obj} > 0\eqdot
	\end{equation}

	By \cref{lem:est_stay_in_between} applied to $\cpt \gets \bigcptalt_\margin \setminus \open$, there exists $\const > 0$, $\nRuns_0 \geq 1$, $\step_0 > 0$ such that for any $\run > \nRuns_0$, $\state \in \bigcptalt_\margin \setminus \open$, $\step \leq \step_0$,
		\begin{equation}
\prob_\state \parens*{\exittime_{\bigcptalt_\margin \setminus \open} > \run} \leq \exp \parens*{- \const (\run - \nRuns_0) / \step}\eqdot
		\end{equation}
		Defining $\nRuns \defeq \ceil*{
			\frac{\Margin}{\const}
	} + \nRuns_0$, which is greater or equal than 1, we obtain that, for any $\step \leq \step_0$, $\state \in \bigcptalt_\margin \setminus \open$, 
	\begin{equation}
		\prob_\state \parens*{\exittime_{\bigcptalt_\margin \setminus \open} \geq \nRuns} \leq \exp \parens*{- \frac{\Margin}{\step}}\eqdot
		\label{eq:lemma:est_not_going_to_bigcpt:csq_est_stay_in_between}
	\end{equation}
	Note that this inequality actually holds for any $\state \in \bigcptalt_\margin$.

	We now bound $\prob_\state \parens*{\exittime_{\bigcptalt_\margin} > \hittime_\open}$ for $\state \in \bigcpt$ by distinguishing the cases where $\exittime_{\bigcptalt_\margin} < \nRuns$ and $\exittime_{\bigcptalt_\margin} \geq \nRuns$. For any $\state \in \bigcpt$, we have that
	\begin{align}
		\prob_\state \parens*{\exittime_{\bigcptalt_\margin} < \hittime_\open} 
		&\leq 
		\prob_\state \parens*{\exittime_{\bigcptalt_\margin} < \hittime_\open,\,\exittime_{\bigcptalt_\margin} < \nRuns} + \prob_\state \parens*{\exittime_{\bigcptalt_\margin} < \hittime_\open,\, \exittime_{\bigcptalt_\margin} \geq \nRuns}
		\notag\\
		&\leq 
		\prob_\state \parens*{\exittime_{\bigcptalt_\margin} < \nRuns} + \prob_\state \parens*{\exittime_{\bigcptalt_\margin \setminus \open} \geq \nRuns}
		\notag\\
		&\leq 
		\prob_\state \parens*{\exittime_{\bigcptalt_\margin} < \nRuns} + \exp \parens*{- \frac{\Margin}{\step}}
\end{align}
	where we used \cref{eq:lemma:est_not_going_to_bigcpt:csq_est_stay_in_between}.

	We now focus on bounding the first term. For this, we first show that $\exittime_{\bigcptalt_\margin} < \nRuns$ implies that 
	\begin{equation}
        \dist_{\nRuns} \parens*{ \accstate, \pths^{\setof{\state}}_{\nRuns}\parens{\Margin / 2}} > \margin / 2\eqdot
	\end{equation}

	For the sake of contradiction, suppose that this inequality does not hold.
	Therefore, there must exist $\dpth \in \pths^{\setof{\state}}_{\nRuns}\parens{\Margin / 4}$ such that $\dist_{\nRuns} \parens*{ \accstate, \dpth} < \margin$. In particular, there is some $\run < \nRuns$ such that $\dpth_\run \notin \bigcptalt$, so that, by \cref{def:dquasipot},
	\begin{equation}
		\daction_{\nRuns}( \dpth) \geq \daction_{\run+1}( \dpth) \geq \dquasipot(\dpth_\start, \dpth_\run)\eqdot
	\end{equation}

	By \cref{lem:B_is_descent}, we have that
\begin{align}
\dquasipot(\dpth_\start, \dpth_\run) &\geq  {\bdpot(\dpth_\run) - \bdpot(\dpth_\start)}
	\notag\\
                                     &\geq {\bdprimvar\parens*{\inf_{\vecspace \setminus {\bigcptalt}} \obj} - \bdprimvar\parens*{\sup_{\bigcptalt} \obj}}\,,
\end{align}
 since $\bdprimvar$ is increasing. By construction of $\bigcptalt$, we have further that
\begin{equation}
\dquasipot(\dpth_\start, \dpth_\run)
\geq  {\bdprimvar \parens*{\sup_{\bigcpt} \obj + 1} - \bdprimvar \parens*{\sup_{\bigcpt} \obj)}} = \Margin\,,
	\end{equation}
    so that $\daction_\nRuns(\dpth) \geq \Margin$, which is a contradiction with $\dpth \in \pths^{\setof{\state}}_{\nRuns}\parens{\Margin / 2}$.

	Therefore, we have that 
\begin{align}
\probof*{\exittime_{\bigcptalt_\margin} < \nRuns}
    &\leq \probof*{\dist_{\nRuns} \parens*{ \accstate, \pths^{\setof{\state}}_{\nRuns}\parens{\Margin / 2}} > \margin / 2}
	\notag\\
    &\leq \exp \parens*{- \frac{\Margin}{4 \step}}
\end{align}
where we invoked \cref{cor:ldp_iterates_full} with $\margin \gets \margin / 2$, $\level \gets \Margin / 2$, $\precs \gets \Margin / 4$. 
\end{proof}

The following lemma is key to our analysis: it shows that \ac{SGD} spends most of its time near its critical points.

\begin{lemma}
	\label{lem:est_going_back_to_crit}
	Consider $\crit\obj \subset \open \subset \bigcpt \subset \vecspace$ with $\open$ an open set and $\bigcpt$ a compact set.
   Then, there is some $\step_0, \coef_0, \constA, \constB > 0$ such that,
   \begin{equation}
	   \forall \step \leq \step_0,\, \coef \leq  \coef_0,\, \state \in \bigcpt\,,\quad
	   \ex_{\state} \bracks*{e^{\frac{\coef \hittime_{\open}}{\step}}} \leq e^{ \frac{\constA \coef}{\step} + \constB}\eqdot
   \end{equation}
\end{lemma}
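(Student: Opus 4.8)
The plan is to deduce the bound from an excursion decomposition of the accelerated chain, fed by the two exponential tail estimates of \cref{lem:exponential_tail_bounds} together with the non-escape estimate of \cref{lem:est_not_going_to_bigcpt}. Since enlarging $\bigcpt$ only strengthens the statement, I would assume from the start that $\bigcpt$ contains the fixed compact set $\bigcpt_\ast$ supplied by the first item of \cref{lem:exponential_tail_bounds}. Applying \cref{lem:est_not_going_to_bigcpt} to the pair $(\open,\bigcpt)$ yields a compact set $\bigcptalt\supseteq\bigcpt$ and a constant $\Margin>0$ such that, for all small $\step$ and all $\state\in\bigcpt$, $\prob_\state(\exittime_{\bigcptalt}<\hittime_\open)\leq e^{-\Margin/\step}$. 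Running the chain for one more accelerated step and invoking \cref{lem:growth_iterates}, there is a further compact set $\cpt_\flat\supseteq\bigcptalt$, depending only on $\bigcptalt$ and the range of admissible step-sizes, such that $\accstate_{\run}\in\cpt_\flat$ as soon as $\accstate_{\run-1}\in\bigcptalt$. Finally, $\cpt_0\defeq\bigcptalt\setminus\open$ is compact and disjoint from $\crit\obj$ (because $\crit\obj\subseteq\open$), and $\cpt_\flat\setminus\bigcpt_\ast$ is compact with $\cpt_\flat\supseteq\bigcpt_\ast$.

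Starting from $\state\in\bigcpt$, I would build a sequence of ``attempts''. Set $\nu_0\defeq0$; given $\accstate_{\nu_\runB}\in\bigcpt$, attempt $\runB$ runs the chain until $\ell_\runB\defeq\inf\{\run\geq1:\accstate_{\nu_\runB+\run}\in\open\ \text{or}\ \accstate_{\nu_\runB+\run}\notin\bigcptalt\}$. If $\accstate_{\nu_\runB+\ell_\runB}\in\open$ the attempt \emph{succeeds} and we stop; otherwise $\accstate_{\nu_\runB+\ell_\runB}\notin\bigcptalt$, the attempt \emph{fails}, we let $\ell'_\runB$ be the subsequent hitting time of $\bigcpt_\ast$, and we set $\nu_{\runB+1}\defeq\nu_\runB+\ell_\runB+\ell'_\runB$; since $\accstate_{\nu_{\runB+1}}\in\bigcpt_\ast\subseteq\bigcpt$ the construction iterates. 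Writing $\runB^\ast$ for the index of the first successful attempt, which is a.s.\ finite since each attempt fails with probability below $1$, we get $\hittime_\open\leq\sum_{\runB=0}^{\runB^\ast-1}(\ell_\runB+\ell'_\runB)+\ell_{\runB^\ast}$. Three facts drive the estimate: a failed attempt exhibits the event $\{\exittime_{\bigcptalt}<\hittime_\open\}$, so each attempt fails with conditional probability at most $e^{-\Margin/\step}$ by \cref{lem:est_not_going_to_bigcpt} (here $\accstate_{\nu_\runB}\in\bigcpt$); each $\ell_\runB$ is at most one accelerated step plus the exit time of $\cpt_0$, so the second item of \cref{lem:exponential_tail_bounds} applied to $\cpt_0$ gives $\ex_{\accstate_{\nu_\runB}}[e^{\coef\ell_\runB/\step}]\leq e^{\constA_1\coef/\step+\constB_1}$ uniformly over $\accstate_{\nu_\runB}\in\bigcpt$ and small $\coef$; and each $\ell'_\runB$ is a hitting time of $\bigcpt_\ast$ started from $\accstate_{\nu_\runB+\ell_\runB}\in\cpt_\flat\setminus\bigcpt_\ast$, so the first item of \cref{lem:exponential_tail_bounds} applied with ambient compact $\cpt_\flat$ gives $\ex[e^{\coef\ell'_\runB/\step}]\leq e^{\constA_2\coef/\step+\constB_2}$ uniformly.

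To conclude, I would peel the excursions off one at a time with the strong Markov property (the chain is Feller by \cref{lem:feller}): bounding $\oneof{\runB^\ast=m}$ by the indicator that attempts $0,\dots,m-1$ all fail and summing over $m\geq0$ gives
\begin{equation*}
\ex_\state\!\bigl[e^{\coef\hittime_\open/\step}\bigr]
\ \leq\ \Bigl(\sup_{\state'\in\bigcpt}\ex_{\state'}\!\bigl[e^{\coef\ell/\step}\bigr]\Bigr)\sum_{m\geq0}r^{\,m},
\qquad
r\defeq\sup_{\state'\in\bigcpt}\ex_{\state'}\!\bigl[\oneof{\text{attempt fails}}\,e^{\coef(\ell+\ell')/\step}\bigr].
\end{equation*}
By Cauchy--Schwarz, splitting $\ell$ from $\ell'$ via the strong Markov property and using the exponential-moment bounds above with $2\coef$ in place of $\coef$, one obtains $r\leq e^{\const\coef/\step}\,e^{-\Margin/(2\step)}\,e^{\const'}$ for suitable $\const,\const'>0$; choosing $\coef_0$ small enough that $\const\coef_0\leq\Margin/4$ and then $\step_0$ small enough makes $r\leq\tfrac12$ for all $\step\leq\step_0$ and $\coef\leq\coef_0$. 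The geometric series then sums to at most $2$ and the prefactor is $\leq e^{\constA_1\coef/\step+\constB_1}$, which yields the claim with $\constA\defeq\constA_1$ and $\constB\defeq\constB_1+\log2$.

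I expect the main obstacle to be organizational rather than conceptual: arranging the renewal bookkeeping so that every constant is uniform over the fixed compact sets in play, using \cref{lem:growth_iterates} to control which compact set the chain occupies immediately after it leaves $\bigcptalt$, and handling the minor nuisance that $\hittime_\open$ is a return time when the chain happens to start inside $\open$ (absorbed into the ``one extra accelerated step'' in the bound on $\ell_\runB$). The one genuinely quantitative step is the Cauchy--Schwarz decoupling of ``the current attempt fails'' from ``the current excursion is long,'' which is exactly what forces the restriction to small $\coef$.
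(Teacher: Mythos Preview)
Your proposal is correct and follows essentially the same excursion/renewal strategy as the paper's proof, invoking the same three auxiliary results (\cref{lem:exponential_tail_bounds}, \cref{lem:est_not_going_to_bigcpt}, \cref{lem:growth_iterates}) in the same roles to feed a geometric series over failed attempts. The only cosmetic differences are that the paper packages the recursion as a fixed-point inequality for a truncated supremum $s_\nRuns(\coef,\step)=\sup_{\state\in\bigcpt}\ex_\state[e^{\coef\min(\nRuns,\hittime_\open)/\step}]$ (then passes to the limit via Fatou) and decouples the failure indicator from the return time by a one-step Markov shift rather than your Cauchy--Schwarz, so it avoids the halving of $\Margin$; the underlying argument is the same.
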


\begin{proof}
	\def\smallbigcpt{\bigcpt}
	\def\middlebigcpt{\tilde{\bigcpt}}
	\def\bigbigcpt{\bigcptalt}
	
	Fix $\precs > 0$.
	Without loss of generality, assume that $\smallbigcpt$ is large enough to include the compact set given by the first item of \cref{lem:exponential_tail_bounds} (note that the guarantee of the first item of \cref{lem:exponential_tail_bounds} still holds even if $\smallbigcpt$ is larger).
	
	Apply \cref{lem:est_not_going_to_bigcpt} with $\open \gets \open$, $\bigcpt \gets \smallbigcpt$ and denote by $\middlebigcpt$ the obtained compact and $\step_0, \Margin > 0$ such that, for every $\step \leq \step_0$, $\state \in \smallbigcpt$,
	\begin{equation}
		\prob_\state \parens*{\exittime_{\middlebigcpt} < \hittime_\open} \leq \exp \parens*{- \frac{\Margin}{\step}}\eqdot
	\end{equation}

	Define $\radius \defeq \sup_{\state \in \middlebigcpt} \norm{\state}$ and $\Radius = e^{8 \growth}(1 + \radius)$.
	Assuming that $\step \leq  1$, by \cref{lem:growth_iterates}, for any $\state \in \middlebigcpt$, the next two iterates of $(\accstate_\run)_{\run \geq 0}$ satisfies $\norm{\accstate_{1}} \leq \Radius$ and $\norm{\accstate_{2}} \leq \Radius$.
	Define $\bigbigcpt \defeq \clball(0, \Radius)$.

	We invoke both items of \cref{lem:exponential_tail_bounds} with $\cpt \gets \middlebigcpt \setminus \open$ and denote by $\const > 0$ a constant that satisfies the bounds of both items. In the rest of the proof, we consider $\step$ and $\coef$ smaller than the bounds given by this lemma.

	Our goal is to bound, for any $\nRuns \geq \start$, the quantity,
	\begin{equation}
	s_\nRuns(\coef, \step) \defeq \sup_{\state \in \bigcpt} \ex_{\state} \bracks*{\exp \parens*{\frac{\coef \hittime_{\open}^\nRuns}{\step}}}\,,
		\quad \text{where} \quad
		\hittime_{\open}^\nRuns \defeq \min(\nRuns, \hittime_{\open})\eqdot
	\end{equation}
	Note that, by construction, $s_\nRuns(\coef, \step)$ is finite.

	Take $\state \in \bigcpt$.
	In particular, \cref{lem:exponential_tail_bounds} implies that $\exittime_{\middlebigcpt \setminus \open} < +\infty$ almost surely for all $\state \in \bigcpt$.
	\begin{align}
		\exp \parens*{\frac{\coef \hittime_{\open}^\nRuns}{\step}}
		&=
		\oneof{\accstate_{\exittime_{\middlebigcpt \setminus \open}} \in \open}
		\exp \parens*{\frac{\coef \exittime_{\middlebigcpt \setminus \open}}{\step}}
		\notag\\
		&\qquad
		+ \oneof{\accstate_{\exittime_{\middlebigcpt \setminus \open}} \notin \bigcpt_\margin}
		 \exp \parens*{\frac{\coef \exittime_{\middlebigcpt \setminus \open}}{\step}}
		  \exp \parens*{\frac{\coef \parens*{\hittime_{\open}^\nRuns - 
		  \exittime_{\middlebigcpt \setminus \open}}}{\step}}
		  \notag\\
		&\leq
		\oneof{\accstate_{\exittime_{\middlebigcpt \setminus \open}} \in \open}
		\exp \parens*{\frac{\coef \exittime_{\middlebigcpt \setminus \open}}{\step}}
		\notag\\
		&\qquad
		+ \oneof{\accstate_{\exittime_{\middlebigcpt \setminus \open}} \notin \bigcpt_\margin}
		 \exp \parens*{\frac{\coef \exittime_{\middlebigcpt \setminus \open}}{\step}}
		  \exp \parens*{\frac{\coef \min \parens*{\hittime_{\open} - 
		  \exittime_{\middlebigcpt \setminus \open}, \nRuns}}{\step}}
	\end{align}
	so that we can apply the strong Markov property to the Markov chain $(\curr[\accstate])_{\run \geq \start}$ with stopping time $\exittime_{\middlebigcpt \setminus \open}$ to obtain that
	\begin{align}
		\ex_{\state} \bracks*{
		\exp \parens*{\frac{\coef \hittime_{\open}^\nRuns}{\step}}}
		&\leq 
		\ex_{\state} \bigg[
		\oneof{\accstate_{\exittime_{\middlebigcpt \setminus \open}} \in \open}
		\exp \parens*{\frac{\coef \exittime_{\middlebigcpt \setminus \open}}{\step}}
		\notag\\
		&\qquad
		+ \oneof{\accstate_{\exittime_{\middlebigcpt \setminus \open}} \notin \bigcpt_\margin}
		 \exp \parens*{\frac{\coef \exittime_{\middlebigcpt \setminus \open}}{\step}}
		 \ex_{\accstate_{\exittime_{\middlebigcpt \setminus \open}}}
		 \bracks*{
		  \exp \parens*{\frac{\coef \hittime_{\open}^\nRuns}{\step}}}
		  \bigg]
		  \label{eq:lemma:est_going_back_to_crit:first_strong_markov}
	\end{align}
	We now bound 
	\begin{equation}
		\oneof{\accstate_{\exittime_{\middlebigcpt \setminus \open}} \notin \bigcpt_\margin}
		 \ex_{\accstate_{\exittime_{\middlebigcpt \setminus \open}}}
		 \bracks*{
		  \exp \parens*{\frac{\coef \hittime_{\open}^\nRuns}{\step}}}
		  =
		  \ex_{\accstate_{\exittime_{\middlebigcpt \setminus \open}}}
		 \bracks*{
		 \oneof{\accstate_\start \notin \bigcpt_\margin}		  \exp \parens*{\frac{\coef \hittime_{\open}^\nRuns}{\step}}}
	\end{equation}
	Since $\state$ is in $\bigcpt$, by definition, $\exittime_{\middlebigcpt \setminus \open}$ is at least equal to 1 and $\accstate_{\exittime_{\middlebigcpt \setminus \open} - 1}$ is still in $\middlebigcpt$. By definition of $\bigbigcpt$, $\accstate_{\exittime_{\middlebigcpt \setminus \open}}$ must still be in $\bigbigcpt$.
	Therefore, the guarantee of \cref{lem:exponential_tail_bounds} applies and, since in particular it implies that $\hittime_\smallbigcpt$ is finite almost surely when the chain is started at $\accstate_{\exittime_{\middlebigcpt \setminus \open}}$, we can apply the strong Markov property to obtain that 
\begin{align}
\ex_{\accstate_{\exittime_{\middlebigcpt \setminus \open}}} \bracks*{\oneof{\init \notin \middlebigcpt} \exp \parens*{\frac{\coef \hittime_{\open}^\nRuns}{\step}}}
	&\leq \ex_{\accstate_{\exittime_{\middlebigcpt \setminus \open}}}
		\bracks*{\oneof{\init \notin \middlebigcpt} \exp \parens*{\frac{\coef \hittime_{\bigcpt}}{\step}}
			\ex_{\accstate_{\hittime_{\smallbigcpt}}}\bracks*{
				\exp \parens*{\frac{\coef \hittime_{\open}^\nRuns}{\step}}
			}
		}
	\notag\\
	&\leq s_\nRuns(\coef, \step) \ex_{\accstate_{\exittime_{\middlebigcpt \setminus \open}}} \bracks*{\oneof{\init \notin \middlebigcpt} \exp \parens*{\frac{\coef \hittime_{\smallbigcpt}}{\step}}}
\end{align}
	where, for the second inequality, we used the definition of $s_\nRuns(\coef, \step)$ and the fact that $\state_{\hittime_{\smallbigcpt}}$ is in $\bigcpt$.
	
	Now, to bound the remaining expectation, note that $\init$ does not belong to $\middlebigcpt$ and, \emph{a fortiori}, does not belong to $\bigcpt$. Therefore, $\hittime_{\smallbigcpt}$ depends only on $(\accstate_\run)_{\run \geq \afterstart}$ and the (weak) Markov property implies that
	\begin{align}
		\ex_{\accstate_{\exittime_{\middlebigcpt \setminus \open}}} \bracks*{
			\oneof{\init \notin \middlebigcpt}
				\exp \parens*{\frac{\coef \hittime_{\bigcpt}}{\step}}
		}
	   &\leq
		\ex_{\accstate_{\exittime_{\middlebigcpt \setminus \open}}} \bracks*{
			\oneof{\init \notin \middlebigcpt}
			\ex_{\afterinit}
			\bracks*{
				\exp \parens*{\frac{\coef (1 + \hittime_{\bigcpt})}{\step}}
			}
		}
		\notag\\
		&\leq 
		\exp \parens*{
			\frac{\coef(1 + \const)}{\step}
		}
		\ex_{\accstate_{\exittime_{\middlebigcpt \setminus \open}}} \bracks*{
			\oneof{\init \notin \middlebigcpt}
		}\,,
	\end{align}
	by \cref{lem:exponential_tail_bounds}, since the $\accstate_{\exittime_{\middlebigcpt \setminus \open} - 1}$ is still in $\middlebigcpt$ so that $\afterinit$ of the chain started at $\accstate_{\exittime_{\middlebigcpt \setminus \open}}$ is still in $\bigbigcpt$.

	\Cref{lem:est_not_going_to_bigcpt} then implies that,
  \begin{align}
		\ex_{\accstate_{\exittime_{\middlebigcpt \setminus \open}}} \bracks*{
			\oneof{\init \notin \middlebigcpt}
				\exp \parens*{\frac{\coef \hittime_{\bigbigcpt}}{\step}}
		}
		\leq 
		\exp \parens*{
		  \frac{\coef(1 +\constA) - \Margin}{\step} + \constB
		}
		 \end{align}
		Combining these bounds with \cref{eq:lemma:est_going_back_to_crit:first_strong_markov} then gives
		\begin{align}
			\ex_\state \bracks*{\exp \parens*{\frac{\coef \hittime_{\open}^\nRuns}{\step}}}
			&\leq 
			\parens*{1 + 
			s_\nRuns(\coef, \step)
			 \exp \parens*{
				\frac{\coef(1  + \constA) - \Margin}{\step} + \constB
			}
		}
			\ex_\state \bracks*{
			\exp \parens*{\frac{\coef \exittime_{\middlebigcpt \setminus \open}}{\step}}
			}
			\notag\\
			&\leq 
			\parens*{1 +
			s_\nRuns(\coef, \step)
			 \exp \parens*{
				\frac{\coef(1 + \constA) - \Margin}{\step} + \constB
		}
		 }
			\exp \parens*{
				\frac{\constA \coef}{\step} + \constB
			}\,,
		\end{align}
		by \cref{lem:exponential_tail_bounds}.
		Since this inequality is valid for any $\state \in \smallbigcpt$, we have shown that
		\begin{equation}
			s_\nRuns(\coef, \step) \leq 
			\parens*{e^{\frac{\constA \coef}{\step} + \constB} +
			s_\nRuns(\coef, \step)
			 e^{
				\frac{\coef(1 + 2\constA) - \Margin}{\step} + \constB
				}
			}\eqdot
		\end{equation}
		For $\coef \leq \Margin / (2(1 + 2 \constA))$ and $\step$ small enough,
		\begin{equation}
			e^{\frac{\coef (1 + 2\constA) - \Margin}{\step} + \constB} \leq e^{- \frac{\Margin}{2\step} + \constB} \leq  \half\,,
		\end{equation}
		and we obtain that
		\begin{equation}
		  s_\nRuns(\coef, \step) \leq 2 e^{\frac{\constA \coef}{\step} + \constB}\eqdot   
		\end{equation}
		Taking $\nRuns \to +\infty$ and using Fatou's lemma yields that
		\begin{equation}
			\sup_{\state \in \bigcpt} \ex_{\state} \bracks*{\exp \parens*{\frac{\coef \hittime_{\open}}{\step}}} \leq 2 e^{\frac{\constA \coef}{\step} + \constB}\,,
		\end{equation}
		which concludes the proof.
\end{proof}

\WAdelete{
The following lemma is adapated from \citet[Chap.~6,Lem.~1.7]{FW98}
to our discrete time setting.
\begin{lemma}
	\label{lem:exit_from_nbd_of_eq_cl}
	Consider $\cpt \subsetneq \points$ an equivalence class.
	Then, for any $\precs > 0$, there exists $\step_0 > 0$, $\W$ neighborhood of $\cpt$ such that, for any $\V \subset \W$ neighborhood of $\cpt$,  $\point \in \V$, $\step \leq \step_0$,
	\begin{equation}
		\ex_\point[\exittime_\V] \leq \exp(\precs / \step)\eqdot
	\end{equation}
\end{lemma}

\begin{proof}
	By compactness of $\cpt$ (\cref{lem:equivalence_classes_closed}) and \cref{lem:continuity_rate}, there exists $\radius > 0$ such that $\rate$ is continuous on $\U_\radius(\cpt) \times \U_\radius(\cpt)$.
	At the expense of reducing $\radius$, since $\rate(\point, \point) = 0$ for any $\point \in \cpt$, we can also assume that, for any $\point \in \U_\radius(\cpt)$, $\pointalt \in \cpt$ such that $d(\point, \pointalt) < \radius$
	\begin{equation}
		\max(\rate(\point, \pointalt), \rate(\pointalt, \point)) < \precs\eqdot
	\end{equation}
	Take $0 < \margin < \radius$ and define $\W \defeq \U_{\margin / 2}(\cpt)$.

	Applying \cref{lem:equivalence_classes_dpth_stay_close} with $\precs \gets \precs$, there exists $\nRuns \geq 1$ such that, for every $\point, \pointalt \in \cpt$, there exists $\run \leq  \nRuns$, $\dpth \in \vecspace^\run$ such that $\dpth_0 = \point$, $\dpth_\run = \pointalt$ and $\daction_\run(\dpth) < \precs$.
	Also invoke \cref{cor:ldp_iterates_full} with $\nRuns \gets \nRuns + 2$, $\level \gets 3 \precs$, $\precs \gets \precs$, $\margin \gets \margin / 2$.

	By compactness of $\cpt$, $\U_{\radius}(\cpt) \setminus \U_{\margin}(\cpt)$ cannot be empty so there exists $\statealt_4 \in \U_{\radius}(\cpt) \setminus \U_{\margin}(\cpt)$ and take $\statealt_3 \in \cpt$ such that $d(\statealt_3, \statealt_4) < \radius$.

	Now, consider any $ \V \subset \W$ neighborhood of $\cpt$ and $\statealt_1 \in \V$.
	In particular, $\statealt_1$ belongs to $\U_{\radius}(\cpt)$ so there is $\statealt_2 \in \cpt$ such that $d(\statealt_1, \statealt_2) < \radius$.
	
	Then, there exists $\run \leq \nRuns$ and $\dpth \in \vecspace^\run$ such that $\dpth_0 = \statealt_2$, $\dpth_\run = \statealt_3$ and $\daction_\run(\dpth) < \precs$. The path $\dpthalt \in \vecspace^{\run+2}$ defined by $(\statealt_1, \dpth_0, \dpth_1, \dots, \dpth_{\run -1}, \statealt_4)$ satisfies $\daction_{\run+2}(\dpthalt) < 3\precs$ and $\dpthalt_{\run + 1} = \statealt_4 \notin \U_{\margin}(\cpt)$.
	Therefore, with $\init = \statealt_1$, the event $\setof{\dist_{\run + 2}(\accstate, \dpthalt) < \margin/2}$ implies $\setof{\exittime_\V <  \run + 2}$.
	Therefore,
	\begin{align}
		\prob_{\statealt_1}(\exittime_\V <  \nRuns + 2) &\geq \prob_{\statealt_1}(\dist_{\run + 2}(\accstate, \dpthalt) < \margin/2)\\
														&\geq e^{- \frac{\daction_{\run + 2}(\dpthalt) +\precs}{\step}}\\
														&\geq e^{- \frac{4\precs}{\step}}\,,
	\end{align}
	where we used \cref{cor:ldp_iterates_full} in the one before last inequality.

	Hence, we have shown that, for any $\point \in \V$, $\step \leq \step_0$,
	\begin{equation}
		\prob_{\point}(\exittime_\V \geq  \nRuns + 2) \leq 1 - e^{- \frac{4\precs}{\step}}\eqdot
	\end{equation}
	By the Markov property, for any $\run \geq 1$, we obtain that
	\begin{equation}
		\prob_{\point}(\exittime_\V \geq  \run(\nRuns + 2)) \leq \parens*{1 - e^{- \frac{4\precs}{\step}}}^\run\eqdot
	\end{equation}
	Therefore, we have that,
	\begin{align}
		\ex_{\point}[\exittime_\V] &= (\nRuns + 2) \ex_{\point} \bracks*{
			\frac{\exittime_\V}{\nRuns + 2}
		}\\
		&= (\nRuns + 2) \int_0^{\infty } \prob_{\point}(\exittime_\V \geq \time (\nRuns + 2)) \dd \time\\
		&\leq (\nRuns + 2) \int_0^{\infty } \prob_{\point}(\exittime_\V \geq \floor{\time} (\nRuns + 2)) \dd \time\\
		&= (\nRuns + 2) \sum_{\run = 0}^\infty \prob_{\point}(\exittime_\V \geq \run (\nRuns + 2)) \\
		&\leq (\nRuns + 2) (1 + e^{\frac{4\precs}{\step}})\,,
\end{align}
so that, for $\step$ small enough, $\ex_{\point}[\exittime_\V] \leq e^{\frac{5\precs}{\step}}$.

\end{proof}
}


\subsection{Estimates of the invariant measure}
\label{app:estimates_trans_inv}

Define, for sets $A, B \subset \vecspace$,
\begin{equation}
	\dquasipot(A, B) \defeq \inf \setdef{\dquasipot(\state, \statealt)}{\state \in A, \statealt \in B}\,.
\end{equation}

Recall that for any $\idx, \idxalt$, we define
	\begin{equation}
		\dquasipot_{\idx, \idxalt} =
		\dquasipot(\eqcl_\idx, \eqcl_\idxalt) = \inf \setdef{\dquasipot(\state, \statealt)}{\state \in \eqcl_\idx, \statealt \in \eqcl_\idxalt}\,.
	\end{equation}

Recall that, to avoid degenerate cases, we assume that \cref{asm:costs} holds, that is, 
\begin{equation}
\qmat_{\iComp\jComp}
	< \infty
	\quad
	\text{for all $\iComp,\jComp=1,\dotsc,\nComps$}.
\end{equation}
With \cref{lem:finite_B}, this assumption is satisfied in particular if the following condition holds: for any $\iComp,\jComp=1,\dotsc,\nComps$, there exists a $\contdiff{1}$ path $\pth$ joining $\comp_{\iComp}$ and $\comp_{\jComp}$ such that, for all $\time$, $\grad \obj(\pth_\time)$ belongs to the interior of the closed convex hull of the support of the noise $\noise(\pth_\time, \sample)$:
\begin{equation}
	\grad \obj(\pth_\time) \in \intr \clconv \supp \noise(\pth_\time, \sample)\,,
\end{equation}
This means there is a sufficient level of noise for the probability of going from $\comp_{\iComp}$ to $\comp_{\jComp}$ with at least one path to be non-zero (though it can be vanishingly small).
Note that it does not constrain the nature of the noise itself --- which can be discrete, continuous or else ---, only the support of its distribution.

Moreover, if \cref{asm:costs} did not hold, the same analysis as in this section could still be carried out. We would consider the components of the graph $\graph$ connected by edges with finite weights proceed with the proof on each of them, and obtain the same results on each of these components. To keep the complexity of the proof reasonable, we will not consider this case here.

\WAdelete{
By combining \cref{lem:W_nbd} with \cref{lem:equivalence_classes_dpth_stay_close}, we immediately obtain the following result.

\begin{lemma}
	\label{lem:W_dpth_stay_close}
	For $\eqcl$ equivalence class, for any $\precs > 0$ small enough, there exists $\W \subset \W_\precs(\eqcl)$ neighborhood of $\eqcl$ such that, for any $\V$ neighborhood of $\eqcl$, there exists $\nRuns \geq 1$ such that,
	\begin{equation}
		\forall \point, \pointalt \in \W,\quad
		\exists \run \leq \nRuns,\,
		\dpth \in (\vecspace)^\run,\quad
		\begin{cases}
			\dpth_\start = \point\,, \dpth_{\run - 1} = \pointalt\\
			\dpth_{\runalt} \in \V \quad\text{ for all }\quad \start < \runalt < \run - 1\\
			\daction_{\run}(\dpth) < \precs
		\end{cases}
	\end{equation}
\end{lemma}
}

We adapt to our context \citet[Lem.~5.4]{Kif88} and simplify it using ideas from \citet[Chap.~6]{FW98}.

\begin{definition}[{\citet[Chap.~6,\S2]{FW98}}]
	For $\idx, \idxalt$,
	\begin{equation}
		\dquasipotalt_{\idx, \idxalt} \defeq
		\inf \setdef*{\daction_{\nRuns}(\dpth)}{\nRuns \geq 1, \dpth \in \vecspace^{\nRuns},\, \dpth_\start \in \eqcl_\idx,\, \dpth_{\nRuns - 1} \in \eqcl_\idxalt,\, \dpth_\run \notin \bigcup_{\idxaltalt \neq \idx,\idxalt} \eqcl_\idxaltalt \text{ for } \run = 1,\dots, \nRuns - 2}\,.
	\end{equation}
\end{definition}

We now defined an important object: the law of the (accelerated) iterated at the first time they reach some set $\nhdaltalt$ (typically a neighborhood of the critical set), following \eg \citet[Chap. 3.4]{doucMarkovChains2018}.

\begin{definition}[{\citet[Prop.~5.3]{Kif88}}]
	For $\nhdaltalt$ open set, with hitting time (of the accelerated sequence) 
$
		\hittime_\nhdaltalt \defeq \inf \setdef{\run \geq \afterstart}{\accstate_\run \in \nhdaltalt}\,,
$ (as in \cref{def:stoptimes}), denote the law of $\accstate_{\hittime_\nhdaltalt}$ started at $\state$ by $\probalt_\nhdaltalt(\state, \cdot)$ and the corresponding $\nRuns$-step transition probability by $\probalt_\nhdaltalt^\nRuns(\state, \cdot)$. 
\end{definition}

In words, $\probalt_\nhdaltalt(\state, \cdot)$  is the distributions of the $\accstate$ started at $\state$ at the first time they reach $\nhdaltalt$. Typically, $\probalt_\nhdaltalt(\state, \nhd)$ is the probability that $\nhd \subset \nhdaltalt$ is reached first among $\nhdaltalt$. Then, $\probalt_\nhdaltalt^\nRuns(\state, \cdot)$ is the distribution of the $\accstate$ started at $\state$ at the $\nRuns$-th time they reach $\nhdaltalt$. 

We first give estimates of the transition probabilities using the $\dquasipotalt_{\idx,\idxalt}$. We will then translate them to $\dquasipot_{\idx,\idxalt}$.

\begin{lemma}
	\label{lem:est_trans_prob_ub}
		For any $\precs > 0$, for any small enough neighborhoods $\nhdaltalt_\idx$ of $\eqcl_\idx$, $\idx = 1, \dots, \neqcl$ , there is some $\step_0 > 0$ such that for all $\idx, \idxalt$, $\state \in \nhdaltalt_{\idx}$, $0 < \step < \step_0$,
		\begin{equation}
			\probalt_{\nhdaltalt}(\state, \nhdaltalt_{\idxalt})
			\leq 
			\exp \parens*{
				- \frac{\dquasipotalt_{\idx, \idxalt}}{\step}
				+ \frac{\precs}{\step}
			}\,.
		\end{equation}
		where we defined $\nhdaltalt \defeq \bigcup_{\idx = 1}^{\neqcl} \nhdaltalt_\idx$.
\end{lemma}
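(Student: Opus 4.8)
The plan is to decompose the return event according to how long the accelerated chain takes to come back to $\nhdaltalt$, controlling short time scales with the large deviation upper bound of \cref{cor:ldp_iterates_full} and long time scales with the return-time tail bound of \cref{lem:est_going_back_to_crit}. Fix $\precs>0$ and take the neighborhoods $\nhdaltalt_\idx$ small. We may assume every $\dquasipotalt_{\idx,\idxalt}$ is finite — otherwise truncate it at a large constant and let the constant tend to $\infty$ at the end (and, since $\dquasipotalt_{\idx,\idxalt}\ge\dquasipot_{\idx,\idxalt}<\infty$ under \cref{asm:costs}, this is the only genuine case) — and that $\dquasipotalt_{\idx,\idxalt}>2\precs$, as otherwise the asserted bound exceeds $1$. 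Write $\nhdaltalt=\bigcup_\idx\nhdaltalt_\idx$, an open neighborhood of $\crit\obj$, fix an open set $\open$ with $\crit\obj\subset\open\subset\nhdaltalt$ contained in the compact set of \cref{lem:est_going_back_to_crit}, and note $\hittime_{\nhdaltalt}\le\hittime_{\open}$ and $\probalt_{\nhdaltalt}(\state,\nhdaltalt_\idxalt)=\prob_\state(\accstate_{\hittime_{\nhdaltalt}}\in\nhdaltalt_\idxalt)$.

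First I would dispose of the long-horizon contribution. By \cref{lem:est_going_back_to_crit} there are $\step_0,\coef,\constA,\constB>0$ with $\ex_\state[e^{\coef\hittime_{\open}/\step}]\le e^{\constA\coef/\step+\constB}$ for all $\state$ in the relevant compact set and all $\step\le\step_0$; Markov's inequality then gives $\prob_\state(\hittime_{\nhdaltalt}>\horizon)\le e^{(\constA-\horizon)\coef/\step+\constB}$. Choosing a single integer horizon $\horizon$, uniform over the finitely many pairs $(\idx,\idxalt)$, with $\horizon>\constA+\coef^{-1}\max_{\idx,\idxalt}\dquasipotalt_{\idx,\idxalt}$, makes this at most $e^{-\dquasipotalt_{\idx,\idxalt}/\step}$ for all small enough $\step$.

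Next I would treat the truncated event $\{\hittime_{\nhdaltalt}=\run\le\horizon,\ \accstate_\run\in\nhdaltalt_\idxalt\}$. On it, the finite segment $(\accstate_0,\dots,\accstate_\run)$ is a discrete path starting at $\state\in\nhdaltalt_\idx$, ending in $\nhdaltalt_\idxalt$, all of whose intermediate points lie outside $\nhdaltalt$. The crux is a lower bound: \emph{for the neighborhoods chosen small enough and some $\margin>0$, every discrete path lying within $\margin$ (in the $\dist_{\cdot}$-metric) of such a segment has $\daction$-cost at least $\dquasipotalt_{\idx,\idxalt}-\precs$.} Granting this, the segment $(\accstate_0,\dots,\accstate_\run)$ cannot lie within $\margin$ of the compact set $\pths_{\run+1}^{\{\state\}}(\dquasipotalt_{\idx,\idxalt}-2\precs)$, so \cref{cor:ldp_iterates_full} — applied with horizon $\horizon+1$ (which covers every $\run\le\horizon$ since $\run+1\le\horizon+1$), level $\dquasipotalt_{\idx,\idxalt}-2\precs$, and tolerance $\precs$ — yields $\prob_\state(\hittime_{\nhdaltalt}=\run,\ \accstate_\run\in\nhdaltalt_\idxalt)\le e^{-(\dquasipotalt_{\idx,\idxalt}-3\precs)/\step}$. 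Summing over $\run=1,\dots,\horizon$ and adding the long-horizon term, the polynomial factor $\horizon$ is absorbed into the exponent for $\step$ small, giving $\probalt_{\nhdaltalt}(\state,\nhdaltalt_\idxalt)\le e^{-(\dquasipotalt_{\idx,\idxalt}-4\precs)/\step}$; replacing $\precs$ by $\precs/4$ finishes the argument.

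The main obstacle is the lower-bound sublemma of the previous paragraph — the discrete-time, unbounded-domain analogue of the estimates in \citet[Chap.~6, \S2]{FW98} and \citet[Lem.~5.4]{Kif88}. I would argue by contradiction: a sequence of admissible (or $\margin$-perturbed) paths of cost below $\dquasipotalt_{\idx,\idxalt}-\precs$, with the neighborhoods shrinking to $\crit\obj$, all live in a fixed sublevel set of $\daction$, which is compact by \cref{cor:ldp_iterates_full}; extracting a uniform limit and using lower semicontinuity of $\daction$ produces a limiting discrete path from $\eqcl_\idx$ to $\eqcl_\idxalt$ whose interior avoids the other components and whose cost is $\le\dquasipotalt_{\idx,\idxalt}-\precs$. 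To actually contradict the definition of $\dquasipotalt_{\idx,\idxalt}$ one must move the endpoints \emph{exactly} onto $\eqcl_\idx$ and $\eqcl_\idxalt$, and this is where \cref{lem:regularity_lagrangian} enters: near $\crit\obj$ one has $\lagrangian(\point,\vel)\le\const\norm{\vel}^{2}$, so a short straight-line segment joining a point of a small neighborhood of $\eqcl_\idx$ (resp.\ $\eqcl_\idxalt$) to that component can be spliced on at arbitrarily small extra cost without leaving a small neighborhood of it, hence without touching any other $\eqcl_\idxaltalt$; the spliced path is then a competitor in the definition of $\dquasipotalt_{\idx,\idxalt}$ with cost $<\dquasipotalt_{\idx,\idxalt}$, a contradiction. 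The non-compactness of $\vecspace$, which is the real point of departure from \citet{Kif88}, is already neutralized by the tail bounds of \cref{app:lyapunov}, which confine the chain to a compact region with overwhelming probability.
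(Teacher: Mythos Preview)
Your overall architecture — split on $\hittime_{\nhdaltalt}\lessgtr\horizon$, handle the tail via Markov's inequality and \cref{lem:est_going_back_to_crit}, and reduce the short-horizon piece to the LDP upper bound of \cref{cor:ldp_iterates_full} through a uniform action lower bound on near-admissible discrete paths — is exactly the paper's. The divergence is in how you argue the sublemma. Your compactness route with the neighborhoods \emph{shrinking to} $\crit\obj$ does deliver a limit path with endpoints in $\eqcl_\idx$ and $\eqcl_\idxalt$, but your claim that its interior ``avoids the other components'' does not follow: the $k$-th interior points lie only $\margin_k$-close to $\vecspace\setminus\nhdaltalt^{(k)}$, and as $\nhdaltalt^{(k)}$ collapses onto $\crit\obj$ that complement becomes dense, so limit interior points can perfectly well land on some $\eqcl_\idxaltalt$ with $\idxaltalt\neq\idx,\idxalt$. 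The limit path is then not a competitor for $\dquasipotalt_{\idx,\idxalt}$ and no contradiction results. Your subsequent endpoint-splicing via \cref{lem:regularity_lagrangian} is then either redundant (if the endpoints already lie on $\eqcl_\idx,\eqcl_\idxalt$) or a symptom that the two halves of the argument are pulling in opposite directions.

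The paper sidesteps all of this by keeping the $\nhdaltalt_\idx$ \emph{fixed} and arguing directly, with no limit extraction. It requires $\nhdaltalt_\idx\subset\U_\margin(\eqcl_\idx)$ with $\U_{2\margin}(\eqcl_\idx)\subset\W_\idx$, the rate-neighborhoods $\W_\precs(\eqcl_\idx)$ of \cref{lem:W_nbd}, and then picks $\marginalt>0$ so small that $\U_{\marginalt}(\eqcl_\idxaltalt)\subset\nhdaltalt_\idxaltalt$ for every $\idxaltalt$. Any $\marginalt$-perturbation $\dpth$ of an admissible segment then automatically has interior disjoint from every $\eqcl_\idxaltalt$ (because the segment's interior lies outside the \emph{fixed} $\nhdaltalt$) and endpoints inside $\W_\idx,\W_\idxalt$; one simply prepends a single $\point\in\eqcl_\idx$ with $\rate(\point,\dpth_\start)<\precs$ and appends a single $\pointalt\in\eqcl_\idxalt$ with $\rate(\dpth_{\nRuns-1},\pointalt)<\precs$, and the extended path $(\point,\dpth_\start,\dots,\dpth_{\nRuns-1},\pointalt)$ is a legitimate competitor for $\dquasipotalt_{\idx,\idxalt}$, whence $\daction_\nRuns(\dpth)\ge\dquasipotalt_{\idx,\idxalt}-2\precs$. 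Your splice via \cref{lem:regularity_lagrangian} could replace the use of \cref{lem:W_nbd} in this last step, but only once you drop the compactness layer and hold the neighborhoods fixed.
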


\begin{proof}
	Assume that, without loss of generality, $\precs$ is small enough so that \cref{lem:W_nbd} with $\radius \gets \precs$ can be applied to every $\eqcl_\idx$, $\idx =1, \dots, \neqcl$. Denote by $\W_\idx$, $\idx = 1, \dots, \neqcl$ the corresponding neighborhoods of $\eqcl_\idx$.

    \revise{Since these $\W_\idx$'s are neighborhoods of the $\eqcl_\idx$'s,} there exists $\margin > 0$ such that $\U_{2 \margin} (\eqcl_\idx) \subset \W_\idx$ for all $\idx = 1, \dots, \neqcl$.
	Require then that $\V_\idx$ be contained in $\U_\margin(\eqcl_\idx)$ so that 
	$\U_\margin(\V_\idx) \subset \W_\idx$ for all $\idx = 1, \dots, \neqcl$.
	Moreover, assume that $\margin > 0$ is small enough so that the neighborhoods $\U_\margin(\eqcl_\idx)$, $\idx = 1, \dots, \neqcl$ are pairwise disjoint.

	Define $0 < \marginalt \leq \margin$ such that $\U_\marginalt(\eqcl_\idx)$ is contained in $\V_\idx$ for all $\idx = 1, \dots, \neqcl$.

	Fix $\idx, \idxalt \in \indices$ and consider $\dpth \in (\vecspace)^{\nRuns}$ such that $\dpth_\start \in \U_\marginalt(\V_\idx) \subset \W_\idx$, $\dpth_{\nRuns - 1} \in \U_\marginalt(\V_\idxalt) \subset \W_\idxalt$ and $\dpth_\run \in \U_{\marginalt}\parens*{\vecspace \setminus \bigcup_{\idxaltalt \neq \idx, \idxalt} \V_\idxaltalt}$ for all $\run = 1, \dots, \nRuns - 2$.
	By the choice of $\marginalt$, $\dpth_\run$ cannot be in $\bigcup_{\idxaltalt \neq \idx, \idxalt} \eqcl_\idxaltalt$ for any $\run = 1, \dots, \nRuns - 2$.

	By definition of $\W_\idx$ and $\W_\idxalt$, there are $\point \in \eqcl_\idx$, $\pointalt \in \eqcl_\idxalt$ such that $\rate(\point, \dpth_\start) < \precs$, $\rate(\dpth_{\nRuns - 1}, \pointalt) < \precs$. Therefore, the path $\dpthalt \in (\vecspace)^{\nRuns + 2}$ defined as $\dpthalt = (\point, \dpth_\start, \dpth_1, \dots, \dpth_{\nRuns - 1}, \pointalt)$ satisfies
	\begin{equation}
		\daction_\nRuns(\dpth) \geq \daction_{\nRuns + 2}(\dpthalt) - 2 \precs\,.
	\end{equation}
	and, by definition of $\dquasipotalt_{\idx, \idxalt}$, we thus obtain,
	\begin{equation}
		\daction_\nRuns(\dpth) \geq \daction_{\nRuns+2}(\dpthalt) - 2 \precs \geq \dquasipotalt_{\idx, \idxalt} - 2 \precs\,.
		\label{eq:prop:est_trans_prob:step1:lower_bound_dpth}
	\end{equation}
	
	Fix $\state \in \V_\idx$. Let us now bound the probability
	\begin{equation}
		\probalt_{\nhdaltalt}(\state, \nhdaltalt_{\idxalt}) =
		\prob_\state \parens*{\accstate_{\hittime_\V} \in \V_\idx}\,.
	\end{equation}

	We have, for any $\nRuns \geq \start$,
	\begin{equation}
		\prob_\state \parens*{\accstate_{\hittime_\V} \in \V_\idx} \leq
		\prob_\state \parens*{\accstate_{\hittime_\V} \in \V_\idx,\, \hittime_\V < \nRuns} + \prob_\state \parens*{\hittime_\V \geq \nRuns}\,.
	\end{equation}
	We first bound the second probability using \cref{lem:est_going_back_to_crit} applied to $\open \gets \V_\idx$.
	Take $\nRuns$ such that $\coef_0(\constA - \nRuns) + \step_0 \constB \leq - \dquasipotalt_{\idx, \idxalt}$.
	Then, by Markov's inequality and \cref{lem:est_going_back_to_crit}, it holds that for all $\step \leq \step_0$
	\begin{align}
		\prob_\state \parens*{\hittime_\V \geq \nRuns} 
		&\leq 
		\prob_\state \parens*{\exp \parens*{\frac{\coef_0 \hittime_\V}{\step}} \geq \exp \parens*{\frac{\coef_0 \nRuns}{\step}}}
		\notag\\
		&\leq \exp \parens*{\frac{\coef_0 (\constA - \nRuns)}{\step} + \constB}
		\notag\\
		&\leq \exp \parens*{\frac{- \dquasipotalt_{\idx, \idxalt}}{\step}}\,.
		\label{eq:prop:est_trans_prob:step1:upper_bound_hittime}
	\end{align}
	
	We now bound the term $\prob_\state \parens*{\accstate_{\hittime_\V} \in \V_\idxalt,\, \hittime_\V < \nRuns}$ for this choice of $\nRuns$.

	For this, we show that $\accstate_{\hittime_\V} \in \V_\idxalt$ with $\hittime_\V < \nRuns$ implies that 
	\begin{equation}
	\dist_{\nRuns}\parens*{\accstate, \pths_{\nRuns}^{\setof{\state}}\parens*{\dquasipotalt_{\idx, \idxalt} - 3 \precs}} > \half[\marginalt]\,.
	\end{equation}
	Indeed, on the event $\accstate_{\hittime_\V} \in \V_\idxalt$ with $\hittime_\V < \nRuns$, there is some $\nRunsalt \leq \nRuns$ such that $\hittime_\V = \nRunsalt - 1$.
	If $\dist_{\nRuns}\parens*{\accstate, \pths_{\nRuns}^{\setof{\state}}\parens*{\dquasipotalt_{\idx, \idxalt} - 3 \precs}} > \half[\marginalt]$ did not hold, this would mean that there exists $\dpth \in (\vecspace)^{\nRunsalt}$ such that $\dist_{\nRunsalt}(\accstate, \dpth) < \marginalt$, $\dpth_\start = \state$ and, 
	$\daction_{\nRunsalt}(\dpth) \leq \dquasipotalt_{\idx, \idxalt} - 3 \precs$. In particular, $\dpth$ would also satisfy
	$\dpth_{\nRunsalt - 1} \in \U_{\marginalt}(\V_\idxalt)$, $\dpth_\run \in \U_{\marginalt}(\vecspace \setminus \V)$ for all $\run = 1, \dots, \nRunsalt - 2$. This would be in direct contradiction of \cref{eq:prop:est_trans_prob:step1:lower_bound_dpth}. 

	Therefore, we have that
	\begin{align}
		\prob_\state \parens*{\accstate_{\hittime_\V} \in \V_\idxalt,\, \hittime_\V < \nRuns}
		&\leq 
		\prob_\state \parens*{\dist_{\nRuns}\parens*{\accstate, \pths_{\nRuns}^{\setof{\state}}\parens*{\dquasipotalt_{\idx, \idxalt} - 3 \precs}} > \half[\marginalt]}
		\notag\\
		&\leq 
		\exp \parens*{- \frac{\dquasipotalt_{\idx, \idxalt} - 4 \precs}{\step}}\,,
	\end{align}
	by \cref{cor:ldp_iterates_full}.

	Combining this bound with \cref{eq:prop:est_trans_prob:step1:upper_bound_hittime} yields
	\begin{equation}
		\prob_\state(\accstate_{\hittime_\V} \in \V_\idxalt) \leq \exp \parens*{- \frac{\dquasipotalt_{\idx, \idxalt} - 4 \precs}{\step}} + \exp \parens*{\frac{- \dquasipotalt_{\idx, \idxalt}}{\step}}\,,
	\end{equation}
	which concludes the proof.
\end{proof}

\begin{lemma}
   \label{lem:est_trans_prob_lb}
		For any $\precs > 0$,for any neighborhoods $\nhdaltalt_\idx$ of $\eqcl_\idx$, $\idx = 1, \dots, \neqcl$ small enough, there exists $\nRuns \geq \start$, $\step_0 > 0$ such that for all $\idx, \idxalt$, $\state \in \nhdaltalt_{\idx}$, $0 < \step < \step_0$,
		\begin{equation}
			\probalt_{\nhdaltalt}^\nRuns(\state, \nhdaltalt_{\idxalt})
			\geq
			\exp \parens*{
				- \frac{\dquasipotalt_{\idx, \idxalt}}{\step}
				- \frac{\precs}{\step}
			}\,.
		\end{equation}
	
\end{lemma}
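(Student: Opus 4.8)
The plan is to run the argument of \cref{lem:est_trans_prob_ub} in reverse: rather than excluding cheap escape routes, I will exhibit one explicit near-optimal discrete path realizing the transition and invoke the \emph{lower} bound in \cref{cor:ldp_iterates_full}. Fix $\precs>0$. For each ordered pair $(\idx,\idxalt)$ choose, using \cref{asm:costs} and the definition of $\dquasipotalt_{\idx,\idxalt}$, a discrete path whose endpoints lie in $\eqcl_\idx$ and $\eqcl_\idxalt$, whose interior vertices avoid $\bigcup_{\iComp\neq\idx,\idxalt}\eqcl_\iComp$, and whose discrete action is below $\dquasipotalt_{\idx,\idxalt}+\precs/4$. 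The crucial preliminary step is to \emph{clean} this path: if $\run_\ast$ is the last index at which it meets $\eqcl_\idx$ and $\run_{\ast\ast}$ the first index after $\run_\ast$ at which it meets $\eqcl_\idxalt$, then subadditivity of $\daction$ together with the defining property of $\dquasipotalt_{\idx,\idxalt}$ shows that the restriction to $[\run_\ast,\run_{\ast\ast}]$ still has action below $\dquasipotalt_{\idx,\idxalt}+\precs/4$; by construction its interior vertices lie off $\crit\obj$, and since there are finitely many of them they sit at distance at least some $\delta_{\idx,\idxalt}>0$ from $\crit\obj$. Let $\delta>0$ be the minimum of the $\delta_{\idx,\idxalt}$ and of the pairwise distances between distinct components.

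Next I would assemble, for an \emph{arbitrary} $\state\in\nhdaltalt_\idx$, a single path of a \emph{uniform} length $\nRuns_{\mathrm{tot}}$ built from four blocks: one step from $\state$ to its nearest point $\pointB\in\eqcl_\idx$, of cost $\rate(\state,\pointB)<\precs/4$ once the $\nhdaltalt_\iComp$ are taken small enough (here one uses $\rate(\cdot,\cdot)=0$ on the diagonal of $\crit\obj$, by \cref{lem:basic_prop_map}, and the continuity of $\rate$ near $(\crit\obj)^2$ from \cref{lem:continuity_rate}); a segment of uniformly bounded length, staying in an arbitrarily small neighborhood of $\eqcl_\idx$ and of action below $\precs/4$, joining $\pointB$ to the starting vertex of the cleaned path, supplied by \cref{lem:equivalence_classes_dpth_stay_close} (recall the equivalence classes are exactly the components $\eqcl_\iComp$ by \cref{corollary:equivalence_classes_are_connected_components_of_crit}); the cleaned transition path itself; and finally $M$ copies of its terminal vertex, which lies in $\eqcl_\idxalt$ and is therefore a fixed point of the gradient flow, so this padding costs nothing. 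The total action is below $\dquasipotalt_{\idx,\idxalt}+\precs$; choosing $\nRuns$ to exceed a uniform bound $\nRuns_A$ on the number of steps spent near $\eqcl_\idx$ and then $M:=\nRuns$, the length $\nRuns_{\mathrm{tot}}$ is bounded uniformly in $\state$ and over the finitely many pairs $(\idx,\idxalt)$.

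Finally, shrink each $\nhdaltalt_\iComp$ inside $\U_{\delta/2}(\eqcl_\iComp)$ and take $\margin$ smaller than $\delta/2$ and than the radius of a ball around each terminal vertex contained in the corresponding $\nhdaltalt_\iComp$; then, on the event $\{\dist_{\nRuns_{\mathrm{tot}}}(\accstate,\dpth)<\margin\}$, the accelerated chain meets $\nhdaltalt$ only while the path is near $\eqcl_\idx$ — at most $\nRuns_A$ times, each time in $\nhdaltalt_\idx$ — or during the terminal padding — at least $M+1$ consecutive times, each time in $\nhdaltalt_\idxalt$ — so its $\nRuns$-th visit to $\nhdaltalt$ lies in $\nhdaltalt_\idxalt$. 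Hence $\probalt_{\nhdaltalt}^{\nRuns}(\state,\nhdaltalt_\idxalt)\ge\prob_\state\!\big(\dist_{\nRuns_{\mathrm{tot}}}(\accstate,\dpth)<\margin\big)$, and \cref{cor:ldp_iterates_full} (applicable since $\nhdaltalt$ is bounded and $\daction$ along $\dpth$ is bounded by $\max_{\idx,\idxalt}\dquasipotalt_{\idx,\idxalt}+\precs<\infty$) bounds the right-hand side below by $\exp(-(\daction(\dpth)+\precs)/\step)\ge\exp(-(\dquasipotalt_{\idx,\idxalt}+2\precs)/\step)$ for $\step$ small, uniformly in $\state\in\nhdaltalt$; running the argument with $\precs/2$ gives the claim. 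The genuine obstacle here is not a single estimate but the bookkeeping: the cleaning step must leave the interior of the transition path strictly off $\crit\obj$ — this is exactly where discreteness of the path matters, a continuous interpolant could graze the critical set — and the cascade of ``small enough'' choices for the neighborhoods $\nhdaltalt_\iComp$, for $\margin$, and for $\step_0$ has to be arranged consistently with the quantifier order of the statement ($\precs$ first, then the neighborhoods, then $\nRuns$ and $\step_0$).
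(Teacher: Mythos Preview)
Your approach is essentially the paper's: build an explicit concatenated discrete path from $\state$ to a point of $\eqcl_\idxalt$ with action within $O(\precs)$ of $\dquasipotalt_{\idx,\idxalt}$, arranged so that on the event $\{\dist(\accstate,\dpth)<\margin\}$ the $\nRuns$-th return of the accelerated chain to $\nhdaltalt$ lands in $\nhdaltalt_\idxalt$, then invoke the lower bound in \cref{cor:ldp_iterates_full}. Two differences are worth noting. First, the paper pads at the \emph{beginning} --- it repeats a single point $\stateB\in\eqcl_\idx$ so that the near-$\eqcl_\idx$ segment has length exactly $\nRuns$, forcing the first $\nRuns$ returns to fall in $\nhdaltalt_\idx$ and the next one in $\nhdaltalt_\idxalt$ --- whereas you pad at the \emph{end} with copies of the terminal vertex in $\eqcl_\idxalt$; your version is slightly more robust because it absorbs the variability in how many of the near-$\eqcl_\idx$ steps actually land inside $\nhdaltalt_\idx$ rather than trying to pin that number down exactly. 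Second, your cleaning step (restricting to the sub-path between the last visit to $\eqcl_\idx$ and the first subsequent visit to $\eqcl_\idxalt$) is a genuine addition: the definition of $\dquasipotalt_{\idx,\idxalt}$ only excludes interior vertices from $\bigcup_{\idxaltalt\neq\idx,\idxalt}\eqcl_\idxaltalt$, so without cleaning an interior vertex could sit in $\eqcl_\idx$ or $\eqcl_\idxalt$ and produce an unaccounted-for return to $\nhdaltalt$. The paper asserts its interior vertices lie in $\vecspace\setminus\U_{\margin/2}(\V)$ without quite justifying this; your cleaning step supplies precisely the missing argument.
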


\begin{proof}
	For any $\idx, \idxalt$, there exists $\nRuns_{\idx, \idxalt} \geq 1$, $\dpth^{\idx,\idxalt} \in (\vecspace)^{\nRuns_{\idx,\idxalt}}$ such that $\dpth^{\idx,\idxalt}_\start \in \eqcl_\idx$, $\dpth^{\idx,\idxalt}_{\nRuns_{\idx,\idxalt} - 1} \in \eqcl_\idxalt$, $\dpth^{\idx,\idxalt}_\run \notin \bigcup_{\idxaltalt\neq\idx,\idxalt} \eqcl_\idxaltalt$ for all $\run = 1, \dots, \nRuns_{\idx,\idxalt} - 2$ and $\daction_{\nRuns_{\idx,\idxalt}}(\dpth^{\idx,\idxalt}) \leq \dquasipotalt_{\idx, \idxalt} + \precs$. Define $\margin_{\idx, \idxalt} \defeq \min \setdef*{d(\dpth^{\idx,\idxalt}_\run, \bigcup_{\idxaltalt\neq\idx,\idxalt} \eqcl_\idxaltalt)}{\run = 1, \dots, \nRuns_{\idx,\idxalt} - 2}$ and $\margin \defeq \min_{\idx, \idxalt \in \indices} \margin_{\idx, \idxalt}$. By construction, it holds that $\margin > 0$.

	Require that $\V_\idx$ be contained in $\W_\idx \cap \U_{\margin/2}(\eqcl_\idx)$ for all $\idx = 1, \dots, \neqcl$.
	Now, given such $\V_\idx$ neighborhoods of $\eqcl_\idx$, $\idx = 1, \dots, \neqcl$, there exists $0 < \marginalt \leq \margin / 2$ such that $\U_\marginalt(\eqcl_\idx)$ is contained in $\V_\idx$ for all $\idx = 1, \dots, \neqcl$.

	Apply \cref{lem:equivalence_classes_dpth_stay_close} to $\eqcl_\idx$, $\idx = 1, \dots, \neqcl$ with $\precs \gets \min(\precs, \marginalt/2)$ and denote by $\nRuns_\idx$ the bound on the length of paths obtained.	
	Define
	\begin{equation}
		\nRuns \defeq \max_{\idx \in \indices} \nRuns_\idx  + 1\,.
	\end{equation}

	Fix $\idx, \idxalt \in \indices$ and $\stateA \in \V_\idx$. Since $\V_\idx \subset \W_\idx$, there exists $\stateB \in \eqcl_\idx$ such that $\rate(\stateA, \stateB) < \precs$. Moreover, note that $\rate(\stateB, \stateB) = 0$ since $\stateB$ is a critical point of $\obj$.

	By \cref{lem:equivalence_classes_dpth_stay_close}, there exists $\run \leq \nRuns$, $\dpth \in (\vecspace)^\run$ such that $\dpth_\start = \stateB$, $\dpth_{\nRuns - 1} = \dpth^{\idx, \idxalt}_\start$, $\dpth_\runalt \in \U_{\marginalt/2}(\eqcl_\idx)$ for all $\runalt = 1, \dots, \run - 2$ and $\daction_\run(\dpth) <\precs$.

	Considering the concatenation 
	\begin{equation}
		\dpthalt \defeq \parens*{
		\stateA,
		\underbrace{
		\stateB,
		\stateB,
		\dots,
		\stateB
		}_{\nRuns - \run \text{ times}},
		\dpth_\start,
		\dpth_\afterstart,
		\dots,
		\dpth_{\run - 2},
		\dpth_{\run - 1},
		\dpth^{\idx, \idxalt}_\afterstart,
		\dots,
		\dpth^{\idx, \idxalt}_{\nRuns_{\idx,\idxalt} - 1}
	}
	\end{equation}
	which is a path of length $\nRuns + \nRuns_{\idx,\idxalt}$ made of $\state \in \V_\idx$, then exactly $\nRuns$ points in $\U_{\marginalt/2}(\eqcl_\idx)$ then $\nRuns_{\idx,\idxalt} - 2$ in $\vecspace \setminus \U_{\margin / 2}(\V)$ and $\dpth^{\idx, \idxalt}_{\nRuns_{\idx,\idxalt} - 1} \in \eqcl_\idxalt$. 
	Moreover, by construction, $\daction_{\nRuns + \nRuns_{\idx,\idxalt}}(\dpthalt) \leq \dquasipotalt_{\idx, \idxalt} + 3\precs$.
	Therefore, if 
	\begin{equation}\label{eq:closepath}
	\dist_{\nRuns + \nRuns_{\idx,\idxalt}}\parens*{\accstate, \dpthalt} < \marginalt/2\,,
	\end{equation}
	with $\accstate_\start = \stateA$, then $\accstate_1,\dots,\accstate_\nRuns$ are in $\U_{\marginalt}(\eqcl_\idx) \subset \V_\idx$  and, since $\marginalt \leq \margin / 2$, $\accstate_{\nRuns + 1}, \dots, \accstate_{\nRuns + \nRuns_{\idx,\idxalt} - 2}$ are not in $\U_{\margin / 4}(\V)$, and therefore not in $\V$. Moreover, $\accstate_{\nRuns + \nRuns_{\idx,\idxalt} - 1}$ would be in $\U_{\marginalt/2}(\eqcl_\idxalt) \subset \V_\idxalt$.

	Thus, all the paths $\accstate$ satisfying \eqref{eq:closepath} with $\accstate_\start = \stateA$ are started at $\stateA$ and their $\nRuns$-th point that fall into $\nhdaltalt$ belongs to $\nhdaltalt_{\idxalt}$. 

	Therefore, using the definition of $ \probalt_{\nhdaltalt}^\nRuns$, we have that 
	\begin{align}
		\probalt_{\nhdaltalt}^\nRuns(\state, \nhdaltalt_{\idxalt})
		&\geq
		\prob_\state \parens*{\dist_{\nRuns + \nRuns_{\idx,\idxalt}}\parens*{\accstate, \dpthalt} < \marginalt/2}
		\notag\\
		&\geq
		\exp \parens*{-\frac{\dquasipotalt_{\idx, \idxalt} + 4 \precs}{\step}}\,,
	\end{align}
	by \cref{cor:ldp_iterates_full}.
\end{proof}

\begin{lemma}
	\label{lem:dquasipotalt_to_dquasipot}
	For any $\idx,\idxalt \in \indices$,
	\begin{align}
		\dquasipot_{\idx, \idxalt} 
		&=
		\min
		\setdef*{
			\sum_{\idxaltalt = 0}^{\run-2} \dquasipotalt_{\idx_{\idxaltalt}, \idx_{\idxaltalt + 1}}
		}{
			\idx_0 = \idx,\, \idx_{\run - 1} = \idxalt,\, \idx_{\idxaltalt} \in \indices \text{ for } \idxaltalt = 1, \dots, \run - 2\,, \run \geq 1
		}
		\notag\\
		&=
		\min
		\setdef*{
			\sum_{\idxaltalt = 0}^{\neqcl - 2} \dquasipotalt_{\idx_{\idxaltalt}, \idx_{\idxaltalt + 1}}
		}{
			\idx_0 = \idx,\, \idx_{\neqcl - 1} = \idxalt,\, \idx_{\idxaltalt} \in \indices \text{ for } \idxaltalt = 1, \dots, \neqcl - 2
		}\,.
	\end{align}
\end{lemma}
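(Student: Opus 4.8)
The plan is to establish both equalities by a standard shortest‑path decomposition on the finite weighted digraph whose vertices are the equivalence classes. I would use three elementary facts throughout. First, by \cref{lem:basic_prop_h_l} one has $\lagrangian\geq 0$, hence $\rate\geq 0$ and $\daction_\nRuns\geq 0$; moreover $\daction_\nRuns$ is additive under concatenation of discrete paths at a common endpoint. Second, by \cref{corollary:equivalence_classes_are_connected_components_of_crit} the equivalence classes $\eqcl_1,\dots,\eqcl_\neqcl$ are exactly the connected components of $\crit\obj$; in particular they are finitely many, compact (\cref{lem:equivalence_classes_closed}), pairwise disjoint, and $\dquasipot(\point,\pointalt)=0$ whenever $\point,\pointalt$ belong to the same class (this is the definition of $\sim$). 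Third, $\dquasipotalt_{\idx,\idx}=0$ for every $\idx$: by \cref{lem:equivalence_classes_dpth_stay_close} any two points of $\eqcl_\idx$ can be joined by a discrete path of arbitrarily small action that remains in an arbitrarily small neighborhood of $\eqcl_\idx$, and once this neighborhood is smaller than half the (positive) distance from $\eqcl_\idx$ to $\bigcup_{\idxaltalt\neq\idx}\eqcl_\idxaltalt$, such a path is admissible in the definition of $\dquasipotalt_{\idx,\idx}$.

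For the lower bound in the first equality, I would take $\point\in\eqcl_\idx$, $\pointalt\in\eqcl_\idxalt$ and a discrete path $\dpth\in\vecspace^\nRuns$ with $\dpth_\start=\point$, $\dpth_{\nRuns-1}=\pointalt$, and list \emph{all} indices $\start=\run_0<\run_1<\dots<\run_p=\nRuns-1$ at which $\dpth_{\run}\in\crit\obj$; let $\idx_m$ denote the index of the component containing $\dpth_{\run_m}$, so $\idx_0=\idx$ and $\idx_p=\idxalt$. Each segment $(\dpth_{\run_m},\dots,\dpth_{\run_{m+1}})$ starts in $\eqcl_{\idx_m}$, ends in $\eqcl_{\idx_{m+1}}$, and has no intermediate point in $\crit\obj$, hence none in $\bigcup_{\idxaltalt\neq\idx_m,\idx_{m+1}}\eqcl_\idxaltalt$, so it is admissible for $\dquasipotalt_{\idx_m,\idx_{m+1}}$ and its action is $\geq\dquasipotalt_{\idx_m,\idx_{m+1}}$. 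Summing over $m$ by additivity gives $\daction_\nRuns(\dpth)\geq\sum_{m=0}^{p-1}\dquasipotalt_{\idx_m,\idx_{m+1}}$, and since $(\idx_0,\dots,\idx_p)$ is an admissible index sequence for the right‑hand side of the first claimed identity (with $\run\gets p+1$), this quantity is at least the asserted minimum. Taking the infimum over $\dpth$ and over the endpoints $\point,\pointalt$ yields $\dquasipot_{\idx,\idxalt}\geq\min\{\cdots\}$.

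For the upper bound, given an index sequence $\idx=\idx_0,\idx_1,\dots,\idx_{\run-1}=\idxalt$ and $\precs>0$, I would choose for each $m$ an admissible path attaining $\dquasipotalt_{\idx_m,\idx_{m+1}}$ up to $\precs$, running from some $\point_m^{\mathrm{out}}\in\eqcl_{\idx_m}$ to some $\point_{m+1}^{\mathrm{in}}\in\eqcl_{\idx_{m+1}}$, and for each intermediate $m$ a path of action at most $\precs$ from $\point_m^{\mathrm{in}}$ to $\point_m^{\mathrm{out}}$ inside $\eqcl_{\idx_m}$ (possible since $\dquasipot$ vanishes there). Concatenating all of these produces a discrete path from a point of $\eqcl_\idx$ to a point of $\eqcl_\idxalt$ of action at most $\sum_m\dquasipotalt_{\idx_m,\idx_{m+1}}+C\precs$ for a constant $C$ depending only on $\run$; letting $\precs\to 0$ and minimizing over index sequences gives $\dquasipot_{\idx,\idxalt}\leq\min\{\cdots\}$, which proves the first equality. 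The second equality is then purely combinatorial on the finite digraph with vertex set $\indices$ and nonnegative weights $\dquasipotalt_{\cdot,\cdot}$: removing any cycle from a walk $\idx\to\idxalt$ does not increase its cost, so the minimum over walks of arbitrary length is attained on a simple walk, which has at most $\neqcl$ vertices; padding such a walk to exactly $\neqcl$ vertices by repeating the terminal vertex $\idxalt$ leaves the cost unchanged by the third fact, so the minimum over walks of length exactly $\neqcl$ coincides with it.

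The routine parts are the concatenation bookkeeping and the graph reduction; the step needing genuine care — and the main obstacle — is the lower bound, where it is essential to cut $\dpth$ at \emph{every} time it meets $\crit\obj$ so that each segment's interior avoids $\crit\obj$ entirely (which is exactly what makes it admissible for the relevant $\dquasipotalt$), and to have the identity $\dquasipotalt_{\idx,\idx}=0$ available, itself resting on \cref{lem:equivalence_classes_dpth_stay_close} together with the separation of the components of $\crit\obj$.
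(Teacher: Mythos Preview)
Your proposal is correct and follows essentially the same approach as the paper's proof: decompose an arbitrary path at its visits to $\crit\obj$ for the lower bound, concatenate near-optimal $\dquasipotalt$-paths with short connectors inside each class for the upper bound, and reduce to walks of length $\neqcl$ via $\dquasipotalt_{\idx,\idx}=0$ and cycle removal. Your write-up is in fact more explicit than the paper's (which states the decomposition for $(\geq)$ in one sentence and cites \cref{lem:equivalence_classes_dpth_stay_close} directly for the connectors), and your separate justification of $\dquasipotalt_{\idx,\idx}=0$ via \cref{lem:equivalence_classes_dpth_stay_close} together with the positive separation of the components is a nice touch that the paper leaves implicit.
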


\begin{proof}
	It suffices to show that
	\begin{equation}
		\dquasipot_{\idx, \idxalt} = 
		\min
		\setdef*{
			\sum_{\idxaltalt = 0}^{\run-2} \dquasipotalt_{\idx_{\idxaltalt}, \idx_{\idxaltalt + 1}}
		}{
			\idx_0 = \idx,\, \idx_{\run - 1} = \idxalt,\, \idx_{\idxaltalt} \in \indices \text{ for } \idxaltalt = 1, \dots, \run - 2\,, \run \geq 1
		}\,.
	\end{equation}
	The statement of the lemma then follows from the fact that $\dquasipotalt_{\idxaltalt,\idxaltalt} = 0$ for all $\idxaltalt \in \indices$ and that shortest paths on graphs can be chosen not to visit the same node twice.

	For the inequality $(\geq)$, notice that any path between $\eqcl_\idx$ and $\eqcl_\idxalt$ can be decomposed into a concatenation of paths between $\eqcl_\idx$ and $\eqcl_{\idx_1}$, $\eqcl_{\idx_1}$ and $\eqcl_{\idx_2}$, \dots, $\eqcl_{\idx_{\run - 1}}$ and $\eqcl_\idxalt$  for some $\idx_1, \dots, \idx_{\run - 1} \in \indices$ that do not enter any other equivalence class in between. Therefore, the inequality $(\geq)$ follows from the definition of $\dquasipot_{\idx, \idxalt}$ and $\dquasipotalt_{\idx_{\idxaltalt}, \idx_{\idxaltalt + 1}}$.

	We now focus on $(\leq)$.

	Fix $\precs$.
	Take $\run \geq 1$, $\idx_0 = \idx$, $\idx_{\run - 1} = \idxalt$, $\idx_{\idxaltalt} \in \indices$ for $\idxaltalt = 1, \dots, \run - 2$.
	There are paths $\dpth^{0},\dots,\dpth^{\run-2}$ of lengths $\nRuns_0,\dots,\nRuns_{\run-2}$ such that $\dpth^{\idxaltalt}_\start \in \eqcl_{\idx_{\idxaltalt}}$, $\dpth^{\idxaltalt}_{\nRuns_{\idxaltalt} - 1} \in \eqcl_{\idx_{\idxaltalt + 1}}$, $\daction_{\nRuns_{\idxaltalt}}(\dpth^{\idxaltalt}) \leq \dquasipotalt_{\idx_{\idxaltalt}, \idx_{\idxaltalt + 1}} + \precs/\run$ for $\idxaltalt = 0, \dots, \run - 2$.

	By \cref{lem:equivalence_classes_dpth_stay_close}, for all $\idxaltalt = 0, \dots, \run - 2$, $\dpth^{\idxaltalt}_{\nRuns_{\idxaltalt} - 1}$ and $\dpth^{\idxaltalt + 1}_\start$ can be connected by path of cost at most $\precs / \run$. Therefore concatenating all these paths yield $\dpthalt$ of length $\nRuns$ with $\dpthalt_\start \in \eqcl_\idx$, $\dpthalt_{\nRuns - 1} \in \eqcl_\idxalt$ and $\daction_\nRuns(\dpthalt) \leq \sum_{\idxaltalt = 0}^{\run - 2} \dquasipotalt_{\idx_{\idxaltalt}, \idx_{\idxaltalt + 1}} + 2\precs$. Since $\daction_\nRuns(\dpthalt) \geq \dquasipot_{\idx, \idxalt}$, we obtain the desired result.
\end{proof}

\begin{notation}
	We will write, for non-decreasing $\fn \from \R \to \R$,
	\begin{equation}
		a \asympteq \fn(b \pm c) \iff \fn(b - c) \leq a \leq \fn(b + c)\,.
	\end{equation}
\end{notation}

\begin{proposition}
	\label{prop:est_trans_prob}
		For any $\precs > 0$ and any small enough neighborhoods $\nhdaltalt_\idx$ of $\eqcl_\idx$, $\idx = 1, \dots, \neqcl$, there exists $\nRuns \geq \start$, $\step_0 > 0$ such that for all $\idx, \idxalt$, $\state \in \nhdaltalt_{\idx}$, $0 < \step < \step_0$,
		\begin{equation}
			\probalt_{\nhdaltalt}^\nRuns(\state, \nhdaltalt_{\idxalt})
			\asympteq
			\exp \parens*{
				- \frac{\dquasipot_{\idx, \idxalt}}{\step}
				\pm \frac{\precs}{\step}
			}\,.
		\end{equation}
	
\end{proposition}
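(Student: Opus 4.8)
The plan is to derive the proposition purely by combining the three results already available: the one‑step upper estimate of \cref{lem:est_trans_prob_ub}, the $\nRuns$‑step lower estimate of \cref{lem:est_trans_prob_lb}, and the shortest‑path identity of \cref{lem:dquasipotalt_to_dquasipot} expressing $\dquasipot_{\idx,\idxalt}$ as a minimum of sums of the ``direct'' costs $\dquasipotalt$. The crucial point that makes this go through cleanly is the order of the quantifiers in the statement: $\precs$ and the neighborhoods $\nhdaltalt_\idx$ are chosen first, and $\nRuns$ and $\step_0$ are produced afterwards, so $\nRuns$ is allowed to depend on $\precs$ and on the $\nhdaltalt_\idx$'s.

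First I would set up the $\nRuns$‑step kernel. Shrink the $\nhdaltalt_\idx$ so that, besides being small enough for both \cref{lem:est_trans_prob_ub,lem:est_trans_prob_lb}, they are pairwise disjoint — legitimate because the $\eqcl_\idx$ are the distinct, hence disjoint, connected components of $\crit\obj$ by \cref{corollary:equivalence_classes_are_connected_components_of_crit}. Disjointness guarantees that every visit of the accelerated chain $\accstate$ to $\nhdaltalt=\bigcup_\idx\nhdaltalt_\idx$ falls into exactly one $\nhdaltalt_\idx$, so by the strong Markov property $\probalt_{\nhdaltalt}^{\nRuns}$ is the $\nRuns$‑fold composition of $\probalt_{\nhdaltalt}$, and for $\state\in\nhdaltalt_\idx$
\[
\probalt_{\nhdaltalt}^{\nRuns}(\state,\nhdaltalt_\idxalt)
 = \sum_{\idx_1,\dots,\idx_{\nRuns-1}\in\indices}
 \int_{\nhdaltalt_{\idx_1}}\!\!\probalt_{\nhdaltalt}(\state,d\statealt_1)\int_{\nhdaltalt_{\idx_2}}\!\!\probalt_{\nhdaltalt}(\statealt_1,d\statealt_2)\cdots\int_{\nhdaltalt_{\idxalt}}\!\!\probalt_{\nhdaltalt}(\statealt_{\nRuns-1},d\statealt_{\nRuns}),
\]
each factor being bounded uniformly over its starting point by \cref{lem:est_trans_prob_ub} from above and by \cref{lem:est_trans_prob_lb} from below.

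For the lower bound, set $\delta\defeq\precs/(2\neqcl)$, invoke \cref{lem:est_trans_prob_lb} with this $\delta$ to get a length $\nRuns_{0}$, and define $\nRuns\defeq\neqcl\,\nRuns_{0}$, now a fixed integer. Using the second (fixed‑length) form of \cref{lem:dquasipotalt_to_dquasipot}, choose an optimal chain $\idx=\idx_0,\idx_1,\dots,\idx_{\neqcl-1}=\idxalt$ with $\sum_{\idxaltalt=0}^{\neqcl-2}\dquasipotalt_{\idx_{\idxaltalt},\idx_{\idxaltalt+1}}=\dquasipot_{\idx,\idxalt}$, concatenate $\neqcl-1$ blocks of $\nRuns_0$ visits realizing $\nhdaltalt_{\idx_{\idxaltalt}}\to\nhdaltalt_{\idx_{\idxaltalt+1}}$, and append one more block of $\nRuns_0$ visits that simply stays in $\nhdaltalt_\idxalt$ (a self‑loop, $\dquasipotalt_{\idxalt,\idxalt}=0$); this consumes exactly $\neqcl\,\nRuns_0=\nRuns$ visits. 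Bounding the probability of this event below by the product of the individual $\nRuns_0$‑step lower bounds from \cref{lem:est_trans_prob_lb} gives $\probalt_{\nhdaltalt}^{\nRuns}(\state,\nhdaltalt_\idxalt)\ge\exp(-(\dquasipot_{\idx,\idxalt}+\neqcl\delta)/\step)=\exp(-(\dquasipot_{\idx,\idxalt}+\precs/2)/\step)$, which is stronger than needed.

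For the upper bound, apply \cref{lem:est_trans_prob_ub} with parameter $\eta\defeq\precs/(2\nRuns)$ — admissible since $\nRuns$ is already fixed — and expand $\probalt_{\nhdaltalt}^{\nRuns}(\state,\nhdaltalt_\idxalt)$ over the $\le\nComps^{\nRuns-1}$ index sequences as above, so that
\[
\probalt_{\nhdaltalt}^{\nRuns}(\state,\nhdaltalt_\idxalt)
\le \sum_{\idx_1,\dots,\idx_{\nRuns-1}}\exp\!\Big(-\tfrac1\step\textstyle\sum_{k=0}^{\nRuns-1}\dquasipotalt_{\idx_k,\idx_{k+1}}+\tfrac{\nRuns\eta}{\step}\Big)
\le \nComps^{\nRuns-1}\exp\!\Big(-\tfrac{\dquasipot_{\idx,\idxalt}}{\step}+\tfrac{\precs}{2\step}\Big),
\]
using that any path‑sum of $\dquasipotalt$'s from $\idx$ to $\idxalt$ dominates $\dquasipot_{\idx,\idxalt}$ (\cref{lem:dquasipotalt_to_dquasipot}) and $\nRuns\eta=\precs/2$. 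Since $\nRuns$ is fixed, taking $\step_0$ also below $\precs/(2(\nRuns-1)\log\nComps)$ (together with the thresholds from the two lemmas) absorbs the combinatorial factor $\nComps^{\nRuns-1}$ into $\exp(\precs/(2\step))$ and yields $\probalt_{\nhdaltalt}^{\nRuns}(\state,\nhdaltalt_\idxalt)\le\exp(-(\dquasipot_{\idx,\idxalt}-\precs)/\step)$; combined with the lower bound and the $\asympteq$ convention (with $\fn=\exp$, monotone), this is exactly the assertion. The one genuine obstacle is bookkeeping of dependencies: the length $\nRuns_0$ from \cref{lem:est_trans_prob_lb} blows up as its accuracy $\delta\to0$, so one must fix $\delta$, hence $\nRuns_0$, hence $\nRuns$, \emph{before} choosing the upper‑bound accuracy $\eta$ and the threshold $\step_0$ — otherwise a spurious circular dependence appears. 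Everything else (the strong Markov decomposition, the combinatorial factor, the self‑loop padding) is routine once this ordering is respected.
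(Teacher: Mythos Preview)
Your argument carries a circular dependency that you flag in the last paragraph but do not actually break. The length $\nRuns_0$ produced by \cref{lem:est_trans_prob_lb} depends not only on its accuracy parameter $\delta$ but also on the chosen neighborhoods $\nhdaltalt_\idx$: in that lemma's proof, $\nRuns_\idx$ comes from \cref{lem:equivalence_classes_dpth_stay_close} applied with parameter $\min(\delta,\marginalt/2)$, where $\marginalt$ is a radius with $\U_{\marginalt}(\eqcl_\idx)\subset\nhdaltalt_\idx$. Hence $\nRuns=\neqcl\,\nRuns_0$, and with it your upper-bound accuracy $\eta=\precs/(2\nRuns)$, already depend on the $\nhdaltalt_\idx$. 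But \cref{lem:est_trans_prob_ub} requires the neighborhoods to be ``small enough'' \emph{for the given accuracy} --- its proof forces $\nhdaltalt_\idx\subset\W_\eta(\eqcl_\idx)$, a set that shrinks as $\eta\to0$. So you are asking the $\nhdaltalt_\idx$ to be small enough for an $\eta$ that itself depends on how small the $\nhdaltalt_\idx$ are: shrinking them makes $\marginalt$ smaller, hence $\nRuns_0$ larger, hence $\eta$ smaller, hence the constraint stricter. There is no reason this loop stabilizes, and the ``small enough'' threshold in the proposition cannot then be stated in terms of $\precs$ alone.

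The paper breaks the loop by decomposing the upper bound differently. With $\nRunsalt=(\neqcl-1)\nRuns$ it expands $\probalt_{\nhdaltalt}^{\nRunsalt}$ into only $(\neqcl-1)$ factors (each an $\nRuns$-step block), summed over $\idx_1,\dots,\idx_{\neqcl-2}$. The crucial gain is that the number of factors is the absolute constant $\neqcl-1$, so the accumulated error in the exponent is $(\neqcl-1)\precs$ and the combinatorial prefactor is $\neqcl^{\neqcl-2}$ --- both determined once $\precs$ is, with no reference to $\nRuns$ or the $\nhdaltalt_\idx$. This lets the \emph{same} accuracy $\precs$ be used in both lemmas, and the ``small enough'' threshold on the neighborhoods depends only on $\precs$. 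Your one-step expansion into $\nRuns$ factors is the natural first idea, but it is precisely what forces the accuracy loss to scale with $\nRuns$ and creates the circularity.
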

\begin{proof}
Let us first start with $(\geq)$.

Let $\nRuns$ satisfy the conditions of \cref{lem:est_trans_prob_lb} and define $\nRunsalt \defeq (\neqcl-1) \nRuns$.
For any $\idx, \idxalt \in \indices$, by \cref{lem:dquasipotalt_to_dquasipot}, there exist $\idx_0 = \idx$, $\idx_{\neqcl - 1} = \idxalt$, $\idx_{\idxaltalt} \in \indices$ for $\idxaltalt = 1, \dots, \neqcl - 2$ such that
\begin{equation}
	\dquasipot_{\idx, \idxalt} = \sum_{\idxaltalt = 0}^{\neqcl - 2} \dquasipotalt_{\idx_{\idxaltalt}, \idx_{\idxaltalt + 1}}\,.
\end{equation}

Therefore, we have that the probability of reaching $\nhdaltalt_{\idxalt}$ from $\state \in \V_\idx$ in $\nRunsalt = (\neqcl-1) \nRuns$ steps is greater than the probability of reaching sequentially the $\nhdaltalt_{\idx_{\idxaltalt+1}}$ from any point $\state \in \nhdaltalt_{\idx_{\idxaltalt}}$, \ie
\begin{align}
	\inf_{\state \in \V_\idx}
	\probalt_{\nhdaltalt}^{\nRunsalt}(\state, \nhdaltalt_{\idxalt})
	&\geq
	\prod_{\idxaltalt = 0}^{\neqcl - 2}
	\inf_{\state \in \V_{\idx_{\idxaltalt}}}
	\probalt_{\nhdaltalt}^\nRuns(\state, \nhdaltalt_{\idx_{\idxaltalt + 1}})
	\notag\\
	&\geq
	\prod_{\idxaltalt = 0}^{\neqcl - 2}
	\exp \parens*{
		- \frac{\dquasipotalt_{\idx_{\idxaltalt}, \idx_{\idxaltalt + 1}}}{\step}
		- \frac{\precs}{\step}
	} = \exp \parens*{
		- \frac{\dquasipot_{\idx, \idxalt}}{\step}
		- \frac{(\neqcl - 1)\precs}{\step}
	}\,
\end{align}
where we used \cref{lem:est_trans_prob_lb} to get the second inequality.

Now for the reverse inequality $(\leq)$. Take $\idx, \idxalt \in \indices$ and denote $\idx_0 = \idx$, $\idx_{\nRunsalt - 1} = \idxalt$. By \cref{lem:est_trans_prob_ub}, we have that 
\begin{align}
	\sup_{\state \in \V_\idx}
	\probalt_{\nhdaltalt}^{\nRunsalt}(\state, \nhdaltalt_{\idxalt})
	&\leq
	\sum_{\idx_1, \dots, \idx_{\neqcl-2} \in \indices}
	\prod_{\idxaltalt = 0}^{\neqcl - 2}
	\sup_{\state \in \V_{\idx_{\idxaltalt}}}
	\probalt_{\nhdaltalt}^\nRuns(\state, \nhdaltalt_{\idx_{\idxaltalt + 1}})
	\notag\\
	&\leq
	\sum_{\idx_1, \dots, \idx_{\neqcl-2} \in \indices}
	\prod_{\idxaltalt = 0}^{\neqcl - 2}
	\exp \parens*{
		- \frac{\dquasipotalt_{\idx_{\idxaltalt}, \idx_{\idxaltalt + 1}}}{\step}
		+ \frac{\precs}{\step}
	}
	\notag\\
	&\leq 
	\sum_{\idx_1, \dots, \idx_{\neqcl-2} \in \indices}
	\exp \parens*{
		- \frac{\sum_{\idxaltalt = 0}^{\nRunsalt} \dquasipotalt_{\idx_{\idxaltalt}, \idx_{\idxaltalt + 1}}}{\step}
   + \frac{(\neqcl - 1)\precs}{\step}
	}\,.
\end{align}
Using \cref{lem:dquasipotalt_to_dquasipot}, we obtain that 
\begin{align}
\sup_{\state \in \V_\idx} \probalt_{\nhdaltalt}^{\nRunsalt}(\state, \nhdaltalt_{\idxalt})
	&\leq \sum_{\idx_1, \dots, \idx_{\neqcl-2} \in \indices}
	\exp \parens*{
		- \frac{\dquasipot_{\idx, \idxalt}}{\step}
		+ \frac{(\neqcl - 1)\precs}{\step}
	}
	\notag\\
	&= {\neqcl}^{\neqcl - 2}
	\exp \parens*{
		- \frac{\dquasipot_{\idx, \idxalt}}{\step}
		+ \frac{(\neqcl - 1)\precs}{\step}
	}
\end{align}
which concludes the proof.
\end{proof}

Let us now give our first estimates on the invariant measure.

\begin{definition}[{\citep[Chap.~6,\S4]{FW98}}]
	Define, for every $\idx \in \indices$,
	\begin{equation}
		\dinvpot_\idx  = \dinvpot(\cpt_\idx) \defeq \min_{\tree \in \trees_\idx} \sum_{(\idxalt \to \idxaltalt) \in \tree} \dquasipot_{\idxalt, \idxaltalt}\,,
	\end{equation}
	where $\trees_\idx$ denotes the set of trees rooted at $\idx$ in the complete graph on $\indices$.
\end{definition}

\begin{proposition}
	\label{prop:est_induced_invmeas}
	For any $\invmeas[\V]$ invariant probability measure for $\probalt_\V$, the induced chain on $\V$, in the setting of \cref{prop:est_trans_prob}, for any $\idx \in \indices$,
	\begin{equation}
		\invmeas[\V](\V_\idx) 
		\asympteq
		  \exp \parens*{
			  - \frac{\dinvpot(\cpt_\idx)  - \min_{\idxalt \in \indices} \dinvpot(\cpt_\idxalt)}{\step}
			  \pm \frac{\precs}{\step}
		  }\,.
		\end{equation}
\end{proposition}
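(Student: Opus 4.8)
The plan is to combine the invariance of $\invmeas[\V]$ with the transition estimates of \cref{prop:est_trans_prob} and then to invoke the graph-theoretic (tree-sum) description of the stationary measure of a finite Markov chain, following \citet[Chap.~6,\S3--4]{FW98}.

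First I would fix $\precs > 0$, take neighbourhoods $\V_\idx$, $\idx \in \indices$, small enough --- in particular pairwise disjoint --- for \cref{prop:est_trans_prob} to hold with the corresponding $\nRuns$ and $\step_0$, and set $q_\idx \defeq \invmeas[\V](\V_\idx)$. Since $\invmeas[\V]$ is invariant for $\probalt_\V$, it is also invariant for the $\nRuns$-step kernel $\probalt_\V^\nRuns$, and $\invmeas[\V]$ is carried by $\V = \bigcup_\idx \V_\idx$; disintegrating over the $\V_\idx$ then gives the exact balance relations $q_\idxalt = \sum_\idx \int_{\V_\idx} \probalt_\V^\nRuns(\state, \V_\idxalt)\,\invmeas[\V](d\state)$ for every $\idxalt \in \indices$. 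Inserting the uniform two-sided bound $\probalt_\V^\nRuns(\state, \V_\idxalt) \asympteq \exp(-\dquasipot_{\idx,\idxalt}/\step \pm \precs/\step)$ from \cref{prop:est_trans_prob}, the probability vector $(q_\idx)_{\idx\in\indices}$ is thus \emph{approximately stationary} for the finite weights $\exp(-\dquasipot_{\idx,\idxalt}/\step)$, up to multiplicative errors $\exp(\pm\precs/\step)$.

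Next I would pass from this approximate balance system to the Freidlin--Wentzell tree formula. By the Markov chain tree theorem, the stationary distribution of any finite irreducible chain on $\indices$ with positive transition probabilities $p_{\idx\idxalt}$ is proportional to $Q_\idx = \sum_{\tree\in\trees_\idx}\prod_{(\idxalt\to\idxaltalt)\in\tree} p_{\idxalt\idxaltalt}$; moreover a Cramer's-rule / $i$-graph perturbation argument, as in \citet[Chap.~6,Lem.~3.2--3.3]{FW98}, shows that a probability vector satisfying the above balance relations only up to factors $\exp(\pm\precs/\step)$ is itself comparable, up to factors $\exp(\pm C\precs/\step)$ with $C = C(\neqcl)$ depending only on the number of components, to $\widehat Q_\idx/\sum_\idxalt\widehat Q_\idxalt$, where $\widehat Q_\idx = \sum_{\tree\in\trees_\idx}\exp\bigl(-\tfrac{1}{\step}\sum_{(\idxalt\to\idxaltalt)\in\tree}\dquasipot_{\idxalt,\idxaltalt}\bigr)$. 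Since $\trees_\idx$ is finite ($\step$-independently many trees) and, by the definition of $\dinvpot_\idx$ in \citet[Chap.~6,\S4]{FW98}, $\min_{\tree\in\trees_\idx}\sum_{(\idxalt\to\idxaltalt)\in\tree}\dquasipot_{\idxalt,\idxaltalt} = \dinvpot_\idx$, one gets $\exp(-\dinvpot_\idx/\step) \le \widehat Q_\idx \le \neqcl^{\neqcl}\exp(-\dinvpot_\idx/\step)$, whence $\step\log\bigl(\widehat Q_\idx/\sum_\idxalt\widehat Q_\idxalt\bigr) = -\dinvpot_\idx + \min_\idxalt\dinvpot_\idxalt + \bigoh(\step)$. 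Combining the three estimates and absorbing the $\bigoh(\step)$ term and the fixed multiple of $\precs$ into a relabelled tolerance (legitimate since $\precs$ was arbitrary and $\neqcl$ is fixed) yields $\invmeas[\V](\V_\idx)\asympteq\exp\bigl(-(\dinvpot_\idx-\min_\idxalt\dinvpot_\idxalt)/\step\pm\precs/\step\bigr)$.

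The main obstacle is precisely this middle step: $\probalt_\V$ lives on the continuous set $\V$, not on the finite set $\indices$, so $(q_\idx)$ is not literally the stationary vector of a finite chain and the tree formula cannot be applied verbatim. One has to show that an approximately stationary probability vector for transition weights all comparable to $\exp(-\dquasipot_{\idx,\idxalt}/\step)$ stays comparable to the genuine tree-formula measure with a multiplicative error controlled \emph{uniformly in $\step$} --- which works because the number of states $\neqcl$, and hence the number of $\idx$-trees, is fixed; this is the content of the comparison lemmas of \citet[Chap.~6]{FW98}, which I would adapt to the present discrete-time, continuous-state setting, using crucially that the estimates of \cref{prop:est_trans_prob} are uniform over each $\V_\idx$ so that the integrals in the balance relations are genuinely sandwiched between the two finite bounds.
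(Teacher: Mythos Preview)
Your proposal is correct and follows essentially the same approach as the paper: invoke the invariance of $\invmeas[\V]$ under $\probalt_\V^\nRuns$, feed in the uniform two-sided transition estimates of \cref{prop:est_trans_prob}, and appeal to the Freidlin--Wentzell tree-sum comparison lemmas \citep[Chap.~6, Lem.~3.1--3.2]{FW98} to obtain the stated bounds on $\invmeas[\V](\V_\idx)$. The paper's proof is terser --- it cites those lemmas directly rather than unpacking the balance relations and the Markov-chain tree theorem --- but the ``obstacle'' you flag (continuous state space) is exactly what those lemmas are built to handle via the uniformity over each $\V_\idx$, so there is no genuine difference in strategy.
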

\begin{proof}
	If $\invmeas[\V]$ is an invariant measure of $\probalt_\V$, then it is an invariant measure of $\probalt_{\nhdaltalt}^\nRuns$ for any $\nRuns$ given by \cref{prop:est_trans_prob}.

	\citet[Chap~.6, Lem.~3.1-3.2]{FW98} combined with \cref{prop:est_trans_prob} then give
\begin{align}
\MoveEqLeft
\exp(- (2\card \indices +2) \precs / \step)
		  \frac{\exp( - \dinvpot(\cpt_\idx) / \step)}
			{\sum_{\idxalt \in \indices} \exp( - \dinvpot(\cpt_\idxalt) / \step)}
			\notag\\
			&\leq 
			\invmeas[\V](\V_\idx)
			\notag\\
			&\leq 
		  \exp((2 \card \indices +2) \precs / \step)
		  \frac{\exp( - \dinvpot(\cpt_\idx) / \step)}
			{\sum_{\idxalt \in \indices} \exp( - \dinvpot(\cpt_\idxalt) / \step)}\,.
		\end{align}
		For $\step$ small enough, it holds that 
		\begin{equation}
			\sum_{\idxalt \in \indices} \exp( - \dinvpot(\cpt_\idxalt) / \step)
			\asympteq
			\exp \parens*{- \min_{\idxalt \in \indices} \dinvpot(\cpt_\idxalt) / \step \pm \precs / \step}\,,
		\end{equation}
		which concludes the proof.
\end{proof}

We now state a result that links invariant measures of $(\curr[\accstate])_\run$ and $\probalt_\V$.
It is a consequence of \citet[Thm.~3.6.5]{doucMarkovChains2018}.

\begin{lemma}
\label{lem:invmeas_from_induced_invmeas}
	There is $\step_0 > 0$ such that, for $0 < \step \leq \step_0$,
	if $(\curr[\accstate])_\run$ has an invariant probability measure $\invmeas$,
	then, for any $\V$ measurable neighborhood of $\crit\obj$,
	we have that $\invmeas(\V) > 0$ and $\invmeas[\V]$, the restriction of ${\invmeas}/{\invmeas(\V)}$ to $\V$ is an invariant measure for the induced chain on $\V$ and, for any measurable set $\event \subset \points$, 
	\begin{equation}
		\frac{\invmeas(\event)}{\invmeas(\V)} =  \int_{\V} \dd \invmeas[\V](\point) \ex_\point \bracks*{
			\sum_{\run = \start}^{\hittime_\V - 1} \oneof{\curr[\accstate] \in \event}
		}\,.
	\end{equation}
\end{lemma}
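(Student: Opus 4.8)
The plan is to recognise this statement as a direct instance of the classical theory of trace (or ``induced'') Markov chains, in the exact form of \citet[Thm.~3.6.5]{doucMarkovChains2018}: once one checks the two structural hypotheses of that theorem — that the accelerated chain $(\curr[\accstate])_{\run\geq\start}$ hits $\V$ almost surely, and that $\invmeas(\V)>0$ — the three conclusions (positivity of $\invmeas(\V)$, $\probalt_\V$-invariance of $\invmeas[\V]$, and the excursion identity) all come out of it. Both hypotheses follow from tail bounds already in hand.

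First I would prove that, for every measurable neighbourhood $\V$ of $\crit\obj$ and every $\state\in\vecspace$, one has $\hittime_\V<\infty$ $\prob_\state$-almost surely. Since $\V$ is a neighbourhood of the \emph{compact} set $\crit\obj$, we may fix an open set $\open$ and a compact set $\bigcpt$ with $\crit\obj\subseteq\open\subseteq\V\cap\bigcpt$, and — after enlarging $\bigcpt$ if necessary — we may also assume that $\bigcpt$ contains the compact set supplied by \cref{lem:est_going_back_to_bigcpt}. For $\state\in\bigcpt$, \cref{lem:est_going_back_to_crit} gives a finite exponential moment $\ex_\state\bracks*{e^{\coef\hittime_\open/\step}}\leq e^{\constA\coef/\step+\constB}$ (for all small $\step,\coef$), hence $\hittime_\V\leq\hittime_\open<\infty$ a.s.; for $\state\notin\bigcpt$, applying \cref{lem:est_going_back_to_bigcpt} with a closed ball containing both $\state$ and the compact set of that lemma gives $\prob_\state(\hittime_\bigcpt>\run)\to0$, so $\hittime_\bigcpt<\infty$ a.s., and the strong Markov property at $\hittime_\bigcpt$ (at which time the chain lies in $\bigcpt$) propagates the finiteness to $\hittime_\V$. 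In particular $\probalt_\V$ and its iterates $\probalt_\V^\nRuns$ are well-defined stochastic kernels on $\V$.

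Next I would show $\invmeas(\V)>0$. Suppose not. As $(\curr[\accstate])_{\run\geq\start}$ is a Markov chain with invariant probability measure $\invmeas$, the measure $\invmeas$ is invariant for each of its $\run$-step transition kernels, so for every $\run\geq\afterstart$ we have $0=\invmeas(\V)=\int\prob_\state(\curr[\accstate]\in\V)\,\invmeas(\dd\state)$, and, the integrand being nonnegative, $\prob_\state(\curr[\accstate]\in\V)=0$ for $\invmeas$-almost every $\state$. Taking the countable union over $\run\geq\afterstart$ of the associated $\invmeas$-null sets yields a single $\invmeas$-null set off which $\prob_\state(\hittime_\V<\infty)\leq\sum_{\run\geq\afterstart}\prob_\state(\curr[\accstate]\in\V)=0$; since $\invmeas$ is a probability measure this set is non-empty, contradicting the previous step. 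Hence $\invmeas(\V)\in(0,1]$, and $\invmeas[\V]$ — the restriction of $\invmeas/\invmeas(\V)$ to $\V$ — is a well-defined probability measure on $\V$.

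With these two facts verified, \citet[Thm.~3.6.5]{doucMarkovChains2018} applies and gives simultaneously that $\invmeas[\V]$ is invariant for the induced kernel $\probalt_\V$ and the Kac-type representation: for every nonnegative measurable $\func\from\points\to\R$,
\[
\frac{\invmeas(\func)}{\invmeas(\V)}=\int_\V\dd\invmeas[\V](\point)\,\ex_\point\bracks*{\sum_{\run=\start}^{\hittime_\V-1}\func(\curr[\accstate])}\eqdot
\]
Taking $\func=\oneof{\argdot\in\event}$ then yields the stated identity. (Almost-sure return from $\V$ alone could instead be deduced from Poincaré recurrence for the stationary chain, but the route above also produces $\invmeas(\V)>0$ at no extra cost, which is why I would use it.) The only genuine difficulty here is the bookkeeping: one must make sure that the \emph{merely measurable} set $\V$ — not assumed open, let alone bounded — is nonetheless reached almost surely from every starting point and carries positive $\invmeas$-mass; all quantitative work has already been done in \cref{lem:est_going_back_to_bigcpt,lem:est_going_back_to_crit}.
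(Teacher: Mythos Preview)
Your proposal is correct and follows essentially the same approach as the paper: both arguments verify that $\hittime_\V<\infty$ $\prob_\state$-almost surely for every $\state\in\vecspace$ (first for $\state$ in a large compact set via \cref{lem:est_going_back_to_crit}, then for arbitrary $\state$ via \cref{lem:est_going_back_to_bigcpt}/\cref{lem:exponential_tail_bounds} and the strong Markov property), and then invoke \citet[Thm.~3.6.5]{doucMarkovChains2018}. The only cosmetic difference is that you spell out the argument for $\invmeas(\V)>0$ explicitly, whereas the paper leaves it to the cited theorem.
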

\begin{proof}
	We invoke the first item of \cref{lem:exponential_tail_bounds}: there exists $\bigcpt \subset \vecspace$ a compact set, $\step_0 > 0$, such that for any $\bigcptalt \subset \vecspace$ compact set such that $\bigcpt \subset \bigcptalt$, there exists $\coef_0  > 0$ such that,
	\begin{equation}
		\forall \step \leq \step_0,\, \state \in \bigcptalt\,,\quad
	\ex_{\state} \bracks*{e^{\frac{\coef_0 \hittime_\bigcpt}{\step}}} < +\infty\,.
	\label{eq:lemma:induced_inv_meas:exponential_tail_bounds}
	\end{equation}
	Without loss of generality, at the potential expense of expanding $\bigcpt$, assume that $\V \subset \bigcpt$.

	By applying  \cref{lem:est_going_back_to_crit} to $\open \gets \V$ and $\bigcpt \gets \bigcpt$, we get that there is some $\step_0,\coef_0, > 0$ such that, for any $\step \leq \step_0$, $\state \in \bigcpt$,
	\begin{equation}
	  \ex_\state \bracks*{e^{\frac{\coef \hittime_\V}{\step}}} < +\infty\,.
	\end{equation}
	In particular, we have that $\prob_{\state}(\hittime_\V < \infty) = 1$ for any $\state \in \bigcpt$, and \emph{a fortiori} for any $\state \in \V$.

	Let us now show that, for any $\state \in \vecspace$, $\prob_{\state}(\hittime_\V < \infty) = 1$.
	Fix $\state \in \vecspace$. By choosing $\bigcptalt$ large enough to contain both $\bigcpt$ and $\point$, 
	\cref{eq:lemma:induced_inv_meas:exponential_tail_bounds} implies that, with $\init = \point$, $\hittime_\bigcpt < \infty$ almost surely.
	Therefore, by the strong Markov property, it holds that,
	
	\begin{equation}
		\prob_{\state}( \hittime_\V < \infty) \geq \inf_{\statealt \in \bigcpt}\prob_{\statealt}( \hittime_\V < \infty) = 1\,.
	\end{equation}

	Therefore the assumptions of \citet[Thm.~3.6.5]{doucMarkovChains2018} are satisfied as well as its first item which yields the result.
\end{proof}

We reach the next proposition, which is the first part of our main result. It is an adaptation of \citet[Thm.~4.1]{FW98} to the discrete time setting.

\begin{proposition}
	\label{prop:est_invmeas}
	For any $\precs > 0$, for any $\V_1, \dots, \V_\neqcl$ measurable neighborhoods of $\eqcl_1, \dots, \eqcl_\neqcl$ small enough, there exists $\step_0 > 0$ such that 
	for any $0 < \step < \step_0$,
	$\invmeas$ invariant probability measure for $(\curr[\accstate])_\run$,
	for any $\idx \in \indices$, 
	\begin{equation}
		\invmeas(\V_\idx) \asympteq \exp \parens*{
			- \frac{\dinvpot(\cpt_\idx) - \min_{\idxalt \in \indices} \dinvpot(\cpt_\idxalt)}{\step}
			\pm \frac{\precs}{\step}
		}\,.
	\end{equation}
\end{proposition}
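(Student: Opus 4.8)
The plan is to reduce the statement to the estimate of \cref{prop:est_induced_invmeas} for the induced chain on a neighborhood of $\crit\obj$, via the regeneration formula of \cref{lem:invmeas_from_induced_invmeas}, and then to pin down the residual normalizing mass $\invmeas(\V)$ with the tail bound of \cref{lem:est_going_back_to_crit}. In the terminology of \cite{FW98}, all the hard work — the transition-probability estimates of \cref{prop:est_trans_prob} and the structure of the induced chain — has already been done; what remains is to glue the pieces together and to control the one quantity that the induced-chain analysis does not see, namely how much invariant mass sits near $\crit\obj$ in the first place.

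First I would fix $\precs>0$, set $\precs'\defeq\precs/2$, and shrink the given neighborhoods $\V_\idx$ of $\eqcl_\idx$ so that they are open, pairwise disjoint, contained in a fixed compact set $\bigcpt$, and small enough for \cref{prop:est_induced_invmeas} (hence \cref{prop:est_trans_prob}) to hold with tolerance $\precs'$; write $\V\defeq\bigcup_{\idx=1}^{\neqcl}\V_\idx$, an open neighborhood of $\crit\obj=\bigcup_\idx\eqcl_\idx$. Let $\invmeas$ be an invariant probability measure of $(\accstate_\run)_{\run\geq\start}$. By \cref{lem:invmeas_from_induced_invmeas}, for all small enough $\step$ we have $\invmeas(\V)>0$, the normalized restriction $\invmeas[\V]$ of $\invmeas/\invmeas(\V)$ to $\V$ is an invariant probability measure for $\probalt_\V$, and for every measurable $\event\subseteq\vecspace$,
\[
\frac{\invmeas(\event)}{\invmeas(\V)}
  = \int_\V \ex_\point\biggl[\,\sum_{\run=\start}^{\hittime_\V-1}\oneof{\accstate_\run\in\event}\biggr]\dd\invmeas[\V](\point).
\]

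I would then evaluate this identity in two ways. For $\event=\V_\idx\subseteq\V$: since $\hittime_\V\geq\afterstart$ and, by the very definition of $\hittime_\V$, $\accstate_\run\notin\V$ for all $\afterstart\leq\run<\hittime_\V$, the inner sum collapses to the single term $\oneof{\point\in\V_\idx}$, so $\invmeas(\V_\idx)=\invmeas(\V)\,\invmeas[\V](\V_\idx)$. For $\event=\vecspace$: the inner sum equals $\hittime_\V$, so $\invmeas(\V)^{-1}=\int_\V\ex_\point[\hittime_\V]\dd\invmeas[\V](\point)$. The bound $\hittime_\V\geq\afterstart$ gives $\invmeas(\V)\leq1$, while \cref{lem:est_going_back_to_crit} applied with $\open\gets\V$ furnishes $\coef_0,\constA,\constB>0$ and $\step_0>0$ such that $\ex_\point[e^{\coef_0\hittime_\V/\step}]\leq e^{\constA\coef_0/\step+\constB}$ for all $\point\in\bigcpt\supseteq\V$ and $\step\leq\step_0$; Jensen's inequality then yields $\ex_\point[\hittime_\V]\leq\constA+\step\constB/\coef_0\leq\constA+\step_0\constB/\coef_0$, hence $\invmeas(\V)\geq(\constA+\step_0\constB/\coef_0)^{-1}>0$. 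Thus $\invmeas(\V)$ stays in a fixed interval $[\const,1]$ independent of $\step$, so $\invmeas(\V)\asympteq\exp(\pm\precs'/\step)$ once $\step$ is small enough.

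Finally, combining these identities with \cref{prop:est_induced_invmeas} applied to $\invmeas[\V]$,
\[
\invmeas(\V_\idx)
  = \invmeas(\V)\,\invmeas[\V](\V_\idx)
  \asympteq \exp\biggl(-\frac{\dinvpot(\cpt_\idx)-\min_{\idxalt\in\indices}\dinvpot(\cpt_\idxalt)}{\step}\pm\frac{2\precs'}{\step}\biggr),
\]
and since $2\precs'=\precs$ this is exactly the claimed bound. The step I expect to require the most care is not any single estimate but the bookkeeping of this reduction: checking that the regeneration formula genuinely collapses for sets contained in $\V$ (because excursions leaving $\V$ never meet any $\V_\idx$), and that the normalizing mass $\invmeas(\V)$ is $\Theta(1)$ in $\step$ — both of which rest on \cref{lem:est_going_back_to_crit}, i.e.\ on the fact that the accelerated process returns to a neighborhood of $\crit\obj$ within $\bigoh(1)$ expected accelerated steps.
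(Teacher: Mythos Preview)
Your proposal is correct and follows essentially the same approach as the paper: both use the regeneration formula of \cref{lem:invmeas_from_induced_invmeas} to write $\invmeas(\V_\idx)=\invmeas(\V)\,\invmeas[\V](\V_\idx)$ via the observation that the excursion sum collapses on subsets of $\V$, invoke \cref{prop:est_induced_invmeas} for $\invmeas[\V](\V_\idx)$, and then pin down the normalizing mass using the uniform bound on $\ex_\point[\hittime_\V]$ from \cref{lem:est_going_back_to_crit}. The only cosmetic difference is that the paper works with the unnormalized measure $\measalt$ and bounds $\measalt(\vecspace)$ directly, whereas you bound $\invmeas(\V)=1/\measalt(\vecspace)$ via Jensen's inequality; these are the same computation.
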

\begin{proof}
	Let us first provide estimates for the unnormalized measure defined by, for any measurable set $\event$,
	\begin{equation}
		\measalt(\event) \defeq \int_{\event} \dd \invmeas[\V](\point) \ex_\point \bracks*{
			\sum_{\run = \start}^{\hittime_\V - 1} \oneof{\curr[\accstate] \in \event}
		}\,.
	\end{equation}

	By definition of $\hittime_\V$, in the sequence of points $\accstate_\start, \dots, \accstate_{\hittime_\V-1}$, only $\accstate_\start$ can be in $\V$.
	Therefore, for any $\idx, \idxalt \in \indices$, $\state \in \V_\idxalt$,
	\begin{equation}
		\ex_{\state} \bracks*{
			\sum_{\run = \start}^{\hittime_\V - 1} \oneof{\curr[\accstate] \in \V_\idx}
		}
		= 
		\begin{cases}
			1 & \text{if } \idx = \idxalt\,,\\
			0 & \text{otherwise}\,.
		\end{cases}
	\end{equation}

	Thus, we have that,
	\begin{align}
		\measalt(\V_\idx) &= \invmeas[\V](\V_\idx)
		\notag\\
						  &\asympteq \exp \parens*{
			- \frac{\dinvpot(\cpt_\idx) - \min_{\idxalt \in \indices} \dinvpot(\cpt_\idxalt)}{\step}
			\pm \frac{\precs}{\step}
		}\,,
	\end{align}
	by \cref{prop:est_induced_invmeas}.
	It now remains to estimate the normalization constant $\measalt(\vecspace)$.
	On the one hand, we have that
	\begin{align}
		\measalt(\vecspace) &\geq \max_{\idx \in \indices} \measalt(\V_\idx)
		\notag\\
							&\geq \max_{\idx \in \indices} \exp \parens*{
			- \frac{\dinvpot(\cpt_\idx) - \min_{\idxalt \in \indices} \dinvpot(\cpt_\idxalt)}{\step}
			- \frac{\precs}{\step}
			} = \exp \parens*{- \frac{\precs}{\step}}\,.
	\end{align}
	On the other hand,  by \cref{lem:est_going_back_to_crit} applied with $\open \gets \V$ and $\bigcpt \gets \cl{\V}$ (choosing $\V$ small enough so that is bounded), the quantity
	 \begin{equation}
		 \const \defeq \sup \setdef{\ex_{\state}[\hittime_\V]}{\state \in \V,\, 0 < \step \leq \step_0}
	 \end{equation}
	 is finite.
	Therefore, we have that
	\begin{align}
		\measalt(\vecspace) &= \int_{\vecspace} \dd \invmeas[\V](\point) \ex_\point
		\bracks*{
			\hittime_\V
		}
		\notag\\
		&\leq \const \int_{\vecspace} \dd \invmeas[\V](\point) = \const\,,
	\end{align} 
	which, along with choosing $\step_0$ small enough so that $\const \leq \exp(\precs / \step)$, concludes the proof.
\end{proof}

We will need the following lemma to prove the second part of our main result.
\begin{lemma}
	\label{lem:hitting_time_domain}
	For any $\precs > 0$, for any $\V_1, \dots, \V_\neqcl$ measurable neighborhoods of $\eqcl_1, \dots, \eqcl_\neqcl$ small enough, $\domain$ measurable set, $\margin_\domain > 0$,
	there exists $\step_0 > 0$ such that, 
	for any $0 < \step < \step_0$,
	for any $\idx \in \indices$, $\state \in \V_\idx$,
	\begin{equation}
		\prob_\state \parens*{\hittime_{\domain_{-\margin_\domain}} <\hittime_\V} \leq \exp \parens*{- \frac{\dquasipot(\cpt_\idx, \domain) - \precs}{\step}}\,,
	\end{equation}
	where
	\begin{equation}
		\domain_{-\margin_\domain} \defeq \setdef*{\statealt \in \domain}{d(\statealt, \vecspace \setminus \domain) \geq \margin_\domain}\,.
	\end{equation}
\end{lemma}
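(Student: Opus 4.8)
The plan is to follow the template of the proof of \cref{lem:est_not_going_to_bigcpt}: split the event $\{\hittime_{\domain_{-\margin_\domain}}<\hittime_\V\}$ according to whether $\domain_{-\margin_\domain}$ is reached within a fixed number $\nRuns$ of accelerated steps or not, control the ``late'' alternative by the return-to-criticals tail bound \cref{lem:est_going_back_to_crit}, and control the ``early'' alternative by the large-deviations upper bound of \cref{cor:ldp_iterates_full}, after converting ``$\accstate$ reaches $\domain_{-\margin_\domain}$'' into ``$\accstate$ stays uniformly far from every discrete path of action below $\dquasipot(\cpt_\idx,\domain)-\precs/2$''. Fix $\precs>0$ and set $d^\ast\defeq\dquasipot(\cpt_\idx,\domain)$. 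We may assume $\precs\le d^\ast<\infty$: if $d^\ast<\precs$ the asserted bound exceeds $1$ and there is nothing to prove, and the case $d^\ast=\infty$ is handled by the same argument with the action threshold sent to infinity (after shrinking $\V_\idx$ so that, by the estimate below, no finite-action path from $\V_\idx$ to $\domain$ exists, whence \cref{cor:ldp_iterates_full} forces probability $0$). Since the equivalence classes are exactly the connected components of $\crit\obj$ (\cref{corollary:equivalence_classes_are_connected_components_of_crit}), $\rate$ is finite and continuous near $(\crit\obj)^2$ (\cref{lem:continuity_rate}) with $\rate(\point,\point)=0$ for $\point\in\crit\obj$ (\cref{lem:basic_prop_map}); hence we may take the $\V_\idx$ small enough that every $\state\in\V_\idx$ admits $\point\in\eqcl_\idx$ with $\rate(\point,\state)<\precs/4$. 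By subadditivity of $\dquasipot$ and $\dquasipot\le\rate$ (\cref{def:dquasipot}), any discrete path $\dpth\in\vecspace^{\run}$ with $\dpth_\start=\state\in\V_\idx$ and $\dpth_{\run-1}\in\domain$ then satisfies $\daction_\run(\dpth)\ge\dquasipot(\state,\dpth_{\run-1})\ge\dquasipot(\point,\dpth_{\run-1})-\rate(\point,\state)\ge d^\ast-\precs/4$.

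Next I would truncate in time. Applying \cref{lem:est_going_back_to_crit} with $\open\gets\V\defeq\bigcup_\idx\V_\idx$ (a neighbourhood of $\crit\obj$, taken bounded) and with $\bigcpt$ any compact set containing $\cl(\V)$ yields $\step_1,\coef_0,\constA,\constB>0$ with $\ex_\state[e^{\coef_0\hittime_\V/\step}]\le e^{\constA\coef_0/\step+\constB}$ for $\state\in\bigcpt$ and $\step\le\step_1$. Markov's inequality gives $\prob_\state(\hittime_\V\ge\nRuns)\le e^{\constB}e^{-\coef_0(\nRuns-\constA)/\step}$, so I fix once and for all an integer $\nRuns$ with $\coef_0(\nRuns-\constA)-\step_1\constB\ge d^\ast$, which makes $\prob_\state(\hittime_\V\ge\nRuns)\le e^{-d^\ast/\step}$ for all $\state\in\bigcpt$ and $\step\le\step_1$. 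Since on $\{\hittime_{\domain_{-\margin_\domain}}<\hittime_\V\}\cap\{\hittime_{\domain_{-\margin_\domain}}\ge\nRuns\}$ we have $\hittime_\V\ge\nRuns$,
\[
\prob_\state(\hittime_{\domain_{-\margin_\domain}}<\hittime_\V)\le\sum_{\runB=1}^{\nRuns-1}\prob_\state(\hittime_{\domain_{-\margin_\domain}}=\runB)+e^{-d^\ast/\step}.
\]

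For the early part, on $\{\hittime_{\domain_{-\margin_\domain}}=\runB\}$ with $1\le\runB<\nRuns$ the iterate $\accstate_\runB$ lies in $\domain_{-\margin_\domain}$, so no $\dpth\in\pths_{\runB+1}^{\{\state\}}(d^\ast-\precs/2)$ can satisfy $\dist_{\runB+1}(\accstate,\dpth)\le\margin_\domain/2$: such a $\dpth$ would start in $\V_\idx$ and end within $\margin_\domain/2<\margin_\domain\le d(\accstate_\runB,\vecspace\setminus\domain)$ of $\accstate_\runB$, hence inside $\domain$, forcing $\daction_{\runB+1}(\dpth)\ge d^\ast-\precs/4$ by the previous paragraph, contradicting $\dpth\in\pths_{\runB+1}^{\{\state\}}(d^\ast-\precs/2)$. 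Therefore $\prob_\state(\hittime_{\domain_{-\margin_\domain}}=\runB)\le\prob_\state\big(\dist_{\runB+1}(\accstate,\pths_{\runB+1}^{\{\state\}}(d^\ast-\precs/2))>\margin_\domain/2\big)\le e^{-(d^\ast-3\precs/4)/\step}$ by \cref{cor:ldp_iterates_full}, applied uniformly over $\runB+1\le\nRuns$ with $\cpt\gets\bigcpt$, $\level\gets d^\ast-\precs/2>0$, $\margin\gets\margin_\domain/2$ and tolerance $\precs/4$. Summing and using the display above,
\[
\prob_\state(\hittime_{\domain_{-\margin_\domain}}<\hittime_\V)\le(\nRuns-1)e^{-(d^\ast-3\precs/4)/\step}+e^{-d^\ast/\step}\le\nRuns\,e^{-(d^\ast-3\precs/4)/\step}\le e^{-(d^\ast-\precs)/\step},
\]
the last step holding once $\step$ is small enough that $\nRuns\le e^{(\precs/4)/\step}$. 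Taking $\step_0$ to be the minimum of all thresholds produced above completes the argument.

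The main obstacle is the translation in the third paragraph: turning the geometric event ``$\accstate$ reaches $\domain_{-\margin_\domain}$'' into the quantitative statement ``$\accstate$ is uniformly $\margin_\domain/2$-far from every discrete path of action $<\dquasipot(\cpt_\idx,\domain)-\precs/2$'' requires both the safety margin $\margin_\domain$ baked into $\domain_{-\margin_\domain}$ and the continuity of $\rate$ near $\crit\obj$ (to absorb the gap between the true starting point $\state\in\V_\idx$ and the component $\eqcl_\idx$ that enters the definition of $\dquasipot(\cpt_\idx,\domain)$). By contrast, the unboundedness of $\vecspace$ — which could in principle let the process wander arbitrarily long before returning to $\V$ — is already neutralised by the exponential return bound of \cref{lem:est_going_back_to_crit}, and so affects only the bookkeeping choice of the truncation horizon $\nRuns$.
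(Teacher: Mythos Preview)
Your proposal is correct and follows essentially the same approach as the paper's proof: both split the event by whether the return time $\hittime_\V$ exceeds a fixed horizon $\nRuns$, handle the late alternative via \cref{lem:est_going_back_to_crit}, and handle the early alternative by showing that any low-action discrete path starting near $\eqcl_\idx$ cannot terminate in $\domain$ (using the $\margin_\domain$ safety margin and the continuity of $\rate$ near $\crit\obj$ to prepend a cheap step from $\eqcl_\idx$), then invoking \cref{cor:ldp_iterates_full}. The only cosmetic differences are that the paper packages the ``$\rate(\point,\state)<\precs$'' fact via the $\W_\precs$-neighborhoods of \cref{lem:W_nbd} rather than citing \cref{lem:continuity_rate} directly, and treats the early event $\{\hittime_{\domain_{-\margin_\domain}}<\hittime_\V,\,\hittime_\V<\nRuns\}$ as a single $\nRuns$-step path rather than summing over the hitting index $\runB$, thereby avoiding the harmless extra factor $\nRuns$.
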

\begin{proof}
	The requirement on the $\V_\idx$ are the same in the proof of \cref{lem:est_trans_prob_ub} but we restate them here for completeness.

   Assume that, without loss of generality, $\precs$ is small enough so that \cref{lem:W_nbd} with $\radius \gets \precs$ can be applied to every $\eqcl_\idx$, $\idx =1, \dots, \neqcl$. Denote by $\W_\idx$, $\idx = 1, \dots, \neqcl$ the corresponding neighborhoods of $\eqcl_\idx$.
	Since these $\W_\idx$'s are neighborhoods of the $\eqcl_\idx$'s, there exists $\margin  > 0$ such that $\U_{2 \margin} (\eqcl_\idx) \subset \W_\idx$ for all $\idx = 1, \dots, \neqcl$.
	Require then that $\V_\idx$ be contained in $\U_\margin(\eqcl_\idx)$ so that 
	$\U_\margin(\cl \V_\idx) \subset \W_\idx$ for all $\idx = 1, \dots, \neqcl$.
	Moreover, assume that $\margin > 0$ is small enough so that the neighborhoods $\U_\margin(\eqcl_\idx)$, $\idx = 1, \dots, \neqcl$ are pairwise disjoint and that $\margin \leq  \margin_\domain$.
Define $0 < \marginalt \leq \margin$ such that $\U_\marginalt(\eqcl_\idx)$ is contained in $\V_\idx$ for all $\idx = 1, \dots, \neqcl$.

Fix $\idx \in \indices$ and consider $\dpth \in (\vecspace)^\nRuns$ such that $\dpth_\start \in \U_\marginalt(\cl \V_\idx)$, $\dpth_{\nRuns - 1} \in \U_\marginalt(\domain_{-\margin_\domain})$.
By construction, $\dpth_\start \in \U_\margin(\V_\idx) \subset \W_\idx$. Therefore, there exists $\state \in \eqcl_\idx$ such that $\rate(\state, \dpth_\start) < \precs$.
Moreover, since $\marginalt \leq \margin \leq \margin_\domain$, $\U_\marginalt(\domain_{-\margin_\domain}) \subset \domain$ so that $\dpth_{\nRuns - 1} \in \domain$.

Define $\dpthalt \defeq (\state, \dpth_\start, \dots, \dpth_{\nRuns - 1})$ which is a path from $\eqcl_\idx$ to $\domain$ so that
\begin{equation}
	\daction_\nRuns(\dpth) \geq
	\daction_{\nRuns + 1}(\dpthalt)  - \precs
	\geq
	\dquasipot \parens*{\eqcl_\idx, \domain} - \precs\,.
\end{equation}

We now follow the same outline as for the proof of \cref{lem:est_trans_prob_ub}.
Fix $\state \in \V_\idx$. For any $\nRuns \geq \start$, we have that 
\begin{align}
	\prob_\state \parens*{\hittime_{\domain_{-\margin_\domain}} < \hittime_\V}
	&\leq 
	\prob_\state \parens*{\hittime_{\domain_{-\margin_\domain}} < \hittime_\V\,, \hittime_\V < \nRuns} +
	\prob_\state \parens*{\hittime_\V \geq \nRuns}\,.
\end{align}
For some $\nRuns$ large enough, \cref{lem:est_going_back_to_crit} yields
\begin{equation}
	\prob_\state \parens*{\hittime_\V \geq \nRuns} \leq \exp \parens*{- \frac{\dquasipot\parens*{\eqcl_\idx, \domain} - \precs}{\step}}\,.
\end{equation}
We now bound $\prob_\state \parens*{\hittime_{\domain_{-\margin_\domain}} < \hittime_\V\,, \hittime_\V < \nRuns}$ for this choice of $\nRuns$.
For this, it suffices to note that
$\hittime_{\domain_{-\margin_\domain}} < \hittime_\V\,, \hittime_\V < \nRuns$ implies that $\dist_{\nRuns}\parens*{\accstate, \pths_{\nRuns}^{\setof{\state}}\parens*{\dpth - 2 \precs}} > \half[\marginalt]$. Then, applying \cref{cor:ldp_iterates_full} yields
\begin{equation}
	\prob_\state \parens*{\hittime_{\domain_{-\margin_\domain}} < \hittime_\V\,, \hittime_\V < \nRuns} \leq \exp \parens*{- \frac{\dquasipot\parens*{\eqcl_\idx, \domain} - 3\precs}{\step}}\,,
\end{equation}
which concludes the proof.
\end{proof}

We can use this lemma to upper-bound the $\invmeas(\domain)$.
\begin{lemma}
	\label{lem:ub_invmeas_domain}
	 For any $\precs > 0$, 
     \revise{any measurable set $\domain$ with $\bd \domain$  bounded,} there exists $\step_0 > 0$ such that 
	for any $0 < \step < \step_0$,
	$\invmeas$ invariant probability measure for $(\curr[\accstate])_\run$,
	for any $\idx \in \indices$, 
	\begin{equation}
		\invmeas(\domain_{-\margin_\domain})
		\leq 
		\exp \parens*{
			- \frac{
				\min_{\idx \in \indices}
				\braces*{
					\dinvpot(\cpt_\idx)
					+
					\dquasipot\parens*{\eqcl_\idx, \domain} 
				}
				- \min_{\idx \in \indices} \dinvpot(\cpt_\idx)
			}{\step}
			+ \frac{\precs}{\step}
		}\,,
	\end{equation}
	where
	\begin{equation}
		\domain_{-\margin_\domain} \defeq \setdef*{\statealt \in \vecspace}{d(\statealt, \vecspace \setminus \domain) \geq \margin_\domain}\,.
	\end{equation}
\end{lemma}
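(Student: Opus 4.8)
The plan is to relate $\invmeas$ to the invariant measure $\invmeas[\V]$ of the induced chain on a small neighbourhood $\V=\bigcup_{\idx}\V_\idx$ of $\crit\obj$ via \cref{lem:invmeas_from_induced_invmeas}, and then to estimate the two resulting factors separately: $\invmeas[\V](\V_\idx)$ through \cref{prop:est_induced_invmeas}, and the expected number of visits of the accelerated chain to $\domain_{-\margin_\domain}$ during one excursion out of $\V$ through \cref{lem:hitting_time_domain}. Concretely, fix $\precs>0$, pick the $\V_\idx$ small enough for \cref{prop:est_induced_invmeas,lem:hitting_time_domain,lem:invmeas_from_induced_invmeas} to apply, and fix a compact set $\bigcpt$ with $\cl\V\cup\domain\subseteq\bigcpt$. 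Since $\invmeas$ is a probability measure, $\invmeas(\V)\leq 1$, so \cref{lem:invmeas_from_induced_invmeas} gives
\[
\invmeas(\domain_{-\margin_\domain})
	\leq \sum_{\idx\in\indices}\invmeas[\V](\V_\idx)\,\sup_{\point\in\V_\idx}\ex_\point\bracks*{\sum_{\run=\start}^{\hittime_\V-1}\oneof{\curr[\accstate]\in\domain_{-\margin_\domain}}}.
\]

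Next I would bound the occupation count for $\point\in\V_\idx$. Writing $\hittime\defeq\hittime_{\domain_{-\margin_\domain}}$, one has the pointwise bound $\sum_{\run=\start}^{\hittime_\V-1}\oneof{\curr[\accstate]\in\domain_{-\margin_\domain}}\leq\oneof{\hittime<\hittime_\V}(\hittime_\V-\hittime)$ — up to a single time-$0$ term, which is harmless after further shrinking the $\V_\idx$, since then $\V_\idx\cap\domain_{-\margin_\domain}\neq\emptyset$ already forces $\dquasipot(\cpt_\idx,\domain)\leq\precs$ by the regularity of $\dquasipot$ near $\crit\obj$ — because no visit to $\domain_{-\margin_\domain}$ occurs strictly before $\hittime$. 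Applying the strong Markov property at $\hittime$ (finite on $\{\hittime<\hittime_\V\}$, and landing in $\domain_{-\margin_\domain}\subseteq\bigcpt$) yields
\[
\ex_\point\bracks*{\sum_{\run=\start}^{\hittime_\V-1}\oneof{\curr[\accstate]\in\domain_{-\margin_\domain}}}
	\leq \prob_\point\parens*{\hittime<\hittime_\V}\,\sup_{\statealt\in\bigcpt}\ex_\statealt[\hittime_\V].
\]
Here \cref{lem:hitting_time_domain} controls the first factor, $\prob_\point(\hittime<\hittime_\V)\leq\exp(-(\dquasipot(\cpt_\idx,\domain)-\precs)/\step)$, while \cref{lem:est_going_back_to_crit} applied with $\open\gets\V$ and the compact set $\bigcpt$ gives, for $\coef$ small, $\sup_{\statealt\in\bigcpt}\ex_\statealt[e^{\coef\hittime_\V/\step}]\leq e^{\constA\coef/\step+\constB}$, hence $\sup_{\statealt\in\bigcpt}\ex_\statealt[\hittime_\V]\leq\tfrac{\step}{\coef}e^{\constA\coef/\step+\constB}\leq e^{\precs/\step}$ once $\coef$ and $\step$ are small. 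Combining, the occupation count is at most $\exp(-(\dquasipot(\cpt_\idx,\domain)-2\precs)/\step)$ for every $\point\in\V_\idx$.

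Finally I would assemble the pieces: inserting this bound together with $\invmeas[\V](\V_\idx)\leq\exp(-(\dinvpot(\cpt_\idx)-\min_{\idxalt}\dinvpot(\cpt_\idxalt)-\precs)/\step)$ from \cref{prop:est_induced_invmeas}, summing over the $\neqcl$ components and bounding $\neqcl\leq e^{\precs/\step}$, one obtains
\[
\invmeas(\domain_{-\margin_\domain})
	\leq \exp\parens*{-\frac{\min_{\idx\in\indices}\{\dinvpot(\cpt_\idx)+\dquasipot(\cpt_\idx,\domain)\}-\min_{\idxalt\in\indices}\dinvpot(\cpt_\idxalt)-4\precs}{\step}},
\]
which is the claim after relabelling $4\precs$ as $\precs$. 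The main obstacle here is not conceptual but is purely a matter of bookkeeping: keeping track of the several $e^{\pm\precs/\step}$ slack factors and of the sub-exponential prefactors ($\neqcl$, $\step/\coef$, $e^{\constB}$), and ensuring that the uniform exponential-moment bound on $\hittime_\V$ from \cref{lem:est_going_back_to_crit} may be taken over a compact set large enough to contain $\domain$, so that the strong-Markov reduction of the occupation count at $\hittime_{\domain_{-\margin_\domain}}$ is legitimate and the exponent of the final bound comes out exactly as $\min_{\idx}\{\dinvpot(\cpt_\idx)+\dquasipot(\cpt_\idx,\domain)\}-\min_{\idxalt}\dinvpot(\cpt_\idxalt)$.
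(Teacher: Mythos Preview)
Your proposal is correct and follows essentially the same approach as the paper: relate $\invmeas$ to $\invmeas[\V]$ via \cref{lem:invmeas_from_induced_invmeas}, bound the excursion occupation of $\domain_{-\margin_\domain}$ by combining the strong Markov property at $\hittime_{\domain_{-\margin_\domain}}$ with \cref{lem:hitting_time_domain} and the uniform moment bound on $\hittime_\V$ from \cref{lem:est_going_back_to_crit}, and then weight by $\invmeas[\V](\V_\idx)$ via \cref{prop:est_induced_invmeas}. Your treatment is in fact slightly tidier than the paper's in two places: you invoke $\invmeas(\V)\leq 1$ directly rather than appealing to the normalization-constant estimate from \cref{prop:est_invmeas}, and you explicitly address the time-$0$ contribution (which the paper glosses over).
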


\begin{proof}
	Using $\V_1, \dots, \V_\neqcl$ measurable neighborhoods of $\eqcl_1, \dots, \eqcl_\neqcl$ small enough given by  \cref{lem:hitting_time_domain}, we provide an estimate for the weight of $\domain$ for the unnormalized measure $\measalt$ defined in the proof of \cref{prop:est_invmeas}, \ie for
	\begin{equation}
		\measalt(\domain_{-\margin_\domain}) \defeq \int_{\V} \dd \invmeas[\V](\point) \ex_\point \bracks*{
			\sum_{\run = \start}^{\hittime_\V - 1} \oneof{\curr[\accstate] \in \domain_{-\margin_\domain}}
		}\,,
	\end{equation}
	and the result will follow from the estimate on the normalization constant $\measalt(\vecspace)$ obtained in the proof of \cref{prop:est_invmeas}.

    \revise{
        Since $\bd \domain$ is bounded, $\bd \domain_{-\margin_\domain}$ is also bounded. 
        Therefore, by \cref{lem:growth_iterates}, there exists $\bigcpt$ compact set such that, almost surely, if $\accstate_{\run-1} \notin \domain_{-\margin_\domain}$ and $\accstate_{\run} \in \domain_{-\margin_\domain}$, then $\accstate_{\run} \in \bigcpt$.
        Without loss of generality, we can assume that $\bigcpt$ contains $\cl \V$ (choosing $\V$ small enough so that it is bounded if necessary).
    }

    By \cref{lem:est_going_back_to_crit} applied with $\open \gets \V$ and \revise{$\bigcpt \gets \bigcpt$}, the quantity
	 \begin{equation}
         \const \defeq \sup \setdef{\ex_{\state}[\hittime_\V]}{\state \in \revise{\bigcpt},\, 0 < \step \leq \step_0}
	 \end{equation}
	 is finite.

	Fix $\idx \in \indices$ and $\state \in \V_\idx$. If $\hittime_{\domain_{-\margin_\domain}} \geq \hittime_\V$, then $\sum_{\run = \start}^{\hittime_\V - 1} \oneof{\curr[\accstate] \in \domain_{-\margin_\domain}}$ would be 0 so, we have that
	\begin{align}
		\ex_\point \bracks*{
			\sum_{\run = \start}^{\hittime_\V - 1} \oneof{\curr[\accstate] \in \domain_{-\margin_\domain}}
		}
	&=
	\ex_\point \bracks*{
		\oneof{\hittime_{\domain_{-\margin_\domain}} < \hittime_\V}
			\sum_{\run = \start}^{\hittime_\V - 1} \oneof{\curr[\accstate] \in \domain_{-\margin_\domain}}
		}\notag\\
	&\leq 
	\ex_\point \bracks*{
		\oneof{\hittime_{\domain_{-\margin_\domain}} < \hittime_\V}
		\hittime_\V
	}\notag\\
	&= 
	\ex_\point \bracks*{
		\oneof{\hittime_{\domain_{-\margin_\domain}} < \hittime_\V}
		\parens*{
			\hittime_{\domain_{-\margin_\domain}} +
			\ex_{\accstate_{\hittime_{\domain_{-\margin_\domain}}}} \bracks*{
			\hittime_\V
	}
	}
}\,,
	\end{align}
	where we used the strong Markov property. Bounding $\hittime_{\domain_{-\margin_\domain}}$ by $\hittime_\V$ on the event $\{\hittime_{\domain_{-\margin_\domain}} < \hittime_\V\}$, we obtain that
	\begin{align}
		\ex_\point \bracks*{
			\sum_{\run = \start}^{\hittime_\V - 1} \oneof{\curr[\accstate] \in \domain_{-\margin_\domain}}
		}
		&\leq 
		\ex_\point \bracks*{
			\oneof{\hittime_{\domain_{-\margin_\domain}} < \hittime_\V}
			\parens*{
				\hittime_\V +
                \ex_{\accstate_{\hittime_{\revise{{\domain_{-\margin_\domain}}}}}} \bracks*{
				\hittime_\V
		}
		}
	}\notag\\
	&\leq 2 \const \prob_\state \parens*{\hittime_{\domain_{-\margin_\domain}} < \hittime_\V}
	\notag\\
	&\leq 2 \const \exp \parens*{- \frac{\dquasipot\parens*{\eqcl_\idx, \domain} - \precs}{\step}}\,,
	\end{align}
    \revise{where we used that $\accstate_{\hittime_{\domain_{-\margin_\domain}}} \in \bigcpt$ almost surely on the event $\{\hittime_{\domain_{-\margin_\domain}} < \hittime_\V\}$ and the definition of $\const$ for the second inequality and} 
	where we invoked \cref{lem:hitting_time_domain} for the last inequality.

	Combining this bound and \cref{prop:est_induced_invmeas}, we obtain that
	\begin{align}
		\measalt(\domain_{-\margin_\domain}) 
		&\leq 
		2 \const \sum_{\idx \in \indices} \invmeas[\V](\V_\idx) \exp \parens*{- \frac{\dquasipot\parens*{\eqcl_\idx, \domain} - \precs}{\step}}
		\notag\\
		&\leq 
		2 \const \card \indices
 \exp \parens*{
			- \frac{
				\min_{\idx \in \indices}
				\braces*{
					\dinvpot(\cpt_\idx)
					+
					\dquasipot\parens*{\eqcl_\idx, \domain} 
				}
				- \min_{\idx \in \indices} \dinvpot(\cpt_\idx)
			}{\step}
			+ \frac{2\precs}{\step}
		}\,,
	\end{align}
	which concludes the proof.
\end{proof}


\subsection{Convergence and stability}
\label{app:conv_stab}

Let us first begin by showing that, for every initial point, the flow of $\obj$ converges to one of the $\eqcl_\idx$.

\begin{lemma}
    \label{lemma:cv_flow}
    For any $\state \in \vecspace$, there is $\idx \in \indices$ such that
    \begin{equation}
        \lim_{\time \to +\infty} d(\flowmap_\time(\state), \eqcl_\idx) = 0.
    \end{equation}
\end{lemma}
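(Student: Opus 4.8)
The plan is to run the classical LaSalle / $\omega$-limit-set argument for gradient flows, using the coercivity of $\obj$ to confine the orbit to a compact set.

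Fix $\state \in \vecspace$. First I would record two elementary facts. On the one hand, $\time \mapsto \obj(\flowof{\time}{\state})$ is non-increasing, since its derivative is $-\norm{\grad\obj(\flowof{\time}{\state})}^{2} \leq 0$. On the other hand, $\obj$ is bounded below: by coercivity (\cref{asm:obj-weak}), every sublevel set $\setdef{\point \in \vecspace}{\obj(\point) \leq c}$ is bounded, hence compact by continuity of $\obj$, so $\obj$ attains its infimum on the sublevel set at height $\obj(\state)$. Combining the two, $\obj(\flowof{\time}{\state})$ decreases to some limit $\ell \in \R$, and — crucially — the whole forward orbit $\setof{\flowof{\time}{\state} : \time \geq 0}$ stays in the compact set $\setdef{\point}{\obj(\point) \leq \obj(\state)}$.

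Next I would introduce the $\omega$-limit set
\begin{equation}
\omega(\state) \defeq \bigcap_{\horizon \geq 0} \cl\parens*{\setof{\flowof{\time}{\state} : \time \geq \horizon}}\,.
\end{equation}
Since the orbit is precompact, $\omega(\state)$ is a nested intersection of non-empty compact sets, hence non-empty and compact; it is connected, being a nested intersection of the compact connected sets $\cl(\setof{\flowof{\time}{\state} : \time \geq \horizon})$ (each the closure of the continuous image of an interval); and it is invariant under $\flowmap$, by the semigroup property of the flow and the continuity of $\flowmap$ in its initial condition (\cref{lem:basic_prop_flow}). I would then show $\omega(\state) \subset \crit\obj$: given $\pointalt \in \omega(\state)$, pick $\time_{n} \to \infty$ with $\flowof{\time_{n}}{\state} \to \pointalt$; continuity of $\obj$ gives $\obj(\pointalt) = \ell$, and for every $\timealt \geq 0$ the point $\flowof{\timealt}{\pointalt} = \lim_{n} \flowof{\timealt + \time_{n}}{\state}$ again lies in $\omega(\state)$, so $\obj \equiv \ell$ along $\timealt \mapsto \flowof{\timealt}{\pointalt}$; differentiating this at $\timealt = 0$ forces $\norm{\grad\obj(\pointalt)}^{2} = 0$, i.e.\ $\pointalt \in \crit\obj$.

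Finally, $\omega(\state)$ is a non-empty \emph{connected} subset of $\crit\obj = \bigcup_{\idx \in \indices} \eqcl_\idx$, and by \cref{corollary:equivalence_classes_are_connected_components_of_crit} the $\eqcl_\idx$ are precisely the connected components of $\crit\obj$ (hence relatively open and closed in $\crit\obj$); therefore $\omega(\state)$ lies entirely in a single $\eqcl_\idx$. To close, I would invoke the standard fact that $d(\flowof{\time}{\state}, \omega(\state)) \to 0$ as $\time \to +\infty$ — otherwise some subsequence $\flowof{\time_{n}}{\state}$ would stay a fixed distance away from $\omega(\state)$ while, by precompactness, admitting a further subsequence converging to a point of $\omega(\state)$, a contradiction — which yields $d(\flowof{\time}{\state}, \eqcl_\idx) \leq d(\flowof{\time}{\state}, \omega(\state)) \to 0$, as desired. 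I do not anticipate any real obstacle: this is a textbook gradient-flow convergence argument, and every ingredient (global existence and continuity of $\flowmap$, coercivity of $\obj$, finiteness and connectedness of the components of $\crit\obj$) is already available; the only step requiring a bit of care is the confinement of the orbit to a compact set, which is exactly what coercivity provides.
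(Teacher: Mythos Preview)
Your proof is correct and follows essentially the same approach as the paper: confine the orbit to a compact sublevel set via coercivity, argue that the $\omega$-limit set (the set of accumulation points) is non-empty, connected, and contained in $\crit\obj$, conclude it lies in a single component $\eqcl_\idx$, and finish by the standard contradiction showing $d(\flowof{\time}{\state},\eqcl_\idx)\to 0$. The paper's version is simply a terse statement of these same steps without spelling out the LaSalle-type justification; your write-up is more explicit but not different in substance. One minor remark: you need not invoke \cref{corollary:equivalence_classes_are_connected_components_of_crit}, since the $\eqcl_\idx$ are the connected components of $\crit\obj$ already by \cref{asm:obj-weak}\cref{asm:obj-weak-crit}.
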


\begin{proof}
    Fix $\state \in \vecspace$. For any $\time \geq \tstart$, $\flowmap_\time(\state)$ belongs to $\setdef{\statealt \in \vecspace}{\obj(\statealt) \leq \obj(\state)}$, which is compact by coercivity of $\obj$.
    Therefore, the set of accumulation points of $\parens*{\flowmap_\time(\state)}_{\time \geq \tstart}$ is non-empty, connected and included in $\crit\obj$. Therefore, since the $\eqcl_\idx$, for $\idx \in \indices$, are the connected components of $\crit\obj$, there is $\idx \in \indices$ such that the accumulation points of $\parens*{\flowmap_\time(\state)}_{\time \geq \tstart}$ all belong to $\eqcl_\idx$.

    If $d(\flowmap_\time(\state), \eqcl_\idx)$ did not converge to $0$, there would be a subsequence that would converge to some point out of any $\eqcl_\idx$, which would be a contradiction. Therefore, $d(\flowmap_\time(\state), \eqcl_\idx)$ must converge to $0$.
\end{proof}

\WAdelete{
The next lemma shows how $\dquasipot$ behaves on trajectories of the flow.
\begin{lemma}
    \label{lemma:traj_flow_dquasipot}
    For any $\state \in \vecspace$, if $(\flowmap_\time(\state))_{\time \geq 0}$ converges to $\eqcl_\idx$, then $\dquasipot(\state, \eqcl_\idx) = 0$.
\end{lemma}
\begin{proof}
    Fix $\precs > 0$ and consider $\W_\precs(\eqcl_\idx)$ which is a neighborhood of $\eqcl_\idx$ by \cref{lem:W_nbd}.
    Since $\flowmap_\time(\state)$ converges to $\eqcl_\idx$, there exists $\horizon > 0$, which we can choose integer, such that, for all $\time \geq \horizon$, $\flowmap_\time(\state) \in \W_\precs(\eqcl_\idx)$.
    Therefore, there exists $\statealt \in \eqcl_\idx$ such that $\rate(\flowmap_\horizon(\state), \statealt) < \precs$.
    Consider the discrete path $\dpth \in \vecspace^{\horizon+2}$ defined
    $\dpth_\run = \flowmap_{\run}(\state)$ for all $ 0 \leq \run \leq \horizon$ and $\dpth_{\horizon+1} = \statealt$.
    By \cref{lem:basic_prop_map}, we have that
    \begin{equation}
        \daction_{\horizon + 2}( \dpth ) = \rate(\flowmap_\horizon(\state), \statealt) < \precs\,,
    \end{equation}
    and therefore $\dquasipot(\state, \eqcl_\idx) < \precs$.
\end{proof}
}

Let us restate the definition of minimizing component that we introduced in the main text.
\begin{definition}
    For any $\idx \in \indices$, we say that $\eqcl_\idx$ is a \emph{minimizing component} if there exists $\open$ a neighborhood of $\eqcl_\idx$ such that,
    \begin{equation}
        \argmin_{\point \in \open} \obj(\point) = \eqcl_\idx\,.
    \end{equation}
    $\eqcl_\idx$ is called non-minimizing otherwise.
\end{definition}

We now state a standard definition for asymptotic stability.

\begin{definition}
    A connected component of the critical points $\eqcl_\idx$, for some $\idx \in \indices$, is said to be \emph{asymptotically stable} if there exists $\U$ a neighborhood of $\eqcl_\idx$ such that, for any $\state \in \U$, $\flowmap_\time(\state)$ converges to $\eqcl_\idx$.
\end{definition}

The notions of minimizing component and asymptotic stability are equivalent in our context.

\begin{lemma}
    \label{lemma:minimizing_component_asympt_stable}
    For any $\idx \in \indices$, $\eqcl_\idx$ is a minimizing component if and only if it is asymptotically stable.
\end{lemma}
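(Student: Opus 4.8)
The plan is to prove the two implications separately, with $\obj$ itself playing the role of a Lyapunov function. Write $c$ for the value of $\obj$ on $\eqcl_\idx$; this is well defined because $\obj$ is constant on each connected component of $\crit\obj$ (any two points of $\eqcl_\idx$ are equivalent under $\sim$, so \cref{lem:B_is_descent} applied in both directions gives $\bdpot(\point)=\bdpot(\pointalt)$, hence $\obj(\point)=\obj(\pointalt)$ since $\bdprimvar$ is strictly increasing). Two elementary facts will be used throughout: along the gradient flow, $\ddt\,\obj(\flowof{\time}{\state}) = -\norm{\grad\obj(\flowof{\time}{\state})}^{2}\le 0$, so $\obj$ is non-increasing; and by coercivity the flow from any $\state$ stays in the compact sublevel set $\setdef{\pointalt}{\obj(\pointalt)\le\obj(\state)}$, so it is defined for all $\time\ge0$ and \cref{lemma:cv_flow} applies.

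For \emph{minimizing $\Rightarrow$ asymptotically stable}, let $\open$ be a neighborhood of $\eqcl_\idx$ with $\argmin_{\open}\obj = \eqcl_\idx$, so $\obj>c$ on $\open\setminus\eqcl_\idx$. Since $\eqcl_\idx$ is compact (\cref{lem:equivalence_classes_closed}, \cref{corollary:equivalence_classes_are_connected_components_of_crit}) and the remaining critical components are compact and disjoint from it, I can choose $r>0$ so small that the closed $2r$-neighborhood of $\eqcl_\idx$ is contained in $\open$ and meets $\crit\obj$ only in $\eqcl_\idx$; in particular $\dist(\eqcl_\idx,\eqcl_\idxalt)>2r$ for every $\idxalt\neq\idx$. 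On the compact ``annulus'' $A \defeq \setdef{\point}{r\le\dist(\point,\eqcl_\idx)\le 2r}$ the gradient is nonzero and $\obj>c$, so $\obj$ attains on $A$ a minimum $c+\eta$ with $\eta>0$. Then $\U \defeq \setdef{\point}{\dist(\point,\eqcl_\idx)<r}\cap\setdef{\point}{\obj(\point)<c+\eta}$ is a neighborhood of $\eqcl_\idx$, and it is forward invariant: if the flow from some $\state\in\U$ ever reached distance $r$ from $\eqcl_\idx$, it would hit $A$ (by continuity of $\time\mapsto\dist(\flowof{\time}{\state},\eqcl_\idx)$) with $\obj\ge c+\eta$, contradicting monotonicity. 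Hence the flow from any $\state\in\U$ stays within distance $r$ of $\eqcl_\idx$; combining this with \cref{lemma:cv_flow} and the separation $\dist(\eqcl_\idx,\eqcl_\idxalt)>2r$ forces the limiting component to be $\eqcl_\idx$, which is exactly asymptotic stability.

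For \emph{asymptotically stable $\Rightarrow$ minimizing}, let $\U$ be a neighborhood of $\eqcl_\idx$ from which the flow converges to $\eqcl_\idx$. For $\state\in\U$ one has $\obj(\flowof{\time}{\state})\to c$ (continuity of $\obj$ together with $\dist(\flowof{\time}{\state},\eqcl_\idx)\to0$ and $\obj\equiv c$ on $\eqcl_\idx$), and since $\obj(\flowof{\time}{\state})$ is non-increasing this yields $\obj(\state)\ge c$; thus $\obj\ge c$ on $\U$ with equality on $\eqcl_\idx$, so $\inf_{\U}\obj=c$. Finally, if $\state\in\U$ satisfies $\obj(\state)=c$, then $\obj(\flowof{\time}{\state})$ is constant equal to $c$, so $\grad\obj(\flowof{\time}{\state})\equiv0$, the flow is stationary at $\state$, and $\state=\flowof{\time}{\state}\to\eqcl_\idx$ forces $\state\in\eqcl_\idx$ because $\eqcl_\idx$ is closed. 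Hence $\argmin_{\U}\obj=\eqcl_\idx$, \ie $\eqcl_\idx$ is minimizing, taking $\open=\U$.

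The two Lyapunov computations are routine. The only delicate point is the choice of $r$ in the first implication: one must simultaneously separate $\eqcl_\idx$ from the complement of $\open$ and from the other critical components, and --- crucially --- it is exactly the minimizing hypothesis that guarantees the energy gap $\eta>0$ on the annulus (otherwise the flow could leak across it toward another component). This is the step where the hypothesis is genuinely used, and the main, if mild, obstacle.
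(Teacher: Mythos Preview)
Your proof is correct and follows essentially the same Lyapunov approach as the paper: for the forward implication, both define an energy gap on an annulus around $\eqcl_\idx$ (the paper uses $\cl\U_\margin(\eqcl_\idx)\setminus\U_{\margin/2}(\eqcl_\idx)$, you use $\{r\le\dist\le 2r\}$) and intersect a sublevel set of $\obj$ with a small ball to obtain a forward-invariant neighborhood; for the converse, both use the monotonicity of $\obj$ along the flow together with $\obj(\flowof{\time}{\state})\to c$ to conclude $\obj(\state)\ge c$, with your argument that equality forces $\state\in\eqcl_\idx$ being a mild variant of the paper's direct strict-inequality computation.
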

\begin{proof}
    We start with the direct implication.
    Assume that $\eqcl_\idx$ is a minimizing component. Therefore, there exists  $\margin > 0$ such that
    \begin{equation}
        \argmin_{\point \in \cl\U_\margin(\eqcl_\idx)} \obj(\point) = \eqcl_\idx\,.
    \end{equation}
    Moreover, assume that $\margin$ is small enough so that, $\cl\U_\margin(\eqcl_\idx) \cap \crit\obj = \eqcl_\idx$.

    \revise{ Then, for any $\state \in \U_\margin(\eqcl_\idx) \setminus \eqcl_\idx$, $\obj(\state) > \obj(\eqcl_\idx)$.}
    By continuity of $\obj$, compactness of $\cl\U_\margin(\eqcl_\idx) \setminus \U_{\margin/2}(\eqcl_\idx)$ and definition of minimizing component, we have that
    \begin{equation}
        \min_{\point \in \cl\U_\margin(\eqcl_\idx) \setminus \U_{\margin/2}(\eqcl_\idx)} \obj(\point) > \obj(\eqcl_\idx)\,.
    \end{equation}
    Define $\marginalt \defeq \half \min_{\point \in \cl\U_\margin(\eqcl_\idx) \setminus \U_{\margin/2}(\eqcl_\idx)} \setof*{\obj(\point) - \obj(\eqcl_\idx)}$ and 
    \begin{equation}
        \V \defeq \setdef*{\state \in \U_{\margin/2}(\eqcl_\idx)}{\obj(\state) < \obj(\eqcl_\idx) + \marginalt} \subset \U_\margin(\eqcl_\idx)\,,
    \end{equation}
    which is a neighborhood of $\eqcl_\idx$ by continuity of $\obj$.

    We now show that trajectories of the flow starting in $\V$ converge to $\eqcl_\idx$.
    Take $\state \in \V$. Since $\obj(\flowmap_\time(\state))$ is non-increasing, then $(\flowmap_\time(\state))_\time$ remains in \revise{$\setdef*{\point \in \vecspace}{\obj(\point) < \obj(\eqcl_\idx) + \marginalt}$.}
    \revise{
        $(\flowmap_\time(\state))_\time$ also cannot escape from $\U_{\margin/2}(\eqcl_\idx)$: if it were, by continuity of the flow and of the distance to $\eqcl_\idx$, there would be $\time > 0$ such that $d(\flowmap_\time(\state), \eqcl_\idx) = \margin/2$, which is a contradiction since $\obj(\flowmap_\time(\state)) < \obj(\eqcl_\idx) + \marginalt$. Hence, $(\flowmap_\time(\state))_\time$ remains in $\U_{\margin/2}(\eqcl_\idx)$ and so in $\V$.
    }
    By \cref{lemma:cv_flow}, $(\flowmap_\time(\state))_\time$ converges to some component of the critical points, which must be $\eqcl_\idx$ since $\V$ is disjoint from the other components.

    We now show the converse implication.
    Since $\eqcl_\idx$ is a connected component of $\crit\obj$, $\obj$ is constant on $\eqcl_\idx$. Denote by $\sol[\obj_\idx]$ this value.
    Take $\margin > 0$ small enough such that $\U_\margin(\eqcl_\idx) \cap \crit\obj = \eqcl_\idx$ and such that, for any $\state \in \U_\margin(\eqcl_\idx)$, $(\flowmap_\time(\state))_\time$ converges to $\eqcl_\idx$.
    Take $\state \in \U_\margin(\eqcl_\idx) \setminus \eqcl_\idx$. We show that $\obj(\state) > \sol[\obj_\idx]$.
    For any $\time > 0$, 
    \begin{equation}
        \obj(\flowmap_\time(\state)) - \revise{\obj(\state)}
        = - \int_0^\time \norm{\grad \obj(\flowmap_\timealt(\state))}^2 \dd \timealt\,,
    \end{equation}
    which must (strictly) negative since $\state$ is not a critical point of $\obj$. Since $\obj(\flowmap_\time(\state))$ is non-increasing in $\time$ and lower-bounded --- because $\inf_{\vecspace} \obj > - \infty$ by coercivity of $\obj$ ---, it must converge as $\time \to +\infty$ and its limit satisfies $\lim_{\time \to +\infty} \obj(\flowmap_\time(\state)) < \obj(\state)$.
    Moreover, $(\flowmap_\time(\state))_\time$ converges to $\eqcl_\idx$ so that all its accumulation points belong to $\eqcl_\idx$ and have the same objective value $\sol[\obj_\idx]$. Hence, we have that $\lim_{\time \to +\infty} \obj(\flowmap_\time(\state)) = \sol[\obj_\idx]$ and $\sol[\obj_\idx] < \obj(\state)$.
\end{proof}

In the following, we will thus use the terms minimizing component and asymptotically stable interchangeably.

The next lemma shows that if $\eqcl_\idx$ is not asymptotically stable, then it is unstable in the sense of \citet[Chap.~6,\S4]{FW98}.

\begin{lemma}
    \label{lemma:not_asympt_stable_implies_unstable}
    If $\eqcl_\idx$ is not asymptotically stable, then there exists $\idxalt \in \indices$ such that $\dquasipot_{\idx, \idxalt} = 0$.    
\end{lemma}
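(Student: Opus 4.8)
The plan is to combine two facts already established in the excerpt: by \cref{lemma:cv_flow} every gradient-flow trajectory converges to some component of $\crit\obj$, and by \cref{lem:basic_prop_flow,lem:basic_prop_map} a path has zero action exactly when it follows the gradient flow (in particular $\rate(\state,\map(\state))=0$ for every $\state\in\vecspace$). If $\eqcl_\idx$ is not asymptotically stable, then arbitrarily close to $\eqcl_\idx$ there are points whose flow drifts away and, by \cref{lemma:cv_flow}, must land on some \emph{other} component $\eqcl_\idxalt$. Threading a short flow segment between two cheap ``corrections'' near $\crit\obj$ then yields discrete paths from $\eqcl_\idx$ to $\eqcl_\idxalt$ of arbitrarily small cost, which forces $\dquasipot_{\idx,\idxalt}=0$.

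First I would extract the escaping points. Negating asymptotic stability, for each $\run\geq 1$ there is $\state_\run\in\U_{1/\run}(\eqcl_\idx)$ whose flow $\flowof{\time}{\state_\run}$ does not converge to $\eqcl_\idx$; by \cref{lemma:cv_flow} it converges instead to $\eqcl_{\idxaltalt_\run}$ for some $\idxaltalt_\run\neq\idx$. Since $\indices$ is finite, along a subsequence (not relabelled) $\idxaltalt_\run$ is constant, equal to some $\idxalt\neq\idx$. Choose $\base_\run\in\eqcl_\idx$ with $\norm{\state_\run-\base_\run}=\dist(\state_\run,\eqcl_\idx)\to 0$. Because $\crit\obj$ is compact, $\rate$ is uniformly continuous on a compact neighbourhood of $(\crit\obj)^{2}$ (\cref{lem:continuity_rate}), and $\rate(\state,\state)=0$ for $\state\in\crit\obj$ (since $\state=\map(\state)$, \cref{lem:basic_prop_map}); hence $\rate(\base_\run,\state_\run)\to 0$.

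Next I would build the path and estimate its cost. Fix $\precs>0$ small enough for \cref{lem:W_nbd} to apply to $\eqcl_\idxalt$, so that $\nbdalt_\precs(\eqcl_\idxalt)$ is an open neighbourhood of the compact set $\eqcl_\idxalt$; since $\dist(\flowof{\time}{\state_\run},\eqcl_\idxalt)\to 0$, there is an integer $\horizon_\run\geq 1$ with $\flowof{\horizon_\run}{\state_\run}\in\nbdalt_\precs(\eqcl_\idxalt)$, hence a point $\statealt_\run\in\eqcl_\idxalt$ with $\rate(\flowof{\horizon_\run}{\state_\run},\statealt_\run)<\precs$. Consider the discrete path
\[
\dpth \defeq \bigl(\base_\run,\ \state_\run,\ \flowof{1}{\state_\run},\ \dotsc,\ \flowof{\horizon_\run}{\state_\run},\ \statealt_\run\bigr)\in\vecspace^{\horizon_\run+3},
\]
which starts in $\eqcl_\idx$ and ends in $\eqcl_\idxalt$. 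By the flow property $\flowof{\runB+1}{\state_\run}=\map(\flowof{\runB}{\state_\run})$, so each interior transition costs $\rate(\flowof{\runB}{\state_\run},\flowof{\runB+1}{\state_\run})=0$ by \cref{lem:basic_prop_map}, whence $\daction_{\horizon_\run+3}(\dpth)=\rate(\base_\run,\state_\run)+\rate(\flowof{\horizon_\run}{\state_\run},\statealt_\run)<\rate(\base_\run,\state_\run)+\precs$. By \cref{def:dquasipot} this gives $0\leq\dquasipot_{\idx,\idxalt}=\dquasipot(\eqcl_\idx,\eqcl_\idxalt)\leq\dquasipot(\base_\run,\statealt_\run)\leq\rate(\base_\run,\state_\run)+\precs$; letting $\run\to\infty$ yields $\dquasipot_{\idx,\idxalt}\leq\precs$, and letting $\precs\to 0$ gives $\dquasipot_{\idx,\idxalt}=0$ (recall $\dquasipot\geq 0$ since the action functional is nonnegative by \cref{lem:basic_prop_h_l}).

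The routine ingredients are the pigeonhole over $\indices$ and the compactness extractions. The one genuinely load-bearing step is the passage from ``$\eqcl_\idx$ not asymptotically stable'' to ``the flow escapes to a specific other component $\eqcl_\idxalt$''; without \cref{lemma:cv_flow} guaranteeing that every flow trajectory actually converges to some $\eqcl_{\idxaltalt}$, one could not terminate the discrete path at a point of $\eqcl_\idxalt$ and the argument would stall. Once that is in place, everything reduces to the identity ``zero action $\iff$ gradient flow'' together with the continuity of $\rate$ near $\crit\obj$.
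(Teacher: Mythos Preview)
Your proof is correct and follows essentially the same route as the paper's: extract escaping points via the failure of asymptotic stability, use \cref{lemma:cv_flow} and pigeonhole to find a fixed target component $\eqcl_\idxalt$, then thread a discrete path consisting of a cheap correction near $\eqcl_\idx$, the cost-free gradient-flow segment, and a cheap correction near $\eqcl_\idxalt$. The only cosmetic difference is that the paper handles the initial correction via the neighbourhood $\nbdalt_\precs(\eqcl_\idx)$ of \cref{lem:W_nbd} (which directly encodes $\rate(\eqcl_\idx,\state_\run)<\precs$), whereas you invoke the continuity of $\rate$ from \cref{lem:continuity_rate} together with $\rate(\base_\run,\base_\run)=0$; these are equivalent repackagings of the same estimate.
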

\begin{proof}
    If $\eqcl_\idx$ is not asymptotically stable, then, for every $\run \geq 1$, there exists $\state_\run \in \U_{1/\run}( \eqcl_\idx )$ such that $\flowmap_\time(\state_\run)$ does not converge to $\eqcl_\idx$.
    By \cref{lemma:cv_flow}, there exists $\idxalt_\run \in \indices$ such that $(\flowmap_\time(\state_\run))_\time$ converges to $\eqcl_{\idxalt_\run}$.
    Since $\indices$ is finite, there exists $\idxalt \in \indices$ such that $\idxalt_\run = \idxalt$ for infinitely many $\run$.
    Replacing $(\state_\run)_{\run \geq 1}$ by a subsequence, we can assume that $(\flowmap_\time(\state_\run))_\time$ converges to $\eqcl_\idxalt$ for all $\run \geq 1$.

    We now show that $\dquasipot_{\idx, \idxalt} = 0$.

    Fix $\precs > 0$ and consider $\W_\precs(\eqcl_\idx), \W_\precs(\eqcl_\idxalt)$ which are neighborhoods of $\eqcl_\idx, \eqcl_\idxalt$ by \cref{lem:W_nbd}.
    Since $(\state_\run)_{\run \geq 1}$ converges to $\eqcl_\idx$, there exists $\run$ such that $\state_\run \in \W_\precs(\eqcl_\idx)$.
    In turn, since $\flowmap_\time(\state_\run)$ converges to $\eqcl_\idxalt$, there exists $\horizon > 0$, which we can choose integer, such that, $\flowmap_\horizon(\state_\run) \in \W_\precs(\eqcl_\idxalt)$.
    Therefore, there exists $\stateA \in \eqcl_\idx$ and $\stateB \in \eqcl_\idxalt$ such that $\rate(\stateA, \state_\run) < \precs$ and $\rate(\flowmap_\horizon(\state_\run), \stateB) < \precs$.
    Consider the discrete path $\dpth \in \vecspace^{\horizon+3}$ defined by 
    $\dpth_0 = \stateA$,
    $\dpth_\run = \flowmap_{\run-1}(\state)$ for all $ 1 \leq \run \leq \horizon + 1$ and $\dpth_{\horizon+2} = \statealt$.
    By \cref{lem:basic_prop_map}, we have that
    \begin{equation}
        \daction_{\horizon + 3}( \dpth ) = 
        \rate(\stateA, \state_\run) +
        \rate(\flowmap_\horizon(\state), \stateB) < 2\precs\,,
    \end{equation}
    and therefore, since $\dpth_0 \in \eqcl_\idx$ and $\dpth_{\horizon+2} \in \eqcl_\idxalt$, $\dquasipot_{\idx, \idxalt} < 2\precs$.
\end{proof}

The following lemma is now a straightforward adaptation of \citet[Chap.~6, Lemma~4.2]{FW98}.
\begin{lemma}
    \label{lemma:not_asympt_stable_implies_unstable2}
    If $\eqcl_\idx$ is not asymptotically stable, then there exists $\idxalt \in \indices$ such that $\dquasipot_{\idx, \idxalt} = 0$ and such that, for any $\idxaltalt \in \indices$, $\dquasipot_{\idxalt, \idxaltalt} > 0$.
\end{lemma}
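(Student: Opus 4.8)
The plan is to argue, as in \citet[Chap.~6, Lem.~4.2]{FW98}, by a greedy descent along zero-cost edges of the transition graph. Call a component $\eqcl_\idxaltalt$ \emph{terminal} if there is no $\idxaltaltalt \neq \idxaltalt$ with $\dquasipot_{\idxaltalt, \idxaltaltalt} = 0$; since $\dquasipot \geq 0$, this is exactly the property ``$\dquasipot_{\idxaltalt, \idxaltaltalt} > 0$ for all $\idxaltaltalt \neq \idxaltalt$'' asked for in the statement. I would then build a chain $\idx = \idx_0, \idx_1, \idx_2, \dotsc$ in $\indices$: while $\eqcl_{\idx_k}$ is \emph{not} terminal, pick $\idx_{k+1} \neq \idx_k$ with $\dquasipot_{\idx_k, \idx_{k+1}} = 0$ (such a choice exists by the very definition of ``not terminal''); stop the first time a terminal component is reached. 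At each stage the triangle inequality for $\dquasipot$ (valid by construction of $\dquasipot$, exactly as in the transitivity proof for $\sim$) gives $\dquasipot_{\idx, \idx_k} \leq \sum_{\ell=0}^{k-1} \dquasipot_{\idx_\ell, \idx_{\ell+1}} = 0$, so the last term of a finite chain is the index $\idxalt$ we want. That the chain genuinely starts, i.e.\ that $\eqcl_{\idx_0} = \eqcl_\idx$ is not terminal, is precisely what \cref{lemma:not_asympt_stable_implies_unstable} gives: its proof produces a zero-cost neighbour \emph{distinct} from $\idx$. So everything reduces to showing the chain is finite.

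The key lemma for that is: \emph{zero-cost cycles are trivial}, namely if $\idxaltalt, \idxaltaltalt \in \indices$ satisfy $\dquasipot_{\idxaltalt, \idxaltaltalt} = \dquasipot_{\idxaltaltalt, \idxaltalt} = 0$, then $\idxaltalt = \idxaltaltalt$. I would prove this by a compactness argument that routes around the fact that the infimum defining $\dquasipot$ (\cref{def:dquasipot}) need not be attained and that $\dquasipot$ is not even \ac{lsc}. By \cref{def:dquasipot} pick $\point_\runB \in \eqcl_\idxaltalt$ and $\pointalt_\runB \in \eqcl_\idxaltaltalt$ with $\dquasipot(\point_\runB, \pointalt_\runB) \leq 1/\runB$. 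By compactness of the components (\cref{lem:equivalence_classes_closed}) pass to a subsequence with $\point_\runB \to \point \in \eqcl_\idxaltalt$ and $\pointalt_\runB \to \pointalt \in \eqcl_\idxaltaltalt$. The crucial input is \cref{corollary:equivalence_classes_are_connected_components_of_crit}: each $\eqcl_\idxaltalt$ is a single $\sim$-class, so $\point \sim \point_\runB$ and $\pointalt \sim \pointalt_\runB$, i.e.\ $\dquasipot(\point, \point_\runB) = \dquasipot(\pointalt_\runB, \pointalt) = 0$; hence $\dquasipot(\point, \pointalt) \leq \dquasipot(\point, \point_\runB) + \dquasipot(\point_\runB, \pointalt_\runB) + \dquasipot(\pointalt_\runB, \pointalt) \leq 1/\runB \to 0$. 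Running the symmetric argument on $\dquasipot_{\idxaltaltalt, \idxaltalt} = 0$ together with one more triangle inequality (again using that any two points of $\eqcl_\idxaltalt$, and of $\eqcl_\idxaltaltalt$, are $\sim$-related) gives $\dquasipot(\pointalt, \point) = 0$ as well. So $\point \sim \pointalt$; but $\sim$-classes are the connected components of $\crit\obj$, whence $\eqcl_\idxaltalt = \eqcl_\idxaltaltalt$.

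Granting this, finiteness of the chain is immediate: an infinite chain would revisit some index, say $\idx_a = \idx_b$ with $a < b$, and then $\dquasipot_{\idx_a, \idx_{b-1}} \leq \sum_{\ell=a}^{b-2} \dquasipot_{\idx_\ell, \idx_{\ell+1}} = 0$ while $\dquasipot_{\idx_{b-1}, \idx_a} = \dquasipot_{\idx_{b-1}, \idx_b} = 0$, so the cycle lemma forces $\idx_{b-1} = \idx_b$, contradicting the construction. Hence the chain stops at some terminal $\idx_K$, and $\idxalt \defeq \idx_K$ satisfies $\dquasipot_{\idx, \idxalt} = 0$ and $\dquasipot_{\idxalt, \idxaltalt} > 0$ for every $\idxaltalt \neq \idxalt$.

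The main obstacle is the triviality of zero-cost cycles. The naive approach --- take a minimizing sequence of paths realizing $\dquasipot_{\idxaltalt, \idxaltaltalt}$ and pass to a limiting path --- fails because $\dquasipot$ is not lower semicontinuous; the remedy above works only because it never takes a limit of $\dquasipot$-values directly, instead absorbing the limit points into the critical components and invoking the already-established identification of $\sim$-classes with connected components (\cref{corollary:equivalence_classes_are_connected_components_of_crit}), which is where compactness of the components (\cref{lem:equivalence_classes_closed}) enters. Everything else is bookkeeping with the triangle inequality and finiteness of $\indices$.
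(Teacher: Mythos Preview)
Your proof is correct and follows essentially the same route as the paper's three-line argument: build a zero-cost chain starting from $\idx$, observe that a repetition would force two distinct equivalence classes to merge (the paper just asserts ``by definition of equivalence classes, any $\idxalt$ cannot appear twice''), and conclude by finiteness of $\indices$.

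One simplification: the compactness passage in your cycle lemma is superfluous. Fix any $\point \in \eqcl_\idxaltalt$ and $\pointalt \in \eqcl_\idxaltaltalt$ from the outset; for each $\runB$ pick $\point_\runB \in \eqcl_\idxaltalt$, $\pointalt_\runB \in \eqcl_\idxaltaltalt$ with $\dquasipot(\point_\runB, \pointalt_\runB) \leq 1/\runB$, and the very triangle inequality you write already gives $\dquasipot(\point, \pointalt) \leq \dquasipot(\point, \point_\runB) + \dquasipot(\point_\runB, \pointalt_\runB) + \dquasipot(\pointalt_\runB, \pointalt) = 0 + 1/\runB + 0$, since $\point \sim \point_\runB$ and $\pointalt_\runB \sim \pointalt$ by \cref{corollary:equivalence_classes_are_connected_components_of_crit}. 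No subsequence or limit point is needed, so the worry about lower semicontinuity of $\dquasipot$ never actually arises here.
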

\begin{proof}
    For the sake of contradiction, assume that such a $\idxalt$ does not exist.
    Then we build an infinite sequence $\idxalt_0 = \idx, \idxalt_1, \dots$ such that $\dquasipot_{\idxalt_\run, \idxalt_{\run+1}} = 0$ for all $\run \geq 0$.
    By definition of equivalence classes, any $\idxalt$ cannot appear twice in this sequence. But since $\indices$ is finite, this is a contradiction.
\end{proof}

We now reach our final result on unstable equivalence classes: they have negligible weight in the invariant measure.
\begin{lemma}
    \label{lemma:not_asympt_stable_implies_unstable3}
    If $\eqcl_\idx$ is not asymptotically stable, or, equivalently, non-minimizing, then there exists $\idxalt \in \indices$ such that $\eqcl_\idxalt$  is asymptotically stable,  $\dquasipot_{\idx, \idxalt} = 0$ and,
    \begin{equation}
        \dinvpot_\idxalt < \dinvpot_\idx
    \end{equation}
\end{lemma}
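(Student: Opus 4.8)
The plan is to read off the three conclusions in turn, using \cref{lemma:not_asympt_stable_implies_unstable2} to pin down the component $\eqcl_\idxalt$ and a tree-surgery argument — the discrete-time analogue of \citet[Chap.~6, Lem.~4.2]{FW98} — for the energy inequality. Since $\eqcl_\idx$ is non-minimizing it is not asymptotically stable by \cref{lemma:minimizing_component_asympt_stable}, so \cref{lemma:not_asympt_stable_implies_unstable2} furnishes an index $\idxalt$ with $\dquasipot_{\idx,\idxalt} = 0$ and $\dquasipot_{\idxalt,\idxaltalt} > 0$ for every $\idxaltalt \neq \idxalt$. This second property rules out precisely what \cref{lemma:not_asympt_stable_implies_unstable} would produce were $\eqcl_\idxalt$ not asymptotically stable — namely some $\idxaltalt \neq \idxalt$ with $\dquasipot_{\idxalt,\idxaltalt} = 0$, the component reached by the flow being distinct from $\eqcl_\idxalt$ — so $\eqcl_\idxalt$ is asymptotically stable, hence minimizing by \cref{lemma:minimizing_component_asympt_stable}. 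Since $\eqcl_\idx$ is non-minimizing, this also forces $\idxalt \neq \idx$. Two of the three claims are thus settled, and it remains to prove $\dinvpot_\idxalt < \dinvpot_\idx$.

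For the last inequality I would pick a minimum-weight in-tree $\tree \in \trees_\idx$ realizing $\dinvpot_\idx$. In $\tree$ the vertex $\idxalt$ has a single outgoing edge, say $(\idxalt \to \kComp)$, with necessarily $\kComp \neq \idxalt$, so $\dquasipot_{\idxalt,\kComp} > 0$ by the property of $\idxalt$ secured above. Replacing this edge by $(\idx \to \idxalt)$ — legitimate since $\idx \neq \idxalt$ — produces a graph $\tree'$ on $\indices$ with $|\indices|-1$ edges in which every vertex other than $\idxalt$ has out-degree one while $\idxalt$ has out-degree zero; moreover $\tree'$ is acyclic, since deleting an edge preserves acyclicity and the new edge $(\idx\to\idxalt)$ cannot lie on a cycle once $\idxalt$ is a sink. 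A functional graph on finitely many vertices with a unique sink and no cycle is an in-tree converging to that sink, so $\tree' \in \trees_\idxalt$, and its weight equals $\dinvpot_\idx - \dquasipot_{\idxalt,\kComp} + \dquasipot_{\idx,\idxalt} = \dinvpot_\idx - \dquasipot_{\idxalt,\kComp}$, using $\dquasipot_{\idx,\idxalt}=0$. Minimality of $\dinvpot_\idxalt$ over $\trees_\idxalt$ then yields $\dinvpot_\idxalt \leq \dinvpot_\idx - \dquasipot_{\idxalt,\kComp} < \dinvpot_\idx$, which is the desired strict inequality, and combining with the previous paragraph completes the proof.

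The routine parts are the appeals to \cref{lemma:not_asympt_stable_implies_unstable,lemma:not_asympt_stable_implies_unstable2,lemma:minimizing_component_asympt_stable} and the arithmetic of the weight change. The one point that needs care — and that I expect to be the main (if minor) obstacle — is the combinatorial verification that the edge swap yields a genuine spanning in-tree rooted at $\idxalt$, together with the observation that the removed edge $(\idxalt\to\kComp)$ satisfies $\kComp\neq\idxalt$, so that its removal strictly lowers the total weight and produces the \emph{strict} inequality rather than merely $\dinvpot_\idxalt \leq \dinvpot_\idx$.
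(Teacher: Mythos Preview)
Your proposal is correct and follows essentially the same approach as the paper: invoke \cref{lemma:not_asympt_stable_implies_unstable2} to obtain $\idxalt$, deduce asymptotic stability of $\eqcl_\idxalt$ via \cref{lemma:not_asympt_stable_implies_unstable}, then perform the edge swap on an optimal in-tree rooted at $\idx$ to compare $\dinvpot_\idxalt$ with $\dinvpot_\idx$. Your write-up is in fact slightly more careful than the paper's, which simply asserts that the swapped graph lies in $\trees_\idxalt$ without verifying acyclicity or noting that $\idxalt\neq\idx$.
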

\begin{proof}
    By \cref{lemma:not_asympt_stable_implies_unstable2}, there exists $\idxalt \in \indices$ such that $\dquasipot_{\idx, \idxalt} = 0$ and such that, for any $\idxaltalt \in \indices$, $\dquasipot_{\idxalt, \idxaltalt} > 0$.
    \Cref{lemma:not_asympt_stable_implies_unstable} implies in particular that $\eqcl_\idxalt$ must be asymptotically stable

    It remains to show that $\dinvpot_\idxalt < \dinvpot_\idx$.
    Take $\tree \in \trees_\idx$ such that
    \begin{equation}
        \dinvpot_\idx = \sum_{(\idxaltalt \to \idxaltaltalt) \in \tree} \dquasipot_{\idxaltalt,\idxaltaltalt}\,.
    \end{equation}
    $\idxalt$ has an outgoing edge in $\tree$ and denote by $\alt \idxalt$ the other end of that edge.
    Now, consider the tree $\treealt \in \trees_\idxalt$ obtained from $\tree$ by removing the outgoing edge from $\idxalt$ to $\alt \idx$ and adding an edge from $\idx$ to $\idxalt$.
    Then, by definition of $\dinvpot_\idxalt$,
    \begin{align}
        \dinvpot_\idxalt &\leq  \sum_{(\idxaltalt \to \idxaltaltalt) \in \treealt} \dquasipot_{\idxaltalt,\idxaltaltalt} = \sum_{(\idxaltalt \to \idxaltaltalt) \in \tree} \dquasipot_{\idxaltalt,\idxaltaltalt} - \dquasipot_{\idxalt,\alt \idxalt} + \dquasipot_{\idx,\idxalt} \,.
    \end{align}
    But, by definition of $\idxalt$, $\dquasipot_{\idx,\idxalt} = 0$ and $\dquasipot_{\idxalt,\alt \idxalt} > 0$. Therefore, $\dinvpot_\idxalt < \dinvpot_\idx$.
\end{proof}

We now show the second part of our main result: the invariant measure concentrates on the ground states, which are asymptotically stable by \cref{lemma:not_asympt_stable_implies_unstable3}.

For this, we need the following lemma.
\begin{lemma}
    \label{lemma:ground_states_loc_min}
    For any $\idx \in \indices$ such that $\eqcl_\idx$ is minimizing, there exists $\margin > 0$ such that, for any $0 < \marginalt \leq \margin$,
    \begin{equation}
        \dquasipot(\eqcl_\idx, \vecspace \setminus \U_\marginalt(\eqcl_\idx)) > 0\,.
    \end{equation}
\end{lemma}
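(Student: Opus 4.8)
The plan is to produce an explicit, strictly positive lower bound on $\dquasipot(\eqcl_\idx,\vecspace\setminus\U_\marginalt(\eqcl_\idx))$ from the Lyapunov property of the potential $\bdpot=2\bdprimvar(\obj)$ (\cref{def:bdpot}): being a minimizing component means, by definition, that $\obj$ strictly exceeds its value on $\eqcl_\idx$ throughout a small punctured neighborhood of $\eqcl_\idx$, and the computation underlying \cref{lem:B_is_descent} turns this gap of objective values into a gap of action. Concretely, since $\eqcl_\idx$ is minimizing there is an open $\open\supseteq\eqcl_\idx$ with $\argmin_{\open}\obj=\eqcl_\idx$; because $\eqcl_\idx$ is compact (\cref{lem:equivalence_classes_closed}) and is a connected subset of $\crit\obj$ — hence $\obj$ is constant on it, with value I will denote $\obj_\idx$ — I would choose $\margin>0$ so small that $\cl\U_\margin(\eqcl_\idx)$ is a compact subset of $\open$ (possible as $\eqcl_\idx$ is compact and $\open$ open), which forces $\obj(\state)>\obj_\idx$ for every $\state\in\cl\U_\margin(\eqcl_\idx)\setminus\eqcl_\idx$. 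Then, fixing $0<\marginalt\leq\margin$ and setting $\plainset_\marginalt\defeq\setdef{\state\in\vecspace}{\dist(\state,\eqcl_\idx)=\marginalt}$ — which is compact, nonempty (the continuous map $\dist(\argdot,\eqcl_\idx)$ vanishes on $\eqcl_\idx$ and is unbounded since $\eqcl_\idx$ is bounded, so it attains $\marginalt$), and contained in $\cl\U_\margin(\eqcl_\idx)\setminus\eqcl_\idx$ — one gets $m(\marginalt)\defeq\min_{\state\in\plainset_\marginalt}\obj(\state)>\obj_\idx$.

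Next I would bound the action of an arbitrary competitor in the definition of the quasi-potential. Take $\horizon\in\N$ and $\pth\in\contfuncs([0,\horizon],\vecspace)$ with $\pth_\tstart\in\eqcl_\idx$ and $\pth_\horizon\in\vecspace\setminus\U_\marginalt(\eqcl_\idx)$; we may assume $\pth$ absolutely continuous, else $\action_{[\tstart,\horizon]}(\pth)=\infty$. Since $\time\mapsto\dist(\pth_\time,\eqcl_\idx)$ is continuous, equals $0$ at $\time=\tstart$ and is $\geq\marginalt$ at $\time=\horizon$, there is $\timealt\in(\tstart,\horizon]$ with $\pth_\timealt\in\plainset_\marginalt$. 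Running verbatim the chain-rule and Young's-inequality computation from the proof of \cref{lem:B_is_descent} — which uses only $\bdpot=2\bdprimvar(\obj)$, \cref{lem:csq_subgaussian}, and the nonnegativity of $\lagrangian$ (\cref{lem:basic_prop_h_l}), none of which needs an integer horizon — on the subinterval $[\tstart,\timealt]$ yields
\begin{equation}
\bdpot(\pth_\timealt)-\bdpot(\pth_\tstart)\;\leq\;\int_{\tstart}^{\timealt}\lagrangian(\pth_\time,\dot\pth_\time)\dd \time\;\leq\;\action_{[\tstart,\horizon]}(\pth).
\end{equation}
Because $\bdprimvar$ is strictly increasing ($\bdprimvar'=1/\bdvar>0$), $\pth_\tstart\in\eqcl_\idx$ and $\pth_\timealt\in\plainset_\marginalt$, the left-hand side is at least $\const_\marginalt\defeq 2\bdprimvar(m(\marginalt))-2\bdprimvar(\obj_\idx)$, and $\const_\marginalt>0$ precisely because $m(\marginalt)>\obj_\idx$.

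Taking the infimum of $\action_{[\tstart,\horizon]}(\pth)\geq\const_\marginalt$ over all such paths then gives $\dquasipot(\eqcl_\idx,\vecspace\setminus\U_\marginalt(\eqcl_\idx))\geq\const_\marginalt>0$, which is the claim. I expect the only genuinely delicate point to be the bookkeeping around the integer-horizon restriction in \cref{def:dquasipot}: it prevents one from treating the restriction $\pth|_{[\tstart,\timealt]}$ as a bona fide competitor for a quasi-potential to $\plainset_\marginalt$, so one must argue directly via additivity and nonnegativity of the action integral, exactly as above; the rest is routine.
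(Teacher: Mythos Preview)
Your proposal is correct and follows essentially the same approach as the paper: both arguments choose a small $\margin$ so that $\obj>\obj_\idx$ on a punctured neighborhood, use the intermediate value theorem to find a time where the path hits a fixed distance-sphere around $\eqcl_\idx$, and then apply the chain-rule/Young's-inequality computation from \cref{lem:B_is_descent} on the corresponding subinterval to extract a strictly positive lower bound on the action. The only cosmetic differences are that the paper places the sphere at radius $\marginalt/2$ rather than $\marginalt$ and carries an extra (harmless) factor of $2$ in its final bound; your handling of the integer-horizon issue via nonnegativity of the Lagrangian is exactly what the paper does implicitly.
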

\begin{proof}
    Since $\eqcl_\idx$ is minimizing, there exists $\margin > 0$ such that, for any $\state \in \U_\margin(\eqcl_\idx)$, $\obj(\state) > \obj_\idx$ where $\obj_\idx$ is the value of $\obj$ on $\eqcl_\idx$.
    Take $\marginalt \leq \margin$, $\open \defeq  \U_\marginalt(\eqcl_\idx)$ and
    \begin{equation}
    \Margin \defeq \min \setdef*{\bdpot(\state) - \bdprimvar(\obj_\idx)}{\state \in \vecspace,\, d(\state, \eqcl_\idx) = \marginalt/2}
    \,.
    \end{equation}
    Then, by the continuity of $\bdpot$ and the fact that $\bdprimvar$ is (strictly) increasing, we have that $\Margin > 0$.
    To conclude the proof of this lemma, we now show that $\dquasipot(\eqcl_\idx, \vecspace \setminus \open) \geq \half[\Margin]$.
    Consider some $\horizon > 0$ and $\pth \in \contfuncs([0, \horizon], \vecspace)$ such that $\pth_0 \in \eqcl_\idx$ and $\pth_\horizon \in \vecspace \setminus \W$.
    By continuity of $\pth$ and $d(\cdot, \eqcl_\idx)$, there exists $\time \in [0, \horizon]$ such that $d(\pth_\time, \eqcl_\idx) = \marginalt/2$.
    By the same computation as in \cref{lem:B_is_descent}, we have that
    \begin{align}
        \Margin 
        &\leq \bdpot(\pth_\time) - \bdpot(\pth_0)
        \notag\\
        &= 2 \int_0^\time \frac{
            \inner{\dot \pth_\timealt}{\grad \obj(\pth_\timealt)}
        }{
            \bdvar(\obj(\pth_\timealt))
        }\notag\\
        &\leq 2 \action_{[0, \time]}(\pth)
        \notag\\
        &\leq 2 \action_{[0, \horizon]}(\pth)\,.
    \end{align}
    Since this is valid for any $\pth$, we obtain that $\dquasipot(\eqcl_\idx, \vecspace \setminus \open) \geq \half[\Margin]$.
\end{proof}

The next proposition shows that the invariant measure concentrates exponentially on states that are asymptotically stable (and contain the ground states).

\begin{proposition}
    \label{prop:inv_meas_ground_states}
    Consider $\indicesalt \subset \indices$ such that, for all $\idx \in \indicesalt$, $\eqcl_\idx$ is minimizing and, such that, $\indicesalt$ contains $\argmin_{\idx \in \indices} \dinvpot_\idx$.
    Consider $\V_\idx$ small enough neighborhoods of $\eqcl_\idx$ for $\idx \in \indicesalt$.
    Then, there exists $\const > 0$, $\step_0 > 0$ such that, for any $\step \leq \step_0$, for any $\invmeas$ invariant measure of $(\curr[\accstate])_{\run \geq \start}$,
    \begin{equation}
        \invmeas \parens*{\vecspace \setminus \bigcup_{\idx \in \indicesalt} \V_\idx} \leq e^{-\frac{\const}{\step}}\,.
    \end{equation}
\end{proposition}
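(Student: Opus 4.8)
The plan is to split $\vecspace \setminus \bigcup_{\idx\in\indicesalt}\V_\idx$ into a piece far from $\crit\obj$, which must be handled by hand since all the transition estimates of the preceding section live on bounded sets, and a bounded piece near $\crit\obj$ but away from $\bigcup_{\idx\in\indicesalt}\V_\idx$, to which \cref{lem:ub_invmeas_domain} applies directly. First, for each $\kComp\in\indicesalt$ fix nested neighborhoods $\open'_\kComp\subset\open_\kComp$ of $\eqcl_\kComp$ with $\cl{\open'_\kComp}\subset\open_\kComp$ and $\cl{\open_\kComp}\subset\V_\kComp$, all small enough that \cref{lemma:ground_states_loc_min} applies (this uses that each $\eqcl_\kComp$ is minimizing, and the ``$\V_\idx$ small enough'' clause lets $\open_\kComp$ sit inside $\V_\kComp$); set $\beta_\kComp\defeq\dquasipot(\eqcl_\kComp,\vecspace\setminus\open'_\kComp)>0$. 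Pick also small neighborhoods $\open_\jComp$ of the remaining components $\eqcl_\jComp$, $\jComp\notin\indicesalt$, put $\V\defeq\bigcup_{\idx\in\indices}\open_\idx$, and note that since $\open_\kComp\subset\V_\kComp$ it suffices to bound $\invmeas(\domain_\infty)$ for $\domain_\infty\defeq\vecspace\setminus\bigcup_{\kComp\in\indicesalt}\open_\kComp$.

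For the far-away piece, fix a ball $\bigcpt=\clball(0,\Radius)$ with $\crit\obj\cup\V\subset\ball(0,\Radius/2)$ and $\Radius$ large enough that $\inf_{\norm{\point}\geq\Radius/2}\bdpot(\point)-\max_\idx\sup_{\eqcl_\idx}\bdpot\geq 1$, which is possible because $\bdpot$ is coercive (\cref{def:bdpot}); by \cref{lem:B_is_descent} this forces $\dquasipot(\eqcl_\idx,\vecspace\setminus\clball(0,\Radius/2))\geq 1$ for every $\idx$. Using \cref{lem:invmeas_from_induced_invmeas} with the neighborhood $\V$ and the fact that $\invmeas[\V]$ is a probability measure,
\[
\invmeas(\vecspace\setminus\bigcpt)\;\leq\;\sup_{\point\in\V}\ex_\point\bracks*{\,\sum_{\run=\start}^{\hittime_\V-1}\oneof{\accstate_\run\notin\bigcpt}\,}.
\]
Since $\accstate_\start=\point\in\V\subset\bigcpt$, the sum vanishes unless the excursion leaves $\bigcpt$ before returning to $\V$, so it is at most $\oneof{\hittime_{\vecspace\setminus\bigcpt}<\hittime_\V}\,\hittime_\V$; the strong Markov property at $\hittime_{\vecspace\setminus\bigcpt}$ then bounds its expectation by $\prob_\point(\hittime_{\vecspace\setminus\bigcpt}<\hittime_\V)\cdot\sup_{\statealt\in\bigcptalt}\ex_\statealt[\hittime_\V]$, where $\bigcptalt$ is the fixed compact containing $\accstate_{\hittime_{\vecspace\setminus\bigcpt}}$ provided by \cref{lem:growth_iterates}. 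The last supremum is a finite constant, obtained by combining \cref{lem:est_going_back_to_crit} (with $\open\gets\V$, $\bigcpt\gets\bigcpt_0$) on $\bigcpt_0$ with the return-time bound of the first item of \cref{lem:exponential_tail_bounds} on $\bigcptalt\setminus\bigcpt_0$ via the strong Markov property, where $\bigcpt_0$ is the compact of that item (and $\Radius$ is enlarged if needed so that $\bigcpt_0\subset\bigcptalt$). The probability factor is bounded, via \cref{lem:hitting_time_domain} applied with $\domain=\vecspace\setminus\clball(0,\Radius/2)$ (so $\domain_{-\Radius/2}=\vecspace\setminus\ball(0,\Radius)$) and $\precs=\tfrac12$, by $\exp\bigl(-(\dquasipot(\eqcl_\idx,\domain)-\tfrac12)/\step\bigr)\leq e^{-1/(2\step)}$ uniformly for $\point\in\open_\idx$. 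Hence $\invmeas(\vecspace\setminus\bigcpt)\leq\Const\,e^{-1/(2\step)}$ for all small $\step$.

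For the bounded piece, $\domain_\infty\cap\bigcpt=\bigcpt\setminus\bigcup_{\kComp\in\indicesalt}\open_\kComp$ is bounded, at positive distance from each $\open'_\kComp$, and (after enlarging $\bigcpt$ if needed) contains a full neighborhood of every $\eqcl_\jComp$ with $\jComp\notin\indicesalt$. Choose a bounded open set $\domain$ and $\margin_\domain>0$ with $\domain_{-\margin_\domain}\supseteq\bigcpt\setminus\bigcup_\kComp\open_\kComp$, with $\domain$ still disjoint from each $\open'_\kComp$ and still containing each such $\eqcl_\jComp$ in its interior. Then $\dquasipot(\eqcl_\kComp,\domain)\geq\beta_\kComp>0$ for $\kComp\in\indicesalt$ while $\dquasipot(\eqcl_\jComp,\domain)=0$ for $\jComp\notin\indicesalt$, and since $\argmin_\idx\dinvpot_\idx\subset\indicesalt$ we have $\dinvpot_\jComp>\min_\idx\dinvpot_\idx$ for all $\jComp\notin\indicesalt$, whence
\[
\min_\idx\bigl\{\dinvpot_\idx+\dquasipot(\eqcl_\idx,\domain)\bigr\}-\min_\idx\dinvpot_\idx\;\geq\;\min\bigl(\min\nolimits_{\jComp\notin\indicesalt}(\dinvpot_\jComp-\min\nolimits_\idx\dinvpot_\idx),\ \min\nolimits_{\kComp\in\indicesalt}\beta_\kComp\bigr)\eqdef 2\const_0>0.
\]
Applying \cref{lem:ub_invmeas_domain} to this $\domain$ with $\precs=\const_0$ gives $\invmeas(\bigcpt\setminus\bigcup_\kComp\open_\kComp)\leq\invmeas(\domain_{-\margin_\domain})\leq e^{-\const_0/\step}$ for small $\step$. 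Since $\domain_\infty=(\vecspace\setminus\bigcpt)\cup(\bigcpt\setminus\bigcup_\kComp\open_\kComp)$ (using $\open_\kComp\subset\bigcpt$), we get $\invmeas(\domain_\infty)\leq\Const\,e^{-1/(2\step)}+e^{-\const_0/\step}\leq e^{-\const/\step}$ with $\const\defeq\tfrac12\min(\tfrac12,\const_0)$ for all small $\step$, and a fortiori $\invmeas(\vecspace\setminus\bigcup_{\idx\in\indicesalt}\V_\idx)\leq e^{-\const/\step}$.

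The main obstacle is the far-away piece: there is no off-the-shelf large-deviations statement forbidding the invariant mass from leaking to infinity, so one must marry the Lyapunov-type return bound of \cref{lem:exponential_tail_bounds} — which controls excursion lengths but only yields $\bigoh(1)$, not exponentially small, expected return times — with the large-deviations bound of \cref{lem:hitting_time_domain} — which supplies the exponentially small \emph{probability} of a long excursion — the link being that $\bdpot$, hence $\dquasipot(\eqcl_\idx,\argdot)$ evaluated at far-away sets, is coercive through \cref{lem:B_is_descent}. Working with the induced chain via \cref{lem:invmeas_from_induced_invmeas} is what lets one keep the ball $\bigcpt$ fixed and thereby avoid any loss of uniformity in $\step_0$. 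The bounded piece, by contrast, is essentially a single bookkeeping application of \cref{lem:ub_invmeas_domain} together with \cref{lemma:ground_states_loc_min}.
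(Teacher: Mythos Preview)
Your proof is correct and, on one point, more careful than the paper's own argument. The paper does \emph{not} split into a far-away and a bounded piece: it simply sets $\domain\defeq\vecspace\setminus\bigcup_{\idx\in\indicesalt}\U_{\margin/2}(\eqcl_\idx)$, observes that $\vecspace\setminus\bigcup_{\idx\in\indicesalt}\V_\idx\subset\domain_{-\margin/2}$, and then invokes \cref{lem:ub_invmeas_domain} with this $\domain$ together with the positivity of $\dquasipot(\eqcl_\idx,\domain)$ for $\idx\in\indicesalt$ (from \cref{lemma:ground_states_loc_min}) and the strict gap $\dinvpot_\jComp>\min_\idx\dinvpot_\idx$ for $\jComp\notin\indicesalt$, exactly as in your bounded-piece computation. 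This is shorter, but it silently applies \cref{lem:ub_invmeas_domain} to an \emph{unbounded} $\domain$, whereas that lemma is stated (and proved, via \cref{lem:est_going_back_to_crit} with $\bigcpt\gets\cl(\V\cup\domain)$) only for bounded $\domain$.

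Your decomposition fixes precisely this gap: you restrict \cref{lem:ub_invmeas_domain} to a genuinely bounded $\domain$, and for the remainder $\vecspace\setminus\bigcpt$ you manufacture the missing ingredient---a uniform bound on $\sup_{\statealt}\ex_\statealt[\hittime_\V]$ over an unbounded starting set---by combining the Lyapunov return-time control of \cref{lem:exponential_tail_bounds} with \cref{lem:est_going_back_to_crit}, and you get the exponentially small probability factor from \cref{lem:hitting_time_domain} and the coercivity of $\bdpot$ through \cref{lem:B_is_descent}. One small wrinkle: bounding the excursion sum by $\oneof{\hittime_{\vecspace\setminus\bigcpt}<\hittime_\V}\,\hittime_\V$ and then appealing to strong Markov at $\hittime_{\vecspace\setminus\bigcpt}$ leaves the pre-exit contribution $\hittime_{\vecspace\setminus\bigcpt}$ uncontrolled; the clean version uses that the sum is actually at most $\oneof{\hittime_{\vecspace\setminus\bigcpt}<\hittime_\V}\,(\hittime_\V-\hittime_{\vecspace\setminus\bigcpt})$ (no visits outside $\bigcpt$ before $\hittime_{\vecspace\setminus\bigcpt}$), after which strong Markov gives exactly the product you wrote. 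With that adjustment the argument is complete.
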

\begin{proof}
    Take $\margin > 0$ small enough so that, for any $\idx \in \indicesalt$,
    $\U_\margin(\eqcl_\idx) \subset \V_\idx$ and 
    \begin{equation}
        d(\eqcl_\idx, \vecspace \setminus \U_\margin(\eqcl_\idx)) > 0\,.
    \end{equation}
    This is possible 
    by \cref{lemma:ground_states_loc_min}.

    Define 
    \begin{equation}
        \domain \defeq \vecspace \setminus \bigcup_{\idx \in \indicesalt} \U_{\margin/2}(\eqcl_\idx)\,.
    \end{equation}
    With the notations of \cref{lem:ub_invmeas_domain}, we show that
    \begin{equation}
        \vecspace \setminus \bigcup_{\idx \in \indicesalt} \V_\idx \subset \domain_{-\margin/2}\,.
    \end{equation}
    Indeed, take $\state \in \vecspace \setminus \bigcup_{\idx \in \indicesalt} \V_\idx$.
    Then, for any $\idx \in \indicesalt$, it holds that $d(\state, \eqcl_\idx) \geq \margin$ and so, we have
    \begin{equation}
        d(\state, \vecspace \setminus \domain)
        = d \parens*{\state, \vecspace \setminus \bigcup_{\idx \in \indicesalt} \U_{\margin/2}(\eqcl_\idx)}
        \geq \frac{\margin}{2}\,.
    \end{equation}
    Hence $\state \in \domain_{-\margin/2}$ and, it suffices to bound $\invmeas(\domain_{-\margin/2})$ to show the result.

    Moreover, for any $\idx \in \indicesalt$,
    \begin{equation}
        \dquasipot(\eqcl_\idx, \domain) \geq \dquasipot(\eqcl_\idx, \vecspace \setminus \U_{\margin/2}(\eqcl_\idx)) > 0\,,
    \end{equation}
    so that the quantity
    \begin{equation}
        \const \defeq \min \parens*{
            \min_{\idx \notin \indicesalt} \dinvpot_\idx - \min_{\idx \in \indices} \dinvpot_\idx,
            \min_{\idx \in \indicesalt} \dquasipot(\eqcl_\idx, \domain)
        }
    \end{equation}
     is positive.

     Apply \cref{lem:ub_invmeas_domain} with $\margin_\domain \gets \margin/2$, $\precs \gets \const / 2$ to get that, for any $\step \leq \step_0$,
     \begin{equation}
         \invmeas(\domain_{-\margin/2}) \leq 
             \exp \parens*{
            - \frac{
                \min_{\idx \in \indices}
                \braces*{
                    \dinvpot(\eqcl_\idx)
                    +
                    \dquasipot\parens*{\eqcl_\idx, \domain} 
                }
                - \min_{\idxalt \in \indices} \dinvpot(\eqcl_\idxalt)
            }{\step}
            + \frac{\const}{2\step}
        }\,.
     \end{equation}
     But, for any $\idx \in \indices$, the exponent can be estimated as 
     \begin{equation}
            \braces*{
                    \dinvpot(\eqcl_\idx)
                    +
                    \dquasipot\parens*{\eqcl_\idx, \domain} 
                }
                - \min_{\idxalt \in \indices} \dinvpot(\eqcl_\idx)
            \geq
            \begin{cases}
             \dinvpot(\eqcl_\idx) - \min_{\idxalt \in \indices} \dinvpot(\eqcl_\idxalt) & \text{if } \idx \notin \indicesalt\\
            \dquasipot(\eqcl_\idx, \domain) & \text{if } \idx \in \indicesalt\,,
            \end{cases}
     \end{equation}
     which is always positive, even in the first case, since $\argmin_{\idxalt \in \indices} \dinvpot(\eqcl_\idxalt) \subset \indicesalt$.
     Therefore, it holds that
       \begin{equation}
                \min_{\idx \in \indices}
                \braces*{
                    \dinvpot(\eqcl_\idx)
                    +
                    \dquasipot\parens*{\eqcl_\idx, \domain} 
                }
                - \min_{\idxalt \in \indices} \dinvpot(\eqcl_\idxalt)
            - \precs
            \geq \const - \precs  = \frac{\const}{2}\,,
       \end{equation}
       which concludes the proof.
\end{proof}

\WAdelete{
We finish this section with a sufficient condition for a $\cpt_\idx$ to be a ground state.
For this we need to define the notion of basin of attraction.
\begin{definition}
 For any $\eqcl_\idx$ connected component of $\crit\obj$, we define its \emph{basin of attraction} as  
\begin{equation}
    \basin(\eqcl_\idx) \defeq \setdef*{\state \in \vecspace}{\lim_{\time \to +\infty} d \parens*{\flowmap_\time(\state), \eqcl_\idx}= 0}\,.
\end{equation}
\end{definition}
\Cref{lemma:cv_flow} implies that $\basin(\cpt_1), \dots, \basin(\cpt_\neqcl)$ form a partition of $\vecspace$.

\begin{lemma}
    \label{lemma:sufficient_condition_ground_state}
    Consider $\idx \in \indices$ such that $\eqcl_\idx$ is asymptotically stable.
    If, 
    \begin{equation}
        \dquasipot\left(\eqcl_\idx, \bd \basin(\eqcl_\idx)\right)
        \geq
        \neqcl
        \sup_{\idxalt \neq \idx, \idxaltalt \in \indices} \dquasipot\left(\eqcl_\idxalt, \basin(\eqcl_\idxaltalt)\right)\,,
    \end{equation}
    then $\idx \in \groundstates$.
    Moreover, if this inequality is strict, then $\idx$ is the unique ground state.
\end{lemma}
}

\WAdelete{
Using the fact that the gradient flows always converge to some connected components of the critical points, we can restrict the trees involved in the computation of the $\dinvpot_\idx$.

We now have the following alternative formula for $\dinvpot_\idx$.
\begin{lemma}
	For every $\idx \in \indices$,
	\begin{equation}
        \invpot(\eqcl_\idx) = \min_{\tree \in \treesalt_\idx} \sum_{(\idxalt \to \idxaltalt) \in \tree} \dquasipot_{\idxalt, \idxaltalt}\,,
	\end{equation}
	where $\treesalt_\idx$ denotes the set of trees rooted at $\idx$ in the graph on $\indices$ where there is an edge between $\idxalt$ and $\idxalt$ if and only if
	\begin{equation}
		\cl\basin(\eqcl_\idxalt) \cap \cl \basin(\eqcl_\idxaltalt) \neq \emptyset\,,
	\end{equation}
\end{lemma}
\begin{proof}
    Since, for any $\idx \in \indices$, $\treesalt_\idx \subset \trees_\idx$, we have that
    \begin{equation}
        \invpot(\eqcl_\idx) \leq \min_{\tree \in \treesalt_\idx} \sum_{(\idxalt \to \idxaltalt) \in \tree} \dquasipot_{\idxalt, \idxaltalt}\,.
    \end{equation}
    Therefore, it suffices to show the reverse inequality.
    
    Fix $\idx \in \indices$, $\tree \in \trees_\idx$ and $\idxalt \to \idxaltalt \in \tree$ such that $\cl \basin(\eqcl_\idxalt) \cap \cl \basin(\eqcl_\idxaltalt) = \emptyset$.
    We show that we can create another tree from $\tree$ that does not contain this edge but that has a smaller weight.
    By definition of $\dquasipot_{\idxalt, \idxaltalt}$, there exist sequences $\horizon^\runalt > 0$, $\pth^\runalt \in \contfuncs([0, \horizon^\runalt], \vecspace)$ such that $\pth^\runalt_0 \in \eqcl_\idxalt$, $\pth^\runalt_{\horizon^\runalt} \in \eqcl_\idxaltalt$ and $\action_{0, \horizon^\runalt}(\pth^\runalt) \to \dquasipot_{\idxalt, \idxaltalt}$ as $\runalt \to +\infty$.

    For any $\runalt$, define $\time_\runalt \defeq \sup \setdef*{\time \in [0, \horizon^\runalt]}{\pth^\runalt_\time \in \cl \basin(\eqcl_\idxalt)}$. By continuity of $\pth^\runalt$, $\pth^\runalt_{\time_\runalt}$ still belongs to $\cl \basin(\eqcl_\idxalt)$.
    Since $\pth^\runalt_{\horizon^\runalt} \in \eqcl_\idxaltalt \subset \cl \basin(\eqcl_\idxaltalt)$, we have that $\time_\runalt < \horizon^\runalt$.
    Moreover, by definition of $\time_\runalt$, for any $\time > \time_\runalt$, $\pth^\runalt_\time$ cannot be $ \basin(\eqcl_\idxalt)$.

    Consider, for $\time_\runalt < \time < \horizon_\runalt$, $\indices_\time \defeq \setdef{\idxaltaltalt \in \indices}{\exists \timealt \in (\time_\runalt, \time), \pth^\runalt_\timealt \in \basin(\eqcl_\idxaltaltalt)}$. These sets are all non-empty by \cref{lemma:cv_flow}, are compact as subsets of the finite discrete set $\indices$ and satisfy, for $\time_\runalt < \timealt < \time < \timealt_\runalt$, $\indices_\timealt \subset \indices_\time$.
    Therefore, their intersection $\bigcap_{\time \in (\time_\runalt, \timealt_\runalt)} \indices_\time$ is non-empty.
    Take $\idxaltaltalt_\runalt$ in this intersection. By construction, $\idxaltaltalt_\runalt \neq \idxalt$ and, by continuity of $\pth^\runalt$, $\pth^\runalt_{\timealt_\runalt}$ belongs to $\cl \basin(\eqcl_{\idxaltaltalt_\runalt})$ so that $\cl \basin(\eqcl_\idxalt) \cap \cl \basin(\eqcl_{\idxaltaltalt_\runalt}) \neq \emptyset$.
    Since $\cl \basin(\eqcl_\idxalt)$ and $\cl \basin(\eqcl_\idxaltalt)$ are disjoint, $\idxaltaltalt_\runalt$ cannot be equal to $\idxaltalt$ as well. 
    Take $\timealt_\runalt \in (\time_\runalt, \horizon_\runalt)$ such that $\pth^\runalt_{\timealt_\runalt} \in \basin(\eqcl_{\idxaltaltalt_\runalt})$.

    Since $\indices$ is finite, there must be some $\idxaltaltalt \in \indices$ such that $\idxaltaltalt_\runalt = \idxaltaltalt$ for infinitely many $\runalt$.
    Without loss of generality, by replacing $\pth^\runalt$ by a subsequence, we can assume that $\idxaltaltalt_\runalt = \idxaltaltalt$ for all $\runalt$.
    We now show that we can replace the edge $\idxalt \to \idxaltalt$ by $\idxalt \to \idxaltaltalt$ in $\tree$ and, we obtain a tree with a smaller weight.

    For any $\runalt \geq \start$, consider the sets 
    \begin{align}
        \interv_\idxalt &\defeq \setdef*{\time \in [0, \horizon^\runalt]}{\pth^\runalt_\time \in \basin(\eqcl_\idxalt)}\,,\\
        \interv_\idxaltalt &\defeq \setdef*{\time \in [0, \horizon^\runalt]}{\pth^\runalt_\time \in \basin(\eqcl_\idxaltalt)}\,.
    \end{align}
    These two sets are closed, non-empty since $\eqcl_\idxalt \subset \basin(\eqcl_\idxalt)$ and $\eqcl_\idxaltalt \subset \basin(\eqcl_\idxaltalt)$, and disjoint since $\cl \basin(\eqcl_\idxalt) \cap \cl \basin(\eqcl_\idxaltalt) = \emptyset$.
    However, $[0, \horizon^\runalt]$ is connected so that $[0, \horizon^\runalt]$ cannot be contained in the union of these two sets $\interv_\idxalt \cup \interv_\idxaltalt$.

\end{proof}
}


\subsection{Main results}
\label{app:main}
In this section, we restate the main results \cref{thm:Gibbs,thm:unstable,thm:crit,thm:ground}
and provide their proofs. They are now mostly corollaries of the results of \cref{app:estimates_trans_inv,app:conv_stab}.
\begin{theorem}
Suppose that $\invmeas$ is invariant under \eqref{eq:SGD},
fix a tolerance level $\toler > 0$,
and
let $\nhd_{\iComp} \equiv \nhd_{\iComp}(\size)$, $\iComp = 1,\dotsc,\nComps$, be $\size$-neighborhoods of the components of $\crit\obj$.
Then, for all sufficiently small $\size,\step > 0$, we have
\begin{equation}
\abs*{\step\log\invmeas(\nhd_{\iComp})
	+ \energy_{\iComp} - \min\nolimits_{\jComp} \energy_{\jComp}}
	\leq \toler
\end{equation}
and
\begin{equation}
\abs*{\step\log\frac{\invmeas(\nhd_{\iComp})}{\invmeas(\nhd_{\jComp})}
	+ \energy_{\iComp} - \energy_{\jComp}}
	\leq \toler.
\end{equation}
More compactly, with notation as above, we have:
\begin{equation}
\invmeas(\nhd_{\iComp})
	\propto \exp\of*{-\frac{\energy_{\iComp} + \bigoh(\toler)}{\step}}.
\end{equation}
\end{theorem}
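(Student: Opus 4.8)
The plan is to deduce \cref{thm:Gibbs} from \Cref{prop:est_invmeas} — the discrete-time, unbounded-domain analogue of the Freidlin--Wentzell description of the invariant measure, \citep[Chap.~6]{FW98} — after which only bookkeeping remains. Two identifications are needed. By \Cref{corollary:equivalence_classes_are_connected_components_of_crit}, the equivalence classes $\eqcl_{\iComp}$ of $\sim$ are exactly the connected components $\comp_{\iComp}$ of $\crit\obj$ (see \cref{asm:obj}); and, unwinding definitions, the quasi-potential $\qpot$ of \eqref{eq:qpot-point} and the displacement cost $\dquasipot$ of \Cref{def:dquasipot} are both the infimum of the action $\action$ over finite-horizon curves joining two prescribed points, so $\qmat_{\iComp\jComp} = \dquasipot_{\iComp\jComp}$; comparing \eqref{eq:energy} with the definition of $\dinvpot$ — both equal to the minimum total weight of a spanning in-tree rooted at $\iComp$ in the complete graph on $\{1,\dotsc,\nComps\}$ — gives $\energy_{\iComp} = \dinvpot(\comp_{\iComp})$.

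Next I would pass from \eqref{eq:SGD} to the accelerated chain $(\accstate_{\run})_{\run\geq\start}$ of \eqref{eq:rescaled}. If $\invmeas$ is invariant under \eqref{eq:SGD}, \ie stationary for its one-step transition kernel $P$, then it is stationary for $P^{\floor{1/\step}}$, which (for $\step\le 1$) is exactly the transition kernel of $(\accstate_{\run})_{\run}$; hence $\invmeas$ is an invariant probability measure for the accelerated chain and \Cref{prop:est_invmeas} applies to it.

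Then, fixing $\toler>0$, I would invoke \Cref{prop:est_invmeas} with $\precs \gets \toler/2$ and with the neighborhoods $\V_{\iComp} \gets \nhd_{\iComp}(\size)$; this is legitimate once $\size$ is below the threshold that the proposition produces, since the $\size$-neighborhoods are then admissible small neighborhoods of the $\comp_{\iComp}=\eqcl_{\iComp}$. The proposition yields, for all sufficiently small $\step$,
\begin{equation*}
\exp\of*{-\tfrac{\energy_{\iComp} - \min\nolimits_{\jComp}\energy_{\jComp} + \toler/2}{\step}}
\;\le\; \invmeas(\nhd_{\iComp})
\;\le\; \exp\of*{-\tfrac{\energy_{\iComp} - \min\nolimits_{\jComp}\energy_{\jComp} - \toler/2}{\step}},
\end{equation*}
\ie $\abs*{\step\log\invmeas(\nhd_{\iComp}) + \energy_{\iComp} - \min\nolimits_{\jComp}\energy_{\jComp}} \le \toler/2 \le \toler$, which is \eqref{eq:invmeas-Gibbs-log}. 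Subtracting this bound for $\jComp$ from the one for $\iComp$ gives \eqref{eq:invmeas-Gibbs-diffs} (the two errors of size $\toler/2$ add up to $\toler$), and \eqref{eq:invmeas-Gibbs} is merely \eqref{eq:invmeas-Gibbs-log} rewritten, absorbing the $\iComp$-independent factor $\exp(\min\nolimits_{\jComp}\energy_{\jComp}/\step)$ into ``$\propto$'' and the residual $\pm\toler/2$ into $\bigoh(\toler)$.

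The real work is entirely upstream, in \Cref{prop:est_invmeas} and the transition-cost estimates of \Cref{prop:est_trans_prob} that feed it; within the present deduction the only delicate point is the one flagged above — checking that the prescribed $\size$-neighborhoods $\nhd_{\iComp}(\size)$ are eligible as the $\V_{\iComp}$'s of \Cref{prop:est_invmeas} — which is exactly why the statement is quantified over all sufficiently small $\size$ (and $\step$).
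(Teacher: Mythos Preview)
Your proposal is correct and follows essentially the same route as the paper: the paper's proof merely observes that an invariant measure for \eqref{eq:SGD} is \emph{a fortiori} invariant for the accelerated process $(\accstate_{\run})_{\run\geq\start}$, and then declares the result a direct consequence of \Cref{prop:est_invmeas}. You have simply made explicit the identifications $\eqcl_{\iComp}=\comp_{\iComp}$, $\dquasipot_{\iComp\jComp}=\qmat_{\iComp\jComp}$, $\dinvpot_{\iComp}=\energy_{\iComp}$ and the bookkeeping for \eqref{eq:invmeas-Gibbs-diffs}, all of which the paper leaves implicit.
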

\begin{proof}
	Note that if $\invmeas$ is invariant for \eqref{eq:SGD}, it is \emph{a fortiori} invariant for the accelerated process $(\curr[\accstate])_{\run \geq \start}$.
	This result is then a direct consequence of \cref{prop:est_invmeas} (\cref{app:estimates_trans_inv}).
\end{proof}

\begin{theorem}
Suppose that $\invmeas$ is invariant under \eqref{eq:SGD},
and let $\comp$ be a non-minimizing component of $\obj$.
Then, with notation as in \cref{thm:Gibbs}, there exists a minimizing component $\alt\comp$ of $\obj$ and a positive constant $\const \equiv \const(\comp,\alt\comp) > 0$ such that
\begin{equation}
\frac{\invmeas(\nhd)}{\invmeas(\alt\nhd)}
	\leq \exp\of*{-\frac{\const(\comp,\alt\comp) + \eps}{\step}}
\end{equation}
for all all sufficiently small $\step>0$ and all sufficiently small neighborhoods $\nhd$ and $\alt\nhd$ of $\comp$ and $\alt\comp$ respectively.
In particular, in the limit $\step\to0$, we have $\invmeas(\nhd)\to0$.
\end{theorem}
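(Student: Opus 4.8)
The plan is to obtain this as a short corollary of the Boltzmann--Gibbs estimate \cref{thm:Gibbs} together with the comparison of energies established in \cref{app:conv_stab}. First I would identify the component $\comp$ with an equivalence class $\eqcl_\idx$ of the relation $\sim$, which is legitimate since these classes are exactly the connected components of $\crit\obj$ (\cref{corollary:equivalence_classes_are_connected_components_of_crit}). Because $\comp$ is non-minimizing, \cref{lemma:minimizing_component_asympt_stable} shows that $\eqcl_\idx$ is not asymptotically stable, so \cref{lemma:not_asympt_stable_implies_unstable3} produces an index $\idxalt$ such that $\eqcl_\idxalt$ \emph{is} asymptotically stable --- hence minimizing, again by \cref{lemma:minimizing_component_asympt_stable} --- and such that $\energy_\idxalt < \energy_\idx$ (\cref{lemma:not_asympt_stable_implies_unstable3} is phrased in the $\dinvpot$-notation of \cref{app:estimates_trans_inv}, which denotes the same quantity as the energy $\energy$ of \eqref{eq:energy}). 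I would then set $\alt\comp \defeq \eqcl_\idxalt$ and $\const(\comp,\alt\comp) \defeq \tfrac{1}{2}\bigl(\energy_\idx-\energy_\idxalt\bigr)>0$.

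For the quantitative bound, fix $\eps\in\bigl(0,\const(\comp,\alt\comp)\bigr)$. Any measure $\invmeas$ invariant under \eqref{eq:SGD} is a fortiori invariant under the accelerated chain $(\accstate_\run)_{\run\geq\start}$, so \cref{thm:Gibbs} (equivalently \cref{prop:est_invmeas}) applies to it. Invoking \cref{thm:Gibbs} with tolerance $\const(\comp,\alt\comp)-\eps>0$, applied to a family of sufficiently small neighborhoods of the components of $\crit\obj$ in which $\nhd$ (around $\comp$) and $\alt\nhd$ (around $\alt\comp$) are taken as $\size$-neighborhoods for $\size$ small, estimate \eqref{eq:invmeas-Gibbs-diffs} gives, for all sufficiently small $\step>0$,
\[
\step\log\frac{\invmeas(\nhd)}{\invmeas(\alt\nhd)}\;\leq\;-\bigl(\energy_\idx-\energy_\idxalt\bigr)+\bigl(\const(\comp,\alt\comp)-\eps\bigr)\;=\;-\bigl(\const(\comp,\alt\comp)+\eps\bigr),
\]
using $\energy_\idx-\energy_\idxalt=2\,\const(\comp,\alt\comp)$; exponentiating yields \eqref{eq:invmeas-unstable}. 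Finally, since $\invmeas$ is a probability measure, $\invmeas(\alt\nhd)\leq1$, so the same bound gives $\invmeas(\nhd)\leq e^{-(\const(\comp,\alt\comp)+\eps)/\step}\to0$ as $\step\to0$, which is the last assertion.

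The substance of the argument lies entirely in \cref{lemma:not_asympt_stable_implies_unstable3} and in the estimate \cref{prop:est_invmeas}/\cref{thm:Gibbs}, both already established in \cref{app:estimates_trans_inv,app:conv_stab}; given those, this statement is a bookkeeping exercise rather than a genuine obstacle. The one point requiring care is the order of quantifiers: \cref{thm:Gibbs} fixes its tolerance first and only then produces thresholds on the neighborhood size $\size$ and on $\step$, so in the proof the admissible sizes of $\nhd,\alt\nhd$ and the upper bound on $\step$ depend on $\eps$ through $\const(\comp,\alt\comp)-\eps$ --- which is exactly what ``for all sufficiently small neighborhoods'' in the statement permits --- and the restriction $\eps<\const(\comp,\alt\comp)$ is harmless, being precisely the regime in which \eqref{eq:invmeas-unstable} carries information (one may always rename $\const(\comp,\alt\comp)$ to any fixed value in $\bigl(0,\energy_\idx-\energy_\idxalt\bigr)$ to enlarge this range).
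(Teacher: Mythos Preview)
Your proposal is correct and follows the same route as the paper's own proof, which simply invokes \cref{lemma:not_asympt_stable_implies_unstable3} to produce a minimizing $\eqcl_\idxalt$ with $\energy_\idxalt<\energy_\idx$ and then reads off the bound from \cref{thm:Gibbs}. You have merely filled in the bookkeeping (the choice $\const=\tfrac12(\energy_\idx-\energy_\idxalt)$, the tolerance $\const-\eps$ in \eqref{eq:invmeas-Gibbs-diffs}, and the quantifier order), which the paper leaves implicit.
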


\begin{proof}
	Let $\eqcl = \eqcl_\idx$ be a non-minimizing component. By \cref{lemma:not_asympt_stable_implies_unstable3}, there exists $\idxalt \in \indices$ such that $\energy_\idxalt < \energy_\idx$. The statement then follows from \cref{thm:Gibbs}.
\end{proof}
\begin{theorem}
Suppose that $\invmeas$ is invariant under \eqref{eq:SGD},
fix a tolerance level $\size>0$,
and
let $\nhd \equiv \nhd(\size)$ be a $\size$-neighborhood of $\crit\obj$.
Then there exists a constant $\const \equiv \const_{\size} > 0$ such that, for all sufficiently small $\step > 0$, we have:
\begin{equation}
\invmeas(\nhd)
	\geq 1 - e^{-\const/\step}.
\end{equation}
\end{theorem}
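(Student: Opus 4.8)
The plan is to obtain the statement as a short corollary of \cref{prop:inv_meas_ground_states}. First I would note that if $\invmeas$ is invariant under \eqref{eq:SGD}, then it is \emph{a fortiori} invariant under the accelerated (subsampled) chain $(\curr[\accstate])_{\run\geq\start}$ of \eqref{eq:SGD-acc}, since that chain is obtained from \eqref{eq:SGD} by observing it every $\floor{1/\step}$ steps. Hence every statement established for invariant measures of $(\curr[\accstate])_{\run\geq\start}$ in \cref{app:estimates_trans_inv} and \cref{app:conv_stab} applies verbatim to $\invmeas$.

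Next, set $\indicesalt \defeq \setdef{\idx\in\indices}{\eqcl_\idx \text{ is minimizing}}$. By \cref{lemma:not_asympt_stable_implies_unstable3}, whenever $\eqcl_\idx$ is non-minimizing there is some $\idxalt$ with $\dinvpot_\idxalt < \dinvpot_\idx$, so no non-minimizing component can attain $\min_{\idxalt}\dinvpot_\idxalt$; consequently $\argmin_{\idx\in\indices}\dinvpot_\idx\subseteq\indicesalt$, which is exactly the hypothesis required by \cref{prop:inv_meas_ground_states}. Given the prescribed radius $\size>0$, I would then choose neighborhoods $\V_\idx$ of $\eqcl_\idx$ ($\idx\in\indicesalt$) that are simultaneously small enough for \cref{prop:inv_meas_ground_states} to apply and contained in the open $\size$-neighborhood of $\eqcl_\idx$ — concretely, $\V_\idx = \U_{\min(\size,\radius_0)}(\eqcl_\idx)$, where $\radius_0$ is the smallness threshold supplied by that proposition. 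Since $\crit\obj=\bigcup_{\idx\in\indices}\eqcl_\idx$, this forces $\bigcup_{\idx\in\indicesalt}\V_\idx\subseteq\bigcup_{\idx\in\indices}\U_\size(\eqcl_\idx)=\nhd$.

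Applying \cref{prop:inv_meas_ground_states} then yields constants $\const>0$ and $\step_0>0$ with $\invmeas\bigl(\vecspace\setminus\bigcup_{\idx\in\indicesalt}\V_\idx\bigr)\leq e^{-\const/\step}$ for all $\step\leq\step_0$, and monotonicity of the measure gives $\invmeas(\nhd)\geq\invmeas\bigl(\bigcup_{\idx\in\indicesalt}\V_\idx\bigr)\geq 1-e^{-\const/\step}$, which is the claimed bound with $\const\equiv\const_\size$. The only point requiring real care — and the step I would double-check — is the compatibility of the various smallness requirements: \cref{prop:inv_meas_ground_states} inherits conditions on the $\V_\idx$ from \cref{lem:ub_invmeas_domain}, \cref{lemma:ground_states_loc_min} and \cref{lem:hitting_time_domain}, and one must verify that each of them is genuinely of the form ``$\V_\idx$ sufficiently small'' (so that further shrinking the $\V_\idx$ to fit inside $\nhd$ cannot invalidate them) rather than a two-sided constraint. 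A quick inspection of those lemmas confirms this, so I do not anticipate a genuine obstacle; everything else is bookkeeping.
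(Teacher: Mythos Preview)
Your proposal is correct and follows essentially the same route as the paper: define $\indicesalt$ as the set of minimizing components, invoke \cref{lemma:not_asympt_stable_implies_unstable3} to check that $\argmin_{\idx}\dinvpot_\idx\subseteq\indicesalt$, and apply \cref{prop:inv_meas_ground_states}. The paper's proof is terser and leaves implicit the bookkeeping you spell out (passage to the accelerated chain, fitting the $\V_\idx$ inside $\nhd$, monotonicity), but the argument is the same.
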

\begin{proof}
	We actually show a slighly stronger result. Define $\indicesalt \defeq \setdef*{\idx \in \indices}{\eqcl_\idx \text{ is minimizing}}$. 
	We prove that there exists $\open$ neighborhood of $\bigcup_{\idx \in \indicesalt} \eqcl_\idx$, a constant $\const > 0$, such that, for all $\step$ small enough,
\begin{equation}
\invmeas \parens*{\vecspace \setminus \bigcup_{\idx \in \indicesalt} \V_\idx} \leq e^{-\frac{\const}{\step}}
	\eqdot
\end{equation}
This is then a consequence of \cref{prop:inv_meas_ground_states} (\cref{app:conv_stab}) with $\indicesalt \gets \indicesalt$. Note $\indicesalt$ contain the ground states since they are minimizing by \cref{lemma:not_asympt_stable_implies_unstable3}.
\end{proof}
\begin{theorem}
Suppose that $\invmeas$ is invariant under \eqref{eq:SGD},
fix a tolerance level $\size>0$,
and
let $\nhd_{\iGround} \equiv \nhd_{\iGround}(\size)$ be a $\size$-neighborhood of the system's ground state $\ground$.
Then there exists a constant $\const \equiv \const_{\size} > 0$ such that, for all sufficiently small $\step>0$, we have:
\begin{equation}
\invmeas(\nhd_{\iGround})
	\geq 1 - e^{-\const/\step}.
\end{equation}
\end{theorem}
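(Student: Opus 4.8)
The plan is to obtain \cref{thm:ground} as an essentially immediate corollary of \cref{prop:inv_meas_ground_states} in \cref{app:conv_stab}, once the ground state is identified with the correct index set and the reduction from \eqref{eq:SGD} to its accelerated version is made.

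First I would recall that, by the definitions \eqref{eq:energy} and of $\dinvpot$ in \cref{app:estimates_trans_inv}, one has $\energy_{\iComp} = \dinvpot_{\iComp}$ for every $\iComp = 1,\dotsc,\nComps$, so that the ground state $\ground = \bigcup_{\iComp\in\argmin_{\jComp}\energy_{\jComp}}\comp_{\iComp}$ coincides with $\bigcup_{\idx\in\indicesalt}\eqcl_{\idx}$ for $\indicesalt \defeq \argmin_{\idx\in\indices}\dinvpot_{\idx}$. Next I would check that every $\eqcl_{\idx}$ with $\idx\in\indicesalt$ is a minimizing component: if some such $\eqcl_{\idx}$ were non-minimizing, \cref{lemma:not_asympt_stable_implies_unstable3} would furnish an index $\idxalt\in\indices$ with $\dinvpot_{\idxalt} < \dinvpot_{\idx}$, contradicting $\idx\in\argmin_{\idx}\dinvpot_{\idx}$. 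Hence $\indicesalt$ satisfies the hypotheses of \cref{prop:inv_meas_ground_states}: it consists of minimizing components and it trivially contains $\argmin_{\idx\in\indices}\dinvpot_{\idx}$.

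Then comes the routine neighborhood bookkeeping. Given the $\size$-neighborhood $\nhd_{\iGround}(\size)$ of $\ground$, I would choose, for each $\idx\in\indicesalt$, a neighborhood $\V_{\idx}$ of $\eqcl_{\idx}$ that is small enough both to meet the smallness requirement of \cref{prop:inv_meas_ground_states} and to be contained in $\setdef{\point\in\vecspace}{\dist(\eqcl_{\idx},\point)<\size}$, so that $\bigcup_{\idx\in\indicesalt}\V_{\idx}\subseteq\nhd_{\iGround}(\size)$. Applying \cref{prop:inv_meas_ground_states} then yields $\const = \const_{\size} > 0$ and $\step_0 > 0$ such that, for every $\step\leq\step_0$ and every probability measure $\invmeas$ invariant for the accelerated chain $(\curr[\accstate])_{\run\geq\start}$,
\[
\invmeas\Bigl(\vecspace\setminus\bigcup_{\idx\in\indicesalt}\V_{\idx}\Bigr)\leq e^{-\const/\step},
\qquad\text{so that}\qquad
\invmeas(\nhd_{\iGround}(\size))\geq 1-e^{-\const/\step}.
\]
Finally, any measure invariant under \eqref{eq:SGD} is \emph{a fortiori} invariant under its $\floor{1/\step}$-fold iterate, \ie under $(\curr[\accstate])_{\run\geq\start}$, so the bound transfers verbatim to \eqref{eq:SGD}, which completes the argument.

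The substantive work lies entirely upstream of this statement: \cref{prop:inv_meas_ground_states} combines the quasi-potential upper bound on the mass of a region bounded away from $\crit\obj$ (\cref{lem:ub_invmeas_domain}), the positivity of $\dquasipot(\eqcl_{\idx},\vecspace\setminus\U_{\margin}(\eqcl_{\idx}))$ for minimizing $\eqcl_{\idx}$ (\cref{lemma:ground_states_loc_min}), and the strict drop $\dinvpot_{\idxalt} < \dinvpot_{\idx}$ for non-minimizing $\idx$ (\cref{lemma:not_asympt_stable_implies_unstable3}). At the level of \cref{thm:ground}, the only point requiring mild care is that the exponent $\const$ genuinely depends on $\size$ through the choice of the $\V_{\idx}$ and cannot be taken uniform as $\size\to0$; no new estimate is needed.
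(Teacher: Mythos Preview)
Your proposal is correct and follows essentially the same approach as the paper's own proof, which simply applies \cref{prop:inv_meas_ground_states} with $\indicesalt = \argmin_{\idx\in\indices}\energy_{\idx}$ and invokes \cref{lemma:not_asympt_stable_implies_unstable3} to ensure these indices are minimizing. Your version is more explicit about the neighborhood bookkeeping and the passage from \eqref{eq:SGD}-invariance to invariance under the accelerated chain, but these are exactly the details the paper leaves implicit.
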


\begin{proof}
	It suffices to apply \cref{prop:inv_meas_ground_states} (\cref{app:conv_stab}) with $\indicesalt$ as the set of ground states $\argmin_{\idx \in \indices} \energy_\idx$.
	They are necessarily minimizing by \cref{lemma:not_asympt_stable_implies_unstable3}.
\end{proof}

\subsection{Extension: the mean occupation measures}
\label{app:main_occmeas}

We now state and prove analogue of  \cref{thm:Gibbs,thm:unstable,thm:crit,thm:ground} for the mean occupation measure $\curr[\occmeas]$.

For this we will need a strengthened version of \cref{asm:SNR-weak}, \viz

\smallbreak
\asmtag{\ref*{asm:SNR}$^{\ast\ast}$}
\begin{assumption}
[Variant of \cref{asm:SNR-weak}]
\label{asm:SNR-weak-occmeas}
\label{assumption:coercivity_noise}
The \acl{SNR} of $\orcl$ satisfies
\begin{equation}
\label{eq:SNR-weak-occmeas}
\tag{\ref*{eq:SNR}$^{\ast\ast}$}
\frac{\norm{\grad\obj(\point)}^{2}}{\bdvar(\obj(\point))}
	\to\infty
	\quad
	\text{as}
	\quad
\norm{\point}
	\to \infty
	\eqdot
\end{equation}
\end{assumption}

In this section, we will posit that \cref{asm:SNR-weak-occmeas} holds in addition to
\crefnosort{asm:obj-weak,,asm:noise-weak,,asm:costs}.
We then have the following series of results for the occupation measure $\curr[\occmeas]$ of \eqref{eq:SGD}.

\smallbreak
\thmtag{\ref*{thm:Gibbs}$^{\ast}$}
\begin{theorem}
[Occupation variant of \cref{thm:Gibbs}]
\label{thm:Gibbs-occmeas}
Fix a tolerance level $\toler > 0$,
and
let $\nhd_{\iComp} \equiv \nhd_{\iComp}(\size)$, $\iComp = 1,\dotsc,\nComps$, be $\size$-neighborhoods of the components of $\crit\obj$.
Then, for all sufficiently small $\size,\step > 0$ and large enough $\run$, we have
\begin{equation}
\abs*{\step\log\curr[\occmeas](\nhd_{\iComp})
	+ \energy_{\iComp} - \min\nolimits_{\jComp} \energy_{\jComp}}
	\leq \toler
\end{equation}
and
\begin{equation}
\abs*{\step\log\frac{\curr[\occmeas](\nhd_{\iComp})}{\curr[\occmeas](\nhd_{\jComp})}
	+ \energy_{\iComp} - \energy_{\jComp}}
	\leq \toler.
\end{equation}
More compactly, with notation as above, we have:
\begin{equation}
\curr[\occmeas](\nhd_{\iComp})
	\propto \exp\of*{-\frac{\energy_{\iComp} + \bigoh(\toler)}{\step}}.
\end{equation}
\end{theorem}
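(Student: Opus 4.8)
The plan is to derive \cref{thm:Gibbs-occmeas} from \cref{thm:Gibbs} by a tightness-and-compactness argument, using the fact (recorded in the remark following \cref{thm:ground}) that every weak limit point of the sequence $(\curr[\occmeas])_{\run}$ is invariant under \eqref{eq:SGD}. Concretely, I would proceed in three steps: (i) prove that the family $\{\curr[\occmeas]\}_{\run}$ is tight; (ii) show that its weak limit points are invariant under \eqref{eq:SGD}, so that \cref{thm:Gibbs} applies to them; and (iii) upgrade, by contradiction, the statement for limit points to one valid for all sufficiently large $\run$.

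Step (i) is where \cref{asm:SNR-weak-occmeas} enters, and it is the step I expect to be the crux. Set $\psi(\point) \defeq \norm{\grad\obj(\point)}^{2}/\bdvar(\obj(\point)) - \gradientbound$; by \cref{asm:SNR-weak-occmeas} one has $\psi(\point)\to\infty$ as $\norm{\point}\to\infty$, and $\psi$ is bounded below, so its sublevel sets $\{\psi\leq M\}$ are compact. Re-running the computation of \cref{lem:lyapunov_condition} with the potential $\bdpot$ of \cref{def:bdpot}, and invoking \cref{corollary:concentration} to control $\ex[\norm{\noise(\curr,\curr[\sample])}^{2}/\bdvar(\obj(\curr))\mid\curr]$, yields, for all small enough $\step$, a compact set $\cpt$ and a finite constant $\const$ with $\ex[\bdpot(\next)\mid\curr=\point] - \bdpot(\point) \leq -\step\,\psi(\point) + \step\,\const\,\oneof{\point\in\cpt}$ for every $\point\in\vecspace$. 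Summing this over $\run=0,\dots,\nRuns-1$, using $\bdpot\geq\inf\bdpot>-\infty$ (coercivity of $\obj$), and dividing by $\nRuns\step$ gives $\int\psi\dd\curr[\occmeas] \leq \const_{1}/(\nRuns\step) + \const_{2}$ with $\const_{1},\const_{2}$ independent of $\nRuns$, hence $\sup_{\nRuns\geq 1/\step}\int\psi\dd\curr[\occmeas] < \infty$. Markov's inequality applied to a global non-negative shift of $\psi$ then yields $\sup_{\nRuns\geq 1/\step}\curr[\occmeas]\parens{\vecspace\setminus\{\psi\leq M\}} \to 0$ as $M\to\infty$, \ie the family is tight.

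For step (ii), the chain is weak Feller by \cref{lem:feller}, so for every bounded continuous $\func\from\vecspace\to\R$ one has $\int\func\dd(\curr[\occmeas]\mathcal{P}) - \int\func\dd\curr[\occmeas] = \tfrac{1}{\run}\bigl(\ex[\func(\curr)] - \ex[\func(\init)]\bigr)\to 0$, while $\int\func\dd(\curr[\occmeas]\mathcal{P}) = \int\mathcal{P}\func\dd\curr[\occmeas]$ with $\mathcal{P}\func$ again bounded continuous; passing to a weak limit $\invmeas$ along a subsequence gives $\int\func\dd\invmeas = \int\mathcal{P}\func\dd\invmeas$ for all such $\func$, so $\invmeas$ is invariant under \eqref{eq:SGD} and \cref{thm:Gibbs} applies to it (with the energies $\energy_{\iComp}$ independent of the neighborhood scale).

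For step (iii), fix $\size,\step>0$ small enough that \cref{thm:Gibbs} holds for invariant measures at scales $\size$ and $2\size$ with tolerance $\toler/2$. If the first bound of \cref{thm:Gibbs-occmeas} failed, there would exist $\run_{j}\to\infty$ and, after passing to a subsequence (finitely many components), a fixed $\iComp$ with $\abs{\step\log\occmeas_{\run_{j}}(\nhd_{\iComp}(\size)) + \energy_{\iComp} - \min_{\jComp}\energy_{\jComp}} > \toler$ for all $j$; by step (i), pass to a further subsequence with $\occmeas_{\run_{j}}\rightharpoonup\invmeas$, a probability measure, invariant under \eqref{eq:SGD}. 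The portmanteau inequalities $\liminf_{j}\occmeas_{\run_{j}}(\nhd_{\iComp}(\size)) \geq \invmeas(\nhd_{\iComp}(\size))$ and $\limsup_{j}\occmeas_{\run_{j}}(\nhd_{\iComp}(\size)) \leq \invmeas\bigl(\cl(\nhd_{\iComp}(\size))\bigr) \leq \invmeas(\nhd_{\iComp}(2\size))$, combined with \cref{thm:Gibbs} applied to $\invmeas$ at scales $\size$ and $2\size$, then force $\limsup_{j}\abs{\step\log\occmeas_{\run_{j}}(\nhd_{\iComp}(\size)) + \energy_{\iComp} - \min_{\jComp}\energy_{\jComp}} \leq \toler/2 < \toler$, a contradiction. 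The ratio bound and the compact $\propto$-form follow from the first bound exactly as in the proof of \cref{thm:Gibbs}. The main obstacle is precisely step (i): without \cref{asm:SNR-weak-occmeas}, the sublevel sets of $\psi$ need not be compact and occupation mass could leak to infinity, so the strengthened signal-to-noise hypothesis is exactly what makes tightness — and hence the whole argument — go through.
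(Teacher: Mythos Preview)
Your proposal is correct and follows essentially the same route as the paper: tightness of the occupation measures via the Lyapunov function $\bdpot$ and \cref{asm:SNR-weak-occmeas} (this is the paper's \cref{lemma:tight_occmeas}), invariance of weak limit points via the Feller property, and a Portmanteau sandwich to transfer the estimates for invariant measures to $\curr[\occmeas]$. The only cosmetic differences are that the paper argues the $\liminf/\limsup$ bounds directly rather than by contradiction, and applies \cref{prop:est_invmeas} to both $\nhd_{\iComp}$ and $\cl\nhd_{\iComp}$ instead of to the two scales $\size$ and $2\size$.
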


\smallbreak
\thmtag{\ref*{thm:unstable}$^{\ast}$}
\begin{theorem}
[Occupation variant of \cref{thm:unstable}]
\label{thm:unstable-occmeas}
Let $\comp$ be a non-minimizing component of $\obj$.
Then, with notation as in \cref{thm:Gibbs}, there exists a minimizing component $\alt\comp$ of $\obj$ and a positive constant $\const \equiv \const(\comp,\alt\comp) > 0$ such that
\begin{equation}
	\frac{\curr[\occmeas](\nhd)}{\curr[\occmeas](\alt\nhd)}
	\leq \exp\of*{-\frac{\const(\comp,\alt\comp) + \eps}{\step}}
\end{equation}
for all all sufficiently small $\step>0$, $\run$ large enough and all sufficiently small neighborhoods $\nhd$ and $\alt\nhd$ of $\comp$ and $\alt\comp$ respectively.
\end{theorem}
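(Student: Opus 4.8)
The plan is to derive \cref{thm:unstable-occmeas} from the occupation-measure Gibbs estimate \cref{thm:Gibbs-occmeas}, reproducing verbatim the deduction of \cref{thm:unstable} from \cref{thm:Gibbs}. The crucial observation is that the only genuinely dynamical ingredient of that deduction -- the comparison of energy levels carried out in \cref{lemma:not_asympt_stable_implies_unstable3} -- is a purely deterministic statement about the numbers $\energy_\iComp$ (equivalently $\dinvpot_\iComp$): these are built from the quasi-potential $\dquasipot$, which depends only on $\obj$ and on the statistics of the noise, and not on which invariant or occupation measure one considers. Hence \cref{lemma:not_asympt_stable_implies_unstable3} applies unchanged, and what remains is a short bookkeeping argument.

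Concretely, I would fix a non-minimizing component $\comp = \eqcl_\iComp$ of $\obj$. By \cref{lemma:not_asympt_stable_implies_unstable3} there is $\jComp \in \indices$ such that $\eqcl_\jComp$ is minimizing and $\energy_\jComp < \energy_\iComp$; set $\alt\comp \defeq \eqcl_\jComp$, $\Margin \defeq \energy_\iComp - \energy_\jComp > 0$, and $\const(\comp,\alt\comp) \defeq \Margin/2$. Then I would apply \cref{thm:Gibbs-occmeas} with tolerance $\toler$ chosen so small that $\eps \defeq \Margin/2 - 2\toler > 0$: for all sufficiently small $\size$ and $\step$ and all large enough $\run$ (depending on $\step$),
\begin{equation*}
\step\log\curr[\occmeas](\nhd_\iComp(\size)) \le -\energy_\iComp + \min\nolimits_{\kComp}\energy_\kComp + \toler,
\qquad
\step\log\curr[\occmeas](\nhd_\jComp(\size)) \ge -\energy_\jComp + \min\nolimits_{\kComp}\energy_\kComp - \toler.
\end{equation*}
For arbitrary sufficiently small neighborhoods $\nhd$ of $\comp$ and $\alt\nhd$ of $\alt\comp$, pick admissible radii $\size$ with $\nhd \subseteq \nhd_\iComp(\size)$ and $\size'$ with $\nhd_\jComp(\size') \subseteq \alt\nhd$; since the leading terms above do not depend on the radius, monotonicity of $\curr[\occmeas]$ together with the two estimates (used at $\size$ and at $\size'$, with $\step_0$ and the $\run$-threshold taken as the smaller resp.\ larger of the two values produced) gives
\begin{equation*}
\step\log\frac{\curr[\occmeas](\nhd)}{\curr[\occmeas](\alt\nhd)}
	\le -(\energy_\iComp - \energy_\jComp) + 2\toler
	= -\bigl(\const(\comp,\alt\comp) + \eps\bigr),
\end{equation*}
which is the asserted bound after exponentiating.

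I do not anticipate a real obstacle: all the hard work is already packaged inside \cref{thm:Gibbs-occmeas}, namely that weak limit points of $\curr[\occmeas]$ are invariant in the sense of \eqref{eq:invariant}, that the invariant-measure estimates of \cref{app:estimates_trans_inv,app:conv_stab} transfer to those limit points, and that the strengthened signal-to-noise condition \cref{asm:SNR-weak-occmeas} prevents mass from escaping to infinity. The one point requiring some care is precisely the sandwiching step above -- one must check that the two radii lie in the range of validity of \cref{thm:Gibbs-occmeas} and that the thresholds can be made uniform -- but this is immediate since only the two components $\comp,\alt\comp$ are involved and the admissible radii form an interval $(0,\size_0)$.
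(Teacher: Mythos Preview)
Your proposal is correct and follows essentially the same route as the paper: the paper's proof of \cref{thm:unstable} is just ``apply \cref{lemma:not_asympt_stable_implies_unstable3} to get $\jComp$ with $\energy_\jComp < \energy_\iComp$, then invoke \cref{thm:Gibbs}'', and for \cref{thm:unstable-occmeas} the paper explicitly says the argument is identical with \cref{thm:Gibbs-occmeas} in place of \cref{thm:Gibbs}. Your added bookkeeping about neighborhood radii and monotonicity just spells out what the paper leaves implicit.
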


\smallbreak
\thmtag{\ref*{thm:crit}$^{\ast}$}
\begin{theorem}
[Occupation variant of \cref{thm:crit}]
\label{thm:crit-occmeas}
Fix a tolerance level $\size>0$,
and
let $\nhd \equiv \nhd(\size)$ be a $\size$-neighborhood of $\crit\obj$.
Then there exists a constant $\const \equiv \const_{\size} > 0$ such that, for all sufficiently small $\step > 0$ and large enough $\run$, we have:
\begin{equation}
\curr[\occmeas](\nhd)
	\geq 1 - e^{-\const/\step}.
\end{equation}
\end{theorem}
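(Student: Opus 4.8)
The plan is to derive \cref{thm:crit-occmeas} from its invariant-measure counterpart \cref{thm:crit} through a Krylov\textendash Bogolyubov compactness argument, using the strengthened \acl{SNR} condition \cref{asm:SNR-weak-occmeas} only to guarantee that the family of mean occupation measures $\{\curr[\occmeas]\}_{\run\geq1}$ is tight. The guiding observation, already recorded in the remark at the end of \cref{sec:analysis}, is that every weak limit point of $\curr[\occmeas]$ is an invariant probability measure of \eqref{eq:SGD}, so that \cref{thm:crit} applies to it verbatim; the bounded-below condition \cref{asm:SNR-weak} does not suffice here because under it $\norm{\grad\obj}^{2}/\bdvar$ may stay bounded, and then probability mass can leak off to infinity over the averaging window.

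The first step is to upgrade the Lyapunov estimate of \cref{lem:lyapunov_condition}. Under \cref{asm:SNR-weak-occmeas}, the function $\func \defeq \max\bigl(0,\ \norm{\grad\obj(\argdot)}^{2}/\bdvar(\obj(\argdot)) - \gradientbound\bigr)$ is continuous and coercive ($\func(\point)\to\infty$ as $\norm{\point}\to\infty$). Combining the pathwise descent inequality of \cref{lem:lyapunov_condition} with \cref{corollary:concentration} applied to $\noise(\point,\sample)/\sqrt{\bdvar(\obj(\point))}$, and with the fact that the one-step increment $\new-\state$ is of order $\step$ uniformly on compacts (by \eqref{eq:growth} and $\contdiff{2}$-smoothness of $\bdpot$), one obtains a compact set $\cpt_0\subset\vecspace$ and constants $\const_0,\step_0>0$ such that, for all $\step\leq\step_0$ and all $\state\in\vecspace$,
\[
\ex_\state\bracks{\bdpot(\new)} - \bdpot(\state)
	\leq \step\bigl(-\func(\state) + \const_0\,\oneof{\state\in\cpt_0}\bigr)\,,
\]
with $\bdpot$ as in \cref{lem:B_is_descent}.

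The second step is tightness. Summing this inequality along the trajectory, using that $\bdpot$ is bounded below (coercivity of $\obj$), and dividing by $\step\run$ yields
\[
\int\func\dd\curr[\occmeas]
	\leq \const_0 + \frac{\bdpot(\init) - \inf_{\vecspace}\bdpot}{\step\,\run}\,,
\]
where $\const_0$ depends on $\obj,\bdvar,\vdim$ but not on $\step$ or $\run$. Hence, for $\run$ large enough (possibly depending on $\init$ and $\step$), the right-hand side is at most $\const_0+1$, and Markov's inequality together with the coercivity of $\func$ shows that $\{\curr[\occmeas]\}_{\run\geq1}$ is tight (the finitely many early terms being individually tight). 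By the Feller property of \eqref{eq:SGD} (\cref{lem:feller}) and the standard Krylov\textendash Bogolyubov argument, every weak limit point $\invmeas$ of $\curr[\occmeas]$ is then a \emph{probability} measure satisfying the invariance property \eqref{eq:invariant}.

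Finally, fix the $\size$-neighborhood $\nhd$ of $\crit\obj$. By \cref{thm:crit}, every invariant probability measure $\invmeas$ satisfies $\invmeas(\vecspace\setminus\nhd)\leq e^{-\const/\step}$ with $\const=\const_\size$ independent of $\step$. Since $\vecspace\setminus\nhd$ is closed, the Portmanteau lemma, the tightness of $\{\curr[\occmeas]\}$, and a routine sub-subsequence argument give $\limsup_{\run}\curr[\occmeas](\vecspace\setminus\nhd)\leq e^{-\const/\step}$; hence, for $\run$ large enough, $\curr[\occmeas](\nhd)\geq 1 - 2e^{-\const/\step}\geq 1 - e^{-\const/(2\step)}$ once $\step$ is small, which is the claim (with $\const_\size/2$ in place of $\const_\size$). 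The main obstacle is the tightness step: one must verify that the computation behind \cref{lem:lyapunov_condition} genuinely produces a \emph{coercive} penalty $\func$ under \cref{asm:SNR-weak-occmeas}, and keep track of the $\step$-dependence carefully enough that the threshold on $\run$ may depend on $\init$ and $\step$ while the final exponent $\const_\size$ does not.
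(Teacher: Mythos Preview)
Your proposal is correct and follows essentially the same route as the paper: the paper packages your first two steps into \cref{lemma:tight_occmeas} (tightness of $\{\curr[\occmeas]\}$ under \cref{asm:SNR-weak-occmeas}, proved via the Lyapunov inequality of \cref{lem:lyapunov_condition} and the machinery of \citet[Thm.~12.3.3]{doucMarkovChains2018}), and then, exactly as you do, invokes Prohorov, the Feller property (\cref{lem:feller}) to identify weak limits as invariant measures, and Portmanteau on the closed set $\vecspace\setminus\nhd$ together with the invariant-measure bound from \cref{thm:crit}/\cref{prop:inv_meas_ground_states}. Your explicit summation-and-Markov argument for tightness is a valid unpacking of what the paper cites from \citet{doucMarkovChains2018}.
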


\smallbreak
\thmtag{\ref*{thm:ground}$^{\ast}$}
\begin{theorem}
[Occupation variant of \cref{thm:ground}]
\label{thm:ground-occmeas}
Fix a tolerance level $\size>0$,
and
let $\nhd_{\iGround} \equiv \nhd_{\iGround}(\size)$ be a $\size$-neighborhood of the system's ground state $\ground$.
Then there exists a constant $\const \equiv \const_{\size} > 0$ such that, for all sufficiently small $\step>0$ and large enough $\run$, we have:
\begin{equation}
\curr[\occmeas](\nhd_{\iGround})
	\geq 1 - e^{-\const/\step}.
\end{equation}
\end{theorem}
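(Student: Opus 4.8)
The plan is to reduce the statement about the occupation measure to the already-established statement about invariant measures (\cref{thm:ground}) by a standard compactness-plus-tightness argument, using the additional coercivity hypothesis \cref{asm:SNR-weak-occmeas} to guarantee uniform tightness of the family of occupation measures.

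First I would recall that $\curr[\occmeas]$ is the mean occupation measure defined in \eqref{eq:occmeas}, and that, by the discussion in \cref{subsec:ldp} and the references \citep{doucMarkovChains2018,hernandez-lermaMarkovChainsInvariant2003}, every weak limit point of the sequence $(\curr[\occmeas])_{\run}$ is an invariant probability measure in the sense of \eqref{eq:invariant}. The key point is to upgrade this to a \emph{uniform} statement: there exists a compact set $\bigcpt$ (independent of $\run$) such that $\curr[\occmeas](\vecspace \setminus \bigcpt)$ is negligible for all large $\run$. This is exactly where \cref{asm:SNR-weak-occmeas} enters — because $\norm{\grad\obj(\point)}^{2}/\bdvar(\obj(\point)) \to \infty$, the Lyapunov drift estimate of \cref{lem:lyapunov_condition} can be strengthened so that $\bdpot(\next) - \bdpot(\curr)$ has a drift toward $\crit\obj$ of arbitrarily large magnitude outside a compact set; summing this drift inequality along the trajectory and dividing by $\run$ shows that $\frac{1}{\run}\sum_{\runalt=\start}^{\run-1}\oneof{\iter \notin \bigcpt}$ is small in expectation, i.e. $\curr[\occmeas](\vecspace\setminus\bigcpt)$ can be made smaller than any prescribed $\delta$ for $\run$ large. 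With this uniform tightness in hand, Prokhorov's theorem implies that the family $(\curr[\occmeas])_{\run \geq \run_0}$ is relatively compact in the weak topology, and every limit point is supported (up to mass $\delta$) in $\bigcpt$ and is invariant.

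Next I would argue by contradiction in the usual way. Suppose \cref{thm:ground-occmeas} fails: then there exist $\size, \toler > 0$ and a subsequence $\run_m \to \infty$ and step-sizes $\step_m \to 0$ along which $\curr[\occmeas_{\run_m}](\nhd_{\iGround}(\size)) < 1 - e^{-\toler/\step_m}$ — but since we actually want a $\step$-uniform obstruction, it is cleaner to fix $\step$ small (as allowed by \cref{thm:ground}) and suppose $\liminf_{\run} \curr[\occmeas](\nhd_{\iGround}) < 1 - e^{-\const/\step}$ for the constant $\const$ furnished by \cref{thm:ground}. Extract a weakly convergent subsequence $\curr[\occmeas_{\run_m}] \Rightarrow \invmeas$; by the tightness argument $\invmeas$ is a genuine probability measure and, by \citep[Thm.~12.3.6]{doucMarkovChains2018} (or the cited Krylov–Bogolyubov-type result), $\invmeas$ is invariant under \eqref{eq:SGD}. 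Applying \cref{thm:ground} to $\invmeas$ gives $\invmeas(\nhd_{\iGround}(\size)) \geq 1 - e^{-\const/\step}$. Since $\nhd_{\iGround}(\size/2)$ has closure inside the open set $\nhd_{\iGround}(\size)$, the portmanteau theorem yields $\liminf_m \curr[\occmeas_{\run_m}](\nhd_{\iGround}(\size)) \geq \invmeas(\nhd_{\iGround}(\size/2)) \geq 1 - e^{-\const'/\step}$ for the constant $\const' = \const_{\size/2}$ attached to the smaller neighborhood, contradicting the assumption (after renaming $\const$). Exactly the same scheme — pass to a weak limit, invoke the corresponding invariant-measure theorem, apply portmanteau — proves \cref{thm:Gibbs-occmeas,thm:unstable-occmeas,thm:crit-occmeas}; for \cref{thm:Gibbs-occmeas} one uses that the $\nhd_\iComp$ are both open and (for small $\size$) have compact closure disjoint from the other components, so both the $\liminf$ (via open sets) and $\limsup$ (via closed sets) bounds transfer.

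The main obstacle is the uniform tightness step: \cref{lem:inv_meas_exist} already produces \emph{an} invariant measure via the Lyapunov condition, but for the occupation measures one needs the drift estimate to hold with a quantitative margin that does not degrade as the escape radius grows, and this is precisely what the plain \cref{asm:SNR-weak} (with its fixed threshold $\gradientbound$) does not give — hence the need for \cref{asm:SNR-weak-occmeas}. Concretely, one must show: for every $A > 0$ there is a compact $\bigcpt_A$ and $\step_0 > 0$ with $\ex_{\state}[\bdpot(\next) - \bdpot(\curr) \mid \curr] \leq -\step A$ whenever $\curr \notin \bigcpt_A$, by combining the computation in the proof of \cref{lem:lyapunov_condition} with \cref{corollary:concentration} (the noise term contributes at most $\step\,\gradientbound$) and the new divergence $\norm{\grad\obj}^2/\bdvar\to\infty$ (the drift term contributes $-\step\cdot(\gradientbound + A)$ for $\norm{\curr}$ large). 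Then a telescoping sum gives $\frac{A}{\run}\sum_{\runalt<\run}\probwrt{\init}{\iter \notin \bigcpt_A} \leq \frac{\ex_{\init}[\bdpot(\init)] - \inf\bdpot}{\run} + \text{(boundary term)}$, so $\curr[\occmeas](\vecspace\setminus\bigcpt_A) \leq O(1/(A\run)) + o(1)$, which is the required uniform tightness once $A$ and then $\run$ are taken large. Everything else is a routine transfer of limits.
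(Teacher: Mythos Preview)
Your proposal is correct and follows essentially the same route as the paper. The paper packages the tightness step into a separate lemma (\cref{lemma:tight_occmeas}), proved by combining the Lyapunov drift of \cref{lem:lyapunov_condition} with \cref{corollary:concentration} and then invoking \citet[Thm.~12.3.3]{doucMarkovChains2018}, and then --- exactly as you do --- uses Prokhorov to extract a weak limit, the Feller property (\cref{lem:feller}) plus \citet[Prop.~12.3.1]{doucMarkovChains2018} to conclude the limit is invariant, and Portmanteau to transfer the invariant-measure bound (here \cref{prop:inv_meas_ground_states}) back to $\curr[\occmeas]$. Two minor remarks: your $\size/2$ trick is unnecessary since $\nhd_{\iGround}(\size)$ is already open and Portmanteau gives $\liminf_m \curr[\occmeas_{\run_m}](\nhd_{\iGround}(\size)) \geq \invmeas(\nhd_{\iGround}(\size))$ directly; and the paper argues directly rather than by contradiction, but this is purely cosmetic.
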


We begin with a preliminary lemma which shows that, under \cref{asm:SNR-weak-occmeas}, the sequence of mean occupation measure $(\curr[\occmeas])_{\run \geq \start}$ is tight (see \eg \citet[Chap.~23]{kallenbergFoundationsModernProbability2021}).

\begin{lemma}
	\label{lemma:tight_occmeas}
	The sequence of mean occupation measures $(\curr[\occmeas])_{\run \geq \start}$ is tight.
\end{lemma}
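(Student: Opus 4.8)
The plan is to turn the Lyapunov estimate of \cref{lem:lyapunov_condition} into a Foster\textendash Lyapunov drift whose \emph{rate function} is coercive \textendash\ this is precisely where the strengthened hypothesis \eqref{eq:SNR-weak-occmeas} is needed \textendash\ and then to average this drift along the chain to control the tails of $\curr[\occmeas]$ uniformly in $\run$. Write $\init = \point$ for the (fixed) initialization of \eqref{eq:SGD}; since adding a constant to $\bdpot$ affects neither \cref{def:bdpot} nor \cref{lem:lyapunov_condition}, we may assume $\bdpot \geq 0$. Set
\begin{equation}
\func(\state)
	\defeq \frac{\norm{\grad\obj(\state)}^{2}}{\bdvar(\obj(\state))} - \gradientbound
	\qquad (\state \in \vecspace)\eqdot
\end{equation}
Then $\func$ is continuous, and by \cref{asm:SNR-weak-occmeas} we have $\func(\state)\to\infty$ as $\norm{\state}\to\infty$; hence $\func$ is bounded below on $\vecspace$ and every sublevel set $\setdef{\state\in\vecspace}{\func(\state)\leq\level}$ is compact.

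First I would derive a global one-step drift bound. Taking the conditional expectation of the first inequality in \cref{lem:lyapunov_condition} and applying \cref{corollary:concentration} to the $1$-sub-Gaussian vector $\noise(\curr,\curr[\sample])/\sqrt{\bdvar(\obj(\curr))}$ (sub-Gaussianity being \cref{assumption:subgaussian}), one obtains a compact set $\cpt\subset\vecspace$ and $\step_0>0$ such that $\ex_{\state}\bracks*{\bdpot(\new)} - \bdpot(\state) \leq -\step\,\func(\state)$ for all $\state\notin\cpt$ and $\step\leq\step_0$. On $\cpt$, since $\bdpot$ is $C^{1}$ hence locally Lipschitz and $\norm{\new-\state}\leq 2\step\growth(1+\norm{\state})$ by \eqref{eq:growth}, the increment $\abs{\bdpot(\new)-\bdpot(\state)}$ is a.s.\ bounded by a $\step$-multiple of a constant, uniformly over $\state\in\cpt$, $\step\leq\step_0$; absorbing this together with $\sup_{\cpt}\abs{\func}<\infty$ into a constant $\smooth>0$ gives
\begin{equation}
\label{eq:tight-drift}
\ex_{\state}\bracks*{\bdpot(\new)} - \bdpot(\state)
	\leq -\step\,\func(\state) + \step\smooth\,\oneof{\state\in\cpt}
	\qquad \text{for all } \state\in\vecspace,\ \step\leq\step_0\eqdot
\end{equation}

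Next I would iterate \eqref{eq:tight-drift}. Each $\curr$ is a.s.\ bounded by \cref{lem:growth_iterates}, so the expectations below are finite; telescoping \eqref{eq:tight-drift} over $\runB=\start,\dotsc,\run-1$, using $\bdpot\geq0$ and $\prob_{\point}(\iter\in\cpt)\leq1$, yields
\begin{equation}
\step\sum_{\runB=\start}^{\run-1}\ex_{\point}\bracks*{\func(\iter)}
	\leq \bdpot(\point) - \ex_{\point}\bracks*{\bdpot(\curr)} + \step\smooth\run
	\leq \bdpot(\point) + \step\smooth\run
	\qquad (\run\geq1)\eqdot
\end{equation}
Dividing by $\run$ and recalling $\curr[\occmeas](\borel) = \frac{1}{\run}\sum_{\runB=\start}^{\run-1}\prob_{\point}(\iter\in\borel)$, this reads $\int_{\vecspace}\func\dd\curr[\occmeas] \leq \bdpot(\point)/\step + \smooth \eqdef \Const$ for every $\run\geq1$, with $\Const$ independent of $\run$. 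Finally, setting $\func_{0}\defeq -\inf_{\vecspace}\func\in[0,\infty)$ so that $\func+\func_{0}\geq0$, Markov's inequality gives, for every $\level>0$ and $\run\geq1$,
\begin{equation}
\curr[\occmeas]\parens*{\setdef{\state\in\vecspace}{\func(\state)>\level}}
	\leq \frac{\int_{\vecspace}(\func+\func_{0})\dd\curr[\occmeas]}{\level+\func_{0}}
	\leq \frac{\Const+\func_{0}}{\level+\func_{0}}\eqdot
\end{equation}
Since $\setdef{\state\in\vecspace}{\func(\state)\leq\level}$ is compact and the right-hand side tends to $0$ as $\level\to\infty$ uniformly in $\run$, the family $(\curr[\occmeas])_{\run\geq1}$ is tight. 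I expect the only genuinely load-bearing point to be the coercivity of $\func$: under the weaker \cref{asm:SNR-weak} one merely gets $\func$ bounded below by a positive constant outside a compact set, which is enough for recurrence but not for the uniform compact exhaustion needed here.
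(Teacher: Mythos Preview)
Your proof is correct and follows essentially the same approach as the paper: both derive the Foster\textendash Lyapunov drift \eqref{eq:tight-drift} from \cref{lem:lyapunov_condition} and \cref{corollary:concentration}, with the coercive rate $\func(\state)=\norm{\grad\obj(\state)}^{2}/\bdvar(\obj(\state)) - \gradientbound$ supplied by \cref{asm:SNR-weak-occmeas}. The only difference is cosmetic: the paper stops at the drift inequality and invokes the proof of \citet[Thm.~12.3.3]{doucMarkovChains2018} for the passage to tightness, whereas you spell that passage out explicitly via telescoping and Markov's inequality, which makes your argument more self-contained.
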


The proof of this lemma first follows the proof of \cref{lem:inv_meas_exist} and then relies on the same reasoning as the proof of \citet[Thm.~12.3.3]{doucMarkovChains2018}.

\begin{proof}
	By \cref{lem:lyapunov_condition}, there exists $\cpt \subset \vecspace$ compact, $\step_0 > 0$, $\const > 0$ such that, for any $\step \leq \step_0$, $\init = \point \notin \cpt$,
\begin{equation}
	\bdpot(\afterinit) - \bdpot(\point) \leq  \step \parens*{\frac{\norm{\noise(\point, \init[\sample])}^2}{\bdvar(\obj(\point))} - \frac{\norm{\grad \obj(\point)}^2}{\bdvar(\obj(\point))}}\,.
\end{equation}

Passing to the expectation yields that, for any $\point \notin \cpt$,
\begin{equation}
	\ex_{\point} \bracks*{
	\bdpot(\afterinit)} - {\bdpot(\point)
	} \leq  \step \parens*{\frac{\ex_{\point} \bracks*{\norm{\noise(\point, \init[\sample])}^2}}{\bdvar(\obj(\point))} - \frac{\norm{\grad \obj(\point)}^2}{\bdvar(\obj(\point))}}\,.
\end{equation}
Appplying \cref{corollary:concentration} with $\rv \gets \frac{\noise(\point, \init[\sample])}{\sqrt{\bdvar(\obj(\point))}}$ (the conditions of application are verified from \cref{asm:noise-weak}\cref{assumption:subgaussian}) yields that
\begin{equation}
	\ex_{\point} \bracks*{
	\bdpot(\afterinit)} - {\bdpot(\point)
	} \leq - \step \frac{\norm{\grad \obj(\point)}^2}{\bdvar(\obj(\point))} + \step \times \gradientbound\,.
\end{equation}

Hence, for any $\point \in \vecspace$, we have
\begin{align}
\ex_{\point} \bracks*{
	\bdpot(\afterinit)} 
	 - \bdpot(\point)
	 &\leq 
	 \oneof{\point \in \cpt} \parens*{\sup_{\pointalt \in \cpt}\ex_{\pointalt} [ \bdpot(\pointalt)] - \inf_{\vecspace} \bdpot}
	 \notag\\
	 &- \oneof{\point \notin \cpt}
	\parens*{
		\step \frac{\norm{\grad \obj(\point)}^2}{\bdvar(\obj(\point))} - \step \times \gradientbound
	}\,,
\end{align}
or, after rearranging,
\begin{align}
\ex_{\point} \bracks*{\bdpot(\afterinit)} 
	 &- \oneof{\point \in \cpt} \parens*{\sup_{\pointalt \in \cpt}\ex_{\pointalt} [ \bdpot(\pointalt)] - \inf_{\vecspace} \bdpot}
	 \notag\\
	 &+ \oneof{\point \notin \cpt}
	\parens*{
		\step \frac{\norm{\grad \obj(\point)}^2}{\bdvar(\obj(\point))} - \step \times \gradientbound
	}
	 \leq  \bdpot(\point)
	\eqdot
\end{align}
Since the function 
\begin{align}
\point
	\mapsto
	&- \oneof{\point \in \cpt} \parens*{\sup_{\pointalt \in \cpt}\ex_{\pointalt} [ \bdpot(\pointalt)] - \inf_{\vecspace} \bdpot}
	\notag\\
	&+ \oneof{\point \notin \cpt} \parens*{\step \frac{\norm{\grad \obj(\point)}^2}{\bdvar(\obj(\point))} - \step \times \gradientbound}
\end{align}
is measurable, lower-bounded and goes to infinity as $\norm{\point} \to \infty$ by \cref{asm:SNR-weak-occmeas}, one can then apply the same computations as in the proof of \citet[Thm.~12.3.3]{doucMarkovChains2018} to obtain that the sequence of occupation measures $(\curr[\occmeas])_{\run \geq \start}$ is tight.
\end{proof}

We now prove \cref{thm:Gibbs-occmeas} by adapting the proof of \cref{thm:Gibbs}.
Since the process is exactly the same for \cref{thm:unstable-occmeas,thm:crit-occmeas,thm:ground-occmeas}, we omit their proofs.
\WAcomment{While we rely on it heavily, it is not made clear that $\nhd_i$ are open neighborhoods.}
\begin{proof}[Proof of \cref{thm:Gibbs-occmeas}]
We show that, for sufficiently small $\size, \step > 0$, for any $\iComp \in \indices$, we have that
\begin{align}
\exp\of*{-\frac{\energy_{\iComp} - \min\nolimits_{\jComp} \energy_{\jComp} + \toler}{\step}}
	&\leq \liminf_{\run \to \infty}\curr[\occmeas](\nhd_{\iComp})
	\notag\\
	&\leq \limsup_{\run \to \infty} \curr[\occmeas](\nhd_{\iComp})
	\notag\\
	&\leq \exp\of*{-\frac{\energy_{\iComp} - \min\nolimits_{\jComp} \energy_{\jComp} - \toler}{\step}}
\label{eq:proof_occmeas_Gibbs_target}
\end{align}
	and the results in the statement will follow with $2 \toler$ in place of $\toler$.

	We apply \cref{prop:est_invmeas} (\cref{app:estimates_trans_inv}): take $\size$ small enough so 
	\cref{prop:est_invmeas} can be applied with both the neighborhoods $\nhd_{1}, \dots, \nhd_{\nComps}$ and $\cl \nhd_1, \dots, \cl \nhd_{\nComps}$.
	One then obtain $\step_0 > 0$ such that, for all $0 < \step < \step_0$ and any $\invmeas$ invariant probability measure for $(\curr[\accstate])_\run$,
	for any $\idx \in \indices$, 
	\begin{equation}
	\label{eq:proof_occmeas_Gibbs_est_invmeas}
	\begin{aligned}
		\invmeas(\nhd_\idx) &\geq \exp \parens*{
			- \frac{\dinvpot(\cpt_\idx) - \min_{\idxalt \in \indices} \dinvpot(\cpt_\idxalt)}{\step}
			- \frac{\precs}{\step}
		}\\
		\invmeas(\cl \nhd_\idx) &\leq \exp \parens*{
			- \frac{\dinvpot(\cpt_\idx) - \min_{\idxalt \in \indices} \dinvpot(\cpt_\idxalt)}{\step}
			+ \frac{\precs}{\step}
		}\eqdot
	\end{aligned}
	\end{equation}
	
	We now prove that \cref{eq:proof_occmeas_Gibbs_target} holds.
	Fix $\idx \in \indices$. By \cref{lemma:tight_occmeas}, the sequence of mean occupation measures $(\curr[\occmeas])_{\run \geq \start}$ is tight, so that, by Prohorov theorem \citep[Thm.~23.2]{kallenbergFoundationsModernProbability2021}, it is sequentially compact for the weak topology, or, in other terms, for the convergence in distribution.
	Therefore, $(\curr[\occmeas])_{\run \geq \start}$ admits a weak accumulation point which is a probability distribution and that we denote by $\altmeas$.
	  Applying Portmanteau theorem \citep[Thm.~5.25]{kallenbergFoundationsModernProbability2021} to the open set $\nhd_\idx$ and the closed set $\cl \nhd_\idx$ yields that
	  \begin{equation}
		  \altmeas(\nhd_\idx)   
		  \leq 
		  \liminf_{\run \to \infty}
		  \curr[\occmeas](\nhd_\idx)
		  \leq 
		  \limsup_{\run \to \infty}
		  \curr[\occmeas](\cl \nhd_\idx)
		  \leq 
		  \altmeas(\cl \nhd_\idx)\eqdot
		\label{eq:proof_occmeas_Gibbs_portmanteau}
	  \end{equation}

	  Since $(\curr)_{\run \geq \start}$, the sequence of iterates of \ac{SGD}, is (weak) Feller by \cref{lem:feller}, $\altmeas$ is actually invariant for $(\curr)_{\run \geq \start}$, by, \eg \citet[Prop.~12.3.1]{doucMarkovChains2018}, and
\emph{a fortiori} invariant for the accelerated process $(\curr[\accstate])_{\run \geq \start}$.
	  Combining \cref{eq:proof_occmeas_Gibbs_est_invmeas} with \cref{eq:proof_occmeas_Gibbs_portmanteau} gives the result \cref{eq:proof_occmeas_Gibbs_target}.
\end{proof}

\section{Potential for the invariant measure}
\label{app:potential}

\subsection{Gaussian noise}
\label{app:subsec:gaussian_noise}
Though it does not formally fit into our setting, let us first begin with the case where the noise is Gaussian. Since it is unbounded, our assumptions are not satisfied and our theorems describing the invariant measure do not apply. However, all the objects we consider are still well-defined, and, in that case, it is possible to compute the $\dinvpot_\idx$ explicitly. Moreover, this section serves as a blueprint for the truncated Gaussian case of the next section.

Assume that, for every $\state \in \vecspace$, $\noise(\state, \sample)$ follows a centered Gaussian distribution with covariance $\variance(\obj(\state)) \identity$ for some continuous function $\variance : \R \to (0, + \infty)$.

Akin to $\bdpot$ in \cref{app:invmeas}, a key role is played by the function $\pot: \vecspace \to \R$ defined by
\begin{equation}
\pot(\state) \defeq 2 \primvar(\obj(\state)) \quad \text{ with } \quad \primvar \from \R \to \R \quad \text{ a primitive of } \quad 1/{\variance}\,.
\end{equation}

Since the noise is Gaussian, the Lagrangian and Hamiltonian have explicit expressions: for every $\state, \mom, \vel \in \vecspace$,
\begin{subequations}
\begin{align}
	\hamilt(\state, \mom) &= - \inner{\grad \obj(\state)}{\mom} + \half \variance(\obj(\state)) \norm{\mom}^2\\
	\lagrangian(\state, \vel) &= \frac{\norm{\vel + \grad \obj(\state)}}{2 \variance(\obj(\state))}\,.
\end{align}
\end{subequations}
This expression of $\lagrangian$ make it clear that the action function penalizes the deviation of a path from the flow: for a path $\pth \in \contfuncs([0, \horizon])$ for some $\horizon > 0$,
\begin{equation}
	\action_\horizon(\pth) = \int_{0}^{\horizon} \frac{\norm{\dot \pth_\time + \grad \obj(\pth_\time)}^2}{2 \variance(\obj(\pth_\time))} \dd \time\,.
\end{equation}

The computation of the $\dinvpot_\idx$ relies on the following observation.
Take a path $\pth \in \contfuncs([0, \horizon])$ for some $\horizon > 0$ and consider $\pthalt$ defined by $\pthalt_\time = \pth_{\horizon -\time}$ for $\time \in [0, \horizon]$. Then, the action cost of $\pthalt$ is given by,
\begin{align}
	\action_\horizon(\pthalt) &= \int_{0}^{\horizon} \frac{\norm{-\dot \pth_\time + \grad \obj(\pth_\time)}^2}{2 \variance(\obj(\pth_\time))} \dd \time
	\notag\\
							  &= \int_{0}^{\horizon} \frac{\norm{\dot \pth_\time + \grad \obj(\pth_\time)}^2}{2 \variance(\obj(\pth_\time))} \dd \time 
							  - \int_{0}^{\horizon} \frac{2 \inner{\dot \pth_\time, \grad \obj(\pth_\time)}}{ \variance(\obj(\pth_\time))} \dd \time
	\notag\\
	&= \action_\horizon(\pth) - \int_{0}^{\horizon} \inner{\dot \pth_\time, \grad \pot(\pth_\time)} \dd \time\,,
\end{align}
since $\grad \pot(\state) = 2 \primvar'(\obj(\state)) \grad \obj(\state)$.
Therefore, we get that
\begin{equation}
							\action_\horizon(\pthalt)
							  = \action_\horizon(\pth) -( \pot(\pth_\horizon) -  \pot(\pth_\tstart))\,.
\end{equation}

Take $\idx, \idxalt \in \indices$. This equality then translates to a relation between $\dquasipot_{\idx, \idxalt}$ and $\dquasipot_{\idxalt, \idx}$: considering $\pth \in \contfuncs([0, \horizon])$ such that $\pth_{0} \in \cpt_\idx$, $\pth_\horizon \in \cpt_\idxalt$ and taking the infimum over all such paths, we get that
\begin{equation}
	\dquasipot_{\idxalt, \idx}  \leq  \dquasipot_{\idx, \idxalt} + (\pot_\idxalt -  \pot_\idx)\,.
\end{equation}
where, since $\obj$ is constant on $\cpt_\idx$ and $\cpt_\idxalt$, we denote by $\pot_\idx$ and $\pot_\idxalt$ the values of $\pot$ on $\cpt_\idx$ and $\cpt_\idxalt$ respectively.

Reversing the roles of $\idx$ and $\idxalt$ and applying the same argument shows that this inequality is an equality:
\begin{equation}
	\dquasipot_{\idxalt, \idx} + \pot_\idxalt = \dquasipot_{\idx, \idxalt} + \pot_\idx\,.
\end{equation}
Denote by $\symdquasipot_{\idx, \idxalt}$ this common value. Crucially, $\symdquasipot_{\idx, \idxalt}$ is symmetric in $\idx$ and $\idxalt$.

Consider now $\sol[\tree]$ a minimum weight spanning tree on the complete but now undirected graph on $\indices$ with weights $(\symdquasipot_{\idx, \idxalt})_{\idx, \idxalt}$. We show that the minima in the $\dinvpot_\idx$ are attained $\sol[\tree]$, or more precisely, a directed version of it.

Fix $\idx$ and consider $\tree \in \trees_\idx$ a spanning tree rooted at $\idx$.
We have that, since any node $\idxalt$ is the origin of exactly one edge in $\tree$,
\begin{align}
	\sum_{(\idxalt \to \idxaltalt) \in \tree} \dquasipot_{\idxalt, \idxaltalt} + \sum_{\idxalt \in \indices} \pot_\idxalt
	&= \sum_{(\idxalt \to \idxaltalt) \in \tree} \parens*{\dquasipot_{\idxalt, \idxaltalt} + \pot_\idxalt} + \pot_\idx
	\notag\\
	&= \sum_{(\idxalt \to \idxaltalt) \in \tree} \symdquasipot_{\idxalt, \idxaltalt} + \pot_\idx \,.
\end{align}
But by definition of $\sol[\tree]$, this sum is at least greater than $\sum_{(\idxalt \leftrightarrow \idxaltalt) \in \sol[\tree]} \symdquasipot_{\idxalt, \idxaltalt}$ so that we have
\begin{equation}
	\sum_{(\idxalt \to \idxaltalt) \in \tree} \dquasipot_{\idxalt, \idxaltalt} + \sum_{\idxalt \in \indices} \pot_\idxalt \geq \sum_{(\idxalt \leftrightarrow \idxaltalt) \in \sol[\tree]} \symdquasipot_{\idxalt, \idxaltalt} + \pot_\idx\,,
\end{equation}
and taking $\tree_\idx$ an oriented version of $\sol[\tree]$ rooted at $\idx$, the equality is attained.
Therefore, we have that
\begin{equation}
    \dinvpot_\idx = \sum_{(\idxalt \to \idxaltalt) \in \tree_\idx} \dquasipot_{\idxalt, \idxaltalt} =
    \sum_{(\idxalt \leftrightarrow \idxaltalt) \in \sol[\tree]} \symdquasipot_{\idxalt, \idxaltalt} - \sum_{\idxalt \in \indices} \pot_\idxalt + \pot_\idx\,,
\end{equation}
or, in short, $\dinvpot_\idx = \pot_\idx + \const$ where $\const > 0$ is independent of $\idx$.

Therefore, the mass distribution over critical points is governed by a Gibbs measure with potential $\pot$.

Let us now mention two particular cases.
\begin{itemize}
	\item If $\variance$ is constant, then $\pot = \frac{2 \obj}{\variance}$.
	\item It $\variance$ is linear, \ie of the form $\variance(\obj(\state)) = \variance_1 (\obj(\state) + \variance_{0})$, then $\pot = \frac{2}{\variance_1} \log (\obj + \variance_{0})$.
\end{itemize}

\subsection{Truncated Gaussian noise}
\label{app:subsec:truncated_gaussian_noise}

To fit into our theoretical framework, we consider truncated Gaussian noise instead. The general outline of the proof but with added steps to handle the error due to the truncation. In particular, one must show that, without loss of generality, we can only conider paths whose derivative has the same norm as the gradient of $\obj$. This is done with \citet[Chap.~4, Lem.~3.1]{FW98} that we adapt to our setting.

Assume that $\noise(\state, \sample)$ follows a centered Gaussian distribution with covariance $\variance(\obj(\state)) \identity$ conditioned on being in $\ball(0, \Radius(\state))$ for some $\Radius(\state) > 0$.

As in \cref{def:bdpot}, we define $\pot(\state) = 2 \primvar(\obj(\state))$ with $\primvar' = \frac{1}{\variance}$ and denote by $\pot_\idx$ the value taken by $\pot$ on $\eqcl_\idx$.

Consider some $0 < \precsalt \leq \half$ and assume that

\begin{equation}
	\sup_{\state \in \vecspace} 2^{\dims + 4}(\dims + 1)e^{-\tfrac{\Radius^2}{16 \variance}} \leq \precsalt\,,
\end{equation}
so that the error term $2 \errorterm(\variance(\obj(\state)), \Radius(\state))$ in \cref{lem:truncated_gaussian:lagrangian} is bounded by $\precsalt$.

Moreover, assume that, for any $\state \in \vecspace$
\begin{equation}
	\norm{\grad \obj(\state)} \leq \frac{\Radius(\state)}{8} 
\end{equation}
\begin{lemma}
	\label{lemma:pth_resc}
	Consider $\pth \in \contfuncs([0, \horizon])$.
	Then, there exists $\widetilde \pth \in \contfuncs([0, \horizonalt])$ a reparametrization of $\pth$ such that, for any $\time \in [0, \horizonalt]$,
	\begin{equation}
		\norm{\dot{\widetilde \pth}_\timealt} = \norm{\grad \obj(\widetilde \pth_\timealt)}\,.
	\end{equation}
	and
	\begin{equation}
	\action_\horizon(\pth) \geq \int_{0}^{\horizonalt} \frac{\norm{\dot{\widetilde \pth}_\timealt + \grad \obj(\widetilde \pth_\timealt)}^2}{2 (1 + \precsalt)\variance(\obj(\widetilde \pth_\timealt))} \dd \timealt\,.
	\end{equation}
\end{lemma}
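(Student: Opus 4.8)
The plan is to adapt \citet[Chap.~4, Lem.~3.1]{FW98} to our weighted setting and to absorb the effect of the truncation through the pointwise comparison of the truncated Gaussian Lagrangian with the Gaussian one supplied by \cref{lem:truncated_gaussian:lagrangian}. We may assume throughout that $\action_\horizon(\pth) < +\infty$; otherwise there is nothing to prove. Then $\pth$ is absolutely continuous and, since $\lagrangian(\pth_\time, \dot\pth_\time) < +\infty$ for a.e.\ $\time$, the drift-adjusted velocity $\dot\pth_\time + \grad\obj(\pth_\time)$ lies in the support ball $\ball(0, \Radius(\pth_\time))$ of the truncated noise for a.e.\ $\time$, so that \cref{lem:truncated_gaussian:lagrangian} applies and gives
\begin{equation}
\lagrangian(\pth_\time, \dot\pth_\time)
	\;\geq\; \frac{\norm{\dot\pth_\time + \grad\obj(\pth_\time)}^2}{2(1+\precsalt)\,\variance(\obj(\pth_\time))}
	\qquad \text{for a.e.\ } \time \in [0,\horizon]\,.
\end{equation}
It therefore suffices to reparametrize $\pth$ into a path $\widetilde\pth$ with $\norm{\dot{\widetilde\pth}_\timealt} = \norm{\grad\obj(\widetilde\pth_\timealt)}$ a.e.\ \emph{without increasing the relaxed Gaussian action} appearing on the right-hand side above — which is precisely the Freidlin--Wentzell reparametrization principle, here in weighted form.

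For this second step, the key identity is that, using $\tfrac12\norm{v}^2 + \tfrac12\norm{w}^2 \geq \norm{v}\,\norm{w}$,
\begin{equation}
\int_0^\horizon \frac{\norm{\dot\pth_\time + \grad\obj(\pth_\time)}^2}{2(1+\precsalt)\variance(\obj(\pth_\time))}\dd\time
	\;\geq\; \int_0^\horizon \frac{\norm{\dot\pth_\time}\,\norm{\grad\obj(\pth_\time)} + \inner{\dot\pth_\time}{\grad\obj(\pth_\time)}}{(1+\precsalt)\variance(\obj(\pth_\time))}\dd\time\,,
\end{equation}
with equality whenever $\norm{\dot\pth_\time} = \norm{\grad\obj(\pth_\time)}$ a.e. The crucial observation is that the right-hand side is \emph{purely geometric}: it is the integral along the oriented image curve of $\pth$ of $\bigl(\norm{\grad\obj}\,\abs{\dd\ell} + \inner{\grad\obj}{\dd\ell}\bigr)/\bigl((1+\precsalt)\variance(\obj)\bigr)$, hence invariant under any orientation-preserving absolutely continuous reparametrization — the first summand being an integral against arc-length measure (rectifiability is guaranteed by $\action_\horizon(\pth)<+\infty$), the second a line integral of a continuous vector field. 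Consequently, for the natural-speed reparametrization $\widetilde\pth$ the relaxed Gaussian action equals exactly this geometric integral, and chaining the three displays yields $\action_\horizon(\pth) \geq \int_0^{\horizonalt} \norm{\dot{\widetilde\pth}_\timealt + \grad\obj(\widetilde\pth_\timealt)}^2 / \bigl(2(1+\precsalt)\variance(\obj(\widetilde\pth_\timealt))\bigr)\dd\timealt$, which is the claim.

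It remains to construct $\widetilde\pth$. On the set of times where $\grad\obj(\pth_\time)\neq 0$ one sets $\alpha(\time) \defeq \int_0^\time \norm{\dot\pth_r}/\norm{\grad\obj(\pth_r)}\dd r$, checks that $\alpha$ is absolutely continuous and non-decreasing, puts $\horizonalt \defeq \alpha(\horizon)$ and $\widetilde\pth \defeq \pth \circ \alpha^{-1}$ (generalized inverse, so that intervals on which $\pth$ is stationary are simply collapsed), and verifies via the change-of-variables formula for absolutely continuous maps that $\norm{\dot{\widetilde\pth}_\timealt} = \norm{\grad\obj(\widetilde\pth_\timealt)}$ a.e.\ and that the geometric integral is preserved. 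Finally, the standing hypothesis $\norm{\grad\obj(\state)} \leq \Radius(\state)/8$ gives $\norm{\dot{\widetilde\pth}_\timealt + \grad\obj(\widetilde\pth_\timealt)} \leq 2\norm{\grad\obj(\widetilde\pth_\timealt)} \leq \Radius(\widetilde\pth_\timealt)/4$, so $\widetilde\pth$ is a bona fide finite-action path whose drift-adjusted velocity stays well inside the support ball — which is exactly what the subsequent estimates will exploit.

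The main obstacle is this last construction near $\crit\obj$: there $\norm{\grad\obj}$ vanishes, so $\alpha$ may diverge and the natural-speed parametrization of a curve issued from a hyperbolic equilibrium genuinely takes infinite time. One must therefore either read $\contfuncs([0,\horizonalt])$ with $\horizonalt\in(0,+\infty]$ — harmless here, since $\dquasipot$ is an infimum over arbitrarily large horizons and the tail of such a path contributes a vanishing amount of action — or, following \citet[Chap.~4]{FW98}, first excise a small neighbourhood of $\crit\obj$, reparametrize the remaining portion, and control the discarded arcs. A secondary but necessary point is the measure-theoretic care needed to run the change of variables for a merely absolutely continuous $\pth$, in particular the handling of the set $\{\time:\dot\pth_\time=0\}$ of parametrization ``pauses'', so that both the speed identity and the invariance of the geometric integral hold rigorously.
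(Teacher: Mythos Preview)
Your first displayed inequality is where the argument slips: \cref{lem:truncated_gaussian:lagrangian} supplies the lower bound $\lagrangianalt(\vel)\geq(1-2\errorterm)\norm{\vel}^{2}/(2\variance)$ only for $\norm{\vel}\leq\Radius/4$, whereas finiteness of $\lagrangian(\pth_\time,\dot\pth_\time)$ merely pins $\dot\pth_\time+\grad\obj(\pth_\time)$ inside $\clball(0,\Radius)$. On the annulus $\Radius/4<\norm{\vel}<\Radius$ the cited lemma says nothing, so the pointwise bound is not yet justified. The fix is immediate, however: an isotropic truncated Gaussian is $\variance$-sub-Gaussian globally (Anderson's inequality, or simply invoke \cref{lem:csq_subgaussian} with $\bdvar=\variance$), which yields $\lagrangian(\pth_\time,\dot\pth_\time)\geq\norm{\dot\pth_\time+\grad\obj(\pth_\time)}^{2}/(2\variance)$ for \emph{all} $\time$ --- in fact without the $(1+\precsalt)$ loss. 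With this correction your AM--GM / geometric-invariance argument goes through.

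The paper takes the opposite order of operations: it reparametrizes \emph{first} via \citet[Chap.~4, Lem.~3.1]{FW98}, so that on the new path the constraint $\norm{\dot{\widetilde\pth}+\grad\obj(\widetilde\pth)}\leq 2\norm{\grad\obj}\leq\Radius/4$ holds automatically from the standing hypothesis $\norm{\grad\obj}\leq\Radius/8$; it then lower-bounds the Lagrangian through the \emph{constrained} dual $\sup_{\norm{\mom}\leq\Radius/(2\variance)}\{\cdot\}$ (where the Hamiltonian bound of \cref{lem:truncated_gaussian:lagrangian} is legitimately available), and absorbs the scaling factor $\dot\time(\timealt)$ via the bespoke \cref{lemma:almost_sq_norm_opt_rescale}. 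That rescaling lemma is, unwound, precisely your AM--GM step $\tfrac12\norm{v}^{2}+\tfrac12\norm{w}^{2}\geq\norm{v}\,\norm{w}$ packaged so as to stay inside the constraint ball. Your route is the more modular one --- reduce to the pure Gaussian action once, then run the Freidlin--Wentzell reparametrization argument --- at the price of needing the global sub-Gaussian bound; the paper's route stays self-contained to the truncated estimates of \cref{lem:truncated_gaussian:lagrangian} but leans on an ad-hoc inequality whose content is the same geometric invariance you make explicit.
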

\begin{proof}
	\def\resc{\widetilde{\pth}}
	By the proof \citet[Chap.~4, Lem.~3.1]{FW98}, there exists $\time(\timealt)$ change of time such that,  with $\resc_\timealt = \pth_{\time(\timealt)}$, $\norm{\dot \resc_\timealt} = \norm{\grad \obj(\resc_\timealt)}$.

	We have that 
	\begin{equation}
		\action_\horizon(\pth) = \int_{0}^{\time^{-1}(\horizon)} \dot \time(\timealt) \lagrangian(\resc_\timealt, (\dot \time(\timealt))^{-1}\dot \resc_\timealt) \dd \timealt\,,
	\end{equation}
	so it suffices to bound $\lagrangian(\resc_\timealt, \dot \resc_\timealt)$ from below: by definition, we have
	\begin{align}
		\lagrangian(\resc_\timealt, (\dot \time(\timealt))^{-1}\dot \resc_\timealt) 
		&\geq 
		\sup \setdef*{
			\inner{\mom, (\dot \time(\timealt))^{-1}\dot \resc_{\revise\timealt} + \grad \obj(\resc_\timealt)}
			- \hamiltalt(\resc_\timealt, \mom)
		}{
			\norm{\mom} \leq \frac{\Radius(\resc_\timealt)}{2 \variance(\obj(\resc_\timealt))}
		}
		\notag\\
		&\geq
		\sup \setdef*{
			\inner{\mom, (\dot \time(\timealt))^{-1}\dot \resc_{\revise\timealt} + \grad \obj(\resc_\timealt)}
			- (1 + \precsalt) \half[\variance(\obj(\resc_\timealt))] \norm{\mom}^2
		}{
		   \norm{\mom} \leq \frac{\Radius(\resc_\timealt)}{2 \variance(\obj(\resc_\timealt))}
		}\,,
	\end{align}
	by \cref{lem:truncated_gaussian:lagrangian}.
    Applying \cref{lemma:almost_sq_norm_opt_rescale} with \revise{$\vel \gets (\dot \time(\timealt))^{-1}\dot \resc_{\time}$, $\velalt \gets \grad \obj(\resc_\timealt)$ and $\lambda \gets (\dot \time(\timealt))^{-1}$},
	now exactly yields, for almost all $\timealt$,
	\begin{align}
		\lagrangian(\resc_\timealt, (\dot \time(\timealt))^{-1}\dot \resc_\timealt) 
		&\geq
        \revise{(\dot \time(\timealt))^{-1}}
		\sup \setdef*{
			\inner{\mom, \dot \resc_{\revise\timealt} + \grad \obj(\resc_\timealt)}
			- (1 + \precsalt) \half[\variance(\obj(\resc_\timealt))] \norm{\mom}^2
		}{
			\norm{\mom} \leq \frac{\Radius(\resc_\timealt)}{2 \variance(\obj(\resc_\timealt))}
		}
		\notag\\
        &= \revise{(\dot \time(\timealt))^{-1}}
		\frac{\norm{\dot \resc_\timealt + \grad \obj(\resc_\timealt)}^2}{2 (1 + \precsalt)\variance(\obj(\resc_\timealt))}\,,
	\end{align}
	since $\norm{\dot \resc_\timealt + \grad \obj(\resc_\timealt)} \leq \frac{\Radius(\resc_\timealt)}{2}$.
\end{proof}

\begin{lemma}
	With this setting, for any $\idx \in \indices$
	\begin{equation}
		\dinvpot_\idx =
    \sum_{(\idxalt \leftrightarrow \idxaltalt) \in \sol[\tree]} \symdquasipot_{\idxalt, \idxaltalt} - \sum_{\idxalt \in \indices} \pot_\idxalt + \pot_\idx\,,
	\end{equation}
\end{lemma}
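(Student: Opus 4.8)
\noindent\emph{Proof plan.} The plan is to transpose, step by step, the explicit computation of the potentials $\dinvpot_\idx$ carried out for honest Gaussian noise in \cref{app:subsec:gaussian_noise}, with every exact identity there replaced by a two-sided estimate that is tight up to an additive error of order $\precsalt$. Fix $\idx,\idxalt\in\indices$ and choose a near-optimal path $\pth\in\contfuncs([0,\horizon])$ joining $\eqcl_\idx$ to $\eqcl_\idxalt$, i.e.\ with $\pth_{0}\in\eqcl_\idx$, $\pth_\horizon\in\eqcl_\idxalt$ and $\action_\horizon(\pth)\le\dquasipot_{\idx,\idxalt}+\precsalt$. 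The first move is to invoke \cref{lemma:pth_resc}: it replaces $\pth$ by a reparametrization $\widetilde\pth\in\contfuncs([0,\horizonalt])$ with $\norm{\dot{\widetilde\pth}_\time}=\norm{\grad\obj(\widetilde\pth_\time)}$ and
\[
\action_\horizon(\pth)\ \ge\ \int_{0}^{\horizonalt}\frac{\norm{\dot{\widetilde\pth}_\time+\grad\obj(\widetilde\pth_\time)}^{2}}{2(1+\precsalt)\,\variance(\obj(\widetilde\pth_\time))}\dd\time\,.
\]
Applying instead the upper bound of \cref{lem:truncated_gaussian:lagrangian} directly to the reversed path $\pthalt_\time=\widetilde\pth_{\horizonalt-\time}$ \textendash\ legitimate because along $\widetilde\pth$ the maximizing momenta stay inside the ball on which the truncated and the genuine Gaussian cumulant-generating functions agree up to $\precsalt$, thanks to the standing bound $\norm{\grad\obj}\le\Radius/8$ of this subsection \textendash\ yields $\action_{\horizonalt}(\pthalt)\le\int_{0}^{\horizonalt}\norm{-\dot{\widetilde\pth}_\time+\grad\obj(\widetilde\pth_\time)}^{2}/\bigl(2\variance(\obj(\widetilde\pth_\time))\bigr)\dd\time+\bigoh(\precsalt\,\horizonalt)$.

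The crux is to subtract the two displays. Since $\grad\pot=(2/(\variance\circ\obj))\,\grad\obj$, one has $\inner{\dot{\widetilde\pth}_\time}{\grad\obj(\widetilde\pth_\time)}=\tfrac12\,\variance(\obj(\widetilde\pth_\time))\,\ddt\pot(\widetilde\pth_\time)$, so the cross term telescopes to $\pot_\idxalt-\pot_\idx$, while the mismatch $\tfrac{\precsalt}{1+\precsalt}$ between $\tfrac12\variance^{-1}$ and $\tfrac{1}{2(1+\precsalt)\variance}$ contributes at most $\precsalt\bigl((1+\precsalt)\action_\horizon(\pth)\bigr)=\bigoh(\precsalt)$ because $\dquasipot_{\idx,\idxalt}<\infty$ by \cref{asm:costs}; combined with the additive truncation error, this gives the approximate reversal relation $\dquasipot_{\idxalt,\idx}\le\dquasipot_{\idx,\idxalt}-(\pot_\idxalt-\pot_\idx)+\bigoh(\precsalt)$. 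Exchanging $\idx$ and $\idxalt$ gives the reverse inequality, so that $\dquasipot_{\idx,\idxalt}+\pot_\idx=\dquasipot_{\idxalt,\idx}+\pot_\idxalt+\bigoh(\precsalt)$. I would then let $\symdquasipot_{\idx,\idxalt}$ be this common value (equivalently, the manifestly symmetric weight $\tfrac12(\dquasipot_{\idx,\idxalt}+\dquasipot_{\idxalt,\idx})+\tfrac12(\pot_\idx+\pot_\idxalt)$), and take $\sol[\tree]$ to be a minimum-weight spanning tree for $(\symdquasipot_{\idx,\idxalt})_{\idx,\idxalt}$ on the complete undirected graph over $\indices$.

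With the symmetrization in hand, the tree-rewriting argument of \cref{app:subsec:gaussian_noise} applies essentially verbatim: for any $\tree_\idx\in\trees_\idx$ each vertex is the source of exactly one edge, so
\begin{align*}
\sum_{(\idxalt\to\idxaltalt)\in\tree_\idx}\dquasipot_{\idxalt,\idxaltalt}+\sum_{\idxalt\in\indices}\pot_\idxalt
&=\pot_\idx+\sum_{(\idxalt\to\idxaltalt)\in\tree_\idx}\bigl(\dquasipot_{\idxalt,\idxaltalt}+\pot_\idxalt\bigr)\\
&=\pot_\idx+\sum_{(\idxalt\to\idxaltalt)\in\tree_\idx}\symdquasipot_{\idxalt,\idxaltalt}+\bigoh(\precsalt)\\
&\ge\pot_\idx+\sum_{(\idxalt\leftrightarrow\idxaltalt)\in\sol[\tree]}\symdquasipot_{\idxalt,\idxaltalt}+\bigoh(\precsalt)\,,
\end{align*}
and equality is attained, up to $\bigoh(\precsalt)$, by orienting every edge of $\sol[\tree]$ toward $\idx$. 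Subtracting $\sum_{\idxalt\in\indices}\pot_\idxalt$ and minimizing over $\tree_\idx$ in the definition of $\dinvpot_\idx$ then produces $\dinvpot_\idx=\sum_{(\idxalt\leftrightarrow\idxaltalt)\in\sol[\tree]}\symdquasipot_{\idxalt,\idxaltalt}-\sum_{\idxalt\in\indices}\pot_\idxalt+\pot_\idx$, which is the claim.

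The main obstacle I anticipate lies in making the $\bigoh(\precsalt)$ terms uniform, since the truncation error $\bigoh(\precsalt\,\horizonalt)$ above is proportional to the length $\horizonalt$ of the reparametrized path and this is not bounded a priori. This is handled as in the Freidlin\textendash Wentzell construction of the $\dquasipot_{\idx,\idxalt}$: a near-optimal path may be taken to meet each component $\eqcl_\idxaltalt$ at most once, and on the stretches where it is close to $\crit\obj$ both $\dot{\widetilde\pth}$ and $\grad\obj$ are small, so that neither the action nor $\pot$ accumulate there; this caps the effective length, hence the total error, by a constant depending only on $\nComps$. Carrying out this bookkeeping carefully \textendash\ together with checking that \cref{lemma:pth_resc} and \cref{lem:truncated_gaussian:lagrangian} can be applied uniformly over all the (finitely many) pairs of components \textendash\ is the delicate part of the proof.
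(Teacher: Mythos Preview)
Your approach is essentially the paper's: reparametrize via \cref{lemma:pth_resc} so that $\norm{\dot{\widetilde\pth}}=\norm{\grad\obj(\widetilde\pth)}$, use the polarization identity to relate the forward and reversed quadratic integrands, bound the reversed action via the upper bound on $\lagrangianalt$ from \cref{lem:truncated_gaussian:lagrangian}, and then rerun the spanning-tree computation of \cref{app:subsec:gaussian_noise}. The difference is only in the bookkeeping of the error, and here you have made your life harder than necessary.

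The obstacle you flag in the last paragraph is a red herring. The upper bound of \cref{lem:truncated_gaussian:lagrangian} is \emph{multiplicative}, not additive: for $\norm{\vel}\le\Radius/4$ one has $\lagrangianalt(\vel)\le(1+\precsalt)\tfrac{\norm{\vel}^2}{2\variance}$, and after reparametrization $\norm{-\dot{\widetilde\pth}+\grad\obj(\widetilde\pth)}\le 2\norm{\grad\obj(\widetilde\pth)}\le\Radius/4$ everywhere, so the bound applies pointwise along the whole path. Integrating gives $\action_{\horizonalt}(\pthalt)\le(1+\precsalt)\int\tfrac{\norm{-\dot{\widetilde\pth}+\grad\obj}^{2}}{2\variance}$, with no $\horizonalt$ in the error. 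The paper then chains this directly: from \cref{lemma:pth_resc} and the polarization identity one gets, for \emph{any} admissible $\pth$,
\[
\action_\horizon(\pth)\ \ge\ \int_{0}^{\horizonalt}\frac{\norm{-\dot{\widetilde\pth}+\grad\obj(\widetilde\pth)}^{2}}{2(1+\precsalt)\variance(\obj(\widetilde\pth))}\dd\time\ +\ \frac{\pot_\idxalt-\pot_\idx}{1+\precsalt}\ \ge\ \frac{1-\precsalt}{1+\precsalt}\,\dquasipot_{\idxalt,\idx}+\frac{\pot_\idxalt-\pot_\idx}{1+\precsalt}\,,
\]
and taking the infimum over $\pth$ yields the approximate reversal relation with a purely multiplicative error. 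No control of $\horizonalt$, no appeal to paths meeting each component only once, is needed. Your route through an additive $\bigoh(\precsalt\,\horizonalt)$ term and then back down via ``effective length'' arguments would eventually work (the integral you are trying to bound equals the forward quadratic integral minus a potential difference, both of which are finite), but it is an unnecessary detour.
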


\begin{proof}

	Consider $\pth \in \contfuncs([0, \horizon])$ such that $\pth_{0} \in \cpt_\idx$, $\pth_\horizon \in \cpt_\idxalt$ and $\action_\horizon(\pth) < + \infty$.
	Then, by the previous lemma \cref{lemma:pth_resc}, there exists $\widetilde \pth \in \contfuncs([0, \horizonalt])$ a reparametrization of $\pth$ such that, for any $\time \in [0, \horizonalt]$,
	\begin{align}
		\action_\horizon(\pth)&\geq \int_{0}^{\horizonalt} \frac{\norm{\dot{\widetilde \pth}_\timealt + \grad \obj(\widetilde \pth_\timealt)}^2}{2 (1 + \precsalt)\variance(\obj(\widetilde \pth_\timealt))} \dd \timealt
		\notag\\
		&= \int_{0}^{\horizonalt} \frac{\norm{- \dot{\widetilde \pth}_\timealt  + \grad \obj(\widetilde \pth_\timealt)}^2}{2 (1 + \precsalt)\variance(\obj(\widetilde \pth_\timealt))} \dd \timealt + \frac{\pot(\pth_\horizon) - \pot(\pth_{0})}{1 + \precsalt}\,,
	\end{align}
	where we performed the same computations as above in \cref{app:subsec:gaussian_noise}.
	Considering the path $(\widetilde{\pth}_{\horizonalt - \timealt})_{\timealt \in [0, \horizonalt]}$ and invoking the upper-bound on the Lagrangian from \cref{lem:truncated_gaussian:lagrangian} with $- \dot{\widetilde \pth}_\timealt + \grad \obj(\widetilde \pth_\timealt)$ which still has norm less than $\Radius(\widetilde \pth_\timealt) / 4$, we get that
	\begin{equation}
		\action_\horizon(\pth) \geq \frac{1 - \precsalt}{1 + \precsalt}\dquasipot_{\idxalt, \idx} + \frac{\pot(\pth_\horizon) - \pot(\pth_{0})}{1 + \precsalt}\,.
	\end{equation}
	The result now follows from the same computations as in \cref{app:subsec:gaussian_noise}.
\end{proof}

\WAdelete{

\begin{assumption}[Convergence of the gradient flow]
	Assume that all the trajectories of the flow started at points in $\points$ (stay in points) and converge to some $\cpt_\idx$.
\end{assumption}

\begin{notation}
	For $\idx \in \indices$, define 
	$\basin(\cpt_\idx)$ the set of points that converge to $\cpt_\idx$ under the flow.
\end{notation}

\begin{definition}[{\citet{FW98}}]
	Define, for any sets $\cpt, \cptalt \subset \points$,
	\begin{equation}
		\quasipot(\cpt, \cptalt) 
		\defeq \inf \setdef*{\action_\horizon(\pth)}{\pth \in \contfuncs([0, \horizon]), \horizon \in [0, + \infty), \pth_{0} \in \cpt,  \pth_\horizon \in \cptalt}\,.
	\end{equation}
\end{definition}

\begin{lemma}
	For every $\idx \in \indices$, set $\cpt \subset \points$,
	\begin{equation}
		\dquasipot(\cpt_\idx, \cpt) = \quasipot(\cpt_\idx, \cpt)\,.
	\end{equation}
\end{lemma}

\begin{lemma}
	For every $\idx \in \indices$,
	\begin{equation}
		\invpot(\cpt_\idx) = \min_{\tree \in \treesalt_\idx} \sum_{(\idxalt \to \idxaltalt) \in \tree} \quasipot(\cpt_\idxalt, \cpt_\idxaltalt)\,,
	\end{equation}
	where $\treesalt_\idx$ denotes the set of trees rooted at $\idx$ in the graph on $\indices$ where there is an edge between $\idxalt$ and $\idxalt$ if and only if
	\begin{equation}
		\cl\basin(\cpt_\idxalt) \cap \cl \basin(\cpt_\idxaltalt) \neq \emptyset\,,
	\end{equation}
\end{lemma}

\WAcomment{Can we have a similar result for $\dinvpot(\open)$?}

\begin{lemma}
	Fix $\idx \in \indices$.
	If 
	\begin{equation}
		\quasipot(\cpt_\idx, \bd \basin(\cpt_\idx))
		\geq
		\card \indices
		\max \setdef{ \quasipot(\cpt_\idxalt, \cl \basin(\cpt_\idxalt) \cap \cl \basin(\cpt_\idxaltalt))}{\idxalt, \idxaltalt \in \indices,\, \idxalt \neq \idx}\,, 
	\end{equation}
	then $\idx \in \argmin_{\idxalt \in \indices} \invpot(\cpt_\idxalt)$. Moreover, if this inequality is strict, then $\{\idx \} = \argmin_{\idxalt \in \indices} \invpot(\cpt_\idxalt)$.
\end{lemma}

The next lemma is heavily inspired by \citet[chap.~5, thm.~4.3]{FW98}.

It allows us to estimate $\quasipot(\cpt_\idx, \point)$ for $\point \in \cl \basin(\cpt_\idx)$.

\begin{notation}
	\begin{align}
		\hamiltalt(\state, \vel) &\defeq 
\log \ex \bracks*{\exp \parens*{\inner{\vel, \noise(\state, \sample)}}}\\
		\lagrangianalt(\state, \cdot) &\defeq \hamiltalt(\state, \cdot)^*
	\end{align}
\end{notation}

\begin{lemma}[Estimation of the quasi-potential]\label{lem:est_quasi_pot}
	Fix $\idx \in \indices$.
	Consider $\covmat$ positive-definite, $\variance \from \R \to (0, + \infty)$ continuous.
	Write $\covmat$ as 
	\begin{equation}
		\covmat = \sum_{\eigval \in \eig \covmat} \eigval \orthproj_\eigval\,,
	\end{equation}
	where $\orthproj_\eigval$ is the orthogonal projection on the eigenspace of $\covmat$ associated with $\eigval$ and define $\primvar \from \R \to \R$ by 
	\begin{equation}
		\primvar' = \frac{1}{\variance}\,.
	\end{equation}

	Assume that there is some $\sol \in \cpt_\idx$ such that, for any $\state \in \basin(\cpt_\idx)$,
	\begin{equation}
		\primvar \circ \obj(\state) = 
		\sum_{\eigval \in \eig \covmat} \primvar \circ \obj(\sol + \orthproj_\eigval(\state - \sol))\,.
	\end{equation}
	Define the potential
	\begin{equation}
		\pot(\state) = \sum_{\eigval \in \eig \covmat} \frac{2\primvar \circ \obj(\sol + \orthproj_\eigval(\state - \sol))}{\eigval}\,.
	\end{equation}
	\begin{itemize}
		\item If, for some $\potgbound > 0$, on $\basin(\cpt_\idx)$,  for any $\vel$ such that $\norm{\vel} \leq  \potgbound$,
			\begin{equation}
				\hamiltalt(\state, \vel) \leq \half {\variance \circ \obj(\state)} \inner{\vel, \covmat \vel}\,,
			\end{equation}
			then, assuming that $\norm{\grad \pot} \leq \potgbound$ on $\cl \basin(\cpt_\idx)$, on $\cl \basin(\cpt_\idx)$,
			\begin{equation}
				\quasipot(\cpt_\idx, \point) \geq \pot(\point) - \pot(\cpt_\idx)\,.
			\end{equation}
\item If, for some $\potgbound > 0$, on $\basin(\cpt_\idx)$, for any $\vel$ such that $\norm{\vel} \leq  \potgbound$,
			\begin{equation}
				\lagrangianalt(\state, \vel) \leq \frac{\inner{\vel, \covmat^{-1} \vel} }{2\variance \circ \obj(\state)}\,,
			\end{equation}
			then, on $\cl \basin(\cpt_\idx)$,
			\begin{equation}
				\quasipot(\cpt_\idx, \point) \leq \frac{\sup_{\basin(\cpt_\idx)}\norm{\grad \obj}}{\potgbound}\parens*{\pot(\point) - \pot(\cpt_\idx)}\,.
			\end{equation}
	\end{itemize}
\end{lemma}
}

\WAdelete{
We now derive consequences in the Gaussian case.
\begin{lemma}\label{lem:truncated_gaussian_primal}
	Consider $\rv$ a centered Gaussian vector with covariance $\covmat \mg 0$ conditioned on being in a symmetric compact set $\cpt \subset \R^\dims$ that contains $\ball(0, \Radius)$.
	Then, there exists $\precsalt(\covmat, \Radius)$ such that, for any $\vel \in \cl \ball(0, \Radius/2)$,
	\begin{equation}
		\abs*{
			\log \ex \bracks*{\exp \parens*{\inner{\vel, \rv}}}
			- \half \inner{\vel, \covmat \vel}
		} \leq \precsalt(\covmat, \Radius) \times \half \inner{\vel, \covmat \vel}\,,
	\end{equation}
	with, for any $\const > 0$,
	\begin{equation}
		\sup_{\covmat: \const^{-1} \identity \mleq \covmat \mleq \const \identity} \precsalt(\covmat, \Radius) \to 0 \text{ as } \Radius \to + \infty\,.
	\end{equation}
\end{lemma}

\begin{lemma}\label{lem:truncated_gaussian_dual}
	Consider $\rv$ a centered Gaussian vector with covariance $\covmat \mg 0$ conditioned on being in a symmetric compact set $\cpt \subset \R^\dims$ that contains $\ball(0, \Radius)$.
Then, there exists $\precsalt(\covmat, \Radius)$, $\Radiusalt(\covmat, \Radius)$ such that, for any $\vel \in \cl \ball(0, \Radiusalt)$,
	\begin{align}
			\log \ex \bracks*{\exp \parens*{\inner{\vel, \rv}}}
		&\leq (1 + \precsalt(\covmat, \Radius)) \times \half \inner{\vel, \covmat \vel}\\
		\parens*{\log \ex \bracks*{\exp \parens*{\inner{\cdot, \rv}}}}^*(\vel)
		&\leq (1 + \precsalt(\covmat, \Radius)) \times \half \inner{\vel, \covmat^{-1} \vel}\\
	\end{align}
	with, for any $\const > 0$,
	\begin{align}
		\sup_{\covmat: \const^{-1} \identity \mleq \covmat \mleq \const \identity} \precsalt(\covmat, \Radius) &\to 0 \text{ as } \Radius \to + \infty \\
		\inf_{\covmat: \const^{-1} \identity \mleq \covmat \mleq \const \identity} \Radiusalt(\covmat, \Radius) &\to + \infty \text{ as } \Radius \to + \infty\,.
	\end{align}
\end{lemma}

\begin{lemma}[Truncated Gaussian case]
	Assume that $\noise(\state)$ is a centered Gaussian vector with covariance $\variance \circ \obj(\state)$ conditioned on being in a symmetric compact set $\cpt \subset \R^\dims$ that contains $\ball(0, \Radius)$.
	Moreover, assume that $\const^{-1} \leq \variance \circ \obj(\state) \leq \const$ for some $\const > 0$ and, that, with $\Radiusalt$ and $\precsalt$ from \cref{lem:truncated_gaussian_dual}, $\norm{\grad \obj} \leq \Radiusalt$ on the whole of points.
	Define
	\begin{equation}
		\pot(\state) = 2 \primvar \circ \obj(\state) \quad \text{ with } \quad \primvar' = \frac{1}{\variance}\,.
	\end{equation}
	Then, for any $\idx \in \indices$,
	\begin{equation}
		\dinvpot_\idx - \min_{\idxalt \in \indices} \dinvpot_\idxalt
		\asympteq
		(1 \pm \precsalt) \parens*{\pot(\cpt_\idx) - \min_{\idxalt \in \indices} \pot(\cpt_\idxalt)}
		\pm 4 \precsalt \card \indices \sup \abs{\pot}\,.
	\end{equation}
\end{lemma}
}

\subsection{Local dependencies}
\label{app:subsec:local_dependencies}

Under local assumptions similar to \citet{moriPowerLawEscapeRate}, we demonstrate how the modelling of the noise influences the invariant measure.
\begin{lemma}
	\label{lem:loc_dep}
	Consider $\comp_\iComp$ minimizing, 
	$\variance \from \R \to (0, +\infty)$ continuous, and take $\primvar \from \R \to \R$ such that $\primvar' = \frac{1}{\variance}$.
	Assume that $\sol[\hessmat]$ is a positive definite matrix such that, locally near $\eqcl_\iComp$, it holds that:
\begin{equation}
	\primvar(\obj(\state)) = \sum_{\eigval \in \eig \sol[\hessmat]} \objalt^\eigval(\state_\eigval)\,,
\end{equation}
where $\state_\eigval$ denotes the orthogonal projection of $\state$ on the eigenspace of the eigenvalue $\eigval$ and where $\objalt^\eigval \from \vecspace \to \R$ is continuously differentiable.
	Define the potential $\pot \from \vecspace \to \R$ by
	\begin{equation}
		\pot(\state) = \sum_{\eigval \in \eig \sol[\hessmat]} \frac{2 \objalt^\eigval(\state_\eigval)}{\eigval}\,.
	\end{equation}
	If we have the anistropic subGaussian bound: for $\point$ in a neighborhood of $\eqcl_\iComp$, $\mom \in \clball(0, \norm{\pot(\state)})$,
	\begin{equation}
		\hamiltalt(\point, \mom) \leq \frac{\variance(\obj(\state))}{2} \inner{\mom, \sol[\hessmat] \mom}\,,
	\end{equation}
	then there is $\margin > 0$ such that, for all $\jComp \neq \iComp$, any $0 < \marginalt \leq \margin$,
	\begin{equation}
		\energy_\jComp \geq  \min \setdef*{
			\pot(\state)  - \pot_\idx
		}{
			\state,
			d(\state, \comp_\iComp) = \marginalt
		} > 0 \,,
		\label{eq:local_energy_levels}
	\end{equation}  
	where $\pot_\idx$ is the value of $\pot$ on $\comp_\idx$.

	Moreover, there exists $\Radius > \margin$ such that, if there exists $\const > 0$, $\variancealt > 0$, such that, for all $\state \in \clball(0, \Radius) \setminus \U_\margin(\eqcl_\iComp)$, $\vel \in \ball(\grad \obj(\state), \const)$,
	\begin{equation}
		\lagrangianalt(\state, \vel) \leq \frac{\norm{\vel}}{2 \variancealt}\,,
	\end{equation}
	then there exists $\Const > 0$ that depends only on $\radius, \Radius, \const$, $\obj$ restricted to $\vecspace \setminus \U_{\radius}(\eqcl_\iComp)$ 
	such that
	\begin{equation}
		\energy_\idx \leq  \frac{\Const}{\variancealt}\,.
	\end{equation}
\end{lemma}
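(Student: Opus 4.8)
The plan is to translate both claims into statements about the quasi-potentials $\dquasipot_{\idxalt,\idxaltalt}=\dquasipot(\eqcl_\idxalt,\eqcl_\idxaltalt)$ via the tree representation of the energy levels. Recall from \eqref{eq:energy} (equivalently the definition of $\dinvpot$ in \cref{app:estimates_trans_inv}) that $\energy_\iComp=\min_{\tree\in\trees_\iComp}\sum_{(\idxalt\to\idxaltalt)\in\tree}\dquasipot_{\idxalt,\idxaltalt}$, and that every weight $\dquasipot_{\idxalt,\idxaltalt}\ge 0$ because $\action\ge 0$ by \cref{lem:basic_prop_h_l}. For the lower bound on $\energy_\jComp$ ($\jComp\neq\iComp$): in any spanning in-tree rooted at $\jComp$ the vertex $\iComp$ has out-degree one, so there is an edge $(\iComp\to\kComp)$ with $\kComp\neq\iComp$, whence $\energy_\jComp\ge\min_{\kComp\neq\iComp}\dquasipot(\eqcl_\iComp,\eqcl_\kComp)$; it then suffices to bound $\dquasipot(\eqcl_\iComp,\eqcl_\kComp)$ below for a single $\kComp\neq\iComp$. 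For the upper bound on $\energy_\idx=\energy_\iComp$: plug in the star in-tree with edges $\jComp\to\iComp$, $\jComp\neq\iComp$, giving $\energy_\iComp\le\sum_{\jComp\neq\iComp}\dquasipot(\eqcl_\jComp,\eqcl_\iComp)$, and bound each summand.

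The heart of the lower bound is a Lyapunov identity for $\pot$. Differentiating $\primvar(\obj(\state))=\sum_\eigval\objalt^\eigval(\state_\eigval)$ and using that the eigenspaces of $\sol[\hessmat]$ are orthogonal gives $\nabla\objalt^\eigval(\state_\eigval)=\orthproj_\eigval\nabla\obj(\state)/\variance(\obj(\state))$, hence $\grad\pot(\state)=2(\sol[\hessmat])^{-1}\nabla\obj(\state)/\variance(\obj(\state))$. Applying the anisotropic sub-Gaussian bound with $\mom=\grad\pot(\state)$ then yields, on a neighborhood of $\eqcl_\iComp$, $\hamilt(\state,\grad\pot(\state))=-\inner{\nabla\obj(\state)}{\grad\pot(\state)}+\hamiltalt(\state,\grad\pot(\state))\le-\tfrac{2}{\variance}\inner{\nabla\obj}{(\sol[\hessmat])^{-1}\nabla\obj}+\tfrac{2}{\variance}\inner{\nabla\obj}{(\sol[\hessmat])^{-1}\nabla\obj}=0$. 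Since $\hamilt(\state,\cdot)$ is closed, proper and convex (\cref{lem:basic_prop_h_l}) and $\lagrangian(\state,\cdot)=\hamilt(\state,\cdot)^{*}$, Fenchel--Young at $(\grad\pot(\state),\vel)$ gives $\inner{\grad\pot(\state)}{\vel}\le\lagrangian(\state,\vel)$ for all $\vel$, so, exactly as in \cref{lem:B_is_descent}, $\pot(\pth_\horizon)-\pot(\pth_\tstart)\le\action_{[0,\horizon]}(\pth)$ for any absolutely continuous $\pth$ confined to that neighborhood. Choose $\margin>0$ small enough that $\cl\U_{\margin}(\eqcl_\iComp)$ sits inside this neighborhood and inside $\{d(\cdot,\eqcl_\iComp)<d(\eqcl_\iComp,\eqcl_\kComp)\}$ for all $\kComp\neq\iComp$. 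Given any path from $\eqcl_\iComp$ to $\eqcl_\kComp$ and any $\marginalt\le\margin$, the path first meets $\{d(\cdot,\eqcl_\iComp)=\marginalt\}$ at a time before which it stays in $\cl\U_{\marginalt}(\eqcl_\iComp)$, so its action is at least $\pot(\pth_{\timealt^{\star}})-\pot_\iComp\ge\min\setdef{\pot(\state)-\pot_\iComp}{d(\state,\eqcl_\iComp)=\marginalt}$ (note $\pot$ is constant on $\eqcl_\iComp$, since $\grad\pot$ vanishes there because $\nabla\obj$ does and $\eqcl_\iComp$ is connected). Taking infima proves the bound on $\energy_\jComp$; positivity follows because $\eqcl_\iComp$ is minimizing hence asymptotically stable (\cref{lemma:minimizing_component_asympt_stable}), $\tfrac{d}{dt}\pot(\flowof{\time}{\state})=-\tfrac{2}{\variance}\inner{\nabla\obj}{(\sol[\hessmat])^{-1}\nabla\obj}<0$ off $\crit\obj$, so $\pot$ is a strict Lyapunov function near $\eqcl_\iComp$ and exceeds $\pot_\iComp$ on the compact set $\{d(\cdot,\eqcl_\iComp)=\marginalt\}$.

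For the upper bound I would construct an explicit competitor. Fix $\Radius$ so large that the convex hull of $\crit\obj$ lies in $\clball(0,\Radius)$, and (shrinking $\margin$) so small that $\cl\U_{\margin}(\eqcl_\iComp)$ lies in the neighborhood from asymptotic stability, i.e.\ the gradient flow carries every point of $\cl\U_{\margin}(\eqcl_\iComp)$ into $\eqcl_\iComp$. For each $\jComp\neq\iComp$, take the line segment from a point of $\eqcl_\jComp$ toward a point of $\eqcl_\iComp$, truncated at its first meeting with $\bd\U_{\margin}(\eqcl_\iComp)$; this truncated segment stays in $\clball(0,\Radius)\setminus\U_{\margin}(\eqcl_\iComp)$, I parametrize it with speed below $\const$, and I append the gradient flow from its endpoint into $\eqcl_\iComp$, which has zero action (\cref{lem:basic_prop_flow}, the flow tail being closed up in finite time at arbitrarily small extra cost via the $\W$-neighborhoods of \cref{lem:W_nbd}). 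On the segment, $\norm{\dot\pth}<\const$ gives $\lagrangian(\pth_\time,\dot\pth_\time)=\lagrangianalt(\pth_\time,\dot\pth_\time+\nabla\obj(\pth_\time))\le\norm{\dot\pth_\time+\nabla\obj(\pth_\time)}/(2\variancealt)\le(\norm{\dot\pth_\time}+\norm{\nabla\obj(\pth_\time)})/(2\variancealt)$, so integrating over a horizon of order $\Radius/\const$ bounds its action by $(\mathrm{length}+\mathrm{horizon}\cdot\sup_{\clball(0,\Radius)}\norm{\nabla\obj})/(2\variancealt)=\Const_\jComp/\variancealt$, with $\Const_\jComp$ depending only on $\Radius$, $\const$, and $\obj$ on $\clball(0,\Radius)\setminus\U_{\radius}(\eqcl_\iComp)$. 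Summing over the finitely many $\jComp\neq\iComp$ gives $\energy_\iComp\le\Const/\variancealt$.

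The main obstacle is the localization: the separable model and the anisotropic bound hold only near $\eqcl_\iComp$, so the Lyapunov inequality must be applied only to segments confined to $\cl\U_{\margin}(\eqcl_\iComp)$, and one must verify that the resulting estimate is a genuine lower bound on $\dquasipot(\eqcl_\iComp,\eqcl_\kComp)$ and not merely on the cost of reaching the sphere — this is what the first-passage-time truncation secures. Everything else is routine: the gradient identity for $\grad\pot$, the one-line check that $\hamilt(\state,\grad\pot(\state))\le 0$, constancy of $\pot$ on $\eqcl_\iComp$, and tracking the dependence of $\Const$. The only minor wrinkle in the upper bound is keeping the competitor inside the region $\clball(0,\Radius)\setminus\U_{\margin}(\eqcl_\iComp)$ where the Lagrangian bound is assumed; truncating the straight segment at $\bd\U_{\margin}(\eqcl_\iComp)$ (which lies in the basin once $\margin$ is small) and then switching to the zero-cost flow handles this without any connectedness hypothesis on the complement.
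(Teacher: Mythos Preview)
Your proposal is correct and follows essentially the same route as the paper: for the lower bound, the paper also reduces to $\dquasipot_{\iComp,\kComp}$ via the out-edge of $\iComp$ in any in-tree rooted at $\jComp$, computes $\grad\pot=2(\sol[\hessmat])^{-1}\nabla\obj/\variance$, shows $\hamilt(\state,\grad\pot(\state))\le 0$ from the anisotropic sub-Gaussian bound, and then integrates the Fenchel--Young inequality along the path truncated at the first crossing of the $\marginalt$-sphere; for the upper bound, the paper likewise uses the star in-tree at $\iComp$, a slow $C^1$ path from each $\eqcl_\jComp$ to a point on $\bd\U_\margin(\eqcl_\iComp)$ whose action is controlled by the Lagrangian bound, and then appeals to $\dquasipot(\{\state\},\eqcl_\iComp)=0$ via the gradient flow (your $\W$-neighborhood closing is exactly that argument made explicit). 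The only cosmetic difference is that the paper leaves the competitor path abstract while you take a straight segment, and the paper writes the Lagrangian bound with $\norm{\vel}^2$ rather than $\norm{\vel}$ (a typo in one of the two places), which does not affect the argument.
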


The assumptions on the Hamiltonian and the Lagrangian roughly say that the noise share some similarities with Gaussian distributions with the prescribed variances. In particular, they are satisfied in the (truncated) Gaussian case, as shown in \cref{lem:truncated_gaussian:lagrangian}.

Moreover, a takeaway of this lemma is that, if $\variance$ or the eigenvalues of $\sol[\hessmat]$ are small enough, then $\eqcl_\idx$ must be the ground state even if it may not be the global minimum of $\pot$.
This lemma is general enough to handle non-constant variance: a notable example is when $\variance(\obj(\state))$ is linear in $\obj(\state)$ and where the resulting potential $\pot$ then depends logarithmically on the value $\obj$.

\begin{proof}
	Denote $\orthproj_\eigval \in \R^{d \times d}$ the orthogonal projection on the eigenspace of $\sol[\hessmat]$ associated with the eigenvalue $\eigval$. $\pot$ can thus be rewritten as 
	\begin{equation}.
		\pot(\state) = \sum_{\eigval \in \eig \sol[\hessmat]} \frac{2 \objalt^\eigval(\orthproj_\eigval \state)}{\eigval}\,,
	\end{equation}
	so that its gradient is given by
	\begin{equation}
		\grad \pot(\state) = \sum_{\eigval \in \eig \sol[\hessmat]} \frac{2 \orthproj_\eigval \grad \objalt^\eigval(\orthproj_\eigval \state)}{\eigval} \,.
	\end{equation}
	In particular, we obtain that for $\state$ close enough to $\eqcl_\iComp$, using the orthogonality of the projections,
	\begin{align}
	   \hamiltalt(\state, \grad \pot(\state)) 
	   &\leq 
	\frac{\variance(\obj(\state))}{2} \inner{\grad \pot(\state), \sol[\hessmat] \grad \pot(\state)}
	\notag\\
	   &= \frac{\variance(\obj(\state))}{2} \sum_{\eigval \in \eig \sol[\hessmat]}  \frac{4\norm{\orthproj_\eigval \grad \objalt^\eigval(\orthproj_\eigval \state)}^2}{\eigval}
	   \notag\\
	   &= \frac{\variance(\obj(\state))}{2} \inner*{\grad \pot(\state), 2 \grad (\primvar \circ \obj)(\state)}
	   \notag\\
	   &=  \inner*{\grad \pot(\state), \grad \obj(\state)}\,.
	\end{align}
	Therefore, for $\state$ close enough to $\eqcl_\iComp$, we have that
	\begin{align}
		\label{eq:lemma:anis_bd_hamilt}
		\hamilt(\state, \grad \pot(\state))
		&= -\inner*{\grad \pot(\state), \grad \obj(\state)} + \hamiltalt(\state, \grad \pot(\state))
		\leq 0\,.
	\end{align}
	For $\state$ close to $\eqcl_\idx$, let us compute $\inner{\grad \pot(\state)}{\grad \obj(\state)}$:
	\begin{align}
		\inner{\grad \pot(\state)}{\grad \obj(\state)} &= \variance(\obj(\state)) \inner{\grad \pot(\state)}{\grad (\primvar \circ \obj)(\state)}
		\notag\\
													   &= \variance(\obj(\state)) \sum_{\eigval \in \eig \sol[\hessmat]} \frac{2 \norm{\orthproj_\eigval \grad \objalt^\eigval(\orthproj_\eigval \state}}{\eigval}\,,
	\end{align}
	which is (stricly) positive in a small neighborhood of $\eqcl_\idx$ (excluding $\eqcl_\idx$ itself). Therefore, $\pot$ is decreasing on trajectories of the flow in this neighborhood. With the same proof as in \cref{lemma:minimizing_component_asympt_stable}, we deduce that there exists $\margin > 0$ such that $\argmin_{\state \in \U_\margin(\eqcl_\idx)} \pot(\state) = \eqcl_\idx$.
	Moreover, take $\margin > 0$ small enough such that \cref{eq:lemma:anis_bd_hamilt} holds on $\U_\margin(\eqcl_\idx)$, $\U_\margin(\eqcl_\idx) \cap \crit\obj = \eqcl_\idx$ and the trajectories of the flow started in $\U_\margin(\eqcl_\idx)$ stay converge to $\eqcl_\idx$.
	We now proceed as in the proof of \cref{lemma:ground_states_loc_min}.
	Take $\marginalt \leq \margin/2$, $\open \defeq  \U_\marginalt(\eqcl_\idx)$ and $\Margin \defeq \min \setdef*{\pot(\state) - \pot_\idx}{\state \in \vecspace,\, d(\state, \eqcl_\idx) = \marginalt}$, which is positive by definition.
	Fix $\jComp \neq \iComp$: we show that $\qpot_{\idx, \jComp} \geq \Margin$.
	Consider some $\horizon > 0$ and $\pth \in \contfuncs([0, \horizon], \vecspace)$ such that $\pth_{0} \in \eqcl_\idx$ and $\pth_\horizon \in \eqcl_\idxalt$. By definition of $\margin$ and by continuity of $\pth$ and $d(\cdot, \eqcl_\idx)$, there exists $\time \in [0, \horizon]$ such that $d(\pth_\time, \eqcl_\idx) = \marginalt$.
	Therefore, we have that
	\begin{align}
		\action_{0, \horizon}(\pth)
		&\geq
		\action_{0, \time}(\pth)
		= \int_{0}^\time \lagrangian(\pth_\timealt, \dot \pth_\timealt) \dd \timealt\,,
	\end{align}
	and therefore, by definition of $\lagrangian$ as the conjugate of $\hamilt$, we obtain that
	\begin{align}
		\action_{0, \horizon}(\pth)
		&\geq
		\int_{0}^\time
		\inner*{\dot \pth_\timealt, \grad \pot(\pth_\timealt)} - \hamiltalt(\pth_\timealt, \grad \pot(\pth_\timealt)) \dd \timealt
		\geq
		\int_{0}^\time 
		\inner*{\dot \pth_\timealt, \grad \pot(\pth_\timealt)}\,,
	\end{align}
	where we used \cref{eq:lemma:anis_bd_hamilt} in the last inequality.
	Thus, we get
	\begin{align}
		\action_{0, \horizon}(\pth)
		&\geq \pot(\pth_\time) - \pot(\pth_{0})
		\geq \Margin\,,
	\end{align}
	with any $\marginalt \leq \margin/2$ (and therefore $\margin$ in the statement corresponds to $\margin / 2$ here).

	Finally, it remains to show that, any $\idxaltalt \neq \idx$, $\energy_{\idxaltalt} \geq \Margin$.
	Consider any tree rooted at $\idxaltalt$: it must have an edge of the form $\idx \to \idxalt$ for some $\idxalt \in \indices$ and therefore the sum of its weights will be at least $\dquasipot_{\idx, \idxalt} \geq \Margin$. Hence, since it holds for any such tree, we have that $\energy_{\idxaltalt} \geq \Margin$.

	We now prove the second part of the lemma. $\margin$ was chosen small enough so that it is possible to find $\Radius > \margin$ such that $\ball(0, \Radius) \setminus \U_{\margin}(\eqcl_\idx)$ contains all the $\eqcl_\idxalt$ for $\idxalt \neq \idx$ and not $\eqcl_\idx$.
	The assumption on the Lagrangian implies that, for any $\state \in \ball(0, \Radius) \setminus \U_{\margin}(\eqcl_\idx)$, $\vel \in \ball(0, \const)$,
	\begin{equation}
		\lagrangian(\state, \vel) \leq \frac{\norm{\vel + \grad \obj(\state)}^2}{2 \variancealt}\,.
	\end{equation}
	Fix $\idxalt \neq \idx$ and take $\pth \in \contdiff{1}([0, \horizon], \vecspace)$ such that $\pth_{0} \in \eqcl_\idxalt$, $\state \defeq \pth_\horizon \in \U_{\margin}(\eqcl_\idx)$ and $\pth$ remains in $\clball(0, \Radius) \setminus \U_{\margin}(\eqcl_\idx)$.
	Without loss of generality, at the expense of replacing $\pth$ by a reparametrization, we can assume that $\dot \pth_\time$ in $\ball(0, \const)$ for all $\time \in [0, \horizon]$.
	Then, we have that
	\begin{align}
		\action_{0, \horizon}(\pth)
		&\leq 
		\int_{0}^{\horizon} \lagrangian(\pth_\timealt, \dot \pth_\timealt) \dd \timealt
		\leq 
		\int_{0}^{\horizon} \frac{\norm{\dot \pth_\timealt + \grad \obj(\pth_\timealt)}^2}{2 \variancealt} \dd \timealt\,,
	\end{align} 
	which depends only on the value of $\grad \obj$ outside of $\U_{\margin/2}(\eqcl_\idx)$.
	But, by the same reasoning as in \cref{lemma:not_asympt_stable_implies_unstable2}, since the flow started at $\state$ converges to $\eqcl_\idx$, we have that $\dquasipot(\setof{\state}, \eqcl_\idx) = 0$. Therefore, we have that,
	\begin{equation}
		\dquasipot_{\idxalt, \idx} \leq   \int_{0}^{\horizon} \frac{\norm{\dot \pth_\timealt + \grad \obj(\pth_\timealt)}^2}{2 \variancealt} \dd \timealt\,,
	\end{equation}
	and, taking the maximum of such quantities over all $\idxalt \neq \idx$, we obtain $\Const > 0$ such, for any $\idxalt \neq \idx$,
	\begin{equation}
		\qpot_{\idxalt, \idx} \leq \frac{\Const}{\variancealt}\,.
	\end{equation}
	To conclude on the value of $\energy_\idx$, we consider the tree rooted at $\idx$ made of all the edges $(\idxalt, \idx)$ for $\idxalt \neq \idx$. It has weight at most $(\neqcl - 1) \Const / \variancealt$ and therefore, we have that $\energy_\idx \leq (\neqcl - 1) \Const / \variancealt$.
\end{proof}

\section{Auxiliary results}
\label{app:aux}
\subsection{Truncated Gaussian distribution}
\label{app:truncated_gaussian}

\begin{lemma}
	\label{lem:truncated_gaussian:hamilt}
	Consider $\rv \sim \gaussian(0, \covmat)$ a multivariate Gaussian distribution $\covmat \in \R^{\dims \times \dims}$ positive definite.
	For $\Radius > 0$, define the truncated Gaussian \ac{rv} $\trv$ by conditioning $\rv$ to the ball $\clball(0, \Radius)$.
	Define
	\begin{equation}
		\hamiltalt(\mom) \defeq \log \ex \bracks*{
			e^{\inner{\mom}{\trv}}
		}
	\end{equation}
	and
	\begin{equation}
		\errortermalt(\covmat, \Radiusalt) \defeq 
e^{-\tfrac{\Radiusalt^2}{4 \norm{\covmat}}}(\tr \covmat + \norm{\covmat}) 2^{\dims + 3}
	\end{equation}
   Then, for $\Radiusalt > 0$ such that
	\begin{equation}
		\Radiusalt \geq \sqrt{\norm{\covmat} (2 \dims + 4) \log 2}\,,
	\end{equation}
	it holds that, for any $\mom \in \vecspace$ such that $\norm{\covmat \mom} \leq \Radius - \Radiusalt$,
	\begin{subequations}
   \begin{align}
	   \abs*{
		   \hamiltalt(\mom) - \tfrac{1}{2} \inner{\mom, \covmat \mom}
	   }
	   &\leq 
	   \tfrac{1}{2}\errortermalt(\covmat, \Radiusalt) \norm{\mom}^2\\
	   \norm*{
		   \grad \hamiltalt(\mom) - \covmat \mom
	   }
	   &\leq 
	   \errortermalt(\covmat, \Radiusalt) \norm{\mom}\\
	   \norm*{
		   \Hess \hamiltalt(\mom) - \covmat
	   }
	   &\leq
	   \errortermalt(\covmat, \Radiusalt)\eqdot
   \end{align}
   \end{subequations}
\end{lemma}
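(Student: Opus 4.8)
The plan is to reduce all three estimates to the single operator‑norm bound $\norm{\Hess\hamiltalt(\mom)-\covmat}\le\errortermalt(\covmat,\Radiusalt)$ on the region $\setof{\mom:\norm{\covmat\mom}\le\Radius-\Radiusalt}$, and to obtain that bound through a Gaussian change of measure. Since $\trv$ is almost surely bounded by $\Radius$, the map $\mom\mapsto\ex[e^{\inner{\mom}{\trv}}]$ is finite and entire, so $\hamiltalt$ is smooth with $\hamiltalt(\zer)=0$, $\grad\hamiltalt(\mom)=\ex_\mom[\trv]$ and $\Hess\hamiltalt(\mom)=\cov_\mom[\trv]$, where $\ex_\mom,\cov_\mom$ are taken under the tilted law $\propto e^{\inner{\mom}{\trv}}$; moreover $\grad\hamiltalt(\zer)=\ex[\trv]=0$ because $\trv$ (a centered Gaussian conditioned on a centered ball) is symmetric. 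Granting the key estimate
\begin{equation}
\label{eq:sketch-hess}
\norm{\Hess\hamiltalt(\mom)-\covmat}\leq\errortermalt(\covmat,\Radiusalt)
\qquad\text{whenever }\ \norm{\covmat\mom}\leq\Radius-\Radiusalt,
\end{equation}
the other two bounds follow by Taylor's theorem with integral remainder: the region in \eqref{eq:sketch-hess} is convex and contains $\zer$, so $\grad\hamiltalt(\mom)-\covmat\mom=\int_0^1(\Hess\hamiltalt(s\mom)-\covmat)\mom\dd s$ gives $\norm{\grad\hamiltalt(\mom)-\covmat\mom}\leq\errortermalt\norm{\mom}$, and then $\hamiltalt(\mom)-\tfrac12\inner{\mom}{\covmat\mom}=\int_0^1\inner{\grad\hamiltalt(s\mom)-s\covmat\mom}{\mom}\dd s$ gives $\abs{\hamiltalt(\mom)-\tfrac12\inner{\mom}{\covmat\mom}}\leq\errortermalt\norm{\mom}^2\int_0^1 s\dd s=\tfrac12\errortermalt\norm{\mom}^2$.

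So everything comes down to \eqref{eq:sketch-hess}. Let $\rv\sim\gaussian(\zer,\covmat)$ with density $\gamma$. Completing the square in the exponent yields $e^{\inner{\mom}{z}}\gamma(z)=e^{\frac12\inner{\mom}{\covmat\mom}}\gamma(z-\covmat\mom)$, hence the tilting identity $\exof*{e^{\inner{\mom}{\rv}}\oneof{\norm{\rv}\leq\Radius}}=e^{\frac12\inner{\mom}{\covmat\mom}}\probof{\norm{\rv+\covmat\mom}\leq\Radius}$. Dividing by $\probof{\norm{\rv}\leq\Radius}$ identifies the tilted law of $\trv$ with the law of $\rv+\covmat\mom$ conditioned on $E\defeq\setof{\norm{\rv+\covmat\mom}\leq\Radius}$; consequently $\Hess\hamiltalt(\mom)=\cov_\mom[\trv]=\cov[\rv\mid E]$, and \eqref{eq:sketch-hess} becomes the assertion that conditioning $\rv$ on the slightly shifted ball $E$ changes its covariance (in operator norm) by at most $\errortermalt$.

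The decisive gain of this reformulation is that $\compl{E}\subseteq\setof{\norm{\rv}>\Radiusalt}$ \emph{uniformly in} $\mom$: on $\compl{E}$ one has $\norm{\rv}\geq\norm{\rv+\covmat\mom}-\norm{\covmat\mom}>\Radius-\norm{\covmat\mom}\geq\Radiusalt$. Writing $\rv=\covmat^{1/2}G$ with $G\sim\gaussian(\zer,\identity)$, so $\norm{\rv}^2\leq\norm{\covmat}\norm{G}^2$, elementary $\chi^2$‑Chernoff and moment bounds (from $\ex[e^{\norm{G}^2/4}]=2^{\dims/2}$ and $\exof*{\norm{\rv}^2 e^{\norm{\rv}^2/(4\norm{\covmat})}}\leq 2^{(\dims+2)/2}\tr(\covmat)$, with Cauchy\textendash Schwarz for the linear moment) give $\probof{\norm{\rv}>\Radiusalt}\leq 2^{\dims/2}e^{-\Radiusalt^2/(4\norm{\covmat})}$, $\exof*{\norm{\rv}^2\oneof{\norm{\rv}>\Radiusalt}}\leq 2^{(\dims+2)/2}\tr(\covmat)\,e^{-\Radiusalt^2/(4\norm{\covmat})}$, and $\exof*{\norm{\rv}\oneof{\norm{\rv}>\Radiusalt}}\leq 2^{(\dims+1)/2}\sqrt{\tr\covmat}\,e^{-\Radiusalt^2/(4\norm{\covmat})}$. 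The hypothesis $\Radiusalt\geq\sqrt{\norm{\covmat}(2\dims+4)\log2}$ is exactly what forces $2^{\dims/2}e^{-\Radiusalt^2/(4\norm{\covmat})}\leq\tfrac12$, hence $p\defeq\probof{E}\geq\tfrac12$. Finally I would write $\ex[\rv\rv^{\top}\mid E]-\covmat=\big((1-p)\covmat-\exof*{\rv\rv^{\top}\oneof{\compl{E}}}\big)/p$ and $\ex[\rv\mid E]=-\exof*{\rv\oneof{\compl{E}}}/p$, bound each in operator norm using $\norm{\rv\rv^{\top}}=\norm{\rv}^2$, $p\geq\tfrac12$, $\compl{E}\subseteq\setof{\norm{\rv}>\Radiusalt}$ and the three tail bounds, recombine via $\cov[\rv\mid E]-\covmat=(\ex[\rv\rv^{\top}\mid E]-\covmat)-\ex[\rv\mid E]\ex[\rv\mid E]^{\top}$, and absorb the resulting powers of $2$ and the factors $\tr\covmat,\norm{\covmat}$ into $2^{\dims+3}(\tr\covmat+\norm{\covmat})$, which gives \eqref{eq:sketch-hess}.

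The routine parts are the $\chi^2$‑type Chernoff and truncated‑moment computations and the bookkeeping of constants in the final recombination; these I would not spell out in full. The one genuinely structural step is the tilting identity, since it is what makes the lower bound $\norm{\rv}>\Radiusalt$ on $\compl{E}$ hold \emph{uniformly} in $\mom$ and thereby yields an estimate on $\Hess\hamiltalt(\mom)-\covmat$ that does not depend on $\mom$ — precisely what lets the first‑ and second‑order Taylor remainders carry the factors $\norm{\mom}$ and $\norm{\mom}^2$ in the two other bounds. The other mild point, justifying differentiation under the integral sign for $\hamiltalt$, is immediate here because $\trv$ is compactly supported, so $\ex[e^{\inner{\mom}{\trv}}]$ is entire in $\mom$.
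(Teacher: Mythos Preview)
Your proposal is correct and follows essentially the same approach as the paper: both use the Gaussian tilting identity to recognize that the tilted law is a shifted Gaussian restricted to the ball (the paper phrases this via the partition function $Z(\cpt-\covmat\mom)$, you via the conditional law on $E$), both exploit that the shifted ball contains $\clball(0,\Radiusalt)$ to get a $\mom$-independent Hessian bound from Gaussian tail estimates, and both reduce the first two inequalities to the Hessian bound via Taylor with $\hamiltalt(0)=0$, $\nabla\hamiltalt(0)=0$. Your probabilistic formulation ($\Hess\hamiltalt(\mom)=\cov[\rv\mid E]$) and the paper's analytic one ($\Hess\log g$) are the same computation in different notation.
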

\begin{proof}
	\def\dinner{\cdot}
	\def\thalf{\tfrac{1}{2}}
	Define, for $\plainset$ a measurable set,
	\begin{equation}
		\partition(\plainset) = \int_\plainset e^{-\thalf {\point \dinner \covmat^{-1} \point }} \dd \point\,,
	\end{equation}
	and, for convenience, $\cpt \defeq \clball(0, \Radius)$.
	For notational convenience, in this proof, we will denote the inner product between two vectors $\point$ and $\mom$ with a simple dot $\point \dinner \mom$.
	We have that
	\begin{align}
		\ex \bracks*{
			e^{\trv \dinner \mom}
		}
		&= \frac{e^{ \thalf \mom \dinner \covmat \mom}}{\partition(\cpt)} \int_\cpt e^{- \thalf (\point - \covmat \mom) \dinner \covmat^{-1} (\point - \covmat \mom)} \dd \point
		\notag\\
		&= \frac{e^{ \thalf \mom \dinner \covmat \mom}}{\partition(\cpt)} \int_{\cpt - \covmat \mom} e^{- \thalf \pointalt \dinner \covmat^{-1} \pointalt} \dd \pointalt
		\notag\\
		&= e^{ \thalf \mom \dinner \covmat \mom} \frac{\partition(\cpt - \covmat \mom)}{\partition(\cpt)}\,,
	\end{align}
	where we performed the change of variable $\pointalt \gets \point - \covmat \mom$.

	Define
	\begin{equation}
		\fn(\mom) \defeq \partition(\cpt - \covmat \mom) = \int_\cpt e^{-\thalf (\point - \covmat \mom) \dinner \covmat^{-1} (\point - \covmat \mom)} \dd \point\eqdot
	\end{equation} so that
	\begin{equation}
		\hamiltalt(\mom) 
		=
		\log \ex \bracks*{
			e^{\trv \dinner \mom}
		}
		= \half \mom \dinner \covmat \mom
		+ \log \frac{\fn(\mom)}{\fn(0)}\eqdot
	\end{equation}
	Therefore it suffices to bound $\log \frac{\fn(\mom)}{\fn(0)}$ and its derivatives.

	Differentiating yields
	\begin{subequations}
	\begin{align}
		\grad \fn(\mom)
		&=
		\int_\cpt (\point - \covmat \mom) e^{-\thalf (\point - \covmat \mom) \dinner \covmat^{-1} (\point - \covmat \mom)} \dd \point
		\\
		\Hess \fn(\mom)
		&=
		\int_\cpt (\point - \covmat \mom) (\point - \covmat \mom)^\top e^{-\thalf (\point - \covmat \mom) \dinner \covmat^{-1} (\point - \covmat \mom)} \dd \point
		- \covmat \fn(\mom)\eqdot
	\end{align}
\end{subequations}
Note that, by symmetry of $\clball(0, \Radius)$, $\grad \fn(0) = 0$.

	We first bound $\Hess \fn(\mom)$.
	Performing the change of variable $\pointalt \gets \point - \covmat \mom$ again gives us
	\begin{align}
		\Hess \fn(\mom)
		&=
		\int_{\cpt - \covmat \mom} \pointalt \pointalt^\top e^{-\thalf \pointalt \dinner \covmat^{-1} \pointalt} \dd \pointalt
		- \covmat \fn(\mom)
		\notag\\
		&=
		\int_{\cpt - \covmat \mom} (\pointalt \pointalt^\top -\covmat) e^{-\thalf \pointalt \dinner \covmat^{-1} \pointalt} \dd \pointalt
		\notag\\
		&=
		- \int_{\vecspace \setminus (\cpt - \covmat \mom)} (\pointalt \pointalt^\top - \covmat) e^{-\thalf \pointalt \dinner \covmat^{-1} \pointalt} \dd \pointalt\,,
		\end{align}
		where we used that $\int_{\vecspace}( \pointalt \pointalt^\top - \covmat) e^{-\thalf \pointalt \dinner \covmat^{-1} \pointalt} \dd \pointalt = 0$.

		By definition of $\Radiusalt$, $\cpt - \covmat \mom$ contains $ \clball(0, \Radiusalt)$.
		We now bound the operator norm of $\Hess \fn(\mom)$:
		\begin{align}
			\norm*{
				\Hess \fn(\mom)
			}
			&\leq 
			\int_{\vecspace \setminus (\cpt - \covmat \mom)}  (\norm{\pointalt}^2 + \norm{\covmat}) e^{-\thalf \pointalt \dinner \covmat^{-1} \pointalt} \dd \pointalt
			\notag\\
			&\leq
		\int_{\vecspace \setminus \clball(0, \Radiusalt)} (\norm{\pointalt}^2 + \norm{\covmat}) e^{-\thalf \pointalt \dinner \covmat^{-1} \pointalt} \dd \pointalt
		\notag\\
			&\leq
			e^{-\tfrac{\Radiusalt^2}{4 \norm{\covmat}}}
			\int_{\vecspace} (\norm{\pointalt}^2 + \norm{\covmat}) e^{-\tfrac{1}{4} \pointalt \dinner \covmat^{-1} \pointalt} \dd \pointalt
			\notag\\
			&=
			e^{-\tfrac{\Radiusalt^2}{4 \norm{\covmat}}}
			(\tr \covmat + \norm{\covmat}) (4 \pi)^{\dims/2} \det(\covmat)^{1/2}\,,
			\label{eq:lemma:truncated_gaussian:Hess}
		\end{align}

	We now bound $\grad \fn(\mom)$. The change of variable $\pointalt \gets \point - \covmat \mom$ yields
	\begin{align}
		\grad \fn(\mom)
		&=
		\int_{\cpt - \covmat \mom} \pointalt e^{-\thalf \pointalt \dinner \covmat^{-1} \pointalt} \dd \pointalt
		=
		\int_{\vecspace \setminus (\cpt - \covmat \mom)} \pointalt e^{-\thalf \pointalt \dinner \covmat^{-1} \pointalt} \dd \pointalt\,,
	\end{align}
	where we used that $\int_{\vecspace} \pointalt e^{-\thalf \pointalt \dinner \covmat^{-1} \pointalt} \dd \pointalt = 0$.

	Therefore, similar computations as above yield that
	\begin{align}
		\norm{\grad \fn(\mom)}
		&\leq
		\int_{\vecspace \setminus (\cpt - \covmat \mom)} \norm{\pointalt} e^{-\thalf \pointalt \dinner \covmat^{-1} \pointalt} \dd \pointalt
		\notag\\
		&\leq
		\int_{\vecspace \setminus \clball(0, \Radiusalt)} \norm{\pointalt} e^{-\thalf \pointalt \dinner \covmat^{-1} \pointalt} \dd \pointalt
		\notag\\
		&\leq 
		e^{-\tfrac{\Radiusalt^2}{4 \norm{\covmat}}} \int_{\vecspace} \norm{\pointalt} e^{-\tfrac{1}{4} \pointalt \dinner \covmat^{-1} \pointalt} \dd \pointalt
		\notag\\
		&\leq 
		e^{-\tfrac{\Radiusalt^2}{4 \norm{\covmat}}} \sqrt{\tr \covmat} (4 \pi)^{\dims/2} \det(\covmat)^{1/2}\eqdot
		\label{eq:lemma:truncated_gaussian:grad}
	\end{align}

	Let us now lower-bound $\fn(\mom)$.
	In the same fashion as above, we have that
		\begin{align}
			\fn(\mom)
			&= \int_{\cpt - \covmat \mom} e^{-\thalf \pointalt \dinner \covmat^{-1} \pointalt} \dd \pointalt
			\notag\\
			&\geq \int_{\clball(0, \Radiusalt)} e^{-\thalf \pointalt \dinner \covmat^{-1} \pointalt} \dd \pointalt
			\notag\\
			&\geq (2 \pi)^{\dims/2} \det(\covmat)^{1/2} \parens*{
			1 - 2^{\dims/2} e^{-\tfrac{\Radiusalt^2}{4 \norm{\covmat}}}}\,,
		\end{align}
		so that, if $\Radiusalt\geq\sqrt{\norm{\covmat}(2 \dims + 4) \log 2}$,
		it holds that
		\begin{equation}
			\fn(\mom)
			\geq
			\thalf (2 \pi)^{\dims/2} \det(\covmat)^{1/2}
			\label{eq:lemma:truncated_gaussian:fn}
		\end{equation}

		The Hessian of $\log \fn / \fn(0)$ is then given by
		\begin{equation}
			\Hess \log \frac{\fn(\mom)}{\fn(0)}
			= \frac{\Hess \fn(\mom)}{\fn(\mom)} - \frac{\grad \fn(\mom) \grad \fn(\mom)^\top}{\fn(\mom)^2}\eqdot
		\end{equation}
		\cref{eq:lemma:truncated_gaussian:Hess,eq:lemma:truncated_gaussian:grad,eq:lemma:truncated_gaussian:fn} combined yield that
		\begin{align}
			\norm*{
				\Hess \log \frac{\fn(\mom)}{\fn(0)}
			}
			&\leq
			e^{-\tfrac{\Radiusalt^2}{4 \norm{\covmat}}}(\tr \covmat + \norm{\covmat}) 2^{\dims/2 + 1} + \parens*{
				e^{-\tfrac{\Radiusalt^2}{4 \norm{\covmat}}} \sqrt{\tr \covmat} 2^{\dims/2 + 1}
			}^{2}
			\notag\\
			&\leq 
			e^{-\tfrac{\Radiusalt^2}{4 \norm{\covmat}}}(\tr \covmat + \norm{\covmat}) 2^{\dims + 3} = 
			\errortermalt(\covmat, \Radiusalt)\eqdot
		\end{align}

		Taylor-Lagrange inequality now yields the full result since $\log \fn(0)/\fn(0) = 0$ and $\grad \log \fn(0) = 0$.
	\end{proof}

	\begin{lemma}
		\label{lem:truncated_gaussian:lagrangian}
		Consider $\rv \sim \gaussian(0, \variance \identity)$ a multivariate Gaussian distribution with $\variance > 0$. For $\Radius > 0$, define the truncated Gaussian \ac{rv} $\trv$ by conditioning $\rv$ to the ball $\clball(0, \Radius)$.
		Define
		\begin{subequations}
		\begin{align}
			\hamiltalt(\mom) &\defeq \log \ex \bracks*{
				e^{\inner{\mom}{\trv}}
			}\\
			\lagrangianalt(\mom) &\defeq \hamiltalt^*(\mom)\,,
		\end{align}
		\end{subequations}
		and 
		\begin{equation}
			\errorterm(\variance, \Radius) \defeq 
			e^{-\tfrac{\Radius^2}{16 \variance}}2^{\dims + 3}(\dims + 1)
		\end{equation}
		and assume that $\Radius > 0$ satisfies
		\begin{equation}
			\Radius \geq 4 \sdev \sqrt{(\dims + 3) \log 2 + \log(\dims + 1)}\eqdot
		\end{equation}
		Then, for any $\mom \in \vecspace$ such that $\norm{\mom} \leq \frac{\Radius}{2 \variance}$, $\vel \in \vecspace$ such that $\norm{\vel} \leq \frac{\Radius}{4}$, it holds that
		\begin{subequations}
		\begin{align}
			\parens*{1 - \errorterm(\variance, \Radius)} \frac{\variance \norm{\mom}^2}{2}
			\leq &\hamiltalt(\mom) \leq 
			\parens*{1 + \errorterm(\variance, \Radius)} \frac{\variance \norm{\mom}^2}{2}\\
			\parens*{1 - 2\errorterm(\variance, \Radius)} \frac{\norm{\vel}^2}{2 \variance}
			\leq &\lagrangianalt(\vel) \leq
			\parens*{1 + 2\errorterm(\variance, \Radius)} \frac{\norm{\vel}^2}{2 \variance}\eqdot
		\end{align}
		\end{subequations}
	\end{lemma}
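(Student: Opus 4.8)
The plan is to derive everything from \cref{lem:truncated_gaussian:hamilt} applied with the covariance $\covmat = \variance\,\identity$ and the auxiliary radius $\Radiusalt = \Radius/2$. With this choice $\norm{\covmat} = \variance$ and $\tr\covmat = \dims\variance$, so $\errortermalt(\variance\,\identity,\Radius/2) = e^{-\Radius^{2}/(16\variance)}\,\variance(\dims+1)2^{\dims+3} = \variance\,\errorterm(\variance,\Radius)$, and the requirement $\Radiusalt \geq \sqrt{\norm{\covmat}(2\dims+4)\log 2}$ becomes $\Radius \geq 2\sdev\sqrt{(2\dims+4)\log 2}$, which is implied by the standing hypothesis $\Radius \geq 4\sdev\sqrt{(\dims+3)\log 2+\log(\dims+1)}$ (squaring both and simplifying leaves $(2\dims+8)\log 2 + 4\log(\dims+1)\geq 0$). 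Since the condition $\norm{\covmat\mom}\leq\Radius-\Radiusalt=\Radius/2$ is exactly $\norm{\mom}\leq\Radius/(2\variance)$, \cref{lem:truncated_gaussian:hamilt} yields directly $\bigl|\hamiltalt(\mom)-\tfrac{\variance}{2}\norm{\mom}^{2}\bigr|\leq \errorterm(\variance,\Radius)\cdot\tfrac{\variance\norm{\mom}^{2}}{2}$ on that ball, which is the first pair of inequalities. The same hypothesis also gives $e^{-\Radius^{2}/(16\variance)}\leq 2^{-(\dims+3)}(\dims+1)^{-1}$, hence $a\defeq\errorterm(\variance,\Radius)\leq 1$, a fact we use freely below.

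For the lower bound on $\lagrangianalt$ one evaluates the conjugate at a single point. Given $\norm{\vel}\leq\Radius/4$, the point $\mom_{0}\defeq \vel/((1+a)\variance)$ satisfies $\norm{\mom_{0}}\leq\norm{\vel}/\variance\leq\Radius/(4\variance)\leq\Radius/(2\variance)$, so the upper bound on $\hamiltalt$ applies at $\mom_{0}$ and gives $\lagrangianalt(\vel)\geq\inner{\mom_{0}}{\vel}-\hamiltalt(\mom_{0})\geq \tfrac{\norm{\vel}^{2}}{(1+a)\variance}-\tfrac{\norm{\vel}^{2}}{2(1+a)\variance}=\tfrac{\norm{\vel}^{2}}{2(1+a)\variance}\geq(1-a)\tfrac{\norm{\vel}^{2}}{2\variance}\geq(1-2a)\tfrac{\norm{\vel}^{2}}{2\variance}$, using $1/(1+a)\geq 1-a$.

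The upper bound on $\lagrangianalt$ is the delicate step, since the conjugate is a supremum over all of $\dspace$ while \cref{lem:truncated_gaussian:hamilt} controls $\hamiltalt$ only on the ball $B\defeq\{\norm{\mom}\leq\Radius/(2\variance)\}$. Over $B$ the bound $\inner{\mom}{\vel}-\hamiltalt(\mom)\leq \inner{\mom}{\vel}-(1-a)\tfrac{\variance\norm{\mom}^{2}}{2}\leq \tfrac{\norm{\vel}^{2}}{2(1-a)\variance}\leq(1+2a)\tfrac{\norm{\vel}^{2}}{2\variance}$ is immediate (for $a$ bounded away from $1$; if $a$ is close to $1$ the target $(1+2a)\tfrac{\norm{\vel}^2}{2\variance}$ already exceeds $\tfrac32\tfrac{\norm{\vel}^2}{\variance}$ and one argues directly). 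Off $B$ one needs a global lower bound on $\hamiltalt$: by symmetry $\hamiltalt(\mom)=\log\ex\bracks{\cosh\inner{\mom}{\trv}}\geq 0$ and $\grad\hamiltalt(0)=\ex[\trv]=0$, so $\hamiltalt$ is convex with minimum $0$ at the origin, and for each unit vector $u$ the slab event $\{\inner{u}{\trv}\geq\Radius/2\}$ has positive probability $p_{0}$, whence $\hamiltalt(\mom)\geq\tfrac{\Radius}{2}\norm{\mom}+\log p_{0}$ for all $\mom$. Combining this with the quadratic bound on $B$, for $\norm{\vel}\leq\Radius/4$ the supremum of $\inner{\mom}{\vel}-\hamiltalt(\mom)$ over $\dspace\setminus B$ is at most $-\tfrac{\Radius^{2}}{8\variance}+\log(1/p_{0})$, which is negative — hence dominated by the on-$B$ estimate — provided $\log(1/p_{0})\leq\tfrac{\Radius^{2}}{8\variance}$. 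Verifying this last inequality is where I expect the real work to lie: one bounds $p_{0}$ below by a Gaussian tail estimate (roughly $p_{0}\gtrsim e^{-\Radius^{2}/(8\variance)}$ up to factors polynomial in $\dims$) and checks, using $\Radius\geq 4\sdev\sqrt{(\dims+3)\log 2+\log(\dims+1)}$, that the logarithmic loss is absorbed. An essentially equivalent route is to show the maximizer $\mom^{\ast}$ of $\mom\mapsto\inner{\mom}{\vel}-\hamiltalt(\mom)$ lies in $B$ when $\norm{\vel}\leq\Radius/4$: monotonicity of $\grad\hamiltalt$ along rays together with $\hamiltalt(0)=0$ forces $\norm{\grad\hamiltalt(\mom)}\geq(1-a)\Radius/4$ for $\mom\notin B$, so $\grad\hamiltalt(\mom^{\ast})=\vel$ with $\norm{\vel}\leq(1-a)\Radius/4$ already localizes $\mom^{\ast}$ inside $B$, the residual sliver $(1-a)\Radius/4\leq\norm{\vel}\leq\Radius/4$ being handled by the same numeric check (it is empty for $a$ small). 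Once the maximizer is localized in $B$, the upper bound on $\lagrangianalt$ follows exactly as in the on-$B$ computation above.
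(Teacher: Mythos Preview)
Your derivation of the $\hamiltalt$ bounds from \cref{lem:truncated_gaussian:hamilt} with $\Radiusalt=\Radius/2$ is correct, and your lower bound on $\lagrangianalt$ by evaluating the conjugate at $\mom_{0}=\vel/((1+a)\variance)$ is fine. The gap is the upper bound on $\lagrangianalt$, where you correctly identify that the supremum runs over all of $\dspace$ while you only control $\hamiltalt$ on $B=\{\norm{\mom}\le\Radius/(2\variance)\}$, and then offer two sketches that are not fully carried out.

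Your route (a) via the slab probability $p_{0}$ is an unnecessary detour. Your route (b) is the right idea but has two issues. First, ``monotonicity of $\grad\hamiltalt$ along rays'' in the sense you need---namely that $t\mapsto\norm{\grad\hamiltalt(tu)}$ is nondecreasing---is \emph{not} a consequence of convexity alone (convexity only gives monotonicity of $t\mapsto\langle\grad\hamiltalt(tu),u\rangle$); it requires the rotational invariance of the truncated Gaussian, which forces $\grad\hamiltalt(\mom)$ to be parallel to $\mom$. Second, your constant is off by a factor of two: on $\partial B$ one has $\norm{\mom}=\Radius/(2\variance)$, so the gradient estimate from \cref{lem:truncated_gaussian:hamilt} gives $\norm{\grad\hamiltalt(\mom)}\ge(1-a)\variance\norm{\mom}=(1-a)\Radius/2$, not $(1-a)\Radius/4$. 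With the correct constant and $a\le 1/2$ there is no ``residual sliver'' at all.

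The paper's argument is exactly your route (b) done cleanly. Rotational invariance of the distribution of $\trv$ implies that $\grad\hamiltalt$ maps every closed centered ball to a closed centered ball; the gradient estimate from \cref{lem:truncated_gaussian:hamilt} on $B$ then gives $\sup_{\mom\in B}\norm{\grad\hamiltalt(\mom)}\ge(1-a)\Radius/2\ge\Radius/4$, hence $\grad\hamiltalt(B)\supseteq\clball(0,\Radius/4)$. For $\norm{\vel}\le\Radius/4$ this means $\vel\in\grad\hamiltalt(B)$, so the (unique, by strict convexity) maximizer in the definition of $\lagrangianalt(\vel)$ lies in $B$, and one may write $\lagrangianalt(\vel)=\sup_{\mom\in B}\{\langle\mom,\vel\rangle-\hamiltalt(\mom)\}$. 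From there both bounds on $\lagrangianalt$ follow immediately by sandwiching $\hamiltalt$ between $(1\pm a)\variance\norm{\mom}^{2}/2$ on $B$ and using $1/(1\pm a)\in[1\mp 2a,1\pm 2a]$ for $a\le 1/2$. No off-$B$ analysis is needed.
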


	\begin{proof}
		First, let us show that, for any $\radius > 0$, $\grad \hamiltalt \parens*{\clball(0, \radius)}$ is a closed ball centered at 0.
		Since the distribution of $\trv$ is invariant by rotation, this set can be rewritten as
		\begin{equation}
			\grad \hamiltalt \parens*{\clball(0, \radius)} = \setdef*{\vel \in \vecspace}{\norm{\vel} \in \setdef*{\norm*{\grad \hamiltalt(\mom)}}{\mom \in \clball(0, \radius)}}\eqdot
		\end{equation}
        But, by connectedness of $\clball(0, \radius)$ and continuity of $\grad \hamiltalt$, $\setdef*{\norm*{\grad \hamiltalt(\mom)}}{\mom \in \clball(0, \radius)}$ is an interval. Since $\grad \hamiltalt(0) = 0$, it is either \revise{$\left[0, \sup_{\clball(0, \radius)}\norm*{\grad \hamiltalt}\right]$ or $\left[0, \sup_{\clball(0, \radius)}\norm*{\grad \hamiltalt}\right)$}. $\grad \hamiltalt  \parens*{\clball(0, \radius)}$ being  compact and therefore closed, it must be the latter.
		Hence, we have shown that
		\begin{equation}
		\grad \hamiltalt \parens*{\clball(0, \radius)} = \clball \parens*{0, \sup_{\clball(0, \radius)}\norm*{\grad \hamiltalt}}\eqdot
		\end{equation}

   We apply \cref{lem:truncated_gaussian:hamilt} with $\Radiusalt \gets \half[\Radius]$. Note that our choice of $\Radius$ implies that $\Radiusalt$ satisfies the condition of \cref{lem:truncated_gaussian:hamilt} and, moreover, that
   $\errorterm(\variance, \Radius) \leq \half$. \Cref{lem:truncated_gaussian:hamilt} directly implies the bound on $\hamiltalt$.

   Consider $\mom \in \vecspace$ such that $\norm{\mom} \leq \frac{\Radius}{2 \variance}$.
   Then, by \cref{lem:truncated_gaussian:hamilt}, we have that
		\begin{align}
			\norm*{\grad \hamiltalt \parens*{\mom}}
			&\geq \variance \norm{\mom} - e^{-\tfrac{\Radius^2}{16 \variance}}2^{\dims + 3}
			\parens*{
			\tr\parens*{\variance \identity} + \norm*{\variance \identity}} \norm{\mom}
			\notag\\
			&= \variance \norm{\mom} (1 - \errorterm(\variance, \Radius))
			\notag\\
			&\geq \frac{\variance \norm{\mom}}{2}\,,
		\end{align}
		where we used that $\errorterm(\variance, \Radius) \leq \half$.
		Therefore, we obtain that $\sup \setdef*{\norm*{\grad \hamiltalt(\mom)}}{\mom \in \clball \parens*{0, \frac{\Radius}{2 \variance}}} \geq \frac{\Radius}{4}$ so that $\grad \hamiltalt \parens*{\clball(0, \frac{\Radius}{2 \variance})}$ contains $\clball(0, \frac{\Radius}{4})$.

		Take $\vel \in \clball(0, \frac{\Radius}{4})$ which therefore belongs to $\grad \hamiltalt \parens*{\clball(0, \frac{\Radius}{2 \variance})}$.
		Therefore, $\lagrangianalt(\vel)$ can be rewritten as 
		\begin{equation}
			\lagrangianalt(\vel) = \sup_{\mom \in \clball(0, \frac{\Radius}{2 \variance})} \inner{\vel}{\mom} - \hamiltalt(\mom)\eqdot
		\end{equation}
		Using \cref{lem:truncated_gaussian:hamilt} again, we obtain that
		\begin{align}
			\sup_{\mom \in \clball(0, \frac{\Radius}{2 \variance})} \inner{\vel}{\mom} -  \frac{\variance}{2}(1 + \errorterm(\variance, \Radius)) \norm{\mom}^2
			\leq \lagrangianalt(\vel) \leq \sup_{\mom \in \clball(0, \frac{\Radius}{2 \variance})} \inner{\vel}{\mom} -  \frac{\variance}{2}(1 - \errorterm(\variance, \Radius)) \norm{\mom}^2\,,
		\end{align}
		so that, since $\errorterm(\variance, \Radius) \leq \half$, we obtain that
		\begin{equation}
			\frac{\norm{\vel}^2} {2 \variance (1 + \errorterm(\variance, \Radius))} \leq \lagrangianalt(\vel) \leq \frac{\norm{\vel}^2} {2 \variance (1 - \errorterm(\variance, \Radius))}\eqdot
		\end{equation}
		Since, for $x \in [0, 1/2]$, both $\frac{1}{1 +x } \geq 1 - 2 x$ and $\frac{1}{1 - x} \leq 1 + 2 x$ hold,
		we get
		\begin{equation}
		\frac{\norm{\vel}^2} {2 \variance} \left(1 - 2\errorterm(\variance, \Radius)\right) \leq \lagrangianalt(\vel) \leq \frac{\norm{\vel}^2} {2 \variance}  \left(1 + 2\errorterm(\variance, \Radius)\right)		\end{equation}
		which concludes the proof.
	  \end{proof}

	  We will require the following technical lemma.
	  \begin{lemma}\label{lemma:almost_sq_norm_opt_rescale}
		  Consider $\vel, \velalt \in \vecspace$ such that $0 < \norm{\velalt} \leq \tfrac{\mu \Radius}{2}$ for some $\Radius, \mu > 0$.
		  Define, 
		  \begin{equation}
			  \fn(u) = \sup_{\mom \in \vecspace: \norm{\mom} \leq \Radius} \inner{\mom, u} - \half[\mu] \norm{\mom}^2\,,
		  \end{equation}
		  then, with $\lambda = \frac{\norm{\vel}}{\norm{\velalt}}$,
		  \begin{equation}
		  \lambda \fn \parens*{\frac{\vel}{\lambda} + \velalt} \leq \fn(\vel + \velalt)\eqdot
		  \end{equation}
	  \end{lemma}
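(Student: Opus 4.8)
The plan is to reduce the statement to a one–variable inequality by exploiting the radial structure of $\fn$. First I would record the closed form: maximizing $\inner{\mom}{u}-\tfrac{\mu}{2}\norm{\mom}^{2}$ over $\norm{\mom}\le\Radius$ yields $\fn(u)=\func(\norm{u})$, where
\[
\func(\radius)=\tfrac{\radius^{2}}{2\mu}\ \text{ for }\ 0\le\radius\le\mu\Radius,\qquad
\func(\radius)=\Radius\radius-\tfrac{\mu\Radius^{2}}{2}\ \text{ for }\ \radius\ge\mu\Radius,
\]
so $\func$ is continuous, increasing, convex and $\func(0)=0$. (If $\vel=0$ then $\lambda=0$ and the inequality is vacuous, so assume $\vel\neq0$.) Writing $s=\norm{\vel}$, $t=\norm{\velalt}$ and $c=\inner{\vel}{\velalt}/(st)\in[-1,1]$, and using $\lambda=s/t$, the vector $\vel/\lambda$ has norm $t$; hence, putting $\radius_{1}\defeq\norm{\vel/\lambda+\velalt}$ and $\radius_{2}\defeq\norm{\vel+\velalt}$, one computes $\radius_{1}=t\sqrt{2(1+c)}$ and $\radius_{2}=\sqrt{s^{2}+t^{2}+2stc}$. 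Since $t\le\mu\Radius/2$ and $1+c\le2$ we have $\radius_{1}\le\mu\Radius$, so $\func(\radius_{1})=\radius_{1}^{2}/(2\mu)=t^{2}(1+c)/\mu$, and the left-hand side of the claim equals $\lambda\func(\radius_{1})=st(1+c)/\mu$. Everything thus reduces to showing $st(1+c)/\mu\le\func(\radius_{2})$.

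I would then split according to whether $\radius_{2}\le\mu\Radius$. In the quadratic regime $\func(\radius_{2})=\radius_{2}^{2}/(2\mu)$, and the target inequality is $2st(1+c)\le s^{2}+t^{2}+2stc$, i.e.\ $2st\le s^{2}+t^{2}$, which is $(s-t)^{2}\ge0$. In the linear regime $\radius_{2}\ge\mu\Radius$ one has $\func(\radius_{2})=\Radius\radius_{2}-\mu\Radius^{2}/2$; here I would regard $\Phi(c)\defeq\Radius\radius_{2}(c)-\tfrac{\mu\Radius^{2}}{2}-\tfrac{st(1+c)}{\mu}$ as a function of $c$ (with $s,t$ fixed), compute $\radius_{2}'(c)=st/\radius_{2}(c)$ and hence $\Phi'(c)=\tfrac{st}{\mu\,\radius_{2}(c)}\bigl(\mu\Radius-\radius_{2}(c)\bigr)$, which is $<0$ precisely on the set of $c$ with $\radius_{2}(c)>\mu\Radius$. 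Since $\radius_{2}(\cdot)$ is increasing, that set is an interval whose right endpoint is $c=1$, so on it $\Phi(c)\ge\Phi(1)$. Finally $\Phi(1)=\Radius(s+t)-\tfrac{\mu\Radius^{2}}{2}-\tfrac{2st}{\mu}=-\tfrac{2}{\mu}\bigl(s-\tfrac{\mu\Radius}{2}\bigr)\bigl(t-\tfrac{\mu\Radius}{2}\bigr)$; the factor $t-\mu\Radius/2$ is $\le0$ by hypothesis, while for $c=1$ to be relevant to the linear regime we need $s+t=\radius_{2}(1)\ge\mu\Radius$, which with $t\le\mu\Radius/2$ forces $s\ge\mu\Radius/2$, so the product is $\le0$ and $\Phi(1)\ge0$. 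Combining the two regimes yields the claim.

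The step I expect to be the main obstacle is the linear regime with small angle ($c$ near $1$) and $s$ much larger than $t$: there the crude estimates — comparing the norm $s\sqrt{2(1+c)}$ with $\radius_{2}$, or exploiting only convexity of $\fn$ together with $\fn(0)=0$ — all break down, and one genuinely needs the sharp facts that $\Phi$ is monotone on the linear range and that $\Phi(1)$ factors as above; this is exactly where the hypothesis $\norm{\velalt}\le\mu\Radius/2$ is used in an essential way. The reduction to the radial profile $\func$ and the quadratic regime are routine.
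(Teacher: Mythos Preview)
Your proof is correct, but it takes a markedly different route from the paper's. The paper's argument is a single algebraic step: it picks the maximizer $\mom = \tfrac{1}{\mu}(\vel/\lambda + \velalt)$ of the left-hand sup (which is feasible because $\norm{\vel/\lambda}=\norm{\velalt}\le\mu\Radius/2$ forces $\norm{\mom}\le\Radius$), uses that same $\mom$ as a test point in the right-hand sup, and then observes that
\[
\lambda\bigl(\inner{\mom}{\vel/\lambda+\velalt}-\tfrac{\mu}{2}\norm{\mom}^2\bigr)-\bigl(\inner{\mom}{\vel+\velalt}-\tfrac{\mu}{2}\norm{\mom}^2\bigr)
=(\lambda-1)\bigl(\inner{\mom}{\velalt}-\tfrac{\mu}{2}\norm{\mom}^2\bigr)=0,
\]
the last equality being a two-line computation using $\norm{\vel}^2/\lambda^2=\norm{\velalt}^2$. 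No case split, no radial profile, no monotonicity in the angle.

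Your approach instead computes the closed form of $\fn$, reduces to a one-variable inequality in the cosine $c$, and handles the quadratic and linear regimes separately; the linear regime needs the monotonicity of $\Phi$ and the factorization of $\Phi(1)$. This is longer but arguably makes more visible \emph{where} the hypothesis $\norm{\velalt}\le\mu\Radius/2$ bites: in the paper it enters only through the feasibility of $\mom$, whereas in your argument it appears twice (to force $\radius_1\le\mu\Radius$ and to fix the sign of the $t-\mu\Radius/2$ factor). The paper's trick of reusing the LHS maximizer as a test point on the RHS is worth remembering; it sidesteps exactly the ``linear regime with small angle'' difficulty you flagged as the main obstacle.
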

	  \begin{proof}
		  Define $\mom \defeq \frac{1}{\mu} \parens*{\frac{\vel}{\lambda} + \velalt}$ which has norm at most $\Radius$ since $\norm{\velalt} \leq \frac{\Radius}{2 \mu}$.
		  Then, we have that
		  \begin{align}
		  \lambda \fn \parens*{\frac{\vel}{\lambda} + \velalt} - \fn(\vel + \velalt)
			  &\leq 
			  \lambda \parens*{\inner*{\mom, \frac{\vel}{\lambda} + \velalt } - \half[\mu] \norm{\mom}^2} - \parens*{\inner{\mom, \vel + \velalt} - \half[\mu] \norm{\mom}^2}
			  \notag\\
			  &= (\lambda - 1)\parens*{\inner{\mom, \velalt} -\half[\mu] \norm{\mom}^2}
			  \notag\\
			  &= (\lambda - 1) \times \frac{1}{\mu} \parens*{\frac{\inner{\vel, \velalt}}{\lambda} + \norm{\velalt}^2 - \half \parens*{2 \norm{\velalt}^2 + 2 \frac{\inner{\vel, \velalt}}{\lambda}}}
			  \notag\\
			  &= 0\,,
		  \end{align}
		  and our proof is complete.
	  \end{proof}

\section*{Acknowledgments}
\begingroup
\small
%
%
This research was supported in part by 
the French National Research Agency (ANR) in the framework of
the PEPR IA FOUNDRY project (ANR-23-PEIA-0003),
the ``Investissements d'avenir'' program (ANR-15-IDEX-02),
the LabEx PERSYVAL (ANR-11-LABX-0025-01),
MIAI@Grenoble Alpes (ANR-19-P3IA-0003).
PM is also a member of the Archimedes Research Unit/Athena RC, and was partially supported by project MIS 5154714 of the National Recovery and Resilience Plan Greece 2.0 funded by the European Union under the NextGenerationEU Program.
\endgroup

\bibliographystyle{icml2024}
\bibliography{bibtex/IEEEabrv,bibtex/PaperBib}

\end{document}